\newcommand{\pr}{\pageref}
\newcommand{\N}{{\mathbb{N}}}
\newcommand{\cost}{{\mathrm{time}}}
\newtheorem{theorem}{Theorem}[section]
\newtheorem{lemma}[theorem]{Lemma}
\newtheorem{cy}[theorem]{Corollary}
\newtheorem{prop}[theorem]{Proposition}
\newtheorem{property}[theorem]{Property}
\theoremstyle{definition}
\newtheorem{df}[theorem]{Definition}
\newtheorem{rk}[theorem]{Remark}
\newcommand{\card}{\mathrm{card}}
\newcommand{\area}{\mathrm{Area}}
\newcommand{\me}{\medskip}
\newcommand{\Lab}{{\mathrm{Lab}}}
\newcommand{\tool}{\stackrel{\ell}{\too} }
\newcommand{\ttt}{{\cal T}}
\newcommand{\aaa}{{\cal A}}
\newcommand{\bb}{{\cal B}}
\newcommand{\topp}{{\bf top}}
\newcommand{\ttopp}{{\bf ttop}}
\newcommand{\tbott}{{\bf tbot}}
\newcommand{\bott}{{\bf bot}}
\newcommand{\vk}{van Kampen }
\newcommand{\Sym}{\hbox{Sym}}
\newcommand{\ccc}{{\cal C}}
\newcommand{\cee}{\alpha}
\newcommand{\dol}{\omega}
\newcommand{\iv}{^{-1}}
\newcommand{\too}{\to }
\newcommand{\sss}{{\cal S} }
\newcommand{\base}{\mathrm{base}}
\begin{document}
\renewcommand{\theequation}{\thesection.\arabic{equation}}

\title{Groups with undecidable word problem and almost quadratic
Dehn function}

 \author{A.Yu. Ol'shanskii\thanks{The 
author was supported in part by the NSF grant DMS 0700811 and by 
the Russian Fund for Basic Research
grant 08-01-00573}}
\date{}
\maketitle
\begin{center}
\large{\textit{with an Appendix by M.V.Sapir}}\footnote{The author of the Appendix was supported in
part by the NSF Grant  DMS-0700811.}
\end{center}
\maketitle

\begin{abstract} We construct a finitely presented group with undecidable word problem
and with Dehn function bounded by a quadratic function on an infinite set of positive integers.
\end{abstract}

{\bf Key words:} generators and relations in groups, Dehn function of group, 
 algorithmic word and conjugacy problems, Turing machine, S-machine, van Kampen
diagram 

\medskip

\medskip

{\bf AMS Mathematical Subject Classification:} 20F05, 20F06, 20F10, 20F65, 20F69, 03D10, 03D25, 03D40

\tableofcontents

\section{Introduction}

\subsection{Formulation of results}

The minimal non-decreasing function $f(n)\colon \mathbb{N}\to \mathbb{N}$ such that
every word $w$ vanishing in a group $G=\langle A\mid R\rangle$  and having length \label{lengthw||} $||w||\le n,$ 
freely equal to a product of at most $f(n)$ conjugates of relators from $R$
is called the \label{Dehnf}{\em Dehn function} of the presentation 
$G=\langle A\mid R\rangle$ \cite{GrHyp}. By van Kampen's Lemma, $f(n)$ is equal to the maximal
area of minimal diagrams $\Delta$ with perimeter $\le n.$ (See Subsection \ref{md}
for the definitions.) For {\it finitely presented} groups (i.e., both sets $A$ and $R$ are finite)
Dehn functions are usually taken up to equivalence to get rid of the dependence  on
a finite presentation for $G$ (see \cite{MO}). To introduce this \label{equivf}{\em equivalence} $\sim,$ we write $f\preceq g$
if there is a positive integer $c$ such that
$f(n)\le cg(cn)+cn\;\;\; for \;\; any \;\;n\in \mathbb{N}.$ 
Two non-decreasing functions $f$ and $g$ on $\mathbb{N}$ are called equivalent if
$f\preceq g$ and $g\preceq f.$

A function $f\colon \mathbb{N}\to \mathbb{N}$
is called \label{almostqf}{\em almost quadratic} if there exists a constant $C>0$ and an infinite set of integers $B$,
such that
$f(b)<Cb^2$ for all $b\in B$.

It is well known that a finitely presented group has undecidable word problem if and
only if its Dehn function $d(n)$ is not bounded by a recursive function (and if and only if $d(n)$ is not  recursive itself; see \cite{Ger}, \cite{BRS}), whence
for every recursive function $f(n)$, $d(n)>f(n)$ for infinitely many
values of $n$. The main result of this paper shows that a
non-recursive Dehn function can be almost quadratic at the same
time.

\begin{theorem}\label{mainth} There exists a finitely presented group $G$ with
undecidable word problem and almost quadratic Dehn function.
\end{theorem}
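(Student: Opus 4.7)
The plan is to build $G$ as the finitely presented group simulating a specifically engineered S-machine $\mathcal{M}$, using the machine-to-group translation of Sapir--Birget--Rips and its Ol'shanskii--Sapir refinements. First, I would choose a non-recursive set $S\subset\N$ and design $\mathcal{M}$ so that (a) the configurations $\mathcal{M}$ accepts encode $S$, and (b) there is an infinite set of input lengths $B\subset\N$ on which every accepting computation of $\mathcal{M}$ has time and tape space $O(b)$. Property (b) is arranged by a padding/sparsity argument: one picks a very sparse infinite $B$ on which ``the answer'' is hard-wired into the finite state graph of $\mathcal{M}$, so that $\mathcal{M}$ only performs linear bookkeeping at those lengths, while on other inputs $\mathcal{M}$ must carry out the unbounded recursive exploration needed to witness the non-recursiveness of $S$.

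Second, I convert $\mathcal{M}$ into a finite presentation $G=\langle A\mid R\rangle$ in the standard way. The generators split into tape, state and auxiliary ``command'' letters; the relations encode (i) commutation between disjoint regions of a configuration, (ii) one equality per transition rule of $\mathcal{M}$, and (iii) a hub relation identifying the accept configuration with the identity. By the correspondence between $\mathcal{M}$-computations and van Kampen diagrams, a word in the tape generators equals $1$ in $G$ iff the encoded configuration is accepted by $\mathcal{M}$. Hence undecidability of the acceptance problem for $\mathcal{M}$ transfers to undecidability of the word problem in $G$.

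Third, for the area estimate on $B$: given a vanishing word $w$ with $||w||=n\in B$, I would construct an explicit van Kampen diagram for $w$ by stacking the $O(n)$ elementary $\mathcal{M}$-bands of the accepting computation on the input encoded by $w$, capped by $O(n)$ hubs, each of constant width. By (b) the total area is $O(n^2)$, giving $f(n)\le Cn^2$ for $n\in B$ --- an almost quadratic Dehn function in the sense on page~\pageref{almostqf}. Outside $B$ no recursive bound is expected, consistent with the Dehn function being non-recursive.

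The main obstacle is the converse inequality: that \emph{every} minimal diagram $\Delta$ with perimeter $n\in B$ has area $\le Cn^2$, not just that one such diagram can be exhibited. This will require the full van Kampen surgery apparatus developed in the subsequent sections --- decomposing $\Delta$ into maximal bands, controlling annular subdiagrams and hub patterns, applying a trapezium-type lemma to identify strips of $\Delta$ with legal $\mathcal{M}$-computations, and invoking the quantitative time/space bounds of the first stage to constrain the area contribution of each part. Putting the band, hub and trapezium estimates together would yield $\mathrm{Area}(\Delta)\le Cn^2$ for $n\in B$ and complete the proof.
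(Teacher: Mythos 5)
There is a genuine gap, and it sits exactly where your construction puts its weight. Your sparsity mechanism is to pick an infinite set $B$ of input lengths on which ``the answer is hard-wired into the finite state graph'' so that the machine runs in linear time there. A finite state graph cannot store answers for infinitely many lengths of a non-recursive set, so as stated the mechanism is vacuous; but more importantly, even if you had linear-time acceptance at lengths in $B$, that property is about the wrong words. The Dehn function at a perimeter $n\in B$ must bound the area of \emph{every} null-homotopic word of length $\le n$, and among these are the encoded input configurations of \emph{shorter} accepted words whose lengths are not in $B$ and whose accepting times may be non-recursively larger than $n$; the corresponding minimal diagrams have area at least comparable to those times, which destroys any $Cn^2$ bound at $n\in B$. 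What is actually needed is a condition on \emph{all} accepted words of length $<n$: this is the ``$h$-good number'' property with $h$ doubly exponential supplied by Theorem~\ref{re} in the Appendix, which forces the acceptance times $T_i$ to be either tiny (roughly logarithmic in $n$) or enormous (beyond any fixed power of $n$) at infinitely many scales $n$; the quadratic bound is then proved at perimeters lying in these gaps, not at lengths where computations are fast.

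The second gap is that the entire quantitative core is deferred to ``the full van Kampen surgery apparatus'' of band/annulus/trapezium arguments in the style of Sapir--Birget--Rips. That apparatus, applied to an S-machine with (almost) linear time, yields a Dehn function of order $T(n)^4$, i.e.\ only an ``almost polynomial'' bound (this is exactly Remark~\ref{almpol}); the easy general estimates give at best a cubic bound (Lemma~\ref{grubo}, Theorem~\ref{thmain1}(3)). Reaching $Cn^2$ on an infinite set is not a routine invocation of that machinery: it requires the specific machine design ($M_3$'s control heads, $M_4$'s two historical $tk$- and $k't'$-tapes, the mirror copies forming $M$) so that non-accepting-like computations are forced to leave long traces on the boundary, together with the new boundary invariants (the $\kappa$-, $\lambda$-, $\nu_J$-mixtures), the comb/quasicomb area estimates, and the crescent surgery removing hubs, all combined in the induction of Lemma~\ref{main}. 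Your outline neither identifies these ingredients nor replaces them with an alternative, so the claimed bound $\mathrm{Area}(\Delta)\le Cn^2$ for arbitrary minimal diagrams is unsupported; only the easy direction (exhibiting one quadratic-area diagram for configuration words) is addressed.
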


Note that ``almost quadratic'' is the smallest Dehn function one can get, because if the Dehn function of a finitely presented group is $o(n^2)$ on some infinite set of integers, then the group is hyperbolic and its Dehn function is linear (this follows from Gromov \cite[6.8.M]{GrHyp} or Bowditch \cite{Bow}).

By Theorem \ref{mainth},  for some infinite set $B$ of natural numbers $b,$ the Dehn
function of $G$ satisfies the condition $f(b)<Cb^2$ for some constant $C$. Notice that
the set $B$ is not recursive or even recursively enumerable although its complement is recursively enumerable. Indeed, if $f(b)\ge Cb^2,$ then there exists a word $w$ of length $\le b$ which is equal to 1 in the group, but which is not the boundary label of any van Kampen diagram with less than $Cb^2$ cells; all diagrams with this number of cells and boundary length at most $b$ can be enumerated; and all words that are equal to 1 in the group can be enumerated too.
Moreover, $B$ cannot contain any infinite recursively enumerable
subset (i.e. it is {\em immune} in the terminology of \cite{Mal}).
Indeed if $B$ contains an infinite recursively enumerable set
enumerated by a Turing machine $M$, then in order to check if a word
$w$ is 1 in $G$ (and solve the word problem in $G$) we would do the
following: wait till $M$ produces a word $w'$ longer than $w$. Then
the area of the minimal van Kampen diagram for $w$ cannot exceed $C||w'||^2$ (here and below $||w||$ denotes the length of the word $w$), and
it would remain to check all diagrams of that area. Thus although
$B$ exists and is infinite, there is no algorithm to find any
infinite part of it. 

As a corollary of Theorem  \ref{mainth} and the results of  \cite{OS2} and
\cite{Gr1} we get

\begin{cy} The group $G$ from Theorem \ref{mainth} has a simply connected and a non-simply connected asymptotic cone.
\end{cy}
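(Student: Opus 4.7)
The corollary is a short deduction that splits into two independent claims, each obtained by pairing Theorem \ref{mainth} with a single external result. My plan is to extract the simply connected cone from \cite{OS2} and the non-simply connected cone from \cite{Gr1}.

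First I would build the simply connected asymptotic cone. Theorem \ref{mainth} supplies an infinite set $B\subset\mathbb{N}$ and a constant $C>0$ with $f(b)<Cb^2$ for every $b\in B$. The relevant criterion from \cite{OS2} says that if the Dehn function of a finitely presented group satisfies $f(d_k)\le Cd_k^2$ along a sequence $d_k\to\infty$, then, for any non-principal ultrafilter $\omega$, the asymptotic cone built with scaling constants $(d_k)$ is simply connected. I would pick a strictly increasing sequence $d_1<d_2<\dots$ drawn from $B$ and apply this criterion directly.

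Next I would produce a non-simply connected asymptotic cone by contradiction, invoking Gromov's theorem from \cite{Gr1}: if every asymptotic cone of a finitely presented group is simply connected, then its Dehn function is bounded above by a polynomial, and in particular by a recursive function, so the word problem is decidable. But by Theorem \ref{mainth} the word problem of $G$ is undecidable, hence (as recalled in the introduction) its Dehn function is not bounded by any recursive function. Consequently at least one asymptotic cone of $G$ must fail to be simply connected, proving the corollary.

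The main obstacle is purely bibliographic rather than mathematical: quoting the \cite{OS2} criterion in the correct form. In some formulations the criterion requires not only the quadratic Dehn estimate along the sequence $(d_k)$ but also a companion bound on filling length (or on the ``space'' invariant $\width$) at the same scales. I would verify that the version of \cite{OS2} being cited yields simple connectedness from the quadratic Dehn estimate alone, or alternatively check that the construction proving Theorem \ref{mainth} simultaneously provides the requisite filling length bound on $B$. Once this compatibility check is done, the two-line argument sketched above is complete.
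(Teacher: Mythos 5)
Your proposal is correct and matches the paper's own two-line argument: the cone over a sequence of scaling constants drawn from $B$ is simply connected by \cite{OS2}, while if all asymptotic cones were simply connected the word problem would be decidable by \cite{Gr1}, contradicting Theorem \ref{mainth}. Your bibliographic caution about the exact form of the \cite{OS2} criterion is reasonable but does not change the route, which is exactly the one the paper takes.
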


Indeed the asymptotic cone corresponding to the sequence
$B$ discussed in the previous paragraph is simply connected by
\cite{OS2}. On the other hand all asymptotic cones of $G$ cannot be
simply connected because that would imply decidability of the word
problem in $G$ by \cite{Gr1}.

Undecidability of conjugacy problem is easier to achieve than
undecidability of the word problem.

\begin{theorem}\label{thmain1} There exists a finitely presented
(multiple) HNN extension $M$ of a free group with finitely generated
associated subgroups  and with Dehn  function $f(n)$ such that:
\begin{enumerate}
\item The conjugacy problem is undecidable in $M$;
\item There is an infinite set $N_1\subseteq \mathbb{N},$
such that for some constant $C$ we have $f(n)<Cn^2$ for every $n\in N_1$;
\item For every $n$, $f(n)\le C n^3$.
\end{enumerate}
\end{theorem}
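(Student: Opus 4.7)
The plan is a Miller-style encoding of an undecidable problem into the conjugacy problem of an HNN extension, tuned so that the Dehn function is controlled by the time and space complexity of the simulating machine. Starting from a Turing machine $T$ with non-recursive halting set, I would convert $T$ into an $S$-machine $\mathcal{S}$ so that (i) every computation on a configuration of length $n$ stays in space $O(n)$, (ii) every accepting computation on a length $n$ input admits a shortening of length $O(n)$, and (iii) an arbitrary computation on such a configuration takes time at most $Cn^2$ (imposing a quadratic time clock on the simulation). The group $M$ is then built by the standard recipe: the state letters and tape letters of $\mathcal{S}$ as free generators, together with a stable letter $\theta_i$ for each rule, subject to HNN relations $\theta_i u_i \theta_i\iv = v_i$, where $u_i, v_i$ are short words describing the effect of that rule on a configuration. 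A routine enlargement of the rule alphabet ensures that $M$ is a multiple HNN extension of a free group with finitely generated associated subgroups.

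The undecidability of conjugacy comes from a Britton-type analysis: two configuration words are conjugate in $M$ if and only if there is a computation of $\mathcal{S}$ transforming one into the other, so the conjugacy problem encodes the halting problem of $T$ and item (1) follows.

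For the Dehn function I would analyze a minimal van Kampen diagram $\Delta$ of perimeter $n$. Each occurrence of a stable letter on $\partial\Delta$ initiates a maximal $\theta$-band; by Britton's lemma and minimality these bands do not annihilate inside $\Delta$, and each records a computation of $\mathcal{S}$ along it. A $\theta$-band has length at most the space used, which is $O(n)$ by (i), and the number of bands is at most $n$; by (iii) the total time, and hence the total number of cells in all bands, is at most $Cn^3$. Between consecutive bands $\Delta$ is filled by a free-group van Kampen diagram in the base whose area is linear in its boundary, so the overall area is at most $Cn^3$, establishing (3). For boundary words that encode halting configurations -- an infinite family since the accepted language is infinite -- assumption (ii) collapses the encoded computation to time $O(n)$, so the area drops to $O(n^2)$, yielding the infinite set $N_1$ of (2).

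The main obstacle is the uniform cubic bound in (3). One must argue that for an \emph{arbitrary} trivial word, not just one that encodes a configuration or a computation, the minimal diagram decomposes via its $\theta$-bands into a polynomial number of sub-diagrams with well-controlled free-group fillings, so that cancellation patterns between unrelated bands cannot force super-cubic area. The technical heart is an induction on the number of stable letters on $\partial\Delta$, using the minimality of $\Delta$ together with the quadratic time bound on $\mathcal{S}$ to control the area of each strip; this is also the step where the precise choice of $u_i, v_i$ (guaranteeing finite generation of the associated subgroups) is crucial, since otherwise the free-group fillings between bands could grow uncontrollably.
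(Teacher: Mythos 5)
There is a fundamental flaw in your plan: properties (ii) and (iii) that you impose on the simulating $S$-machine are incompatible with item (1) of the theorem. If every computation starting from a configuration of length $n$ were bounded by a quadratic clock $Cn^2$, or if every accepted input of length $n$ had an accepting computation of length $O(n)$, then acceptance would be decidable (a nondeterministic machine with finitely many rules and a recursive time bound recognizes a recursive language), and then the conjugacy relation you use to encode the halting problem would be decidable as well. The whole difficulty of the theorem is that the time function of the machine must be non-recursive, while the Dehn function is nevertheless quadratic on an infinite set of scales; this is achieved in the paper by starting from the machine of Theorem \ref{re} in the Appendix, whose accepting times form an extremely sparse (immune) set, so that the quadratic bound is proved at perimeters lying in the huge gaps between consecutive accepting times $T_i$. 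Relatedly, your mechanism for item (2) targets the wrong words: the Dehn function $f(n)$ is a supremum over \emph{all} words of length at most $n$, so for $n\in N_1$ one needs a quadratic area bound for every such word, not just for those encoding halting configurations; in fact the words encoding accepted configurations are exactly the ones whose area is non-recursively large, so your claim that property (ii) makes their area $O(n^2)$ has the picture backwards. Establishing the quadratic bound away from the scales $T_i$ is the technical heart of the paper (rim bands, combs and quasicombs, the $\kappa$-, $\lambda$- and $\nu_J$-mixtures, and the induction of Lemma \ref{main} feeding into Lemma \ref{almostquad}), and your proposal does not engage with it: the induction on boundary stable letters with ``well-controlled free-group fillings'' that you sketch has no mechanism replacing these quadratic boundary invariants, and without them the subdiagrams between $\theta$-bands (the combs) cannot be cut off with the required area-versus-perimeter accounting.

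Two smaller points. The cubic bound (3) needs no time clock at all: in the actual proof (Step 1 of Lemma \ref{grubo}) it is a purely structural band-counting argument --- at most $n/2$ maximal $\theta$-bands and $n/2$ maximal $q$-bands give $O(n^2)$ $(\theta,q)$-cells, every maximal $a$-band ends on the boundary or on such a cell and crosses each $\theta$-band at most once, yielding area $O(n^3)$ --- so invoking a machine time bound here is both unnecessary and, as above, self-defeating. Finally, your Britton-lemma description of the conjugacy encoding is close in spirit to the paper's (conjugacy of $W(M)$ to the hub word via annular diagrams corresponds to accepting computations, Lemma \ref{abc}), but for $S$-machines one must also control computations with non-standard bases, which is part of why the chain of machines $M_1,\dots,M_4,M$ with control letters and history tapes is built rather than a single direct simulation.
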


\begin{rk} Probably the first example of an almost quadratic but not quadratic
Dehn function of a finitely presented group was constructed in \cite{O}. However
that function is $O(n^2 \log n /\log\log n)$ which is not much bigger
than a quadratic function. A slight modification of the proofs of the
present paper provides us with a {\it recursive} almost quadratic Dehn function $d(n)$ 
rapidly increasing on some infinite subset $N_2$ of $\mathbb N.$ (For example, almost
quadratic $d(n)$ is at least exponential on $N_2$ and at most exponential on the entire $\mathbb N$;
see Theorem \ref{exp} and Remark \ref{other} for details.)
The difference is that the proof of Theorem \ref{mainth} uses Sapir's Theorem \ref{re}, 
but the recursive examples are independent of it.
\end{rk}

\begin{rk} \label{almpol}
Using \cite{SBR} and Theorem \ref{re} one can easily obtain a weaker version of Theorem \ref{mainth} replacing ``almost quadratic'' by ``almost polynomial''. Recall that a function 
 $f\colon \mathbb{N}\to \mathbb{N}$ is superadditive if $f(m+n)\ge f(m)+f(n)$ for any $m,n\in \mathbb{N}.$ The superadditive closure $\bar f(n)$ of a function $f(n)$ is given by the
formula $\bar f(n) = \max(f(n_1)+\dots+f(n_k))$ over all non-negative partitions $n=n_1+\dots+n_k$. If a Turing machine $M$ accepts a language $L$
with at least linear time function $T(n),$ and $T(n)^4$ is equivalent to a superadditive 
function, then by Theorem 1.3 \cite{SBR}, there is a finitely
presented group $G(M)$ with Dehn function $d(n)$ equivalent to $T(n)^4.$  But in fact, it is proved
in \cite{SBR} that omitting the assumption that $T(n)^4$ is superadditive, we have
inequalities $T(n)^4 \preceq d(n)\preceq \overline{ T(n)^4}.$  Thus it suffices to construct
a Turing machine $M$ with non-recursive but ``almost linear'' time function $T(n).$ The existence
of such a machine follows from Theorem \ref{re}. (Moreover, one can derive from
Theorem \ref{re} that $\overline{ T(n)^4}$ is ``almost $n^4$''.) 
\end{rk}


Reducing to ``almost quadratic" (as in Theorem \ref{mainth}) requires a new approach.
The $S$-machine we are going to use will be different from \cite{SBR}, and the analysis of diagrams will be much more delicate. The main reason for the difficulties arising here
is that the ``almost quadratic'' property is unimprovable. For example, the {\it cubic} upper bound
of the Dehn function of the group $M$ is obvious in \cite{SBR} (see also Step 1 in the proof of Lemma \ref{grubo} below), but the main contents of our paper focus on $M,$ starting with
the properties of the machine defining $M$ and ending with new quadratic invariants
of the diagrams called mixture(s) on their boundaries. In the next subsection of the introduction, we discuss the outline of the proof of Theorem \ref{mainth}, and some ideas needed in its proof.

\subsection{A short description of the proof of Theorem \ref{mainth}}

Relations of a finitely presented group with undecidable word problem 
simulate the commands of a Turing machine $M_0$ with undecidable halting problem,
and as in the works of P.Novikov, W.Boone and many other authors (see \cite{R}, \cite{SICM}), one has to 
properly code the work of a Turing machine in terms of group relations. To
obtain an almost quadratic Dehn function of a group $G$, we must start with a machine having almost linear time function (but which is not bounded from above by any recursive function). 
Thus we can just demand that the lengths of words accepted by $M_0$  form a very sparse subset of positive integers $B\subset\mathbb N.$  As a measure of how sparse  $B$ is, M.V. Sapir
gives the following exact definition.

Let $X$ be a recursively enumerable (r.e.) language in the binary
alphabet recognized by a Turing machine $M$. If
$w\in X$ then the {\em time} of $w$ \label{cost} (denoted $\cost(w)$ or $\cost_M(w)$) is, by
definition, the minimal time of an accepting computation of $M$ with
input $w$. For an increasing function $h\colon \N\to \N$, a number $m\in \mathbf N$ is called {\em $h$-good}\label{good} for $M$ if for
every word $w\in X$ of length $<m,$ we have $h(\cost(w))<m$. 

For our estimates, it suffices to start with a Turing machine $M_0$ recognizing a r.e. non-recursive
set $X$ such that the set of all $f$-good numbers for $M_0$ is infinite, where $f$ is arbitrary
double exponential function. Such a machine is constructed in the Appendix written by M.V.Sapir (see
Theorem \ref{re}).

Group relations always interpret the symmetrization of a machine. Thus as a preliminary step,  one has to add the inverse
commands, in spite of the fact that the machine $M_0$ is replaced by a non-deterministic
machine $M_1.$
(Of course, one should be concerned that the symmetrization preserves some basic characteristics
of the machine.) 
However the interpretation problem for groups remains much harder than for semigroups even after
modifying the machine because the group theoretic simulation can execute unforeseen computations with non-positive
words. Boone and Novikov secured the positiveness of admissible configurations  with the help of an
additional `quadratic letter' (see \cite{R}, Ch.12). However this old trick implies that the constructed
group $G$ contains Baumslag - Solitar groups $B_{1,2}$ and has at least exponential Dehn function. Since we want to obtain almost quadratic Dehn function, we use a new approach  suggested in \cite{SBR}. Invented by
Sapir, S-machines can work with non-positive words on the tapes. Here we use an 
a composition $M_2$ of the symmetric machine $M_1$ with an `adding machine' $Z(A)$
introduced in \cite{OS}. This  S-machine is equivalent to $M_1.$

Here we have to guarantee at least two important properties of the machine  $M_2$
(since the violation of either of them makes the Dehn function of the group $M_2$ non-almost
quadratic): a reduced computation of $M_2$ does not repeat the same configuration twice,
and every {\it accepting} computation of $M_2$ is uniquely determined by the initial configuration (though $M_2$ is highly non-deterministic).

Every $S$-machine is, on the one hand, a rewriting system and, on the other hand,
it can be treated as a multiple HNN-extension of a free group (see \cite{OS} or \cite{SICM}).
But when one takes an $S$-machine as an HNN-extension, then the number of working heads 
can be arbitrarily large and their order on the common tape can be non-standard.
Therefore, as in \cite{SBR} or \cite{OS}, we have to extend the set of admissible words for
the machine treated as a rewriting system. This makes  the control of
arbitrary computation difficult. (There was no need for such accurate control in \cite{SBR} or \cite{OS}.) Hence  we are forced to introduce auxiliary control heads
which are called upon to examine the order of heads after and/or before the
application of every rule of the machine $M_2.$ The obtained machine $M_3$ is
better than $M_2$ because it is able to accomplish only `simple' computations
with non-standard disposition of the heads. 

When we introduce a new
machine, then clearly, we should check that it inherits the important properties
of the machines studied earlier. In particular, $M_3$ inherits the language accepted
by $M_1.$ The next modification is the machine $M_4$ which has two additional
tapes with histories of what $M_3$ computes. For any  computation with the standard
order of heads of $M_4$ (the notion of standard base of $M_4$ is given in subsection \ref{M4}), we prove that either the time $T$ of this computation is `close' to the time $T_i$ of some  computation accepting a word from the sparse set provided by
Theorem \ref{re}, or  the space on the `historical' tapes at the beginning or at the end
of the computation is bounded from below by a linear function of $T$. In the latter case we have a quadratic
upper estimate for the area of the trapezium corresponding to the computation. Here the definition of trapezium as a special van Kampen diagram is borrowed from \cite{OS}, and Lemma \ref{simul} 
translates the machine language to the diagram language. 

Finally, the machine $M$ is a union of many copies and mirror copies of $M_4$
working in a parallel way. The corresponding HNN-extension $M$ is the group from
Theorem \ref{thmain1}. The accept word of the machine $M$ is called the hub. There is
no algorithm deciding if a given word in the generators of $M$ is conjugate to the hub.
The usual adding of the hub relation to the list of defining relations of $M$ 
(as in \cite{R}, and many papers)
provides us with the group $G$ for Theorem \ref{mainth}. As in \cite{SBR} or \cite{O1}
the hub relation has many copies of the accept words of $M_4.$ This makes the hub graph 
(with vertices in hub cells; see Subsection \ref{dwh}) associated with a van Kampen diagram, hyperbolic, and this is used in
Lemmas \ref{extdisc} and \ref{mnogospits}. The mirror symmetry of the hub is
used for the surgery removing a hub (see Subsection \ref{srh}).

Unsolvability of the halting problem for the S-machine $M$  immediately implies
that the Dehn function $d(n)$ of the group $G$ is not bounded from above by any recursive function.
Other precautions used in the construction of the machines $M_0,\dots, M_4, M$ eliminate
a number of visual obstacles standing in the way of the almost quadratic property for the Dehn functions of $M$ and $G.$
(For instance, if one uses only one historic tape for $M_4$ or the arrangement 
of the  historic heads is different, then the almost quadratic estimate is not
achieved by our model.)  But how  can one {\it prove} this property ?

The areas of diagrams whose perimeters are close to some numbers $T_i$ mentioned above
can be non-recursively high in comparison with their perimeters. Therefore one
has to consider a diagram $\Delta$ whose perimeter $n$ is far from the infinite increasing
sequence $\{T_1,T_2,\dots\},$ for example, $\exp T_{i-1} < n < T_i$ for some $i$.
Unlike a trapezium, an arbitrary diagram has irregular structure. Therefore we want
to find some more regular pieces to cut them off and then use an induction on the
perimeter $n.$  

Indeed assume that there is a simple path $\bf y$ in $\Delta$ cutting up $\Delta$
into two subdiagrams $\Delta_1$ and $\Delta_2$ with boundary paths ${\bf yz}$ and ${\bf y}^{-1}{\bf z'},$ resp., where ${\bf zz'}$ is the boundary of $\Delta.$    Assume that
$|{\bf z}|>|{\bf y}|$ (where $|\bf x|$ is the length of a path $\bf x$) and moreover,
$\area(\Delta_1)\le C|{\bf y}|(|{\bf z}|-|{\bf y}|),$ where the positive constant $C$
does not depend on the diagrams. Then it is easy to see that the quadratic estimate
$\area(\Delta_2)\le C|{\bf yz}|^2$ for the subdiagram $\Delta_2$ with perimeter $<n$
together with the estimate for $\area(\Delta_1)$ give 
$$\area(\Delta)\le \area(\Delta_1)+\area(\Delta_2)\le C|{\bf zz'}|^2=Cn^2$$ as required.
Thus we are looking for pieces whose area can be estimated as for $\Delta_1.$ 

First of all, among such `good' pieces, we have so called rim $\theta$-bands with a restriction on the
length (but here we have to change the usual combinatorial metric by the metric,
where the generators from the tape alphabet of the machine $M$ are much shorter than other generators of the groups $M$ and $G$).
The `good' pieces of second type (again, under some restrictions) are combs defined 
in Section \ref{cmb} (and introduced earlier in \cite{OS}).

The upper bound of the form $C|{\bf y}|(|{\bf z}|-|{\bf y}|)$ works for many types
of combs but unfortunately, it is false for other combs whose areas must also be estimated.
We have found another quadratic invariant of the boundaries  of the diagrams, called
mixture. In Section \ref{mix}, we associate a two-colored necklace with the boundary $\bf p$ of $\Delta$. The black and white beads of this necklace correspond to different types of edges in $\bf p.$  
To obtain the mixture $\mu(\Delta)$ one calculates the number of pairs of white beads
separated in $\bf p$ by black ones.  (Another quadratic invariant, called dispersion,
was introduced and applied earlier in \cite{OS}, but the dispersion depends on the whole diagram and works for  hub free diagrams while the mixture depends on the boundary label only and works for arbitrary diagram over $G$.)  

The important observation is that for many types of subcombs $\Delta_1,$ we have
inequalities $\area(\Delta_1)\le C|{\bf y}|(|{\bf z}|-|{\bf y}|)+ \mu(\Delta_1),$
and $\mu(\Delta_2)\le \mu(\Delta)-\mu(\Delta_1).$ 
This was a breakthrough which inspired the confidence that the whole project would be completed.
However the original mixture cannot help in case of some special combs. Therefore we have to
consider boundary necklaces  of 3 different types. The different mixtures help to estimate the areas of different combs. But one of these  mixtures helps in some cases and can be negative in some other cases, which causes a problem for our induction. Therefore we use
a weighted linear combination of 3 mixtures in the Lemma \ref{itog} summarizing our
estimates of comb areas. Hence we have to estimate the behavior of these mixtures in
different situations, which makes a number of comb lemmas complicated, and the comb part 
of the paper is the hardest one.

Then we consider a diagram $\Delta$ with hubs. Due to  hyperbolicity of the hub structure
mentioned above,
there is a hub $\Pi$ such that almost all `spokes' starting on $\Pi$ end on the
boundary $\partial\Delta,$ and they bound (together with $\partial\Delta$ and $\partial\Pi$)
a subdiagram $\Psi$ without hubs. Now we are able to remove redundant combs and rim bands
from $\Psi.$ The remaining {\it crescent} $\tilde\Psi$ together with $\Pi$ can be cut off by a relatively short cutting path. (Thus, one can also induct on the number of hubs in $\Delta$.) As in \cite{SBR}, our  surgery uses the mirror symmetry of the hub relation,
but our inequalities are more delicate here than those used for the `snowball decomposition' in \cite{SBR}
since we aim for almost quadratic bounds.
Again we estimate the area of the removed part in terms of the reduction of the perimeter, of the mixtures, and more. To complete the
proof, we take into account that the auxiliary parameters are quadratically bounded with
respect to the perimeter of a diagram.  

The author is aware that such a long proof can be arduous to the reader. Making our apology we collect all the definitions and terms at the end of the paper (see Subject Index) and insert 
many pictures and brief comments throughout the text. Besides, Lemma \ref{summary} reformulates
all machine properties we need in terms of van Kampen diagrams so that the machine
constructions can be forgotten after one has read that lemma.
\medskip

{\bf Acknowledgment.} The author is grateful to M.V. Sapir who was involved in the joint 
work when the project started in March 2005. Although he has retired from the absorbing and
exhausting struggle against the details, his contribution to the proof is certainly
bigger than what is written by him in Appendix.

\section{A Turing machine}

\subsection{Definitions and notations related to Turing machines}

In this section, we collect all information about Turing machines
that we need in the proof of our main results.

As usual we consider words as sequences of symbols from some alphabet $X$. 

We shall use the following standard notation for Turing machines. A
(multi-tape) \label{Turingm}{\em Turing machine} has $k$ tapes and $k$ heads observing
the tapes. One can view it as a structure  $$M= \langle I, Y, Q,\Omega,
\Theta\rangle,$$ where $I$ is the input alphabet, $Y$ is the tape
alphabet ($I\subseteq Y$), $Q=\sqcup Q_i,
i=1,...,k$
is the set of states of the heads
of the machine (and $\sqcup$ denotes disjoint union),\\ $\Omega=\{\alpha_1,\omega_1,\dots,\alpha_k,\omega_k\}$ is the
set of left and right markers of the tapes, and
$\Theta$ is a set of
commands.

The leftmost (the rightmost) square on the $i$-th tape is always marked by $\cee_i$ (by $\omega_i$).
The head is placed between two consecutive squares on the tape. A 
{\em configuration} of the $i$-th tape of a Turing machine is a word $\cee_i u
q v \dol_i,$ where $q\in Q_i$ is the current state of the head of that tape,
$u$ is the word in $Y$ to the left of the head and $v$ is the word
in $Y$  to the right of the head, and so the word written on the
entire tape is $uv;$ so we do not include $\alpha_i,$ $\omega_i$ and
the state letter when we talk about the word written on the tape. 

At every moment the head of each tape observes two letters on that
tape: the last letter of $u$ (or $\alpha_i$) and the first letter of
$v$ (or $\omega_i$).

A \label{config}{\em configuration} $U$ of a
Turing machine is the word
$$U_1U_2\dots U_k,$$
where $U_i$ is the configuration of tape $i$. 
We shall omit the indices $i$ of $\alpha_i$ and $\omega_i$ for the sake of brevity.

Assuming that the Turing machine is recognizing, we can define input configurations and accepted (stop) configurations.
An \label{inputc}{\it input configuration} is a configuration where the word written
on the first tape is in $I$, all other tapes are empty, the head on the first tape observes
the right marker $\omega$, and the states of all tapes form a special \label{startv} {\em start} $k$-vector $\vec s_1$.
An \label{acceptc}{\em accept (or stop) configuration} is any configuration
where the state vector for a special $k$-vector  $\vec s_0$, the \label{acceptv}{\em accept vector} of the machine. We shall always assume (as can be easily achieved) that in the accept configuration of a Turing machine every tape is empty.

A transition (\label{command}{\it command}) of a Turing machine is given by the states of the heads and some of the $2k$ letters
observed by the heads. As a result of a
transition we replace some of these $2k$ letters by other letters,
insert new squares in some of the tapes and may move the heads
one square to the
left (right) with respect to the corresponding tapes.

For example in a one-tape machine, every transition is of the
following form: $$uqv\to u'q'v',$$ where $u, v, u', v'$ are letters
(could be end markers) or empty words. The only constraint  is that
the result of applying the substitution $uqv\to u'q'v'$ to a
configuration word must be a configuration word again, in particular
the end markers cannot be deleted or inserted.
This command means that if the state of the
head is $q$, $u$ is written to the left of $q$ and $v$ is written to
the right of $q,$ then the machine must replace $u$ by $u'$, $q$ by
$q'$ and $v$ by $v'$.

For a general  $k$-tape machine, a command is a vector
$$[U_1\to V_1,...,U_k\to V_k],$$
where $U_i\to V_i$ is a command of a 1-tape machine, the elementary commands (also called {\em parts of the command})
$U_i\to V_i$ are listed in the order of tape numbers. In order to execute this
command, the machine checks if $U_i$ is a subword of the configuration
of tape $i$ ($i=1,...,k$), and then replaces $U_i$ by $V_i$.

Notice that for every command
$[U_1\to V_1,...,U_k\to V_k]$, the vector $[V_1\to U_1,...,V_k\to U_k]$
is also a command of a Turing
machine. These two commands are called \label{inversec}{\em mutually inverse}. A Turing machine is called \label{symmetricm}{\em symmetric} if for every command of that machine, the inverse is also a command of the machine. If a Turing machine is symmetric, we shall always consider a division of the set of its commands $\Theta$ into two disjoint subsets, \label{positivec} positive and negative commands: $\Theta=\Theta^+\sqcup \Theta^-$, so that the inverses of commands in \label{T+-} $\Theta^+$ are in $\Theta^-$ and vice versa.

We will assume that only input configurations of a Turing machine
involve the state letters from $\vec s_1$ and only one (positive) command \label{tstart} $\theta_{start}$ is applicable to the
input configurations. 
Similarly, we assume that  there is a unique
accept configuration $\vec s_0$ and a unique (positive) accepting command \label{taccept}$\theta_{accept}.$  

A \label{computation}{\em computation} is a sequence of configurations $C_0\to \dots\to C_n$ such that for
every $0=1,..., n-1$ the machine passes from $C_i$ to $C_{i+1}$ by applying one
of the commands from $\Theta$.  A configuration $C$ is said to be {\em
accepted} by a machine $M$ if there exists at least one computation which starts
with $C$
and ends with the accept configuration.

A word $u$ over $I$ is said to be \label{acceptediw}{\em accepted} by the machine if the
corresponding input configuration is accepted.  The set of all accepted words
over the input alphabet $I$ is called the \label{languageam}{\em language accepted (recognized) by the machine}.

Let  $C = C_0\to\dots\to C_n$
be a computation of a machine $M$ such that for
every $j=0,...,n-1$ the configuration $C_{j+1}$ is obtained from $C_j$ by a
command $\theta_{j+1}$ from $\Theta$.  Then we call the word $\theta_1 \ldots \theta_{n}$
the \label{history}{\em
history} of this computation.  The number $n$ will be called the \label{lengthtime}{\em time} (or {\em length})
of the computation.

\begin{rk}\label{rk1} Note that we can (and will)  assume that in
every command $[u_1q_1v_1\to u_1'q_1'v_1',...,u_kq_kv_k\to
u_k'q_k'v_k']$ the sum of numbers of letters from the tape alphabet $Y$
in all
$u_i, u_i', v_i, v_i'$, $i=1,...,k,$  
is at most 1. 
Indeed, this can
be achieved by subdividing a command in the standard way. For
example, a command $[aq\to bq']$ is replaced by two commands $[aq\to
q'']$, $[q''\to bq']$, where $q''$ is a new state letter.
\end{rk}

It is
convenient to consider {\em empty computations} consisting of one
word $W$.
The history of an empty computation is the empty word, the
start and end words of this computation are equal to $W$.
We do not only consider {\em deterministic} Turing machines, for example,
we allow several transitions with the same left side. For example, most symmetric Turing machines are not deterministic.

\subsection{A conversion of a deterministic Turing machine into a symmetric Turing machine}

At first, let us add some useful properties to a machine.


\begin{lemma}\label{m0m1}
 Let $M_0$ be a deterministic Turing machine recognizing a set of words $X$. Then there exists a
deterministic Turing machine $M_1$ which recognizes $X$ and such that

\begin{enumerate}
\item[(a)] If $W\equiv W'$ (i.e. these two words are \label{equivgraph} letter-for-letter equal) for a computation $W\to\dots\to W'$, then this computation is empty.
\item [(b)] The property from Remark \ref{rk1} (``at most one tape letter'') holds for every command of $M_1.$ 
\item [(c)] The state letters from the start vector $\overrightarrow s_1$ (from the accept vector $\overrightarrow s_0$)
occur in the left-hand side of a unique command $U\to V$ of $M_1$ and do not occur in the right-hand side of any command 
(resp., occur in the right-hand side of a unique command $U'\to V'$ and do not occur in the left-hand side of any command) 
\item[ (d) ] The letters used on different tapes are from disjoint alphabets.
The letters to the left and to the right of the head of any tape are from disjoint alphabets.

\end{enumerate}
\end{lemma}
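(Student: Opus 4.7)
I would prove the lemma by applying four successive modifications to $M_0$, each responsible for one of the four properties, and then verifying that the later modifications do not spoil the earlier ones. Throughout, determinism and the recognized language $X$ are to be preserved.

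First, to secure property (b), I would apply the standard subdivision trick described in Remark \ref{rk1}: any command of $M_0$ that rewrites more than one tape letter is replaced by a sequence of commands each rewriting at most one tape letter, separated by fresh intermediate state letters. Since the intermediate states are new and used only in one LHS and one RHS each, determinism and the accepted language are unaffected. Next, for property (d), I would first take disjoint copies $Y^{(i)}$ of the tape alphabet for each tape $i$ and simply rename every occurrence; this is possible because no command reads or writes letters on tape $i$ other than through its $i$-th component. Then, to separate left-of-head and right-of-head alphabets, I would duplicate every tape letter $a$ into two symbols $a^L$ and $a^R$ and maintain the invariant that letters to the left of each head are always of $L$-type and letters to the right of $R$-type: every command of the current machine is rewritten so that when the head crosses a letter moving right, its $R$-form is replaced by the $L$-form, and symmetrically when moving left.

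Third, for property (c), I would introduce brand-new state vectors $\vec s_1$ and $\vec s_0$ disjoint from all existing state letters, and add exactly one command $\theta_{start}$ whose left-hand side carries $\vec s_1$ and whose right-hand side replaces $\vec s_1$ by the old start vector of the simulated machine, and exactly one command $\theta_{accept}$ transforming the old accept vector into $\vec s_0$. Since $\vec s_1$ and $\vec s_0$ are fresh, $\vec s_1$ appears in no RHS and in only the one LHS, and dually for $\vec s_0$.

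Finally, to enforce acyclicity (property (a)), I would add an auxiliary counter tape and split the set of states of the machine into two disjoint copies, a \emph{work} phase and a \emph{cleanup} phase. Every work-phase command (obtained from the simulation of $M_0$) is augmented so as to append a single tally mark to the counter tape; thus during the work phase the length of the counter tape strictly increases by one with every step. When the simulation reaches its old accept vector, a single transition moves into a cleanup state; each cleanup command then removes one tally mark from the counter tape without touching the other tapes except for the state vector on them. When the counter tape becomes empty, $\theta_{accept}$ fires and reaches $\vec s_0$, at which point all tapes are empty as required. Work and cleanup states are disjoint, so a work configuration can never coincide with a cleanup configuration; within the work phase the counter length is strictly increasing along any computation, and within the cleanup phase it is strictly decreasing; hence no non-empty computation can return to its initial configuration, giving (a). The main obstacle I anticipate is the bookkeeping needed to make the modifications compatible: specifically, ensuring that the uniqueness clauses of (c) survive after the phase split of (a) (so that the counter-initializing and counter-terminating commands do not reintroduce $\vec s_1$ or $\vec s_0$ in unintended positions) and that the subdivision in (b) interacts correctly with the left/right duplication in (d) so that each single-letter rewrite acts consistently on the $L$/$R$-tagged alphabet.
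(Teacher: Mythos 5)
Your overall plan is the same as the paper's: a counter tape whose length is monotone within each phase gives (a), the subdivision of Remark \ref{rk1} gives (b), a fresh (or inherited) start/accept pair gives (c), and disjoint copies of the tape alphabet, further split into left-of-head and right-of-head versions, give (d). The individual constructions are all sound, and your left/right tagging applied to already-subdivided commands does stay within the one-letter constraint, since each subdivided command touches a letter on only one side of the head.

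The one step that fails as written is the order in which you compose the modifications. You establish (b) first and add the counter last, and you do so by augmenting \emph{every} work-phase command ``so as to append a single tally mark to the counter tape.'' After that augmentation a command that rewrites one tape letter on a working tape also writes one tally letter on the counter tape, so the sum of tape letters occurring in the command is $2$, and property (b) is destroyed in the final machine $M_1$. The paper performs the steps in the opposite order precisely to avoid this: it first adds the extra tape with the letter $\tau$ (securing (a)), and only then applies the subdivision of Remark \ref{rk1}, observing that introducing fresh intermediate state letters cannot create a repeated configuration, so (a) survives. Your construction can be repaired either by adopting that order, or by re-applying the subdivision after adding the counter (e.g.\ simulating each step of $M_0$ and incrementing the counter in two separate sub-steps through a new intermediate state), but then you must also check, as the paper does implicitly, that the interleaved configurations still never repeat — which holds because the intermediate state letters distinguish the two configurations sharing a counter value. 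As it stands, this interaction between (a) and (b) is exactly the compatibility issue your construction does not address (you flag (a)-versus-(c) and (b)-versus-(d), but not this one), so the machine you produce does not satisfy conclusion (b).
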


\proof Let $W$ be a configuration of $M_0$. The general form of a configuration of $M_1$ will be $W\alpha \tau^lq_{k+1}\omega$, that is the machine $M_1$ has one more tapes than $M_0$.  The 
last tape contains a (non-negative) power of a special tape letter $\tau$.
The set of state letters is then increased by one component $\{q_{k+1}\}$.
At the beginning the last tape is empty.  The machine will
execute $M_0$ on its tapes, adding new $\tau$ on the last tape after every step of computation. 
After $M_0$ accepts, $M_1$ erases the last tape  and stops. 
The last tape  guarantees Property (a)).
In order to get Property (b) of $M_1$, we apply the trick from Remark \ref{rk1} since it
does not violate Property (a).

Property (c) of $M_1$ for the start command 
follows from the same property of $M_0,$ and we can define the accept command of $M_1$
so that Property (c) also holds for it. 

In order to obtain Property (d), we use different copies of the tape alphabet
for different tapes, and moreover, we use different copies from the left and from the right
of each head. \endproof

If $M_1$
is a deterministic Turing machine, satisfying the properties of Lemma \ref{m0m1}, with the set of commands $\Theta$ such that $\Theta\cap \Theta\iv=\emptyset$, then let \label{symM1} $\Sym(M_1)$ be the Turing machine with the set of commands $\Theta\sqcup \Theta\iv$ and the same sets of state and tape letters. The division of the commands of $\Sym(M_1)$ into positive and negative is natural: the commands of $M_1$ are positive, their inverses are negative. The computation of $\Sym(M_1)$ is called \label{reducedc} {\em reduced} if its history is a reduced word. Clearly, every computation can be made reduced (without changing the start or end configurations of the computation) by removing consecutive mutually inverse commands.

\begin{lemma} \label{mach23} 
The Turing machine $\Sym(M_1)$ satisfies the following properties.
\begin{enumerate}

\item [(a)] Every command of $\Sym(M_1)$ 
satisfies Property (b) of Lemma \ref{m0m1}.
\item [(b)] Every reduced history of computation of $\Sym(M_1)$ has the form $H_1H_2\iv,$ where $H_1, H_2$ consist of positive commands.
\item[(c)] $\Sym(M_1)$ satisfies Properties (a), (c) (for positive commands), and (d) of Lemma \ref{m0m1}
\item [(d)] The language recognized by $\Sym(M_1)$ is $X$.
\item [(e)] For every $W\in X$ there exists only one accepting computation of $\Sym(M_1).$ 
It is equal to the computation accepting $W$ by $M_1,$
and if $M_1$ is given by Lemma \ref{m0m1}, then the length of this computation is
big-O of the length of the accepting computation of $M_0$ with input $W$.
\end{enumerate}
\end{lemma}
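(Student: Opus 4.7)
The plan is to derive each part from Lemma \ref{m0m1} combined with the determinism of $M_1$. Three parts are essentially immediate: Part (a), because the tape-letter count in Property (b) of Lemma \ref{m0m1} is symmetric under inverting a command; Property (d) of Lemma \ref{m0m1} inside part (c), because $\Sym(M_1)$ and $M_1$ have identical tape and state alphabets; and Property (c) for positive commands, because the positive commands of $\Sym(M_1)$ are by definition the commands of $M_1$.

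The central structural fact is part (b): in a reduced history of $\Sym(M_1)$, a negative command cannot be immediately followed by a positive one, from which it follows that the reduced history has the form $H_1H_2^{-1}$ with $H_1,H_2$ positive. Indeed, if $\theta_i^{-1}$ takes a configuration $C$ to $C'$ and $\theta_{i+1}$ takes $C'$ to $C''$, then both $\theta_i$ and $\theta_{i+1}$ are positive commands of $M_1$ applicable to the single configuration $C'$; determinism of $M_1$ forces $\theta_i=\theta_{i+1}$, contradicting reducedness. The remaining assertion of (c) --- Property (a) of Lemma \ref{m0m1} for $\Sym(M_1)$ --- then follows by decomposing a reduced computation $W\to\cdots\to W$ as $H_1H_2^{-1}$: both $H_1$ and $H_2$ are positive $M_1$-computations starting at $W$ and ending at some common configuration $V$, and determinism forces one to be an initial segment of the other. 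If the lengths differ, the tail is a nonempty $M_1$-computation from $V$ to $V$, violating Property (a) for $M_1$; if the lengths coincide, then $H_1=H_2$, so $H_1H_2^{-1}$ is freely reducible, again contradicting the assumption (unless both are empty).

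Parts (d) and (e) follow by applying the same decomposition to an accepting computation. One inclusion is trivial: every $M_1$-accepting computation is an $\Sym(M_1)$-accepting computation consisting of positive commands. Conversely, a reduced accepting history $H_1H_2^{-1}$ would make $H_2$ a positive $M_1$-computation whose first configuration is the accept configuration. But Property (c) of Lemma \ref{m0m1} prohibits the accept vector $\vec s_0$ from occurring on the left of any command of $M_1$, so $H_2$ must be empty. Hence the reduced accepting computation coincides with the unique (by determinism of $M_1$) accepting $M_1$-computation of $W$, which delivers both (d) and the uniqueness claim in (e). The length comparison with $M_0$ is then immediate: the construction in Lemma \ref{m0m1} simulates $M_0$ with only a constant-factor overhead (from subdividing commands per Remark \ref{rk1}, plus erasing the $\tau$-tape whose length equals the number of simulated $M_0$-steps). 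The only conceptual step in the whole argument is the determinism deduction used for (b); everything else is bookkeeping built on top of it.
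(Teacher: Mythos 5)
Your proof is correct and follows essentially the same route as the paper: the paper also reduces everything to the observation that, by determinism of $M_1$, a negative command cannot be immediately followed by a positive one in a reduced history, giving the $H_1H_2^{-1}$ form, and then dismisses (a), (c), (d), (e) as consequences. Your elaboration of those consequences (prefix argument via determinism for Property (a), emptiness of $H_2$ in an accepting computation via Property (c) of Lemma \ref{m0m1}, and the constant-factor length comparison with $M_0$) is exactly the intended bookkeeping.
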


\proof Property (a) is obvious. Property (b) follows immediately from the fact that in a reduced computation, a command from $\Theta\iv$ cannot be followed by a command from $\Theta$ (since $M_1$ is deterministic). Properties (c), (d), (e) follow from (b).
\endproof

\section{$S$-machines}
\label{smach}

\subsection{$S$-machines as rewriting systems}

There are several interpretations of $S$-machines in groups, the most complicated so far is in \cite{OScol}. Another interpretation is given in \cite{OS}, and in fact it is probably the easiest way to view an $S$-machine 
as a group that is a multiple HNN extension of a free group. Here we  use a definition which is close to the original
one \cite{SBR} and  define $S$-machines as rewriting systems, similar to Turing machines.
 $S$-machines work with words in group alphabets
and they are almost ''blind'', i.e., the heads do not observe the tape letters. But the heads can ''see''
each other if there are no tape letters between them. We will use the following precise definition
of an \label{Smachine}$S$-machine $\cal S$.

A hardware of an $S$-machine $\cal S$ is a pair $(Y,Q),$ where $Q=\sqcup_{i=0}^NQ_i$ and $Y= \sqcup_{i=1}^N Y_i$ (for convenience we always set $Y_0=Y_{N+1}=\emptyset$).
The elements from $Q$ are called \label{statel}{\it state letters}, the elements from $Y$ are \label{tapel}{\it tape letters}. The sets $Q_i$ (resp.
$Y_i$) are called {\em parts} of $Q$ (resp. $Y$).

The {\it language of \label{admissiblew} admissible words}
consists of reduced words of the form 
\begin{equation}\label{admiss}
W\equiv q_1^{\pm 1}u_1q_2^{\pm 1}\dots u_k q_{k+1}^{\pm 1}, 
\end{equation}
where  every subword $q_i^{\pm 1}u_iq_{i+1}^{\pm 1}$ either \\(1) belongs to $(Q_jF(Y_{j+1})Q_{j+1})^{\pm 1}$ for some $j$ and $u_i\in F(Y_{j+1}),$ where $F(Y_i)$ is the set of reduced group words in the alphabet $Y_i^{\pm 1},$
or \\(2) has the form $quq^{-1}$ for some $q\in Q_j$ and $u\in   F(Y_{j+1}),$
or \\(3) is of the form $q^{-1}uq$ for $q\in Q_j$ and $u\in   F(Y_{j}).$

We shall follow the tradition of calling state letters \label{qletter}{\em
$q$-letters} and tape letters \label{aletter}{\em $a$-letters}, even though we
shall use  other letters as state or tape letters.
Usually parts of the set $Q$ of state letters are denoted by capital letters. For example, a set $K$ would consist of
letters $k$ with various indices. Then we shall say that letters in $K$ are $k$-letters or $K$-letters.

If a group word $W$ over $Q\cup Y$
has the form $q_1u_1q_2u_2...q_s,$
and $q_i\in Q_{j(i)}^{\pm 1},$
$i=1,...,s$, $u_i$  are
group words in $Y$, then we shall say that the \label{basew}{\em base} of $W$ is 
$\base(W)\equiv Q_{j(1)}^{\pm 1}Q_{j(2)}^{\pm 1}...Q_{j(s)}^{\pm 1}$ Here $Q_i$ are just letters, denoting the parts of the set of state letters. Note that the base is not necessarily a reduced word, and the last equality is in the free semigroup.
The subword of $W$ between the $Q_{j(i)}^{\pm 1}$-letter and the $Q_{j(i+1)}^{\pm 1}$-letter
will be called a \label{sectorw}
$Q_{j(i)}^{\pm 1}Q_{j(i+1)}^{\pm 1}$-{\em sector} of $W$. A word can certainly have many $Q_{j(i)}^{\pm 1}Q_{j(i+1)}^{\pm 1}$-sectors.

For aesthetic reasons,  we shall substitute the capital names of parts of $Q$ by the corresponding small letters. For
example, if $t\in T, k\in K,...,$
we shall say that the base is $tk...$, that is the state letters in $W$ start
with a $t$-letter, followed by a $k$-letter, and so on. Usually instead of specifying the names of the parts of $Q$
and their order as in $Q=Q_0\sqcup Q_2\sqcup ... \sqcup Q_N$, we say that \label{standardb}{\em the standard} base
of the $S$-machine
is $Q_0...Q_N$ or $q_0...q_N$.

The $S$-machine also has a set of \label{rule}{\em rules} $\Theta$.  Every $\theta\in \Theta$
is assigned two sequences of
reduced words:
$[U_0,...,U_N]$,
$[V_0,...,V_N]$, and a subset $Y(\theta)=\cup
Y_i(\theta)$ of $Y$, where \label{Yit} $Y_i(\theta)\subseteq Y_i$.

The words $U_i, V_i$ satisfy the following restriction:

\begin{itemize}
\item[(**)]
For every $i=0,...,N$, the words $U_i$ and $V_i$ have the form
$$U_i\equiv v_iq_iu_{i+1}, \quad V_i\equiv v_{i}'q_{i}'u_{i+1}',$$ where $q_{i}, q_{i}'$ are state letters in
$Q_{i},$
$u_{i+1}$ and $u_{i+1}'$ are words in the alphabet $Y_{i+1}(\theta)^{\pm
1}$, $v_{i}$ and $v_{i}'$ are words in the alphabet
$Y_{i}(\theta)^{\pm 1}$.
\end{itemize}

The pair of words $U_i,V_i$ is called a \label{partr}{\em part} of the rule, and is denoted $U_i\to V_i$.

We will denote the rule $\theta$ by $[U_1\to
V_1,...,U_N\to V_N]$. This notation contains all the necessary
information about the rule except for the sets $Y_i(\theta)$. In
most cases it will be clear what these sets are. In the $S$-machines
used in this paper, the sets $Y_i(\theta)$ will be equal to
either $Y_i$ or $\emptyset$. By default $Y_i(\theta)=Y_i$. If $Y_{i+1}(\theta)=\emptyset$, then the corresponding
component $U_i\to V_i$ will be denoted \label{tool}$U_i\tool V_i$ and we shall say that the rule \label{locks}{\em locks} the
$Q_{i}Q_{i+1}$-sectors.
In that case we always assume that $U_i, V_i$ do not have tape letters to the right of the state letters, i.e.,
it has the form $v_iq_i\tool v_i'q_i'$. Similarly, these words have no tape letters to the
left of the state letters if the $Q_{i-1}Q_i$-sector is locked by the rule.

Every $S$-rule $\theta=[U_1\to V_1,...,U_s\to V_s]$ has an inverse
$\theta\iv=[V_1\to U_1,...,V_s\to U_s]$ which is also a rule of $\sss$; we set
$Y_i(\theta\iv)=Y_i(\theta)$. We always divide the set of rules
$\Theta$ of an $S$-machine into two disjoint parts, $\Theta^+$ and
$\Theta^-$ such that for every $\theta\in \Theta^+$, $\theta\iv\in
\Theta^-$ and for every $\theta\in\Theta^-$, $\theta\iv\in\Theta^+$ (in particular $\Theta\iv=\Theta$, that is any $S$-machine is symmetric).
The rules from $\Theta^+$ (resp. $\Theta^-$) are called \label{positiver}{\em
positive} (resp. {\em negative}).

 To \label{applicationr} apply an $S$-rule $\theta$ to an admissible word (\ref{admiss}) $W$
means to check if all tape letters of $W$ belong to the alphabet $Y(\theta)$ 
and then, if $W$ satisfies
 this condition, to replace simultaneously subwords $U_i^{\pm 1}$ by subwords $V_i^{\pm 1}$ ($i=1,\dots,k+1$) and to  trim a few first and last $a$-letters (to obtain an admissible
 word starting and ending with $q$-letters).
 This replacement is allowed to perform in the form $q_i^{\pm 1} \to
 (v'_{i-1}v_i^{-1}q'_iu_i^{-1}u'_{i})^{\pm 1}$ followed by the reducing of
 the resulted word. 
 The following convention is important in the definition of
$S$-machine: {\it After every application of a rewriting rule, the word is automatically 
reduced. The reducing is not considered a separate step of an $S$-machine.}

 If a rule $\theta$ is applicable to an admissible word $W$ (i.e., $W$ belongs to the \label{domain} domain
 of $\theta$) then the word $W$ is called
 $\theta$-admissible. The definitions of computation, its history, input admissible words,
 are similar to those for Turing machines. 
Similarly, we sometimes choose a distinguished {\em stop word} \label{W0} $W_0$
from the free group $F(Q)$. It will always have the standard base (and no $a$-letters).
We say that a word $W\in F(Q\cup Y)$ is \label{acceptedws}{\em accepted} if there exists a
computation connecting this word and $W_0$.
 
 \subsection{Modifying the rules of $S$-machines}\label{modif}

We shall need the following properties of our $S$-machines. All S-machines
that appears in this paper,
except for $M_2$, satisfy Property \ref{one} (1) below, and $\tilde M_2$ satisfies Property \ref{one} (2).

\begin{property} \label{one} (1) In every part
$v_{i}q_iu_{i+1}\to v_{i}'q_i'u_{i+1}'$  we have $||v_{i-1}||+||v'_{i-1}||\le 1$ and $||u_i||+||u'_i||\le 1$ (see the notation in (**) above).

(2) For every rule, we have $\sum_i (||v_i||+||v'_i||+||u_i||+||u'_i||) \le 1.$ 
\end{property}

Suppose that Property \ref{one} (1) is not satisfied. For example, suppose that a positive rule $\theta$ of an $S$-machine $\sss$ has the $i$-th part of the form $v_{i-1}aq_iu_i\to v_{i-1}'q_i'u_i',$ where $u_{i-1}, v_i, u_{i-1}', v_i'$ are words in the appropriate parts of the alphabet of $a$-letters, $v_{i-1}$ is not empty, $a$ is a $a$-letter, $q_i,q_i'$ are $q$-letters (a very similar procedure can be done in all other cases). Then we create a new $S$-machine $\tilde\sss$ with the same standard base and the same $a$-letters as $\sss$. In order to make $\tilde\sss$, we add one new ({\em auxiliary}) $q$-letter $\tilde q_i$ to each part of the set of $q$-letters, and replace the rule $\theta$ by two rules $\theta'$ and $\theta''$. The rule $\theta'$ is obtained from $\theta$ by replacing the part $v_{i-1}aq_iu_i\to v_{i-1}'q_i'u_i'$ by $aq_iu_i\to \tilde q_iu_i'$, and all other parts $U_j\to V_j$ by $U_j\to \tilde q_j$ (here $\tilde q_j$ is the auxiliary $q$-letter in the corresponding part of the set of $q$-letters). The rule $\theta''$ is obtained from $\theta$ by replacing the part $v_{i-1}aq_iu_i\to v_{i-1}'q_i'u_i'$ by $v_{i-1}\tilde q_i\to v'_{i-1}q_i'$, and all other parts $U_j\to V_j$ by $\tilde q_j\to V_j$.

The key property of the new $S$-machine is in the following obvious lemma.

\begin{lemma}\label{tilde} There is a one-to-one correspondence between computations $w_0\to...\to w_n$ of $\tilde\sss$ (with any base) such that $w_0$, $w_n$ do not have auxiliary $q$-letters
and computations of $\sss$ connecting the same words. For every history $H$ of such computation of $\sss$, the corresponding history of computation of $\tilde\sss$ is obtained from $H$ by replacing every occurrence of the rule $\theta$ by the 2-letter word $\theta'\theta''$.
\end{lemma}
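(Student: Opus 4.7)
I would prove the bijection by writing it down explicitly at the level of histories and verifying both directions. Given an $\sss$-history $H = \theta_1\cdots\theta_n$, define the associated $\tilde\sss$-history $\tilde H$ by performing the substitutions $\theta\mapsto\theta'\theta''$ and $\theta^{-1}\mapsto(\theta'')^{-1}(\theta')^{-1}$ on each occurrence and leaving every other rule (which is common to $\sss$ and $\tilde\sss$) untouched. The first step is to check that the pair $\theta'\theta''$ has the same effect on a $\theta$-admissible word as $\theta$ itself: by construction, $\theta'$ performs $aq_iu_i\to\tilde q_iu_i'$ in the $i$-th part (leaving the prefix $v_{i-1}$ undisturbed) and extracts the surrounding tape letters from every other part via $v_jq_ju_{j+1}\to\tilde q_j$, marking all state letters as auxiliary; then $\theta''$ performs $v_{i-1}\tilde q_i\to v_{i-1}'q_i'$ together with $\tilde q_j\to v_j'q_j'u_{j+1}'$ in the remaining parts, producing exactly the output of $\theta$. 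The verification for $\theta^{-1}\mapsto(\theta'')^{-1}(\theta')^{-1}$ is identical in reverse. Hence $\tilde H$ is a valid $\tilde\sss$-computation with the same endpoints as $H$ and without auxiliary letters at the endpoints.

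For the reverse direction, the decisive observation is that the auxiliary letters $\tilde q_j$ appear in the rule set of $\tilde\sss$ only on the right-hand sides of $\theta'$ and $(\theta'')^{-1}$ and only on the left-hand sides of $\theta''$ and $(\theta')^{-1}$. Hence auxiliary letters can be introduced only by $\theta'$ or $(\theta'')^{-1}$, and any configuration containing them admits only $\theta''$ or $(\theta')^{-1}$ as an applicable rule. In a $\tilde\sss$-computation with non-auxiliary endpoints, every occurrence of $\theta'$ must therefore be immediately followed by $\theta''$ (giving a net $\theta$) or by $(\theta')^{-1}$ (giving a cancelling pair), and symmetrically for $(\theta'')^{-1}$. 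Contracting the productive pairs $\theta'\theta''$ and $(\theta'')^{-1}(\theta')^{-1}$ back to $\theta^{\pm 1}$ defines the candidate inverse map.

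Composing the two maps visibly yields the identity in both directions, so the correspondence is a bijection taking $H$ to the substitution described in the statement. The one point requiring a little care is that the decomposition of a $\tilde\sss$-computation into blocks is well-defined only once the cancelling pairs $\theta'(\theta')^{-1}$ and $(\theta'')^{-1}\theta''$ are ruled out; since the paper works throughout with reduced computations (cf.\ Lemma \ref{mach23}) and a non-reduced pair of consecutive mutually inverse rules can always be deleted without affecting the endpoints of the computation, this is harmless. This matches the author's characterization of the lemma as obvious, and I do not expect any genuine obstacle beyond this bookkeeping.
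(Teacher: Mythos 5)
Your proof is correct and amounts to exactly the routine verification that the paper omits: the lemma is stated there without proof as "obvious," and your two directions — checking that the substitution $q_i\to a^{-1}\tilde q_i u_i'u_i^{-1}$ followed by $\tilde q_i\to v_{i-1}^{-1}v_{i-1}'q_i'$ (and likewise $q_j\to v_j^{-1}\tilde q_j u_{j+1}^{-1}$ followed by $\tilde q_j\to v_j'q_j'u_{j+1}'$) reproduces the effect of $\theta$, and noting that auxiliary state letters occur only in the right-hand sides of $\theta',(\theta'')^{-1}$ and left-hand sides of $\theta'',(\theta')^{-1}$, so each $\theta'$ or $(\theta'')^{-1}$ must be completed immediately — are the intended argument. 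Your closing remark about first deleting the cancelling pairs $\theta'(\theta')^{-1}$ and $(\theta'')^{-1}\theta''$ is the right reading of the statement, since the lemma is only applied to reduced computations.
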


Note that the sum of lengths of words in all parts of $\theta'$ (resp. $\theta''$) in $\tilde\sss$ is smaller than the similar sum for $\theta$. Clearly, applying this transformation to an $\sss$-machine $\sss$ several times, we obtain a new $\sss$-machine satisfying Property \ref{one} (1). Similarly,
one can obtain Property \ref{one} (2). Thus Lemma \ref{tilde} implies the following

\begin{lemma}\label{tilde1} For every $S$-machine $\sss$ there exists an $S$-machine $\tilde S$ with the same standard base, the same set of $a$-letters, and some new, {\em auxiliary}, $q$-letters such that $\tilde\sss$ satisfies Property \ref{one} (2), and there exists a one-to-one correspondence between computations $w_0\to...\to w_n$ of $\tilde\sss$ (with any base) such that $w_0$, $w_n$ do not have auxiliary $q$-letters
and computations of $\sss$ connecting the same words. For every history $H$ of $\sss$, the corresponding history of computation of $\tilde\sss$ is obtained from $H$ by replacing every occurrence of the rule $\theta$ by the word $\phi(\theta)$ such that all rules in $\phi(\theta)$ are different, and $\phi(\theta), \phi(\theta')$ do not have common rules provided $\theta\ne \theta'$.
\end{lemma}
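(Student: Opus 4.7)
The plan is to iterate the splitting construction described in the paragraph preceding Lemma \ref{tilde}, applying it repeatedly until Property \ref{one}(2) is achieved, and then compose the bijections supplied by Lemma \ref{tilde}. Define, for a rule $\theta$, the quantity $\ell(\theta) = \sum_i(||v_i||+||v_i'||+||u_i||+||u_i'||)$, and let $\Phi(\sss) = \sum_{\theta\in\Theta^+}\max(0,\ell(\theta)-1)$ be the complexity of the machine $\sss$. The claim is that $\Phi$ decreases strictly under the splitting construction.

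First I would handle the base case: if $\Phi(\sss)=0$ then every positive rule of $\sss$ satisfies Property \ref{one}(2), so the original machine already works and $\phi$ is the identity. Otherwise, choose a positive rule $\theta$ with $\ell(\theta)\ge 2$. Since $\ell(\theta)\ge 2$, some part of $\theta$ contains at least one tape letter adjacent to a state letter (on one of the four possible sides among the $v_{i-1}, u_i, v_{i-1}', u_i'$ slots), which is precisely the situation to which the construction before Lemma \ref{tilde} applies. Perform that construction, adding one new auxiliary $q$-letter $\tilde q_i$ in each part of $Q$, and choose these auxiliary letters \emph{fresh} — disjoint from all $q$-letters introduced by any previous splitting step and reserved exclusively for this particular rule $\theta$. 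This produces a machine $\sss_1$ in which $\theta$ has been replaced by two rules $\theta',\theta''$.

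Next I would verify that $\Phi(\sss_1)<\Phi(\sss)$. By the observation recorded right before Lemma \ref{tilde1}, each of $\ell(\theta'),\ell(\theta'')$ is strictly less than $\ell(\theta)$; more precisely the construction peels off one $a$-letter adjacent to the state letter in the distinguished part and attaches the remaining $v$'s and $u$'s to $\theta''$ (or $\theta'$), with the $N$ other parts becoming trivial state-letter-only rewrites, so $\ell(\theta')+\ell(\theta'')=\ell(\theta)$ and $\ell(\theta'),\ell(\theta'')\le \ell(\theta)-1$. A direct case check of $\max(0,x-1)+\max(0,y-1)\le \max(0,x+y-1)-1$ whenever $x+y\ge 2$ and $\max(x,y)\le x+y-1$ shows that $\max(0,\ell(\theta')-1)+\max(0,\ell(\theta'')-1) < \max(0,\ell(\theta)-1)$. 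All other positive rules of $\sss_1$ coincide with those of $\sss$ and contribute the same amount to $\Phi$. Therefore $\Phi(\sss_1)<\Phi(\sss)$, and by induction on $\Phi$ we reach in finitely many steps a machine $\tilde\sss$ with $\Phi(\tilde\sss)=0$, i.e., satisfying Property \ref{one}(2).

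For the correspondence of computations, apply Lemma \ref{tilde} at each splitting step: a computation of $\sss_i$ whose endpoints are free of the freshly added auxiliary $q$-letters corresponds bijectively, and preserving endpoints, to a computation of $\sss_{i-1}$, with the history obtained by substituting every occurrence of $\theta$ by $\theta'\theta''$. Since the auxiliary $q$-letters added at every stage of the iteration are fresh, a computation of $\tilde\sss$ whose endpoints lie in the original alphabet has endpoints free of every auxiliary letter; iterating the bijection backward through the finitely many stages yields the required one-to-one correspondence with computations of $\sss$. Define $\phi(\theta)$ to be the word in the rules of $\tilde\sss$ obtained by recursively substituting in this way. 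The freshness of the auxiliary $q$-letters guarantees that distinct splittings produce rules in disjoint alphabets of state letters, hence the rules inside $\phi(\theta)$ are pairwise distinct and $\phi(\theta)\cap\phi(\theta')=\emptyset$ whenever $\theta\ne\theta'$.

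The main obstacle is the bookkeeping around the freshness of auxiliary state letters: one must assign, at every splitting step and for every rule ever created (including those produced by prior splittings), a private batch of auxiliary $q$-letters, so that the substitution $\theta\mapsto\theta'\theta''$ never collides with rules obtained from splitting a different ancestor rule. A clean way to do this is to label every rule by its full ancestry (a binary string recording the sequence of left/right splits) and index the auxiliary $q$-letters by that label; then the disjointness of $\phi(\theta)$ and $\phi(\theta')$ is automatic from the disjointness of the descendant trees of $\theta$ and $\theta'$ in the splitting process.
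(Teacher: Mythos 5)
Your proposal is correct and follows essentially the same route as the paper: the paper's own (very brief) argument is exactly to iterate the splitting construction preceding Lemma \ref{tilde}, noting that the total length of tape-letter words in each new rule is strictly smaller so the process terminates, and then to compose the correspondences from Lemma \ref{tilde}. Your potential function $\Phi$ and the explicit freshness bookkeeping for the auxiliary $q$-letters just make precise what the paper leaves implicit ("applying this transformation several times" and the disjointness of the words $\phi(\theta)$).
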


\subsection{Some general properties of $S$-machines}

\label{gpsm}
Note that the base of an admissible word is not always a reduced word. But we have the following immediate corollary of the definition of admissible word.

\begin{lemma}\label{qqiv}
If the $i$-th component of the rule $\theta$ has the form $q_i\tool q_i',$
i.e. $Y_{i+1}(\theta)=\emptyset$, then the
base of any admissible for $\theta$ word cannot have subwords $Q_iQ_i\iv$ or $Q_{i+1}^{-1}Q_{i+1}.$
\end{lemma}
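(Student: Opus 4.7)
The plan is a direct unpacking of the two relevant definitions: what it means for a word to be admissible, and what it means for such a word to lie in the domain of the rule $\theta$.

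First I would fix an admissible word $W$ whose base contains $Q_iQ_i^{-1}$ and locate the offending subword. By the definition of admissible word in (\ref{admiss}), the factorization $W \equiv q_1^{\pm 1} u_1 q_2^{\pm 1} \dots q_{k+1}^{\pm 1}$ forces each triple $q_r^{\pm 1} u_r q_{r+1}^{\pm 1}$ to fall into one of cases (1), (2), (3). The only way to have two consecutive $q$-letters whose base letters give $Q_iQ_i^{-1}$ is case (2): the subword has the form $q u q^{-1}$ with $q \in Q_i$ and $u \in F(Y_{i+1})$. Symmetrically, a base subword $Q_{i+1}^{-1} Q_{i+1}$ forces case (3): a subword $q^{-1} u q$ with $q \in Q_{i+1}$ and $u \in F(Y_{i+1})$. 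In both cases the tape segment $u$ is a group word over $Y_{i+1}^{\pm 1}$.

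Next I would invoke reducedness. By definition, admissible words are reduced as words over $Q \cup Y$. In particular $u$ cannot be empty, since otherwise the subword $q q^{-1}$ (or $q^{-1} q$) would cancel inside $W$. Hence $u$ contains at least one letter from $Y_{i+1}^{\pm 1}$.

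Finally I would apply the applicability condition for $\theta$. To apply $\theta$ to $W$, every tape letter of $W$ must lie in $Y(\theta) = \bigcup_j Y_j(\theta)$. Since the parts $Y_j$ are pairwise disjoint by hypothesis ($Y = \sqcup Y_j$), the assumption $Y_{i+1}(\theta) = \emptyset$ means no letter of $Y_{i+1}$ is contained in $Y(\theta)$. This contradicts the existence of the $Y_{i+1}^{\pm 1}$-letter we just found in $u$, completing the proof. There is no real obstacle here; the only point to be careful about is remembering that admissible words are reduced, which is what rules out the trivial case $u = 1$.
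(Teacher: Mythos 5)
Your proof is correct and is exactly the argument the paper has in mind: the lemma is stated there as an immediate corollary of the definition of admissible word, and your unpacking (a base subword $Q_iQ_i^{-1}$ or $Q_{i+1}^{-1}Q_{i+1}$ forces a reduced, hence nonempty, $F(Y_{i+1})$-word between the two $q$-letters, which is incompatible with the applicability condition when $Y_{i+1}(\theta)=\emptyset$) is precisely that intended verification. Nothing is missing.
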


In this paper we are often using copies of words.
If $W$ is a word and $A$ is an alphabet, then to obtain a \label{copyw}{\em copy}
of $W$ in the alphabet $A$ we substitute letters from $A$ for letters in $W$ so that different letters from $A$
substitute for different letters. Note that if $U'$ and $V'$ are copies of $U$ and $V$ respectively corresponding to
the same substitution, and $U'\equiv V'$, then $U\equiv V.$

The following lemma is obvious.

\begin{lemma}\label{gen1}Suppose that the base of an admissible word $W$ is $Q_{i}Q_{i+1}$. Suppose that each rule of
a reduced computation starting with $W\equiv q_iuq_{i+1}$ and ending with $W'\equiv q_i'u'q_{i+1}'$ multiplies the
$Q_iQ_{i+1}$-sector by a letter on the left (resp. right).
And suppose that different rules multiply that sector by
different letters.
Then the history of computation is a copy
of the reduced form of the word $u'u\iv$ read from right to left
(resp. of the word $u\iv u'$ read from left to right). In particular,
 if $u\equiv u'$, then the computation is empty.
\end{lemma}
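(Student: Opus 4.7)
The plan is to encode the computation as a word in the tape alphabet via a natural bookkeeping map, and then show that this word is both equal to $u'u^{-1}$ in the free group and already freely reduced. I focus on the left-multiplication case; the right case is entirely symmetric. For each rule $\theta$ occurring in the computation, define $\ell(\theta) \in Y_{i+1}^{\pm 1}$ to be the single letter that $\theta$ prepends to the $Q_iQ_{i+1}$-sector (before the automatic reduction). Since $\theta^{-1}$ undoes $\theta$, we have $\ell(\theta^{-1}) = \ell(\theta)^{-1}$, and the hypothesis that distinct rules prepend distinct letters tells us that $\ell$ is injective on the set of rules appearing in the computation.

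Next I would track the evolution of the sector content. Let $u_0 \equiv u$ and let $u_k$ denote the sector content after $k$ steps, so that $u_n \equiv u'$. The $k$-th rule $\theta_k$ replaces $u_{k-1}$ by the freely reduced form of $\ell(\theta_k)\, u_{k-1}$, in accordance with the $S$-machine convention that words are automatically reduced. Iterating, and taking equality in the free group $F(Y_{i+1})$, one obtains $u' = \ell(\theta_n)\ell(\theta_{n-1}) \cdots \ell(\theta_1)\, u$, hence $u'u^{-1} = \ell(\theta_n) \cdots \ell(\theta_1)$ as elements of $F(Y_{i+1})$.

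The decisive step is to verify that the word $\ell(\theta_n) \cdots \ell(\theta_1)$ is already freely reduced. Suppose, to the contrary, that some adjacent pair $\ell(\theta_{k+1})\ell(\theta_k)$ cancels. Then $\ell(\theta_{k+1}) = \ell(\theta_k)^{-1} = \ell(\theta_k^{-1})$, and injectivity of $\ell$ forces $\theta_{k+1} = \theta_k^{-1}$, contradicting the assumption that the history $\theta_1 \cdots \theta_n$ is reduced. Consequently $\ell(\theta_n) \cdots \ell(\theta_1)$ \emph{is} the freely reduced form of $u'u^{-1}$; reading it right-to-left yields $\ell(\theta_1) \cdots \ell(\theta_n)$, which is precisely the image of the history under the injective map $\ell$, i.e.\ a copy of the history in the required sense. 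The right-multiplication case gives the analogous identity $u^{-1}u' = \ell(\theta_1) \cdots \ell(\theta_n)$ read left-to-right. The last clause is then immediate: if $u \equiv u'$, the reduced form of $u'u^{-1}$ is empty, forcing $n=0$.

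I do not anticipate a substantive obstacle: the lemma is a bookkeeping statement once the map $\ell$ is set up, and the only delicate point is aligning the left/right prepending convention with the ``right-to-left'' vs.\ ``left-to-right'' reading convention in the statement.
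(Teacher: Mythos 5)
Your proof is correct, and it is exactly the bookkeeping argument the paper has in mind: the paper states this lemma without proof ("the following lemma is obvious"), and your tracking of the sector contents in the free group plus the observation that cancellation of adjacent letters would force adjacent mutually inverse rules (contradicting reducedness of the history) is the intended justification. No gaps; the left/right and reading-order conventions are handled consistently with the statement.
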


\begin{lemma}\label{gen1.5} Let $W_0\to\dots\to W_t$ be a sequence
of transformations of reduced words, where $W_i$
is a conjugate of $W_{i-1}$ ($i=1,...,t$) by  a letter, and $H$ - a
product of these letters, i.e. $W_t=H^{-1}W_0H.$ Then $H$ is equal
to a reduced product $H_1H_2^k H_3$, where $k\ge0, ||H_2||\le \min(||W_0||,
||W_t||)$,  $||H_1||\le ||W_0||/2,$ and $||H_3||\le ||W_t||/2.$
\end{lemma}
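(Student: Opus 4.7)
The lemma is essentially a statement about conjugation in a free group: given reduced words $W_0, W_t$ satisfying $W_t = H^{-1} W_0 H$, the goal is to decompose $H$ into a short prefix (absorbed into $W_0$), a periodic middle, and a short suffix (absorbed into $W_t$). The conjugation-by-a-letter hypothesis enters only through the resulting free-group identity $W_t = H^{-1} W_0 H$.

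The plan is to pass to cyclic normal forms. First I would write $W_0 = A V A^{-1}$ and $W_t = B V' B^{-1}$ as reduced products, where $V, V'$ are cyclically reduced. Such decompositions are unique, so $||A|| \le ||W_0||/2$ and $||B|| \le ||W_t||/2$ automatically. Because $W_0$ and $W_t$ are conjugate in the free group, so are $V$ and $V'$, and hence $V'$ is a cyclic rotation of $V$. Let $V_0$ be the primitive root of $V$, so $V = V_0^\ell$ with $V_0$ cyclically reduced of length $r$, and write $V = CD$ with $V' = DC$, where $C$ is a prefix of $V_0$ of some length $j$ with $0 \le j < r$ (achievable because the rotation is defined only modulo the period $r$).

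Next I would describe $H$ via the centralizer. Since the centralizer of $V$ in the free group equals $\langle V_0 \rangle$, any $H$ with $H^{-1} W_0 H = W_t$ satisfies $A^{-1} H B = V_0^n C$ for some $n \in \mathbb{Z}$, giving $H = A \cdot V_0^n C \cdot B^{-1}$. The key identity is that $V_0^n C$, read as a prefix of the periodic word $V_0^{\infty}$, admits the factorisation
\[
V_0^n C = (\zeta_1\cdots\zeta_a)\,(V_0^{(a)})^n\,(\zeta_{a+1}\cdots\zeta_j)
\]
for every $0 \le a \le j$, where $V_0 = \zeta_1\cdots\zeta_r$ and $V_0^{(a)} = \zeta_{a+1}\cdots\zeta_r\zeta_1\cdots\zeta_a$ is the cyclic rotation of $V_0$ by $a$ positions. (For $n < 0$ one first reduces $V_0^{-1}C$ to $(\zeta_{j+1}\cdots\zeta_r)^{-1}$ and re-runs the argument with $V_0^{-1}$ playing the role of the primitive period.) Setting $H_1 := A\zeta_1\cdots\zeta_a$, $H_2 := V_0^{(a)}$, $H_3 := \zeta_{a+1}\cdots\zeta_j\cdot B^{-1}$, and $k := |n|$, one has $H = H_1 H_2^k H_3$ in the free group. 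That the product is reduced as a word follows from three boundary checks: reducedness of $W_0 = AVA^{-1}$ forces the last letter of $A$ to differ from $\zeta_r$ (ruling out cancellation with $\zeta_1$), cyclic reducedness of $V_0$ prevents cancellation between consecutive copies of $V_0^{(a)}$, and reducedness of $W_t = BV'B^{-1}$ rules out cancellation at the $\zeta_j$--$B^{-1}$ junction.

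Finally I would pick $a$ to meet the length bounds: choose $a$ with $a \le \lfloor ||V||/2 \rfloor$ and $j - a \le \lfloor ||V||/2 \rfloor$, which is possible since $j < r \le ||V||$. Then $||H_1|| \le ||A|| + a \le (||W_0|| - ||V||)/2 + ||V||/2 = ||W_0||/2$, and symmetrically $||H_3|| \le ||W_t||/2$, while $||H_2|| = r = ||V_0|| \le ||V|| \le \min(||W_0||, ||W_t||)$. The hardest point will be the bookkeeping for the case $n < 0$, where one must pass to $V_0^{-1}$ as the primitive period and redo the periodic reshuffling (with $j$ replaced by $r - j$) while simultaneously rechecking the four junctional cancellations; the degenerate case $V = \epsilon$, where reducedness forces $W_0 = W_t$ to be the empty word, is to be handled by convention.
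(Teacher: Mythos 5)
Your argument is correct in substance, but it takes a genuinely different route from the paper's. The paper never leaves the given sequence: after deleting mutually inverse consecutive steps it tracks the lengths $||W_0||,\dots,||W_t||$, observes that they first drop by $2$ at each step, then stay constant, then grow by $2$, and concludes that $H$ splits as a short prefix $H_1'$ with $||H_1'||=(||W_0||-||W_i||)/2$, a middle plateau, and a short suffix $H_3'$ with $||H_3'||=(||W_t||-||W_j||)/2$; since the plateau consists of one-letter cyclic shifts of a word of constant length $||W_i||$, it is periodic with period $||W_i||$, and a cyclic permutation of the period gives $H_1H_2^kH_3$. You discard the step-by-step structure entirely and use only the free-group identity $W_t=H^{-1}W_0H$, via cyclically reduced cores, rotations, the primitive root $V_0$ and the centralizer $\langle V_0\rangle$. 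Your route proves a stronger statement — the factorization holds for an arbitrary conjugator $H$, with $||H_2||=||V_0||\le ||V||$, which is at most the paper's bound $||W_i||$ — and, because your product is reduced, uniqueness of reduced forms recovers the literal factorization of the reduced history word that is what Lemma \ref{gen2} actually needs. The paper's route, in exchange, is shorter and completely self-contained: it uses no conjugacy/normal-form theory and reads the factorization straight off the computation.

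Two small patches are needed in your write-up. First, the pairing of junction conditions is switched: reducedness of $AV$ gives that the last letter of $A$ is not $\zeta_1^{-1}$, which is what rules out cancellation with $\zeta_1$ (the case $n\ge 0$), while reducedness of $VA^{-1}$ gives that the last letter of $A$ is not $\zeta_r$, which is the condition you need against $\zeta_r^{-1}$ in the case $n<0$; both facts are available, so nothing breaks. Second, when $n=0$ and $C$ is empty the product $AB^{-1}$ need not be reduced; writing $A=A'S$, $B=B'S$ with $S$ the maximal common suffix and setting $H_1=A'$, $H_3=(B')^{-1}$, $k=0$ repairs this, and the bounds survive since $||A'||\le||A||$ and $||B'||\le||B||$. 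Finally, the degenerate case of an empty core (then $W_0=W_t$ is the empty word and any $H$ conjugates) lies outside the statement as written; the paper's own proof ignores it too, and in all applications the core is nonempty (cf. $u\ne 1$ in Lemma \ref{gen2}), so this is not a defect of your argument relative to the paper.
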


\proof One may assume that the consecutive transformations
$W_{i-1}\to W_i\to W_{i+1}$ are not mutual inverse. If
$||W_{i-1}||<||W_i||,$ then $||W_i||-||W_{i-1}||=2,$ and moreover,
$||W_{i'}||-||W_{i'-1}||=2,$ for every $i'\ge i.$ Similar observation is
true for inverse transformations $W_t\to\dots\to W_0.$ It follows
that there exist subscripts $i,j$ such that $0\le i\le j\le t,$ and
each of the transformations $W_0\to\dots\to W_i$ decreases the
length by $2,$ while $||W_i||=||W_{i+1}||=\dots=||W_j||,$  and each of the
transformations $W_j\to\dots\to W_t$ increases the length by $2.$
Thus, we have $W_i = (H'_1)^{-1}W_0H'_1,$ where
$||W_0||-||W_i||=2||H'_1||$ and $W_t = (H'_3)^{-1}W_jH'_3,$  where
$||W_t||-||W_j||= 2||H'_3||.$

We also have $j-i$ one-letter cyclic shifts $W_i\to\dots\to W_j,$
and this procedure is periodic with period $||W_i||,$ whence the
middle part of the conjugating word $H$ must be of the form $\bar
H_2^k\bar H,$ with $k\ge 0$ and $||\bar H||<||\bar H_2||=
||W_i||\le\min(||W_0||,||W_t||)$, where $\bar H$ - is a prefix of the word
$\bar H_2$. Replacing $\bar H_2$ by a cyclic permutation $H_2$ one
rewrites the same middle part as $H'H_2^k H''$, where $||H_2||=||\bar
H_2||$ and $||H'||, ||H''||\le\frac12 (||\bar H||+1) \le ||W_i||/2.$ Finally,
we set $H_1\equiv H'_1H'$ and $H_3\equiv H''H'_3,$ to obtain the required
factorization of $H$ with $||H_1||\le \frac12 (||W_0||-||W_i||)+||W_i||/2 =
||W_0||/2$ and also $||H_3||\le ||W_t||/2.$\endproof

Lemma \ref{gen1.5} immediately implies

\begin{lemma} \label{gen2}
Suppose that the base of an admissible word $W$ is $Q_{i}Q_{i}\iv$
(resp., $Q_i\iv Q_i$). Suppose that each rule $\theta$ of a reduced
computation starting with $W\equiv q_iuq_i\iv$ (resp., $q_i\iv uq_i$), where
$u\ne 1$, and ending with $W'\equiv q_i'u'(q_i')\iv$ (resp., $W'\equiv (q_i')\iv
u'q_i')$
has a component $q_i\to a_\theta
q_i'b_\theta,$ where $b_{\theta}$ (resp., $a_{\theta}$) is a letter, and  for different $\theta$-s the $b_\theta$-s
(resp., $a_\theta$-s) are different. Then the history of the computation
has the form $H_1H_2^kH_3,$ where $k\ge0$, $||H_2||\le \min(||u||, ||u'||),$ $||H_1||\le ||u||/2,$ 
and $||H_3||\le ||u'||/2.$
\end{lemma}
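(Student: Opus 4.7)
The plan is to reduce Lemma \ref{gen2} directly to Lemma \ref{gen1.5} by tracking how the tape word between the two $q$-letters is transformed by each rule. Consider first the case of base $Q_iQ_i\iv$, so that the admissible word has the form $q_iuq_i\iv$. If the rule $\theta$ has component $q_i\to a_\theta q_i' b_\theta$, then applying it produces (before trimming and free reduction) $a_\theta q_i' b_\theta\, u\, b_\theta\iv (q_i')\iv a_\theta\iv$. Trimming the leading $a_\theta$ and trailing $a_\theta\iv$ leaves the admissible word $q_i'(b_\theta u b_\theta\iv)(q_i')\iv$, so the middle tape portion is conjugated by the single letter $b_\theta\iv$.

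I would then set $u_0:=u$ and let $u_j$ be the middle tape portion after applying the $j$-th rule $\theta_j$ of the computation, so that $u_j = b_{\theta_j} u_{j-1} b_{\theta_j}\iv$ for $j=1,\dots,t$ and $u_t = u'$. This is precisely the setup of Lemma \ref{gen1.5} with conjugating word $H = b_{\theta_1}\iv b_{\theta_2}\iv\cdots b_{\theta_t}\iv$. Applying that lemma yields a factorization $H = H_1 H_2^k H_3$ with $||H_2||\le\min(||u||,||u'||)$, $||H_1||\le ||u||/2$, and $||H_3||\le ||u'||/2$.

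The remaining step is to translate this factorization of $H$ back into a factorization of the history $\theta_1\theta_2\cdots\theta_t$. For this I would use the hypothesis that the assignment $\theta\mapsto b_\theta$ is injective on rules, together with the natural convention $b_{\theta\iv}=b_\theta\iv$, to conclude that the correspondence $\theta_j\leftrightarrow b_{\theta_j}\iv$ is a bijection between letters of the history and letters of the formal product defining $H$. Moreover, the reduced-computation hypothesis (no consecutive mutually inverse rules) implies that no two consecutive letters of this formal product are mutually inverse, so $H$ is already freely reduced of length $t$, and the factorization $H_1 H_2^k H_3$ is literal (not merely in the free group). The corresponding factorization of the history then inherits the length bounds asserted in the lemma.

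The case of base $Q_i\iv Q_i$ is entirely symmetric: applying $\theta$ to $q_i\iv u q_i$ and trimming the outer $b_\theta^{\pm 1}$ leaves $(q_i')\iv(a_\theta\iv u a_\theta)q_i'$, so the tape portion is conjugated by the letter $a_\theta$, and the distinctness hypothesis on the $a_\theta$-s now plays the role played above by the $b_\theta$-s. I do not expect a serious obstacle; the only detail requiring care is verifying that the injectivity of $\theta\mapsto b_\theta$ (resp.\ $a_\theta$) combined with the reduced-computation hypothesis really forces the formal product of letters to be freely reduced, so that the length bounds from Lemma \ref{gen1.5} transfer verbatim from $H$ to the history.
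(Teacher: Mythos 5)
Your proof is correct and is exactly the intended argument: the paper derives Lemma \ref{gen2} as an immediate consequence of Lemma \ref{gen1.5}, with the conjugating letters being the $b_\theta^{\pm 1}$ (resp.\ $a_\theta^{\pm 1}$) and the injectivity of $\theta\mapsto b_\theta$ plus reducedness of the history ensuring the letter word is reduced and in bijection with the history. Nothing essential is missing.
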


\subsection{Turing machines as $S$-machines}

Every symmetric Turing machine $M$ 
satisfying Condition (d) of Lemma \ref{m0m1} 
can be viewed as an $S$-machine $SM$ \cite[Page 372]{SBR}, such that the positive (negative) commands of $M$ are the positive (negative) rules of $SM$. More precisely, we consider the $\cee$ and $\dol$ symbols as state letters (hence the set of state letters has 2 more parts for each tape of the Turing machine). A part of a command 
of the form $uq_iv\to u'_iq'_iv'_i,$ where
$u$ and $v$ are tape letters or empty words, is replaced by
$$ [\alpha_i\to\alpha_i, u_iq_iv_i \to u'_iq'_iv'_i, \omega_i\tool \omega_i],$$
a part  of the form $\alpha_iq_iv_i\to \alpha_iq'_iv'_i$ 
is replaced by

$$ [\alpha_i\tool\alpha_i, q_iv_i \to q'_iv'_i, \omega_i\tool \omega_i],$$
a part of the form $u_iq_i\omega_i\to u'_iq'_i\omega_i$ is replaced by

$$ [\alpha_i\to\alpha_i, u_iq_i \tool u'_iq'_i, \omega_i\tool \omega_i],$$
and a part of the form $\alpha_iq_i\omega_i\to \alpha_iq'_i\omega_i$ is replaced by

$$ [\alpha_i\tool\alpha_i, q_i \tool q'_i, \omega_i\tool \omega_i].$$

The language recognized by $SM$ is in general much bigger than the 
   language recognized by $M$ since $M$ works with a {\it positive} tape alphabet only. 
   Nevertheless the following property statement holds:

\begin{lemma}(compare with \cite[Proposition 4.1]{SBR})
\label{ss} Let $M$ be a symmetric Turing machine satisfying the conditions of Lemma \ref{mach23} (i.e. the symmetrization of some deterministic Turing machine satisfying conditions of Lemma \ref{m0m1}). 
Let $W_1\to W_2\to\dots\to W_k$ be a computation of the S-machine $SM$ with the standard base consisting of positive words. Then it is a computation of the Turing machine $M$ (with the same history).
\end{lemma}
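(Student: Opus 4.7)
The plan is to proceed by induction on $i$, showing simultaneously that each $W_i$ is a positive admissible word (i.e. a genuine configuration of the Turing machine $M$) and that the $i$-th rule of the $S$-computation is the $S$-machine translation of a Turing command of $M$ applicable to $W_i$.  The base case is immediate from the hypothesis that $W_1$ is positive with the standard base.

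For the inductive step, let $\theta$ be the rule of $SM$ taking $W_i$ to $W_{i+1}$.  By the definition of $SM$, $\theta$ is the translation of either a positive command $\tau$ of $M$ or its inverse $\tau^{-1}$; in either case $\tau^{\pm 1}$ is a command of the symmetric machine $M$.  Each non-trivial part of $\theta$ has the form $v_iq_iu_i\to v'_iq'_iu'_i$ where the tape letters $v_i,u_i,v'_i,u'_i$ are all positive, since they come from the positive tape alphabet of the original Turing machine.  Because the base of $W_i$ is the standard one $\alpha_1q_1\omega_1\alpha_2q_2\omega_2\dots$, the unique occurrence of $q_i$ in $W_i$ lies in a sector of the form $\alpha_i(\text{positive tape word})q_i(\text{positive tape word})\omega_i$; hence $\theta$ is applicable to $W_i$ precisely when the positive letters $v_i,u_i$ occur as literal neighbors of $q_i$ in $W_i$, which is the same condition under which $\tau^{\pm 1}$ applies to $W_i$ viewed as a Turing configuration.

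It remains to check that applying $\theta$ to $W_i$ produces exactly the Turing-machine successor of $W_i$ under $\tau^{\pm1}$, i.e.\ that the automatic free reduction step of the $S$-machine is vacuous.  Here Property~(d) of Lemma~\ref{m0m1} is essential: tape letters of distinct tapes lie in disjoint alphabets, so no cancellation can propagate across a sector boundary; and on each single tape the letters to the left of a head and to the right of that head lie in disjoint alphabets, so the new word $\ldots v'_iq'_iu'_i\ldots$ cannot cancel across $q'_i$.  Within a single sector no cancellation can occur either, since all letters on both sides of the substitution are positive.  Consequently $W_{i+1}$ is the literal word obtained by substituting $V_i$ for $U_i$, is positive, and coincides with the Turing successor of $W_i$ under $\tau^{\pm 1}$.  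Iterating, the history of the $S$-computation is a history of the Turing machine $M$ and $W_1\to\dots\to W_k$ is itself an $M$-computation.

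The only delicate point is the verification that the $S$-machine's built-in free reduction never changes anything; the entire force of the argument is concentrated in invoking Property~(d) of Lemma~\ref{m0m1} to rule out cancellation at the three possible places (across sector boundaries, across state letters within a tape sector, and between positive tape letters on the same side of a head).  Once this is seen, the correspondence between $SM$-steps and $M$-steps is tautological.
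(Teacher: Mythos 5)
There is a genuine gap, and it sits exactly at the point you call tautological. You assert that ``$\theta$ is applicable to $W_i$ precisely when the positive letters $v_i,u_i$ occur as literal neighbors of $q_i$ in $W_i$.'' This is false for $S$-machines: by the definition in Section~\ref{smach}, a rule is \emph{blind} — its applicability requires only that the state letters match and that the tape letters of $W_i$ lie in $Y(\theta)$; it never checks that the words $v_i,u_i$ actually occur next to $q_i$. The rule is executed as the substitution $q_i^{\pm1}\to(v_{i-1}'v_{i-1}^{-1}q_i'u_i^{-1}u_i')^{\pm1}$ followed by free reduction, so a rule that is supposed to delete a letter $a$ can perfectly well be applied when no $a$ is present, in which case it \emph{inserts} $a^{-1}$ and the result is no longer positive. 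For the same reason your claim that the built-in free reduction is vacuous is wrong as stated: cancellation of $v_iv_i^{-1}$ (or $u_i^{-1}u_i$) is precisely the mechanism by which deletions are performed, and Property~(d) of Lemma~\ref{m0m1} does nothing to prevent the bad case just described. Consequently your induction, which tries to \emph{derive} positivity of $W_{i+1}$ from positivity of $W_i$, proves something false: positivity is not preserved by arbitrary $SM$-steps — this is exactly why the paper remarks, just before the lemma, that the language recognized by $SM$ is in general much bigger than that of $M$, and why the lemma's hypothesis demands that \emph{all} the words $W_1,\dots,W_k$ be positive.

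The correct argument must use that hypothesis for $W_{i+1}$ as well, not only for $W_1$: given that both $W_i$ and $W_i\cdot\theta$ are positive, and that by Property~(b) of Lemma~\ref{m0m1} the rule (or its inverse) inserts or deletes at most one tape letter, the only way the reduced word $W_i\cdot\theta$ can come out positive is that the would-be deleted letter really did stand next to the corresponding state letter (otherwise its inverse survives reduction), and then the $S$-rule application coincides with the application of the Turing command, with the same result. This is the paper's one-step argument; your proposal never invokes the positivity of $W_2,\dots,W_k$, so it cannot close this gap.
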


\proof Indeed, every positive admissible word $W$ of $SM$ with the standard base is a configuration of the Turing machine $M$. If a rule  $\theta$ of $SM$ satisfying Property (c)
of Lemma \ref{m0m1} or
its inverse applies
to this $W$ and the word $W\cdot \theta$ is positive, then, obviously, the command $\theta$
of $M$ applies to $W$ and the result of the application is the same (here we essentially use the fact that the rule $\theta$
or its inverse inserts (deletes) at most one letter).
This immediately implies the statement of the lemma.
\endproof

\section{The $S$-machine}

We turn to the proof of Theorem \ref{mainth}. From now \label{M0} $M_0$ is the deterministic
Turing machine recognizing language $X$ from Theorem \ref{re}, the machine \label{M1} $M_1$ is constructed as in 
Lemma \ref{m0m1} and recognizes the  language \label{X1} $X_1,$ where $X_1=X.$  We keep the same
notation $M_1$ for the symmetrization of $M_1$ given by Lemma \ref{mach23}. 
Note that by claim (e) of that lemma, the machine $M_1$ has infinitely many
$h_{\alpha}$-good numbers for every $\alpha>0,$ where the functions $h_{\alpha}$ are defined in
Theorem \ref{re}. 
First, we need to construct a new $S$-machine which inherits  important
properties of the Turing machine $M_1.$

As in Section \ref{gpsm},  we can view $M_1$ as an $S$-machine.
We shall denote that $S$-machine by the same letter \label{M1S} $M_1$.

Let $Q_0...Q_N$ be the standard base of $M_1$, let the components of the alphabet of $a$-letters be $Y_1,...,Y_N$ (letters from $Y_i$ are in the $Q_{i-1}Q_i$-sectors of admissible words with the standard base).

\subsection{The machine $M_1\circ Z$}

Let $A$ be a finite set of letters. Let the sets $A_1, A_2$ be copies of
$A$. It will be convenient to denote $A$ by $A_0$. For every
letter $a_0\in A_0$ let $a_1, a_2$ denote its copies in $A_1, A_2$.

As in \cite{OS}, consider
the following auxiliary ``adding" $S$-machine \label{ZA} $Z(A)$.

Its set of state letters is $P_1\cup P_2\cup P_3,$ where $$P_1=\{L\},
P_2=\{p(1),p(2), p(3)\}, P_3=\{R\}.$$ The set of tape letters is
$Y_1\cup Y_2,$ where $Y_1=A_0\cup A_1$ and $Y_2=A_2$.

The machine $Z(A)$ has the following positive rules
(there $a$ is an
arbitrary letter from $A$). The comments explain the meanings of
these rules.

\begin{itemize}

\item $r_1(a)=[L\to L, p(1)\to a_1\iv p(1)a_2, R\to R]$.
\me

{\em Comment.} The state letter $p(1)$ moves left searching for a
letter from $A_0$ and replacing letters from $A_1$ by their copies
in $A_2$.

\me

\item $r_{12}(a)=[L\to L, p(1)\to a_0\iv a_1p(2), R\to R]$.

{\em Comment.} When the first letter $a_0$ of $A_0$ is found, it is
replaced by $a_1$, and $p(1)$ turns into $p(2)$.

\me

\item $r_2(a)=[L\to L, p(2)\to a_0p(2)a_2\iv, R\to R]$.

\me

{\em Comment.} The state letter $p(2)$ moves toward $R$.

\me

\item $r_{21}=[L\to L, p(2)\tool p(1), R\to R]$, $Y_1(r_{21})=Y_1,
Y_2(r_{21})=\emptyset$.

\me

{\em Comment.} $p(2)$ and $R$ meet, the cycle starts again.

\me

\item $r_{13}=[L\tool L, p(1)\to p(3), R\to R]$, $Y_1(r_{13})=\emptyset,
Y_2(r_{13})=A_2$.

\me

{\em Comment.} If $p(1)$ never finds a letter from $A_0$, the cycle
ends, $p(1)$ turns into $p(3)$; $p$ and $L$ must stay next to each
other in order for this rule to be executable.

\item $r_{3}(a)=[L\to L, p(3)\to a_0p(3)a_2\iv, R\to R]$,
$Y_1(r_3(a))=A_0, Y_2(r_3(a))=A_2$

 \me

{\em Comment.} The letter $p(3)$ returns to $R$.

\end{itemize}

For every letter $a\in A$ we set $r_i(a\iv)=r_i(a)\iv$ ($i=1,2,3$).

The following Lemmas from \cite{OS} contain the main properties of $Z(A)$
used later.

If $u\equiv a_1...a_m$ is a word, $a_i$ are letters, then we set $r_3(u)\equiv r_3(a_1)r_3(a_2)...r_3(a_m)$, $r_2(u)\equiv r_2(a_1)r_2(a_2)...r_2(a_m)$, $r_1(u)\equiv r_1(a_m)...r_1(a_2)r_1(a_1).$

\begin{lemma} \label{lm569}(\cite[Lemma 3.18]{OS}) Suppose that an admissible word $W$ of $Z(A)$ has the form
$LupvR,$ where $u,v$ are words in $(A_0\cup
A_1\cup A_2)^{\pm 1}$. Let $ W\cdot \theta\equiv Lu'p'v'R$. Then the projections of $uv$ and $u'v'$ onto $A$ are freely equal.
\end{lemma}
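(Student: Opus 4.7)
My plan is to define a projection homomorphism $\pi$ from the free group on all letters of $Z(A)$ (state letters $L,R,p(1),p(2),p(3)$ together with all tape letters) onto $F(A)$, sending each of $a_0,a_1,a_2$ to the corresponding $a\in A$ and killing every state letter. Then the projection of $uv$ onto $A$ is simply $\pi(LupvR)$, since $\pi$ kills $L$, $p$ and $R$. So it suffices to verify that $\pi(W)=\pi(W\cdot\theta)$ in $F(A)$ for every rule $\theta$. Because the automatic free reduction performed by an $S$-machine after the substitution does not change the underlying element of the free group, it is enough to check this for the raw substitution prescribed by the middle component of the rule.

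The verification is a one-line calculation for each positive rule of $Z(A)$ (the negative rules, being inverses, then follow automatically). The rule $r_1(a)$ replaces $p(1)$ by $a_1^{-1}p(1)a_2$, whose image under $\pi$ is $a^{-1}\cdot 1\cdot a=1$; the rule $r_{12}(a)$ replaces $p(1)$ by $a_0^{-1}a_1p(2)$, projecting to $a^{-1}a\cdot 1=1$; the rule $r_2(a)$ replaces $p(2)$ by $a_0p(2)a_2^{-1}$, projecting to $a\cdot 1\cdot a^{-1}=1$; the rule $r_3(a)$ is entirely analogous to $r_2(a)$; and the rules $r_{21}$, $r_{13}$ only rename a state letter ($p(2)\to p(1)$, $p(1)\to p(3)$), both sides mapping to $1$. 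In each case the substitution multiplies $\pi(W)$ by a trivial element of $F(A)$, so $\pi$ is preserved.

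Consequently $\pi(uv)=\pi(LupvR)=\pi(Lu'p'v'R)=\pi(u'v')$ in $F(A)$, which is exactly the statement that the $A$-projections of $uv$ and $u'v'$ are freely equal. I do not expect any substantive obstacle: the rules of $Z(A)$ were visibly designed so that the introduced letters cancel out under $\pi$ (pairs like $a_1^{-1},a_2$ and $a_0^{-1},a_1$ project to mutually inverse letters in $F(A)$), so the only thing to be careful about is that the standard $S$-machine reduction stage is harmless, which is clear since it takes place in the free group on all letters and $\pi$ factors through it.
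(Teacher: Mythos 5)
Your proof is correct and is essentially the intended argument: the paper states this lemma without proof, citing \cite[Lemma 3.18]{OS}, and the verification there is exactly this routine check that each rule of $Z(A)$ replaces the $p$-letter by a word whose projection to $F(A)$ is trivial (with the $L$- and $R$-components unchanged), so the projection homomorphism is invariant under every rule and under the subsequent free reduction. Nothing further is needed.
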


\begin{lemma}\label{lm90}(Follows from the proof of \cite[Lemma 3.25]{OS}) Let $W$ be an admissible word of $Z(A)$ with $\base(W)\equiv LpR$.
Then for every reduced
computation $W\equiv W_0\to W_1\to \dots\to W_t\equiv W\cdot H$
of the $S$-machine $Z(A)$:

\begin{enumerate}
\item $||W_i||\le \max(||W_0||, ||W\cdot H||)$, $i=0,...,t$,

\item If $W\equiv LupR,$ where $p=p(1)$ (resp. $p=p(3)$), $u$ is positive, then there exists a computation starting with $W$ and ending with $Lup(3)R$
(resp. $Lup(1)R$). Moreover if $u$ is any word in $a$-letters, and for some history of computation $H$,  $W\cdot H$ contains $p(3)R$ (resp. $p(1)R$) and all $a$-letters in
$W, W\cdot H$ are from $A_0^{\pm 1}$, then the length of $H$ is
between $2^{||u||}$ and $6\cdot 2^{||u||}$, $u$ and all words in the computation $W\to...\to W\cdot H$ are positive,
all words in that computation have the same length, and $H$ is uniquely determined by $u$. That computation (resp. its inverse) has the history of the following form
$$
D(u)\equiv E(u)r_{13}r_3(u),
$$
where $E(u)$ is defined by induction: $E(\emptyset)\equiv \emptyset$ and if $u\equiv a_1u'$, then
$$
 E(u)\equiv E(u')r_{12}(a_1)r_2(u')E(u')r_1(a_1)
$$
\end{enumerate}
\end{lemma}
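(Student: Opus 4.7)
The plan is induction on $\|u\|$ for part 2, together with a convexity argument for part 1. Both parts rely on the fact that, given the variant of $p$ (one of $p(1), p(2), p(3)$) and its immediate neighbors in the admissible word $L\dots R$, at most one positive rule of $Z(A)$ is applicable, so the evolution is essentially deterministic on words with base $LpR$.

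First I would collect the basic structural observations. Every rule has $L \to L$ and $R \to R$ and replaces the unique $p$-letter by a word of length $1$ (for $r_{21}, r_{13}$) or length $3$ (for $r_1(a), r_{12}(a), r_2(a), r_3(a)$); after free reduction the total length of the admissible word changes by $0$ or $\pm 2$. Moreover the projection to $A$ of all tape content is preserved (Lemma \ref{lm569}), so the total number of $a$-letters is determined (modulo matched $A_1$/$A_2$ insertions).

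For part 2 I induct on $\|u\|$. The base case $u \equiv \emptyset$ is immediate: only $r_{13}$ applies (its condition $Y_1(r_{13})=\emptyset$ says the first sector is free of tape letters), giving $W\cdot r_{13}\equiv Lp(3)R$; thus $D(\emptyset)\equiv r_{13}$, of length $1=2^0$, and the single intermediate word preserves length. For the inductive step $u\equiv au'$, start from $W\equiv Lau'p(1)R$: rule $r_{13}$ is now inapplicable, so deterministic applicability forces a sub-computation on the $u'$-portion. By the inductive hypothesis, this sub-computation has history $E(u')$, bringing the $u'$-section into its $A_1$-converted form with $p(1)$ still present. Then the unique applicable rule is $r_{12}(a)$, which consumes the exposed leading $a$ and switches $p(1)\to p(2)$; then $r_2(u')$ sweeps $p(2)$ to the right regenerating an $A_0$-copy of $u'$; a second $E(u')$ subroutine readies the configuration; then $r_1(a)$ shifts $p(1)$ past the $A_1$-copy of $a$ to adjoin $L$; then $r_{13}$ flips $p(1)\to p(3)$; finally $r_3(u)$ sweeps $p(3)$ back to $R$, restoring all tape letters to $A_0^{\pm 1}$. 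Uniqueness of the history follows from the deterministic analysis of paragraph 1. The recursion $|E(u)|=2|E(u')|+\|u'\|+2$ together with $|D(u)|=|E(u)|+1+\|u\|$ yields by a routine induction that $|D(u)|\in[2^{\|u\|},\,6\cdot 2^{\|u\|}]$. Length-constancy along the $D(u)$-computation holds because every ``gluing'' rule $r_{12}(a), r_1(a), r_2(\cdot), r_3(\cdot), r_{13}$ produces one adjacent letter exactly as it cancels another, while the inductive sub-computations preserve length by hypothesis.

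For part 1, consider an arbitrary reduced computation $W_0\to\cdots\to W_t$ with base $LpR$. By paragraph 1 the length profile $i\mapsto \|W_i\|$ jumps by $0$ or $\pm 2$ at each step. I claim the profile is unimodal (first non-increasing, then non-decreasing), so the maximum is attained at one of the endpoints. To prove this, I would show that a strict $+2$ step cannot be followed by a strict $-2$ step in a reduced computation: a $+2$ step creates a specific letter of $A_1^{\pm 1}$ or $A_2^{\pm 1}$ immediately adjacent to $p$, and the only rule whose output would cancel that letter is the inverse of the step just taken, contradicting reducedness. A rule-by-rule verification, taking care of length-preserving state-changing rules ($r_{21}, r_{13}$, and the inverses $r_{12}(a)^{-1}$, etc.) that may insert long plateaus between the descent and ascent, completes the argument and yields $\|W_i\|\le\max(\|W_0\|,\|W_t\|)$.

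The hardest step will be the case analysis for part 1. One must verify that along a constant-length plateau the dynamics are tightly constrained (only state-changing rules are available, and their sequencing is forced), so that no alternative cancellation pathway allows the length to strictly increase after having strictly decreased without producing a consecutive mutually inverse pair. The part 2 induction, by contrast, is essentially bookkeeping once the determinism established in paragraph 1 is in hand.
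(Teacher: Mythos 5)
Your argument for part 2 rests on the claim that, once the variant of $p$ and its immediate neighbours are known, at most one positive rule of $Z(A)$ is applicable, so that the evolution is ``essentially deterministic''. That claim is false for an $S$-machine: rules do not observe tape letters, and applicability depends only on the state letter and on the locked sectors $Y_i(\theta)$. From $Lup(1)R$ with $u$ positive over $A_0$, \emph{every} rule $r_1(b)^{\pm 1}$ and every $r_{12}(b)$, $b\in A$, is applicable (most of them lengthen the word), and so is $r_{21}^{-1}$. Consequently ``the unique applicable rule is $r_{12}(a)$'' and ``uniqueness of the history follows from the deterministic analysis'' do not hold. The ``moreover'' clause of part 2 is a statement about an arbitrary reduced computation whose two end words satisfy the hypotheses: one must exclude computations that temporarily pass through longer or non-positive words, and this is precisely the nontrivial content of the argument this paper refers to (it gives no internal proof, quoting instead the proof of \cite[Lemma 3.25]{OS}), which analyses reduced computations of $Z(A)$ rather than invoking one-step determinism. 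A symptom that the ``forced sequencing'' was not actually checked: the history $E(a)\equiv r_{12}(a)r_1(a)$ is not even executable as written, since after $r_{12}(a)$ the state letter is $p(2)$ while $r_1(a)$ requires $p(1)$; the returns of $p(2)$ to $p(1)$ via $r_{21}$ (whose applicability depends on the right sector being empty) never appear in your trace, although they occur in the genuine computation.

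Part 1 has the same kind of gap. Your unimodality argument only rules out a $-2$ step \emph{immediately} following a $+2$ step, and your treatment of the plateaus rests on the incorrect assertion that along a constant-length segment only state-changing rules are available. In fact the typical length-preserving steps are the head-moving applications of $r_1,r_2,r_3$ with exactly one cancellation; during a plateau the head can traverse the whole word and completely change the letters adjacent to $p$, so the configuration in which a later $-2$ step might occur bears no relation to the one created by the $+2$ step, and reducedness of consecutive steps gives nothing. Excluding a strict decrease at any time after a strict increase requires tracking the defect created by a lengthening step through arbitrarily long constant-length subcomputations — again the substance of the cited proof in \cite{OS}. As written, neither part 1 nor part 2 is established.
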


\begin{lemma}\label{lm89}(The first part of the lemma is \cite[Lemma 3.21]{OS}) For every
admissible word $W$ of $Z(A)$ with $\base(W)\equiv LpR$,
every rule $\theta$ applicable to $W$, and every natural number
$t>1$, there is at most one reduced computation
$W\to_\theta W_1\to ...\to
W_t$ of length $t,$ where the lengths of the words are all the same. (In fact from the proof of \cite[Lemma 3.21]{OS}, it immediately follows that the history of that computation is a subword of
$D(u)^{\pm 1}$ for some $u$).
\end{lemma}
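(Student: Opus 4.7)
The plan is to prove the first (uniqueness) claim by induction on $t$, with the inductive step showing that at each intermediate configuration the next rule is forced. The base case $t=1$ is vacuous since only $\theta$ is applied. For the inductive step, fix $W_i$ (with base $LpR$) and the previous rule $\theta_i$; I must show that there is at most one rule $\theta_{i+1}$ that is (a) applicable to $W_i$, (b) distinct from $\theta_i^{-1}$ (so that the history remains reduced), and (c) preserves the length of the word. Then the computation is uniquely determined by $W$ and $\theta$.

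The argument splits into three cases according to which of $p(1),p(2),p(3)$ (or inverses) is the $p$-letter in $W_i = L u p v R$. For $p = p(1)$, the potentially applicable positive rules are $r_1(a), r_{12}(a), r_{13}$, which all substitute $p(1)$ by a short word involving $a$-letters in $A_1 \cup A_2$ or switch $p(1)$ into $p(2), p(3)$. Length preservation forces cancellation with adjacent letters of $u$ or $v$: for instance, $r_1(a)$ can preserve length only when $u$ ends in $a_1$ or $v$ begins in $a_2^{-1}$, which pins down $a$; the rule $r_{13}$ requires $u$ to be empty in $Y_1$-letters; and so on. Combining these constraints with the exclusion of $\theta_i^{-1}$ yields at most one admissible choice of $\theta_{i+1}$. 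The cases $p = p(2)$ (applicable rules $r_2(a), r_{21}$) and $p = p(3)$ (applicable rules $r_3(a)$) are analogous and in fact simpler.

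For the parenthetical claim, I would argue by induction that any reduced length-preserving history obtained as above must read off a subword of $D(u)^{\pm 1}$ for the word $u$ obtained by projecting the tape content of $W_i$ to $A_0$ (cf.\ Lemma~\ref{lm569}). The recursive definition $D(u) \equiv E(u) r_{13} r_3(u)$ with $E(u) \equiv E(u') r_{12}(a_1) r_2(u') E(u') r_1(a_1)$ exactly matches, step-by-step, the canonical rule dictated by the case analysis above, so the forced sequence of rules traced out in (a)--(c) above is nothing but reading $D(u)^{\pm 1}$ starting at the position determined by $\theta$ and $W$. The main obstacle is bookkeeping: the three state letters, six rule families, and their inverses generate many sub-cases to verify, and one must be careful that the cancellation analysis correctly identifies which $a$ appears as the ``next letter'' of $u$. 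However, the rigidity of the adding machine makes each sub-case a short direct check, and the conclusion follows.
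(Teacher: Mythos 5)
The paper itself does not prove this lemma: it is quoted, together with its proof, from \cite[Lemma 3.21]{OS}, so your argument has to stand on its own. Its overall shape — induction on $t$, showing that at each intermediate word the next rule is forced by applicability, length preservation and reducedness — is the right one and does work. But the justification you give for the local step is not the correct mechanism, and as stated it would fail: length preservation does \emph{not} pin down the rule. For instance at $W_i\equiv La_1p(1)b_2^{-1}R$ with $a\ne b$, both $r_1(a)$ and $r_1(b)$ are applicable and length preserving, and neither is excluded merely by "looking at the adjacent letter of $u$ or $v$". What actually saves the induction is a two-part observation you never state: (i) at any admissible word with base $LpR$ there are \emph{at most two} length-preserving applicable rules, namely at most one whose single cancellation (or emptiness condition, in the case of $r_{13}$, $r_{21}^{\pm1}$, $r_{12}^{\pm 1}$) involves the left end of $u$, and at most one involving the right end of $v$ — this is where the case analysis over $p(1),p(2),p(3)$ and the rule families genuinely lives; and (ii) since $\theta_i$ was applied to $W_{i-1}$ and preserved length, its inverse $\theta_i^{-1}$ is itself applicable to $W_i$ and length preserving, hence is one of those at most two rules. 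Only the combination of (i) and (ii) makes "exclude $\theta_i^{-1}$" leave at most one continuation; in the example above, $W_i$ can only have been reached (length-preservingly) by $r_1(a)^{-1}$ or $r_1(b)^{-1}$, which is exactly why no branching occurs. With (i) and (ii) made explicit your induction is complete.

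Two cautions on the parenthetical claim. First, the word $u$ must be allowed to be an arbitrary reduced word over $A^{\pm1}$ (the convention $r_i(a^{-1})=r_i(a)^{-1}$ makes $D(u)$ meaningful for such $u$), not only the positive words of Lemma \ref{lm90}, and it need not be read off naively from the current tape content, so that identification is part of what has to be checked. Second, if you match the forced rule sequence "step by step" against the recursion as printed in this paper you will find a discrepancy: the forced canonical computation necessarily contains occurrences of $r_{21}$ (e.g.\ from $La_0p(1)R$ the forced length-preserving history toward $p(3)$ is $r_{12}(a)\,r_{21}\,r_1(a)\,r_{13}\,r_3(a)$), while the displayed formula for $E(u)$ shows none; the comparison should be made with the canonical computation itself (equivalently, the formula in \cite{OS}), the displayed recursion here apparently omitting the $r_{21}$-steps.
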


\begin{lemma} \label{lm901}(\cite[Lemma 3.27]{OS}) Suppose $W$ is an admissible word of $Z(A)$ with $\base(W)\equiv LpR$. Suppose that $W\cdot H$ exists for some reduced history $H$. Suppose that
both $W$ and
$W\cdot H$ contain $p(1)R$ (resp. $p(3)R$) and all $a$-letters in
$W$, $W\cdot H$ are from $A_0$. Then $H$ is empty.
\end{lemma}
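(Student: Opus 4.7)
My plan is to first show, using Lemma \ref{lm569}, that the two endpoints $W$ and $W\cdot H$ are syntactically identical, and then rule out any non-empty reduced history that returns to the starting word. Since admissible words with base $LpR$ have the form $LupvR$ with $u\in F(A_0\cup A_1)$ and $v\in F(A_2)$, the hypothesis that all $a$-letters lie in $A_0^{\pm 1}$ forces $v\equiv \emptyset$ and $u\in F(A_0)$ in both $W$ and $W\cdot H$. Lemma \ref{lm569} then gives that the projections of the left-tape words of $W$ and $W\cdot H$ onto $A$ are freely equal; since each is already a reduced word in the copy $A_0$ of $A$, they coincide, so $W\equiv W\cdot H$. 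The task reduces to showing that the only reduced history $H$ with $W\cdot H\equiv W$ is the empty one.

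Suppose for contradiction that $H$ is non-empty and reduced. I first treat the case in which every rule of $H$ belongs to the family $r_1(a)^{\pm 1}$, so $p$ stays at $p(1)$ throughout. Applied to a configuration $Lu_0p(1)v_0R$, each $r_1(a)^{\varepsilon}$ inserts $(a)_1^{-\varepsilon}$ immediately to the left of $p(1)$ and $(a)_2^{\varepsilon}$ immediately to the right; since the $pR$-sector contains only $A_2^{\pm 1}$-letters, the new $A_2^{\pm 1}$-letter stacks onto the right-hand word without any cancellation against $v_0$. Reducedness of $H$ forbids two consecutive deposits from being mutually inverse, so the resulting $pR$-sector of $W\cdot H$ reads as a non-empty reduced word in $A_2^{\pm 1}$, contradicting that the $pR$-sector of $W\cdot H$ is empty.

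The remaining case is that $H$ uses at least one rule outside the $r_1(a)^{\pm 1}$ family, so the $p$-letter must leave $p(1)$ and return. I would split $H$ at the maximal segments where $p=p(1)$, producing excursions of type $p(1)\to p(2)\to p(1)$ and $p(1)\to p(3)\to p(1)$. Any excursion to $p(3)$ uses $r_{13}$, which locks the $Lp$-sector and hence requires the entire left-tape word to be empty at that moment; since $u\in F(A_0)$ has no $A_1$-letter to cancel against the $A_1$-deposits of any intervening $r_1$-rule, such an excursion can occur only after the letters of $u$ have been individually consumed by $r_{12}$-type rules. A parallel analysis applies to excursions into $p(2)$, using that $r_{21}$ locks the $pR$-sector and that $r_{12}^{-1}$ requires a precise $a_0^{-1}a_1p(2)$ pattern. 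In each excursion one computes the net deposit on the $pR$-sector, mirroring the construction of $D(u)$ in Lemma \ref{lm90} (2); this deposit is trivial only if the excursion itself is non-reduced (a rule followed by its inverse). Assembling all excursions, the $pR$-sector of $W\cdot H$ must contain a non-trivial reduced word in $A_2^{\pm 1}$, again contradicting emptiness.

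The main obstacle will be the last step: proving that excursion-deposits cannot cancel against each other or against the $A_2^{\pm 1}$-deposits coming from intervening $r_1$-rules. I expect the cleanest resolution to be an induction on $||u||$, where the base case $u\equiv \emptyset$ is handled directly (using that $r_{13}$-excursions must be trivial cycles and $r_{12}$-excursions cannot close reducibly) and the inductive step uses Lemma \ref{lm89}'s uniqueness statement to pin down the sub-computations between successive $p$-state changes.
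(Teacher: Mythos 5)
The paper itself contains no argument for this lemma: it is quoted directly from \cite[Lemma~3.27]{OS}, so the only comparison possible is with your proposal on its own merits. Your preliminary reduction is fine: since both endpoints have $p(1)$ adjacent to $R$ and all their $a$-letters in $A_0$, the $pR$-sectors are empty, Lemma \ref{lm569} gives that the $Lp$-sectors have freely equal projections onto $A$, and as both are reduced words over $A_0$ this yields $W\cdot H\equiv W$. The case where $H$ consists only of $r_1(\cdot)^{\pm1}$-rules is also correct (it is an instance of the Lemma \ref{gen1} argument). The genuine gap is exactly where you flag it, and the claim you lean on there is in fact false: it is not true that the excursion deposits force the final $pR$-sector to be a non-trivial word. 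Take $W\equiv Lup(1)R$ with $u\in F(A_0)$ and apply the history $r_1(c)\,r_{12}(a)\,r_2(c)\,r_{21}$. It is reduced, $r_1(c)$ deposits $c_2$ in the $pR$-sector, $r_2(c)$ deposits $c_2^{-1}$ which cancels it, $r_{21}$ is then applicable because the $pR$-sector is empty again, and one ends at $L\,u\,c_1^{-1}a_0^{-1}a_1c_0\,p(1)R$: a non-empty reduced computation between two words containing $p(1)R$ with empty $pR$-sector. So a contradiction can never be extracted from the $pR$-sector alone; this example fails to contradict the lemma only because its final $Lp$-sector contains the letters $c_1^{-1},a_1\notin A_0^{\pm1}$.

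This pinpoints what your sketch omits: the hypothesis that the $Lp$-sectors of \emph{both} endpoints lie in $F(A_0)$ must enter the core of the argument, namely one has to show that any passage through $p(2)$ necessarily creates (or presupposes) $A_1$-letters in the $Lp$-sector that a reduced history cannot eliminate while returning to a word over $A_0$ with $p(1)$ next to $R$. Note also a smaller inaccuracy that affects your excursion bookkeeping: $S$-machine rules are blind, so $r_{12}(a)^{-1}$ does not ``require a precise $a_0^{-1}a_1p(2)$ pattern''; it applies unconditionally and merely deposits $a_1^{-1}a_0$ to the left of $p$, so exits from $p(2)$ are less constrained than you assume. Your concluding plan (induction on the length of $u$, combined with the uniqueness statements of Lemmas \ref{lm89} and \ref{lm90}) is a reasonable route and close in spirit to how \cite{OS} controls computations of $Z(A)$, but as written the decisive non-cancellation step is not proved, and the version of it you assert is refuted by the computation above.
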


\begin{lemma} (\cite[Lemma 3.24]{OS}) \label{lm93} Let $W\equiv LvpuR$ and $\base(W)\equiv LpR$. Suppose that
$||W\cdot \theta||>||W||$. Then for every reduced computation $W\to_\theta
W_1\to W_2\to...\to W\cdot H,$
we have $||W_i||>||W||$ for every $i\ge 1$.
\end{lemma}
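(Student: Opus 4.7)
The plan is to argue by contradiction on the length trajectory of the reduced computation, combined with a case analysis of the $Z(A)$-rules.

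First I observe that every rule of $Z(A)$ applied to an admissible word with base $LpR$ changes its length by exactly $0$ or $\pm 2$. Indeed, each rule either inserts at most two tape letters near the state letter (one on each side for $r_1, r_2, r_3$ and their inverses; two on the left for $r_{12}$; two deleted by the matching rule $r_{12}^{-1}$) or changes only the state label (as for $r_{13}^{\pm 1}$ and $r_{21}^{\pm 1}$), and the only possible free reductions occur at the interfaces where the new letters meet the pre-existing $u,v$. The hypothesis $\|W\cdot\theta\|>\|W\|$ then forces $\|W_1\|=\|W\|+2$, with the letters introduced by $\theta$ not cancelling with their neighbors in $W$.

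Suppose for contradiction the conclusion fails, and let $i\ge 2$ be minimal with $\|W_i\|\le\|W\|$. By minimality $\|W_{i-1}\|\ge\|W\|+2$, and since length drops by at most $2$ per step, $\|W_{i-1}\|=\|W\|+2$ and $\|W_i\|=\|W\|$. Hence $\theta_i$ must be length-decreasing by exactly $2$: either $\theta_i\in\{r_1^{\pm 1},r_2^{\pm 1},r_3^{\pm 1},r_{12}\}$ with its inserted letters fully cancelling on both interfaces with $W_{i-1}$, or $\theta_i=r_{12}^{-1}$ with its required deletion-subword appearing adjacent to the state letter in $W_{i-1}$.

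The core of the proof is a case analysis on the pair $(\theta,\theta_i)$. Because the tape alphabets $A_0,A_1,A_2$ are pairwise disjoint, the letters touched by each kind of rule live in specific sub-alphabets: $r_1^{\pm 1}$ uses $A_1\cup A_2$; $r_2^{\pm 1}$ and $r_3^{\pm 1}$ use $A_0\cup A_2$; $r_{12}^{\pm 1}$ uses $A_0\cup A_1$. Combined with the constraints on which state label $p(1), p(2), p(3)$ each rule acts on and the locking conditions $Y_i(\theta)=\emptyset$ for $r_{21}$ and $r_{13}$, this tightly restricts which configurations can admit a length-$-2$ step at time $i$. The cancellation (or deletion) demanded by $\theta_i$ forces the tape letters or subword adjacent to the state letter of $W_{i-1}$ to have been placed there by a specific earlier rule $\theta_j$; tracking how the state label and the neighboring tape letters can evolve from $W_1$ to $W_{i-1}$ shows that such a $\theta_j$ must actually coincide with $\theta_{i-1}$ and satisfy $\theta_{i-1}=\theta_i^{-1}$, contradicting reducedness of the history.

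I expect the main obstacle to be the subcase in which $\theta$ or $\theta_i$ is a state-changing rule such as $r_{12}^{\pm 1}$ or $r_{13}^{\pm 1}$: the reduced history could in principle traverse several intermediate state labels $p(1)\leftrightarrow p(2)\leftrightarrow p(3)$ before $\theta_i$ can apply, so one must carefully use the locking conditions on $r_{21}$ and $r_{13}$ (which force the corresponding sector to be empty during the switch), together with alphabet-disjointness, to exclude any reduced chain $\theta_2,\ldots,\theta_{i-1}$ producing the requisite cancellation pattern at step $i$ without a direct $\theta_{i-1}=\theta_i^{-1}$ reversal.
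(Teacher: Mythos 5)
First, a point of reference: the paper itself contains no proof of this statement --- Lemma \ref{lm93} is simply quoted from \cite[Lemma 3.24]{OS} --- so your argument has to stand entirely on its own, and it does not yet do so. Your preliminary reductions are correct: every rule of $Z(A)$ applied to a word with base $LpR$ changes the length by $0$ or $\pm 2$ (the rules $r_{21}^{\pm 1}$, $r_{13}^{\pm 1}$ only relabel the state letter, the remaining rules insert two tape letters of which at most two can cancel), so the hypothesis forces $||W_1||=||W||+2$, and at a minimal $i$ with $||W_i||\le ||W||$ one indeed has $||W_{i-1}||=||W||+2$, $||W_i||=||W||$, with $\theta_i$ decreasing the length by exactly $2$.

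The gap is that everything after this point --- which is the actual content of the lemma --- is asserted rather than proved. You state that ``tracking how the state label and the neighboring tape letters can evolve'' shows that the letters consumed at step $i$ were deposited by the rule $\theta_{i-1}$ and that $\theta_{i-1}=\theta_i^{-1}$, but no such tracking is carried out, and the assertion is not self-evident: the letters adjacent to the $p$-letter in $W_{i-1}$ that are cancelled by $\theta_i$ could a priori be letters of the original word $W$, or letters written many steps earlier, after which the head moved away (changing state through $r_{12}$, $r_{21}$, $r_{13}$ and their inverses) and later returned; reducedness of the history only forbids \emph{consecutive} mutually inverse rules, so to get a contradiction you must genuinely force an adjacent inverse pair, not merely exhibit some earlier rule responsible for the cancelled letters. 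Excluding the ``head wanders off and comes back'' scenarios is exactly where the fine structure of $Z(A)$ enters --- which of $A_0,A_1,A_2$ can sit on each side of $p(1),p(2),p(3)$, which sectors are locked by $r_{21}$ and $r_{13}$, and what trace each pass of the head leaves behind; this is essentially the analysis behind \cite[Lemmas 3.21--3.23]{OS}, quoted in the present paper as Lemmas \ref{lm89} and \ref{lm90}. You explicitly name this obstacle in your final paragraph but do not overcome it, so the proposal is a plausible plan with the decisive case analysis missing, not a proof.
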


We define the S-machine $M_2$ as the composition
$M_1\circ Z$ (this operation is defined in \cite{OS}, see also below).

Essentially we insert a $p$-letter between any two
consecutive $q$-letters in admissible words of $M_1$ with standard base, and treat any
subword $q_i...p_{i+1}...q_{i+1}$ as an admissible word for $Z(A)$ (that is $q_i$ plays the role of $L$ and $q_{i+1}$ plays
the role of $R$). The only differences with the construction in \cite{OS} are that, for every state letter $q$, we keep
the sets of $a$-letters that can appear to the left  and to the right of $q$ disjoint, and after application of a main rule, not only the state letters of copies of $Z(A)$ remember the main rule but also the state letters coming from $M_1$ remember that rule. These changes do not affect the proofs of statements in \cite{OS} that we are going to use.

Let us describe \label{M2} $M_2$ in details. First, for every $i=1,...,N$, we make three copies of the alphabet
$Y_i$ of $M_1$ ($i=1,...,N$): $Y_{i,0}=Y_i$,  $Y_{i,1}$, $Y_{i,2}$. Let $\Theta$
be the set of positive commands of $M_1$ (viewed as rules of an $S$-machine). The
set of state letters of the new machine is $$Q_0'\cup P_1\cup
Q_1'\cup P_2\cup...\cup P_{N}\cup Q_N',$$
where $P_i=\{p^{(i)}, p^{(1,i)}, p^{(0,i)},
p_1^{(\theta,i)}, p_2^{(\theta,i)},  p_3^{(\theta,i)}\mid
\theta\in\Theta\}$
$i=1,...,N$, $Q_i'=Q_i\sqcup (Q_i\times \Theta),$
where $Q_0\sqcup Q_1\sqcup ... Q_N$ is the set of state letters of
$M_1$.
We shall denote a pair $(q, \theta)$ from $Q_i\times\Theta$ in $Q_i'$
by $q^{(\theta,i)}.$
Thus every ``old" state letter of $M_1$ gets ``multiple" copies
indexed by positive rules of $M_1$, and the state letters of various
copies of $Z$ have upper indices corresponding to the positive rules
of $M_1$ and the number of sector where the machine is inserted. The
$Q_0'P_1$-sector of an admissible word with the standard base will
be called the {\em input} sector of that word.

The set of tape letters is $$\bar Y=(Y_{1,0}\sqcup Y_{1,1})\sqcup
Y_{1,2}\sqcup (Y_{2,0}\sqcup Y_{2,1})\sqcup Y_{2,2}\sqcup...\sqcup
(Y_{N,0}\sqcup Y_{N,1})\sqcup Y_{N,2};$$
the components of this union
will be denoted by $\bar Y_1,...,\bar Y_{2N}$
We shall call the new (second) indices of tape letters
the \label{M2ind} $M_2$-{\em indices} of these letters.

The set of positive rules $\bar\Theta$ of $M_1\circ Z$ is a union
of the set of suitably modified positive rules of $M_1$  and
$|\Theta|N$
copies $Z^{(\theta,i)}$ ($\theta\in\Theta,
i=1,...,N$) of positive rules of the machine $Z(Y_i)$ (also suitably
modified).

More precisely, every rule $\theta\in \Theta$
of the form
$$[q_0u_1\to q_0'u_1', v_1q_1u_2\to
v_1'q_1'u_2',...,v_{N}q_N\to v_{N}'q_{N}'],$$
where $q_i, q_i'\in
Q_i$, $u_i$ and $v_i$ are words in $Y$, is replaced
by $$\bar\theta=\left[\begin{array}{l}q_0u_1\to (q')^{(\theta,0)} u_1', v_1p^{(1)}\tool
v_1'p_1^{(\theta,1)}, q_1u_2\to (q')^{(\theta,1)}u_2', ...,\\
 v_{N}p^{(N)}\tool v_{N}'p_1^{(\theta,N)}, q_{N}\to
 (q')^{(\theta,N)}\end{array}\right]$$
 with $\bar Y_{2i-1}(\bar\theta)=Y_{i,0}(\theta)$ and $\bar Y_{2i}(\bar\theta)=\emptyset.$
 If $\theta=\theta_{start}$ is the unique start rule of $M_1$ then all $p^{(i)}$-s in the above definition of $\bar\theta$ must be replaced by the special start letters $p^{(1,i)}$-s.

Thus each modified rule from $\Theta$
turns on $N$ copies of the
machine $Z(A)$ (for different $A$'s). The rule $\bar\theta$ will be called \label{bart}{\em the rule of $M_2$ corresponding to the
rule $\theta$ of $M_1$.}

Each\label{Zti} machine $Z^{(\theta,i)}$ is a copy of the machine $Z(Y_{i}(\theta)),$ where
every rule $\tau=[U_1\to V_1, U_2\to V_2, U_3\to V_3]$ is replaced
by the rule of the form
$$\bar\tau_i(\theta)=\left[
\begin{array}{l}\bar U_1\to \bar V_1, \bar U_2\to \bar V_2, \bar U_3\to \bar V_3,\\
(q')^{(\theta,j)}\to (q')^{(\theta,j)}, p_3^{(\theta,j)}\tool  p_3^{(\theta, j)},
j=1,...,i-1,\\
p_1^{(\theta,s)}\tool p_1^{(\theta,s)}, (q')^{(\theta,s+1)}\to (q')^{(\theta,s+1)},
s=i+1,...,N-1\end{array}\right],$$ where $\bar U_1, \bar U_2, \bar
U_3, \bar V_1, \bar V_2, \bar V_3$ are obtained from $U_1, U_2, U_3,
V_1, V_2, V_3$, respectively, by replacing $p(j)$ with
$p_j^{(\theta,i)}$, $L$ with $(q')^{(\theta,i)},$ and $R$ with $(q')^{(\theta,i+1)}$, and for
$s\ne i$, $\bar Y_{2s-1}(\bar\tau_i(\theta))=Y_{s,0}(\theta).$
Thus while the machine $Z^{(\theta,i)}$ works all other machines
$Z^{(\theta,j)}$, $j\ne i$
must stay idle (their state letters do not
change and do not move away from the corresponding $q$-letters).
After the machine $Z^{(\theta,i)}$ finishes (i.e. the state letter
$p_{3}^{(\theta,i)}$ appears next to $(q')^{(\theta,i)}$), the next machine
$Z^{(\theta,i+1)}$ starts working.

In addition, we need the following {\em transition}
rule \label{zetat} $\zeta(\theta)$ that removes $\theta$ from all state letters, and turns all $p_3^{(\theta,j)}$ back into $p^{(j)}$:

$$[(q')^{(\theta,i)}\to q'_i, p_3^{(\theta,j)}\tool
p^{(j)}, i=0,...,N, j=1,...,N].$$

 If $\theta$ is the unique accept rule of $M_1$ then all $p^{(j)}$-s in the above definition of $\zeta(\theta)$ must be replaced by the special (accept) letters $p^{(0,i)}$-s.

\begin{lemma}\label{lmb} Let $H$ be  the history of a reduced computation $W\to...\to W\cdot H$ of $M_2$ with the standard base, of the form $\bar\theta H'\zeta(\theta)$, where $\theta$ is a positive rule of the $S$-machine $M_1$,  $H'$ does not contain rules corresponding to rules of $M_1$ and occurrences of $\zeta(\theta)^{\pm 1}$. Let  $W\cdot \bar \theta= q^{(\theta,0)}u_1p^{(\theta,1)}q^{(\theta,1)}u_2...u_Np^{(\theta,N)}q^{(\theta,N)}$
for some words $u_i$ in $Y_{i,0}$
Then $H'\equiv H_1H_2...H_N,$ where each $H_s$ is the computation
of the machine $Z^{(\theta,s)}$ whose history is a copy of $D(u_s)$
(described in Lemma \ref{lm90}, part 2), and all words in the
computation $W\cdot \bar\theta\to...\to W\cdot H$ are positive. We
shall denote $H$ by \label{Pi12} $\Pi_{1,2}(\theta, W)$. That computation is completely determined by its first (last) word and $\theta$.
\end{lemma}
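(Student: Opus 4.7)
The plan is to split $H'$ into blocks, one per copy of the adding machine, and then apply Lemma~\ref{lm90}(2) to each block. Right after $\bar\theta$ fires, every state letter of $W\cdot\bar\theta$ carries the upper index $\theta$: the positions $Q'_i$ hold $(q')^{(\theta,i)}$, and the positions $P_i$ hold $p_1^{(\theta,i)}$. By hypothesis, $H'$ contains no rule $(\bar\theta')^{\pm 1}$ coming from $M_1$ and no $\zeta(\theta)^{\pm 1}$, so the only rules that could possibly fire are the $\bar\tau_j(\theta')^{\pm 1}$ (rules of some $Z^{(\theta',j)}$) and the $\zeta(\theta')^{\pm 1}$ for $\theta'\ne\theta$. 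Each such rule demands a $\theta'$-indexed state letter in its domain, and no rule can alter the upper $\theta$-index; an induction on the length of a prefix of $H'$ therefore shows that only rules of the form $\bar\tau_j(\theta)^{\pm 1}$ appear in $H'$.

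Next I would establish the sequential decomposition $H'\equiv H_1H_2\cdots H_N$, where $H_i$ collects the $\bar\tau_i(\theta)^{\pm 1}$-rules of $H'$ in the order they occur. The rule $\bar\tau_i(\theta)^{\pm 1}$ modifies only the $i$-th sector (the others are locked by its $\tool$-components), and each firing requires the $P_j$-letter to equal $p_3^{(\theta,j)}$ for $j<i$ and to equal $p_1^{(\theta,s)}$ for $s>i$. Initially all $P_j$-letters are $p_1^{(\theta,j)}$, so $\bar\tau_1(\theta)^{\pm 1}$ must fire first, and no $\bar\tau_j(\theta)^{\pm 1}$ with $j>1$ can fire until the $P_1$-letter has been driven to $p_3^{(\theta,1)}$. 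After that moment, $\bar\tau_1(\theta)^{\pm 1}$ can no longer contribute to $H'$: in a reduced history it cannot immediately reverse, and any later firing of $\bar\tau_1(\theta)^{\pm 1}$ would require the $P_2$-letter to be $p_1^{(\theta,2)}$ again, which the locking of every $\bar\tau_j(\theta)^{\pm 1}$ with $j\ge 2$ prevents. Iterating this argument across $i$ yields the claimed decomposition, with $H_i$ driving the $P_i$-letter from $p_1^{(\theta,i)}$ to $p_3^{(\theta,i)}$ and leaving all other sectors untouched.

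Finally, each $H_i$ is a reduced $Z^{(\theta,i)}$-computation in its active sector from $Lu_ip_1^{(\theta,i)}R$ to $Lu'_ip_3^{(\theta,i)}R$, whose endpoints have all $a$-letters in $A_0^{\pm 1}=Y_{i,0}^{\pm 1}$ (the tapes of the inactive sectors are never touched and remain equal to the positive words $u_j\in Y_{j,0}$, which in particular keeps $u_{i+1}$ available as a positive input for $H_{i+1}$). Lemma~\ref{lm90}(2) then forces $u'_i\equiv u_i$, $H_i$ to be a copy of $D(u_i)$, and every word of $H_i$ to be positive. Concatenation delivers positivity of the entire computation $W\cdot\bar\theta\to\dots\to W\cdot H$, and uniqueness is immediate since $W$ (or $W\cdot H$) together with $\theta$ determines $W\cdot\bar\theta$, hence each $u_i$, hence each $D(u_i)$. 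The main difficulty is the second paragraph: one must verify carefully that, in a reduced history, the locking components of $\bar\tau_i(\theta)$ genuinely prevent the blocks $H_i$ from interleaving, since several $Z$-rules (e.g.\ $r_1(a)$) leave the $P_i$-letter unchanged.
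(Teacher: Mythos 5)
Your overall outline (only rules of the machines $Z^{(\theta,\cdot)}$ can occur in $H'$, then a block decomposition, then Lemma~\ref{lm90}(2) block by block) is the same as the paper's, and your first and third paragraphs are essentially fine. But the second paragraph — which you yourself identify as the crux — contains a genuine gap. The assertion that after $Z^{(\theta,1)}$ hands over, ``any later firing of $\bar\tau_1(\theta)^{\pm 1}$ would require the $P_2$-letter to be $p_1^{(\theta,2)}$ again, which the locking of every $\bar\tau_j(\theta)^{\pm 1}$ with $j\ge 2$ prevents'' is false: the rules of $Z^{(\theta,2)}$ itself (the copies of $r_{21}$, and the inverses of $r_{12}(a)$ and $r_{13}$) turn $p_2^{(\theta,2)}$ or $p_3^{(\theta,2)}$ back into $p_1^{(\theta,2)}$ standing next to its right $q$-letter, while all other machines stay idle; no locking component of any $\bar\tau_j(\theta)$ forbids this. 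So the state-letter/locking data alone do not exclude an alternation $Z^{(\theta,1)},Z^{(\theta,2)},Z^{(\theta,1)},\dots$, and your block decomposition $H'\equiv H_1\cdots H_N$ is not established.

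What actually excludes the alternation is a combination of two facts that your argument never uses. First, the applicability of a rule of $Z^{(\theta,i)}$ also restricts the tape content of the idle machines: $\bar Y_{2s-1}(\bar\tau_i(\theta))=Y_{s,0}(\theta)$ for $s\ne i$, so a return to machine $1$ would force machine $2$'s sector to again consist of $Y_{2,0}$-letters with $p_1^{(\theta,2)}$ adjacent to its $R$-letter. Second, Lemma~\ref{lm901} says that a reduced $Z(A)$-computation which starts and ends with $p(1)R$ and with all $a$-letters in $A_0$ is empty — so such a return would make machine $2$'s intervening subcomputation empty, contradicting reducedness. This is exactly how the paper argues: it first allows a priori the decomposition $H'\equiv H_1\cdots H_m$ with $j(1)=1$ and $j(s+1)=j(s)\pm1$, and then kills any down-step by applying Lemma~\ref{lm901} at a ``peak'' $j(s'-1)=j(s'+1)=j(s')-1$, concluding $j(s)=s$ and $m=N$. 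Without invoking Lemma~\ref{lm901} (or some substitute carrying its content), your proof of the key non-interleaving step does not go through.
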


\proof By assumption, $H'$ consists of the rules of various $S$-machines $Z^{(\theta,i)}$ since only rules $\bar\theta^{-1}$ and $\zeta(\theta)$ can remove the $\theta$-index of state letters.
Since the word $W'=W\cdot (\bar\theta H')$ is in the domain of $\zeta(\theta)$, all $p$-letters in $W'$ have the form $p_3^{(\theta,i)}$. Since rules from $Z^{(\theta,i)}$ can apply to an admissible word with the standard base only if $p_j$-letters ($j\ne i$) stay next to the left of the corresponding (copies of) state letters of $M_1$, and have the form $p_3^{(\theta,j)}$, if $j<i$, and the form $p_1^{(\theta,j)}$ if $j>i$, we can conclude that $H'\equiv H_1H_2...H_m,$ where each $H_s$ is a non-empty history of the computation of some $Z^{(\theta,j(s))}$ such that $j(s+1)=j(s)\pm 1$, $j(1)=1$, and each $m$ between 1 and $N$ occurs as $j(s)$ for some $s$. Note that if $j(s+1)=j(s)-1$, then there must be $s'>1$ such that $j(s'-1)=j(s'+1)=j(s')-1$. But then the subcomputation of the $S$-machine $Z^{(\theta,j(s))}$ with history $H_s$ starts and ends with the $p$-letter of the form $p_{1}^{(\theta,j(s))}$. By Lemma \ref{lm901}, then $H_s$ is empty, a contradiction. Hence $j(s+1)=j(s)+1$ for every $s$, which implies that $m=N$, and $j(s)=s$ for every $s$. By Lemma \ref{lm90}, part 2, each $H_s$ is uniquely determined by the word $u_s$ (and $\theta$) and is equal to a copy of $D(u_s)$ defined in Lemma \ref{lm90}, part 2. This implies the uniqueness of $H'.$ The fact that all words in the computation $W\cdot \bar\theta \to...\to W\cdot H$ are positive
follows from Lemma \ref{lm90}, part 2.
\endproof

\begin{lemma}\label{lma}
Let $H$ be  the history of a reduced computation $W\to...\to W\cdot H$ of $M_2$ with the standard base, $H\equiv \zeta(\theta_1)^{\epsilon_1} H'\zeta(\theta_2)^{\epsilon_2}$, where $\theta_1,\theta_2$ are positive rules of the $S$-machine $M_1$,  $H'$ does not contain rules corresponding to rules of $M_1$ and occurrences of $\zeta(\theta)^{\pm 1}$, $\epsilon_1,\epsilon_2=\pm1, i=1,2$. Then $H'$ is empty, $\epsilon_1=1,$ and $\epsilon_2=-1.$
\end{lemma}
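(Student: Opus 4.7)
The plan is to track a ``$\theta$-color'' on state letters: assign color $\theta$ to every $(q')^{(\theta,i)}$ and every $p_*^{(\theta,j)}$, and leave $q'_i$ and $p^{(j)}$ uncolored. Among the rule families of $M_2$, only $\bar\theta^{\pm 1}$ and $\zeta(\theta)^{\pm 1}$ change the color of state letters, while every rule of $Z^{(\theta,i)}$ has both sides $\theta$-colored and hence preserves the color. Since $H'$ by hypothesis contains no rule of the first two families, the subcomputation determined by $H'$ preserves the color of every state letter.

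If $\epsilon_1=1$, then after $\zeta(\theta_1)$ every state letter is uncolored, so no rule of any $Z^{(\theta,i)}$ can apply, which forces $H'=\emptyset$. The concluding $\zeta(\theta_2)^{\epsilon_2}$ then acts on an uncolored word: $\zeta(\theta_2)$ requires color $\theta_2$ and cannot apply, while $\zeta(\theta_2)^{-1}$ requires uncolored state letters and does apply. Hence $\epsilon_2=-1$, which is exactly the claimed conclusion. If instead $\epsilon_1=-1$, then after $\zeta(\theta_1)^{-1}$ every state letter carries color $\theta_1$, and this color persists through $H'$; so $\zeta(\theta_2)^{-1}$ (demanding uncolored letters) cannot occur at the end, forcing $\epsilon_2=1$ and therefore $\theta_2=\theta_1$. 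Reducedness of $H=\zeta(\theta_1)^{-1}H'\zeta(\theta_1)$ forces $H'$ to be nonempty, since otherwise $\zeta(\theta_1)^{-1}\zeta(\theta_1)$ would cancel. It remains to rule out this subcase.

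In this subcase, every $p$-letter of $W\cdot\zeta(\theta_1)^{-1}$ equals $p_3^{(\theta_1,j)}$, and the terminal word of $H'$ (domain of $\zeta(\theta_1)$) again has all $p$-letters equal to $p_3^{(\theta_1,j)}$. I will decompose the nonempty reduced $H'$ into maximal blocks $G_1G_2\ldots G_m$ of rules from a single $Z^{(\theta_1,i_k)}$. The idle clauses in the definition of $\bar\tau_i(\theta_1)$ demand, at any application of a rule of $Z^{(\theta_1,i)}$, that $p_j=p_3^{(\theta_1,j)}$ for $j<i$ and $p_s=p_1^{(\theta_1,s)}$ for $s>i$. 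The initial and terminal ``all $p_3$'' states therefore force $i_1=i_m=N$, and compatibility of consecutive idle patterns at each block transition forces $|i_{k+1}-i_k|=1$ (otherwise some intermediate $p_j$ would be required simultaneously to be $p_1$ and $p_3$). Hence $(i_k)$ is a unit-step walk on $\{1,\ldots,N\}$ with $i_1=i_m=N$. When $m=1$, $G_1$ is a reduced $Z^{(\theta_1,N)}$-computation whose $p$-letter remains $p_3^{(\theta_1,N)}$ at both endpoints, which is empty by Lemma \ref{lm901}, contradicting nonemptiness. When $m>1$, the walk necessarily has a local maximum $k$ with $i_{k-1}=i_{k+1}=i_k-1$; on both sides of $G_k$ the idle value of $p_{i_k}$ is $p_1^{(\theta_1,i_k)}$, so $G_k$ is a reduced $Z^{(\theta_1,i_k)}$-subcomputation whose $p$-letter starts and ends at $p_1^{(\theta_1,i_k)}$, again forced empty by Lemma \ref{lm901} and contradicting the maximality of the block decomposition. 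The main obstacle will be verifying at each such block boundary that Lemma \ref{lm901}'s $A_0^{\pm 1}$-alphabet hypothesis is met; this will follow from the sector-locking imposed by $\zeta(\theta_1)^{-1}$ (which empties every $P_jQ_j'$-sector at the endpoints of $H'$) together with the fact that from a state with $p=p_3$ or $p=p_1$ the only applicable rules force the adjacent sectors into the $A_0^{\pm 1}$-alphabet, so any nonpositive tape letter would already obstruct the first rule of a peak block.
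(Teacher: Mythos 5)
Your overall route is the paper's route: for $\epsilon_1=1$ you note that no rule of any $Z^{(\theta,s)}$ applies to a word whose $p$-letters are the uncolored $p^{(j)}$, so $H'$ is empty and the final rule can only be $\zeta(\theta_2)^{-1}$; you then reduce to $\epsilon_1=-1$, $\epsilon_2=1$, decompose $H'$ into maximal blocks of single auxiliary machines whose indices change by $\pm 1$, use the all-$p_3$ endpoints to force the first and last block index to be $N$, and try to kill an interior block by Lemma \ref{lm901}. The defect is in the last step: your claim that for $m>1$ the index walk $i_1,\dots,i_m$ with $i_1=i_m=N$ must contain a local maximum $k$ with $i_{k-1}=i_{k+1}=i_k-1$ is false. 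Since $N$ is the top of the admissible range, the walk starts and ends at its global maximum, and a V-shaped walk such as $N,\ N-1,\ N$ has no interior local maximum at all, so the block to which you want to apply the $p(1)R$ case of Lemma \ref{lm901} need not exist. What is guaranteed (for $m\ge 3$; $m=2$ is already impossible because $i_2=N-1\ne N$) is an interior local \emph{minimum}, $i_{k-1}=i_{k+1}=i_k+1$. For such a $k$ the neighbouring blocks run the machine with index $i_k+1>i_k$, so the idle value of $p_{i_k}$ on both sides of $G_k$ is $p_3^{(\theta_1,i_k)}$, and it is the $p(3)R$ case of Lemma \ref{lm901} that forces $G_k$ to be empty. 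This is precisely the paper's argument (``for some $s$ we must have $j(s-1)=j(s+1)=j(s)+1$''), and since Lemma \ref{lm901} covers both the $p(1)$ and the $p(3)$ situations, your proof is repaired by replacing ``local maximum'' with ``local minimum'' and $p_1$ with $p_3$ at this point; your treatment of $m=1$ via the $p_3$ case is fine and consistent with this.

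A secondary remark: you leave the verification of the hypotheses of Lemma \ref{lm901} (the $p$-letter adjacent to its right neighbour, and the $A_0^{\pm 1}$ condition on the tape letters) as a promissory note. The adjacency is in fact part of the idle conditions you already invoke, and the paper itself does not spell out the alphabet condition either, referring instead to the argument of Lemma \ref{lmb}; so this is not the decisive issue — the decisive one is the nonexistent local maximum.
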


\proof At first, let us  prove that $H'$ is empty. If $\epsilon_1=1$, then the $p$-letters in $W\cdot \zeta(\theta_1)$ have the form $p^{(j)}$, and no rules from any $Z^{(\theta,s)}$ apply to words with such $p$-letters, hence $H'$ is empty (because by assumption it can contain only rules of various $Z^{(\theta,j)}$). Thus we can assume that $\epsilon_1=-1$.
Similarly $\epsilon_2=1$. As in the proof of Lemma \ref{lmb}, $H'\equiv H_1H_2...H_m,$ where each $H_i$ is a non-empty history of computation of some $Z^{(\theta_1, j(s))}$ such that $j(s+1)=j(s)\pm 1$, $j(1)=N.$
 Note that the $p$-letters in $W\cdot \zeta(\theta_1)\iv$ and in $W\cdot \zeta(\theta_1)\iv H'$ are of the form $p_3^{(\theta_1,j)}$. Therefore for some $s$, we must have $j(s-1)=j(s+1)=j(s)+1.$
As in the proof of Lemma \ref{lmb}, this implies that $H_s$ is empty, a contradiction.

Since $H'$ is empty, the $p$-letters in $W\cdot \zeta(\theta_1)$ have no $\theta$-indices
because otherwise they have to be equal to both $\theta_1$ and $\theta_2$, and $H\equiv
\zeta(\theta_1)^{-1} \zeta(\theta_1)$ would not be reduced. Therefore $\epsilon_1=1,$ and $\epsilon_2=-1.$
\endproof

\begin{lemma}\label{lme}
Let $H$ be  the history of a reduced computation $W\to...\to W\cdot H$ of $M_2$ with the standard base, $H\equiv \bar\theta_1\iv H'\bar\theta_2$, where $\theta_1,\theta_2$ are positive rules of the $S$-machine $M_1$,  $H'$ does not contain rules corresponding to the rules of $M_1$. Then $H'$ is empty.
\end{lemma}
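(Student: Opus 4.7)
The plan is to track which rules of $M_2$ can possibly appear in $H'$ by examining the form of the state letters along the computation. By the construction of $M_2$, immediately after applying $\bar\theta_1\iv$ the admissible word has ``plain'' state letters: each $q$-letter lies in some $Q_i$ without a $\theta$-index, and each $p$-letter has the form $p^{(j)}$ rather than $p_k^{(\theta,j)}$. Symmetrically, $\bar\theta_2$ can be applied only to a word whose state letters are plain. Hence the configuration at the beginning and at the end of the segment produced by $H'$ consists of plain state letters.

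Next I would classify the remaining rule types. Since $H'$ contains no rule corresponding to a rule of $M_1$, every letter of $H'$ is either a rule of some machine $Z^{(\theta,i)}$ or a transition rule $\zeta(\theta)^{\pm 1}$. Both the rules of $Z^{(\theta,i)}$ and the positive transition $\zeta(\theta)$ require state letters carrying the $\theta$-index, so none of them can be applied to a word with plain state letters; only $\zeta(\theta)\iv$ is available at such a configuration, and it converts plain state letters into $\theta$-indexed ones. Symmetrically, to arrive back at plain state letters just before $\bar\theta_2$, the last rule of $H'$ (if $H'$ is nonempty) must be a positive transition $\zeta(\theta')$.

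From this dichotomy I would derive a contradiction whenever $H'$ is nonempty. If $H'$ contains a single $\zeta$-rule, it would have to be simultaneously the first (hence $\zeta(\theta)\iv$) and the last (hence $\zeta(\theta')$), which is impossible. If $H'$ contains at least two $\zeta$-rules, let $\zeta_a$ and $\zeta_b$ be the first two of them; between $\zeta_a$ and $\zeta_b$ only rules of the $Z^{(\theta,i)}$ machines occur. The sub-computation bracketed by $\zeta_a$ and $\zeta_b$ then satisfies the hypothesis of Lemma~\ref{lma}, which forces $\zeta_a$ to be positive. This contradicts what was established in the previous paragraph, namely that the first $\zeta$-rule of $H'$ has to be negative. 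Therefore $H'$ contains no $\zeta$-rules at all, so it consists solely of rules of the various $Z^{(\theta,i)}$; but such rules cannot be applied to a word with plain state letters, so $H'$ must in fact be empty.

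The main obstacle I anticipate is purely bookkeeping: one must be certain that the alphabets of state letters produced by $\bar\theta_1\iv$ are disjoint from the domains of every rule other than $\zeta(\theta)\iv$ and the $\bar\theta$'s. This is exactly the content of the careful splitting of the $q$-letters into indexed and non-indexed versions, and of the $p$-letters into the three types $p^{(j)}$, $p_k^{(\theta,j)}$ used in the definition of $M_2$, so once that is checked the argument reduces to Lemma~\ref{lma} as above.
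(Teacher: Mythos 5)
Your proof is correct and follows essentially the same route as the paper: you observe that the words adjacent to $\bar\theta_1\iv$ and $\bar\theta_2$ have state letters without $\theta$-indices, so a nonempty $H'$ must begin with some $\zeta(\theta_3)\iv$ and end with some $\zeta(\theta_4)$, and then a bracketed segment between two $\zeta$-occurrences contradicts Lemma~\ref{lma}. The paper's proof is exactly this reduction (it extracts a subword $\zeta(\theta')\iv H''\zeta(\theta'')$ violating Lemma~\ref{lma}), so no further changes are needed.
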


\proof
Indeed if $H'$ is not empty, it must start with $\zeta(\theta_3)\iv$, and end with $\zeta(\theta_4)$,
for some positive $\theta_3, \theta_4$, and then $H'$ would  have a subword of the form $\zeta(\theta')\iv H''\zeta(\theta'')$ satisfying to the assumptions of Lemma \ref{lma},
which contradicts with the statement of Lemma \ref{lma}.
\endproof

\begin{lemma} \label{lmc}
Let $H$ be  the history of a reduced non-empty computation $W\to...\to W\cdot H$ of $M_2$ with the standard base, $H\equiv \bar\theta_1 H'\bar\theta_2\iv$, where $\theta_1,\theta_2$ are positive rules of the $S$-machine $M_1$, and let $H'$  contain no rules corresponding to rules of $M_1$. Then $H\equiv \Pi_{1,2}(\theta_1,W)\Pi_{1,2}(\theta_2,W\cdot H)\iv$, all words in that computation except possibly the first and the last ones are positive,  and $\theta_1\ne\theta_2$.
\end{lemma}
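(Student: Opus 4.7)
The strategy is to track, at each point of the computation, which $\theta$-indices appear on state letters and which subscripts appear on $p$-letters, and to use this to show that $H'$ must contain exactly two $\zeta$-rules, namely $\zeta(\theta_1)$ and $\zeta(\theta_2)\iv$, separated by nothing. First I would observe that right after $\bar\theta_1$ is applied, every $q$-letter in $W\cdot \bar\theta_1$ carries the $\theta_1$-index and every $p$-letter has the form $p_1^{(\theta_1,j)}$. Among the rules of $M_2$, only the rules of $Z^{(\theta_1,i)}$ preserve such words, and the transition rule $\zeta(\theta_1)$ becomes applicable only once all $p$-letters have been moved to the form $p_3^{(\theta_1,j)}$. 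Since $H'$ contains no rules corresponding to rules of $M_1$, this already forces the prefix of $H$ up to the first $\zeta$-rule occurring inside $H'$ to consist only of $Z^{(\theta_1,\cdot)}$-rules followed eventually by $\zeta(\theta_1)$.

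Next I would show that $H'$ indeed contains a $\zeta$-rule. If it did not, then $Z^{(\theta_1,i)}$-rules (which preserve $\theta_1$-indices) would be applied throughout $H'$, and the word just before $\bar\theta_2\iv$ would still carry $\theta_1$-indices, making $\bar\theta_2\iv$ inapplicable. Let $\zeta(\theta_3)^{\epsilon_3}$ be the first $\zeta$-rule appearing in $H'$. Since the word just before it has $\theta_1$-indices, applicability forces $\epsilon_3 = 1$ (the rule $\zeta(\theta)\iv$ requires a word with no $\theta$-index) and $\theta_3 = \theta_1$. Therefore the prefix $\bar\theta_1 [\text{rules of } Z^{(\theta_1,\cdot)}] \zeta(\theta_1)$ satisfies the hypotheses of Lemma \ref{lmb} and equals $\Pi_{1,2}(\theta_1, W)$.

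Now I turn to what follows $\zeta(\theta_1)$ inside $H$. After $\zeta(\theta_1)$, state letters have no $\theta$-index and $p$-letters are of the form $p^{(j)}$, so no $Z^{(\theta,i)}$-rule applies and the very next rule must be another $\zeta$-rule. Since $\zeta(\theta)$ requires $\theta$-indices, this $\zeta$-rule has the form $\zeta(\theta_2')\iv$, with $\theta_2' \ne \theta_1$ by reducedness. If $H'$ contained yet another $\zeta$-rule after $\zeta(\theta_2')\iv$, the same index analysis would force it to be $\zeta(\theta_2')$ (positive), but then Lemma \ref{lma} applied to the subcomputation starting with $\zeta(\theta_2')\iv$ and ending with $\zeta(\theta_2')$ would require the sign pattern $(+1,-1)$, not $(-1,+1)$, a contradiction. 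Hence $\zeta(\theta_2')\iv$ is the only other $\zeta$-rule in $H$, and the suffix has the form $\zeta(\theta_2')\iv [\text{rules of } Z^{(\theta_2',\cdot)}] \bar\theta_2\iv$. For $\bar\theta_2\iv$ to be applicable at the end we must have $\theta_2' = \theta_2$.

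Reading this suffix backwards from $W\cdot H$ produces a computation of the form $\bar\theta_2 [\ldots] \zeta(\theta_2)$ satisfying the hypotheses of Lemma \ref{lmb}, hence equal to $\Pi_{1,2}(\theta_2, W\cdot H)$; therefore the suffix itself is $\Pi_{1,2}(\theta_2, W\cdot H)\iv$, and concatenating with the prefix yields $H \equiv \Pi_{1,2}(\theta_1, W)\,\Pi_{1,2}(\theta_2, W\cdot H)\iv$. Reducedness of $H$ at the junction $\zeta(\theta_1)\zeta(\theta_2)\iv$ forces $\theta_1 \ne \theta_2$. Positivity of all intermediate words is inherited from the positivity clause of Lemma \ref{lmb} applied to each of the two factors. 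The main technical point is the sign and index bookkeeping that rules out additional $\zeta$-rules inside $H'$; once that is established, Lemma \ref{lmb} does all the remaining work.
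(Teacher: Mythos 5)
Your index-tracking argument from the first $\zeta$-rule onward is sound, but the step where you claim $H'$ must contain a $\zeta$-rule has a genuine gap. You argue that if $H'$ consisted only of rules of the machines $Z^{(\theta_1,\cdot)}$, then the word just before $\bar\theta_2\iv$ would still carry $\theta_1$-indices, "making $\bar\theta_2\iv$ inapplicable." That is true only when $\theta_2\ne\theta_1$ — but $\theta_1\ne\theta_2$ is part of the conclusion of the lemma, not a hypothesis, so you may not use it here. If $\theta_2=\theta_1$, the rule $\bar\theta_2\iv=\bar\theta_1\iv$ applies exactly to words whose state letters carry the $\theta_1$-index (with each control letter of the form $p_1^{(\theta_1,j)}$ standing next to its $q$-letter), so nothing in your bookkeeping rules out a reduced nonempty computation $\bar\theta_1 H'\bar\theta_1\iv$ in which $H'$ is a nonempty product of histories of the machines $Z^{(\theta_1,j)}$ returning every $p$-letter to the state $p_1^{(\theta_1,j)}$ adjacent to its $q$-letter. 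Were such a computation to exist, both the claimed decomposition of $H$ and the claim $\theta_1\ne\theta_2$ would fail, so this case genuinely has to be excluded.

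Excluding it is precisely the first paragraph of the paper's proof, and it needs a real argument beyond index tracking: one decomposes $H'$ into maximal segments belonging to the individual machines $Z^{(\theta_1,j)}$ (as in the proofs of Lemmas \ref{lmb} and \ref{lma}) and invokes Lemma \ref{lm901}, using that in the domain and range of $\bar\theta_1^{\pm 1}$ all tape letters lie in the subalphabets $Y_{i,0}$ and every $p_1^{(\theta_1,j)}$ is adjacent to its $q$-letter, to conclude that each segment, hence $H'$ itself, is empty; then $H\equiv\bar\theta_1\bar\theta_1\iv$ is not reduced, a contradiction. Once this case is supplied, the rest of your proof (the first $\zeta$-rule is $\zeta(\theta_1)$, the next rule is $\zeta(\theta_2')\iv$, Lemma \ref{lma} forbids further $\zeta$-rules, applicability of $\bar\theta_2\iv$ forces $\theta_2'=\theta_2$, and Lemma \ref{lmb} applied to the two halves gives the decomposition and positivity) coincides with the paper's; your derivation of $\theta_1\ne\theta_2$ from reducedness at the adjacent junction $\zeta(\theta_1)\zeta(\theta_2)\iv$ is a legitimate small shortcut compared with the paper's appeal to the uniqueness statement of Lemma \ref{lmb}.
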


\proof If $H$ does not contains the rule $\zeta(\theta_1)$, we get that $H'$ is empty as in the proof of Lemma \ref{lma}. 
Note that the $p$-letters of the admissible words in the domain of $\bar\theta_1\iv$ (of
$\bar\theta_2\iv$) have the form $p_1^{(\theta_1,i)}$ (resp., $p_1^{(\theta_2,i)}$). It follows that $\theta_1\equiv\theta_2,$ and so the history $H$ is not reduced, a contradiction.

Hence we can assume that $H$ contains $\zeta(\theta_1)$. Then the next rule in $H$ must be $\zeta(\theta_3)\iv$  for some $\theta_3$ because only rules of the form $\bar\theta$ and $\zeta(\theta)\iv$ for positive $\theta$  are applicable to admissible words in the range
 of  $\zeta(\theta_1)$ and $H'$ does not contain rules corresponding to rules of $M_1$. After application of $\zeta(\theta_3)\iv$, all $p$-letters in the admissible word have the form $p_3^{(\theta_3,i)}$. Recall that the $p$-letters of the admissible words in the domain of $\bar\theta_2\iv$ have the form $p_1^{(\theta_2,i)}$.
Thus  if $\theta_3\ne\theta_2$, the word $H'$ must contain a subword $\zeta(\theta_3)\iv H''\zeta(\theta_3),$ where $H''$ does not contain rules of the form $\zeta(\theta)$ and their inverses. By Lemma \ref{lma}, $H''$ is empty, and the computation is not reduced, a contradiction.  Hence $H'$ has the form $H''\zeta(\theta_1)\zeta(\theta_2)\iv H''',$ where $H'', H'''$ do not contain rules of the form $\bar\theta^{\pm 1}$ and rules of the form $\zeta(\theta)^{\pm 1}$ by Lemma \ref{lma}. Applying now Lemma \ref{lmb} to the  computation with history $\bar\theta_1H''\zeta(\theta_1)$ and the first word $W$, and to the computation with history $\bar\theta_2(H''')\iv\zeta(\theta_2)$ and the first word $W\cdot H$, we obtain the desired equality $H\equiv \Pi_{1,2}(\theta_1,W)\Pi_{1,2}(\theta_2,W\cdot H)\iv$
and the fact that all words in that computation are positive except possibly the first and the last words.
Now if $\theta_1=\theta_2$, then $H'$ must be empty (since the computation is reduced and $H'$ is a product of two mutually inverse
words in that case by the uniqueness statement of Lemma \ref{lmb}) (b), and so the computation is empty, a contradiction.
\endproof

For every admissible word $W$ of $M_2$ with the standard base,
let \label{pi21} $\pi_{2,1}(W)$ be the word obtained by removing state $p$-letters, removing $\theta$-indices 
(if any exist) of other state letters, and removing the $M_2$-indices
of $a$-letters, and reducing the resulting word. We
obtain a word in the alphabet of state and tape letters of $M_1$.

With every admissible word $W$ of the $S$-machine $M_1$ with the standard base we associate  the admissible word \label{pi12} $\pi_{1,2}(W)$ of $M_2$ by inserting the
state letters $p^{(j)}$-s next to the left
of $q_j$-s, and replacing every $a$-letter $a$ by $a_0$. (We insert the special letters
$p^{(1,j)}$-s instead of $p^{(j)}$-s if the word $W$ is admissible for the unique start command of $M_1.$) Let $W_0$ be the stop word of $M_1$ (considered as an $S$-machine). It exists because the Turing machine $M_1$ is recognizing. We call  the word $\pi_{1,2}(W_0)$ the {\em stop word} of $M_2$.

Note that we have

\begin{equation}\label{p21}
\pi_{2,1}(\pi_{1,2}(W))\equiv W.
\end{equation}

For every input configuration
$W$ of the $S$-machine $M_1,$ we call $\pi_{1,2}(W)$ an {\em input word} of $M_2$. Note that an input word of
$M_2$ has the standard base and all sectors except the $q_0p_1$-sector are empty.

For every rule $\theta'$ of $M_2$, if $(\theta')^{\pm 1}$ corresponds to a positive rule $\theta$ (i.e. if $\theta'\equiv\bar\theta^{\pm 1}$)
of $M_1$
we denote $\theta^{\pm 1}=\Pi_{2,1}(\theta')$. If
$(\theta')^{\pm 1}$ does not correspond to a rule of $M_1$, we denote by $\Pi_{2,1}(\theta')$ the empty rule. The map \label{Pi21} $\Pi_{2,1}$
extends to histories of computations in the natural way.

\begin{lemma}\label{lmP} If $H$ is the reduced history of a computation of $M_2$ with the standard base and $W\cdot H=W'$,
then $\Pi_{2,1}(H)$ is
a reduced history of computation of the $S$-machine $M_1.$
If $\pi_{2,1}(W)$ is an admissible word for the $S$-machine $M_1$, then

\begin{equation}\label{P21}
\pi_{2,1}(W)\cdot \Pi_{2,1}(H) =\pi_{2,1}(W').
\end{equation}
\end{lemma}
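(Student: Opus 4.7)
My plan is to decompose $H$ along the rules that correspond to positive rules of $M_1$. Write $H \equiv H_0 R_1 H_1 R_2 H_2 \cdots R_k H_k$, where each $R_i \equiv \bar\theta_i^{\epsilon_i}$ with $\theta_i$ a positive rule of $M_1$ and $\epsilon_i = \pm 1$, and each $H_i$ contains no rule corresponding to any rule of $M_1$ (i.e. each $H_i$ is a word in the rules of the various machines $Z^{(\theta,j)}$ and in the transition rules $\zeta(\theta)^{\pm 1}$). By the definition of the extension of $\Pi_{2,1}$ to histories, $\Pi_{2,1}(H) \equiv \theta_1^{\epsilon_1}\theta_2^{\epsilon_2}\cdots\theta_k^{\epsilon_k}$.

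For reducedness of $\Pi_{2,1}(H)$, I rule out free cancellation between consecutive letters $\theta_i^{\epsilon_i}\theta_{i+1}^{\epsilon_{i+1}}$, which can only occur if $\epsilon_i = -\epsilon_{i+1}$. If $\epsilon_i = 1$ and $\epsilon_{i+1} = -1$, then the subword $\bar\theta_i H_i \bar\theta_{i+1}^{-1}$ is a reduced non-empty computation of $M_2$ with standard base whose middle $H_i$ contains no rules corresponding to rules of $M_1$, so Lemma~\ref{lmc} applies directly and gives $\theta_i \ne \theta_{i+1}$. If $\epsilon_i = -1$ and $\epsilon_{i+1} = 1$, Lemma~\ref{lme} applied to the same subword forces $H_i$ to be empty; then $\bar\theta_i^{-1}\bar\theta_{i+1}$ is a two-letter subword of the reduced history $H$, so $\theta_i \ne \theta_{i+1}$. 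Hence $\Pi_{2,1}(H)$ is a reduced word in the rule-letters of $M_1$.

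For the projection identity I analyse the effect of each of the three types of rules of $M_2$ on $\pi_{2,1}$. A rule of $Z^{(\theta,j)}$ modifies only the portion of an admissible word lying in the $Q_j'P_{j+1}Q_{j+1}'$-part of the base, and by Lemma~\ref{lm569} applied to the corresponding copy of $Z(Y_j(\theta))$, the projection of the tape letters in that portion onto $Y_j$ is freely unchanged; the accompanying state-letter relabellings are invisible under $\pi_{2,1}$, which erases $p$-letters, $\theta$-indices and $M_2$-indices. The transition rule $\zeta(\theta)^{\pm 1}$ only toggles $\theta$-indices of $q$-letters and swaps $p_3^{(\theta,j)}$ with $p^{(j)}$, all of which $\pi_{2,1}$ discards, so $\pi_{2,1}$ is again preserved. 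Finally, direct inspection of the definition of $\bar\theta$ shows that after stripping $p$-letters, $\theta$-indices, and $M_2$-indices from both sides, the effect of $\bar\theta^{\pm 1}$ on an $M_2$-admissible word becomes exactly the effect of $\theta^{\pm 1}$ on its $\pi_{2,1}$-projection; moreover the applicability of $\bar\theta^{\pm 1}$ in $M_2$ forces the applicability of $\theta^{\pm 1}$ in $M_1$.

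Combining these three facts by induction on the length of $H$ yields (\ref{P21}); the hypothesis that $\pi_{2,1}(W)$ is $M_1$-admissible propagates automatically, since the first two types of rules preserve $\pi_{2,1}$ and a rule of the third type produces a new $M_1$-admissible word via application of $\theta^{\pm 1}$. I expect the main technical hurdle to be the precise identification of the subwords of $H$ to which Lemmas~\ref{lmc} and \ref{lme} should be applied in the reducedness argument (one has to verify each time that the relevant ``middle'' piece is free of rules corresponding to rules of $M_1$, which is exactly how the $H_i$ were chosen); the projection part is then essentially routine once Lemma~\ref{lm569} is invoked.
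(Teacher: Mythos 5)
Your proposal is correct and follows essentially the same route as the paper: the projection identity is obtained from the definition of $M_2$ together with Lemma \ref{lm569} (rules of the $Z^{(\theta,i)}$ do not change $\pi_{2,1}$), and reducedness of $\Pi_{2,1}(H)$ is obtained from Lemmas \ref{lmc} and \ref{lme}. Your write-up merely spells out the decomposition of $H$ and the two cancellation cases that the paper's terse proof leaves implicit.
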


\proof The fact that $\pi_{2,1}(H)$ is a history of computation of the $S$-machine $M_1$, and (\ref{P21}), immediately follows from the definition of $M_2$ and the fact that application of rules from $Z^{(\theta,i)}$ does not change
the value of $\pi_{2,1}$  by Lemma \ref{lm569}. The fact that $\pi_{2,1}(H)$ is reduced follows from Lemmas \ref{lmc} and \ref{lme}
\endproof

The next lemma-definition
gives in a sense an inverse function of $\Pi_{2,1}$.

\begin{lemma}\label{Pi12}
For every positive $\bar\theta$-admissible word
$W$ of $M_2$ with the standard base such that there
exists a 
computation 
$w\to w_1\to\dots \to w\cdot H$ 
of the Turing machine
$M_1$ with positive history $H$, starting with $w\equiv \pi_{2,1}(W)$ and having the first rule $\theta$, and all admissible words
positive
there exists
a unique
reduced
computation of $M_2$ starting with $W\to W\cdot\bar\theta$, whose history is $H'$ such that $\Pi_{2,1}(H')\equiv H$ and
the last rule is of the form $\zeta(\theta).$
That history $H'$ will be denoted by $\Pi_{1,2}(H, W)$. This definition agrees with the notation $\Pi_{1,2}(\theta,W)$ of Lemma \ref{lmb}.
\end{lemma}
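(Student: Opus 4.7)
The plan is to proceed by induction on the length $k$ of $H = \theta_1\theta_2\cdots\theta_k$ (with $\theta_1 \equiv \theta$), building the desired $M_2$-history as a concatenation of blocks of the form $\Pi_{1,2}(\theta_j, W_{j-1})$ produced by Lemma \ref{lmb}.

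For existence, the base case $k=1$ is Lemma \ref{lmb}: since $W$ is positive and $\bar\theta$-admissible, the canonical history $\Pi_{1,2}(\theta, W)$ consisting of $\bar\theta$, the $D(u_s)$-histories in $Z^{(\theta,1)},\dots,Z^{(\theta,N)}$, and $\zeta(\theta)$ is a reduced lift, and Lemma \ref{lm90}(2) guarantees that all intermediate words are positive. The final word $W_1$ satisfies $\pi_{2,1}(W_1) = w\cdot\theta$ by (\ref{P21}), has $a$-letters in $Y_{s,0}$, and has $p$-letters of the form $p^{(j)}$. In the inductive step, given the lift for $\theta_1\cdots\theta_{k-1}$ ending in $W_{k-1}$ with these three properties, the applicability of $\theta_k$ to $\pi_{2,1}(W_{k-1})$ in $M_1$ together with the form of the left-hand side of $\bar\theta_k$ (involving only $p^{(j)}$-letters) gives $\bar\theta_k$-admissibility of $W_{k-1}$. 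We append $\Pi_{1,2}(\theta_k, W_{k-1})$; the junction $\zeta(\theta_{k-1})\bar\theta_k$ is reduced since these two rules are of different types, and $\Pi_{2,1}(H')\equiv H$ follows from the definition of $\Pi_{2,1}$.

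For uniqueness, let $H^*$ be any reduced history satisfying the stated conditions. Since $H$ is entirely positive, the occurrences of rules of the form $\bar\phi^{\pm 1}$ in $H^*$ must be $\bar\theta_1,\dots,\bar\theta_k$ in order, so $H^* \equiv \bar\theta_1 A_1 \bar\theta_2 A_2 \cdots \bar\theta_k A_k$ with each $A_j$ containing only $Z$-rules and $\zeta$-rules. After $\bar\theta_j$ the state letters carry $\theta_j$-indices, so the maximal $\zeta$-free prefix $B_j$ of $A_j$ consists exclusively of $Z^{(\theta_j,\cdot)}$-rules, and the only $\zeta$-rule applicable immediately after $B_j$ is $\zeta(\theta_j)$. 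Hence $A_j$ begins with $B_j\zeta(\theta_j)$, and Lemma \ref{lmb} applied to $\bar\theta_j B_j \zeta(\theta_j)$ identifies this prefix with the canonical block of $\Pi_{1,2}(\theta_j, W_{j-1})$. It remains to show that the suffix $C_j$ of $A_j$ after $B_j\zeta(\theta_j)$ is empty. If not, $C_j$ must begin with $\zeta(\phi)\iv$ for some positive $\phi\ne\theta_j$ (by reducedness across the juncture $\zeta(\theta_j)\zeta(\phi)\iv$); since at this moment all $a$-letters lie in $Y_{s,0}$ and all $p$-letters become $p_3^{(\phi,j)}$ after $\zeta(\phi)\iv$, Lemma \ref{lm901} applied sector-by-sector forces any $Z^{(\phi,\cdot)}$-subcomputation bracketed by $\zeta(\phi)\iv$ and the next $\zeta(\phi)$ (forced because either $\bar\theta_{j+1}$ follows, requiring no $\theta$-index, or $H^*$ ends with a positive $\zeta$-rule) to be empty, producing the non-reduced subword $\zeta(\phi)\iv\zeta(\phi)$ — a contradiction. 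Thus $A_j = B_j\zeta(\theta_j)$ for every $j$ and $H^* \equiv \Pi_{1,2}(H, W)$.

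The main obstacle is this uniqueness argument, specifically ruling out the ``spurious loops'' of the form $\zeta(\phi)\iv \cdots \zeta(\phi)$ which $\Pi_{2,1}$ collapses to the empty word and which are therefore not directly constrained by $\Pi_{2,1}(H^*)\equiv H$. The rigidity provided by Lemma \ref{lm901} — that no non-trivial $Z(A)$-computation starts and ends with $p_3R$ over $A_0$ — combined with the positivity propagated by the inductive construction is precisely what eliminates these loops, after which Lemma \ref{lmb} pins each block down uniquely.
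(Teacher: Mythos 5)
Your existence argument and your block decomposition of a candidate history $H^*\equiv\bar\theta_1A_1\bar\theta_2A_2\cdots\bar\theta_kA_k$, with each prefix $\bar\theta_jB_j\zeta(\theta_j)$ pinned down by Lemma \ref{lmb}, follow the same route as the paper, which defines $\Pi_{1,2}(H,W)$ as the concatenation of the blocks $\Pi_{1,2}(\theta_j,W_{j-1})$ and deduces uniqueness from Lemmas \ref{lma} and \ref{lmb}. The gap is in your elimination of the spurious factors $\zeta(\phi)^{-1}D\zeta(\phi)$ inside $C_j$. You claim that Lemma \ref{lm901} ``applied sector-by-sector'' forces $D$ to be empty, but this does not follow as stated. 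First, Lemma \ref{lm901} requires that all $a$-letters of \emph{both} endpoint words lie in $A_0$; you only know this for the word just after $\zeta(\phi)^{-1}$ (inherited from $\pi_{1,2}(w_j)$), whereas the domain of the later $\zeta(\phi)$ locks only the sectors to the right of the $p$-letters and does not force the terminal $a$-letters into $Y_{i,0}$ (this can be patched by choosing the loop adjacent to $\bar\theta_{j+1}$, but you do not do so). Second, and more seriously, Lemma \ref{lm901} concerns a single copy of $Z(A)$ with base $LpR$, while $D$ interleaves rules of the various copies $Z^{(\phi,1)},\dots,Z^{(\phi,N)}$; the restriction of $D$ to one sector need not be a reduced history, so at best you conclude that each sector's restriction freely reduces to the empty history, i.e.\ each sector returns to its initial word. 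Sector-wise triviality does not make the word $D$ itself empty (a reduced interleaving of the shape $\tau\sigma\tau^{-1}\sigma^{-1}$ with $\tau,\sigma$ in different copies is not excluded by it), so you never reach the non-reduced subword $\zeta(\phi)^{-1}\zeta(\phi)$.

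What actually kills such a $D$ is the locking structure of the rules $\bar\tau_i(\phi)$: a rule of $Z^{(\phi,i)}$ applies only when the $p$-letters of the copies $j<i$ stand in the $p_3$-locked position and those of the copies $j>i$ in the $p_1$-locked position, so the copies must activate in a sequence $j(s)$ with $j(s+1)=j(s)\pm1$ starting at $N$, and since both endpoints of $D$ have all $p$-letters in $p_3$-position, some copy's sub-block begins and ends in the same locked position and is then annihilated by Lemma \ref{lm901}. This is precisely the proof of Lemma \ref{lma} (which asserts $\epsilon_1=+1$, whereas your loop would have $\epsilon_1=-1$). Since your factor $\zeta(\phi)^{-1}D\zeta(\phi)$, with $D$ free of $\zeta$- and $\bar\theta$-rules, is exactly the configuration that Lemma \ref{lma} forbids, the correct repair is simply to cite that lemma at this point, as the paper does; with that substitution the remainder of your argument goes through.
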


\proof
Indeed, if $H\equiv \theta\theta_1...\theta_n$, where all $\theta_i$ are positive rules of $M_1$, then we can define $\Pi_{1,2}(H,w)$ as $\Pi_{1,2}(\theta,W)\Pi_{1,2}(\theta_1, W_1)...\Pi_{1,2}(\theta_n,W_n),$ where $W_i=\pi_{1,2}(w_i)$ ($i=1,\dots,n$)
. 

For the uniqueness of $H'$, we note that every rule of the form $\bar\theta_i$ 
can follow only after a rule of the form $\zeta(*)$ since $H$ is positive.
It follows from Lemma \ref{lma} that there is only one $\zeta(*)^{\pm 1}$-rule
between  $\bar\theta_i$ and a preceding rule of this form.  Now the uniqueness
of $H'$ follows 
from Lemmas \ref{lmb}. 
\endproof

Every time we are using the notation $\Pi_{1,2}(H,W)$ below, the conditions of Lemma \ref{Pi12} will be assumed or clearly satisfied.

\begin{rk}\label{rkP} Note that if $\pi_{2,1}(W)\cdot H=W_0$, that is the computation of the Turing machine $M_1$
is accepting, then the
corresponding computation of $M_2$ with history $\Pi_{1,2}(H, W)$ is also accepting.
\end{rk}

\begin{lemma}\label{tt} There are no reduced computations $W\to W\cdot\bar\theta\iv\to W\cdot \bar\theta\iv\bar\theta'$ 
with the standard base,
where the first and the third words are positive and $\theta, \theta'$ are positive rules of $M_1$. 
\end{lemma}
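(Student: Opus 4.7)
The plan is to derive a contradiction by projecting the computation to the Turing machine $M_1$ and invoking determinism. Set $W_2=W\cdot\bar\theta\iv$ and $W_3=W\cdot\bar\theta\iv\bar\theta'$.

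First I would show that $W_2$ is positive. The rule $\bar\theta\iv$ acts only at the two ends of each $Q'_{i-1}P_i$-sector: it replaces the prefix $(q')^{(\theta,i-1)}u_i'$ by $q_{i-1}u_i$ and the suffix $v_i'p_1^{(\theta,i)}$ by $v_ip^{(i)}$, where $u_i,u_i',v_i,v_i'$ are all positive words because they come from a positive command of the Turing machine $M_1$; the $P_iQ_i'$-sectors are empty in any $\bar\theta$-admissible word because $\bar\theta$ locks them. Since $W$ is positive, the middle of each $Q'_{i-1}P_i$-sector of $W$ is also positive, and no cancellation can occur at the substitution boundaries (two positive letters never form an inverse pair). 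Therefore $W_2$ is a positive admissible word with the standard base.

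Consequently $\pi_{2,1}(W_2)$ is a positive admissible word of $M_1$ with the standard base, i.e., a configuration of the Turing machine $M_1$. Applying Lemma \ref{lmP} to the one-step $M_2$-computations $W_2\to W$ and $W_2\to W_3$ (with histories $\bar\theta$ and $\bar\theta'$, whose $\Pi_{2,1}$-projections are $\theta$ and $\theta'$) yields
$$
\pi_{2,1}(W_2)\cdot\theta=\pi_{2,1}(W),\qquad \pi_{2,1}(W_2)\cdot\theta'=\pi_{2,1}(W_3),
$$
and both right-hand sides are positive because $W$ and $W_3$ are. By Lemma \ref{ss} these one-step $S$-machine computations are genuine computations of the symmetric Turing machine $M_1$. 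Since $\theta$ and $\theta'$ are positive commands, they come from the deterministic Turing machine provided by Lemma \ref{m0m1}, so determinism forces $\theta=\theta'$. But then $\bar\theta\iv\bar\theta'=\bar\theta\iv\bar\theta$ is not a reduced word, contradicting the hypothesis that the computation is reduced.

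The only non-routine step is the positivity of $W_2$; this is where the carefully engineered structure of $\bar\theta$ matters. If the $a$-letters in the command $\theta$ of $M_1$ were allowed to be negative, or if the $P_iQ_i'$-sectors were not locked by $\bar\theta$, the substitution performed by $\bar\theta\iv$ could introduce inverse letters inside a sector of $W_2$ and thereby destroy the projection-plus-determinism argument. Once the positivity of $W_2$ is secured, Lemmas \ref{lmP} and \ref{ss} together with the determinism of the underlying Turing machine finish the proof with no further calculation.
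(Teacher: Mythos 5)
Your reduction to the Turing machine works only in the case you cannot justify: the positivity of $W_2=W\cdot\bar\theta\iv$, and that claim is false in general. Being in the domain of $\bar\theta\iv$ does not mean that the $Q'_{i-1}P_i$-sector of $W$ literally begins with $u_i'$ and ends with $v_i'$: an $S$-rule only requires the tape letters of the admissible word to lie in the prescribed alphabets, and its application replaces the state letters and then freely reduces (the machine is ``blind''). So applying $\bar\theta\iv$ multiplies the sector content $w$ of $W$ on one side by $u_i(u_i')\iv$ and on the other by $(v_i')\iv v_i$ and reduces; when $\theta$ inserts a tape letter -- say the component is $q_{i-1}\to q_{i-1}'a$, so $u_i$ is empty and $u_i'=a$ -- the result contains $a\iv w\dots$, which is not positive unless $w$ happens to begin with $a$. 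There is no ``prefix/suffix replacement'' to appeal to, and this is precisely the unforeseen behaviour of $S$-machines that makes the lemma nontrivial; it is the case the paper treats separately.

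In that remaining case the paper argues as follows: since $W$ and $W\cdot\bar\theta\iv\bar\theta'$ are positive while $W\cdot\bar\theta\iv$ is not, the only possibility (using Lemma \ref{m0m1}(b)) is that both $\theta$ and $\theta'$ insert a tape letter, so $\bar\theta\iv$ creates a letter $a\iv$ which $\bar\theta'$ must delete in the same sector; then the left-hand sides of all parts of $\theta$ and $\theta'$ coincide, and determinism of the underlying Turing machine forces $\theta\equiv\theta'$, contradicting reducedness of the history. Your first half (when $W\cdot\bar\theta\iv$ is positive) is essentially the paper's first case -- the paper cites Lemma \ref{ss} together with Lemma \ref{mach23}(b) where you invoke Lemmas \ref{lmP} and \ref{ss} plus determinism, which comes to the same thing -- but without the second case the proof is incomplete.
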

\proof 
Assume that such a computation exists and $W\cdot\bar\theta\iv$ is positive too. Then by Lemma \ref{ss} this computation is a reduced computation of the symmetric Turing machine $M_1$ with
history $\theta\iv\theta',$ contrary to the Property (b) of $M_1$ given by Lemma \ref{mach23}.

Now assume that the word $W\cdot\bar\theta\iv$ is not positive. By Property (b) from Lemma \ref{m0m1}, each $\theta, \theta'$ inserts/deletes at most one tape letter.
The only non-trivial case is when $\theta, \theta'$ insert an $a$-letter:
in other cases the second word in the computation is obviously positive.  But then $\bar\theta\iv$ must insert a letter $a\iv$ which is then removed by $\bar\theta'$ (in the same sector). 
Since both rules $\bar\theta,\bar\theta'$ have word $W\cdot \bar\theta\iv$ in their domains, the left hand sides in all parts of the rules of $\theta, \theta'$ coincide. Since  $M_1$ is the symmetrization of a deterministic Turing machine by construction, $\theta\equiv\theta'$ and our computation is not reduced, a contradiction.
\endproof

\begin{lemma}\label{lmd}
Let $H$ be  the history of a reduced computation $W\to...\to W\cdot H$ of $M_2$ with the standard base.

(1) If $H\equiv \bar\theta_1H'\zeta(\theta_2)$, where $\theta_1,\theta_2$ are positive rules of the $S$-machine $M_1$. Then the word $H$ and all words in that computation except possibly the first one are positive.

(2) If $H\equiv \bar\theta_1 H_1\bar\theta_2^{\epsilon_2}\dots \bar\theta_n^{\epsilon_n}H_n\bar\theta_{n+1}^{\epsilon_{n+1}},$
where $\theta_1,\dots \theta_{n+1},$ are positive rules of the $S$-machine $M_1,$ $\epsilon_i=\pm 1,$ 
$(\epsilon_n,\epsilon_{n+1})\ne (-1,1),$ 
and $H_1,\dots,H_n$ have no rules corresponding to the rules
of $M_1,$ then all words in this computation except possibly the first one and the last one are positive.
\end{lemma}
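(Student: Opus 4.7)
I will prove part (2) by induction on $n$ and then deduce part (1), leveraging Lemmas \ref{lmb}, \ref{lma}, \ref{lme}, \ref{lmc}, \ref{tt}, \ref{lm90}, and \ref{lm901}. For the base case $n=1$, the subcase $\epsilon_2=-1$ is Lemma \ref{lmc} verbatim. For $\epsilon_2=+1$, I track $\theta$-indices on state letters: after $\bar\theta_1$ every $p$-letter has form $p_1^{(\theta_1,j)}$ while $\bar\theta_2$'s domain requires plain $p$-letters, so $H_1$ must contain at least one $\zeta$-rule. Iterating Lemma \ref{lma} over pairs of $\zeta$-rules in $H_1$ (which has no rules corresponding to rules of $M_1$) shows $H_1$ contains exactly one such rule, and index-matching forces it to be $\zeta(\theta_1)$. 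Thus $H_1 = Y_0\zeta(\theta_1)$ with $Y_0$ a pure $Z^{(\theta_1,*)}$-subcomputation, and Lemma \ref{lmb} applied to $\bar\theta_1 Y_0\zeta(\theta_1)$ yields the desired positivity, its hypothesis being verified by Lemma \ref{lm90}(2): since the $Z$-subcomputation joins a $p(1)R$-configuration to a $p(3)R$-configuration whose terminal $a$-letters lie in $Y_{j,0}^{\pm 1}$ (forced by the alphabet of the subsequent $\bar\theta_2$), the initial sector contents must themselves be positive.

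For the inductive step, I first rule out any interior $(-1,1)$-pair at some position $(i,i+1)$ with $i < n$. Lemma \ref{lme} then forces $H_i$ to be empty, leaving the raw pattern $\bar\theta_i^{-1}\bar\theta_{i+1}^{+1}$. The inductive hypothesis applied to the strict prefix through $\bar\theta_i^{-1}$ (whose own final pair ends in $-1$ and hence cannot be $(-1,1)$) shows the word just before $\bar\theta_i^{-1}$ is positive, and the inductive hypothesis applied to the strict suffix starting at $\bar\theta_{i+1}^{+1}$ (which inherits the forbidden-pair hypothesis at its tail) shows the word just after $\bar\theta_{i+1}^{+1}$ is positive. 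These two positive words flanking $\bar\theta_i^{-1}\bar\theta_{i+1}^{+1}$ contradict Lemma \ref{tt}. With interior $(-1,1)$-pairs excluded, the final pair is $(1,1)$, $(1,-1)$, or $(-1,-1)$; the final block admits a base-case-style analysis (Lemma \ref{lmc} for $(1,-1)$, and for $(-1,-1)$ a symmetric analysis using Lemma \ref{lm901} to exclude intermediate cycles at $p_3R$-configurations), giving positivity of the block's intermediate words, and the inductive hypothesis on the prefix completes the step.

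For part (1), if $H'$ contains no $\bar\theta$-rule, the base-case argument with $\zeta(\theta_2)$ playing the role of the terminating $\zeta$-rule forces $\theta_2 = \theta_1$ and identifies $H = \Pi_{1,2}(\theta_1,W_0)$; Lemma \ref{lmb} then gives both positivity of $H$ as a word and positivity of every non-initial word in the computation. If $H'$ has $\bar\theta$-rules, part (2) applied to the prefix up through the last $\bar\theta$-rule gives intermediate positivity so far. Then Lemma \ref{lm901}, applied to the trailing $Z$-subcomputation to exclude non-empty $p_3R$-to-$p_3R$ loops (which would produce an unreduced $\zeta(\theta_2)^{-1}\zeta(\theta_2)$), together with reducedness forces the last $\bar\theta$-rule to be positive with $\theta$-value $\theta_2$ and the trailing segment to be a pure $Z^{(\theta_2,*)}$-computation. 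A backward iteration of this reasoning then shows every $\bar\theta$-rule in $H$ is positive, and the analysis of consecutive positive $\bar\theta$-rules shows each $H_i$ also consists of positive rules; consequently $H$ is a positive word.

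The main obstacle is the intricate index bookkeeping across the many rule types, particularly the step ruling out interior $(-1,1)$-patterns, which requires simultaneously propagating intermediate positivity through both a prefix and a suffix of the supposed pattern before appealing to Lemma \ref{tt}. A secondary subtlety is that each invocation of Lemma \ref{lmb} demands a separate verification of its positivity hypothesis, accomplished by tracing the $a$-letter alphabet restriction of the immediately subsequent rule and applying Lemma \ref{lm90}(2).
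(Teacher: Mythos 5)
Your part (2) is essentially the paper's own argument with the roles of the two claims interchanged: the paper proves (1) first, by induction on the length of $\Pi_{2,1}(H)$, and then derives (2), whereas you induct on $n$ in (2) directly; but the working parts are identical (Lemma \ref{lme} to empty the block at a sign change, positivity of the two flanking words from the inductive hypothesis, Lemma \ref{tt} for the contradiction, and Lemma \ref{lmc}/\ref{lmb} for the remaining ``monotone'' sign patterns), and that half of your plan is sound. One small misreading: Lemma \ref{lmb} has no positivity hypothesis for you to verify -- positivity of all words after the first is its conclusion, the required form of $W\cdot\bar\theta$ being automatic because $\bar\theta$ locks the $p^{(i)}q_i$-sectors and only admits $Y_{i,0}$-letters; your appeal to Lemma \ref{lm90}(2) is what the paper uses inside the proof of \ref{lmb}, so this is harmless but not a separate obligation.

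The genuine gap is in your deduction of (1) from (2). You ``apply part (2) to the prefix up through the last $\bar\theta$-rule,'' but part (2) requires the last two exponents of that prefix not to form the pattern $(-1,+1)$, and this is exactly the configuration you may not assume away: a priori $H$ could be $\bar\theta_1\cdots\bar\theta_{k-1}^{-1}H_{k-1}\bar\theta_k Y\zeta(\theta_2)$ with the last $\bar\theta$-rule positive and its predecessor negative, and then (2) simply does not apply to that prefix. Your later sentence ``a backward iteration of this reasoning shows every $\bar\theta$-rule in $H$ is positive'' does not close this, because the reasoning being iterated (index bookkeeping on the $p$-letters, reducedness, Lemma \ref{lm901}/\ref{lma}) is perfectly consistent with an adjacent subword $\bar\theta_{k-1}^{-1}\bar\theta_k$ with $\theta_{k-1}\ne\theta_k$: after $\bar\theta_{k-1}^{-1}$ the $p$-letters are plain, so $\bar\theta_k$ is applicable and no index count forbids it. Excluding it is exactly the content of the paper's Case B in the proof of (1), and it needs the positivity argument, not bookkeeping: Lemma \ref{lme} makes $H_{k-1}$ empty; Lemma \ref{lmb} applied to the trailing block $\bar\theta_k Y\zeta(\theta_2)$ (which you have already identified) gives positivity of the word right after $\bar\theta_k$; part (2) applied to the prefix ending at $\bar\theta_{k-1}^{-1}$ -- whose final pair ends in $-1$, so (2) does apply there -- gives positivity of the word right before $\bar\theta_{k-1}^{-1}$; then Lemma \ref{tt} gives the contradiction. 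With that step inserted, the right-to-left induction over the $\bar\theta$-rules goes through and the rest of your deduction of (1), including positivity of $H$ itself from the $D(u)$-structure of the blocks, is fine.
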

\proof (1) Induction on the length of $\Pi_{2,1}(H)$. Suppose  $\Pi_{2,1}(H)\equiv\theta_1$. Then
by Lemma \ref{lma},
$H'$ does not contain rules of the form $\zeta(\theta)^{\pm 1}$.
Hence we can apply Lemma \ref{lmb} and conclude that all words in the computation except possibly the first one are positive.

Suppose that the length of $\Pi_{2,1}(H)$ is at least 2. Suppose further that  the second letter of $\Pi_{2,1}(H)$ is positive, that is $H\equiv \bar\theta_1H_1\bar\theta_3H_2$ for some positive $\theta_3$
and $H_1$ not containing rules corresponding to the positive rules of $M_1$. Then $H_1$ must end with $\zeta(\theta')$ for some $\theta'.$ 
Then we can apply the induction assumption to the computations with histories $\bar\theta_1H_1$ and $\bar\theta_3H_2$  
and conclude that $H$ and all words, except for the first one, in the computation with history $H$  are positive as desired.

Now suppose that the second letter in $\Pi_{2,1}(H)$ is $\theta_3\iv$ for some positive $\theta_3.$
Since $H$ ends with $\zeta(\theta_2)$
, the last rule in $\Pi_{2,1}(H)$ is positive. Indeed  the rule used in any reduced computation of $M_2$ immediately after
a rule of the form $\bar\theta\iv$ for some positive $\theta$ is either $\zeta(\theta')\iv$ for some positive $\theta'$ or $\bar\theta'$ for some positive $\theta'$ (this can be seen by looking at the indices of $q$-letters of the admissible words). The first option is  impossible by Lemma \ref{lma}
, the second option is impossible since we consider the last rule in $\Pi_{2,1}(H)$.
Hence $H\equiv \bar\theta_1H_1\bar\theta_2\iv H_2...H_{m-1}\bar\theta_m\iv H_m\bar\theta_{m+1}H''$
for some positive rules $\theta_2,...,\theta_{m+1},$ where $H_2,...,H_m$ do not contain rules corresponding to rules of $M_1$
or their inverses, $\theta_{m+1}$ is the second positive rule in $\Pi_{2,1}(H)$. By Lemma \ref{lme}, $H_m$ is empty. 
By Lemma \ref{lmc}, $\bar\theta_1H_1\bar\theta_2\iv\equiv \Pi_{1,2}(\theta_1,W)\Pi_{1,2}(\theta_2,W\cdot \bar\theta_1H_1\bar\theta_2\iv)\iv.$
Now, consider the computation of $M_2$ started with the admissible word $W'=W\cdot \bar\theta_1H_1\bar\theta_2\iv H_2...H_{m-1}\bar\theta_m\iv$ and having the history $\bar\theta_mH_{m-1}\iv...H_2\iv\Pi_{1,2}(\theta_2,W\cdot \bar\theta_1H_1\bar\theta_2\iv).$
By the inductive hypothesis, 
all words in this computation, starting with the second one,  and all words in the computation $W\cdot\bar\theta_1\to...\to W\cdot \Pi_{1,2}(\theta_1,W)$
are positive.  
By the induction assumption,
the computation $W'\cdot\bar\theta_{m+1}\to...\to W\cdot H$  also consists of positive words. Therefore the first and the third words in the subcomputation with history $\bar\theta_m\iv\bar\theta_{m+1}$ are positive contrary to Lemma \ref{tt}.

(2) It is nothing to prove if $n=0.$ The case $\epsilon_2=1$  was
considered in the proof of claim (1). For the case $H\equiv \bar\theta_1H_1\bar\theta_2\iv,$
we also proved that the computation with subhistory $H\equiv \bar\theta_1H_1$ has
all words positive except possibly the first one. Hence we may assume that $n\ge 2.$
If $H\equiv \bar\theta_1H_1\bar\theta_2\iv H_2...H_{n-1}\bar\theta_n\iv H_n\bar\theta_{n+1}\iv,$
then we consider the computation with history $H^{-1}$ and again come to the case $\epsilon_2=1.$ Therefore we assume that $H\equiv \bar\theta_1H_1\bar\theta_2\iv H_2...H_{m-1}\bar\theta_m\iv H_m\bar\theta_{m+1}H'',$ where $m<n$ by the condition on
$(\epsilon_n,\epsilon_{n+1}).$ By Lemma \ref{lme}, we have that $H_m$ is empty. By the
inductive hypothesis, the computations with histories $\bar\theta_1H_1\bar\theta_2\iv H_2...H_{m-1}\bar\theta_m\iv$ and $\bar\theta_m H''$ have all words positive except possibly the
first one and the last one. Therefore we can apply Lemma \ref{tt} to the
computation with history $\bar\theta_m\iv\theta_{m+1}$, a contradiction.

\endproof

\begin{lemma}\label{norep} For every reduced computation $w_0\to\dots\to w_t$ of $M_2$ with the
standard base and a non-empty history $H$, we have $w_t\ne w_0.$
\end{lemma}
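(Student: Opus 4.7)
The plan is to argue by contradiction: assume $H$ is reduced, nonempty, and $w_t=w_0$. By Lemma~\ref{lmP}, $\Pi_{2,1}(H)$ is a reduced history of the $S$-machine $M_1$ satisfying $\pi_{2,1}(w_0)\cdot\Pi_{2,1}(H)=\pi_{2,1}(w_0)$, and I would split the argument on whether this projected history is empty.

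In the case $\Pi_{2,1}(H)\ne\emptyset$, I would induct on $\|H\|$, first passing to the cyclically reduced form of $H$ if needed (a nontrivial cyclic reduction produces a shorter reduced closed computation of $M_2$ at a different basepoint, handled by the induction hypothesis). The equality $w_0=w_t$ then allows cyclically shifting so that $H$ begins with some $\bar\theta_1$ corresponding to a positive rule of $M_1$; by replacing $H$ with $H^{-1}$ if necessary, the exponent pattern of the boundary $\bar\theta^{\pm 1}$-occurrences can be made to fit the hypothesis of Lemma~\ref{lmd}(2). That lemma then forces every intermediate admissible word $w_1,\dots,w_{t-1}$ of the $M_2$-computation to be positive; projecting via $\pi_{2,1}$ and invoking Lemma~\ref{ss}, the intermediate portion becomes a reduced computation of the Turing machine $M_1$ between positive configurations. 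Together with $w_0=w_t$, this yields a nonempty reduced closed computation of the Turing machine $M_1$, contradicting Property~(a) of Lemma~\ref{mach23}(c).

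In the case $\Pi_{2,1}(H)=\emptyset$, the history $H$ contains only rules $\zeta(\theta)^{\pm 1}$ and rules of auxiliary machines $Z^{(\theta,i)}$. I would partition $H=Z_0\zeta_1^{\epsilon_1}Z_1\cdots\zeta_k^{\epsilon_k}Z_k$, with each $Z_\ell$ a (possibly empty) block of $Z^{(*,*)}$-rules. For each $1\le\ell<k$, Lemma~\ref{lma} applied to the reduced sub-history $\zeta_\ell^{\epsilon_\ell}Z_\ell\zeta_{\ell+1}^{\epsilon_{\ell+1}}$ forces $Z_\ell=\emptyset$, $\epsilon_\ell=+1$ and $\epsilon_{\ell+1}=-1$; consistency across overlapping pairs limits $k\le 2$. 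Tracking the \emph{phase} of admissible words (phase~1: no $\theta$-index on $q$-letters; phase~2: a common $\theta$-index), the subcase $k=2$ simultaneously forces $\theta_a=\theta_b$ (from $w_0=w_t$) and $\theta_a\ne\theta_b$ (from reducedness of $\zeta(\theta_a)\zeta(\theta_b)^{-1}$); the subcase $k=1$ changes phase exactly once, so $w_0$ and $w_t$ lie in different phases; both contradict $w_0=w_t$. In the remaining subcase $k=0$, $H$ becomes a reduced nonempty computation of the combined adding machines $Z^{(\theta_0,*)}$ returning to $w_0$; Lemma~\ref{lm90}(1) bounds all intermediate lengths by $\|w_0\|$, and Lemma~\ref{lm89} (uniqueness of constant-length reduced $Z(A)$-computations) rules out such a closed computation. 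The crux of the argument is precisely this last subcase, where the ``monotone adder'' no-return property of $Z(A)$ must be carefully extracted from Lemmas~\ref{lm90}(1) and~\ref{lm89}; a secondary subtlety lies in the cyclic-shift-and-invert step of the first case, which must simultaneously preserve reducedness and realize the sign hypothesis of Lemma~\ref{lmd}(2).
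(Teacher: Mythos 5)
Your overall skeleton (split on whether $\Pi_{2,1}(H)$ is empty; use Lemma \ref{lmd}(2), Lemma \ref{ss} and Lemma \ref{mach23}(c) in the first case, and the $Z(A)$-lemmas in the second) is the paper's, but you omit the one device the paper leans on in both cases: since $w_0\cdot H=w_0$, the closed computation can be iterated, and the paper works with the histories $H^k$ for large $k$. Without this, your first case has a real gap. Lemma \ref{lmd}(2) requires the history to end with a $\bar\theta^{\pm1}$-rule and its conclusion explicitly excludes the first and the last word of the computation, so after your cyclic shift (and possible inversion) you only get positivity of the words strictly inside the segment between the first and the last $\bar\theta$-occurrence; the basepoint $w_0=w_t$ and the words in the tail after the last $\bar\theta$-occurrence are never covered. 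No choice of shift or inversion fixes this -- e.g.\ if $H$ contains a single $\bar\theta^{\pm1}$, every admissible shift leaves the wrap-around stretch (which contains $w_0$) outside the scope of the lemma. But Lemma \ref{ss} needs \emph{all} words of the projected cycle to be positive before you may view it as a computation of the Turing machine $M_1$ and invoke Property (a) of Lemma \ref{m0m1} through Lemma \ref{mach23}(c). The paper resolves exactly this by applying Lemma \ref{lmd}(2) inside $H^k$ with $k\ge 3$: periodicity lets one choose a subhistory satisfying the hypotheses (in particular avoiding the forbidden final pattern $(-1,+1)$) in whose interior a full period, hence every $w_i$ including $w_0$, occurs.

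In the empty-projection case your treatment of $k=1,2$ via Lemma \ref{lma} and the $\theta$-index ``phase'' is correct and is a reasonable substitute for the paper's uniform argument, but the subcase $k=0$ -- which you yourself call the crux -- is not closed by the lemmas you cite. Lemma \ref{lm90}(1) only bounds the intermediate lengths from above; Lemma \ref{lm89} applies to computations in which all lengths are \emph{equal}, and constancy of length is exactly what the paper extracts from Lemma \ref{lm93} (an increase at any step forces all later lengths to stay larger, impossible on a cycle, and decreases are excluded by the same argument applied to the inverse computation). Moreover, even granted constant length, the uniqueness statement of Lemma \ref{lm89} does not by itself forbid a closed computation: the paper again iterates, notes that the history of the constant-length reduced computation $H^k$ must be a subword of some $D(u)^{\pm1}$ whose length is bounded in terms of $\|w_0\|$ by Lemma \ref{lm90}(2), and contradicts $\|H^k\|=k\|H\|\to\infty$. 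Finally, before any of these $Z(A)$-lemmas apply you must first reduce from your ``combined adding machines $Z^{(\theta_0,*)}$'' to the work of a single $Z^{(\theta_0,i)}$; the paper gets this from the block decomposition (as in Lemmas \ref{lmb}, \ref{lmc}) applied to $H^k$, whose number of blocks would otherwise grow with $k$. So the proposal needs the iteration trick and Lemma \ref{lm93} to become a proof.
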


\proof Assume that $w_t=w_0$, and $t>0$ is minimal. Then the computation $w_1\to\dots\to w_{t-1}$
is not a counter-example, and so $H$ is a cyclically reduced word.

If $H'\equiv\Pi_{21}(H)$ is empty, we consider the computation $w_0\to\dots\to w_0\circ H=w_0\to\dots\to w_0\circ H^2\to\dots$
with history $H^k$, where $k$ as large as we want. As in Lemmas \ref{lmb} and \ref{lmc}, we have a decomposition
$H\equiv H_1\dots H_s,$ where $H_i$ corresponds to the work of some $Z^{(\theta,i)}$ or equal to some $\zeta(\theta)^{\pm 1}$ and $s$ is bounded by a constant independent of $k.$ It follows that $H$ corresponds to only one $Z^{(\theta,i)}$, and
then the equality $w_t=w_0$ and Lemma \ref{lm93} imply that $||w_0||=||w_1||=\dots=||w_t||.$ Now $||H^k||$ is uniformly bounded
for all $k$-s, contrary to Lemma \ref{lm89}.

If $||H'||\ge 1,$ then Lemma \ref{lmP} gives a reduced computation
$\pi_{21}(w_0)\to\dots\to\pi_{21}(w_t)$ with history $H'.$ As above we can obtain 
reduced computations $\pi_{21}(w_0)\to\dots$ of the $S$-machine $M_1$ 
with histories $H'^k.$ For $k\ge 3,$ Lemma \ref{lmd} (2) implies that all words
in the computation with history $H$ are positive. Then the same property  must be 
true for the computation with history $H',$ and by Lemma \ref{ss}, it is also
a computation of the Turing machine $M_1$ with the same history $H'$, contrary to Lemma \ref{mach23} (c). Thus the lemma is proved by contradiction. \endproof

\begin{lemma}\label{lmM21} (a) Let \label{X2} $X_2$ be the set of all words $\pi_{1,2}(W)$ accepted by $M_2$,
where $W$ is an
input word of the Turing machine $M_1$. Then a word $W'$ belongs to $X_2$ if and only if $W'\equiv\pi_{1,2}(W)$, and $W$ is an input word of $M_1$
accepted by the Turing machine $M_1$.
Hence the set of words accepted by $M_2$ is not recursive.

(b) For every $W'\equiv\pi_{1,2}(W)\in X_2$ there exists only one reduced computation of $M_2$ accepting $W'$, the length
of that computation is between the length $T$ of the reduced computation of $M_1$ accepting $W$ (this computation is unique by Lemma \ref{mach23} (e)) and $\exp(O(T))$.
\end{lemma}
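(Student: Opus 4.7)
\textbf{Proof plan for Lemma \ref{lmM21}.} The strategy is to transport the unique-accepting-computation property of the Turing machine $M_1$ (Lemma \ref{mach23}(e)) across to $M_2$ via the $\pi_{1,2}/\pi_{2,1}$ and $\Pi_{1,2}/\Pi_{2,1}$ correspondences, using the structural lemmas \ref{lmb}--\ref{lmd} and especially the uniqueness statement of Lemma \ref{Pi12}.

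For part (a), both implications follow from machinery already developed. Given an accepting $M_2$-history $H$ for $W'\equiv\pi_{1,2}(W)$, the first rule of $H$ is forced to be $\bar\theta_{start}$ (the only rule applicable to a word carrying the special start letters $p^{(1,j)}$) and the last rule is forced to be $\zeta(\theta_{accept})$ (the only rule producing the special accept letters $p^{(0,j)}$ that appear in the stop word of $M_2$). Hence $H$ has the shape required by Lemma \ref{lmd}(1), which yields that $H$ and all subsequent admissible words are positive. By Lemma \ref{lmP} the projection $\Pi_{2,1}(H)$ is then a reduced positive $M_1$-history accepting $W$, and Lemma \ref{ss} upgrades it to a bona fide Turing machine computation, placing $W$ in $X_1=X$. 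For the converse, if $W$ is accepted by the Turing machine $M_1$, let $H_1$ be the unique positive accepting history provided by Lemma \ref{mach23}(e); Lemma \ref{Pi12} together with Remark \ref{rkP} then lifts it to an accepting $M_2$-computation whose history is $\Pi_{1,2}(H_1,\pi_{1,2}(W))$. This identifies $X_2\equiv\pi_{1,2}(X)$, and non-recursivity of $X_2$ (hence of the full accepted language of $M_2$) follows because $\pi_{1,2}$ is a computable injection while $X$ is non-recursive by Theorem \ref{re}.

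For part (b), uniqueness is the uniqueness assertion of Lemma \ref{Pi12}: for any reduced accepting $H$, the projection $\Pi_{2,1}(H)$ equals $H_1$ by the argument above plus Lemma \ref{mach23}(e), the first rule is $\bar\theta_{start}$, and the last rule is $\zeta(\theta_{accept})$, so $H\equiv\Pi_{1,2}(H_1,\pi_{1,2}(W))$. For the length bound, use the factorization $H\equiv\prod_{i=1}^{T}\Pi_{1,2}(\theta_i,W_{i-1})$ supplied by Lemma \ref{Pi12}. By Lemma \ref{lmb} each factor consists of $\bar\theta_i$, followed by $N$ segments $H_{i,s}$ which are copies of $D(u_{i,s})$ for the tape words $u_{i,s}$ appearing between the state letters, followed by $\zeta(\theta_i)$. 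Lemma \ref{lm90}(2) gives $||H_{i,s}||\le 6\cdot 2^{||u_{i,s}||}$, and since the Turing machine tape grows by at most one letter per step we have $||u_{i,s}||=O(T+||W||)=O(T)$. Hence each factor has length $O(2^{O(T)})$, and summing over $i=1,\ldots,T$ yields $||H||=\exp(O(T))$. The lower bound $||H||\ge T$ is immediate since $H$ contains the $T$ distinct rules $\bar\theta_1,\ldots,\bar\theta_T$.

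The principal obstacle is controlling uniqueness in (b) despite the inverse $a$-letters and $\zeta^{-1}$-rules that $M_2$ formally permits: a priori, a reduced accepting $M_2$-computation could take detours through non-positive admissible words, generating distinct histories that all project to the same $M_1$-history. This is exactly what the chain of Lemmas \ref{lmb}, \ref{lma}, \ref{lmc}, \ref{lme}, \ref{lmd} culminating in Lemma \ref{Pi12} is designed to preclude. Once Lemma \ref{lmd}(1) forces positivity from the shape of the endpoints, everything reduces to the uniqueness of the Turing machine computation (Lemma \ref{mach23}(e)), and the length bound becomes a routine application of the exponential estimate for the adding machine $Z(A)$ from Lemma \ref{lm90}(2).
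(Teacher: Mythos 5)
Your proposal is correct and follows essentially the same route as the paper: part (a) by forcing the first rule to be $\bar\theta_{start}$ and the last to be $\zeta$ of a positive rule, invoking Lemma \ref{lmd}(1) for positivity and then Lemmas \ref{lmP} and \ref{ss}, with the converse via Lemmas \ref{mach23} and \ref{Pi12} and Remark \ref{rkP}; part (b) by the uniqueness clause of Lemma \ref{Pi12} together with Lemmas \ref{lmb} and \ref{lm90}(2). Your write-up merely spells out the exponential length estimate (including the bound $||u_{i,s}||=O(T)$) that the paper dispatches with a citation to Lemma \ref{lm90}(2).
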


\proof
(a) Let $W'\equiv\pi_{1,2}(W),$ where $W$ is an input word of the Turing machine $M_1$. Suppose that $W$ is accepted by the (symmetric) Turing machine $M_1$. By part (b) of Lemma \ref{mach23}, the history $H$ of  the accepting computation consists of positive commands only. Then the computation of $M_2$ with history $\Pi_{1,2}(H,W)$
accepts $W'$ by Remark \ref{rkP}.

Suppose that $W'$ is accepted by $M_2$, and $H'$ is the history of an accepting computation $C'$. By Lemma \ref{lmP}, $\Pi_{2,1}(H')$ is a reduced history of an accepting computation of the $S$-machine $M_1$ starting with the input admissible word $\pi_{2,1}(W').$ Therefore by 
part (c) of Lemma \ref{m0m1}, 
the first rule in $H'$
is $\bar\theta$ for some positive rule $\theta$ of $M_1$. Again by Lemma \ref{m0m1} (c),
the last rule of $\Pi_{2,1}(H')$ is positive. It follows that the last rule of $H'$ must be $\zeta(\theta')$ for some positive rule $\theta'$ since the accepted admissible word of $M_2$
has state letters having no $\theta$-indices. By Lemma \ref{lmd}, then all words in computation $C'$ are positive because both $W$ and $W'\equiv\pi_{1,2}(W)$ are positive too. Therefore all words in the accepting computation $W\to...\to \pi_{2,1}(W'\cdot H')$ of the $S$-machine $M_1$ are positive. By Lemma \ref{ss}, the latter computation is an accepting computation of the Turing machine $M_1$, whence $W\in X_1$.

(b) The existence and the uniqueness 
follow from part (a) of this lemma and Lemmas \ref{lmb} and \ref{Pi12}. 
The part about the length of computation follows from Lemma \ref{lm90} (2). 
\endproof

Similarly to the case of Turing machines, for every
function $f(n)$, we
define \label{fgoodn} $f$-good numbers
for $M_2$. We call a number $b$ $f$-good provided
for every
input word $W$ from $X_2$, if the length of the input sector (that is the $Q_0P_1$-sector) of $W$ is $<b$, then
$f(T)\le b,$ where $T$ is the time of accepting $W$ by $M_2$.  Now Theorem \ref{re} and Lemma \ref{lmM21} (b) imply

\begin{lemma}\label{lmM22} For every  $\alpha>0$,
the set of $\exp(\alpha n)$-good numbers of $M_2$ is infinite.
\end{lemma}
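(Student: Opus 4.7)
The plan is to deduce this lemma from Theorem \ref{re} (which gives infinitely many $h_\beta$-good numbers for $M_1$, where $h_\beta$ may be taken double exponential) combined with the time comparison in Lemma \ref{lmM21}(b). Since $W'\in X_2$ if and only if $W'\equiv\pi_{1,2}(W)$ for an input word $W$ of $M_1$ accepted by $M_1$, and by construction the input sector (the $Q_0P_1$-sector) of $\pi_{1,2}(W)$ has length equal to the length $|W|$ of the input word $W$ of the Turing machine $M_1$, the $f$-goodness of a number $b$ for $M_2$ reduces to a time bound for the machine $M_1$ via Lemma \ref{lmM21}(b).

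Concretely, let $C$ be a constant such that $T_2\le C\exp(C\, T_1)$, where $T_1$ and $T_2$ are the acceptance times for $W$ by $M_1$ and for $W'\equiv\pi_{1,2}(W)$ by $M_2$, respectively (such $C$ exists by Lemma \ref{lmM21}(b)). Given $\alpha>0$, I fix $\beta>C$ and take $h_\beta(n)=\exp(\exp(\beta n))$, which is double exponential, so by Theorem \ref{re} the set $\mathcal{G}_\beta$ of $h_\beta$-good numbers of $M_1$ is infinite. If $b\in\mathcal{G}_\beta$ and $W'\equiv\pi_{1,2}(W)\in X_2$ has input sector of length $<b$, then $|W|<b$, so $h_\beta(T_1)<b$, giving $T_1<(\log\log b)/\beta$. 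Substituting,
$$\exp(\alpha T_2)\le\exp\bigl(\alpha C\exp(C T_1)\bigr)\le\exp\bigl(\alpha C(\log b)^{C/\beta}\bigr).$$
Because $C/\beta<1$, for all sufficiently large $b$ we have $\alpha C(\log b)^{C/\beta}<\log b$, so $\exp(\alpha T_2)<b$, i.e.\ $b$ is $\exp(\alpha n)$-good for $M_2$.

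Thus all but finitely many elements of the infinite set $\mathcal{G}_\beta$ are $\exp(\alpha n)$-good numbers for $M_2$, which proves the lemma. There is no substantial obstacle: the only point requiring care is matching the definition of $h_\beta$-goodness for $M_1$ (phrased in terms of lengths of input words over $I$ in Theorem \ref{re}) with the definition of $f$-goodness for $M_2$ (phrased in terms of lengths of input sectors of admissible words), and this correspondence is exactly what the map $\pi_{1,2}$ encodes. Everything else is a one-line estimate built on the exponential upper bound from Lemma \ref{lmM21}(b).
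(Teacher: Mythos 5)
Your proof is correct and follows the same route as the paper, which simply cites Theorem \ref{re} together with Lemma \ref{lmM21}(b); you have just written out the explicit double-exponential-versus-exponential estimate that the paper leaves implicit. The only cosmetic point is that the infinitude of $h_\beta$-good numbers for $M_1$ comes from Theorem \ref{re} via Lemma \ref{mach23}(e) (as noted at the start of Section 4), not from Theorem \ref{re} alone, but this is exactly the transfer the paper itself invokes.
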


\begin{rk} \label{tildeM2}
The machine $Z(A)$ and the copies of it $Z^{(\theta,i)}$ do not satisfy Property
\ref{one} since two $a$-letters are involved in the (copies of) rules $r_{12}(a).$ Therefore
further we will use the machine \label{tildeM2} $\tilde M_2$ obtained from $M_2$ by the application of Lemma
\ref{tilde1}. Note that the claim of Lemma \ref{norep}
is correct for $\tilde M_2$ as well. Indeed if the words $w_0\equiv w_t$ in a computation
$w_0\to\dots\to w_t$ of
$\tilde M_2$ with non-empty reduced history $H$ involve auxiliary state letters, then there is a
computation of $\tilde M_2$ with a reduced history $H',$ where $H'$ is a freely conjugate of $H,$ which starts
and ends with the words $w'_0\equiv w'_{t'},$ having no special state letters. Then Lemma \ref{tilde1} and
Lemma \ref{norep} for $M_2$ lead to a contradiction. Since the modification of $M_2$ does
not touch the rules $\bar\theta$ corresponding to the rules $\theta$ of $M_1,$ the statements
of Lemmas \ref{lmM21} and \ref{lmM22} also remain valid for $\tilde M_2.$ {\it Thus
Lemmas \ref{norep}, \ref{lmM21}, and \ref{lmM22} will be applied to the modified machine
$\tilde M_2.$} Moreover, it follows from the definitions of $M_2$ and $\tilde M_2$ that
these S-machines inherit the Property (c) from Lemma \ref{m0m1} of the Turing machine $M_1$ (for positive rules). 
\end{rk}

If the sum from Property \ref{one} (2) is positive for some rule of $\tilde M_2,$ then this sum is
$1,$ and we have $||v_i||+||v'_i||=1$ or $||u_{i+1}||+||u'_{i+1}||=1$ for a unique $i$.
In the first case (in the second case) we say that the rule is \label{leftrr}{\it left} (is {\it right}).

\begin{rk}\label{rk2}
Note that similarly, we can define the {\em $\circ$-product} $\sss\circ \sss'$ of any $S$-machine $\sss$ and S-machine
$\sss'=\sss'(A)$ depending on the tape alphabet $A$. Furthermore, one can replace
the auxiliary machine $\sss'$ by several S-machines $\sss_1,\dots,\sss_d.$  Namely, one inserts a $p$-letter between
two consecutive state letters in the standard base of $\sss$ and treats any subword
$q_i\dots p\dots q_{i+1}$ as an admissible subword for $\sss_i$-s. For each rule $\theta$ of $\sss,$
one has a modified rule \label{bart} $\bar\theta$ of the composition. The application of the rule
$\bar\theta$ is normally framed by alternated works of the auxiliary machines
$\sss_j^{(\theta,i)}$, and the priorities of the work of these machines may
depend on $\theta.$   We are not going to define this construction formally, leaving it
to the reader. In the next subsection, we shall introduce the $\circ$-product of $\tilde M_2$ and two
primitive $S$-machines.
\end{rk}

\subsection{The machine $M_3$}

Let $M_2$ be the $S$-machine $M_1\circ Z$ and $\tilde M_2$ the modification from Remark \ref{tildeM2}.
For every set of letters
$A,$ let $A'$ and $A''$ be  disjoint  copies of $A$, the maps $a\mapsto a'$ and $a\mapsto a''$ identify $A$ with $A'$ and $A'',$ resp. Let \label{overZ} $\overrightarrow Z=\overrightarrow Z(A)$ and $\overleftarrow Z=\overleftarrow Z(A)$ be the $S$-machines
with tape alphabet $A'\sqcup A''$, state alphabet $\{L\}\cup P\cup \{R\},$ where
$P=\{p(1),p(2),p(3)\}$ and the following positive $S$-rules. For $\overrightarrow Z$ we have the rules

$$\xi_1(a)=[L\to L, p(1)\to a'p(1)(a'')\iv, R\to R], a\in A$$
{\em Comment:} The head moves from left to right, replacing the word on the tape by its copy in the alphabet $A'$.
$$\xi_2:=[L\to L,  p(1)\tool p(2), R\to R]$$
{\em Comment:} When the head meets $R$, it turns into $p(2)$.

For $\overleftarrow Z$, we define the rules

$$\xi_3(a)=[L\to L, p(2)\to (a')\iv p(2)a'', R\to R]$$
{\em Comment:} The head $p(2)$ moves from right to left, replacing the word in $A'$ by its copy in $A''$.
$$\xi_4=[L\tool L, p(2)\to p(3), R\to R]$$
{\em Comment:} When the head reaches the left end of the tape, it turns into $p(3)$.

\begin{rk} \label{aiv} For every $a\in A$, $i=1,3$, it will be convenient to denote $\xi_i(a)\iv$ by $\xi_i(a\iv)$. It is clear from the definition $\xi_i(a)$ that this does not lead to a confusion.\end{rk}

\begin{rk}\label{Z'} Note that if the machine $\overrightarrow Z$ (the machine $\overleftarrow Z$) starts with the word $Lp(1)u''R$ (resp., with $Lu'p(2)R$) and ends with the word $Lu'p(2)R$ (with
$Lp(3)u''R$),
where $u''$ is the word  in $A''$-letters, then
the history $H$ of the only reduced computation
such that $Lp(1)u''R\cdot H=Lu'p(2)R$ (such that $Lu'p(2)R\cdot H=Lp(3)u''R$) is

$\xi_1(a_1)...\xi_1(a_m)\xi_2$ (is $\xi_3(a_m)\dots \xi_3(a_1)\xi_4$)
and its length is $||u||+1$. Here $u\equiv a_1\dots a_m$ is the copy of $u'$ (of $u''$)
in the alphabet $A.$

Similarly, any reduced computation of $\overrightarrow Z$ (of $\overleftarrow Z$)
ending with $Lu'p(2)R$ (resp., with $Lp(3)u''R$) is uniquely determined by its initial
admissible word and has length $\le ||u||+1.$ 
\end{rk}

Remark \ref{Z'} implies, in particular

\begin{lemma}\label{M332} Suppose that $W\to ...\to W\cdot H$ is a reduced computation of $\overrightarrow Z$  (of $\overleftarrow Z$) with history $H$ and
the standard base. Suppose that both $W$ and $W\cdot H$ contain $Lp(1)$ or both contain $p(2)R$ (resp., $p(2)R$ or $Lp(3)$). Then $H$ is empty.
\end{lemma}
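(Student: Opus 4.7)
I treat the four cases uniformly; let me describe the argument for $\overrightarrow Z$ under the hypothesis $Lp(1)$ in detail, and then indicate the straightforward modifications for the other three.

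\emph{Step 1 (the shape of $H$).} In $\overrightarrow Z$ the only rule that changes the state letter between $p(1)$ and $p(2)$ is $\xi_2^{\pm 1}$; moreover $\xi_2^{\pm 1}$ locks the $PR$-sector, so it is applicable only when the head is immediately to the left of $R$. Every rule $\xi_1(a)^{\pm 1}$ requires the state letter $p(1)$, so once $p(2)$ appears the only applicable rule is $\xi_2\iv$. If $H$ contains some $\xi_2^{\pm 1}$, look at the first such occurrence: since up to that point only $\xi_1(a)^{\pm 1}$-rules have been used, the state letter is still $p(1)$, so the occurrence must be $\xi_2$; but then the subsequent rule in $H$ must be $\xi_2\iv$, contradicting reducedness. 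Hence $H$ consists only of rules $\xi_1(a)^{\pm 1}$.

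\emph{Step 2 (free-group bookkeeping).} Write $H\equiv\xi_1(b_1)^{\epsilon_1}\dots\xi_1(b_k)^{\epsilon_k}$, and let $u_j\in F(A'\cup A'')$ be the reduced $a$-word to the left of $p(1)$ after $j$ steps of the computation. Inspecting the substitution $p(1)\to b'p(1)(b'')\iv$ (and its inverse) shows that applying $\xi_1(b_j)^{\epsilon_j}$ modifies the left end of the admissible word by right-multiplication by the single $A'$-letter $(b_j')^{\epsilon_j}$, so $u_j$ is the free reduction of $u_{j-1}(b_j')^{\epsilon_j}$. Consequently $u_k$ is the free reduction of $(b_1')^{\epsilon_1}\dots(b_k')^{\epsilon_k}$. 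Reducedness of $H$ in the rule alphabet rules out any pair $\xi_1(b)\xi_1(b)\iv$, which is precisely the condition that no two consecutive letters of $(b_1')^{\epsilon_1}\dots(b_k')^{\epsilon_k}$ cancel; hence this word is already freely reduced of length $k$. The hypothesis that both $W$ and $W\cdot H$ contain $Lp(1)$ gives $u_0=u_k=\emptyset$, so $k=0$ and $H$ is empty.

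\emph{Step 3 (the three remaining cases).} For $\overrightarrow Z$ with hypothesis $p(2)R$, the state letter at both endpoints is $p(2)$. If $H$ contains no $\xi_2^{\pm 1}$, then no rule of $\overrightarrow Z$ applies at all to a word with state letter $p(2)$ except $\xi_2\iv$, so $H$ is empty. Otherwise, by the analysis in Step~1 the only possibility is $H\equiv\xi_2\iv H'\xi_2$ with $H'$ a reduced history in $\xi_1(a)^{\pm 1}$; the outer rules force the right side of $p(1)$ to be empty at both ends of $H'$. Tracking this right side, and noting that each $\xi_1(b)^{\epsilon}$ left-multiplies it by $(b'')^{-\epsilon}$, the argument of Step~2 applies verbatim and gives $H'=\emptyset$. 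But then $H\equiv\xi_2\iv\xi_2$ is non-reduced, a contradiction. The two cases for $\overleftarrow Z$ are handled the same way, substituting $(\xi_3,\xi_4,p(2),p(3))$ for $(\xi_1,\xi_2,p(1),p(2))$ and choosing the side of the head appropriate to the hypothesis.

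The only substantive content is Step~2. The minor obstacle, once the setup is in place, is to observe that the reducedness of the $S$-machine history corresponds exactly to non-cancellation of the bookkeeping word in $F(A'\cup A'')$; this works because distinct rules of $\overrightarrow Z$ (and of $\overleftarrow Z$) modify a unique side of the head by distinct tape letters.
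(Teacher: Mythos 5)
Your proof is correct and follows essentially the route the paper intends: the paper deduces this lemma directly from Remark \ref{Z'} (the deterministic one-pass structure of $\overrightarrow Z$ and $\overleftarrow Z$), and your Steps 1--3 simply make that explicit — the state letters and the locked sectors of $\xi_2^{\pm 1}$ (resp. $\xi_4^{\pm 1}$) pin down the shape of $H$, and the Lemma \ref{gen1}-style bookkeeping (distinct rules append distinct letters to one side of the head, so reducedness of $H$ means no cancellation) forces $H$ to be empty. The only unstated micro-step is in Step 1: if $\xi_2$ were the last letter of $H$ there is no ``subsequent rule,'' but then $W\cdot H$ would contain $p(2)$ rather than $p(1)$, contradicting the hypothesis, which is exactly what your setup already provides.
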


Below we define \label{M3} $M_3$ as $\tilde M_2\circ\{\overrightarrow Z,\overleftarrow Z \} $ (see Remark \ref{rk2}), that is we insert copies of  $\overleftarrow Z$ and $\overleftarrow Z$ between every two consecutive state letters of $\tilde M_2$.
We simplify and unify the notation by changing the value of $N$
and renaming the parts of the state alphabet of $\tilde M_2$.
In this section we assume that $\tilde M_2$ has the standard base $s_0s_1...s_N$
(and forget more detailed earlier notations).

For every $i=1,...,N$, we make  copies $Y'_i$ and $Y''_i$ of the alphabet
$Y_i$ of $\tilde M_2$ ($i=1,...,N$).
Let $\Theta$ be the set of positive commands of $\tilde M_2.$ 
The set of state letters of $M_3$ is $$S_0\cup P_1\cup
S_1\cup P_2\cup...\cup P_{N}\cup S_N$$
where $P_i=\{p^{(i)}, p^{(i,1)}, p^{(i,0)},
p^{(\theta,i)}(1), p^{(\theta,i)}(2),  p^{(\theta,i)}(3)\mid
\theta\in\Theta\}$
$i=1,...,N$, $S_i=Q_i\sqcup Q_i\times \Theta$
where $Q_0\sqcup Q_1\sqcup ... Q_N$ is the set of state letters of
$\tilde M_2$.
Thus the state letters $L$ and $R$ of the copies of machines $\overrightarrow Z$ and $\overleftarrow Z$ are identified with the corresponding $S$-letters as in the case of $M_2=M_1\circ Z$. We shall call the state letters from $P_i$-s the \label{controlsl}{\em control state letters} or \label{pletter} $p$-{\it letters},
and the other state letters (i.e. the copies of the state letters of $\tilde M_2$), the {\em basic state letters}.

The set of tape letters of $M_3$ is $Y=Y_1\sqcup \dots \sqcup Y_{2N}= Y'_{1}\sqcup Y''_{1}\sqcup
Y'_{2}\sqcup Y''_{2}\sqcup...\sqcup
Y'_{N}\sqcup Y''_{N}.$

Let $\theta$ be a positive $\tilde M_2$-rule which is not a right rule. Assume $\theta$ 
is of the form
$$[s_0\to s_0', v_1s_1\to
v_1's_1',...,v_{N}s_N\to v_{N}'s_{N}'],$$
where $s_i, s_i'\in
S_i$, and $v_i$-s are words in $Y.$ Then this rule is replaced in $M_3$ 
by positive $$\bar\theta=\left[\begin{array}{l}s^{(\theta,0)}\tool (s')^{(\theta,0)}, p^{(\theta,1)}(1)
\to p^{(\theta,1)}(1), v_1s^{(\theta,1)}\tool v'_1(s')^{(\theta,1)},\\ p^{(\theta,2)}(1)
\to p^{(\theta,2)}(1), ...,
 v_{N}s^{(\theta,N)}\to v'_{N}
 (s')^{(\theta,N)}\end{array}\right]$$
 with $Y_{2i-1}(\bar\theta)=\emptyset $ and $Y_{2i}(\bar\theta)=Y_{i}''(\theta).$
 As an exception
 , the left-hand sides of the parts of $\bar\theta$ are of the
 form $s_0\tool,$ $ p^{(1,1)}\to,$ $ s_1\tool,$ $p^{(2,1)}\to,$ $\dots, s_N\to $ if $\theta$ is the unique
 start rule, 
 i.e., they do not depend on the index $\theta.$

 A right positive rule $\theta$ of the form $$[s_0u_1\to s_0'u'_1, s_1u_2\to
s_1'u_2',...,s_N\to s_{N}']$$ is replaced by the right rule \label{bart} $\bar\theta$ of $M_3$:

$$\bar\theta=\left[\begin{array}{l}s^{(\theta,0)}u_1\to (s')^{(\theta,0)}u'_1, p^{(\theta,1)}(2)
\tool p^{(\theta,1)}(2), s^{(\theta,1)}u_2\to (s')^{(\theta,1)}u'_2,\\ p^{(\theta,2)}(2)
\tool p^{(\theta,2)}(2), ...,
 s^{(\theta,N)}\to 
 (s')^{(\theta,N)}\end{array}\right]$$
 with $Y_{2i-1}(\bar\theta)=Y'_i(\theta) $ and $Y_{2i}(\bar\theta)=\emptyset.$

 Now we want to describe the alternating work of the auxiliary machines \label{Ztilr}
 $\overleftarrow Z^{(\theta,i)}$ and $\overrightarrow Z^{(\theta,i)}$. Normally
 each of them is switched on exactly once in the frame of the rule $\theta,$
 but the sequence of their turning on depends on $\theta.$   
 
 First, we need the following {\em transition}
\label{zmt} rule $\zeta_-(\theta).$ 
This rule adds $\theta$ to all state letters 
and turns all $p^{(j)}$ into $p^{(\theta,j)}(1)$:

$$[s_i\tool s^{(\theta,i)}, p^{(j)}\to
p^{(\theta,j)}(1), i=0,...,N, j=1,...,N]$$ 
so that the rule $\bar\theta$ becomes applicable if $\theta$ is not a right rule. Again,
as an exception, we do not introduce $\zeta_-(\theta)$ for the  start rule $\theta=\theta_{start}$
of $\tilde M_2.$

If $\theta$ is a right rule, then the rule $\zeta_-(\theta)$ successively switches on the
machines $\overrightarrow Z^{(\theta,1)},\dots,$ $ \overrightarrow Z^{(\theta,N)}.$
(We will not present formulas for the rules $\bar\tau_i(\theta)$ as in the
definition of $M_2,$ since the explicit form of these rules are not necessary.)
Then the state letters $p^{(\theta,j)}(1)$ ($j=1,\dots, N$) successively turn 
into $p^{(\theta,j)}(2),$ find themselves just before $s_{i}$-letters, and the rule $\bar\theta$ can be applicable.

After an application of a non-right rule $\bar\theta,$ the machines $\overrightarrow Z^{(\theta,j)}$
move the $p$-letters to the write, change $p^{(\theta,j)}(1)$  by $p^{(\theta,j)}(2),$
and then the machines $\overleftarrow Z^{(\theta,j)}$ move the $p$-letters to
the left and change $p^{(\theta,j)}(2)$  by $p^{(\theta,j)}(3).$ After an application
of a right rule $\bar\theta,$ only machines $\overleftarrow Z^{(\theta,j)}$ work.

Finally, the transition
rule \label{zmt} $\zeta_+(\theta)$ removes index $\theta$ from all state letters, and turns all  $p^{(\theta,j)}(3)$ into $p^{(j)}$:

$$[s^{(\theta,i)}\tool s_i, p^{(\theta,j)}(3)\to
p^{(j)}, i=0,...,N, j=1,...,N].$$ 
If $\theta$ is the unique accept rule of $\tilde M_2$ then all $p^{(j)}$-s in the above definition of $\zeta_+(\theta)$ must be replaced by special letters $p^{(j,0)}$-s.

An important specification is the following. If $\theta$ is a right rule and
$||u_{i+1}||+||u'_{i+1}||=1$ (see (**)), then the application of $\bar\theta$ always
switches on the auxiliary machines in the order $\overleftarrow Z^{(\theta,i)},$
$\overleftarrow Z^{(\theta,i-1)},\dots,$ $\overleftarrow Z^{(\theta,1)},$ 
$\overleftarrow Z^{(\theta,N)},\dots,$ $\overleftarrow Z^{(\theta,i+1)}.$ If $\theta$ is a left
rule and $||v_i||+||v'_i||=1,$ then
$\bar\theta$ must successively start up $\overrightarrow Z^{(\theta,i+1)},\dots,$
$\overrightarrow Z^{(\theta,N)},$ $\overrightarrow Z^{(\theta,1)},$ $\overrightarrow Z^{(\theta,i)}.$
If $\theta$ is neither right nor left, then the order for the first $N$ machines is $\overrightarrow Z^{(\theta,1)},\dots,$ $\overrightarrow Z^{(\theta,N)}.$

 For every admissible word $w$ of $\tilde M_2$ with standard base, let \label{pi23w}
$\pi_{2,3}(w)$ be the admissible word of $M_3$ obtained by inserting control state letters $p^{(i)}$ ($p^{(i,1)}$ or $p^{(i,0)}$ if the word $w$ has state letters from the start vector $\overrightarrow s_1$, resp., from the accept vector $\overrightarrow s_0$    of $\tilde M_2$) next to the
right of each $s_{i-1},$ $i\le N$.
The stop word of $M_3$ is $\pi_{2,3}(\pi_{1,2}(W_0)),$ where $W_0$ is the stop word of $M_1$. For every input word $w$ of $\tilde M_2$  we call $\pi_{2,3}(w)$ an {\em input} word of $M_3$.

\begin{rk}\label{d} It follows from Remark \ref{tildeM2} and the definition of $M_3$
that the S-machine $M_3$ 
inherits the Property (c) from Lemma \ref{m0m1} (for positive rules). 
\end{rk}

The $p^{(1)}s_1$-sector
of an admissible word of $M_3$ is called the {\em input sector} of that word.
\medskip

Assume that $w\to w\cdot \theta$ is a computation of the machine $\tilde M_2$ 
with standard base and a positive rule $\theta$. Then, by the definition of $M_3,$
we have the canonically defined reduced computation $\dots \to\pi_{2,3}(w)\to \pi_{2,3}(w)\cdot\bar\theta\to\dots$ starting and ending with words whose state
letters have no $\theta$-indices and all other words do have $\theta$-indices.
The computation of $M_3$ with
these properties is unique since the base is standard. Indeed Remark \ref{Z'} 
and the definition of $M_3$ uniquely 
determine the order of
rules for each of the auxiliary machines $\overrightarrow Z^{(\theta,j)}$ and
$\overleftarrow Z^{(\theta,j)}$. (For example, a machine $\overrightarrow Z^{(\theta,j)}$
can start working only if the state letter $p^{(\theta,j)}(1)$ is the right neighbor 
of a letter $s^{(\theta, j-1)}$ since the $p^{(\theta,j)}(1)s^{(\theta, j-1)}$-sector is locked before
the start, and $\overrightarrow Z^{(\theta,j)}$ cannot finish its work until the $p$-letter
becomes the left neighbor of $s^{(\theta,j)}$ and turns into $p^{(\theta,j)}(2),$ etc.) Thus the following claim is true.

\begin{lemma}\label{pi23} For every computation $w\to w\cdot \theta$ of the machine 
$\tilde M_2$  with standard base and a positive rule $\theta, $ there is a unique
reduced $M_3$-computation\\
$\dots \to\pi_{2,3}(w)\to \pi_{2,3}(w)\cdot\bar\theta\to\dots$ such that it
starts and ends with words whose state
letters have no $\theta$-indices and all other words have $\theta$-indices. The history
of this computation starts with $\zeta_-(\theta)$ and ends with $\zeta_+(\theta).$
\end{lemma}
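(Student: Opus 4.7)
The plan is to prove existence and uniqueness separately, both of which rest on the rigidity imposed by the control state letters and the locking conditions of the auxiliary rules.

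For existence, I would construct the canonical computation explicitly. Starting from $\pi_{2,3}(w)$, apply $\zeta_-(\theta)$: this is the unique rule of $M_3$ that grants $\theta$-indices to the basic state letters and converts every $p^{(j)}$ into $p^{(\theta,j)}(1)$. Then, following the ordering prescribed in the definition of $M_3$ (dependent on whether $\theta$ is right, left, or neither), interleave the subcomputations of $\overrightarrow{Z}^{(\theta,j)}$ and $\overleftarrow{Z}^{(\theta,j)}$ with a single application of $\bar\theta$, in such a way that by the end every $p$-letter sits adjacent to its basic state letter in state $p^{(\theta,j)}(3)$. Remark \ref{Z'} produces each auxiliary subcomputation as a well-defined reduced history of appropriate length. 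Finally, apply $\zeta_+(\theta)$ to strip all $\theta$-indices; the resulting computation takes $\pi_{2,3}(w)$ to $\pi_{2,3}(w\cdot\theta)$.

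For uniqueness, I would argue in stages. Since only $\zeta_-(\theta)$ introduces $\theta$-indices into basic state letters and only $\zeta_+(\theta)$ removes them, any computation with the stated property must begin with $\zeta_-(\theta)$ and end with $\zeta_+(\theta)$. In between, every intermediate word carries $\theta$-indices, so only the rules $\bar\theta^{\pm 1}$ and the rules of the $\overrightarrow{Z}^{(\theta,j)}$, $\overleftarrow{Z}^{(\theta,j)}$ ($j=1,\dots,N$) are applicable. Since the basic state letters $s^{(\theta,i)}$ must eventually transition into $(s')^{(\theta,i)}$ in order for $\zeta_+(\theta)$ to land on $\pi_{2,3}(w\cdot\theta)$, the rule $\bar\theta$ must appear at least once; reducedness together with the fact that none of the auxiliary rules touches the basic state letters excludes $\bar\theta$ from appearing more than once. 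The applicability conditions for $\bar\theta$ (certain sectors forced empty together with all $p$-letters being in state $(1)$ or in state $(2)$ according as $\theta$ is non-right or right) pin down the moment of its application. The locking conditions of the auxiliary rules then force each $\overrightarrow{Z}^{(\theta,j)}$ to begin only when $p^{(\theta,j)}(1)$ sits immediately to the right of $s^{(\theta,j-1)}$ and to run until the $p$-letter reaches $s^{(\theta,j)}$ in state $(2)$, with a symmetric description for $\overleftarrow{Z}^{(\theta,j)}$. Given these forced endpoints, Remark \ref{Z'} and Lemma \ref{M332} make each auxiliary subcomputation unique.

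The main delicate point is verifying that no \emph{exotic} interleaving of the auxiliary machines is possible between $\zeta_-(\theta)$ and $\zeta_+(\theta)$: for instance, starting $\overleftarrow{Z}^{(\theta,j)}$ before $\overrightarrow{Z}^{(\theta,j)}$ has finished, or running the wrong $\overrightarrow{Z}^{(\theta,k)}$ out of turn. This is blocked by the facts that every rule of $\overleftarrow{Z}^{(\theta,j)}$ requires the state letter $p^{(\theta,j)}(2)$, which appears only after $\overrightarrow{Z}^{(\theta,j)}$ terminates, and that only one auxiliary $p$-letter can leave its home position at a time, because the rules of the remaining auxiliary machines explicitly pin the other $p$-letters to their adjacent basic state letters. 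While this amounts to bookkeeping rather than a deep argument, it requires a careful case split on whether $\theta$ is right, left, or neither, since the prescribed order of machine activations differs in these three cases.
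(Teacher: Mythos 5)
Your proposal is correct and follows essentially the same route as the paper: existence via the canonical computation built into the definition of $M_3$ (with Remark \ref{Z'} supplying each auxiliary subcomputation), and uniqueness from the fact that only $\zeta_\pm(\theta)$ create or destroy $\theta$-indices together with the locking conditions that force each $\overrightarrow{Z}^{(\theta,j)}$, $\overleftarrow{Z}^{(\theta,j)}$ to start and finish only in the prescribed positions. In fact you spell out the interleaving bookkeeping in more detail than the paper, which compresses the whole argument into the paragraph preceding the lemma.
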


We denote the history of this computation by \label{Pi23}
$\Pi_{2,3}(\theta,w).$ It follows from Remark \ref{Z'} that the length
of this history is $1+2(|w|_a+N),$ where \label{wa} $|w|_a$ is the number of $a$-letters in the word $w.$ If $\theta$ is a negative rule of $\tilde M_2,$ then
we invert the computation constructed for $\theta^{-1}$, and so $\Pi_{2,3}(\theta,w)\equiv\Pi_{2,3}(\theta^{-1}, w\cdot\theta)^{-1}.$ 

    Similarly, with arbitrary reduced computation $w\to\dots\to w\cdot H$ with the standard
    base of $\tilde M_2$ and
    having a history $H\equiv \theta_1\dots\theta_t$ we associate the reduced computation
    of $M_3$ with history \label{Pi23} $$\Pi_{2,3}(H,w)\equiv \Pi_{2,3}(\theta_1,w) 
    \Pi_{2,3}(\theta_2,w\cdot \theta_1)\dots \Pi_{2,3}(\theta_t,w\cdot \theta_1\dots\theta_{t-1})$$
    It follows from the previous paragraph that 
    $||H||\le ||\Pi_{2,3}(H,w)||=O(||H||^2)$ for every accepted computation.
    Indeed $|w|_a=O(||H||)$ since the stop word has no tape letters.
    
    Recall that only rules of the form $\zeta_{\pm}(\theta)$ and the start rule
    involve state letters without $\theta$-indices. Therefore it follows from Lemma \ref{pi23} that
    every reduced computation with the standard base of $M_3$ starting and ending with the admissible words
    without $\theta$-indices in their state letters, has history of the
    form $\Pi_{2,3}(H,w)$ for some reduced computation $w\to\dots\to w\cdot H$ of $\tilde M_2.$
    In particular, our discussion and Lemma \ref{lmM21} (b) imply the following

\begin{lemma}\label{M31} (a) Let \label{X3} $X_3$ be the set of all words $\pi_{2,3}(w)$ accepted by $M_3$, where $w$ is an input
word of $\tilde M_2$ (or $M_2$). Then a word
$W$ belongs to $X_3$ if and only if $W=\pi_{2,3}(w)$, and $w\in X_2$. Hence the set of words
accepted by $M_3$ is not recursive.

(b) For every $W\equiv\pi_{2,3}(w)\in X_3$ there exists only one
reduced computation of $M_3$ accepting $W$, the length
of that computation is between the length $T$ of the reduced computation of $\tilde M_2$
accepting $w$ and $O(T^2)$.
\end{lemma}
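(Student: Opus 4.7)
The plan is to read Lemma \ref{M31} off the bijection, set up in the paragraph preceding the statement, between reduced computations of $\tilde M_2$ with standard base and reduced $M_3$-computations of the form $\Pi_{2,3}(H,w)$, combined with Lemma \ref{lmM21} (b) applied to $\tilde M_2$. The crucial input, already recorded in the text, is that every reduced $M_3$-computation whose initial and terminal admissible words have state letters free of $\theta$-indices arises as $\Pi_{2,3}(H,w)$ for a unique reduced $\tilde M_2$-computation $w \to \dots \to w \cdot H$ with the standard base.

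For part (a) I would argue in both directions. Forward: given $w \in X_2$, take the unique reduced accepting $\tilde M_2$-computation supplied by Lemma \ref{lmM21} (b), with history $H$, and apply $\Pi_{2,3}(\cdot,w)$ to obtain a reduced $M_3$-computation starting at $\pi_{2,3}(w)$ and ending at $\pi_{2,3}(\pi_{1,2}(W_0))$, which is by definition the stop word of $M_3$. Backward: let $W$ be accepted by $M_3$ via a reduced computation. Both an input word and the stop word of $M_3$ use $p$-letters of the form $p^{(i,1)}$ or $p^{(i,0)}$ together with basic state letters bearing no $\theta$-index, so by the characterization above the history is $\Pi_{2,3}(H,w)$ for a reduced $\tilde M_2$-computation $w \to \dots \to w \cdot H$ with $\pi_{2,3}(w) \equiv W$. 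By Remark \ref{d} the start (resp.\ accept) vector of $\tilde M_2$ appears only in input (resp.\ accepted) words, so $w$ is an input word and $w \cdot H$ is the stop word of $\tilde M_2$; hence $w \in X_2$ and $W \equiv \pi_{2,3}(w)$. The non-recursiveness of $X_3$ is then inherited from that of $X_2$ (Lemma \ref{lmM21} (a)).

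For part (b), uniqueness of the accepting $M_3$-computation follows by combining Lemma \ref{lmM21} (b) with the per-step uniqueness in Lemma \ref{pi23}. For the length estimate, I would use the explicit formula $\|\Pi_{2,3}(\theta, w')\| = 1 + 2(|w'|_a + N)$ recorded after Lemma \ref{pi23}. Along the accepting $\tilde M_2$-computation $w_0 \to \dots \to w_T$, Property \ref{one} (2) forces $|w_{i+1}|_a - |w_i|_a \in \{-1, 0, 1\}$, and since the stop word satisfies $|w_T|_a = 0$ this gives $|w_i|_a \le T - i \le T$ for all $i$. Summing the per-step lengths then yields $\|\Pi_{2,3}(H,w)\| = O(T^2)$. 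The lower bound $T$ is immediate since each $\theta_i$ contributes at least the single rule $\bar\theta_i^{\pm 1}$ to $\Pi_{2,3}(H,w)$.

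The only point requiring care is the backward direction of (a): one must confirm that the endpoints of an accepting $M_3$-computation are free of $\theta$-indices, so that the characterization preceding the lemma is applicable. This reduces to inspecting the definitions of the $M_3$-input configuration and of the $M_3$-stop word $\pi_{2,3}(\pi_{1,2}(W_0))$: the former uses the letters $p^{(i,1)}$ (inserted by $\bar\theta_{\mathrm{start}}$) together with basic state letters without $\theta$-index, while the latter uses $p^{(i,0)}$ (inserted by $\zeta_+(\theta_{\mathrm{accept}})$). Once this observation is in place, everything else is direct bookkeeping through the $\Pi_{2,3}$ correspondence.
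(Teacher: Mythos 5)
Your proposal is correct and follows essentially the same route as the paper: the lemma is deduced from the $\Pi_{2,3}$-correspondence established in the paragraph preceding the statement (every reduced $M_3$-computation with $\theta$-index-free endpoints is $\Pi_{2,3}(H,w)$ for a unique reduced $\tilde M_2$-computation) together with Lemma \ref{lmM21}(b), and the length bound $\|H\|\le\|\Pi_{2,3}(H,w)\|=O(\|H\|^2)$ comes, as in the paper, from $|w_i|_a=O(\|H\|)$ because the stop word has no tape letters. Your additional bookkeeping (the explicit per-step formula and the endpoint check) only fills in details the paper leaves implicit.
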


We can define \label{fgoodn} $f$-good numbers of $M_3$ in a similar way as for $M_2$. Lemmas \ref{lmM22} and \ref{M31} imply

\begin{lemma}\label{M32} For every constant $c>0$,
the set of $\exp(c n)$-good numbers of $M_3$ is infinite.
\end{lemma}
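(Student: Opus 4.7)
The plan is to reuse the mechanism producing Lemma \ref{lmM22} (Theorem \ref{re} combined with Lemma \ref{lmM21}(b)) and to compose it with the quadratic length bound of Lemma \ref{M31}(b). First I would fix $c>0$ and reduce the goal: since the input-sector length of $W \equiv \pi_{2,3}(w) \in X_3$ equals that of $w \in X_2$ by construction of $\pi_{2,3}$, and Lemma \ref{M31}(b) supplies $T_3 \le C T^2$ (with $T$ the unique $\tilde M_2$-accepting time of $w$ and $C$ an absolute constant), it suffices to exhibit infinitely many $b$ such that every accepted $w$ with input-sector length $<b$ satisfies $cCT^2 \le \log b$, equivalently $T \le \sqrt{(\log b)/(cC)}$.

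The main obstacle I expect is that the bare statement of Lemma \ref{lmM22} is too weak to absorb this squaring: it only provides $T \le (\log b)/\alpha$ for $\exp(\alpha n)$-good $b$, whence $T_3 \le C(\log b)^2/\alpha^2$, and then $\exp(cT_3) \le b$ would force $\log b \le \alpha^2/(cC)$, a condition satisfied by only finitely many $b$ at any fixed~$\alpha$. So one cannot simply feed the output of Lemma \ref{lmM22} into Lemma \ref{M31}(b) as a black box.

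To circumvent this I would reopen the proof of Lemma \ref{lmM22}. By Theorem \ref{re} combined with Lemma \ref{mach23}(e), for each $\beta>0$ there is an infinite set $B_\beta$ of $h_\beta$-good numbers of $M_1$, with $h_\beta$ at least doubly exponential, so $T_1 \le (\log\log b)/\beta$ for all $b \in B_\beta$. Lemma \ref{lmM21}(b) then yields $T \le \exp(K T_1) \le (\log b)^{K/\beta}$ for some universal constant $K$. Choosing $\beta$ large enough (depending only on $c$) that $K/\beta < 1/2$, I obtain $CT^2 \le C(\log b)^{2K/\beta}$, which is dominated by $(\log b)/c$ for all sufficiently large $b \in B_\beta$. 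Hence all but finitely many members of the infinite set $B_\beta$ are $\exp(cn)$-good for $M_3$, which completes the proof. Remark \ref{tildeM2} legitimizes interchanging $M_2$ and $\tilde M_2$ at every step, and the time bound of Lemma \ref{M31}(b) applies to every $W \in X_3$ by the uniqueness part of Lemma \ref{M31}.
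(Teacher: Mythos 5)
Your proof is correct, and your diagnosis of why the naive composition fails is accurate: since $\exp(\alpha n)$-goodness of $M_2$ only yields $T\le(\log b)/\alpha$, the quadratic overhead $T_3=O(T^2)$ of Lemma \ref{M31}(b) produces a bound of order $(\log b)^2/\alpha^2$, which cannot be dominated by $\log b$ for large $b$, so no purely formal use of the statements of Lemmas \ref{lmM22} and \ref{M31} can give the result. The paper's own proof is exactly that one-line composition (``Lemmas \ref{lmM22} and \ref{M31} imply''), so in substance you follow the same chain $M_0\to M_1\to \tilde M_2\to M_3$ that the paper has in mind, but you carry it one level deeper: you reopen Lemma \ref{lmM22} and use the doubly exponential goodness $h_\beta$ of Theorem \ref{re} (transferred to $M_1$ via Lemma \ref{mach23}(e)), together with $T\le\exp(O(T_1))$ from Lemma \ref{lmM21}(b), to get $T\le(\log b)^{K/\beta}$ with $K/\beta$ as small as desired; such a polylogarithmic bound is stable under squaring and yields $\exp(cT_3)\le b$ for all sufficiently large $b\in B_\beta$, with Remark \ref{tildeM2} justifying the passage from $M_2$ to $\tilde M_2$. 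This is precisely the slack the double exponential in Theorem \ref{re} was designed to provide (the same mechanism is exploited again later, e.g.\ in Lemma \ref{almostquad}), so your argument is best viewed as the intended proof written out in full rather than a genuinely new route; what it buys is a derivation that is actually complete, whereas the paper's citation of Lemma \ref{lmM22} as a black box glosses over the quadratic blow-up.
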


As in the previous section, we need to define more maps
between $S$-machines $\tilde M_2$ and $M_3$.

For every admissible word $W$ of $M_3$
with
the standard base, 
let \label{pi32} $\pi_{3,2}(W)$ be the word obtained by removing
state $p$-letters, $\theta$-indices of state letters, and the indices that
distinguishes $a$-letters from the left and from the right of $p$-letters. 
We obtain an admissible
word of $\tilde M_2$. Note that we have

\begin{equation}\label{p32}
\pi_{3,2}(\pi_{2,3}(w))\equiv w.
\end{equation}

For every rule $\bar\theta$ of $M_3$ corresponding to a rule $\theta$ of $\tilde M_2$ we denote $\theta=\Pi_{3,2}(\bar\theta)$. If
$\bar\theta$ does not correspond to a rule of $\tilde M_2$, we denote by $\Pi_{3,2}(\theta)$ the empty rule. The map \label{Pi32} $\Pi_{3,2}$
extends to histories of computations in the natural way.

\begin{rk} \label{*I}It can be proved similarly to Lemma \ref{lmP}, that if $H$ is a history of a computation of $M_3$ with standard base and $W\cdot H=W'$, then
$\Pi_{3,2}(H)$ is reduced and

\begin{equation}\label{P32}
\pi_{3,2}(W)\cdot \Pi_{3,2}(H)\equiv\pi_{3,2}(W').
\end{equation}
\end{rk}
\begin{lemma} \label{ppm} Suppose a commutation $W\to\dots$ of $M_3$ with a base $B$
has a reduced history $H\equiv \dots\bar\theta_1H'\bar\theta_2^{\eta}\dots ,$ where $\Pi_{3,2}(H)\equiv\theta_1\theta_2^{\eta}$ for some positive $\theta_1$ and $\theta_2,$ and $ \eta =\pm 1.$

(1) if $B$ is standard,
then the word
$W\cdot\bar\theta_1$ is completely determined by $H';$ 

(2) if $\theta_2^{\eta}\ne \theta_1^{-1}$, then  $B$
or $B^{-1}$ is a subword of the standard base of the machine $M_3;$

(3) 
let $||u_{j+1}||+||u'_{j+1}||=1$ ($||v_j||+||v'_j||=1$) for the rule $\theta_1,$  
and $B$ is not a subword of the standard base or of its inverse.
Then $\theta_2^{\eta}\equiv\theta_1^{-1},$ and no rule
from $H'$ locks the $s^{(\theta,j)}p^{(\theta,j+1)}$-sector (resp., the
$p^{(\theta,j)}s^{(\theta,j)}$-sector).

\end{lemma}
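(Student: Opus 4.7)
The plan is to exploit the canonical form of $M_3$-computations around a $\tilde M_2$-type rule, explicitly described by Lemma \ref{pi23}, together with the locking/mirror dichotomy of Lemma \ref{qqiv}. For (1), assuming $B$ is standard, I would observe that each auxiliary rule $\xi_1^{(\theta_1,i)}(a)$ or $\xi_3^{(\theta_1,i)}(a)$ appearing in $H'$ names a specific tape letter $a$ processed in the sector adjacent to $p^{(\theta_1,i)}$, and each transition rule $\zeta_\pm(\theta)$ names $\theta$ explicitly in its indices. Inspection of $H'$ then recovers both $\theta_1$ and the exact tape contents of every sector of $W\cdot\bar\theta_1$; since $B$ is standard, the state-letter skeleton after $\bar\theta_1$ depends only on $\theta_1$, so $W\cdot\bar\theta_1$ is uniquely determined.

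For (2), I would suppose toward a contradiction that $\theta_2^\eta\not\equiv\theta_1^{-1}$ while $B$ is not a subword of the standard base or its inverse. The state letters of $W\cdot\bar\theta_1$ carry the index $(\theta_1)$ whereas the domain of $\bar\theta_2^\eta$ does not, so $H'$ must contain the unique index-removing rule $\zeta_+(\theta_1)$. Applicability of $\zeta_+(\theta_1)$ requires each $p$-letter to sit in state $p^{(\theta_1,i)}(3)$, forcing $\xi_4^{(\theta_1,i)}$ to occur in $H'$ for every $i$ at which $p^{(i)}$ appears in $B$. Now $\xi_4^{(\theta_1,i)}$ locks the $S_{i-1}P_i$-sector, so Lemma \ref{qqiv} forbids the mirrors $S_{i-1}S_{i-1}^{-1}$ and $P_i^{-1}P_i$ in $B$; simultaneously $\bar\theta_1$ locks every $P_iS_i$-sector, so Lemma \ref{qqiv} forbids $P_iP_i^{-1}$ and $S_i^{-1}S_i$ in $B$. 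Enumerating the mirror forms (2) and (3) in the definition of an admissible word shows that these four families exhaust every possible mirror subword of $B$, so $B$ would have to be a subword of the standard base, a contradiction.

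For (3), I would assume $\theta_1$ is right with $\|u_{j+1}\|+\|u'_{j+1}\|=1$, treating the symmetric left case analogously. By (2), $\theta_2^\eta\equiv\theta_1^{-1}$. Since the argument of (2) already forbids $\zeta_+(\theta_1)$ from appearing in $H'$, and any $\zeta_-(\theta)$ requires unindexed state letters that are unreachable from $W\cdot\bar\theta_1$ without $\zeta_+(\theta_1)$, the history $H'$ consists entirely of auxiliary rules. The auxiliary rules that lock the $S_jP_{j+1}$-sector are precisely those of $\overrightarrow Z^{(\theta_1,i)}$ or $\overleftarrow Z^{(\theta_1,i)}$ with $i\neq j+1$, together with $\xi_2^{(\theta_1,j+1)}$ and $\xi_4^{(\theta_1,j+1)}$ and their inverses. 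For each such candidate I would invoke Lemma \ref{qqiv} to exhibit which mirror subwords of $B$ its applicability would forbid, combine these with the mirrors already forbidden by $\bar\theta_1$, and use the requirement that $p^{(\theta_1,j+1)}$ must return to state $(2)$ by the end of $H'$ (for the final $\bar\theta_1^{-1}$) together with the reducedness of the history to eliminate the $\xi_2^{\pm 1},\xi_4^{\pm 1}$ candidates; the resulting mirror exhaustion contradicts non-standardness of $B$.

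The main technical obstacle I foresee is the case analysis in (3): one must carefully enumerate, for each candidate sector-locking rule, the mirror subwords it would forbid via Lemma \ref{qqiv}, and verify that these combined with the mirror restrictions coming from $\bar\theta_1$ being right and from the $p$-letter-state and reduced-history constraints exhaust every admissible mirror in $B$, leaving no room for non-standardness.
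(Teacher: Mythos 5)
Your general strategy (use Lemma \ref{qqiv} and the locked sectors of the rules occurring in the computation to forbid mirror subwords of $B$) is the paper's strategy, and your parts (1)--(2) are close in spirit, but even there the details are shaky. In (2) the claim that $\bar\theta_1$ locks every $P_iS_i$-sector is true only when $\theta_1$ is a right rule; a non-right $\bar\theta_1$ locks the $S_{i-1}P_i$-sectors instead, so your two families of forbidden mirrors do not combine as stated, and you would need the occurrences of $\xi_2^{(\theta_1,i)}$ (or the $s^{(\theta,i)}\tool s_i$ components of $\zeta_+(\theta_1)$ itself) to cover the missing sectors; moreover you only force $\xi_4^{(\theta_1,i)}$ for parts $P_i$ that occur in $B$, which leaves mirrors such as $S_{i-1}S_{i-1}^{-1}$ unforbidden when $P_i$ does not occur. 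In (1), ``inspection of $H'$ recovers the exact tape contents'' presupposes that the rules of each $\overrightarrow Z^{(\theta_1,i)}$ appearing in $H'$ enumerate the \emph{whole} sector; that is exactly what has to be proved (the forcing argument of Lemma \ref{pi23}: with a standard base the locked hand-over sectors compel each auxiliary machine to complete its standard sweep before the computation can continue), and you assert it rather than argue it.

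The real gap is in (3). Your characterization of the rules locking the $s^{(\theta,j)}p^{(\theta,j+1)}$-sector is wrong: after a right rule $\bar\theta_1$ the machine $\overleftarrow Z^{(\theta_1,j+1)}$ has not yet started, so its $p$-letter is held in state $(2)$ pinned against $s^{(\theta,j+1)}$, and consequently the rules of the \emph{other} machines lock the $p^{(\theta,j+1)}s^{(\theta,j+1)}$-sector, not $s^{(\theta,j)}p^{(\theta,j+1)}$; likewise $\xi_2^{(\theta_1,j+1)}$ locks the right-hand sector. The only genuine candidate is a rule of $\overleftarrow Z^{(\theta_1,j+1)}$ itself, and the decisive ingredient --- which your sketch never invokes --- is the prescribed switching order of the auxiliary machines after a right rule, in which $\overleftarrow Z^{(\theta_1,j+1)}$ comes \emph{last}. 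Hence if such a rule occurs in $H'$, all the machines $\overleftarrow Z^{(\theta_1,j)},\dots,\overleftarrow Z^{(\theta_1,1)},\overleftarrow Z^{(\theta_1,N)},\dots,\overleftarrow Z^{(\theta_1,j+2)}$ must have completed their work beforehand, and their successive hand-over rules lock every sector except $p^{(\theta,j)}s^{(\theta,j)}$, which is locked by $\bar\theta_1$ itself; Lemma \ref{qqiv} then forces $B^{\pm 1}$ to be a subword of the standard base, the desired contradiction (the same chain also disposes of possible $\zeta_{\pm}$-rules in $H'$). Your alternative plan --- eliminating $\xi_2^{\pm1},\xi_4^{\pm1}$ by ``mirror exhaustion'' together with the state of $p^{(\theta_1,j+1)}$ at the end of $H'$ and reducedness --- cannot succeed: a single occurrence of $\xi_4^{(\theta_1,j+1)}$ deep inside $H'$, undone much later and not adjacently, locks only one additional sector and by itself forbids far too few mirrors, and reducedness does not exclude it. The contradiction must come from what necessarily happened \emph{before} that rule could become applicable, i.e., from the order specification of the auxiliary machines; without it part (3) does not go through.
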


\proof (1) The argument used for Lemma \ref{pi23} shows that since the base is standard, each of the machines
$ \overrightarrow Z^{(\theta,j)}$ must accomplish its standard work after the application of the rule $\bar\theta_1.$
Therefore the history of the work of $\overrightarrow Z^{(\theta,j)}$
completely determines the $p^{(j)}s_j$-sector subword of the word $W\cdot\bar\theta_1.$
The $\theta$-indices of the state letters of this word are obviously
determined by the histories of $ \overrightarrow Z^{(\theta,j)},$ and Statement (1) is proved.

(2) The assumptions  implies that the $\theta$-indices that the state letters have after
the application of $\bar\theta_1,$ must disappear earlier than one applies $\bar\theta_2^{\eta}.$ Again by Remark \ref{Z'}, it follows that each of the machines
$ \overrightarrow Z^{(\theta_1,j)}$ must perform its standard work. Therefore for every
$j=1,\dots,N,$ the history $H'$ has rules locking $s^{(\theta,j-1)}p^{(\theta,j)}(1)$-sectors and it has
rules locking $p^{(\theta,j)}(2)s^{(\theta,j)}$-sectors. Hence, by Lemma \ref{qqiv}, the base $B$ has no subwords
of the form $q^{\pm 1}q^{\mp 1},$ and so $B^{\pm 1}$ is a subword of the standard base by
the definition of admissible word.

(3) First of all, we have $\theta_2^{\eta}\equiv\theta_1^{-1}$ by Property (2). 
Then we assume that a right rule $\tau$ from $H'$ locks the $s^{(\theta,j)}p^{(\theta,j+1)}$-sector. 

The locking rule $\tau$ must belong to the machine $\overleftarrow Z^{(\theta_1,j+1)}$ since
other auxiliary machines working after the application of the right rule $\bar\theta_1$ do
not lock this sector.  Taking into account the order of the work of auxiliary machines
after an application of a right rule, we conclude that the machines $\overleftarrow Z^{(\theta_1,j)},\dots,$
$\overleftarrow Z^{(\theta_1,1)},$ $\overleftarrow Z^{(\theta_1,N)},\dots,$ $\overleftarrow Z^{(\theta_1,j+2)}$ works before the machine $\overleftarrow Z^{(\theta_1,j+1)}$ starts
working. 
Since the last rule of $\overleftarrow Z^{(\theta_1,j+2)}$ locks the $p^{(\theta,j+1)}s^{(\theta,j+1)}$-sector, 
$H'$ has a rule locking $p^{(\theta,j+1)}s^{(\theta,j+1)}$-sector. Proceeding in this manner, we then consider the work of the preceding machine $\overleftarrow Z^{(\theta_1,j+2)}$ and conclude that the $s^{(\theta,j+1)}p^{(\theta,j+2)}$- 
sectors was locked by by some rules from $H'.$ Finally, we
see that every sector except for $p^{(\theta,j)}s^{(\theta,j)}$ was locked by some rule from $H'.$ The $p^{(\theta,j)}s^{(\theta,j)}$-sector was locked by $\bar\theta_1$ since 
$\theta_1$ is a right rule. By Lemma  \ref{qqiv},  $B$ is a subword of the standard base or of its inverse, a contradiction.

Similar argument works if $\theta_1$ is a left rule. In this case if $p^{(\theta,j)}s^{(\theta,j)}$-sector is locked
by a rule from $H',$ then $\bar\theta_1$ 
switches on the machines $\overrightarrow Z^{(\theta_1,j+1)},\dots, $ 
$\overrightarrow Z^{(\theta_1,N)},$ $\overrightarrow Z^{(\theta_1,1)},\dots $ 
$\overrightarrow Z^{(\theta_1,j)},$ and we again come to a contradiction.

\endproof

\begin{lemma}\label{*V} Suppose that the admissible word $W$ of $M_3$ has the standard base. Suppose that a reduced computation applicable to $W$ has history of the form $H^3$. Then $H$ does not contain rules $\bar\theta$ corresponding to 
the rules $\theta$ of the $S$-machine $\tilde M_2.$ 
\end{lemma}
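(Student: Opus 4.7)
The plan is to argue by contradiction via the projection $\Pi_{3,2}.$ Set $V:=\pi_{3,2}(W)$ and $H':=\Pi_{3,2}(H).$ By definition of $\Pi_{3,2},$ the hypothesis that $H$ contains a rule $\bar\theta$ corresponding to a rule of $\tilde M_2$ is equivalent to $H'$ being non-empty; assume this for contradiction. By Remark \ref{*I}, $H'^3\equiv\Pi_{3,2}(H^3)$ is the reduced history of a computation of $\tilde M_2$ with the standard base starting from $V,$ so in particular $H'$ is cyclically reduced. I would then split the argument into two cases paralleling the case split in the proof of Lemma \ref{norep}.

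\emph{Case 1: $\Pi_{2,1}(H')$ is empty.} In this case $H'$ consists solely of (modified) auxiliary and transition rules coming from copies of $Z.$ Applying Lemmas \ref{lmb}, \ref{lma}, and \ref{lmc} (transferred to $\tilde M_2$ through Lemma \ref{tilde1} and Remark \ref{tildeM2}) one shows that $H'$ lies inside the work of a single copy $Z^{(\theta,i)}.$ Then Lemma \ref{lm93} uniformly bounds the lengths $\|V\cdot H'^j\|$ for $j=0,1,2,3,$ and iterating the construction of $H'^k$ as in the first case of the proof of Lemma \ref{norep} contradicts Lemma \ref{lm89}.

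\emph{Case 2: $\Pi_{2,1}(H')$ is non-empty.} Then $H'^3$ contains at least three $\bar\theta_{M_2}$-type rules, so the hypothesis of Lemma \ref{lmd}(2) (transferred to $\tilde M_2$ via Lemma \ref{tilde1}) is satisfied on the sub-history of $H'^3$ framed by its first and last such occurrences. This forces every intermediate admissible word of $\tilde M_2$ to be positive; in particular the central copy of $H'$ runs through positive configurations. Projecting further by $\pi_{2,1},\Pi_{2,1}$ and invoking Lemma \ref{ss}, the central portion of the computation is literally a reduced computation of the deterministic Turing machine $M_1$ whose history contains $\Pi_{2,1}(H')^3$ as a subword. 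By Lemma \ref{mach23}(b) any reduced history of $\Sym(M_1)$ has the form $G_1 G_2^{-1}$ with $G_1,G_2$ positive, and cyclic reducedness of $\Pi_{2,1}(H')$ forces $\Pi_{2,1}(H')$ itself to be entirely positive. The closing step mimics the end of the proof of Lemma \ref{norep}: iterating the deterministic positive computation of $M_1$ and using Property (a) of Lemma \ref{m0m1} (inherited by $\Sym(M_1)$ for positive commands via Lemma \ref{mach23}(c)) yields the desired contradiction.

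The main obstacle is the technical bookkeeping: transferring Lemma \ref{lmd}(2) from $M_2$ to $\tilde M_2$ through Lemma \ref{tilde1}, and tracking admissibility and positivity across the two projections $\Pi_{3,2}$ and $\Pi_{2,1}.$ The exponent $3$ in $H^3$ is crucial here: it guarantees that $H'^3$ contains at least three $\bar\theta_{M_2}$-type rules with signs satisfying the hypothesis of Lemma \ref{lmd}(2), which is exactly what is needed to force positivity of all intermediate admissible words in the middle segment.
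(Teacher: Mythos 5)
There is a genuine gap. Your two cases transplant the internal mechanics of the proof of Lemma \ref{norep}, but every contradiction in that proof is driven by its hypothesis $w_t=w_0$, i.e.\ an actual repetition of a configuration: in its first case the computation is iterated to history $H^k$ for arbitrary $k$ and the equality $w_t=w_0$ together with Lemma \ref{lm93} gives constant lengths, which is what contradicts Lemma \ref{lm89}; in its second case the contradiction is with Lemma \ref{mach23}(c), i.e.\ Property (a) of Lemma \ref{m0m1}, which only says something about a computation whose first and last words are \emph{equal}. In Lemma \ref{*V} no configuration repetition is given --- the only hypothesis is that the history is periodic, of the form $H^3$ --- and your argument never converts periodicity of the history into equality of two configurations. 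A deterministic Turing machine (and likewise $\tilde M_2$) can perfectly well execute a long positive computation with periodic history while its configurations keep changing, so after you have forced positivity via Lemma \ref{lmd}(2), Lemma \ref{ss} and Lemma \ref{mach23}(b), the "closing step" simply has nothing to contradict; similarly, in your Case 1 you have neither the return to $w_0$ nor the ability to iterate to $H'^k$ for large $k$, so the appeal to Lemmas \ref{lm93} and \ref{lm89} does not go through.

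The missing idea is exactly the one the paper uses, and it lives at the level of $M_3$, which your immediate projection to $\tilde M_2$ throws away: by Lemma \ref{ppm}(1), when the base is standard, the word $W\cdot\bar\theta$ obtained right after a rule $\bar\theta$ is completely determined by the ensuing history of the auxiliary machines $\overrightarrow Z^{(\theta,j)}$, $\overleftarrow Z^{(\theta,j)}$ (their rules record the entire sector contents). Because the history is $H^3$, some rule $\bar\theta_0^{\epsilon_0}$ occurs in a subword $(\bar\theta_0^{\epsilon_0}H_1\bar\theta_1^{\epsilon_1}\dots H_s)^2\bar\theta_0^{\epsilon_0}$, so two occurrences of $\bar\theta_0^{\epsilon_0}$ are followed by the \emph{same} auxiliary history $H_1$; hence the two configurations produced coincide, $W'\equiv W''$. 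Only now does one project: by Remark \ref{*I} the equal words $\pi_{3,2}(W')$ and $\pi_{3,2}(W'')$ are joined by a non-empty reduced computation of $\tilde M_2$, contradicting Lemma \ref{norep}, which is used here as a black box rather than re-proved. Your proposal, which discards the auxiliary histories at the outset, has no substitute for this determinism and therefore cannot reach the contradiction.
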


\proof Suppose that $H$ contains a rule $\bar\theta_0^{\pm 1}.$ 
Then  $\bar\theta_0$ occurs 
in a subword  $H'$
of the form $$H'\equiv(\bar\theta_0^{\epsilon_0} H_1\bar\theta_1^{\epsilon_1}
H_2\bar\theta_2^{\epsilon_2}...H_s)^2\bar\theta_0^{\epsilon_0}$$
where all 
$\theta_i$ are positive rules from
$\tilde M_2$,
$\epsilon_i\in \{-1,1\}$,
and $H_i$ consists of rules of various copies of the $S$-machines $\overleftarrow Z$ and $\overrightarrow Z$. 
We can assume that $\epsilon_0=1$ (if not, we can replace $H$ by
$H\iv$).

Let $W$ be the initial word of the computation with history $H_1$. Then the
word $W\cdot\bar\theta_0$ is completely determined by $H_1$ by Lemma \ref{ppm}.
Similarly, the word $$W''\equiv W\cdot (\bar\theta_0 H_1\bar\theta_1^{\epsilon_1}
H_2\bar\theta_2^{\epsilon_2}...H_s)\bar\theta_0$$ is determined by the same $H_1$. Thus $W'\equiv W'',$
 but then by Remark \ref{*I}, the equal words $\pi_{32}(W')$ and $\pi_{32}(W'')$ are connected by a non-empty
 reduced computation of the machine $\tilde M_2$. This contradicts  Lemma \ref{norep}, and the lemma is proved.
\endproof

\subsection{The machine \label{M4S} $M_4$}\label{M4}

Recall that the set of state letters of $M_3$ is $S_0\sqcup P_1 \sqcup S_1 \sqcup P_2 ...\sqcup S_N$
where $\cup P_i$ contains the control state letters. 
Let \label{T3} $\Theta(3)$ be the set of positive rules of $M_3$ except for the start and the accept rules. We introduce two copies \label{Y3pr} $Y(3)$
and $Y'(3)$ 
of $\Theta(3)$ which will be parts of the tape alphabet of $M_4.$ 
Let maps $\theta\mapsto y_\theta$ and
$\theta\mapsto y'_\theta$ identify $\Theta(3)$ with $Y(3)$ and $Y'(3)$.

The standard base of $M_4$ is $tks_0p_1s_1...p_Ns_Nk't',$ where $s_0p_1s_1...p_Ns_N$ is the standard base of $M_3.$ (Every state letter is called a $q$-letter as earlier, but from now on, we also can 
use $t$-, $k$-, $s$-, $p$-letters or $s_0$-, $p_1$-letters, and so on.)
 As for $M_3$, the $p_1s_1$-sector of an admissible word is
called the \label{inputs}{\em input sector} of that word.

The new parts of the set of state letters are $T=\{t\}$, $K=\{k(1),k(2), k(3)\}$, $K'=\{k'(1),k'(2),k'(3)\}$, $T'=\{t'\}$.

The new sets of state letters are now denoted by $T, K, S_0, P_1,...,P_N, S_N, K', T'$.
The set of tape letters in the $tk$-sector is $Y(3)$, the sets of tape letters in $ks_0$-sector and in the $s_Nk'$-sector
are empty, and the set of tape letters in the $k't'$-sector is $Y'(3)$,  $k\in \{k(1), k(2), k(3)\}$, $k'\in \{k'(1),k'(2),k'(3)\}.$
The tape letters in the other sectors are as
in $M_3$.

The positive rules of the machine $M_4$ are divided into three \label{Steps} {\em Steps}. Each rule below contains subrules $s_0\to
s_0$, $t\to t$ and $t'\to t'$, so we sometimes omit these subrules.

{\bf Step 1.} $$\rho_1(y)=\left[\begin{array}{l}k(1)\tool  yk(1), p^{(1,1)}\to p^{(1,1)}, s_i\tool s_i 
 (1\le i\le N),\\ 
 p^{(1,j)}\tool p^{(1,j)} (2\le j\le N), k'(1)\tool k'(1)\end{array}\right], y\in Y(3)$$
where the $s$- and $p$-letters  form the start vector $\overrightarrow s_1$ for the machine $M_3.$

{\em Comment:} The machine writes the $Y(3)$-copy of a (positive) history word  in the $tk$-sector to the left of $k_1$.
The word between $k$ and $k'$ is an input word of $M_3$. All sectors except for the $tk$-sector and the
$p^{(1,1)}s_1$-sector are locked by the rules of Step 1.

\label{Tr12}{\bf Transition rule (12)} from Step 1 to Step 2 is the `extension' $\theta(M_4)$ of the unique start rule $\theta=\theta_{start} $ of the machine $M_3$:
$$(12)=\left[\begin{array}{l} k(1)\tool k(2), \dots,
s_N\tool s'_N, k'(1)\tool k'(2)\end{array}\right],$$
where the parts of the rule (12) between $k$- and $k'$-letters are the parts
of $\theta_{start}.$

{\em Comment:} After that rule is applied, the machine is ready to execute copies of the machines $\overrightarrow Z^{(\theta_{start},j)}$ and $\overleftarrow Z^{(\theta_{start},j)},$ on tapes 1
through $N$. All sectors except the $tk$-sector and the $p^{(1,1)}s_1$-sector are locked by this rule.

{\bf Step 2.} For every $\theta\in \Theta(3)$:
\label{tM4}
$$\theta(M_4)=[k(2)\tool y_\theta\iv k(2), \theta, k'(2)\to k'(2)y_\theta']$$
{\em Comment:} On tapes $1$ through $N$, the machine executes (backwards) the history written in the $tk$-sector,
erases the word in that sector, and copies it to the $k't'$-sector. 

\label{Tr23}{\bf Transition
rule (23)}=$\theta(M_4)$ from Step 2 to Step 3 `extends' the accept  rule $\theta=\theta_{accept}$ of $M_3$:

$$(23)=[t\tool t, k(2)\tool k(3), \dots s_N\tool s'_N,  k'(2)\to k'(3), t\to t']$$
where the parts of the rule (23) between $k$- and $k'$-letters are the parts of $\theta_{accept}.$

{\em Comment.} All sectors except for the $k't'$-sector are locked by this rule.

{\bf Step 3.}  $$\rho_3(\theta)=[t\tool t, k(3)\tool k(3),\dots, s'_N\tool s'_N,  k'(3)\to k'(3)(y'_\theta)\iv, t'\to
t'],$$
where the state letters between $k$- and $k'$-letters form the accept vector  $\overrightarrow s_0$ of $M_3$ 

{\em Comment:} The machine erases the history from the $k't'$-sector. All other sectors are locked by the rules of
this Step.

For every admissible input word $W\in X_3$ of $M_3$ let \label{pi34} $\pi_{3,4}(W)\in X_4$ be the admissible word of $M_4$ obtained
by adding  state letters $k(1), k'(1), t, t',$ 
hence
$\pi_{3,4}(W)\equiv k(1)tWt'k'(1).$ 
For every input
word $W$ of $M_3$ we call the word $\pi_{3,4}(W)$ an {\em input} word of $M_4$. The stop word of $M_4$, $W_{M_4}$,
 is obtained from the stop word $W_{M_3}$
of $M_3$ by adding state letters $k(3),k'(3), t, t',$ 
i.e., $W_{M_4}\equiv k(3)tW_{M_3}t'k'(3).$

\begin{rk} From now on, we do
not show the indices $(i)$ ($i=1,2,3)$) of the letters $ k,$ and $k'$ assuming that the indices are appropriate for an admissible word.
\end{rk}

\unitlength 1mm 
\linethickness{0.4pt}
\ifx\plotpoint\undefined\newsavebox{\plotpoint}\fi 


For every accepting computation $W\to W\cdot \theta_1\to W\cdot \theta_1\theta_2\to\dots\to W\cdot\theta_1...\theta_n\equiv W_{M_3}$
(where $\theta_i$ are rules and $W$ is an input word for $M_3$) of $M_3$ with history $H\equiv \theta_1\theta_2...\theta_n$, $W\in X_3$, one canonically
constructs a computation of $M_4$:
$\pi_{3,4}(W)\to...\to W_{M_4}$. 
The history of that computation is denoted by \label{Pi34} $\Pi_{3,4}(H)$. That
computation first uses rules of Step 1
and writes a mirror copy of $H'$ (i.e. $H$ without the start and the accept rules) in the alphabet $Y(3)$ in the $tk$-sector,
then executes
rule (12), then executes the computation with history $H'$ on the subword between $k$ and $k'$ while erasing the word
in the $tk$-sector and moving it onto the $k't'$-sector (written in $Y'(3)$). After $H'$ is completed, 
rule (23) is executed, then the $k't'$-sector is erased using rules of Step 3. Let \label{X4} $X_4=\pi_{3,4}(X_3)$. Every word from this set 
of input configurations is accepted by $M_4$.
To simplify the notation, we can include the rules (12) 
and  (23) to Step
2.

Suppose that a history of computation of $M_4$ has the form $H\equiv H_1H_2...,$ where all rules of each $H_i$ belong to the
same Step $j_i$, and $H_i$ is a maximal subword of $H$ with this property. Then we say that the \label{steph}{\em step history} of that computation is $(j_1)(j_2)...$ (or that $H$ is of \label{typeh} type $(j_1)(j_2)...$).
The following lemma is a straightforward consequence of the definition of $M_4$ and will be used without reference throughout the paper.

\begin{lemma}\label{2w} Every 2-letter subword of any step history of a computation of $M_4$ (with any base) is one of the following words: $(1)(2), (2)(1), (2)(3), (3)(2).$ Two consecutive steps are separated by $(12)^{\pm 1}$ or by $(23)^{\pm 1}$, resp., and the letters
of the history neighboring any $(12)$ (or $(23)$) from the left and from the right belong
to different Steps. 

\end{lemma}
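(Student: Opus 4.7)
The plan is to verify the lemma by direct inspection of the $K$-components (equivalently, $K'$-components) of the rules of $M_4$, which act as step indicators. Specifically, $\rho_1(y)$ has $K$-part $k(1) \tool y k(1)$; every Step 2 rule $\theta(M_4)$ with $\theta \neq \theta_{\mathrm{start}}, \theta_{\mathrm{accept}}$ has $k(2) \tool y_\theta^{-1} k(2)$; the transition $(12)$ has $k(1) \tool k(2)$; the transition $(23)$ has $k(2) \tool k(3)$; and $\rho_3(\theta)$ has $k(3) \tool k(3)$. The $K'$-components follow the analogous pattern with $k'$ in place of $k$.

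From this tabulation I observe that no rule of $M_4$ takes $k(1)$ directly to $k(3)$ or vice versa. Hence, when the base of the admissible words contains $K$, the $k$-index can change only through an application of $(12)^{\pm 1}$ or $(23)^{\pm 1}$, and each such application shifts the index by exactly one. This forces the step history to have two-letter subwords only among $(1)(2), (2)(1), (2)(3), (3)(2)$, and shows that $(12)^{\pm 1}$ and $(23)^{\pm 1}$ are the unique boundary rules between distinct Step blocks. When the base contains $K'$ but not $K$, the same argument applies to the $k'$-index; when it contains neither, one can use the distinct state-letter signatures in the $S_i$ and $P_j$ positions (start vector for Step 1, accept vector for Step 3, and $\theta$-indexed letters for Step 2) to distinguish Steps.

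For the second assertion (that the neighbors of any $(12)$ or $(23)$ in the history lie in different Steps), I argue as follows. Consider an occurrence of $(12)$ in the interior of the history: its left neighbor must output $k(1)$, and by reducedness of the computation it cannot be $(12)^{-1}$, so it is a Step 1 rule; its right neighbor must accept $k(2)$ and, again by reducedness, cannot be $(12)^{-1}$, so it is a Step 2 rule. Since these Steps differ, the conclusion follows, and the cases of $(12)^{-1}, (23), (23)^{-1}$ are symmetric.

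I expect no serious obstacle: the lemma reduces to combinatorial bookkeeping on the rule definitions. The only point that requires care is the invocation of reducedness at step boundaries, which is needed to exclude immediate cancellations of transition rules.
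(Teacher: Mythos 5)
Your bookkeeping of the $k$- and $k'$-components is exactly the verification the paper has in mind: the authors give no proof at all, calling the lemma a straightforward consequence of the definition of $M_4$, and your observation that Step 1, Step 2 and Step 3 rules fix the indices $1$, $2$, $3$ respectively while only $(12)^{\pm 1}$ and $(23)^{\pm 1}$ shift the index, by exactly one, plus the reducedness remark at the transitions, settles the statement whenever the base contains a $k^{\pm 1}$- or $(k')^{\pm 1}$-letter. The only place needing comment is your closing catch-all for bases containing neither. First, the signature you name is not quite right: Step 2 of $M_4$ contains, besides the $\theta$-indexed rules, the extensions of rules of $M_3$ such as $\zeta_{\pm}(\theta)$ whose state letters on one side carry no $\theta$-index, so the correct dichotomy is ``start-vector letter (Step 1) / accept-vector letter (Step 3) / neither'', which works via Property (c) of Lemma \ref{m0m1} inherited by $M_3$ whenever the base contains a $p_j$-letter or a non-marker $s_i$-letter. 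Second, for bases built solely from letters whose components are literally identical in every rule ($t$, $t'$, $s_0$), no signature argument is possible: with base $t$, every rule of $M_4$ applies to the one-letter word $t$, so any reduced sequence of rules is a computation and its step history can contain $(1)(3)$. Thus the lemma, read literally ``with any base'', is not covered by your argument (nor by any other) for such degenerate bases; it is to be understood, as in all its uses in the paper, for computations whose base contains letters that distinguish the Steps, most importantly $k^{\pm 1}$ or $(k')^{\pm 1}$. With that proviso your proof is correct and coincides with the intended one.
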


\begin{lemma}\label{M400} An admissible word of $M_4$ 
is not in
the domain of the reduced histories of types:

(a) $(1)(2)(1)$ if the base of $W$ has subword $k't'$;

(b) $(3)(2)(3)$ if the base of $W$ has subword $tk$;

(c) $(3)(2)(1)(2)(3)$ if the base of $W$ is standard.
\end{lemma}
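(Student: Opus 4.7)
The plan is to track the evolution of the $tk$- and $k't'$-sectors during the computation and to exploit the locking behaviour of the transition rules $(12)^{\pm 1}$ and $(23)^{\pm 1}$. From their defining comments, $(12)$ and $(12)^{-1}$ lock the $k't'$-sector (forcing it to be empty) while leaving the $tk$-sector unconstrained, and $(23)$ and $(23)^{-1}$ lock the $tk$-sector while leaving the $k't'$-sector unconstrained. Because a Step $j$ rule requires $k=k(j)$ and $k'=k'(j)$, Lemma \ref{2w} together with inspection of $k$-indices forces each maximal Step 2 block in a reduced history of the given type to have a specific sandwich form in terms of transition rules, and no other transition rule can occur inside such a block without violating reducedness or maximality.

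For part (a), type $(1)(2)(1)$ forces $H_2 = (12)\cdot H_2'\cdot (12)^{-1}$, where $H_2'$ is a reduced nonempty sequence of rules $\theta(M_4)^{\pm 1}$ with $\theta\in\Theta(3)$. Each such rule prepends a letter $(y'_\theta)^{\pm 1}$ to the $k't'$-sector (via the part $k'(2)\to k'(2)y'_\theta$); reducedness of $H_2'$ prevents consecutive inverse appended letters, so the total prepended word is freely reduced of positive length. But the sandwiching rules $(12)$ and $(12)^{-1}$ require the $k't'$-sector to be empty both before and after $H_2'$, a contradiction. Part (b) is the perfectly symmetric argument, using $(23)^{\pm 1}$, the $tk$-sector, and the part $k(2)\tool y_\theta^{-1}k(2)$ of the Step 2 rules that appends letters to the right end of the $tk$-sector.

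For part (c), the same analysis gives $H_2 = (23)^{-1}H_2'(12)^{-1}$ and $H_4 = (12)H_4'(23)$, each $H_i'$ a reduced sequence of $\theta(M_4)^{\pm 1}$ rules. Sector analysis alone is no longer decisive, because the two transition rules flanking $H_2$ (and $H_4$) lock different sectors. The new ingredient is Lemma \ref{M31}(b). Since $(12)$ carries out the start rule $\theta_{start}$ of $M_3$ on the middle, and by property (c) of Lemma \ref{m0m1} inherited by $M_3$ (Remark \ref{d}), the only preimages of the post-start middle under $\theta_{start}$ are input admissible words of $M_3$, the middle just after $(12)^{-1}$ at the end of $H_2$ is such an input word $W_{\mathrm{in}}$. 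It is preserved through the Step 1 block $H_3$ (Step 1 rules only write to the $tk$-sector), and is accepted by $M_3$ via the $M_3$-computation carried out on the middle during $H_4$, so $W_{\mathrm{in}}\in X_3$. The $M_3$-computation performed on the middle during $H_2^{-1}$ is a second reduced accepting computation for $W_{\mathrm{in}}$; reducedness uses that $\Theta(3)$ excludes $\theta_{start}$ and $\theta_{accept}$, so no cancellation occurs with the $\theta_{start}, \theta_{accept}$ coming from the flanking rules $(12), (23)$. Lemma \ref{M31}(b) then forces the two $M_3$-computations to coincide, whence the sequence of $M_3$-rules executed by $H_4'$ is the formal inverse of that executed by $H_2'$; this translates into $u_4 = u_2^{-1}$ for the reduced words $u_2, u_4$ appended by $H_2', H_4'$ to the $tk$-sector.

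The final step tracks the $tk$-sector through the whole history. It is empty during $H_1$ and just before $H_2'$ (Step 3 and $(23)^{-1}$ both lock it), equal to $u_2$ after $H_2$, equal to $u_2\cdot w_3$ after $H_3$ — where $w_3$ is the freely reduced word of length $|H_3|\ge 1$ appended by $H_3$ (reducedness of $H_3$ prevents consecutive inverse letters) — and equal to $u_2 w_3 u_4 = u_2 w_3 u_2^{-1}$ after $H_4'$. The closing rule $(23)$ then forces it to be empty, giving $w_3 = 1$ in the free group and contradicting $|w_3|\ge 1$. I expect the main technical obstacle to be the verification that Lemma \ref{M31}(b) applies, i.e.\ that the middle at the end of $H_2$ really is an element of $X_3$ and that both competing accepting $M_3$-computations are reduced; this relies on the start-rule property (c) of Lemma \ref{m0m1} as inherited by $M_3$, and on the exclusion of $\theta_{start}, \theta_{accept}$ from $\Theta(3)$.
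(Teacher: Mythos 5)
Your proof is correct and takes essentially the same route as the paper: parts (a),(b) by noting that each Step 2 rule multiplies the $k't'$- (resp.\ $tk$-) sector by a letter uniquely determined by the rule while the flanking $(12)^{\pm 1}$ (resp.\ $(23)^{\pm 1}$) force that sector to be empty, and part (c) by applying the uniqueness of the reduced accepting $M_3$-computation (Lemma \ref{M31}(b)) to the two computations on the subword between $k$ and $k'$ and then doing the $tk$-sector bookkeeping. The only cosmetic difference is at the end of (c): you get the contradiction from $u_2w_3u_2^{-1}=1$ with $w_3\neq 1$, whereas the paper phrases the same computation as forcing the Step 1 block to be empty, contradicting reducedness.
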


\proof
Cases (a) and (b) are almost identical, so suppose that the history $H$ contains a subword $(12)H(12)\iv,$ where $H$ is of type $(2).$
The word
$V\equiv W\cdot (12)$ from the computation that is in the domain of $H$ must have the subword between $k'$ and $t'$ empty
(since it is in the domain of $(12)\iv$).
Similarly, the word $V\cdot H$ must have the subword between $k'$ and $t'$
empty. If $H\equiv \theta_1(M_4)^{\pm 1}...\theta_s(M_4)^{\pm 1}$ 
where $\theta_i$ are positive rules of $M_3$, then the
subword between $k'$ and $t'$ in $V\cdot H$ is  equal to $(y'_{\theta_s})^{\pm 1}...(y'_{\theta_1})^{\pm 1}$. Since
this word is empty, we conclude that $H$ is not reduced, a contradiction.

Suppose that the step history is of the form (c). Then the history $H$ has the form $H_3(23)\iv H_2 (12)\iv H_1 (12)
H_2'(23)H_3',$ where $H_i, H_i'$ contain rules from Step $i$ only. Restricting the computation to the subwords between $k$ and $k'$
of the admissible words, we obtain two reduced accepting computations of $M_3$ with the same initial word from $X_3$ and
histories $H_2\iv, H_2'$ (this follows from the definitions of the rules of Step 2). By Lemma 
\ref{M31}
(b) $H_2\iv\equiv H_2'$. Since every rule of Step 1 multiplies the $tk$-sector of the admissible word by an $a$-letter
uniquely determined by the rule, the $tk$-sectors $A_{tk}, B_{tk}$ in the words
$W\cdot H_3(23)\iv H_2$ 
and $W\cdot H_3(23)\iv H_2 (12)\iv H_1 (12)$ respectively are the same. Since every
rule of Step 1 multiplies that sector by a letter uniquely determined by that rule, we deduce that a copy of the word
$H_1$ multiplied by $A_{tk}$ is $A_{tk}$. Hence $H_1$ is empty, which contradicts the assumption that the computation
is reduced.
\endproof

\begin{lemma}\label{M40} Suppose that $W\equiv tW_1kW_2k'W_3t'$ is an admissible word of $M_4$ with the standard base. 
Suppose that $W$ is in the domain of a reduced history of the form $(12)H(23)$. Then

(1) $H$ contains only rules from Step 2, $(12)H(23)\equiv\theta_1(M_4)...\theta_n(M_4)$ for some rules $\theta_1,...,\theta_n$ of
$M_3$.

(2) The word $W_2$ is from $X_3$ and $\theta_1...\theta_n$ is a computation of $M_3$ accepting $W_2$.
\end{lemma}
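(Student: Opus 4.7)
The plan is to prove Part~(1) by step-history analysis together with a stack argument, and then derive Part~(2) by restricting to the $M_3$-part.

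\emph{Part (1).} Because $(12)$ and $(23)$ are classified as Step~2, the step history of $(12)H(23)$ begins and ends with $(2)$. Assume the history uses a non-Step-2 rule; by Lemma~\ref{2w} the step history is $(2)(\alpha_1)(2)\dots(\alpha_k)(2)$ with $\alpha_i\in\{(1),(3)\}$, and Lemma~\ref{M400}(a)--(c) forbids the subpatterns $(1)(2)(1)$, $(3)(2)(3)$, and $(3)(2)(1)(2)(3)$, so $(\alpha_1,\dots,\alpha_k)$ has no repeated $(1)$'s, no repeated $(3)$'s, and no $(3)(1)(3)$. The only candidates are $(1),(3),(1)(3),(3)(1),(1)(3)(1)$. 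The main tool is a stack argument: a positive Step~2 rule $\theta(M_4)$ prepends $y'_\theta$ to the $k't'$-sector and appends $y_\theta^{-1}$ to the $tk$-sector, its inverse pops these letters, and $(12)^{\pm1}$ (resp.\ $(23)^{\pm1}$) requires the $k't'$-sector (resp.\ $tk$-sector) to be empty. Consequently, any reduced Step~2 sub-block which begins and ends with an empty $k't'$-sector (or $tk$-sector) must itself be empty. This kills every candidate beginning $(2)(1)\dots$, because the first block $(12)A(12)^{-1}$ would then force $A=\emptyset$, reducing to the non-reduced $(12)(12)^{-1}$; this eliminates $(1),(1)(3),(1)(3)(1)$. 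Applied to the terminal block the same argument also eliminates $(2)(3)(2)$.

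The only surviving candidate is $(3)(1)$, i.e.\ step history $(2)(3)(2)(1)(2)$ with history $(12)A_1(23)B_1(23)^{-1}A_2(12)^{-1}B_2(12)A_3(23)$. Its subhistory $(12)A_1(23)$ is already pure Step~2, so its $M_3$-projection is a reduced accepting computation of $W_2$; Lemma~\ref{M31}(a) identifies accepted words of $M_3$ with $X_3$, so $W_2\in X_3$, and Lemma~\ref{M31}(b) then forces the reduced accepting $M_3$-history of $W_2$ to be unique. Since distinct Step~2 rules correspond to distinct $M_3$-rules, this means $A_3$ must be the formal inverse of $A_2$ read in reverse order. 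A careful look at the $tk$-sector now shows that $A_3$ can erase $|tk|$ back to $0$ only when the net Step~1 contribution $V_{B_2}$ of $B_2$ vanishes; a stack argument for $\rho_1(y)^{\pm1}$ then forces $B_2=\emptyset$, contradicting reducedness. Hence the step history is $(2)$, and $(12)H(23)\equiv\theta_1(M_4)\dots\theta_n(M_4)$ with $\theta_1=\theta_{start}$, $\theta_n=\theta_{accept}$, and $\theta_2,\dots,\theta_{n-1}\in\Theta(3)$.

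\emph{Part (2).} Restrict to the $M_3$-part: each $\theta_i(M_4)$ acts via the $M_3$-rule $\theta_i$, so $\theta_1\theta_2\dots\theta_n$ is a reduced $M_3$-computation starting at $W_2$. The rule $\theta_n=\theta_{accept}$ of $M_3$ (inherited from the empty-tape accept configuration of $M_1$) locks every $M_3$-tape sector in its domain; hence $W_2\cdot\theta_1\dots\theta_{n-1}$ has empty tapes, $W_2\cdot\theta_1\dots\theta_n=W_{M_3}$, and the computation accepts $W_2$. Lemma~\ref{M31}(a) therefore gives $W_2\in X_3$. The hardest point in the whole proof is Part~(1)'s case $(2)(3)(2)(1)(2)$, which survives local stack analysis and is broken only by bootstrapping the $X_3$-identification and the uniqueness of accepting $M_3$-computations.
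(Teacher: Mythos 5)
Your proof is correct in substance, and it takes a visibly different (and more detailed) route than the paper. The paper's own proof of part (1) is a two-line reduction: it asserts that if $H$ contained a rule of Step 1 or 3, the history would contain a subword $(12)H_1(12)^{-1}$ or $(23)^{-1}H_1(23)$ with $H_1$ of Step 2, contradicting Lemma \ref{M400}(a,b); part (2) is then read off from the domains of $(12)$ and $(23)^{-1}$ exactly as you do. Your systematic enumeration of step histories compatible with Lemma \ref{2w} and Lemma \ref{M400} is sound (the list $1,3,13,31,131$ is complete), and your ``stack'' eliminations of the candidates beginning with $(1)$ and of $(2)(3)(2)$ are just re-derivations of Lemma \ref{M400}(a,b), so they match the paper. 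The genuinely different content is your treatment of the surviving pattern $(2)(3)(2)(1)(2)$, i.e.\ $(12)A_1(23)B_1(23)^{-1}A_2(12)^{-1}B_2(12)A_3(23)$: this configuration contains no subword of either form named in the paper's proof (the only $(12)\cdots(12)^{-1}$ pair has a $(23)B_1(23)^{-1}$ between them, and the only $(23)^{-1}\cdots(23)$ pair has a $(12)^{-1}B_2(12)$ between them), so it is not disposed of by Lemma \ref{M400} alone, and your extra argument via uniqueness of accepting $M_3$-computations together with the $tk$-sector bookkeeping is exactly what is needed to kill it. In this respect your write-up is more complete than the paper's terse proof.

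One correction is needed in that case, though it does not change the outcome. You invoke Lemma \ref{M31}(b) for $W_2$, but the blocks $A_2$ and $A_3$ accept the input word $I'$ sitting between $k$ and $k'$ after $(12)^{-1}$ (Step 1 rules and $(12)^{\pm1}$ do not alter that subword), and $I'$ need not coincide with $W_2$. Since $\theta_{start}\,\Pi(A_2)^{-1}\,\theta_{accept}$ and $\theta_{start}\,\Pi(A_3)\,\theta_{accept}$ are both reduced accepting computations of $I'$ (here $\Pi$ denotes the $M_3$-projection), $I'$ is an accepted input word, so Lemma \ref{M31}(b) applied to $I'$ gives $A_3\equiv A_2^{-1}$; then, as you say, the $tk$-sector contribution of $A_3$ is the exact inverse of that of $A_2$, so the contribution of $B_2$ must be freely trivial, and a Lemma \ref{gen1}-type argument forces $B_2=\emptyset$, contradicting reducedness. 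With this one-line repair your argument is complete; part (2) coincides with the paper's.
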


\proof Suppose that $H$ contains rules from Step 1 or 3.
Then it contains a subword of one of two forms $(23)\iv
H_1(23)$ or $(12)H_1(12)\iv $ with $H_1$ consisting of rules of Step 2 which contradicts Lemma \ref{M400}.
This implies part (1) of the lemma.

Since $W$ is in the domain of $(12)$, the
subword $W_2$ 
an admissible input word of $ M_3$.
Since $W\cdot (12)H(23)$ is in the domain of $(23)^{-1},$
the subword between $k$ and
$k'$ is the stop word $W_{M_3}$ of $M_3$. This implies part (2) of the lemma. 
\endproof

\begin{lemma}\label{M401} Suppose that $W$ is an admissible word of $M_4$ with the standard base. Then

(a) The step history of any reduced computation starting with $W$ is a subword of $(2)(1)(2)(3)(2)(1)(2)$.

(b) The step history of any accepting reduced computations starting with $W$ is a suffix of the word
$(2)(1)(2)(3)$.
\end{lemma}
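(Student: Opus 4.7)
The plan is to reduce both claims to a short combinatorial analysis of walks on the path graph $1\,{-}\,2\,{-}\,3$, using Lemmas \ref{2w} and \ref{M400} as the only inputs beyond the shape of the accept word $W_{M_4}$.

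For part (a), by Lemma \ref{2w} the step history is a walk on the path $1\,{-}\,2\,{-}\,3$: consecutive letters are distinct and $(1)(3)$, $(3)(1)$ are forbidden, so between any two consecutive non-$(2)$ letters there is a $(2)$. Since the base of $W$ is standard, it contains both $tk$ and $k't'$, so Lemma \ref{M400} forbids the subwords $(1)(2)(1)$, $(3)(2)(3)$ and $(3)(2)(1)(2)(3)$ in the step history. Reading only the non-$(2)$ letters of the step history (in order), these first two restrictions say that the resulting sequence alternates between $1$ and $3$, and the third restriction forbids $3,1,3$ as a subword of that alternating sequence. An alternating $\{1,3\}$-sequence avoiding $3,1,3$ must be one of $\emptyset,\ 1,\ 3,\ 1\,3,\ 3\,1,\ 1\,3\,1$. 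For each such sequence, one inserts a $(2)$ between consecutive letters and, optionally, at each end, producing a finite list of candidate step histories; direct inspection shows each of them is a contiguous substring of $(2)(1)(2)(3)(2)(1)(2)$, the extremal case $1\,3\,1$ yielding the whole word.

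For part (b), we identify which rules of $M_4$ can produce state letters $k(3),k'(3)$, the state letters of the accept word $W_{M_4}$. The Step 1 rules $\rho_1(y)$ (and the transition $(12)$, which is listed with them) require and preserve $k(1),k'(1)$; the Step 2 rules $\theta(M_4)$ require and preserve $k(2),k'(2)$. Only the transition rule $(23)$ and the Step 3 rules $\rho_3(\theta)$ produce $k(3),k'(3)$. Hence the last rule of a non-empty accepting computation is $(23)$ or a Step 3 rule, and in either case the step history ends with $(3)$ (since by Lemma \ref{2w} the rule $(23)$ is the boundary between a Step 2 and a Step 3 block of the history, so applying $(23)$ terminally forces a $(3)$-letter at the end of the step history). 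Combining with part (a), the step history is a contiguous substring of $(2)(1)(2)(3)(2)(1)(2)$ that either is empty or ends with the unique $(3)$ of this word; such a substring is necessarily a suffix of the prefix $(2)(1)(2)(3)$.

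The main obstacle is the combinatorial step in part (a): one must recognize that the two Lemmas \ref{2w} and \ref{M400} together pin down the step histories to a short explicit list through the alternating/$3,1,3$-free characterization; once this is observed the verification that each candidate embeds in $(2)(1)(2)(3)(2)(1)(2)$ is a finite check, and (b) then drops out by noting the simple constraint on the final state letters $k,k'$.
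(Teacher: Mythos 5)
Your proposal is correct and takes essentially the same route as the paper: the paper's (one-line) proof likewise notes that in a step history after $(1)$ comes $(2)$, after $(2)$ comes $(1)$ or $(3)$, after $(3)$ comes $(2)$, and then invokes Lemma \ref{M400} to forbid $(1)(2)(1)$, $(3)(2)(3)$ and $(3)(2)(1)(2)(3)$, which is exactly your alternating, $3\,1\,3$-free analysis. Your explicit enumeration of the candidate step histories and the $k(3),k'(3)$ endgame for part (b) merely spell out details the paper leaves implicit.
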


\proof Indeed, in every step history $(i_1)(i_2)...(i_s)$ of a reduced computation of $M_4$, after (1) we should have
(2), after (2) we should have (1) or (3), after (3) we should have (2). The statement then follows immediately from
Lemma \ref{M400}. \endproof

\begin{lemma} \label{*IVb} 
Suppose that a history $H$ of a reduced computation of $M_4$ with standard base contains
both $(12)^{\pm 1}$ and $(23)^{\pm 1}$.

(a) The number of occurrences of $(12)^{\pm 1}$ or $(23)^{\pm 1}$ in $H$ is at most 6.

(b) Suppose that the computation is accepting. Then the number of occurrences of $(12)^{\pm 1}$ or $(23)^{\pm 1}$ in $H$ is at most 3.

\end{lemma}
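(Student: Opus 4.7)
The plan is to reduce both parts to Lemma \ref{M401} and the observation recorded in Lemma \ref{2w} that each transition between two consecutive letters of the step history of a computation corresponds to exactly one occurrence of $(12)^{\pm 1}$ or $(23)^{\pm 1}$ in its history $H$. So if the step history has $k$ letters, then the total number of occurrences of $(12)^{\pm 1}$ and $(23)^{\pm 1}$ in $H$ is exactly $k-1$.

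For part (a), I would invoke Lemma \ref{M401}(a): the step history of any reduced computation starting with a standard-base word $W$ is a subword of the 7-letter word $(2)(1)(2)(3)(2)(1)(2)$. Hence the step history has at most $7$ letters, giving at most $6$ transitions, which is the claimed bound.

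For part (b), I would use Lemma \ref{M401}(b): the step history of an accepting reduced computation is a suffix of $(2)(1)(2)(3)$. Under the assumption that $H$ contains both a $(12)^{\pm 1}$ and a $(23)^{\pm 1}$, the step history must contain at least one $(2)\!-\!(3)$ transition and at least one $(1)\!-\!(2)$ or $(2)\!-\!(1)$ transition. Among the suffixes of $(2)(1)(2)(3)$, those that satisfy this are $(1)(2)(3)$ and $(2)(1)(2)(3)$, of length at most $4$. Therefore the number of transitions is at most $3$, proving the claim.

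The argument is genuinely routine once Lemma \ref{M401} is in hand; there is no real obstacle beyond carefully verifying the bookkeeping of which suffixes of $(2)(1)(2)(3)$ meet the hypothesis that both transition-types appear. I expect the whole proof to be only a few lines.
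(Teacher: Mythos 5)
Your proposal is correct and follows exactly the route the paper takes: the paper's own proof of this lemma is the one-line observation that it follows immediately from Lemma \ref{M401} (step history is a subword of $(2)(1)(2)(3)(2)(1)(2)$, resp.\ a suffix of $(2)(1)(2)(3)$) combined with Lemma \ref{2w} (each occurrence of $(12)^{\pm 1}$ or $(23)^{\pm 1}$ marks a transition between consecutive Steps), so counting transitions gives the bounds $7-1=6$ and $4-1=3$. Your write-up just makes this bookkeeping explicit.
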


\proof Immediately follows from Lemmas \ref{M401} and \ref{2w}.  \endproof

\begin{lemma}\label{M41} Recall that $X_4$ is 
the set of all words of the form $\pi_{3,4}(W)$, $W\in X_3$.

An input word $W'\equiv \pi_{3,4}(W)$ is accepted by $M_4$ if and only if $W\in X_3$. 
Hence the language accepted by $M_4$ is not
recursive.

\end{lemma}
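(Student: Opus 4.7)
The forward direction is essentially the content of the paragraph preceding the lemma. Given $W\in X_3$, Lemma \ref{M31}(b) supplies the unique reduced accepting computation of $M_3$ with some history $H$; the canonical $M_4$-computation whose history is $\Pi_{3,4}(H)$ (Step 1 writes a mirror copy of the inner part of $H$ in the $tk$-sector; rule $(12)$ applies $\theta_{start}$; Step 2 simulates $H$ rule-by-rule, erasing the $tk$-sector and moving its letters in $Y'(3)$-form to the $k't'$-sector; rule $(23)$ applies $\theta_{accept}$; Step 3 erases the $k't'$-sector) takes $\pi_{3,4}(W)$ to $W_{M_4}$, as can be verified by inspection of the rule definitions.

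For the reverse direction, suppose $W'\equiv k(1)tWt'k'(1)$ is accepted by $M_4$ via some reduced history $H$. The key observation is that each rule of $M_4$ has a completely determined effect on the subword of any admissible word (with standard base) strictly between $k$ and $k'$. Inspecting the definitions: the rules of Step 1 and Step 3 do not modify this subword at all (every sector between $k$ and $k'$ is either locked by the rule or the corresponding rule part multiplies nothing); a Step 2 rule $\theta(M_4)$ acts on this subword exactly as the $M_3$-rule $\theta$; and the transition rules $(12)^{\pm 1}$, $(23)^{\pm 1}$ act as $\theta_{start}^{\pm 1}$, $\theta_{accept}^{\pm 1}$ respectively. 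Therefore the ordered list of $M_3$-rules induced by the rules of $H$ (ignoring Step 1/Step 3 rules, and inserting $\theta_{start}^{\pm 1}$, $\theta_{accept}^{\pm 1}$ for transitions) is a (not necessarily reduced) valid $M_3$-history which, starting from the $k{-}k'$ subword $W$ of $W'$, arrives at the $k{-}k'$ subword $W_{M_3}$ of $W_{M_4}$. Hence $W$ is accepted by $M_3$. Since $W'$ is an input word of $M_4$ we have $W=\pi_{2,3}(w)$ for some input word $w$ of $\tilde M_2$, so $W\in X_3$ by the definition of $X_3$ in Lemma \ref{M31}(a).

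Finally, combining the two directions, the set of input words accepted by $M_4$ is exactly $X_4=\pi_{3,4}(X_3)$. Since $\pi_{3,4}$ and its inverse are computable (they just insert or delete the fixed letters $k,k',t,t'$), $X_4$ is recursive iff $X_3$ is; non-recursiveness of $X_3$ (Lemma \ref{M31}(a)) therefore yields non-recursiveness of the language accepted by $M_4$. The only nontrivial step is the projection to $M_3$, which is entirely a matter of reading off the Step 1–3 rule definitions; no subtle argument about step-history types is required here because the projection discards all steps that are ``bookkeeping'' and retains exactly the ones that act on the inner subword, so even unreduced or ``useless'' pieces of $H$ (such as $(2)(1)(2)$ subsegments allowed by Lemma \ref{M401}) cause no difficulty.
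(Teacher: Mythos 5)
Your forward direction coincides with the paper's: the accepting computation for $\pi_{3,4}(W)$ is exactly the one built into the definition of $M_4$, so nothing new is needed there. For the converse you take a genuinely different route. The paper first pins down the shape of a reduced accepting history: by Lemma \ref{M401}(b) it is $H_1(12)H_2(23)H_3$ with $H_j$ consisting of Step~$j$ rules, and then Lemma \ref{M40} at once gives $W\in X_3$ and identifies $H_2$ with an accepting computation of $M_3$. You instead project an arbitrary (not necessarily reduced) accepting $M_4$-computation onto the subword between $k$ and $k'$: since the $ks_0$- and $s_Nk'$-sectors carry no tape letters, this subword stays an admissible word of $M_3$ with the standard base throughout, Step 1 and Step 3 rules act on it trivially, Step 2 rules act as the corresponding $M_3$-rules, and $(12)^{\pm 1},(23)^{\pm 1}$ act as $\theta_{start}^{\pm 1},\theta_{accept}^{\pm 1}$; the induced (possibly unreduced) $M_3$-history carries $W$ to $W_{M_3}$, so $W$ is accepted by $M_3$, and being an input word it lies in $X_3$. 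Both arguments are correct. The paper's version is a one-line consequence of Lemmas \ref{M400}, \ref{M40} and \ref{M401}, which are needed elsewhere anyway (e.g.\ for the length estimates in Lemma \ref{*XVII}) and which yield the additional structural fact that an accepting reduced history passes through $(12)$ and $(23)$ exactly once in the canonical order; your projection argument is more elementary and self-contained—no reducedness and no step-history analysis—but it delivers only the bare acceptance criterion, not that canonical form. Your concluding non-recursiveness step is at the same level of detail as the paper's (both tacitly intersect the accepted language with the recursive set of input words and use that $\pi_{3,4}$ is a computable bijection onto them), so I see no gap.
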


\proof If $W\in X_3$ then $\pi_{3,4}(W)\in X_4$ since the corresponding computation was constructed together with the definition of $M_4$. 
Let $W'\equiv \pi_{3,4}(W)$ for some admissible input 
word $W$ of $M_3$ and $H$ be the history of an accepting computation
for $W'$. By Lemma \ref{M401} (b), 
$H\equiv H_1(12)H_2(23)H_3,$ where $H_j$ contains only rules of Step $j$ ($j=1,2,3$).
By Lemma \ref{M40}, $W$ is in $X_3$, and $H_2$ corresponds to a computation of $M_3$ accepting $W$. 
\endproof

\label{Tis} \begin{df} \label{Ti} Let $T_1<T_2<...$ be all the times of acceptance of acceptable  input 
words of $M_3$.\end{df}

We will call a computation of $M_4$ \label{standardcM4}
{\it standard}
if
it has the standard base and history of the form $(12)(2)(23).$
The following lemma gives (almost) linear upper bounds for the lengths
of many computations with standard base.

\begin{lemma} \label{*XVII} \label{*IIIa} (a) Suppose that an admissible word $W$ of $M_4$ is accepted by $M_4$.
Suppose that the length of a reduced  accepting computation of $W$ is not in  $\cup_{i=1}^{\infty} ( T_i, 9T_i).$ Then the length of this accepting
computation of $W$ is at most $6|W|_a.$ 

(b) Let $b$ be an integer such that any {\em standard} computation starting with 
a word $W$ with $|W|_a\le b$ has the history of length $<\log b.$
Suppose $W$ is any accepted admissible word for $M_4$ with $||W|| < b.$
Then the time of accepting $W$ by any reduced computation of $M_4$ is at most $4|W|_a+3\log b$.
\end{lemma}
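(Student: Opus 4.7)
The plan is to proceed by case analysis on the step history of the reduced accepting computation, which by Lemma~\ref{M401}(b) is a suffix of $(2)(1)(2)(3)$: empty, $(3)$, $(2)(3)$, $(1)(2)(3)$, or $(2)(1)(2)(3)$. Two bookkeeping facts will drive everything. First, a reduced Step~1 (resp., Step~3) sub-computation must be monotone, since consecutive rules of opposite sign would act at the same end of the $tk$- (resp., $k't'$-) sector and hence have to be mutually inverse; consequently the length $L_1$ of a Step~1 portion equals the absolute value of the net change in $|\cdot|_{tk}$ at its endpoints, and analogously for $L_3$. Second, by Lemma~\ref{M40}, any sub-computation of the form $(12)H(23)$ appearing in the history has middle equal to the unique accepting $M_3$ computation of an input word, so its length equals some $T_j$; moreover that $M_3$ computation is positive, so during it exactly $T_j-2$ letters move from the $tk$-sector into the $k't'$-sector. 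In particular, at the start of such a sub-computation the $tk$-sector has length $T_j-2$, and Step~3 is entirely positive.

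For part (a), case $(3)$ gives $L=L_3=|W|_{k't'}\le|W|_a$. In case $(1)(2)(3)$, the preliminaries yield $L_2=T_j$ and substitution gives
\[L=\begin{cases}3T_j-4+|W|_{k't'}-|W|_{tk}&\text{(Step~1 positive),}\\ |W|_{tk}+T_j+|W|_{k't'}&\text{(Step~1 negative).}\end{cases}\]
The negative branch is at most $2|W|_a+2\le 6|W|_a$ via $T_j\le|W|_{tk}+2$. For the positive branch, $L_1\ge 1$ forces $L>T_j$, so the hypothesis $L\notin(T_j,9T_j)$ forces $L\ge 9T_j$; this gives $|W|_{k't'}\ge 6T_j+4$, whence $|W|_a\ge 6T_j$ and $L\le 3T_j+|W|_{k't'}\le\tfrac{3}{2}|W|_a\le 6|W|_a$. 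Cases $(2)(3)$ and $(2)(1)(2)(3)$ are handled analogously for the trailing $(12)H(23)$ sub-computation, with the first $(2)$ portion bounded as discussed in the final paragraph.

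For part (b), the hypothesis gives that any standard sub-computation starting from a word with $|\cdot|_a\le b$ has length $<\log b$. Since $\|W\|<b$, in case $(1)(2)(3)$ the middle $W_2$ of $W$ (which is the starting word of the $M_3$ computation underlying the standard sub-computation) satisfies $|W_2|_a\le|W|_a<b$, so $L_2=T_j<\log b$. In case $(2)(1)(2)(3)$, I would use a similar bound on the middle $V_3$ at the start of the second $(2)$ portion: $V_3\in X_3$, and the first $(2)$'s middle computation transforms the middle $V_1$ of $W$ into $V_3$ with total change in $|\cdot|_a$ controlled by the $tk$/$k't'$-sector bookkeeping, giving $|V_3|_a\le|W|_a+O(1)\le b$ and hence $T_j<\log b$. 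Substituting into the sector identities gives $L_1\le\log b+|W|_a$ and $L_3\le\log b+|W|_a$, and summing yields $L\le 4|W|_a+3\log b$.

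The main obstacle is bounding the first $(2)$ portion in step history $(2)(1)(2)(3)$, since Lemma~\ref{M40} does not apply: this portion ends with $(12)^{-1}$ rather than $(23)$, and its middle is a reduced $M_3$ computation not necessarily starting from an input word. I would handle this via the $tk$/$k't'$-bookkeeping alone: every $\theta(M_4)^{\pm 1}$ rule shifts one letter between the $tk$- and $k't'$-sectors in opposite directions while $(12)^{-1}$ touches neither, so the length of the first $(2)$ portion is at most twice the net change in $|\cdot|_{tk}+|\cdot|_{k't'}$ needed to reach the domain of $(12)^{-1}$, which is controlled by $|W|_a+T_j$. This $O(|W|_a)$ contribution is then absorbed into both the $6|W|_a$ and $4|W|_a+3\log b$ bounds.
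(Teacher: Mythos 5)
Your overall route is the paper's own: split by step history using Lemma \ref{M401}(b), identify the $(12)\dots(23)$ piece with some $T_j$ via Lemma \ref{M40}, do bookkeeping on the $tk$- and $k't'$-sectors, and in part (b) bound the input of the standard piece to get $T_j\le\log b$. However, your first ``bookkeeping fact'' is false, and it is exactly what drives your displayed case identities. For an $S$-machine the rule $\rho_1(y')^{-1}$ is applicable no matter which letter stands next to $k$: it simply multiplies the $tk$-sector by $(y')^{-1}$ and reduces. Hence $\rho_1(y)\rho_1(y')^{-1}$ with $y'\ne y$ is a legitimate reduced Step~1 computation along which the $tk$-sector length is not monotone, and in general consecutive rules of opposite sign acting at the same end of a sector are \emph{not} forced to be mutually inverse, because no cancellation is forced (the sector may acquire negative letters -- this is the whole point of $S$-machines). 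Consequently the claim ``$L_1$ equals the net change of $|\cdot|_{tk}$'', your exact formulas for $L$ in the $(1)(2)(3)$ case, the use of $T_j\le|W|_{tk}+2$ in the ``negative branch'', and the bound on the first $(2)$ portion of $(2)(1)(2)(3)$ by ``twice the net change of $|\cdot|_{tk}+|\cdot|_{k't'}$'' (which can be zero while that portion is long) are all unjustified as written.

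What is true, and what the paper uses instead, is the weaker fact: if a reduced word of length $l$ is obtained from one of length $k$ by successive one-sided multiplications by single letters, no two consecutive multiplications being mutually inverse, then the number of multiplications is at most $k+l$ (the sector length first decreases, then increases, since a cancellation would force two consecutive rules to be mutually inverse). With this you get inequalities rather than identities: $||H_1||\le |W|_{tk}+T_j-2$ (and $\le 2|W|_a+T_j-2$ when a Step~2 portion precedes Step~1), $||H_2||\le|W|_{k't'}\le|W|_a$ for the first $(2)$ portion because the $k't'$-sector is empty in the domain of $(12)^{-1}$, and $||H_3||=T_j-2$; these give $||H||\le 3T_j+3|W|_a$, after which your endgame ($||H||\ge 9T_j$ outside the forbidden intervals for (a), and $T_j\le\log b$ from the hypothesis for (b)) goes through exactly as in the paper. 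So the repair is routine and lands you on the paper's argument, but as it stands your proof rests on a monotonicity mechanism that is not available for $S$-machines, and the intermediate statements it produces are wrong.
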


\proof Let $H$ be the reduced history of an accepting computation of $M_4$ with the first word $W$.
By Lemma \ref{M401}, the step history of $H$ is a suffix of $(2)(1)(2)(3)$. 
Hence the
possible Step histories are $(2)(1)(2)(3)$, $(1)(2)(3)$, $(2)(3)$ or $(3)$. We shall prove (a) and (b) in each of these
cases.

Suppose that the step history of $H$ is $(2)(1)(2)(3)$.
Then $$H\equiv H_2(12)\iv H_1(12) H_2'(23)H_3,$$ where $H_2, H_2'$ consist of rules of Step 2, $H_1$ (resp.
$H_3$) consists of rules of Step 1 (resp. Step 3). By Lemma \ref{M40},
the length of $(12)H_2'(23)$ is one of the $T_i$.
Since every rule of Step 2 multiplies the $tk$-sector by a letter uniquely determined by that rule, and in any word in
the domain of (23), the $tk$-sector is empty, we conclude that the $tk$-sector of the word $W\cdot H_2(12)\iv H_1(12)$
is a copy of $H_2'$, hence its length is $T_i-2$. The $k't'$-sector of that word is empty and every rule from $H_2'$
multiplies that sector by a letter uniquely determined by the rule. Hence the $k't'$-sector of $W\cdot H_2(12)\iv
H_1(12)H'_2$ has length $T_i-2$. Since every rule of Step 3 multiplies that sector by a letter, and in the stop
word of $M_4$ that sector is empty, we conclude that $||H_3||=T_i-2$. Hence $||(12) H_2'(23)H_3||\le 2T_i-2$. 

Note that since every rule of Step 2 multiplies the $k't'$-sector by a letter uniquely
determined by that rule, and in a word in the domain of $(12)\iv$ that sector is empty, we can conclude that $||H_2||\le |W|_a$. Similarly since the rules of Step 1 multiply the $tk$-sector by letters uniquely determined by these rules, we
conclude that $$||H_1||\le ||H_2||+|W|_a+||H_2'||\le 2|W|_a+T_i-2.$$ 
(We use that if a group word $U$ of length $l$ is obtained from a word $V$ of lengths $k$ after a series of one-side multiplications by one letter, and successive multiplications are not mutual inverse, then the number of multiplications does not exceed $k+l$.)
Therefore $||H||\le 2T_i-2+||H_2||+||H_1||+1< 3T_i+3|W|_a,$  and
since in the case under consideration, we have  $||H||\ge 9T_i$ by the condition of the lemma,
it follows that   $|W|_a\ge (||H||-3T_i)/3
>||H||/6$, as required for the part (a).

Now assume that the assumption of $(b)$ holds. Note that every rule of $H_2$ multiplies the $k't'$-sector by a letter
and the input $p_1s_1$-sector also by at most one letter,
the rules of $H_1$ do not touch the input sector. Therefore
the input sectors or $W\cdot H_2$ and $W\cdot H_2(12)\iv H_1(12)$ are the same and their lengths are at most the sum
of lengths of the input sector of $W$ and the $k't'$-sector of $W$. Hence the length of the input sector of
$W\cdot H_2(12)\iv H_1(12)$ does not exceed $|W|_a\le b$. By the condition of the lemma, we have that
$||H_2'||=T_i-2\le \log b-2$ for some $i$. As before $||H||\le 3T_i+3|W|_a\le 3|W|_a +3\log b.$
Suppose that the step history of $H$ is $(1)(2)(3)$, that is
$H=H_1(12)H_2(23)H_3,$ where $H_i$ contains only rules of Step $i$
($i=1,2,3$). Then again by Lemma \ref{M40} $||H_2||=T_i-2$ for some $i$,
and the length of the $tk$-sector in $W\cdot H_1$ is $T_i-2$. As in
the previous paragraph, $||H_3||=T_i-2$.

Under the assumptions of (a) then $||H_1||>9T_i-2T_i+2>7T_i$.
Since every rule of $H_1$ multiplies the $tk$-sector by a
letter, we also have that $||H_1||$ does not exceed the sum of lengths of $tk$-sectors in
$W$ and in $W\cdot H_1$, whence $||H_1||\le |W|_a+T_i-2.$  
Therefore $|W|_a>6T_i$ and
$$||H||=||H_1||+2T_i-2 <  2|W|_a.$$

Suppose that the assumptions of (b) hold. Then since the input sectors of $W$ and $W\cdot H_1(12)$ are the same,
and their length is $\le |W|_a<b$, we conclude that $T_i\le \log b$, and $$||H||\le ||H_1||+||H_2||+||H_3||+2\le
|W|_a+T_i+T_i+T_i+2\le 2|W|_a+3\log b.$$

Suppose that the step history is $(2)(3)$, that is $H\equiv H_2(23)H_3$, and, again, $H_i$ has rules only from Step $i$,
$i=2,3$. Note that every rule of $H_2$ multiplies the $tk$-sector by a letter,
and that sector in any word which is a
domain of $(23)$ must be empty. Hence $||H_2||\le |W|_a$. Every rule in $H_2, H_3$ multiplies the $k't'$-sector by a
letter, hence $||H_3||\le |W|_a+||H_2||\le 2|W|_a$. Therefore $H\le ||H_2||+||H_3||+1\le 3|W|_a+1\le 4|W|_a$. This implies
both (a) and (b).

Finally suppose that the step history is $(3)$. Then clearly $||H||\le |W|_a$, and both (a) and (b) follow.

We conclude that in every case both (a) and (b) hold. 
\begin{lemma}\label{*IIIb} Suppose that an admissible word of $M_4$ is accepted by $M_4$, $H$ is a history of an
accepting computation. Then $||W||_a\le 4||H||$.
\end{lemma}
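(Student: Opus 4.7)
The plan is to show that a single application of any rule of $M_4$ changes the number of tape letters by at most $4$, and then to use the fact that the stop word $W_{M_4}$ has no tape letters to telescope along the accepting computation. Writing the computation as $W=W_0\to W_1\to\dots\to W_{\|H\|}=W_{M_4}$, the triangle inequality gives
\[
|W|_a \;=\; |W|_a - |W_{M_4}|_a \;\le\; \sum_{i=1}^{\|H\|}\bigl||W_{i-1}|_a - |W_i|_a\bigr| \;\le\; 4\|H\|,
\]
so once the per-rule bound is established the lemma follows at once.

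The per-rule bound is verified by reading off the rule formats. Rules of Step 1, $\rho_1(y)$, and of Step 3, $\rho_3(\theta)$, modify only the $tk$-sector, respectively only the $k't'$-sector, by multiplication by a single tape letter, and hence change $|V|_a$ by at most $1$. The transition rules $(12)$ and $(23)$ lock both the $tk$- and $k't'$-sectors; their effect on tape letters is entirely through the middle pieces, which are the single rules $\bar\theta_{start}$ and $\bar\theta_{accept}$ of $M_3$. Each of these inherits Property \ref{one}(2) from $\tilde M_2$, and so changes $|V|_a$ by at most $1$ as well.

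The critical case is a Step 2 rule $\theta(M_4)=[k(2)\tool y_\theta^{-1}k(2),\;\theta,\;k'(2)\to k'(2)y_\theta']$, where $\theta\in\Theta(3)$. The outer two parts each alter the $tk$-, respectively $k't'$-sector by a single tape letter (contributing at most $2$ in total), while the middle is a single rule of $M_3$. Such a rule is of one of three forms: a rule $\bar\theta'$ coming from $\tilde M_2$ (changes $|V|_a$ by at most $1$ by Property \ref{one}(2)), a transition $\zeta_\pm(\theta')$ (changes $|V|_a$ by $0$), or $\bar\tau_i(\theta')$ coming from one of the auxiliary machines $\overrightarrow Z^{(\theta',i)}$ or $\overleftarrow Z^{(\theta',i)}$. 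The last is the only case that needs care: inspection of $\xi_1(a)$ and $\xi_3(a)$ shows that the replacement $p(1)\to a'p(1)(a'')^{-1}$ (or its mirror) inserts or deletes at most two tape letters, so the middle piece contributes at most $2$. The three contributions add to $1+2+1=4$, which is the worst case among all rules of $M_4$.

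The main (and essentially only) obstacle is this bookkeeping for Step 2, which requires us to track separately the tape-letter contributions of three simultaneously active sectors and, in particular, to remember that rules inherited from $\overrightarrow Z,\overleftarrow Z$ insert a pair of copy letters at once rather than a single letter. Once the constant $4$ is secured, the telescoping in the first paragraph completes the proof.
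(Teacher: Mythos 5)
Your proof is correct and follows essentially the same route as the paper: the paper also observes that the accept word has no $a$-letters and that each rule of $M_4$ affects at most four $a$-letters (two in the subword between $k$ and $k'$, one in the $tk$-sector, one in the $k't'$-sector), then telescopes along the computation. Your more detailed per-rule bookkeeping, including the pair of copy letters inserted by the rules inherited from $\overrightarrow Z$, $\overleftarrow Z$, just makes explicit what the paper states in one line.
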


\proof Indeed, $W\cdot H$ does not contain $a$-letters, and each rule of $H$ decreases the number of $a$-letters in
the admissible word by at most $4$
(every rule of $M_4$  affects at most four $a$-letters: two letters in the subword between $k$ and
$k'$, one letter in the subword between $t$ and $k$ and one letter in the subword between $k'$ and $t'$). \endproof

\label{activel} \begin{df} \label{active}  Let $Q_i$ be a base letter. (Recall that usually
we take a representative $q_i\in Q_i.$) We say that
$Q_i$ (or $q_i$)
is {\em active from
the left (resp., from the right)} for a  rule $\theta$
if in the corresponding component
$v_{i-1}q_iu_i\to v_{i-1}'q_i'u'_i$ of $\theta$,
the word $v_{i-1}\iv v_{i-1}$ (resp. $u'_iu_i\iv$) is not trivial (and so equal to a letter
the free group by Property \ref{one} (1) of $M_4$). 
If $q_i$ is active from the left (right) for $\theta$, then we say that $q_i\iv$ is active from the right (left) for $\theta$. We also say that $q_i$ active for $\theta$ if it is 
active from the left or active from the right. Otherwise $q_i$ is \label{passive} {\it passive} for
$\theta.$
\end{df}

\begin{lemma}\label{*IVa} Let a reduced computation of $M_4$ have history $(12)H$ 
and have the base $s_0p_1.$
Suppose that the letter $p_1$ is active
in every rule $\theta$ of step 2 from $H.$  Then every rule of $H$ is of Step 2.
\end{lemma}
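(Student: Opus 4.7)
Plan: I would proceed by contradiction. Suppose that $H$ contains a rule not of Step 2. Since rules of Steps 1 and 3 do not apply to admissible words carrying the Step 2 state labels (those of the form $p^{(\theta,j)}(k)$), the first non-Step-2 rule in $H$ must be a transition rule. After $(12)$ we sit in Step 2 with $p_1 = p^{(\theta_{start},1)}(1)$, so the only transitions out of Step 2 that can possibly occur are $(12)^{-1}$ (to Step 1) and $(23)$ (to Step 3); the rule $(23)^{-1}$ demands Step 3 state labels and hence is inapplicable. Accordingly, decompose $H\equiv H_2\tau H'$ where $H_2$ is a (possibly empty) prefix of Step 2 rules and $\tau\in\{(12)^{-1},(23)\}$.

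Next I would identify which rules can actually appear in $H_2$. Inspecting the $M_3$-rules that underlie Step 2, the only ones for which $p_1$ is active are the copies of $\xi_1(a)$ in $\overrightarrow Z^{(\theta,1)}$ and the copies of $\xi_3(a)$ in $\overleftarrow Z^{(\theta,1)}$; the rules $\bar\theta$, $\zeta_\pm(\theta)$, the $\xi_2$- and $\xi_4$-type rules, and all auxiliary rules $\xi_i(a)^{(\theta,j)}$ with $j\ne 1$ either lock the sector adjacent to $p_1$ or merely relabel the state letter of $p_1$ without writing any $a$-letter next to it. Moreover, $\xi_1(a)^{(\theta',1)}$ and its inverse require $p_1=p^{(\theta',1)}(1)$ and preserve it, whereas $\xi_3(a)^{(\theta',1)}$ and its inverse require $p_1=p^{(\theta',1)}(2)$ and preserve it; any passage between $p^{(\theta,1)}(1)$ and $p^{(\theta,1)}(2)$, or any change of a $\theta$-index, must go through one of the passive-for-$p_1$ rules $\xi_2^{(\theta,1)}$, $\xi_4^{(\theta,1)}$, $\zeta_\pm(\theta)$, all of which are excluded from $H_2$ by hypothesis. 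Therefore every rule of $H_2$ is of type $\xi_1(a)^{(\theta_{start},1)}$ or its inverse, the state letter at position $p_1$ stays equal to $p^{(\theta_{start},1)}(1)$ throughout $H_2$, each rule multiplies the $s_0p_1$-sector by a single letter on the right (adjacent to $p_1$), and distinct rules give distinct letters. Lemma~\ref{gen1} therefore applies to $H_2$.

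The two cases now close quickly. If $\tau=(12)^{-1}$, its left-hand side locks the $s_0p_1$-sector, forcing this sector to be empty at both endpoints of $H_2$; Lemma~\ref{gen1} then forces $H_2$ to be empty, so $(12)(12)^{-1}$ is adjacent and the overall history is not reduced, a contradiction. If $\tau=(23)$, its left-hand side requires $p_1=p^{(\theta_{accept},1)}(3)$, whereas we have just shown $p_1=p^{(\theta_{start},1)}(1)$ at the end of $H_2$; since the start and accept rules of $\tilde M_2$ are distinct (by property~(c) of Lemma~\ref{m0m1}, inherited through Remark~\ref{tildeM2}), these state letters differ and $\tau$ cannot apply, again a contradiction. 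Both cases are impossible, so every rule of $H$ belongs to Step 2. The main subtlety of the argument is the careful enumeration needed to rule out all changes of the state letter at position $p_1$ inside $H_2$; once that is established, Lemma~\ref{gen1} does the rest.
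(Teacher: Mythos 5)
Your proof is correct and follows essentially the same route as the paper: the active-for-$p_1$ hypothesis confines the Step 2 rules to copies of $\xi_1(a)^{\pm 1}$ preserving $p^{(\theta_{start},1)}(1)$, and since the $s_0p_1$-sector is empty whenever $(12)^{\pm 1}$ applies while each such rule multiplies that sector on the right by a letter determined by the rule (Lemma~\ref{gen1}), a subword $(12)H'(12)^{-1}$ forces $H'$ to be empty, contradicting reducedness. Your explicit elimination of the $(23)$ alternative via the mismatch of the $p_1$-letter is a detail the paper leaves implicit, but the argument is the same.
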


\proof Recall, that the rule  $(12)$ extends the start rule $\theta=\theta_{start}$
of $M_3.$ Therefore the first rule of $H$ has $p^{\theta,1}(1)$ in the left-hand side. If
no rule of $H$ changes $p^{\theta,1}(1),$ then every rule is (the extension of) a rule of the
machine $\overrightarrow Z^{\theta,1}$ with the $p_1$-part of the form $p^{\theta,1}(1)\to a'p^{\theta,1}(1)(a'')^{-1},$ where $p_1$ is active from the both sides. Otherwise 
the history has a subword of
type either $(12)H'(12)^{-1}$ 
where  $p_1$-part of every rule of $H'$ is of form  $p^{\theta,1}(1)\to a'p^{\theta,1}(1)(a'')^{-1}$ because the (the copy of the ) rule $\xi_2$ of  $\overrightarrow Z^{\theta,1}$
belongs to Step 2 but it is passive.
However this case is impossible since then every rule of $H'$
inserts (or deletes) one letter $a'$ in the $s_0p_1$-sector from the right, different rules insertes
different letters, and the $s_0p_1$-sector is empty when the rule $(12)$ or
$(12)^{-1}$ is applicable.
\endproof

Every rule either makes a control state letter $p_i$ active from both sides or 
locks a neibor sector. This property is useful for computations with non-standard bases
as in the following

\begin{lemma} \label{*IX} If the base of a reduced computation of $M_4$ contains a subword $(pp\iv p)^{\pm 1},$ where $p$
is a control state letter, then all rules of the computation correspond to the copy of the $S$-machine $\overrightarrow Z$ or of $\overleftarrow Z$ containing
that state letter, and either every rule is a copy  of some $\xi_1(a)^{\pm 1}$ ($a$ depends on the rule) or every rule is a copy of some $\xi_3(a)^{\pm 1}.$
\end{lemma}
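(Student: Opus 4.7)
The plan is as follows. Since the base of a reduced computation is invariant along the computation, every rule occurring in it must be admissible for a word whose base contains $pp^{-1}p$ (the case of $p^{-1}pp^{-1}$ is symmetric). Applying Lemma \ref{qqiv} to the subwords $pp^{-1}$ and $p^{-1}p$ in turn shows that any such rule $\theta$ must have both $Y_j(\theta)\ne\emptyset$ and $Y_{j+1}(\theta)\ne\emptyset$, where $p\in P_j$; that is, neither of the two sectors adjacent to $p$ can be locked by $\theta$.

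The main step is now an exhaustive inspection of the rules of $M_4$ to see which ones pass this test. Step 1 and Step 3 rules, the transitions $(12)^{\pm 1}$ and $(23)^{\pm 1}$, and the transition rules $\zeta_\pm(\theta)^{\pm 1}$ of $M_3$ all carry $\tool$-components on their $s$- (or $k$-, $k'$-) letters that lock every $s_{i-1}p_i$- and $p_is_i$-sector, so at least one sector adjacent to $p$ is locked. Among the Step 2 rules $\bar\theta_1(M_4)$ with $\theta_1$ a rule of $\tilde M_2$: a non-right $\theta_1$ locks all left sectors of the $p_j$'s via the components $v_is^{(\theta_1,i)}\tool v'_i(s')^{(\theta_1,i)}$, while a right $\theta_1$ locks all right sectors via $p^{(\theta_1,j)}(2)\tool p^{(\theta_1,j)}(2)$. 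Finally, $\xi_2^{(\theta_1,i)}$ and $\xi_4^{(\theta_1,i)}$ lock the right sector of their working $p$-letter through $p(1)\tool p(2)$ and $p(2)\tool p(3)$, and in any $\xi_k^{(\theta_1,i)}$ the idle parts for indices $j\ne i$ are $\tool$-marked.

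Thus only copies of $\xi_1(a)^{\pm 1}$ and $\xi_3(a)^{\pm 1}$ survive: their $p$-components $p(1)\to a'p(1)(a'')^{-1}$ and $p(2)\to (a')^{-1}p(2)a''$ insert tape letters on both sides of $p$ and lock neither adjacent sector. A rule $\xi_1(a)^{(\theta,i)}$ (respectively $\xi_3(a)^{(\theta,i)}$) can apply only when the state letter at the position of $p$ equals $p^{(\theta,i)}(1)$ (respectively $p^{(\theta,i)}(2)$), and neither rule alters that state letter. Consequently $p$ is invariant along the computation, which fixes the pair $(\theta,i)$ and the alternative between $\xi_1$ and $\xi_3$, yielding the stated dichotomy. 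The main obstacle is the thoroughness of the case analysis eliminating all non-$\xi_1,\xi_3$ rules; once that bookkeeping is laid out, the conclusion is immediate.
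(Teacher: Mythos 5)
Your proof is correct and follows essentially the same route as the paper: the subword $(pp^{-1}p)^{\pm 1}$ plus Lemma \ref{qqiv} forbids any rule locking either sector adjacent to $p$, and an inspection of the rules of $M_4$ shows that only the copies of $\xi_1(a)^{\pm 1}$ and $\xi_3(a)^{\pm 1}$ from the relevant $\overrightarrow Z$/$\overleftarrow Z$ pass this test. Your closing observation that these rules leave the $p$-letter unchanged, which pins down the copy and the $\xi_1$/$\xi_3$ alternative, just makes explicit what the paper leaves implicit.
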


\proof Indeed, suppose that $p\in P_i$. Then every rule not from the copy of $\overrightarrow Z$ or of $\overleftarrow Z$ containing that state letter, locks
the sector $s_{i-1}p_i$ or the sector $p_is_i$, and the copies of $\xi_2$ and of $\xi_4$ lock 
either sector $s_{i-1}p_i$ or sector $p_is_i.$ Now we can apply Lemma \ref{qqiv}.
\endproof

Lemma \ref{*XVII} and the following lemma show the role of the 'historical' $tk$- and $k't'$-sectors.

\begin{lemma} \label{*XV} Suppose that a reduced computation of $M_4$ with the standard base has the history of the form
$(12)H_2(23)H_3(23)\iv H_2'(12)\iv,$ where $H_2, H_2'$ contain rules from Step 2, and $H_3$ has rules of step 3. Then
$||H_3||\le ||H_2||+||H_2'||.$
\end{lemma}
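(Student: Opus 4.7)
The plan is to analyze how the content of the $k't'$-sector evolves across the seven intermediate words of the given computation---$V_0,\ V_1=V_0\cdot(12),\ V_2=V_1\cdot H_2,\ V_3=V_2\cdot(23),\ V_4=V_3\cdot H_3,\ V_5=V_4\cdot(23)\iv,\ V_6=V_5\cdot H_2',\ V_7=V_6\cdot(12)\iv$---and then to invoke the triangle inequality in the Cayley tree of $F(Y'(3))$.

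The first step is to pin down the boundary conditions on the $k't'$-sector. The $k'$-component of $(12)$ is $k'(1)\tool k'(2)$, and the notation $\tool$ records exactly that $Y_{k't'}((12))=\emptyset$. Hence $(12)$, and symmetrically $(12)\iv$, can only be applied to admissible words whose $k't'$-sector is empty; this forces the $k't'$-contents of both $V_1$ and $V_6$ to be the empty word~$1$. Writing $\beta$ and $\gamma$ for the $k't'$-contents of $V_2=V_3$ and $V_4=V_5$, and noting that neither $(23)$ nor $(23)\iv$ modifies this sector (their $k'$-components change only the state letter, not any tape letters), the entire evolution of the $k't'$-content is carried by the three subhistories $H_2$, $H_3$, $H_2'$.

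Next I would observe that every rule used in these three subhistories changes the $k't'$-sector by prepending exactly one letter of $Y'(3)^{\pm 1}$: the Step 2 rule $\theta(M_4)^{\pm 1}$ prepends $(y'_\theta)^{\pm 1}$ via its $k'$-component $k'(2)\to k'(2)y'_\theta$, while the Step 3 rule $\rho_3(\theta)^{\pm 1}$ prepends $(y'_\theta)^{\mp 1}$ via $k'(3)\to k'(3)(y'_\theta)\iv$. Within each of $H_2$, $H_3$, $H_2'$ considered separately, this map from applicable rules to letters of $Y'(3)^{\pm 1}$ is a bijection that sends inverse rules to inverse letters. Since the overall computation is reduced, no two consecutive rules inside a single subhistory are mutually inverse, so inside each subhistory the corresponding sequence of prependings traces a walk in the Cayley tree of $F(Y'(3))$ with no immediate reversal. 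Such a walk in a tree is a geodesic, so
\[
\|H_2\|=d(1,\beta)=|\beta|,\qquad \|H_3\|=d(\beta,\gamma),\qquad \|H_2'\|=d(\gamma,1)=|\gamma|.
\]

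The conclusion then drops out from the triangle inequality through the common basepoint~$1$:
\[
\|H_3\|=d(\beta,\gamma)\le d(\beta,1)+d(1,\gamma)=|\beta|+|\gamma|=\|H_2\|+\|H_2'\|.
\]
The only genuinely subtle point is the opening boundary observation that the symbol $\tool$ in $k'(1)\tool k'(2)$ really forces the $k't'$-sector to be empty whenever $(12)^{\pm 1}$ is applied; once that is in hand, the bijective correspondence between Step 2 and Step 3 rules and letters of $Y'(3)^{\pm 1}$, together with reducedness, collapses the entire lemma into an application of tree geometry.
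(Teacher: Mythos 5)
Your proof is correct and follows essentially the same route as the paper: both track the $k't'$-sector, use that $(12)^{\pm 1}$ locks (hence forces empty) that sector while each Step 2 or Step 3 rule multiplies it by a letter of $Y'(3)^{\pm 1}$ uniquely determining the rule, and conclude from reducedness that $||H_2||,||H_2'||$ equal the sector lengths and $||H_3||$ is bounded by their sum. Your packaging of the last step as a triangle inequality in the Cayley tree is just a restatement of the paper's observation about one-sided letter multiplications without consecutive cancellations.
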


\proof Let $W$ be the initial word of the computation. Since every rule of $H_2, H_2'$ multiplies the $k't'$-sector by
one letter which determines the rule, and every word in the domain of $(12)^{\pm 1}$ has that sector empty, we
conclude that 
$||H_2||$ is equal to the length of the $k't'$-sector $U$ of $W\circ (12)H_2$,
and 
$||H_2'||$  is equal to the
length of the $k't'$-sector $V$ in $W\circ (12)H_2(23)H_3$. Similarly, every rule from $H_3$ multiples the
$k't'$-sector by a letter that determines the rule. 
Hence $||H_3||\le ||U||+||V||=||H_2||+||H_2'||$ as required.  \endproof

\label{MS} \subsection{The machine $M$}

Consider now $2L\gg 1$ copies of the machine $M_4$, denote them by $M_{4}(i)$, $i=1,\dots,2L$,
 We denote the state and
tape letter of \label{M4iS} $M_{4}(i)$ accordingly, by adding index $i$
to all letters, and all rules. Let \label{TM4} $\Theta (M_4)$
be the set
of positive rules of $M_4$. Let \label{Bst} $B$ be the standard base of $M_3$,
\label{Bi} $B(i)$ be the copy word $B$ with new extra index $i$
added to all letters, and $B_i=k(i)B(i)k'(i)$. We now consider the $S$-machine $M$ with the rules $$\theta(M)=[\theta(1),..,\theta(2L)] ,\;\;
\theta\in \Theta (M_4)$$
(we shall denote $\theta(M)$ by $\theta$ also) and the standard base
\begin{equation}\label{baza}
t_1B_1t_2' B_2\iv t_3 B_3 t_4'B_4\iv ...t'_{2L}B_{2L}^{-1}t_{2L+1},
\end{equation}
where we identify the state $t$-letters $t(1)$ and $t'(1)$ of $M_4(1)$
with $t_1$ and $t_2,$ resp., the $t$-letters $t(2)$ and $t'(2)$ of $M_4(2)$
with $t_3^{-1}$ and $(t'_2)^{-1},$ resp, an so on. Moreover we identify $t_{2L+1}$ with $t_1$ and consider the standard base of $M$ up to
cyclic permutations which may start with any $t$-letter and end with the same $t$-letter.
The stop word is defined accordingly (every
letter in the standard base is replaced by the corresponding letter in the stop word of $M_4(i)$). The stop word without the last letter $t_{2L+1}$ is
called the \label{hub} {\em hub}. We also may take the hub up to cyclic permutations. 

That construction is similar to the construction in \cite{SBR} and \cite{OScol}, though the application of mirror copies of machines goes back to the
works of Boon and P.S. Novikov (see \cite{R}). The condition $L>>1$ makes hub graph
hyperbolic (see Lemmas \ref{extdisc} and \ref{mnogospits}), and the mirror
symmetry of the word (\ref{baza}) is used for the surgery we define in Subsection \ref{srh}.

For every admissible word $W$ of $M_4$ with the standard base we denote by \label{WM} $W(M)$ the corresponding admissible word $t_1k(1)W(1)k'(1)t_2'k'(2)^{-1}W(2)^{-1}k(2)^{-1}t_2\dots$   of
$M$ with the standard base (of $M$). By definition, $W(M)$ is an input (the accept) word
of $M$ if $W$ is an input (the accept) word of $M_4.$

 The letters in the copy \label{Wi} $W(i)$ of the word $W$ are
equipped with the extra index $(i).$ Thus every $a$-letters 
and every $q$-letter (except for $t$ and $t'$-letters), and every letter $\theta(i)$ of the alphabets of $M$ has
this extra index. We call it  the \label{Mind} $M$-{\it index} of the letter and take it modulo $2L.$

\begin{rk}\label{rkM} (1) Notice that for every rule $\theta$ of $M_4$ and every admissible word $W$ of $M_4$ with the
standard base of $M_4$, we have $W\cdot \theta=W'$ if and only if $W(M)\cdot \theta=W'(M).$ 

(2) Also notice that
$||W(M)||<2L||W||$
for every $W$.

(3) Both machines $M_4$ and $M$ enjoy Property \ref{one} (1) but not Property \ref{one} (2).

(4) The unique start and accept rules of the machines $M_1,$ $M_2,$ $M_3$ are converted
to the transition rules $(12)$ and $(34)$ of $M_4$ and $M.$ So there are no specific
start and accept rules of $M_4$ and $M.$ In particular $M$ accepts if it reaches the
hub.  

\end{rk}

Remark \ref{rkM} (1) and Lemma \ref{M41}
immediately imply

\begin{lemma} \label{abc} Let \label{X5} $X_5$ be the set of all words of the form $W(M)$, $W\in X_4$. Then for every input
word $W$ of $M_4,$
$W$ is accepted by $M_4$ if and only if 
$W(M)$ is accepted by $M$ and if and only if $W\in X_4$. Hence the set $X_5$ is not recursive.
\end{lemma}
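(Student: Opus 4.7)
The proof consists of two logically separate pieces: the biconditional $W$ accepted by $M_4 \Longleftrightarrow W(M)$ accepted by $M$, and the biconditional $W$ accepted by $M_4 \Longleftrightarrow W \in X_4$. The second is immediate from Lemma \ref{M41}, so the bulk of the work is the first.

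For the forward direction of the first biconditional, I would take an accepting computation $W \equiv W_0 \to W_1 \to \dots \to W_n \equiv W_{M_4}$ of $M_4$ with rules $\theta_1,\dots,\theta_n \in \Theta(M_4)$, and apply Remark \ref{rkM}(1) inductively: at each step $W_j \cdot \theta_{j+1} = W_{j+1}$ gives $W_j(M) \cdot \theta_{j+1}(M) = W_{j+1}(M)$ in $M$. Since by definition the stop word of $M$ is $W_{M_4}(M)$, the lifted computation accepts $W(M)$.

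For the converse direction, I would start with an accepting computation $U_0 \to \dots \to U_m$ of $M$ with $U_0 \equiv W(M)$ and $U_m$ equal to the stop word. The crucial observation is that every rule of $M$ has the parallel form $\theta(M) = [\theta(1),\dots,\theta(2L)]$ for some $\theta \in \Theta(M_4)$, so applying such a rule to an admissible word of the form $V(M)$ (with the standard base of $M$) produces another word of the same form, namely $(V \cdot \theta)(M)$, by Remark \ref{rkM}(1). By induction we get $U_j \equiv V_j(M)$ for admissible words $V_j$ of $M_4$ with $V_0 \equiv W$, $V_m \equiv W_{M_4}$, and $V_j \cdot \theta_{j+1} = V_{j+1}$. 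Hence the projection $V_0 \to \dots \to V_m$ is an accepting computation of $M_4$ on $W$.

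Combining the two biconditionals, we obtain the equivalence statement. For the non-recursiveness of $X_5$, note that by definition $X_5 = \{W(M) : W \in X_4\}$ and the map $W \mapsto W(M)$ on input words of $M_4$ is computable and injective with a computable inverse on its image; hence recursiveness of $X_5$ would transfer to $X_4$, contradicting Lemma \ref{M41}. I do not anticipate a real obstacle here: the only point requiring care is the inductive claim that every intermediate configuration in a computation of $M$ starting at $W(M)$ remains in the form $V(M)$, which follows formally from the parallel structure of the rules of $M$ together with Remark \ref{rkM}(1).
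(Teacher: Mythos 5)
Your proposal is correct and is essentially the paper's own argument: the paper dispatches this lemma as an immediate consequence of Remark \ref{rkM}(1) (the step-by-step correspondence $W\cdot\theta=W'$ iff $W(M)\cdot\theta=W'(M)$, which is exactly your inductive lifting/projection of computations in both directions) together with Lemma \ref{M41} (acceptance by $M_4$ iff $W\in X_4$, and non-recursiveness). Your explicit induction and the computable-reduction remark for non-recursiveness just spell out what the paper leaves as "immediate."
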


\begin{rk}\label{rkhubsym}\label{*XVI} Considered as a cyclic word, the hub has the following symmetries: it does not
change if we reflect it about any $t$-letter or any $t'$-letter (with indices changing appropriately, and state
letters, except for $t$- and $t'$-letters, replaced by their inverses).
From Remark \ref{rkM} (1), it follows, that every admissible accepted word of $M$ has
similar symmetries.
\end{rk}

\section{Groups and diagrams}\label{gd}

Every S-machine can be considered as a finitely presented group (see \cite{SBR} and
also \cite{OS}, \cite{OS3}). Here we apply the construction to the machine $M.$
To simplify formulas, it is convenient to redefine $N$ once again. From now on we shall denote by $N+1$ the length of
the smallest subword of the hub
containing two $t$-letters.
Thus the length of the hub
is $LN$, $Q=\sqcup_{i=0}^{LN}Q_i$ (where $Q_{LN}=Q_0$) $Y= \sqcup_{i=1}^{LN} Y_i,$ and $\Theta$ is the set of rules of the S-machine $M.$ (But we will remember
that, as for the machine $M_4$, the state letter of $M$ are partitioned into the subsets
of \label{tl} $t$-letters, \label{t'l} $t'$-letters, \label{kl} $k$-letters, \label{k'l} $k'$-letters, \label{sl} $s$-letters, \label{pletter} 
and $p$-letters.)

The finite set of generators of the \label{groupM} group $M$ (the same letter as for the machine) consists of \label{qletter}{\em $q$-letters} corresponding to the states
$Q$, \label{aletter}{\em $a$-letters} corresponding to the tape letters from $Y,$ and
\label{thetal}{\em $\theta$-letters} corresponding to the rules from the positive part $\Theta^+$ of $\Theta.$

The \label{relations} relations of the group $M$ correspond to the rules of the machine $M$;
for every $\theta=[U_0\to V_0,\dots U_{LN}\to V_{LN}]\in \Theta^+$, we have
\begin{equation}\label{rel1}
U_i\theta_{i+1}=\theta_i V_i,\,\,\,\, \qquad \theta_j a=a\theta_j, \,\,\,\, i,j=0,...,LN
\end{equation}
for all $a\in \bar Y_j(\theta)$. (Here $\theta_{LN}\equiv\theta_0.$)
The first type of relations will be
called \label{thetaqr} $(\theta,q)$-{\em relations}, the second type - \label{thetaar}
$(\theta,a)$-{\em relations}. 

Finally, the required \label{groupG} group $G$ is given by the generators and
relations of the group $M$ and by one more additional
relation, namely the {\it hub}-relation  
\begin{equation}\label{rel3}
W_M=1,
\end{equation}
where
$W_M$
is the hub, i.e., the accept word (of length $LN$) of the machine $M.$  

\begin{rk}\label{relsym} The word $W_M$ has the symmetries
mentioned in Remark \ref{rkhubsym}. Since the machine $M$ is built of
$2L$ copies $M_4(i),$ the set of relations is also symmetric in the
following sense. Every relation $\theta_ja=a\theta_j$ from (\ref{rel1}) has $2L$ copies
(including itself) corresponding to different $M_4(i)$-s. If
the relation $U_i\theta_{i+1}=\theta_i V_i$ from (\ref{rel1}) 
involves neither $t$- nor $t'$-letters then it has $L$ copies (including
itself) and $L$ mirror copies. Every relation containing a $t$- or a a
$t'$-letter (denote this letter by $\tilde t$) has form $\tilde t\theta_{i+1}=\theta_i \tilde t,$
i.e., it contains no $a$-letters.

\end{rk}

\subsection{Minimal diagrams}\label{md}

Recall that a van Kampen \label{diagram} {\it diagram} $\Delta $ over a presentation
$P=\langle A\; | \; \mathcal R\rangle$ (or just over the group $P$)
is a finite oriented connected and simply--connected planar 2--complex endowed with a
\label{Lab} labeling function $\Lab : E(\Delta )\to A^{\pm 1}$, where $E(\Delta
) $ denotes the set of oriented edges of $\Delta $, such that $\Lab
(e^{-1})\equiv \Lab (e)^{-1}$. Given a \label{cell} cell (that is a 2-cell) $\Pi $ of $\Delta $,
we denote by $\partial \Pi$ the boundary of $\Pi $; similarly, \label{partial}
$\partial \Delta $ denotes the boundary of $\Delta $. The labels of
$\partial \Pi $ and $\partial \Delta $ are defined up to cyclic
permutations. An additional requirement is that the label of any
cell $\Pi $ of $\Delta $ is equal to (a cyclic permutation of) a
word $R^{\pm 1}$, where $R\in \mathcal R$. The label and the \label{clength} combinatorial length $||\bf p||$ of
a path $\bf p$ are defined as for Cayley graphs.

The van Kampen Lemma states that a word $W$ over the alphabet $A^{\pm 1}$
represents the identity in the group $P$ if and only
if there exists a diagram $\Delta
$ over $P$ such that 
$\Lab (\partial \Delta )\equiv W,$ in particular, the combinatorial perimeter $||\partial\Delta||$ of $\Delta$ equals $||W||.$
(\cite{LS}, Ch. 5, Theorem 1.1). The word $W$ representing $1$ in $P$ is freely equal
to a product of conjugates to the words from $R^{\pm 1}$. The minimal number
of factors in such products is called the \label{areaw} {\em area} of the word $W.$ The \label{aread}{\it area}
of a diagram $\Delta$ is the number of cells in it. 
A diagram having
the smallest number of cells among all diagrams with the same boundary label
is called \label{minimald} {\it minimal}.
By van Kampen's Lemma, $\area(W)$ is equal
to the 
area of a minimal diagram $\Delta$ over $P$ with $\Lab (\partial \Delta )\equiv W.$
This definitions imply

\begin{lemma}\label{2diagr} Assume that a diagram $\Delta_0$ is divided
into two subdiagrams $\Delta_1$ and $\Delta_2$ by a simple path $p.$ Let
a minimal
diagram $\Delta$ have  the same boundary label as $\Delta_0.$ Then 
$\area(\Delta)\le \area(\Delta_0)=
\area(\Delta_1)+\area(\Delta_2).$
\end{lemma}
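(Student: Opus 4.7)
The plan is to treat the two inequalities separately and observe that both follow almost directly from the definitions involved. For the equality $\area(\Delta_0)=\area(\Delta_1)+\area(\Delta_2)$, I would appeal to the fact that a simple path $p$ in $\Delta_0$ is a one-dimensional subcomplex and therefore does not contain any $2$-cell of $\Delta_0$ in its interior. The two subdiagrams $\Delta_1$ and $\Delta_2$ obtained by cutting along $p$ together contain every $2$-cell of $\Delta_0$ exactly once, so the total cell count is additive. One only has to verify that $\Delta_1$ and $\Delta_2$ are again van Kampen diagrams in the sense of Subsection \ref{md} (planar, connected, simply connected $2$-complexes with cell labels from $\mathcal R^{\pm 1}$), which is immediate because $p$ is simple and $\Delta_0$ is simply connected — cutting along a simple interior arc of a simply connected planar complex yields two simply connected planar complexes.

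For the inequality $\area(\Delta)\le \area(\Delta_0)$, I would invoke the definition of minimal diagram together with van Kampen's Lemma, both recalled just above the statement. Since $\Delta$ is a minimal diagram with boundary label $\Lab(\partial\Delta)$, its area equals the area of the word $W\equiv \Lab(\partial\Delta)$. By hypothesis $\Lab(\partial\Delta_0)\equiv W$ as well, so $\Delta_0$ is one of the diagrams over which the minimum defining $\area(W)$ is taken, and hence $\area(\Delta)=\area(W)\le \area(\Delta_0)$.

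Combining the two steps gives $\area(\Delta)\le \area(\Delta_0)=\area(\Delta_1)+\area(\Delta_2)$, as desired. There is no genuine obstacle here; the only thing that needs a brief sentence of justification is that the ``simple path'' hypothesis is what ensures the cut of $\Delta_0$ actually decomposes it into two well-defined van Kampen subdiagrams rather than a single connected piece (which would happen if $p$ were a loop enclosing cells on both sides, or had self-intersections). So the write-up should be short, emphasizing only the simplicity of $p$ and the definition of minimality.
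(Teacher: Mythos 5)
Your argument is correct and is exactly the one the paper intends: the lemma is stated right after the definitions of area and minimal diagram with no separate proof ("This definitions imply"), the equality coming from the fact that the cells of $\Delta_0$ are partitioned by the simple cut $p$ between $\Delta_1$ and $\Delta_2$, and the inequality from minimality of $\Delta$ among diagrams with the given boundary label. No further comment is needed.
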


We will study diagrams over the groups $M$ and $G$. The edges labeled by state
letters ( = $q$-{\it letters}) will be called \label{qedge} $q$-{\it edges}, the edges labeled by tape
letters (= $a$-{\it letters}) will be called \label{aedge} $a$-{\it edges}, and the edges labeled by 
$\theta$-letters are \label{thedge} $\theta$-{\it edges}. 

\begin{rk} \label{diasym} The symmetries of relations observed in Remark \ref{relsym} makes possible the following construction for given $i\le 2L$ and a diagram $\Delta$ over $M.$
Let $\nabla$ be a mirror copy of the map $\Delta.$ For every edge $e$ of $\Delta$ whose
label is equipped with an $M$-index $(j)$ (i.e., if $e$ is neither $t$- nor $t'$-edge), the mirror copy of $e$ in $\nabla$ is marked
by the same letter but with $M$-index equal to $(2i-j-1).$ The label  $t_j$ of $e$
(the label $t'_j$) should be replaced for the mirror image by $t_{2i-j}^{-1}$ (resp.,
by $(t'_{2i-j})^{-1}$). It is easy to see that $\nabla$ is also is a diagram over $M.$
We say that $\nabla$ is obtained by  \label{tlrefl} $t_i$-{\it reflection} from $\Delta.$  
Similarly one can speak on $t_i$-reflections for paths of $\Delta.$
\end{rk}

We denote by $|\bf p|_a$ (by $|\bf p|_{\theta}$, by
$|\bf p|_q$)
the \label{alength} $a$-{\it length} (resp., the \label{thlength} $\theta$-{\it length}, the \label{qlength} $q$-length) of a path/word $\bf p,$ i.e., the number of
$a$-edges/letters (the number of $\theta$-edges/letters, the number of $q$-edges/letters) in $\bf p.$

 The cells corresponding
to Relation (\ref{rel3})  are called \label{hubs} {\it hubs}, the cells corresponding
to $(\theta,q)$-relations are called \label{tq} $(\theta,q)$-{\it cells},
and they are called \label{ta} $(\theta,a)$-{\it cells} if they correspond to $(\theta,a)$-relations.




Every minimal van Kampen diagram is \label{reducedd}{\em reduced}, that is 
it does not contain two cells (= closed $2$-cells) that have a
common edge and are mirror images of each other (if such pairs of cells exist, they can be removed to obtain a  diagram of smaller area and with the same boundary label). 
To study (van Kampen) diagrams
over the group $G$ we shall use their simpler subdiagrams such as bands and trapezia, as in \cite{O1},  \cite{SBR}, \cite{BORS}, etc.
 Here we repeat one more necessary definition.

\label{band} \begin{df}Let $\cal Z$ be a subset of the set of generators ${\cal X}$ of the group $M$. A
$\cal Z$-band $\bb$ is a sequence of cells $\pi_1,...,\pi_n$ in a reduced \vk
diagram $\Delta$ such that

\begin{itemize}
\item Every two consecutive cells $\pi_i$ and $\pi_{i+1}$ in this
sequence have a common edge $e_i$ labeled by a letter from $\cal Z$.
\item Each cell $\pi_i$, $i=1,...,n$ has exactly two $\cal Z$-edges,
$e_{i-1}$ and $e_i$ (i.e. edges labeled by a letter from $\cal Z$).

\item If $n=0$, then $\bb$ is just a $\cal Z$-edge.
\end{itemize}
\end{df}

The counterclockwise boundary of the subdiagram formed by the
cells $\pi_1,...,\pi_n$ of $\bb$ has the factorization $e\iv {\bf q}_1f {\bf q}_2\iv$
where $e=e_0$ is a $\cal Z$-edge of $\pi_1$ and $f=e_n$ is an $\cal Z$-edge of
$\pi_n$. We call ${\bf q}_1$ the \label{bottomb}{\em bottom} of $\bb$ and ${\bf q}_2$ the
\label{topb}{\em top} of $\bb$, denoted \label{bott} $\bott(\bb)$ and \label{topp} $\topp(\bb)$.
Top/bottom paths and their inverses are also called the \label{sideb}{\em
sides} of the band. The $\cal Z$-edges $e$
and $f$ are called the \label{seedgesb}{\em start} and {\em end} edges of the
band. If $n\ge 1$ but $e=f,$ then the $\cal Z$-band is called a \label{annulus} $\cal Z$-{\it annulus}.

We will consider \label{qband} $q$-{\it bands}, where $\cal Z$ is one of the sets $Q_i$ of state letters
for the machine $M$, \label{thband}
$\theta$-{\it bands} for every $\theta\in\Theta$, and \label{aband} $a$-{\it bands}, where
${\cal Z}=\{a\}\subseteq Y$. 
 The convention is that $a$-bands do not
contain $(\theta,q)$-cells, and so they consist of $(\theta,a)$-cells  only.

\begin{rk} \label{tb} To construct the top (or bottom) path of a band $\cal B$, at the beginning
one can just form a product ${\bf x}_1\dots {\bf x}_n$ of the top paths ${\bf x}_i$-s of the cells $\pi_1,\dots,\pi_n$ (where each $\pi_i$ is a $\cal Z$-bands of length $1$).
No  $\theta$-letter is being canceled in the word
$W\equiv\Lab(x_1)\dots\Lab(x_n)$ if $\cal B$ is  a $q$- or $a$-band since
otherwise two neighgbor cells of the band would be mirror copies of each other
which is impossible in a reduced diagram. 

Also there are no cancellations of $a$-letters if $\cal B$ is a $q$-band. Indeed
if both $\pi_i$ and $\pi_{i+1}$ have $a$-edges on their top then the corresponding
rules of $M$ must belong to the same Step since every cell is passive for $(12)$-
and $(23)$-rules. Similarly they correspond to the rule of the same machine
$\overrightarrow Z^{(\theta,i)}$ or $\overleftarrow Z^{(\theta,i)}$ if 
$\cal B$ is a $q$-band for some control letter $p_i$ since the rules $\xi_2$ and
$\xi_4$ provide no active cells. Then the rules are determined by the $a$-letters,
and the cells should be mirror copies as in the previous paragraph. Similar
argument works if $q$ corresponds to any other letter of the standard base except
for $s_i.$ But active $s_i$-cell cannot have a common edge too since this
edge has a $\theta$-index in the label, and so the diagram is not reduced again.

Thus, if $\cal B$ is a $q$-band (or an $a$-band), then the top/bottom label is a product  ${\bf x}_1\dots {\bf x}_n.$
If $\cal B$ is a $\theta$-band then a few cancellations of $a$-letters (but not $\theta$-letters) are possible in $W.$ (This can happen if one of $\pi_i, \pi_{i+1}$
is a $(\theta,q)$-cell and another one is a $(\theta,a)$-cell.) We will always assume
that the top/bottom label of a $\theta$-band is a reduced form of the word $W$.
This property  is easy to achieve: by folding edges
with the same labels having the same initial vertex, one can make
the boundary label of a subdiagram in a \vk diagram reduced (e.g., see
\cite{SBR}).

\end{rk}

If the path $(e\iv {\bf q}_1f)^{\pm 1} $ or the path $(f {\bf q}_2\iv e\iv)^{\pm 1}$
 is the subpath of the boundary path of $\Delta$ then the band is called
 a \label{rimb}{\it rim} band of $\Delta.$ We shall call a $\cal Z$-band \label{maxb}{\em maximal} if it is not contained in
any other $\cal Z$-band.
Counting the number of maximal $\cal Z$-bands
 in a diagram we will not distinguish the bands with boundaries
 $e\iv {\bf q}_1f {\bf q}_2\iv$ and $f {\bf q}_2\iv e\iv {\bf q}_1,$ and
 so every cell having two $\cal Z$-edges belongs to a unique maximal $\cal Z$-band.

We say that a ${\cal Z}_1$-band and a ${\cal Z}_2$-band \label{cross}{\em cross} if 
they have a common cell and ${\cal Z}_1\cap {\cal Z}_2=\emptyset.$

Sometimes we specify the types of bands as follows. A $\theta$-band
corresponding to the transition rule $(12)$ (to $(23)$) is called a
\label{12band}(12)-{\it band} ((23)-{\it band}), and it consists of \label{12cell}(12)-cells (of (23)-cells). A $q$-band corresponding to one
of the letters $t_i$ of the base (\ref{baza}) (resp., to 
$t'_i,$  $k(i),$ $k'(i)$) is called a \label{tband} $t$-{\it band} ( \label{t'band} $t'$-{\it band},
  \label{kband} $k$-{\it band}, \label{k'band} $k'$-{\it band}) since the $M$-index $i$ is, generally,
  not important for further considerations (but we may keep it if it is essential). 
  Similarly, we can omit the
  $M$-index speaking on \label{sband} $s$- and \label{pband} $p$-bands,
  but we distinguish different letters of each
  particular $B_i$ in the standard base, e.g., the $s_0$-letter follows after the $k$-letter
  in each subword $B_i$ hence the standard base (\ref{baza})
  has $2L$ different $s_0$-letters (one in each subword $B_i$), $2L$ different 
  $p_1$-letters, and so on. Also this agreement allows to speak on
  \label{12letter} $(12)$-letters and $(12)$-edges, \dots, \label{piletter} $p_i$-letters, $p_i$-edges,
  and $p_i$- (or \label {siband} $s_i$-) bands.

The papers \cite{O}, \cite{BORS}, \cite{OS2} contain the proof of the
following lemma in a more general setting. (In contrast to Lemmas 6.1 \cite{O} and
 3.11 \cite{OS2}, we have no $x$-cells here.)

\begin{lemma}\label{NoAnnul}
A reduced van Kampen diagram $\Delta$ over $M$ has no
$q$-annuli, no $\theta$-annuli, and no  $a$-annuli.
Every $\theta$-band of $\Delta$ shares at most one cell with any
$q$-band and with any $a$-band. 

\end{lemma}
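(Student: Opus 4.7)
The plan is to argue by contradiction, ruling out each type of annulus in turn; the intersection claim will then follow by closing up a double crossing. I assume throughout that $\Delta$ is reduced.

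I would first dispose of $a$-annuli. Each $(\theta,a)$-cell comes from the relation $\theta_j a=a\theta_j$, so both of its $\theta$-edges carry the same label $\theta_j$. The inner side of an $a$-annulus $\mathcal B$ of length $n>0$ is thus a closed path labelled by a word $W$ purely in $\theta$-letters, and the subdisk it bounds gives $W=1$ in $M$. I would then compose with the homomorphism $M\to F$ onto the free group on $\{g_\theta:\theta\in\Theta^+\}$ that kills every $q$- and $a$-generator and identifies the $\theta$-letters of each rule $\theta$ with $g_\theta$; under this map every defining relation of $M$ becomes trivial, while the image of $W$ is a non-empty non-cancelling word in $F$ (a cancellation between adjacent factors would force two adjacent cells of $\mathcal B$ to be mirror images of each other, forbidden in a reduced diagram). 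This contradicts $W=1$.

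For $\theta$-annuli, the inner side of $\mathcal B$ is labelled by a cyclic admissible word $W$ of $M$ lying purely in $q$- and $a$-letters, since all $\theta$-edges of the cells of $\mathcal B$ are internal to the band. Admissible words are reduced by definition, so $W$ is reduced and non-empty; the enclosed subdisk gives $W=1$ in $M$, but no defining relation of $M$ involves only $q$- and $a$-letters, so the subgroup they generate is free. Hence $W=1$ freely, contradicting its reducedness and non-emptiness. The $q$-annulus case will be handled differently: its two sides encode, in the $S$-machine language, a nontrivial reduced closed computation that fixes the cyclic configuration of state letters around $\mathcal B$. Pulling back through Remark \ref{rkM}(1) from $M$ to $M_4$ and then through the projections $\Pi_{3,2}$, $\Pi_{2,1}$ to $\tilde M_2$ (together with the short-sector Lemmas \ref{gen1} and \ref{gen2}) should produce a nontrivial reduced computation of $\tilde M_2$ with coinciding initial and final admissible words, which contradicts Lemma \ref{norep}.

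With all three annulus types ruled out, I would finish as follows. If a $\theta$-band $\mathcal B_\theta$ and a $q$- or $a$-band $\mathcal C$ shared two distinct cells $\pi\neq\pi'$, then the sub-arcs of $\mathcal B_\theta$ and of $\mathcal C$ joining $\pi$ to $\pi'$ bound a subdiagram of $\Delta$ inside which one of the two sub-bands closes up (via the identification of its boundary edges provided by $\pi$ and $\pi'$) into an annulus of the appropriate type, contradicting the previous steps. The hard part will be the $q$-annulus case: unlike the $a$- and $\theta$-cases, it cannot be settled by a direct free-subgroup argument but forces one to translate the diagrammatic band closure into an honest reduced machine computation and to descend through the full hierarchy $M\to M_4\to M_3\to\tilde M_2$ in order to invoke Lemma \ref{norep}.
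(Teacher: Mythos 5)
Your retraction argument for $a$-annuli does work: the map killing all $q$- and $a$-letters and sending every $\theta_j$ to $g_\theta$ annihilates each defining relation of $M$, and since consecutive cells of an $a$-band share an $a$-edge whose letter determines the index $j$ uniquely, a cancellation in the image really does force a mirror pair. The other three quarters of the proposal, however, have genuine gaps. For $\theta$-annuli, the inference ``no defining relation involves only $q$- and $a$-letters, hence the subgroup they generate is free'' is invalid: relations involving $\theta$-letters can impose relations on $\langle Q\cup Y\rangle$ (compare $\langle a,b,t\mid tat^{-1}=a,\ tbt^{-1}=a\rangle$, where $a=b$). Worse, the freeness of $\langle Q\cup Y\rangle$ in $M$ is essentially equivalent to the absence of $\theta$-annuli: a nontrivial reduced relation among $q$- and $a$-letters yields a minimal diagram with no $\theta$-edges on its boundary, in which every maximal $\theta$-band is an annulus (every cell of $M$ has exactly two $\theta$-edges). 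So unless you import the HNN-extension structure of S-machine groups from \cite{OS} or \cite{SBR} --- which you neither cite nor prove --- the argument is circular; and your claim that the inner side label is a nonempty reduced ``cyclic admissible'' word is also unestablished (Remark \ref{tb} records that $a$-letter cancellations do occur along sides of $\theta$-bands, and nonvanishing of the $q$-letters needs an argument).

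The $q$-annulus plan does not work as described: Lemma \ref{norep} is about reduced computations of $\tilde M_2$ with the \emph{standard} base, whereas a $q$-annulus only gives a cyclic history acting on a single base letter; closed reduced computations with one-letter or otherwise short bases certainly exist for these machines, there is no mechanism for ``pulling back'' such a history through $\Pi_{3,2},\Pi_{2,1}$ to a standard-base computation, and the dictionary between bands/trapezia and computations (Lemma \ref{simul}) is itself proved in \cite{OScol} using the very band lemmas you are trying to establish. Note also that the present lemma is true for arbitrary S-machines, so it cannot rest on a special property like Lemma \ref{norep}. Finally, your last step is incorrect as stated: if a $\theta$-band and a $q$-band (or $a$-band) share two cells, neither sub-band ``closes up'' into a $\theta$-, $q$- or $a$-annulus; one only obtains a mixed $(\theta,q)$- or $(\theta,a)$-annulus, whose impossibility requires its own treatment. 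The paper does not reprove the lemma but cites \cite{O}, \cite{BORS}, \cite{OS2}, where all cases, including the mixed annuli, are handled simultaneously by a combinatorial innermost/minimal-counterexample analysis of what lies inside the annulus; that is the kind of argument you still need for the $\theta$-, $q$- and intersection statements.
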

\medskip

If $W\equiv x_1...x_n$ is a word in an alphabet $X$, $X'$ is another
alphabet, and $\phi\colon X\to X'\cup\{1\}$ (where $1$ is the empty
word) is a map, then $\phi(W)\equiv\phi(x_1)...\phi(x_n)$ is called the
\label{projectw}{\em projection} of $W$ onto $X'$. We shall consider the
projections of words in the generators of $M$ onto
$\Theta$ (all $\theta$-letters map to the
corresponding element of $\Theta$,
all other letters map to $1$), and the projection onto the
alphabet $\{Q_0\sqcup \dots \sqcup Q_{LN-1}\}$ (every
$q$-letter maps to the corresponding $Q_i$, all other
letters map to $1$).

\begin{df}\label{dfsides}
{\rm  The projection of the label
of a side of a $q$-band onto the alphabet $\Theta$ is
called the \label{historyb}{\em history} of the band. The Step history of this projection
is the \label{stephb}{\it Step history} of the $q$-band. The projection of the label
of a side of a $\theta$-band onto the alphabet $\{Q_0,...,Q_{LN-1}\}$
is called the \label{baseb} {\em base} of the band, i.e., the base of a $\theta$-band
is equal to the base of the label of its top or bottom.}
\end{df}
As for words, we will  use representatives of
$Q_j$-s in base words. (If $p\in Q_4$, $s\in Q_5$, we shall
say that the word $pas$ has base $ks$ instead of
$Q_4Q_5$, and so on.)

\begin{df}\label{dftrap}
{\rm Let $\Delta$ be a reduced  diagram over $M$
which has  boundary path of the form ${\bf p}_1\iv {\bf q}_1{\bf p}_2{\bf q}_2\iv,$ where
${\bf p}_1$ and ${\bf p}_2$ are sides of $q$-bands, and
${\bf q}_1$, ${\bf q}_2$ are maximal parts of the sides of
$\theta$-bands such that $\Lab({\bf q}_1)$, $\Lab({\bf q}_2)$ start and end
with $q$-letters.

\begin{figure}[h!]
\unitlength 1mm 
\linethickness{0.4pt}
\ifx\plotpoint\undefined\newsavebox{\plotpoint}\fi 
\begin{picture}(148.25,40)(7,115)
\put(76.5,148){\line(1,0){64.25}}
\put(74.75,143.75){\line(1,0){66.25}}
\put(72.75,140){\line(1,0){68.25}}
\put(71,136.5){\line(1,0){69.75}}
\put(69.5,133.25){\line(1,0){71.75}}
\put(68,130.25){\line(1,0){73.25}}
\put(66.75,127.25){\line(1,0){74.5}}
\put(65.5,124){\line(1,0){75.75}}
\put(76.75,147.75){\line(0,-1){23.5}}
\put(74.25,143.75){\line(0,-1){19.75}}
\put(72.25,140){\line(0,-1){16}}
\put(70,133.5){\line(0,-1){9.5}}
\put(67.5,130.5){\line(0,-1){6.5}}
\put(73.5,148){\line(1,0){3.25}}
\multiput(73.5,148.25)(-.0583333,-.0333333){30}{\line(-1,0){.0583333}}
\put(72,147.25){\line(0,-1){2.75}}
\put(72,144.5){\line(0,1){.25}}
\put(72,144.75){\line(0,-1){.75}}
\put(72,144){\line(1,0){2.25}}
\multiput(72,144.25)(-.033653846,-.043269231){104}{\line(0,-1){.043269231}}
\put(68.5,139.75){\line(1,0){1.75}}
\multiput(70,140)(-.03358209,-.05223881){67}{\line(0,-1){.05223881}}
\put(67.75,136.5){\line(1,0){3.25}}
\multiput(69.5,139.75)(.3125,.03125){8}{\line(1,0){.3125}}
\put(67.75,136.5){\line(0,-1){3.25}}
\put(67.75,133.25){\line(1,0){2}}
\put(65.75,133.25){\line(1,0){2}}
\multiput(65.5,133.25)(-.03333333,-.06666667){45}{\line(0,-1){.06666667}}
\put(64,130.25){\line(1,0){3.25}}
\put(64,130.5){\line(0,-1){6.5}}
\put(64,124){\line(0,1){0}}
\put(64,124){\line(1,0){1.75}}
\put(79.25,147.75){\line(0,-1){23.75}}
\multiput(73.68,147.68)(-.375,-.5){5}{{\rule{.4pt}{.4pt}}}
\multiput(74.93,147.93)(-.45,-.75){11}{{\rule{.4pt}{.4pt}}}
\multiput(76.18,147.68)(-.5,-.77941){18}{{\rule{.4pt}{.4pt}}}
\multiput(76.43,145.93)(-.5,-.5833){4}{{\rule{.4pt}{.4pt}}}
\multiput(73.68,141.43)(-.54167,-.75){19}{{\rule{.4pt}{.4pt}}}
\multiput(66.68,133.18)(-.45,-.5){6}{{\rule{.4pt}{.4pt}}}
\multiput(71.93,137.18)(-.55357,-.75){15}{{\rule{.4pt}{.4pt}}}
\multiput(71.93,135.68)(-.375,-.5625){5}{{\rule{.4pt}{.4pt}}}
\multiput(69.68,132.18)(-.4167,-.5833){4}{{\rule{.4pt}{.4pt}}}
\multiput(67.43,128.93)(-.54167,-.58333){7}{{\rule{.4pt}{.4pt}}}
\multiput(66.93,127.18)(-.5,-.55){6}{{\rule{.4pt}{.4pt}}}
\multiput(67.18,125.68)(-.5833,-.5833){4}{{\rule{.4pt}{.4pt}}}
\put(95,148){\line(0,-1){24.25}}
\put(98.25,148){\line(0,-1){24.25}}
\multiput(96.93,147.93)(-.4375,-.625){5}{{\rule{.4pt}{.4pt}}}
\multiput(97.93,146.93)(-.45833,-.66667){7}{{\rule{.4pt}{.4pt}}}
\multiput(97.68,144.18)(-.5,-.7){6}{{\rule{.4pt}{.4pt}}}
\multiput(97.93,142.43)(-.42857,-.57143){8}{{\rule{.4pt}{.4pt}}}
\multiput(97.68,140.43)(-.5,-.7){6}{{\rule{.4pt}{.4pt}}}
\multiput(97.93,138.43)(-.55,-.75){6}{{\rule{.4pt}{.4pt}}}
\multiput(97.68,136.18)(-.5,-.7){6}{{\rule{.4pt}{.4pt}}}
\multiput(97.68,133.93)(-.5,-.75){6}{{\rule{.4pt}{.4pt}}}
\multiput(97.68,131.18)(-.5,-.65){6}{{\rule{.4pt}{.4pt}}}
\multiput(95.18,127.93)(0,0){3}{{\rule{.4pt}{.4pt}}}
\multiput(95.18,127.93)(-.25,0){3}{{\rule{.4pt}{.4pt}}}
\multiput(97.93,129.18)(-.5,-.5){7}{{\rule{.4pt}{.4pt}}}
\multiput(98.18,126.93)(-.6,-.5){6}{{\rule{.4pt}{.4pt}}}
\multiput(97.93,125.18)(-.3333,-.3333){4}{{\rule{.4pt}{.4pt}}}
\put(140.5,148.25){\line(0,-1){8.5}}
\put(140.5,139.75){\line(0,1){0}}
\multiput(140.5,140)(-.0326087,-.1521739){23}{\line(0,-1){.1521739}}
\put(139.75,136.5){\line(0,1){.25}}
\multiput(139.75,136.75)(.03365385,-.06730769){52}{\line(0,-1){.06730769}}
\put(141.5,133.25){\line(0,1){0}}
\put(141.5,133.25){\line(0,-1){9.25}}
\put(141.5,124){\line(-1,0){.25}}
\put(140.25,148){\line(1,0){4.75}}
\multiput(145.25,148)(-.03289474,-.11842105){38}{\line(0,-1){.11842105}}
\put(144.25,144){\line(1,0){1.5}}
\put(145.75,144){\line(0,-1){8}}
\put(145.75,136.5){\line(-1,0){1.5}}
\multiput(144.5,136.25)(.03365385,-.0625){52}{\line(0,-1){.0625}}
\put(146,133.75){\line(0,-1){10}}
\put(146,124.25){\line(1,0){.25}}
\multiput(140.75,124.25)(.0625,-.03125){8}{\line(1,0){.0625}}
\put(141.25,124.25){\line(1,0){4.75}}
\multiput(142.93,147.93)(-.45,-.5){6}{{\rule{.4pt}{.4pt}}}
\multiput(144.43,147.68)(-.5,-.60714){8}{{\rule{.4pt}{.4pt}}}
\multiput(143.68,144.43)(-.5,-.625){7}{{\rule{.4pt}{.4pt}}}
\multiput(145.18,142.93)(.25,.5){3}{{\rule{.4pt}{.4pt}}}
\multiput(144.93,143.93)(.125,.25){3}{{\rule{.4pt}{.4pt}}}
\multiput(145.18,144.43)(-.5,-.65){11}{{\rule{.4pt}{.4pt}}}
\multiput(145.43,142.18)(-.55,-.65){11}{{\rule{.4pt}{.4pt}}}
\multiput(145.18,139.93)(-.5625,-.625){9}{{\rule{.4pt}{.4pt}}}
\multiput(140.68,134.93)(-.125,-.125){3}{{\rule{.4pt}{.4pt}}}
\multiput(145.43,137.43)(-.5,-.53125){9}{{\rule{.4pt}{.4pt}}}
\multiput(144.93,134.68)(-.58333,-.58333){7}{{\rule{.4pt}{.4pt}}}
\multiput(145.68,133.18)(-.60714,-.5){8}{{\rule{.4pt}{.4pt}}}
\multiput(145.68,131.68)(-.57143,-.46429){8}{{\rule{.4pt}{.4pt}}}
\multiput(145.68,129.93)(-.60714,-.53571){8}{{\rule{.4pt}{.4pt}}}
\multiput(145.68,127.68)(-.75,-.5){6}{{\rule{.4pt}{.4pt}}}
\multiput(145.68,126.43)(-.5625,-.4375){5}{{\rule{.4pt}{.4pt}}}
\put(140.25,143.75){\line(1,0){4.5}}
\put(140.25,140){\line(1,0){5.5}}
\put(140,136.5){\line(1,0){4.75}}
\put(141.5,133.25){\line(1,0){4.5}}
\put(141,130.5){\line(1,0){5.25}}
\put(141,127.25){\line(1,0){4.75}}
\put(137.75,148.25){\line(0,-1){24}}
\put(134.75,148){\line(0,-1){23.75}}
\put(131.75,147.75){\line(0,-1){23.5}}
\put(63,138.75){${\bf p}_1$}
\put(125.75,121.5){${\bf q}_1$}
\put(148.25,135.5){${\bf p}_2$}
\put(110.75,151){${\bf q}_2$}
\put(87.5,120){Trapezium}
\multiput(11,129.75)(.0613964687,.0337078652){623}{\line(1,0){.0613964687}}
\multiput(49.25,150.75)(.033653846,-.0625){104}{\line(0,-1){.0625}}
\multiput(52.75,144.25)(-.0621990369,-.0337078652){623}{\line(-1,0){.0621990369}}
\put(11.25,129.5){\line(1,-2){3}}
\multiput(16,132.5)(.03353659,-.07317073){82}{\line(0,-1){.07317073}}
\multiput(44.25,147.75)(.03370787,-.06179775){89}{\line(0,-1){.06179775}}
\put(28.5,139){\line(1,-2){3.25}}
\put(31.75,132.5){\line(0,1){0}}
\multiput(32.75,142.25)(.03370787,-.07303371){89}{\line(0,-1){.07303371}}
\put(15,128){$\pi_1$}
\put(47.5,145.5){$\pi_n$}
\put(31.25,137.75){$\pi_i$}
\put(10.25,125.75){$e_0$}
\put(27.5,135.5){$e_{i-1}$}
\put(35.5,140){$e_i$}
\put(41,135.25){${\bf q}_1$}
\put(17.25,137){${\bf q}_2$}
\put(28,127.5){Band}
\put(52.5,148.5){$e_n$}
\end{picture}
\end{figure}

Then $\Delta$ is called a \label{trapez}{\em trapezium}. The path ${\bf q}_1$ is
called the \label{bottomt}{\em bottom}, the path ${\bf q}_2$ is called the \label{topt}{\em top} of
the trapezium, the paths ${\bf p}_1$ and ${\bf p}_2$ are called the \label{lrsidest}{\em left
and right sides} of the trapezium. The history (Step history) of the $q$-band
whose side is ${\bf p}_2$ is called the \label{historyt}{\em history} (resp., \label{stepht} Step history) of the trapezium;
the length of the history is called the\label{heightt}{\em height}  of the
trapezium. The base of $\Lab ({\bf q}_1)$ is called the \label{baset}{\em base} of the
trapezium.}
\end{df}

\begin{rk} Notice that the top (bottom) side of a
$\theta$-band $\ttt$ does not necessarily coincides with the top
(bottom) side ${\bf q}_2$ (side ${\bf q}_1$) of the corresponding trapezium of height $1$, and ${\bf q}_2$
(${\bf q}_1$) is
obtained from $\topp(\ttt)$ (resp. $\bott(\ttt)$) by trimming the
first and the last $a$-edges
if these paths start and/or end with $a$-edges.
We shall denote the
\label{trim}{\it trimmed} top and bottom sides of $\ttt$ by \label{ttopp} $\ttopp(\ttt)$ and
\label{tbott} $\tbott(\ttt)$. By definition, for arbitrary $\theta$-band $\cal T,$ $\ttopp(\cal T)$
is obtained by such a trimming only if $\cal T$ starts and/or ends with a
$(\theta,q)$-cell; otherwise $\ttopp(\cal T)=\topp(\cal T).$
The definition of $\tbott(\cal T)$ is similar. 
\end{rk}

By Lemma \ref{NoAnnul}, any trapezium $\Delta$ of height $h\ge 1$
can be decomposed into $\theta$-bands $\ttt_1,...,\ttt_h$ connecting
the left and the right sides of the trapezium. The word written on
the trimmed top side of one of the bands $\ttt_i$ is the same as the
word written on the trimmed bottom side of $\ttt_{i+1}$,
$i=1,...,h$.
Moreover, the following lemma claims that every trapezium
simulates the work of $M.$ It summarizes the assertions  of Lemmas 
6.1, 6.3, 
6.9, and 6.16 from \cite{OScol}. For the formulation (1) below, it is important
that $M$  is an $S$-machine. The analog of 
this statement is false for Turing machines. (See \cite{OS3} for a discussion.) 

\begin{lemma}\label{simul} (1) Let $\Delta$ be a trapezium
with history $\theta_1\dots\theta_d$ ($d\ge 1$). 
Assume that $\Delta$ 
has consecutive maximal $\theta$-bands  ${\cal T}_1,\dots
{\cal T}_d$, and the words
$U_j$ 
and $V_j$ 
are the  trimmed bottom and the 
trimmed top labels of ${\cal T}_j,$ ($j=1,\dots,d$). 
Then $U_j$, $V_j$ are admissible
words for $M,$ and
$$V_1\equiv U_1\cdot \theta_1, U_2\equiv V_1,\dots, U_d \equiv V_{d-1}, V_d\equiv U_d\cdot \theta_d$$

(2) For every reduced computation $U\to\dots\to U\cdot H \equiv V$ of $M$ 
with $||H||\ge 1$
there exists 
a trapezium $\Delta$ with bottom label $U$, top label $V$, and with history $H.$ 
\end{lemma}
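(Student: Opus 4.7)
The plan is to prove both parts by induction on the height $d$ (respectively, on $||H||$), reducing everything to the case of a single $\theta$-band, and then using the fact that a $\theta$-band faithfully encodes exactly one application of the rule $\theta$.

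For part (1), I would begin with the base case $d=1$, i.e., a single $\theta$-band $\cal T$ whose bottom ${\bf q}_1$ and top ${\bf q}_2$ (after trimming the extreme $a$-edges if present) must be analyzed cell by cell. Each cell of $\cal T$ is either a $(\theta,q)$-cell or a $(\theta,a)$-cell: the $(\theta,q)$-cells correspond to the individual parts $U_i\to V_i$ of $\theta$, while the $(\theta,a)$-cells encode the commutation relations $\theta a=a\theta$ for $a\in\bar Y_j(\theta)$. Reading along $\tbott(\cal T)$, the $q$-letters appear in the order dictated by the base of $\cal T$, and between consecutive $q$-letters one sees a word in $\bar Y_j(\theta)^{\pm 1}$ (by the definition of $Y(\theta)$ and the cells available). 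By Remark~\ref{tb}, no $\theta$-cancellation occurs along $\topp/\bott(\cal T)$, and the reduction that might have taken place among $a$-letters in $\topp(\cal T)$ is exactly the free reduction that is part of the definition of the $S$-rule application. Consequently $U_1$ is an admissible word in the domain of $\theta$ and $V_1\equiv U_1\cdot\theta$, establishing the base case. For $d>1$, the trapezium decomposes (by Lemma~\ref{NoAnnul}) into $\mathcal{T}_1,\dots,\mathcal{T}_d$ stacked top-to-bottom; the trimmed top of $\mathcal{T}_j$ is canonically identified with the trimmed bottom of $\mathcal{T}_{j+1}$, so an immediate induction yields the chain of equalities.

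For part (2), I would build the trapezium inductively by the length $d=||H||$, writing $H\equiv\theta_1\theta_2\cdots\theta_d$. For $d=1$: given an admissible word $U$ in the domain of $\theta_1$, I construct a $\theta_1$-band $\mathcal{T}_1$ whose trimmed bottom reads $U$ and whose trimmed top reads $U\cdot\theta_1$. This is done by laying down one $(\theta_1,q)$-cell for each $q$-letter of $U$ (using the corresponding part of $\theta_1$), and one $(\theta_1,a)$-cell for each $a$-letter of $U$ sitting in a sector $Q_jQ_{j+1}$ with $a\in Y_{j+1}(\theta_1)$. The resulting planar arrangement of cells, glued along shared $\theta_1$- and $q$-edges, is a valid $\theta_1$-band since $U$ is admissible for $\theta_1$. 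For the inductive step, by the hypothesis I have a trapezium $\Delta'$ with history $\theta_1\cdots\theta_{d-1}$, bottom $U$ and top $U\cdot\theta_1\cdots\theta_{d-1}$. I then construct a single $\theta_d$-band $\mathcal{T}_d$ with bottom equal to the top of $\Delta'$ as in the $d=1$ case, and glue it on top. The diagram obtained is reduced because the computation $U\to\cdots\to U\cdot H$ is reduced: a pair of mirror cells would force a subword $\theta_j\theta_{j+1}^{-1}$ or $\theta_j^{-1}\theta_{j+1}$ with $\theta_j=\theta_{j+1}$ acting on the same admissible subword, contradicting reducedness.

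The main obstacle in both parts is bookkeeping around the trimming convention: the labels $\topp(\cal T), \bott(\cal T)$ may begin or end with $a$-edges that are not present in the bottom/top of the ambient trapezium, and they may even undergo free cancellation of $a$-letters when two neighboring cells in a $\theta$-band have $a$-edges on the common side. One must check that after these cancellations the trimmed labels $U_j, V_j$ remain \emph{reduced admissible} words (so that they fit the definition in (\ref{admiss})) and that the rewriting $U_j\to V_j$ coincides, letter by letter, with the formal $S$-rule action (including the automatic free reduction built into the $S$-machine formalism, cf.\ Subsection~\ref{smach}). Once this correspondence between bands and rule applications is carefully verified, the inductive assembly in both directions is routine, and I would refer the reader to Lemmas~6.1, 6.3, 6.9, and 6.16 of \cite{OScol} where exactly this analysis is carried out for the same class of $S$-machines.
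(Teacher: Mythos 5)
Your proposal is correct and is essentially the argument the paper relies on: the paper gives no proof of this lemma, stating only that it summarizes Lemmas 6.1, 6.3, 6.9 and 6.16 of \cite{OScol}, and those lemmas are proved by exactly the band-by-band induction you describe (one $\theta$-band encodes one rule application, trimmed top of ${\cal T}_j$ equals trimmed bottom of ${\cal T}_{j+1}$, and reducedness of the computation gives reducedness of the stacked diagram). Since you handle the trimming/free-reduction bookkeeping and then defer to the same lemmas of \cite{OScol}, your route coincides with the paper's.
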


If $H'\equiv \theta_i\dots\theta_j$ is a subword of the history $\theta_1\dots\theta_d$
from Lemma \ref{simul} (1), then the bands ${\cal T}_i,\dots, {\cal T}_j$ form a subtrapezium
$\Delta'$ of the trapezium $\Delta.$ This subtrapezium is uniquely defined by the
subword $H'$ (more precisely, by the occurrence of $H'$ in the word $\theta_1\dots\theta_d$), and $\Delta'$ is called the \label{H'partt} $H'$-{\it part} of $\Delta.$
\medskip

\subsection{Properties of the group $M$.} 

In this subsection, we want to translate the  properties of the machine $M$ in
the language of diagrams over the group $M.$

Recall that every $(\theta,q)$-cell $\pi$ has a boundary label of the form $U_i\theta_{i+1}V_i^{-1}\theta_i^{-1}$ (see Relations (\ref{rel1})), where the word  $U_i$ (the word $V_i$)
has exactly one positive $q$-letter $q_i\in Q_i$ ($q'_i\in Q_i$). Hence the
boundary label of $\pi$ is  $q_iw_1(q')^{-1}_iw_2$ for some words $w_1, w_2.$

\begin{df}\label{activecell} The cell $\pi$ considered as a one-cell $q$-band with base
 $q_i$ is called \label{alrcell}{\it active from the right
(from the left) }  if the word $w_1$ (the word $w_2$) has at least one $a$-letter.
If $\pi$ with base $q_i$ is active from the right (from the left) then, by definition,
the same cell considered as a $q$-band with base $q_i^{-1}$ is active from the
left (resp., from the right).   A $(\theta,q)$-cell is called \label{passivec}{\it passive} if it is
not active either from the left or from the right.
\end{df}

The comparison with Definition \ref{active} shows that the cell $\pi$ with base $q_i^{\pm 1}$
is active from the left (resp., active from the right, passive) iff the base letter
$q_i^{\pm 1}$ is active on the left (resp., active on the right, passive)  for the rule
corresponding to the $\theta$-edges of $\pi.$

\begin{df}\label{activeband}  We say that a $q$-band with base $q_i$
  is \label{activelrb}{\it active from the left (from the right)}
  if every cell of it (with the same base) except for the first
  cell and the last one (if the first and/or the first cell corresponds to the rules $(12)^{\pm 1}$ or $(23)^{\pm 1}$), is active from the left (from the right). A $q$-band is called
  \label{passiveb}{\it passive} if every its cell is passive. Similarly one can speak on a $q$-band with
  base $q_i$ which is {\it passive from the left} or {\it passive from the right}.
\end{df}

\begin{rk} \label{s0} The letter $s_0$ in the standard base of $M_4$ corresponds to the left-most
$\alpha$-marker of the machines $M_1-M_4,$ and so every $s_0$-band is passive (from both sides).
\end{rk}

\label{sactivelrb} \begin{df}\label{stronglyactive} We say that a $q$-band $\cal C$ with base $q_i$ is
{\em strongly active from
the left (resp. right)} if every its cell $\pi$ is active from the left (from the right),
$\partial\pi$ has exactly one $a$-edge on the left side (right side) of $\cal C,$
and these $a$-letters are different for the cells corresponding to different
rules of the history of $\cal C.$
\end{df}
\begin{lemma} \label{*XII} Let $\Delta$ be any reduced diagram over 
$M.$ Let $\cal C$ be a
$q$-band, corresponding to the part $Q_i$ of $Q$. Suppose that $\cal C$ is strongly active from the left (resp. right).
Then $\Delta$ does not have an $a$-band starting and ending on the left side (resp., right side)  of $\cal C$.
\end{lemma}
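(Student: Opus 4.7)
My plan is a short contradiction argument that splits into two steps: first identifying the rules at the two endpoints of the hypothetical $a$-band, then showing that a strongly active $\cal C$ cannot contain two distinct cells with a common rule.

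Assume there is an $a$-band $\cal A$ of positive length in $\Delta$ with both extremal $a$-edges $e,f$ on the left side of $\cal C$, and let $\pi_1,\pi_2$ denote the cells of $\cal C$ on whose left boundary $e$, respectively $f$, lies. Consecutive cells of an $a$-band are joined along $a$-edges, and every $(\theta,a)$-cell has both of its $a$-edges carrying the same label; hence all $a$-edges appearing in $\cal A$ carry a common $a$-letter, so $e$ and $f$ do too. The hypothesis ``strongly active from the left'' means precisely that the correspondence ``rule of a cell of $\cal C$ $\mapsto$ its unique left $a$-letter'' is injective, so this common label forces $\pi_1$ and $\pi_2$ to bear a common rule $\theta^{\ast}$. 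Because each cell of $\cal C$ carries exactly one $a$-edge on the left side and $e\ne f$ (as $\cal A$ has positive length), we have $\pi_1\ne\pi_2$.

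The main step is to rule out a strongly active $\cal C$ having two distinct same-rule cells at all. Here I would use the $q$-band compatibility: for consecutive cells $\pi_j,\pi_{j+1}$ of $\cal C$, the ``after'' label $v'_{i-1}(\pi_j)$ and the ``before'' label $v_{i-1}(\pi_{j+1})$ coincide as words. Property \ref{one}(1) together with the ``exactly one left $a$-edge'' requirement shows that each cell of $\cal C$ falls into one of two cases: case $A$ with $v_{i-1}=a,\,v'_{i-1}=\epsilon$, or case $B$ with $v_{i-1}=\epsilon,\,v'_{i-1}=a$. A four-line case analysis on consecutive cells rules out $A$-$A$ and $B$-$B$ (both give $\epsilon=a$) and rules out $B$-$A$, because the equality $v'_{i-1}(\pi_j)=v_{i-1}(\pi_{j+1})$ would then equate two left $a$-letters; by the strongly-active injection, $\pi_j$ and $\pi_{j+1}$ would share a rule, hence a case, contradicting $B$-$A$. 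Only the transition $A$-$B$ remains. Since no cell can follow a $B$-cell, $\cal C$ contains at most two cells; in the two-cell case the rules are necessarily distinct. This contradicts the conclusion of the first step, completing the proof of the left assertion; the right one is handled by the mirror argument.

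The main obstacle, and the place to be most careful, is precisely the $B$-to-$A$ exclusion in the case analysis: this is the step that actually uses the injectivity supplied by ``strongly active'', and without it the structural reduction of $\cal C$ to at most two cells breaks down.
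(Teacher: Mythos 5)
Your Step 1 is essentially sound (the label is constant along an $a$-band consisting of $(\theta,a)$-cells, so the two endpoint cells of $\cal C$ carry the same left $a$-letter and hence the same rule), but Step 2 contains a genuine gap that the argument cannot survive. The premise that for consecutive cells $\pi_j,\pi_{j+1}$ of the $q$-band the ``after'' word $v'_{i-1}(\pi_j)$ equals the ``before'' word $v_{i-1}(\pi_{j+1})$ is false: consecutive cells of a $q$-band are glued along a $q$-edge only, and their left $a$-edges are distinct boundary edges of the band whose labels are completely unrelated — generically each such $a$-edge is the start of a maximal $a$-band heading off into the part of $\Delta$ to the left of $\cal C$, which is exactly the situation the lemma is about. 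Accordingly your structural conclusion (``$\cal C$ contains at most two cells'') is also false: for instance a $k$-band recording a long Step~1 computation, whose cells correspond to pairwise distinct rules $\rho_1(y)$ each contributing one left $a$-edge with a distinct letter $y$, is strongly active from the left and can be arbitrarily long. Finally, even granting Step 1, ``two distinct cells of $\cal C$ with a common rule'' is not by itself contradictory: the history of a $q$-band in a reduced diagram is only a reduced word, and a rule may well occur twice non-consecutively, so no purely one-dimensional analysis of $\cal C$ can finish the proof.

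What is missing is a planar argument using the subdiagram cut off by the $a$-band. The paper takes an innermost such $a$-band $\aaa$ and the subdiagram $\Delta'$ bounded by $\aaa$ and $\cal C$; by Lemma \ref{NoAnnul} the only maximal $q$-band in $\Delta'$ is the piece ${\cal C}'={\cal C}\cap\Delta'$, and the sides of $\aaa$ consist of $\theta$-edges, so (by minimality of $\aaa$) the single left $a$-edge of each cell of ${\cal C}'$ has nowhere to go unless it is one of the two ends of $\aaa$. Hence ${\cal C}'$ consists of exactly two cells sharing a $q$-edge and joined by $\aaa$; since a cell of $\cal C$ is determined by the $a$-letter on its left side (the strong-activity injectivity, cf.\ Remark \ref{tb}), these two cells are mirror copies with a common edge and cancel, contradicting the assumption that $\Delta$ is reduced. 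You would need to replace your Step 2 by an argument of this kind; the injectivity you isolate is indeed used, but only after the innermost-band reduction forces the two same-rule cells to be adjacent.
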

\begin{proof} Suppose that an $a$-band $\aaa$ starts and ends on $\bott(\cal C)$ which is
the left side of $\cal C$. Let
$\Delta'$ be the subdiagram bounded by $\aaa$ and
$\cal C$.

Then $\Delta'$ has no other maximal $q$-bands except the part
${\cal C'= C}\cap \Delta'$ because $a$-bands and $q$-bands do not
intersect and $\Delta'$ has no $q$-annuli by Lemma \ref{NoAnnul}. Since maximal $a$-bands do not intersect, we can assume
without loss of generality that $\Delta'$ does not have any other
$a$-bands starting and ending on $\bott(\cal C')$. Since $\cal C'$ is strongly
active on the left, and the sides of $\aaa$ consist of
$\theta$-edges, we conclude that $\cal C'$ consists of two cells having
common $q$- and $a$-edges.
A $\theta$-cell in $\cal C$ is
completely determined by its $a$-letter on its bottom side (see Remark \ref{tb} for the argument).
Therefore those two $q$-cells cancel, a contradiction with the
assumption that $\Delta$ is reduced.
\end{proof}
The next Proposition summarizes previously proved properties of the various submachines of the $S$-machine $M$. We
formulate these properties in the language of \vk diagrams which makes it more convenient to apply these properties to the  group $G.$ 

Recall that the standard base of $M_3$ is denoted by $B$.
Note that the standard base of $M$ contains $L$ copies $B(i)$ of
$B$ ($i=1,3,\dots, 2L-1$) and $L$ copies  of $B\iv.$ 
We call the base of an admissible word of $M$\label{alignedb}{\it aligned}, if every maximal subword of this base without letters $t,
k, k', t'$ is a subword of a copy of $B^{\pm 1}$. 

\begin{rk}\label{align}
Since $||B||< N/2,$ Formula \ref{baza} and the definition of admissible words show that
every aligned base of length $\ge N/2$ must contain a $k^{\pm 1}$- or a $(k')^{\pm 1}$-letter or entirely consists of $t^{\pm 1}$- or $(t')^{\pm 1}$-letters. 
\end{rk}

A base of an admissible word of $M$ is called \label{normalb}{\em normal} if it is
a subword of a power of the base of the hub. (Recall that $t_1$ and $t_{2L+1}$
were identified in the definition of the machine $M.$)
A base is called \label{largeb}{\em large} if it contains a copy of $B^{\pm 1}$.

We shall say that a $(\theta,q)$-cell $\pi$ in a \vk diagram over $M$ is \label{oddc}{\em odd} if it contains exactly one $a$-edge
on its boundary, and its base is not $k$ or $k'$. A $\theta$-band with (1-letter) history of type (2) is called \label{oddb} {\em
odd} if it contains odd cells.

A trapezium over $M$ whose top label is one of $2L$  copies of the stop word of $M_4$ will be called \label{M4acct} $M_4$-{\em accepting
trapezium}. (The trapezium pictured in Subsection \ref{M4} is $M_4$-accepting and
in addition, its bottom label is an input word of $M_4$.)
A trapezium whose base is a copy of the standard base of $M_4$ is \label{standardM}{\it standard}
if
its bottom label is
in the domain of the rule $(12)$ 
and its top label is in the domain of $(23)^{-1}.$ 
By Lemmas
\ref{M40} and \ref{simul}, every standard trapezium has height $T_i$ for some $i$ and corresponds to a standard computation of $M_4.$  

\begin{prop}\label{summary} The following properties of the group $M$ hold. In all these properties we assume that we
are given a reduced \vk diagram $\Delta$ over $M$, all bands,  cells and edges are bands, cells and edges of that
diagram.
\begin{enumerate}[label=(\roman{*}), ref=(\roman{*})]

\item\label{order} A two letter base of a $\theta$-band is either a subword of the word (\ref{baza}) or of the inverse word, or it has form $q^{\pm 1}q^{\mp 1}$ for a base letter $q$.

\item\label{kk'} Every cell with base $k$ (every cell with base $k'$) corresponding to a rule of Step $1$ or Step $2$ except for the $(12)^{\pm 1}$-rule
(corresponding to a rule of Step $2$ or Step $3$ except for the $(23)^{\pm 1}$-rule),
is active from the left
(resp., from the right) and passive from the right (resp., from the left). Every non-$k$-cell (non-$k'$-cell) corresponding to a rule of Step 1 (resp., of Step 3)
is passive.

Every $t$-,$t'$-, and $s_0$-band is passive.

\item\label{cell}

(a) The boundary of every cell has at most two $a$-edges. It has either $0$ or $2$
$a$-edges if it is a $(\theta, q)$-cell corresponding to a control letter $p_i$,
otherwise is has at most one $a$-edge.

(b)If there are two $a$-letters $a$ and $a'$ in the boundary label of a cell $\pi$ then $a'$ is a copy
of $a^{-1}$ and $\pi$ is
either a $(\theta,a)$-cell or  a $(\theta, q)$-cell corresponding to a control letter $p_i$,
and the $a$-edges are separated by $q$-edges in $\partial\pi.$

(c) A $(\theta, a)$-cell has two mutually inverse $\theta$-letters in the boundary label.

(d) Two $(\theta,q)$-cells corresponding to control letters $p_i$ and $p_j$ with $i\ne j,$ have no common $a$-letter in the boundary labels.

\item\label{p} If a $\theta$-band $\cal T$ has $3$ consecutive cells $\pi_1, \pi_2, \pi_3,$ where   $\pi_2$
    corresponds to a control letter $p_i^{\pm 1}$ and $\pi_2$ is not active 
    from both sides
then one of the cells  $\pi_1,$ $\pi_3$  is a $(\theta,q)$-cell whose base is an $s$-letter.

\item\label{i} Let a $(\theta,q)$-cell $\pi_i$ of a $\theta$-band $\cal T$ have base $q$.
If $\ttt$ corresponds to a rule of Step 1 or to (12) (respectively,
Step 3 or to $(23)$), and the next cell, $\pi_{i+1}$ in $\ttt$ is a $(\theta,a)$-cell, then $q$ can be only one of
the following letters:  $t, p_1, k^{-1}, s_1^{-1}$ (resp.,
$k'$ or $(t')^{-1}$) (with some 
indices) . For other values of $q$ the next after $q$ letter in the base of $\ttt$ cannot be
$q^{-1}$.

\item\label{ii}  If in the base of a $\theta$-band, there is a subword $p_i^{\pm 1} p_i^{\mp 1}p_i^{\pm 1}$ for
some control letter $p_i$ and there are neither $k^{\pm 1}$- nor $(k')^{\pm 1}$-letters, then the active cells in this band are precisely the $p_i$-cells, and these cells
are active from both sides.

\item\label{iv} Suppose that $\cal C$ is a $k^{\pm 1}$-, $(k')^{\pm 1}$-,
or $p_i^{\pm 1}$-band with top path $\bf y$. Suppose that each cell of $\cal C$ has a common $a$-edge
with $\bf y$. Then no $a$-band of $\Delta$ can start and end on $\bf y$.

{\bf In the remaining parts of the Proposition, $\Delta$ is a trapezium. }

\item\label{kt} If $\Delta$ has base $k't'$ (base $tk$) then it cannot have Step history $(12)(2)(12)^{-1}$
(resp., $(23)^{-1}(2)(23)$).

\item\label{v} If $p_1p_1^{-1}s_0^{-1}$ is a subword of the base of $\Delta$, and the  history of $\Delta$ has the form
    $(12)H$, then $H$ is of type $(2),$ it has no rules $(12)^{\pm 1}$, $(23)^{\pm 1},$ and in the $H$-part of $\Delta,$ 
    all $p_1$-cells are active both from the left and from the right.

\item\label{vi} Suppose that $\Delta$ is $M_4$-accepting. Then the step history of $\Delta$ is a
subword of $ (2)(1)(2)(3).$
If $W$ is the label of the bottom path of $\Delta$, and
    $h$ is the height of $\Delta$, then $||W||_a\le 4h.$ 

\item\label{vii} If the history of $\Delta$ contains $(12)^{\pm 1}$ and $(23)^{\pm 1},$ then the base of $\Delta$
    is normal.

\item\label{viii} If (a) the length of the base of $\Delta$ is at least $N$ and its history contains both a rule $(12)^{\pm 1}$ and a rule $(23)^{\pm 1}$, or (b) the base of $\Delta$ is standard, then the  step history of $\Delta$ is a subword of
    $ (2)(1)(2)(3)(2)(1)(2).$

\item\label{ix} Suppose that the base of $\Delta$ is not aligned and the history is of type (2). Then the label of the top (and of the bottom) of every
maximal $s_j$-band of $\Delta$ admits a factorization of the form $u(b_1v_1b_1^{-1})\dots (b_m v_m b_m^{-1})w$
where $b_i^{\pm 1}$ is an $a$-letter or $1$ ($i=1,\dots,m$) , $v_i$ is a group word in $\theta$-letters,
 $b_i$ commutes with every letter of $v_i$ by virtue of  $(\theta, a)$-relations, and
each of $u,$ $w$ has at most one $a$-letter.

\item\label{x} If the base of $\Delta$ is large, and its history has the form  $H^3$, then $\Delta$ does not have
    odd cells $\pi$.

\item\label{xi} Suppose that the base of  $\Delta$ has the form $k^{-1}k$ or $k'(k')^{-1}$, and all $k$- (resp.
    $k'$-cells) of $\Delta$ are active. Let $W$ and $W'$ be the labels of the bottom and top of $\Delta$ respectively. Then
    the history of $\Delta$ has the form $H_1H_2^kH_3,$ where $k\ge 0$, $||H_1||\le ||W||_a/2$,
$||H_2||\le \min(|W|_a, |W'|_a)$, $||H_3||\le ||W'||_a/2$.

\item\label{xii} If the base of $\Delta$ is of length $\ge N$, and $\Delta$ has the step history
$$(12)(2)(23)(3)(23)^{-1}(2)(12)^{-1},$$ then the height of the
$(23)(3(23)^{-1})$-part of $\Delta$ is less than the sum of heights of the  $(12)(2)$- and $(2)(12)^{-1}$-parts
of it.

\item\label{xiii} Let $m>0$ be an integer such that for every standard trapezium with a
bottom label $W,$  inequality
$|W|_a\le m$ implies $||H||< \log m.$
Suppose that $\Delta$ is $M_4$-accepting, the history of $\Delta$ is $H$, and the bottom label $W'$ satisfies the
inequality $|W'|_a \le m$. Then
we have  $$||H'||\le 4|W'|_a + 3\log m.$$

\item\label{xiv} The set of numbers $m$ satisfying the assumption from \ref{xiii} is infinite.

\item\label{xv} Suppose $\Delta$ is $M_4$-accepting.  If the height $h$ of $\Delta$ exceeds $6|W|_a$, where $W$ is
    the
bottom label of $\Delta,$ then there exists a standard subtrapezium $\Delta'$ in $\Delta$
such that $h\in (h',9h'),$ where $h'$ is the height of $\Delta'.$
\end{enumerate}
\end{prop}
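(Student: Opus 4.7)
\medskip

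\noindent\emph{Proof proposal.} The overarching strategy is to read each of the nineteen clauses (i)--(xv) as a diagrammatic reformulation of a previously established machine-theoretic statement. The bridge is Lemma \ref{simul}: a reduced trapezium corresponds bijectively to a reduced computation of $M$ whose admissible words are the (trimmed) labels of the horizontal sections, and whose history word is the projection of the vertical side to $\Theta$. Clauses (i)--(vii) are local: they describe what individual $(\theta,q)$- and $(\theta,a)$-cells and short $\theta$-bands look like, and they are extracted directly from the explicit list of rules of $M$ built in Section \ref{M4} out of $\tilde M_2$, $\overrightarrow Z$ and $\overleftarrow Z$. Specifically, (i) is the definition of an admissible word together with Lemma \ref{qqiv}; (ii) reads off Step 1 and Step 3 from Subsection \ref{M4}; (iii) and (iv) are Property \ref{one}(1) restated as a property of cell boundaries; (v) comes from listing which parts of a single rule can carry an $a$-letter on a given side; (vi) is the diagrammatic version of Lemma \ref{*IX}; and (vii) is Lemma \ref{*XII}.

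Clauses (viii)--(xii) concern the \emph{global shape} of the step history of a trapezium. I would translate each to a computation of $M$ using Lemma \ref{simul} and then quote the corresponding machine lemma: (viii) is Lemma \ref{M400}(a),(b); (ix) is Lemma \ref{*IVa} applied to the factor $s_0 p_1$ of the base; (x) is Lemma \ref{M401}(b) together with Lemma \ref{*IIIb}; (xi) follows because the presence of both a $(12)^{\pm1}$- and a $(23)^{\pm1}$-band in $\Delta$ forces, via the definition of $M$ and Remark \ref{rkhubsym}, that the base be contained in a cyclic copy of the base of the hub; and (xii) is Lemma \ref{M401}(a) combined with Lemma \ref{*IVb}, applied separately to each maximal standard-base subtrapezium.

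Clauses (xiii)--(xv) are the more technical structural results. For (xiii), the computation has type $(2)$, so no transition rule $(12)$ or $(23)$ is used, and one tracks how each cell of a maximal $s_j$-band picks up $a$-letters: because the base is not aligned, the $s_j$-band is adjacent (on at least one side) to a $p_i$-band whose controlling $Z^{(\theta,i)}$ forces repeated insertion/deletion of paired copies $b_i\cdots b_i^{-1}$, which yields the required factorization $u(b_1 v_1 b_1^{-1})\cdots(b_m v_m b_m^{-1})w$. Clause (xiv) is essentially Lemma \ref{*V} applied to the $H^3$-part of the base: the argument reconstructs an admissible $M_3$-word using $\pi_{3,2}$, to which Lemma \ref{norep} for $\tilde M_2$ would apply, giving a contradiction unless no odd cells are present. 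Clause (xv) is the diagrammatic form of Lemma \ref{gen2}: along a $k^{-1}k$- or $k'(k')^{-1}$-base each rule affects the intervening tape word by one letter on one side, so the history factors as $H_1H_2^kH_3$ with the stated length bounds.

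Clauses (xvi)--(xix) are consequences of the preceding ones together with the quantitative lemmas about $M_4$: (xvi) is Lemma \ref{*XV} applied inside the subtrapezium of step history $(12)(2)(23)(3)(23)^{-1}(2)(12)^{-1}$, where one uses (xi) to ensure the base is normal and hence the relevant $k',t'$-sectors are present; (xvii) and (xix) are the diagrammatic translations of Lemma \ref{*IIIa}(b) and Lemma \ref{*XVII}(a) respectively, noting that $|W|_a$ for the bottom of a trapezium equals the $a$-length of the corresponding admissible word of $M$, which by the identification $W(M)\leftrightarrow W$ of Remark \ref{rkM} governs $M_4$-computations just as well; (xviii) is Lemma \ref{M32} together with the equivalence between $f$-good numbers for $M_3$ and for $M_4$ (here the logarithm comes from the exponential in Lemma \ref{M32}). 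The main obstacle, and what will consume most of the effort, is verifying (xiii): one must simultaneously track the $p_i$-bands on both sides of the $s_j$-band, carefully use Property \ref{one}(1) to see that only a single $a$-letter can appear per rule, and match the cancellations predicted by Lemma \ref{lm569} so that the listed factorization really holds at the level of words (rather than only up to free reduction). Everywhere else, once the correct computational lemma is identified, the argument is a routine translation via Lemma \ref{simul}.
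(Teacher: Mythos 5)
Your overall strategy — translate each clause through Lemma \ref{simul} into a statement about computations of $M$ and quote the corresponding machine lemma — is exactly the paper's, and most of your attributions agree with the paper's own proof (e.g.\ \ref{kt} from Lemma \ref{M400}, \ref{vi} from Lemmas \ref{M401}(b) and \ref{*IIIb}, \ref{viii} from \ref{M401}(a), \ref{x} from Lemma \ref{*V}, \ref{xi} from Lemma \ref{gen2}, \ref{xii} from Lemma \ref{*XV}, \ref{xiii}--\ref{xv} from Lemmas \ref{*IIIa} and \ref{M32}/\ref{M40}). Two small miscitations: clause \ref{p} is not just Property \ref{one}(1) restated — one needs that a $p_i$-component which is not active on both sides \emph{locks} an adjacent sector, so that Lemma \ref{qqiv} together with clause \ref{order} forces an $s$-neighbour in the band; and clause \ref{v} does not follow from Lemma \ref{*IVa}, whose implication runs the other way (it \emph{assumes} $p_1$ is active in the Step-2 rules and has base $s_0p_1$, not $p_1p_1^{-1}s_0^{-1}$); the paper instead argues directly via Lemma \ref{simul} and Lemma \ref{gen1} that after $(12)$ only copies of $\xi_1(a)^{\pm1}$ of $\overrightarrow Z^{(\theta_{start},1)}$ can occur.

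The genuine gap is in the clause you yourself single out as the main obstacle, the factorization statement \ref{ix} for non-aligned bases. The crux there is not the bookkeeping of which cells carry $a$-edges (that part is easy, since $(\theta,s_j)$-cells have at most one $a$-edge), but the claim that each $b_i$ commutes with \emph{every} $\theta$-letter of $v_i$ by $(\theta,a)$-relations. This requires knowing that between the rule inserting $b_i$ and the rule removing it, no rule locks the relevant sector adjacent to the $s_j$-band; equivalently, that for a base which is not a subword of the standard base of $M_3$ (nor of its inverse), the $\tilde M_2$-rules in the history occur only as cancelling pairs $\theta\theta^{-1}$, with the intermediate subhistory never locking that sector. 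That is precisely Lemma \ref{ppm}(2),(3), and it rests on the deliberate scheduling of the auxiliary machines $\overrightarrow Z^{(\theta,j)}$, $\overleftarrow Z^{(\theta,j)}$ in the definition of $M_3$ (the ``important specification'' on the order in which they are switched on for left and right rules). Your proposed tools — Property \ref{one}(1), tracking the neighbouring $p_i$-bands, and Lemma \ref{lm569} — do not supply this: Lemma \ref{lm569} is a statement about the adding machine $Z(A)$ inside $M_2$ and its projections, and says nothing about which sectors of $M_3$ get locked between an insertion and the matching deletion. Without invoking Lemma \ref{ppm} (or reproving its content), the commutation part of the factorization in \ref{ix} remains unestablished.
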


$\Box$

\ref{order} This follows from the definition of admissible word and from Lemma \ref{simul}.

\ref{kk'},\ref{cell} This follows from the definition of the rules of $M,$ the definitions  of  Relations (\ref{rel1}), and from Remark \ref{s0}.

\ref{p} Indeed, if a component $p_i\to ...$ of a rule from $M$ is not active from both sides, then it locks either $s_{i-1}p_i$-sector or the
$p_is_i$-sector,
and we can apply Property \ref{order} and Lemma  \ref{qqiv}.

\ref{i} Indeed, the rules from Step 1 and the rule (12) (resp. Step 3 and the rule (23)) lock all sectors except the
the $tk$-sectors, and $p_1s_1$-sectors (resp. the $k't'$-sectors) of the admissible words of $M$. It remains to use
Lemma \ref{qqiv}.

\ref{ii} This also follows from Lemma \ref{qqiv}: if a rule of $M$ does not lock the $p_is_i$-sectors or
$s_{i-1}p_i$-sectors, then its component involving $p_i$ has the form $p_i\to ap_i'b,$ where $a, b$ are tape letters, and all other components, exept for $k$- and $k'$-components,
do not involve tape letters.

\ref{iv} The condition means that the band $\cal C$ is active on the left and has no (passive) $(12)$-or $(23)$-cells. 
It follows from the definition of  $M$ that then
$k^{\pm 1}, (k')^{\pm 1}$ or $p_i$-band  is strongly active on the left. It remains to apply
Lemma \ref{*XII}.

\ref{kt} This follows from Lemmas \ref{M400} (a,b) and \ref {simul}.

\ref{v} Let us apply Lemma \ref{simul} and consider the reduced computation
corresponding to $\Delta.$ The rule $(12)$ switches on the copy of the machine
$\overrightarrow Z^{\theta,1}$ where $\theta=\theta_{start}$ is the start rule of $M_3.$
The (copies of the) rules of the form $\zeta_1(a)^{\pm 1}$ cannot follow by 
the (copy of the) rule $\zeta_2$ since $\zeta_2$ locks the $p_1s_1$-sector.
Also, by Lemma \ref{gen1}, it cannot follow by the rules $(12)^{\pm 1}$ or $(23)^{\pm 1}$  
locking the $s_0p_1$-sector. Therefore each of the rules of $H$ is of the form
$\zeta_1(a)^{\pm 1}$, and the statement follows.

\ref{vi} The first statement follows from Lemma \ref{M401} (b) because $\Delta$ is the trapezium corresponding to an accepting
computation of a copy of the machine $M_4$. The second property immediately follows from Lemma \ref{*IIIb}.

\ref{vii} Indeed, every sector of the standard base of $M$ is locked by either (12) or (23). It remains to use 
\ref{order} and Lemma \ref{qqiv}.

\ref{viii} Indeed, by Property \ref{vii}, the base of $\Delta$ is normal. Since its length is at least $N$, it must
contain a copy of the base of $M_4$, and it remains to use Lemma \ref{M401} (a).

\ref{ix} The base has non-aligned subword $B_0$ without $k$- and $t$-letters. Hence
the copy of $B_0^{\pm 1}$ is not a subword of the standard base of the machine $M_3.$
If $H$ is the history of the corresponding computation of $M_3,$ then by
Lemma \ref{ppm} (2), we have $\Pi_{32}(H)\equiv (\theta^{-1})
(\theta_1\theta_1^{-1})\dots(\theta_m\theta_m^{-1})(\theta')$ for some positive
rules $\theta,\theta_1,\dots,\theta'$ of $\tilde M_2$ ($\theta$ and/or $\theta'$ may be absent).

Recall that a $(\theta,s_j)$-cell has at most one $a$-edge, and
it has no $a$-edges, if it corresponds to a rule of one of the auxiliary machines $\overrightarrow Z$, $\overleftarrow Z.$ Hence the label of a side
of the $s_j$-band has form $u(b_1v_1b_1^{-1})\dots (b_m v_m b_m^{-1})w$
where $b_i^{\pm 1}$ is an $a$-letter or $1$ ($i=1,\dots,m$) , $v_i$ is a group word in $\theta$-letters, and each of $u,$ $w$ has at most one $a$-letter; and we should prove that $b_i$ commutes with $v_i$ if $b_i$
is involved in the rule $\bar\theta_i$. 

Let us consider the right side of the $s_j$-band. (The `left' case is similar.)
Then $\theta_i$ is a right rule, and
by Lemma \ref{ppm} (3), no rule of the subword
$\theta_i H'\theta_i^{-1}$ of $H$ locks the $s_jp^{j+1}$-sector, and so the letter $b_i$ commutes with every $\theta$-letter of $v_i$ by the definition of relations for the machine $M.$

\ref{x} follows from Lemma \ref{*V} since the cells corresponding to $M_3$-rules 
can have exactly one $a$-edge in the boundary only if they correspond to the rules of $\tilde M_2.$

\ref{xi} follows from Lemma \ref{gen2}.

\ref{xii} follows from Lemma \ref{*XV} because by Property \ref{vii} the base of the trapezium contains (as a subword)
a copy of the base of $M_4$.

\ref{xiii} follows from Lemma \ref{*IIIa} (b).

\ref{xiv} follows from Lemmas \ref{M32} and \ref{M40}.

\ref{xv} This is a reformulation of Lemma \ref{*IIIa} (a).
\endproof

\subsection{Diagrams with hubs} \label{dwh}

Given a reduced diagram $\Delta$ over the group $G,$ one can construct a planar graph whose vertices
are the hubs of this diagram plus one improper vertex outside $\Delta,$ and the
edges are maximal $t$-bands of $\Delta.$  

Let us consider two hubs $\Pi_1$ and $\Pi_2$ in a minimal diagram, 
connected by a
$t_i$-band
${\cal C}_i$ and a $t_{i+1}$-band ${\cal C}_{i+1}$, where
there are no other hubs between these $t$-bands. 
These bands, together with parts of
$\partial\Pi_1$ and $\partial\Pi_2,$ bound either a subdiagram 
having no cells,  
or  a trapezium $\Psi$ of height $\ge 1$. The former case is impossible since in this case
the hubs have a common $t$-edge and they are mirror copies of each other
contrary to the reducibility of the diagram. We want to show that
the latter case is not possible either. 

Indeed, in the latter case, both the top and the bottom of $\Psi$ are the subwords
of the hub $W_M^{\pm 1},$ i.e. , the history $H$ of $\Psi$ and $H^{-1}$ are the histories
of $M_4$-accepting subtrapezia of $\Psi$. Therefore, by Property \ref{viii} (b), the history $H$
is of type $(3).$ We may assume that the base of $\Psi$ has a subword $(k't')^{\epsilon}$
with $\epsilon = 1$ since
otherwise one can replace $\Psi$ by its mirror copy. Let $\Gamma$ be the maximal subtrapezium of $\Psi$ with base $k't'.$ Then every cell of the maximal $k'$-band $\cal C$ of $\Gamma$
is active from the right by Property \ref{kk'}. But the $a$-bands starting on $\cal C$ cannot
end on the passive (see Property \ref{kk'}) $t'$-band of $\Gamma.$ They also cannot end
on $\cal C$ by Property \ref{iv}. Hence $||H||=0,$ a contradiction.


Thus, any two hubs of a reduced diagram cannot be connected by  two $t$-bands,
such that the subdiagram bounded by them contains no other hubs.
This property makes the hub graph of a reduced diagram 
hyperbolic, in a sense, since the degree $L$ of every proper vertex (=hub) is high ($L\ge 40$).
Below we give a more precise formulation (proved for diagrams with such a hub graph, in particular, 
in \cite{SBR}, Lemma 11.4 and in  \cite{O1}, Lemma 3.2).

\begin{figure}[h!]

\unitlength 1mm 
\linethickness{0.4pt}
\ifx\plotpoint\undefined\newsavebox{\plotpoint}\fi 


\end{figure}

\begin{lemma} \label{extdisc} If a reduced diagram over the group $G$ contains a least one hub,
then there is a hub $\Pi$ in $\Delta$ such that $L-3$ consecutive maximal $t$-bands ${\cal B}_1,\dots
{\cal B}_{L-3} $ start on $\partial\Delta$ , end on the boundary $\partial\Pi$, and for any $i\in [1,L-4]$, 
there are no discs in the subdiagram $\Gamma_i$ bounded by ${\cal B}_i$, ${\cal B}_{i+1},$ $\partial\Pi,$ and $\partial\Delta.$ 
\end{lemma}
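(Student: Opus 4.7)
The plan is to analyze the hub graph $\mathcal{G}$ of $\Delta$ and exploit its ``hyperbolicity.'' Define $\mathcal{G}$ to be the planar graph embedded in the sphere whose proper vertices are the hubs of $\Delta$, whose improper vertex $v_\infty$ lies in the exterior of $\Delta$, and whose edges are the maximal $t$-bands (each such band connects either two hubs or one hub to $\partial\Delta$). The standard base (\ref{baza}) contains exactly $2L$ letters of $t$-type, so every proper vertex of $\mathcal{G}$ has degree $2L$.

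The crucial input is the paragraph immediately preceding the lemma, which combines Properties \ref{kk'} and \ref{iv} with Lemma \ref{NoAnnul} to rule out the configuration of two hubs joined by two maximal $t$-bands whose enclosed subdiagram has no further hubs. In graph-theoretic terms this says $\mathcal{G}$ has no ``small bigon'' between any two proper vertices: any face of $\mathcal{G}$ bounded by two edges joining the same pair of proper vertices must contain at least one additional proper vertex in its interior. Together with the degree condition $\deg = 2L \gg 1$, this is precisely the hypothesis for the standard planar hyperbolicity argument from \cite[Lemma 11.4]{SBR} and \cite[Lemma 3.2]{O1}.

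The next step is to apply this argument. By a discrete Gauss--Bonnet / Euler characteristic computation on $\mathcal{G}$ (the no-bigon condition forces every internal face of $\mathcal{G}$, except those incident to $v_\infty$, to have boundary length at least $3$, and the degree of each proper vertex is large), one can exhibit an ``extremal'' proper vertex $\Pi$ in $\mathcal{G}$ whose incident edges overwhelmingly go to $v_\infty$. Quantitatively: $\Pi$ can be chosen so that at most $3$ of its $2L$ incident edges fail to be an edge to $v_\infty$ bounding, together with a cyclically adjacent edge to $v_\infty$, a face of $\mathcal{G}$ that avoids all other proper vertices. Hence there is a cyclically consecutive block of at least $2L - 3 - 3 \geq L - 3$ edges at $\Pi$ (using $L \geq 40$) all of which land on $v_\infty$ and whose consecutive pairs bound hub-free faces. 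Translating from $\mathcal{G}$ back to $\Delta$, these edges are the desired consecutive maximal $t$-bands $\mathcal{B}_1,\dots,\mathcal{B}_{L-3}$ that start on $\partial\Delta$, end on $\partial\Pi$, and bound the hub-free subdiagrams $\Gamma_i$.

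The main obstacle is the bookkeeping in the planar combinatorial argument: producing $L-3$ \emph{consecutive} (rather than merely $L-3$ total) boundary-bound $t$-bands at one hub, with the ``clean'' property that each intermediate $\Gamma_i$ contains no hub. The extremality of $\Pi$ must be measured in a way that simultaneously penalizes non-boundary edges and edges whose adjacent face in $\mathcal{G}$ contains another hub; a suitable weight/distance function on $\mathcal{G}$ (as in the cited lemmas) achieves this. The slack provided by the assumption $L \geq 40$ is what makes the argument go through and yields the threshold $L-3$.
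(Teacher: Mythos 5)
Your proposal follows essentially the same route as the paper: the paper establishes exactly the two inputs you identify (no two hubs joined by two $t$-bands with no hub between them, via Properties \ref{kk'}, \ref{iv} and Lemma \ref{NoAnnul}, plus large vertex degree) and then, rather than giving a self-contained combinatorial proof, cites the same hyperbolic-hub-graph lemmas, \cite{SBR} Lemma 11.4 and \cite{O1} Lemma 3.2, for the conclusion. The only minor discrepancy is bookkeeping: the paper's hub graph has only the maximal $t$-bands as edges, so each hub has degree $L$ (not $2L$ as you state), which is what makes the threshold $L-3$ the natural one, but this does not affect the substance of the argument.
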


A maximal $q$-band starting on a hub of  a diagram is called a \label{spoke}
{\it spoke}. 

Lemma \ref{extdisc} implies

\begin{lemma} \label{mnogospits} If a reduced diagram $\Delta$ has $m\ge 1$ hubs then
the number of $q$-edges in the boundary path of $\Delta$ is greater than  $mLN/2$ .
\end{lemma}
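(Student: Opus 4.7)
My plan is to proceed by induction on $m$, using Lemma \ref{extdisc} as the key peeling tool, together with the elementary observation that in the cyclic word $W_M$ each specific labeled $q$-letter (letter together with its $M$-index) appears exactly once, so any maximal $q$-band whose two endpoints both lie on a single $\partial\Pi$ would have to start and end at the same edge and would therefore be a $q$-annulus, contradicting Lemma \ref{NoAnnul}.

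For the base case $m=1$, the unique hub $\Pi$ contributes $LN$ $q$-edges on its boundary, and by the observation above no maximal $q$-band can have both endpoints on $\partial\Pi$. Since $\Pi$ is the only hub, each of the $LN$ spokes of $\Pi$ must terminate on $\partial\Delta$ at a distinct $q$-edge, giving $|\partial\Delta|_q \geq LN > LN/2$.

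For the inductive step $m \geq 2$, I would apply Lemma \ref{extdisc} to extract a hub $\Pi$ and $L-3$ consecutive maximal $t$-bands ${\cal B}_1,\ldots,{\cal B}_{L-3}$ from $\partial\Pi$ to $\partial\Delta$ together with the $L-4$ hub-free gap subdiagrams $\Gamma_1,\ldots,\Gamma_{L-4}$. In each $\Gamma_i$ the portion of $\partial\Pi$ between the two $t$-edges of ${\cal B}_i$ and ${\cal B}_{i+1}$ carries each non-$t$ base letter (with its $M$-index) exactly once, so the same no-annulus argument forces every non-$t$ spoke of $\Pi$ inside $\Gamma_i$ to end on $\partial\Delta$. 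Setting $\Sigma = \Pi \cup \bigcup_j {\cal B}_j \cup \bigcup_i \Gamma_i$ and $\Delta' = \overline{\Delta \setminus \Sigma}$, I would use that $\Delta'$ is a reduced diagram with $m-1$ hubs and that the outer sides of ${\cal B}_1$ and ${\cal B}_{L-3}$ contain only $\theta$- and $a$-edges (no $q$-edges), to obtain the identity
\[
|\partial\Delta|_q = |\partial\Delta'|_q + |A|_q - |P|_q,
\]
where $A$ is the $\partial\Delta$-portion of $\partial\Sigma$ and $P$ is the $\partial\Pi$-portion. The inductive hypothesis gives $|\partial\Delta'|_q > (m-1)LN/2$, and the induction closes once $|A|_q - |P|_q \geq LN/2$ is established.

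The hard part will be establishing this last inequality. The naive counts $|A|_q \geq (L-3) + (L-4)|B|$ from spoke endpoints and $|P|_q = LN - (L-3) - (L-4)|B|$ from the partition of $\partial\Pi$ into interior pieces of $\Sigma$ and the scar $P$ give only a difference of order $N$, not $LN/2$. To close the gap I would extract additional contributions to $|A|_q$—for example from $\partial\Delta$-to-$\partial\Delta$ $q$-bands lying entirely in $\Sigma$—and exploit the planarity of the hub graph and the no-bigon property between proper vertices established in the paragraphs preceding Lemma \ref{extdisc}. The cleanest route is likely to strengthen the inductive hypothesis so that what propagates is not $|\partial\Delta|_q - mLN/2$ but a more refined quantity involving the scar length, telescoping correctly under the peeling operation, and then to verify the numerics using $L \gg 1$. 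This is the same technical template used in \cite{SBR}, Lemma 11.4 and \cite{O1}, Lemma 3.2.
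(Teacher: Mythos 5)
Your skeleton---induction on $m$, peeling one hub off with Lemma \ref{extdisc}, and comparing the $q$-edges of $\partial\Delta$ that disappear with the $q$-edges gained along the scar on $\partial\Pi$---is exactly the paper's argument. The trouble is that you stop where the proof should end, and the reason you give for stopping is an arithmetic misjudgment, not a real obstacle. Count the scar: it is the arc of $\partial\Pi$ complementary to the $L-3$ $t$-edges of ${\cal B}_1,\dots,{\cal B}_{L-3}$ and to the $L-4$ gaps between them, so it carries only the remaining $3$ $t$-edges and $4$ inter-$t$ gaps, i.e. $|P|_q=3+4(N-1)=4N-1$, which is small. On the other side, each arc of $\partial\Pi$ between two consecutive $t$-edges carries $N-1$ non-$t$ $q$-letters (by the definition of $N$: $N+1$ is the length of the shortest subword of the hub containing two $t$-letters), so $|A|_q\ge (L-3)+(L-4)(N-1)$ from the $t$-band ends and the spokes inside the $\Gamma_i$. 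If your $|B|$ was meant to be the base of $M_3$, whose length is $<N/2$, that is the miscount: the relevant number of spokes per gap is $N-1$, not $\|B\|$. Hence $|A|_q-|P|_q\ge (L-4)(N-1)+(L-3)-4N\ge (L-8)N$, and since $L\ge 40$ gives $L-8>L/2$, this exceeds $LN/2$, which is exactly what your telescoping identity needs. This is literally the paper's computation ($|\partial\Delta'|_q\le |\partial\Delta|_q-(L-4)N+4N$, ``since $L-8>L/2$ the statement follows''), so no strengthened induction hypothesis, no planarity of the hub graph beyond Lemma \ref{extdisc}, and no extra $q$-band contributions are needed: your ``naive count'' is the proof.

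Two smaller points. In the base case you apply the no-$q$-annulus conclusion of Lemma \ref{NoAnnul} inside $\Delta$ itself, but that lemma is stated for diagrams over $M$, i.e. without hubs; a priori you must still exclude a $q$-band that starts and ends at the single $Q_j$-edge of $\partial\Pi$ and encircles the hub. The clean fix (and the paper's) is to treat $m=1$ by the same peeling argument: Lemma \ref{extdisc} applies whenever there is at least one hub, the no-annulus argument is then run inside the hub-free discs $\Gamma_i$ where Lemma \ref{NoAnnul} applies verbatim, and one uses $|\partial\Delta'|_q\ge 0$, getting $(L-8)N>LN/2$ directly. Finally, when you claim the spokes inside $\Gamma_i$ must reach $\partial\Delta$, say explicitly that they cannot end on the sides of ${\cal B}_i,{\cal B}_{i+1}$ because $(\theta,t)$-relations contain no $a$-letters, so those sides consist of $\theta$-edges only.
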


\proof 
$\Delta$ has a hub $\Pi$ satisfying the assumption of Lemma \ref{extdisc}. Then we can separate
a subdiagram with only one hub $\Pi$ from $\Delta$ by making cuts along the $t$-bands ${\cal B}_1,$  ${\cal B}_{L-3} ,$ and along the part of $\partial\Pi$ having $3$ $t$-edges. Since by Lemma
\ref{NoAnnul}, every spoke of $\Gamma_i$ ($i\in [1,L-4]$) starting on $\Pi$ must end
on $\partial\Pi,$ the remaining diagram $\Delta'$ with $m-1$ hubs has
at most $|\partial\Delta|_q - (L-4)N + 4 N$ $q$-edges in the boundary. Since
$L-8 > L/2$ the statement follows by induction on $m$. \endproof

  \bigskip
  
 \label{papam} \subsection{Parameters} \label{param}

  The following constants will be used for the proofs in this paper.

\begin{equation}\label{const}
  L,N<<J<<\delta^{-1}<<(\delta')^{-1}<< c_0<<c_1<<\dots << c_7
  \end{equation}

  For each of the inequalities of this paper, one can find the highest
  constant (with respect to the order $<<$) involved in the inequality
  and see that for fixed lower constants, the inequality is correct as soon
  as the value of the highest one is sufficiently large. This principle
  makes the system of all inequalities used in this paper consistent.

  \subsection{Modified length of words and paths.} \label{newlength}
  
 Recall that the standard length $||w||$ of a word (a path) is called the {\it combinatorial length}. To introduce new length function on the group words in the generators of the groups $M$ and $G$ we first consider a word $w$ having no $q$-letters. We set the length $|a|$ of every
 $a$-letter $a$ equal to $\delta$. We set the length of any $\theta$-letter equal to $1,$
 but the the length $|v|$ of any $\theta a$-syllable, i.e., a $2$-letter word $v$ with one $\theta$-letter and one
 $a$-letter, will be equal to $1+\delta'.$ The {\it length of a decomposition} of $w$
 in a product of letters and $\theta a$-syllables is the sum of lengths of the factors of
 this decomposition. The {\it  length} $|w|$ of $w$ is the smallest length of such 
 decompositions. Finally, the \label{lengthw|} length \label{|.|} $|W|$ of arbitrary word $W\equiv w_0u_1\dots u_nw_n,$ where $u_i$-s are $q$-letters and the words $w_j$-s have no $q$-letters, is,
  by definition, $n+\sum_{i=0}^n |w_i|.$ The \label{lengthp|}{\it length of a path} in a diagram is the length
  of its label. The \label{perimd|}{\it perimeter} $|\partial\Delta|$ of a diagram is similarly defined by
  cyclic
  decompositions of its boundary $\partial\Delta.$
  
  Why do we need such a modification ? The assumption that $a$-edges are much shorter
  than other edges is used in Lemma \ref{main} (Step (2)) and in other lemmas. The assumption
  that $\delta'<<\delta,$ and so the length of a $\theta a$-syllable is less than
  the sum of lengths of its letters, is used in Lemma \ref{width1} and in many other lemmas.

\begin{lemma} \label{ochev}
Let $\bf s$ be a path in a diagram $\Delta$,
having $d$ $a$-edges and $e$ non-$a$-edges. Then

(a) $ e+d\delta\ge |{\bf s}|\ge e+d\delta'+\max(0,(d-e)(\delta-\delta'))\ge e+d\delta'$;

(b) if ${\bf s}={\bf s}_1{\bf s}_2,$ then $|{\bf s}_1|+|{\bf s}_2|\ge |{\bf s}|\ge |{\bf s}_1|+|{\bf s}_2|-(\delta-\delta')$ and
$|{\bf s}|=|{\bf s}_1|+|{\bf s}_2|$ if ${\bf s}_1$ ends or ${\bf s}_2$ starts with
a $q$-edge or if both these edges are not $a$-edges;

(c) if $\bf s$ is a top or a bottom of a $q$-band having $h$ cells, then
$h\le |{\bf s}|\le h(1+\delta')$; and $|{\bf s}|=h$ if $\bf s$ has no $a$-edges.

(d) $||{\bf s}||\ge|{\bf s}|\ge\delta ||{\bf s}||.$ 

\end{lemma}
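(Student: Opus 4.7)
\medskip

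\emph{Proof proposal.} The lemma is essentially a bookkeeping exercise about the definition of $|\cdot|$, so the plan is to analyze an arbitrary decomposition of a word into single letters and $\theta a$-syllables and compare it with the canonical decomposition by single letters. Introduce the notation $e=e_\theta+e_q$ for the non-$a$-edges of $\bf s$, split into $\theta$- and $q$-edges. In any decomposition, let $k$ be the number of $\theta a$-syllables used; since each syllable has length $1+\delta'$ and consumes one $\theta$-edge and one $a$-edge, the length of the decomposition equals $e+d\delta-k(\delta-\delta')$. The value $|{\bf s}|$ is the minimum of this expression, so it is attained when $k$ is as large as possible; the combinatorial constraint is $k\le\min(e_\theta,d)\le\min(e,d)$.

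For (a), the upper bound $|{\bf s}|\le e+d\delta$ comes from taking $k=0$ (the trivial decomposition). The lower bound $|{\bf s}|\ge e+d\delta'+\max(0,(d-e)(\delta-\delta'))$ is then obtained by substituting the inequality $k\le\min(e,d)$ into $e+d\delta-k(\delta-\delta')$ and distinguishing the cases $d\ge e$ and $d<e$; the further inequality $\ge e+d\delta'$ is immediate.

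For (b), concatenating optimal decompositions of ${\bf s}_1$ and ${\bf s}_2$ produces a decomposition of $\bf s$, yielding $|{\bf s}|\le|{\bf s}_1|+|{\bf s}_2|$. Conversely, an optimal decomposition of $\bf s$ restricted to the two halves fails to be a decomposition only if a single $\theta a$-syllable straddles the cut, and breaking it replaces a length-$(1+\delta')$ piece by two pieces of total length $1+\delta$; this gives $|{\bf s}_1|+|{\bf s}_2|\le|{\bf s}|+(\delta-\delta')$. No splitting occurs when the edges on either side of the cut are both non-$a$ (there is no $a$-edge available to form a straddling syllable) or when one of them is a $q$-edge (since $q$-letters never appear in a $\theta a$-syllable), giving the equality clause.

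For (c), invoke Property~\ref{one}(1) of $M$: in every part $v_{i-1}q_iu_i\to v'_{i-1}q'_iu'_i$ of a rule, $\|u_i\|+\|u'_i\|\le 1$, so the top of each cell of the $q$-band is $u_i\theta_{i+1}(u'_i)^{-1}$ with at most one $a$-edge and exactly one $\theta$-edge. Pairing each $a$-edge in the top of a cell with its adjacent $\theta$-edge produces a decomposition with $d\le h$ syllables and $h-d$ solitary $\theta$-letters, of total length $d(1+\delta')+(h-d)=h+d\delta'\le h(1+\delta')$; the inequality $|{\bf s}|\ge h$ follows from (a) since $e_\theta=h$. When $\bf s$ has no $a$-edges, the unique decomposition has length $h$. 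For (d), observe that every unit of combinatorial length inside any decomposition carries length $\ge\delta$: a single non-$a$-letter contributes $1\ge\delta\cdot 1$, a single $a$-letter contributes exactly $\delta$, and a $\theta a$-syllable contributes $1+\delta'\ge 2\delta$ (which holds as $\delta\ll 1$, from the parameter hierarchy in (\ref{const})); summing over any minimal decomposition yields $|{\bf s}|\ge\delta\|{\bf s}\|$, and the upper bound $\|{\bf s}\|\ge|{\bf s}|$ comes from the trivial decomposition as in (a). The only place requiring the chosen relation among $\delta,\delta'$ is the syllable estimate in (d), and this is built into the parameter assumptions, so no real obstacle arises.
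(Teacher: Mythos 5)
Your proof is correct, and for part (a) it follows exactly the paper's own argument (bounding the number of $\theta a$-syllables in an optimal decomposition by $\min(e,d)$ and comparing with the trivial letter-by-letter decomposition). The paper explicitly leaves claims (b), (c), (d) as exercises, and your treatment of them — the straddling-syllable analysis for (b), the use of Property~\ref{one}(1) to get at most one $a$-edge per cell on a side of a $q$-band for (c), and the per-unit estimate $1+\delta'\ge 2\delta$ for (d) — fills those in correctly in the intended way.
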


\proof (a) Since every path is a product of $q$-, $\theta$-, and $a$-edges,
the first inequality follows. The second one is true because at most $e$ $a$-edges
can be joined with $\theta$-letters to form 2-edge subpaths of $\bf s$, and the remaining
$a$-edges has to be taken alone with coefficient $\delta$ when one calculate $|\bf s|.$  To make the reader more
familiar with the definition of $|\bf s|,$ we leave claims (b), (c), (d) for exercises.
\endproof

\section{Mixture on the boundary of a diagram}\label{mix}

Let $O$ be a circle with two-colored
finite set of points (or vertices) on it,
more precisely, let any vertex of this finite set be either black or white. We call $O$ a \label{neckl}{\it necklace} with black and white \label{bead}{\it beads} on it. We want to introduce the {\em mixture} of this finite set of beads.

Assume that there are $n$ white beads and $n'$ black ones on $O$. We define sets \label{Pk}
${\bf P}_j$ of
ordered pairs of distinct white beads as follows. A pair $(o_1,o_2)$
($o_1\ne o_2$) belongs to the set ${\bf P}_j$ if the simple arc of $O$
drawn from $o_1$ to $o_2$ in clockwise direction has at least $j$
black beads. We denote by \label{muKO}$\mu_J(O)$ the sum $\sum_{j=1}^J \card
 {\bf P}_j$ (the \label{Kmix}$J$-{\it mixture} on $O$). Below similar sets for
another necklace $O'$ are denoted by ${\bf P'}_J$. In this section, $J\ge
1$, but later on it will be a fixed large enough number $J$ from the list (\ref{const}).

\begin{lemma}\label{mixture} (a) $\mu_J(O)\le J(n^2-n)$.

 (b) Suppose a
necklace $O'$ is obtained from $O$ after removal of a   white bead
$v$. Then \\ $ \card {\bf P}_j-n < \card {\bf P'}_j \le \card {\bf
P}_j$ for every $j$, and $\mu_J(O)-Jn<\mu_J(O')\le \mu_J(O).$

(c) Suppose a necklace $O'$ is obtained from $O$ after removal of a
black bead $v$. Then  $\card {\bf P'}_j \le \card {\bf P}_j$ for
every $j,$ and $\mu_J(O')\le \mu_J(O).$

(d) Assume that there are three beads $v_1, v_2, v_3$ of a necklace
$O,$ such that the clockwise arc $v_1 - v_3$ contains $v_2$ and has
at most $J$ black beads (excluding $v_1$ and $v_3$), and the arcs
$v_1-v_2$ and $v_2-v_3$ have $m_1$ and $m_2$ white beads,
respectively. If $O'$ is obtained from $O$ by removal of $v_2$, then
$\mu_J(O')\le\mu_J(O)-m_1m_2.$
\end{lemma}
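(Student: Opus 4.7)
My plan is to prove each of the four bounds by a direct combinatorial count of how a single bead removal changes each set ${\bf P}_j$, and hence $\mu_J$. The common thread is that white-pair sets and arc black-bead counts transform in a very controlled way under bead deletion, so each claim reduces to a careful enumeration.

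Part (a) is immediate: since ${\bf P}_j$ is a subset of the set of ordered pairs of distinct white beads, $|{\bf P}_j|\le n(n-1)$, and summing over $j=1,\dots,J$ gives $\mu_J(O)\le J(n^2-n)$.

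For (b) and (c) I distinguish by the color of the removed bead. When a white bead $v$ is removed, every remaining bead keeps its color and cyclic position, so for any two surviving white beads $o_1, o_2$, the clockwise arc from $o_1$ to $o_2$ carries exactly the same black beads in $O$ and in $O'$. Consequently ${\bf P}'_j$ is precisely the restriction of ${\bf P}_j$ to pairs whose coordinates differ from $v$, yielding $|{\bf P}'_j|\le|{\bf P}_j|$, and the decrease is bounded by the number of ordered pairs involving $v$, of which there are at most $2(n-1)$; summing over $j$ gives the advertised bound on $\mu_J(O)-\mu_J(O')$. When a black bead $v$ is removed, no white pair is gained or lost; the only effect is that the black-bead count of any arc crossing $v$ drops by one. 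Thus $(o_1,o_2)\in{\bf P}'_j$ implies $(o_1,o_2)\in{\bf P}_j$, so $|{\bf P}'_j|\le|{\bf P}_j|$ and $\mu_J(O')\le\mu_J(O)$.

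The main obstacle is (d). I would concentrate on the $m_1m_2$ ordered pairs $(u_1,u_2)$ in which $u_1$ is a white bead on the arc $v_1$--$v_2$ and $u_2$ is a white bead on the arc $v_2$--$v_3$. For each such pair, the clockwise arc $u_1\to u_2$ passes through $v_2$ and is contained in the arc $v_1$--$v_3$, so it carries some $b\le J$ black beads in $O$. In the principal case that $v_2$ is black, removing $v_2$ decreases the count for each such arc to $b-1\in[0,J-1]$, and since $\min(b,J)-\min(b-1,J)=1$, each of the $m_1m_2$ pairs contributes exactly one unit to the decrease of $\mu_J$. Combined with part (c) applied to every other ordered pair (which guarantees that no pair contributes a compensating increase), this yields $\mu_J(O')\le\mu_J(O)-m_1m_2$. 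The residual case $v_2$ white is subtler, because the pair $(u_1,u_2)$ itself is unchanged in $O'$; here the contribution must be extracted from the vanishing of the pairs $(u_1,v_2)$ and $(v_2,u_2)$, whose black-bead counts $b_1, b_2$ satisfy $b_1+b_2=b\le J$. The delicate bookkeeping is to amortize $\min(b_1,J)+\min(b_2,J)=b_1+b_2=b$ across the $m_1m_2$ pairs without double counting the lost pair $(u_1,v_2)$ across different $u_2$ (and symmetrically $(v_2,u_2)$ across different $u_1$); this is where I expect the principal difficulty to lie.
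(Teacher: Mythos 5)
Your parts (a) and (c), and your treatment of (d) in the case that $v_2$ is black, are correct and essentially coincide with the paper's own argument: for each of the $m_1m_2$ cross pairs $(u_1,u_2)$ the number $b$ of black beads on the clockwise arc $u_1\to u_2$ satisfies $1\le b\le J$, so the pair lies in ${\bf P}_b\setminus{\bf P}_{b+1}$ and leaves ${\bf P}'_b$ once $v_2$ is deleted, while ${\bf P}'_j\subseteq{\bf P}_j$ for every $j$; the paper says exactly this, and your reformulation via the contribution $\min(b,J)$ of a pair to $\mu_J$ is the same computation. (The paper declares (b) and (c) obvious.)

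Two genuine problems remain. In (b), your (correct) count of at most $2(n-1)$ ordered pairs through the removed white bead $v$ does not ``give the advertised bound'': summing it over $j$ yields $\mu_J(O)-\mu_J(O')\le 2J(n-1)$, not a loss smaller than $Jn$, so that step is a non sequitur. Indeed the loss per level can genuinely equal $2(n-1)$ for a necklace (alternate $n$ white with $n$ black beads and delete a white one: $\card{\bf P}_1$ drops from $n(n-1)$ to $(n-1)(n-2)$); the constant $n$ is the one appropriate to the string version, Lemma~\ref{mixturec}(b), where only $n-1$ ordered pairs involve $v$, and for the circular case your computation honestly gives $\card{\bf P}_j-2n<\card{\bf P}'_j$. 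Since only the monotonicity $\mu_J(O')\le\mu_J(O)$ is invoked later in the paper, nothing downstream is affected, but you should not claim the stated constant follows from your count. More seriously, the ``residual case $v_2$ white'' of (d), which you defer as the principal difficulty, cannot be completed, because the inequality is false there: take a single black bead $c$, let $v_1,v_3$ be the white beads adjacent to $c$ on either side, and let $v_2$ be the middle white bead of the long all-white arc between them. Removing $v_2$ decreases $\mu_J$ exactly by the total weight of pairs through $v_2$, and each other white bead contributes $\min(J,1)+\min(J,0)=1$, so the loss is $n-1$, whereas $m_1m_2$ grows quadratically with $n$. So the white case is not a bookkeeping challenge but a false statement; the lemma must be read, as the paper's own proof tacitly does (it needs a black bead on the arc $u_1\to u_2$ that disappears) and as every application does (compare Lemma~\ref{mixturec}(d), where $v_1,v_2,v_3$ are explicitly black), with $v_2$ a black bead -- and then your black-bead argument already is the whole proof.
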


\proof (a) It is clear from the definition that $\card{\bf P}_j\le n^2-n$ 
, and the statement (a)
follows. The statements (b) and (c) are obvious.

(d) Let $o$ ($o'$) be a white bead on $v_1-v_2$ (on
$v_2-v_3$). Then for some $j\in \{1,\dots, J\},$ the pair $(o,o')$
belongs to ${\bf P}_j$ but does not belongs to ${\bf P}_{j+1}.$ Now,
on the one hand, the same pair $(o,o')$ considered on $O'$ does not
belong to ${\bf P'}_j$. On the other hand, we clearly have ${\bf
P'}_j\subseteq {\bf P}_j$. Therefore $\mu_J(O)-\mu_J(O')$ is at
least the number of such pairs $(o,o'),$ which is equal to
$m_1m_2.$ The lemma is proved.
\endproof

We will use also the mixture of beads on a closed interval ${\bf x}=[a,b]$ with real $a<b$. A \label{stringb}{\it string of beads} is a finite sets of
white and black beads on $\bf x$, but in the definition of mixture
$\mu^c(\bf x)$ we consider only pairs $(o,o')$ of white beads, where
$o<o'.$
 This gives us the mixture \label{muKc} $\mu^c_J(\bf x)$
as above.

\begin{lemma}\label{mixturec} Let $\bf x$ be a string of beads and $J\ge 1.$

(a) $\mu^c_J(x)\le J(n^2-n)/2$.

 (b) Suppose a
string $\bf x'$ is obtained from $\bf x$ after removal of a  white bead $v$.
Then \\ $\card {\bf P}_j-n < \card {\bf P'}_j \le \card {\bf P}_j$
for every $j$, and $\mu^c_J({\bf x})-Jn<\mu^c_J({\bf x'})\le \mu^c_J(\bf x).$

(c) Suppose a string $\bf x'$ is obtained from $\bf x$ after removal of a
black bead $v$. Then  $\card {\bf P'}_j \le \card {\bf P}_j$ for
every $j$, and $\mu^c_J({\bf x'})\le \mu^c_J(\bf x).$

(d) Assume that there are three  black beads $v_1< v_2<v_3$ on $\bf x$
such that the interval $(v_1, v_3)$ has at most $J$ black beads, and
the intervals $(v_1,v_2)$ and $(v_2,v_3)$ have $m_1$ and $m_2$ white
beads, respectively. If $\bf x'$ is obtained from $\bf x$ after removal of the
bead $v_2$, then $\mu^c_J({\bf x'})\le\mu^c_J({\bf x})-m_1m_2.$

(e) Assume that the set of black beads is non-empty. Then there is a
black bead $v,$ such that it divides $\bf x$ into two subsegments with
$m_1$ and $m_2$ white beads, respectively, $m_1\ge m_2$, and
$m_1m_2\le\mu^c_1({\bf x})\le (2m_1-1)m_2.$

\end{lemma}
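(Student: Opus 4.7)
Parts (a)--(d) will be obtained by direct adaptation of the corresponding parts of Lemma~\ref{mixture}. For (a), each $|{\bf P}_j|$ is bounded by $\binom{n}{2}=(n^2-n)/2$, and summing over $j=1,\dots,J$ gives the estimate. For (b), removing a white bead $v$ cannot increase the count of black beads between any other pair, so ${\bf P}'_j\subseteq{\bf P}_j$, and the pairs of ${\bf P}_j$ that involve $v$ number at most $n-1<n$; summing over $j$ produces the two-sided estimate. Part (c) is immediate, since deleting a black bead can only decrease the number of blacks between two whites. For (d), each of the $m_1m_2$ pairs $(o,o')$ with $o$ white in $(v_1,v_2)$ and $o'$ white in $(v_2,v_3)$ satisfies $(o,o')\subseteq(v_1,v_3)$, so it contains between $1$ and $J$ black beads; after $v_2$ is removed this count drops by exactly one, moving the pair from ${\bf P}_j\setminus{\bf P}_{j+1}$ to ${\bf P}'_{j-1}\setminus{\bf P}'_j$ and causing a unit decrease in $\mu^c_J$. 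Combined with the monotonicity of (c), this yields $\mu^c_J({\bf x}')\le\mu^c_J({\bf x})-m_1m_2$.

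The substance of the lemma is part (e). Enumerate the black beads as $v_1<\dots<v_k$, and write $w_0,w_1,\dots,w_k$ for the numbers of white beads in the consecutive subintervals $[a,v_1),(v_1,v_2),\dots,(v_k,b]$. A pair of whites with $o<o'$ fails to contribute to $\mu^c_1$ precisely when both beads lie in the same $w_i$-block, so
\[
\mu^c_1({\bf x})=\binom{n}{2}-\sum_{i=0}^k\binom{w_i}{2}=\tfrac{1}{2}\Bigl(n^2-\sum_i w_i^2\Bigr).
\]
Setting $L_j=w_0+\dots+w_{j-1}$ and $R_j=n-L_j$, I will choose $j^*$ to be the \emph{largest} index that minimizes $|L_{j^*}-n/2|$; by symmetry I may assume $L_{j^*}\le n/2$, and take $v=v_{j^*}$, $l=L_{j^*}=m_2$ and $m_1=n-l\ge m_2$. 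The lower bound $m_1m_2\le\mu^c_1({\bf x})$ will then follow at once from the inequality $(\sum a_i)^2\ge\sum a_i^2$ for non-negative $a_i$, applied to the partition $L_{j^*}+R_{j^*}=n$: this gives $m_1^2+m_2^2\ge\sum w_i^2$, equivalently $m_1 m_2\le\tfrac{1}{2}(n^2-\sum w_i^2)$.

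For the upper bound, the tie-broken choice of $j^*$ will be used to force a large adjacent gap. If $j^*<k$, then $j^*+1$ is not a minimizer, so $|L_{j^*+1}-n/2|>n/2-l$; combined with $L_{j^*+1}\ge L_{j^*}=l$ and integrality this forces $L_{j^*+1}\ge n-l+1$, and hence $w_{j^*}\ge n-2l+1$. If $j^*=k$ then $w_k=n-l\ge n-2l$ directly. Using the integer inequality $w_i^2\ge w_i$ together with the monotonicity of $x^2+n-x$ on $x\ge 1$, I will then estimate
\[
\sum_i w_i^2\;\ge\;w_{j^*}^2+\sum_{i\ne j^*}w_i\;\ge\;(n-2l)^2+2l,
\]
and substituting this into the formula for $\mu^c_1$ yields $\mu^c_1({\bf x})\le l(2n-2l-1)=(2m_1-1)m_2$. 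The main care point will be the tie-breaking that underwrites the gap estimate, and the symmetric treatment of the case $L_{j^*}>n/2$ via the left gap $w_{j^*-1}$ in place of $w_{j^*}$; once these details are settled the remainder reduces to the one-line algebraic manipulation indicated above.
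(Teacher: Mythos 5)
Parts (a)--(d) of your proposal are fine and coincide with the paper's treatment (the paper simply says these are proved as in Lemma \ref{mixture}); likewise your identity $\mu^c_1({\bf x})=\binom{n}{2}-\sum_i\binom{w_i}{2}=\tfrac12\bigl(n^2-\sum_i w_i^2\bigr)$ together with $m_1^2+m_2^2\ge\sum_i w_i^2$ gives the lower bound $m_1m_2\le\mu^c_1({\bf x})$ correctly (indeed for \emph{any} black bead, which is exactly the paper's one-line observation). The genuine gap is in your upper bound for (e), at the tie-breaking and the ``by symmetry'' reduction. Taking $j^*$ to be the \emph{largest} minimizer of $|L_j-n/2|$ only produces a large gap to the \emph{right} of $v_{j^*}$, and only when $L_{j^*}\le n/2$. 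The case $L_{j^*}>n/2$ cannot be dismissed by reversing the string, because reversal turns ``largest minimizer'' into ``smallest minimizer'', so the reversed string may again land in the bad case; and in the bad case the adjacent left gap $w_{j^*-1}$ can be zero, since tied minimizers may be separated by no white beads at all. Concretely, let $n=10$ and let the white blocks be $3,0,4,0,3$, separated by four black beads (a palindrome, so reversal changes nothing): every black bead is a minimizer, your $j^*$ is the last one, $L_{j^*}=7>5$, $l=m_2=3$, yet $w_{j^*-1}=0<n-2l=4$, so the chain $\sum_i w_i^2\ge w_{j^*-1}^2+(n-w_{j^*-1})\ge(n-2l)^2+2l$ breaks down (the lemma itself holds here: $\mu^c_1=33\le(2\cdot7-1)\cdot3=39$).

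The repair is small but it is not the one you indicate. Either choose the cut as the largest minimizer among those with $L_j\le n/2$ when such exist (its right neighbour is then absent, a non-minimizer, or a minimizer with $L>n/2$, and in each case $w_{j^*}\ge n-2l$), and otherwise the smallest minimizer (whose left neighbour is absent or a non-minimizer, giving $w_{j^*-1}\ge n-2l$); or, better, drop the requirement that the big block be adjacent to the chosen bead: minimality of the imbalance implies that \emph{some} maximal white block $i_0$ has $w_{i_0}\ge m_1-m_2$ (walk from $v$ towards the heavier side, skipping tied cuts with no whites between them), and your estimate $\sum_i w_i^2\ge w_{i_0}^2+(n-w_{i_0})\ge(n-2l)^2+2l$ works verbatim for that block. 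This second version is essentially the paper's own proof: it picks $v$ minimizing $|m_1-m_2|$, notes $\mu^c_1\ge m_1m_2$ immediately, and bounds $\mu^c_1\le\binom{n}{2}-\binom{m_1-m_2}{2}=(2m_1-1)m_2$ using such a block, with no adjacency requirement and hence no tie-breaking issue.
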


\proof The proof of statements (a) - (d) is similar to the proof of
Lemma \ref{mixture}. To prove claim (e), we choose the black bead
$v$ so that the difference $|m_1-m_2|$ is minimal. We can assume
that $m_1\ge m_2.$ Since $m_1$ white beads are separated by $v$ from
$m_2$ white beads, we have $\mu^c_1(x)\ge m_1m_2.$ On the other
hand, there is a subsegment with $m_1-m_2$ pairwise non-separated
(by black beads) white beads. Therefore
$$\mu_1^c(x)\le \frac12(m_1+m_2)(m_1+m_2-1)-\frac12(m_1-m_2)(m_1-m_2-1)=(2m_1-1)m_2$$
\endproof

For any diagram $\Delta,$ we introduce the following invariant
$\kappa(\Delta)=\mu_{1} (\partial\Delta)$. To define them, we
consider the boundary $\partial(\Delta),$ as a \label{mu1neckl} $\kappa$-{\it necklace}, i.e.,
we consider a circle $O$ with $||\partial\Delta||$ edges labeled as the
boundary path of $\Delta.$ By definition, the
white beads are the mid-points  of the $\theta$-edges of
$O$ and black beads are the mid-points of the $q$-edges
$O$. 
Then, by definition, the \label{muK1mix} $\kappa$-{\it mixture}
on $\partial\Delta$ is $\kappa
(\Delta)=\mu_1(O).$

We will need an analogous parameter $\nu_J
(\Delta)$. The definition of
the \label{nuneckl} $\nu$-{\it necklace} on $\partial\Delta$ is similar,
but the black beads of it
 correspond to $t$- and $t'$-edges only while the set of white beads
 coincides with that for the $\kappa$-necklace. The $\nu$-necklace
 has $\nu_J$-mixture for every $J\ge 1,$ which is called the
 \label{nuKmix} $\nu_J$-{\it mixture} on $\partial\Delta$ and denoted by \label{nuK.} $\nu_J
(\Delta).$

Recall that a $\theta$-letter is said to be $(12)$-letter ($(23)$-letter) if it
corresponds to the rule $(12)$ (to $(23)$). Such a letter is {\it
special} if it is involved in a $((12),t)$-relation or
in a $((23),t')$-relation. An edge is
a $(12)$-edge (a $(23)$-edge, a \label{specialte} {\it special} edge) if it is labeled
by a $(12)$-letter (by a $(23)$-letter, by a special
$\theta$-letter, respectively). Note that if a $q$-band $\cal C$ has a special
$(12)$-edge (a special $(23)$-edge) on the left side, then the base of $\cal C$ is either 
a $t^{\pm 1}$ or a $k$ (resp., either a $(t')^{\pm 1}$ or a $(k')^{-1}$).

To define an auxiliary parameter $\lambda(\Delta)$ we consider, the \label{mu2neckl} $\lambda$-{\it necklace}, where
white beads are the middle points of all  $\theta$-edges of $O$
which are neither $(12)$-edges nor $(23)$-edges, and the black
beads are the middle points of all non-special  $(12)$- and
$(23)$-edges and all $q$-edges of $O$. The $\lambda$-necklace
defines the \label{mu2Kmix} $\lambda_J$-{\it mixture} on $\partial\Delta$ for every $J,$
and for $J=1$, we denote it
by $\lambda(\Delta).$

By definition,
\label{mu.}\label{mu11.}\label{mu21.}
$\mu(\Delta)= c_0\kappa(\Delta)+ \lambda(\Delta).$  The $\nu_J$-mixtures on the booundaries
will be later applied for a large enough $J.$

 Similarly we have \label{munustring} $\kappa(\bf x)$, $\lambda(\bf x),$ $\mu(\bf x)$, and $\nu_J(\bf x)$ for any path $\bf x$
 in a diagram. (Consider the strings of beads to define.) Clearly, each of this values remains unchanged if one replaces $\bf x$ by ${\bf x}^{-1}.$

\section{General properties of combs} \label{cmb}

By Lemma \ref{simul}, every property of a trapezium can be formulated as
a property of a computation of the S-machine $M$, and vice versa. Unfortunately
minimal diagrams can be much more complicated than trapezia. Now we define
diagrams which are the main subject of our research in this paper. 

As in \cite{OS}, we say that a reduced diagram $\Gamma$ over $M$ with reduced
boundary path (having no subpaths of the form $ee^{-1}$) is a \label{comb}{\it
comb} if it has a rim $q$-band $\cal C$ (the \label{handlec}{\it handle} of the
comb), 
and every maximal $\theta$-band of $\Gamma$ has
a cell in $\cal C.$ In particular, every trapezium is a comb.

Suppose that a maximal $q$-band $\cal C$ of a diagram $\Delta$ starts and
ends on $\partial\Delta$. Then it divides $\Delta$ into two
subdiagrams $\Gamma$ and $\Gamma',$ where $\Gamma'$ contains $\cal C$.  Suppose $\Gamma$ is a comb with handle $\cal C.$
Then we call $\Gamma$ a \label{subcombd}{\em subcomb} of
$\Delta.$

By Lemma \ref{NoAnnul}, any maximal $q$-band $\cal C'$ of a comb
$\Gamma$ is itself a handle of a subcomb $\Gamma'$ of $\Gamma$ which
does not contain (by definition of subcomb of a comb) cells from the handle
$\cal C$ of $\Gamma$ if $\cal C'\ne \cal C.$ In this case $\Gamma'$
is a \label{propersc}{\it proper} subcomb of the comb $\Gamma.$

  The \label{basewc}{\it base width} of a comb is by definition the maximal
number of letters in the bases of its $\theta$-bands. The \label{historyc}{\it history} $H$ and the Step history
of a comb are the history and Step history of its handle. If $H'$ is a subword of H
then \label{H'partc}$H'$-{\it part} of the comb is the union of all maximal $\theta$-bands corresponding
to $H'.$

\bigskip

  It will be convenient to view a comb $\Gamma$ with the handle on its right.
  Thus the bottom of the handle $\cal C$ is the right side of
  $\cal C,$ and it is the part of $\partial\Gamma.$ Respectively, every
  $q$-band of $\Gamma$ has the \label{rightlsqbc}right side and the left side.
  The words written on tops/bottoms of $\theta$-bands of $\Gamma$ and their bases will
  be  read from left to right, and so, for a base letter $q$, one can distinguish
  $q$- and $q^{-1}$-bands of $\Gamma.$ In particular, a $q$-band of $\Gamma$ can
  be active from the left, active from the right (or passive). 
  If a $q$-band
$\cal D$ is passive from the left (from the right), then $h=|{\bf y}'|\le |\bf y|,$ 
where $h$ is the number of cells in $\cal D,$ (resp., $|{\bf y}'|\ge |{\bf y}|=h$) by the definition of length
  and Lemma \ref{ochev}.

  \bigskip

  We introduce the following permanent notation for a comb $\Gamma$ with a handle $\cal C$.
 Denote by $H$ the history of $\cal C$ and set \label{hc} $h=||H||$, i.e., $h$ is the length of $\cal C$,
 the number of $q$-cells in $\cal C$. 
 The comb $\Gamma$ is a \label{onestepc}{\it one-Step}  comb if the history $H$ is one-Step,
 i.e., $H$ has one of the types $(1),$ $(2),$ or $(3).$

\unitlength 1mm 
\linethickness{0.4pt}
\ifx\plotpoint\undefined\newsavebox{\plotpoint}\fi 
\begin{picture}(112.5,65.25)(30,0)
\put(108.75,62.25){\line(0,-1){51.25}}
\put(102,62){\line(0,-1){50.5}}
\put(90,59.5){\line(0,-1){13.25}}
\put(84.75,59){\line(0,-1){12.5}}
\put(97,62){\line(1,0){11.5}}
\put(84.75,59.5){\line(1,0){24}}
\multiput(92.75,63)(.1333333,-.0333333){30}{\line(1,0){.1333333}}
\multiput(92.75,61)(.125,-.0333333){30}{\line(1,0){.125}}
\put(92.75,63){\line(0,-1){2}}
\put(78.25,56.75){\line(1,0){30}}
\put(73.75,54){\line(1,0){34.75}}
\put(72,50.75){\line(1,0){36.5}}
\put(81.75,47){\line(1,0){27}}
\multiput(81.75,47)(-.2065217,-.0326087){23}{\line(-1,0){.2065217}}
\put(77.25,48.75){\line(0,-1){2.5}}
\put(77,49){\line(3,1){5.25}}
\put(78.25,56.75){\line(0,-1){2.5}}
\put(72.25,54){\line(1,0){2.5}}
\put(72,53.75){\line(0,-1){2.75}}
\multiput(96.75,47)(-.211538462,-.033653846){104}{\line(-1,0){.211538462}}
\multiput(75,43.5)(-.17164179,-.03358209){67}{\line(-1,0){.17164179}}
\put(63.75,41.5){\line(-1,0){.25}}
\put(63.68,40.93){\line(0,-1){.9464}}
\put(63.68,39.037){\line(0,-1){.9464}}
\put(63.68,37.144){\line(0,-1){.9464}}
\put(63.68,35.251){\line(0,-1){.9464}}
\put(63.68,33.358){\line(0,-1){.9464}}
\put(63.68,31.465){\line(0,-1){.9464}}
\put(63.68,29.573){\line(0,-1){.9464}}
\put(63.68,27.68){\line(0,1){.125}}
\put(63.75,28.25){\line(1,0){38.25}}
\put(102,28.25){\line(1,0){7.25}}
\multiput(101.75,28.5)(-.17333333,-.03333333){75}{\line(-1,0){.17333333}}
\put(88.75,26.25){\line(0,-1){9.75}}
\put(88.75,16.75){\line(1,0){20}}
\put(83.5,26.25){\line(1,0){5.25}}
\put(83.5,17){\line(1,0){5.25}}
\put(83.5,26.75){\line(0,-1){9.5}}
\put(79.25,23.75){\line(1,0){12.75}}
\multiput(92,23.75)(.22777778,.03333333){45}{\line(1,0){.22777778}}
\put(102.25,25.25){\line(1,0){6.25}}
\put(75.75,20.75){\line(1,0){33}}
\put(76,20.75){\line(0,-1){3.25}}
\put(76,17.25){\line(1,0){7.25}}
\put(79,23.5){\line(0,-1){3.25}}
\put(102.25,12){\line(1,0){6.5}}
\multiput(72.18,51.43)(.6,.5){6}{{\rule{.4pt}{.4pt}}}
\multiput(74.43,51.18)(.725,.55){11}{{\rule{.4pt}{.4pt}}}
\multiput(78.18,50.93)(.73333,.55){16}{{\rule{.4pt}{.4pt}}}
\multiput(81.93,51.18)(.75,.525){11}{{\rule{.4pt}{.4pt}}}
\multiput(78.93,46.93)(.80769,.46154){14}{{\rule{.4pt}{.4pt}}}
\multiput(84.43,47.43)(.71429,.42857){8}{{\rule{.4pt}{.4pt}}}
\multiput(88.43,47.68)(0,-.5){3}{{\rule{.4pt}{.4pt}}}
\multiput(88.43,46.68)(-.125,.25){3}{{\rule{.4pt}{.4pt}}}
\multiput(88.18,47.18)(.4167,.3333){4}{{\rule{.4pt}{.4pt}}}
\put(73.25,57.5){$\Gamma_s$}
\multiput(76.43,18.43)(.61111,.58333){10}{{\rule{.4pt}{.4pt}}}
\multiput(78.68,17.93)(.63462,.63462){14}{{\rule{.4pt}{.4pt}}}
\multiput(81.68,17.43)(.61364,.68182){12}{{\rule{.4pt}{.4pt}}}
\multiput(85.43,17.18)(.42857,.57143){8}{{\rule{.4pt}{.4pt}}}
\multiput(87.43,16.93)(.4167,.5833){4}{{\rule{.4pt}{.4pt}}}
\put(71,20.25){$\Gamma_1$}
\put(84.25,13.25){${\cal C}_1$}
\put(85,63){${\cal C}_s$}
\put(104.5,43.75){${\cal C}$}
\multiput(108.75,41.75)(-.03289474,-.04605263){38}{\line(0,-1){.04605263}}
\multiput(108.5,41.5)(.03289474,-.03289474){38}{\line(0,-1){.03289474}}
\put(112.5,39.25){$y$}
\multiput(100.75,35.5)(.0333333,-.05){30}{\line(0,-1){.05}}
\multiput(103.25,35.5)(-.03289474,-.03289474){38}{\line(0,-1){.03289474}}
\put(97.25,35.25){$y'$}
\put(104.75,9){$x_1$}
\put(104.75,65.25){$x_2$}
\multiput(62.5,35.5)(.0333333,-.0583333){30}{\line(0,-1){.0583333}}
\multiput(63.5,33.75)(.03289474,.04605263){38}{\line(0,1){.04605263}}
\put(58.5,35){$z$}
\end{picture}

 The boundary of $\cal C$ is ${\bf x}_1{\bf y}{\bf x}_2{\bf y}'$, where ${\bf x}_1$ and ${\bf x}_2$ are the boundary
 $q$-edges of the band $\cal C$ and \label{z} $\bf yz$ is the boundary of $\Gamma$. (Thus, \label{y} $\bf y$ is the
 right side of $\cal C$, and \label{y'} $\bf (y')^{-1}$ is the left side.) Similarly
 we have the decomposition $({\bf y'})^{- 1}{\bf z'}$ for the boundary of $\Gamma\backslash\cal C$, where \label{z'} ${\bf z}={\bf x}_2{\bf z'}{\bf x}_1$.
By definition, $\area'(\Gamma)= \area(\Gamma\backslash\cal C).$ Since
$\bf z$ starts
 (ends) with the $q$-edge ${\bf x}_2$ (with ${\bf x}_1$), we have $|\partial\Gamma|=|\bf y|+|\bf z|$ by Lemma \ref{ochev} (b). 
 We also use ${\bf y}^{\Delta}, {\bf z}^{\Delta}$, ... instead of ${\bf y}, {\bf z},\dots$ if we want to stress
 that the notation relates to a particular comb $\Delta$.

 \begin{rk}\label{yzh} It follows from Lemma \ref{NoAnnul} that every maximal
 $\theta$-band crossing the handle of a comb $\Delta$ must ends on ${\bf z}^{\Delta}$.
 Therefore $|{\bf y}^{\Delta}|_{\theta} = |{\bf y'}^{\Delta}|_{\theta} = |{\bf z}^{\Delta}|_{\theta}=h.$
 \end{rk}
 
 \medskip

 For a comb $\Gamma$, we modify the notion of mixture. The \label{combmix}{\it comb mixtures} are \label{mucjK} $\kappa^c(\Gamma)=
 \kappa
 ({\bf z}) - \kappa({\bf y}),$ $\lambda^c(\Gamma)=
 \lambda
 ({\bf z}) - \lambda({\bf y})$, and
 similarly, \label{nucK} $\nu^c_J(\Gamma) = \nu_J({\bf z})-\nu_J({\bf y}) $
 ($\lambda(\Gamma)$ can be negative if
 $(12)$- or $(23)$-cells separate other $\theta$-cells of the handle !).
 By definition
 \label{muc.} $\mu^c(\Gamma)=c_0\kappa^c(\Gamma)+\lambda^c(\Gamma).$

 \begin{lemma}\label{positive} In the above notation, we have
 (a) $\kappa^c(\Gamma)\ge 0,$
 (b) $\nu^c_{J}(\Gamma)=\nu_J({\bf z})\ge 0,$\\
 (c) $\lambda^c(\Gamma)\ge 0$ if for every special edge $e$ of $\bf z,$ the edge $f$ of
 $\bf y$ connected with $e$ by a $\theta$-band, is also special.
 \end{lemma}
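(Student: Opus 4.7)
The plan is to dispatch parts (a) and (b) by a single observation and devote the real work to (c). Since ${\bf y}$ is a side of the $q$-band $\cal C$, it consists only of $\theta$- and $a$-edges and carries no $q$-edges. Hence the $\kappa$-necklace on ${\bf y}$ has no black beads, and likewise the $\nu_J$-necklace on ${\bf y}$ has no $t$- or $t'$-beads; this forces $\kappa({\bf y}) = 0$ and $\nu_J({\bf y}) = 0$. Parts (a) and (b) then reduce to the trivial non-negativity of $\kappa({\bf z})$ and $\nu_J({\bf z})$, which are counts of pairs separated by black beads.

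For (c) the same trivialization fails, since ${\bf y}$ can carry both black beads (non-special $(12)$- or $(23)$-edges) and white beads (all other $\theta$-edges) in the $\lambda$-necklace. The approach is to construct a natural bijection $\psi$ from the $h$ $\theta$-edges of ${\bf y}$ to the $h$ $\theta$-edges of ${\bf z}'$ by following the unique maximal $\theta$-band of $\Gamma$ through the corresponding cell of $\cal C$; this is well-defined and bijective by Lemma \ref{NoAnnul} together with the defining property of a comb (every maximal $\theta$-band meets $\cal C$ in exactly one cell). Since $\theta$-bands cannot cross each other in a reduced planar diagram, $\psi$ is order-preserving up to a reversal of ${\bf z}'$, and $\mu^c_1$ is invariant under reversal of the string, so we may compare bead patterns position by position.

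The crucial step is that $\psi$ matches bead colors on the two $\lambda$-necklaces. All $\theta$-edges of a single $\theta$-band carry labels from the same rule, so the classification into $(12)$-edges, $(23)$-edges, or neither is preserved; in particular white beads on ${\bf y}$ correspond to white beads on ${\bf z}$. For black beads the hypothesis of (c) is used in contrapositive form: if an edge $f$ on ${\bf y}$ is a non-special $(12)/(23)$-edge (hence a black bead), then its partner $\psi(f)$ on ${\bf z}$ carries a $(12)/(23)$-label of the same rule, and cannot be special (else the hypothesis would force $f$ to be special), so $\psi(f)$ is also a non-special $(12)/(23)$-edge, i.e., a black bead on ${\bf z}$.

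Once color- and order-preservation are in place, any pair $(f_a,f_b)$ of white beads on ${\bf y}$ separated by a black bead $f_c$ maps under $\psi$ to a pair of white beads on ${\bf z}$ separated by the black bead $\psi(f_c)$, giving an injection between the pairs counted by $\lambda({\bf y})$ and those counted by $\lambda({\bf z})$. This yields $\lambda^c(\Gamma)\ge 0$. The main obstacle is the color-preservation step: one must keep precise track of how specialness of a $\theta$-letter depends on both the rule and the base position of the corresponding end of the band, and verify that the asymmetric hypothesis of (c) translates cleanly into the assertion ``$f$ black on ${\bf y}$ implies $\psi(f)$ black on ${\bf z}$'' at each matched pair of positions, with the added black beads on ${\bf z}$ (the $q$-edges and the black upgrades of special edges) only helping.
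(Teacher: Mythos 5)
Your proof is correct and follows essentially the same route as the paper: parts (a) and (b) from the absence of $q$- (resp. $t$-, $t'$-) edges on ${\bf y}$, and part (c) via the order-preserving bijection of $\theta$-edges of ${\bf y}$ and ${\bf z}$ given by the maximal $\theta$-bands (Lemma \ref{NoAnnul}), using the contrapositive of the hypothesis to send black beads of ${\bf y}$ to black beads of ${\bf z}$. The only cosmetic difference is that you conclude $\lambda({\bf z})\ge\lambda({\bf y})$ by a direct injection of separated white pairs, whereas the paper phrases the same comparison as repeated removal of the extra black beads of ${\bf z}$ via Lemma \ref{mixturec}(c).
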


\proof (a), (b) Since the path $\bf y$ has no $q$-edges, we have $\kappa({\bf y})=0$
 ($\nu_{J}({\bf y})=0$, respectively), and so $\kappa^c(\Gamma)=\kappa({\bf z})\ge 0$ ( $\nu^c_{J}(\Gamma)
 = \nu_{J}({\bf z})\ge 0$, resp.).

 (c) Consider the strings of beads on $\bf z$ and on $\bf y$ used in the definitions of the comb
 mixture $\lambda^c(.)$. By Lemma \ref{NoAnnul}, the maximal $\theta$-bands of $\Gamma$
 establish a bijection between the white vertices of $\bf z$ and white vertices of $\bf y,$
 preserving the order of the beads on $\bf z$ and ${\bf y}^{-1}$, respectively. Every black bead on $\bf y$
 must belong to a non-special $\theta$-edge $f.$ By the condition of the lemma, we have
 a black bead on the corresponding edge $e$ of $\bf z.$ Hence one can apply Lemma \ref{mixture} (c)
 to the strings of beads on $\bf z$ and $\bf y$ several times to conclude that $\lambda({\bf z})\ge \lambda(\bf y),$
 and so $\lambda^c(\Gamma)\ge 0.$ \endproof

 \begin{lemma}\label{mu}
 Let $\Gamma$ be a proper subcomb of a diagram (of a comb) $\Delta.$
 Let $\Delta\backslash\Gamma$ be the compliment of $\Gamma$ in $\Delta,$ whose handle is the handle
 of $\Delta$ if $\Delta$ is a comb.
 Then

 (a) $\kappa(\Delta\backslash\Gamma)\le \kappa(\Delta)-\kappa^c(\Gamma)$ and
$\lambda(\Delta\backslash\Gamma)\le \lambda(\Delta)-\lambda^c(\Gamma),$

 (b) $\nu_J(\Delta\backslash\Gamma)\le \nu_J(\Delta)-\nu^c_J(\Gamma)$ for every $J\ge 1,$

 (c)
 $\kappa^c(\Delta\backslash\Gamma)\le \kappa^c(\Delta)-\kappa^c(\Gamma)$
and $\lambda^c(\Delta\backslash\Gamma)\le \lambda^c(\Delta)-\lambda^c(\Gamma)$
if $\Delta$ is a comb,

(d) $\nu^c_J(\Delta\backslash\Gamma)\le
\nu^c_J(\Delta)-\nu^c_J(\Gamma)$ for every $J\ge 1$ if $\Delta$ is a
comb,

(e) If $\bar\Delta$ is a subcomb of a diagram  $\Delta$ and $\Gamma$
is a subcomb of $\bar\Delta,$ then for every $J\ge 1$,
$0\le\nu^c_J(\bar\Delta) - \nu^c_J(\bar\Delta\backslash\Gamma)\le
\nu_J(\Delta)-\nu_J(\Delta\backslash\Gamma).$
(Also we have \\ $\nu^c_J(\bar\Delta) - \nu^c_J(\bar\Delta\backslash\Gamma)\le\nu^c_J(\Delta)-\nu^c_J(\Delta\backslash\Gamma)$
if $\Delta$ is a comb.)

 \end{lemma}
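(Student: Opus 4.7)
The proof is a careful bead-counting on the three necklaces ($\kappa$, $\lambda$, $\nu_J$) before and after the surgery that removes $\Gamma$ from $\Delta$. Setup: writing $\partial\Delta=\mathbf{z}^\Gamma\mathbf{w}$ cyclically for an appropriate complementary path $\mathbf{w}$, we have $\partial(\Delta\backslash\Gamma)=(\mathbf{y}^\Gamma)^{-1}\mathbf{w}$. By Remark \ref{yzh}, $|\mathbf{y}^\Gamma|_\theta=|\mathbf{z}^\Gamma|_\theta=h$, and $\mathbf{y}^\Gamma$ contains no $q$-edges; the maximal $\theta$-bands of $\Gamma$ provide a natural bijection between the $\theta$-edges of $\mathbf{y}^\Gamma$ and those of $\mathbf{z}^\Gamma$ respecting the $(12)/(23)$-classification of the corresponding rules.

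For (a) and (b), fix one of the colourings and let $O,O'$ denote the associated necklaces on $\partial\Delta$ and $\partial(\Delta\backslash\Gamma)$, with arcs $A,A'$ coming from $\mathbf{z}^\Gamma$ and $(\mathbf{y}^\Gamma)^{-1}$ respectively. Partition ordered pairs of distinct white beads into three types: both inside $A$ (resp.\ $A'$); both inside $\mathbf{w}$; one in each. The second type contributes identically to both necklaces. Within the first type, the \emph{forward} pairs --- those whose clockwise arc from the first bead to the second stays inside the replaced region --- sum to the string-mixtures $\kappa(\mathbf{z}^\Gamma),\lambda(\mathbf{z}^\Gamma),\nu_J(\mathbf{z}^\Gamma)$ on $O$ and to $\kappa(\mathbf{y}^\Gamma)=0,\lambda(\mathbf{y}^\Gamma),\nu_J(\mathbf{y}^\Gamma)=0$ on $O'$ (the vanishing uses that $\mathbf{y}^\Gamma$ has no $q$-edges), so the forward difference is precisely $\kappa^c(\Gamma),\lambda^c(\Gamma),\nu^c_J(\Gamma)$. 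For the remaining contributions (backward within-arc pairs and cross pairs), the clockwise arc in $O$ traverses a portion of $A$ together with an unchanged portion of $\mathbf{w}$, while the analogous arc in $O'$ traverses the corresponding portion of $A'$ together with the same portion of $\mathbf{w}$. For $\kappa$ and $\nu_J$, $A'$ carries no black beads, so every $O'$-arc in this step has only $\mathbf{w}$-black-beads and hence at most as many as the corresponding $O$-arc; monotonicity of $\min(\cdot,J)$ gives the per-pair inequality, and summing establishes (a) for $\kappa$ and (b). For $\lambda$ the same monotonicity is applied after matching corresponding beads via the $\theta$-band bijection; the $q$-edges of $\mathbf{z}^\Gamma$, absent from $\mathbf{y}^\Gamma$, supply the additional black beads needed to dominate any black beads that the bijection places on $A'$ but not on $A$, establishing (a) for $\lambda$.

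Parts (c) and (d) are the string-valued analogue: when $\Delta$ is itself a comb with handle $\cal C_\Delta\subset\Delta\backslash\Gamma$, the side $\mathbf{y}^\Delta$ is unaffected, and on the other side we have $\mathbf{z}^\Delta=\mathbf{u}\mathbf{z}^\Gamma\mathbf{v}$ with $\mathbf{z}^{\Delta\backslash\Gamma}=\mathbf{u}(\mathbf{y}^\Gamma)^{-1}\mathbf{v}$. Running the same three-type bead-pair analysis on these strings (invoking Lemma \ref{mixturec} in place of Lemma \ref{mixture}) gives $\kappa(\mathbf{z}^{\Delta\backslash\Gamma})\le\kappa(\mathbf{z}^\Delta)-\kappa(\mathbf{z}^\Gamma)$ and its $\lambda,\nu_J$-analogues; subtracting the unchanged $\kappa(\mathbf{y}^\Delta),\lambda(\mathbf{y}^\Delta),\nu_J(\mathbf{y}^\Delta)$ from both sides yields (c) and (d). For (e), the non-negativity $\nu^c_J(\bar\Delta)-\nu^c_J(\bar\Delta\backslash\Gamma)\ge 0$ is (d) applied to $\bar\Delta$ together with Lemma \ref{positive}(b). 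For the upper bounds, every ordered pair of beads witnessing the decrease of $\nu^c_J(\bar\Delta)$ consists of beads on $\mathbf{z}^{\bar\Delta}$, which are also beads on $\partial\Delta$ (or on $\mathbf{z}^\Delta$ when $\Delta$ is a comb), and the contribution of such a pair to $\nu_J(\Delta)-\nu_J(\Delta\backslash\Gamma)$ (resp.\ to $\nu^c_J(\Delta)-\nu^c_J(\Delta\backslash\Gamma)$) is at least as large, because the local surgery is identical in both ambient diagrams and the larger necklace offers at least as many contributing directions.

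The hardest point is the $\lambda$-comparison for backward and cross contributions: the specialness of a $(12)/(23)$-edge depends on the $q$-base of the cell containing it, which can differ between the cell in the handle $\cal C_\Gamma$ and the cell where the $\theta$-band exits onto $\mathbf{z}^\Gamma$, so the $\theta$-band bijection need not preserve specialness and $A'$ may carry black beads at positions where $A$ has none. Verifying that the $q$-edges of $\mathbf{z}^\Gamma$ (always black for $\lambda$, starting with $\mathbf{x}_1$ and $\mathbf{x}_2$) nonetheless absorb all such ``specialness inversions'' requires a case-by-case check on the $q$-bases appearing in $\Gamma$; the remaining bookkeeping is routine.
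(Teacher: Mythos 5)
Your proof is correct and follows essentially the same route as the paper's (much terser) argument: replace ${\bf z}^\Gamma$ by $({\bf y}^\Gamma)^{-1}$ on the boundary, match white beads via the maximal $\theta$-bands, and use the black beads on the two $q$-edges bounding ${\bf z}^\Gamma$ to dominate all cross and backward pairs, with the within-arc (forward) pairs accounted for exactly by the string mixtures. The ``case-by-case check'' you flag at the end is not actually needed: since $\lambda(\cdot)$ is the $J=1$ mixture, every cross or backward pair contributes at most $1$ to $\lambda(\Delta\backslash\Gamma)$, while it contributes exactly $1$ to $\lambda(\Delta)$ because its clockwise arc always contains the black bead on one of the $q$-edges ${\bf x}_1,{\bf x}_2$ at the ends of ${\bf z}^\Gamma$; possible specialness inversions under the $\theta$-band bijection only alter black beads \emph{inside} the two arcs, and those are compared wholesale through $\lambda({\bf z}^\Gamma)$ versus $\lambda({\bf y}^\Gamma)$, so no inspection of $q$-bases is required.
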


 \proof (a)  Let ${\bf y}={\bf y}^{\Gamma},$ and ${\bf z}={\bf z}^{\Gamma}$,
 $\bf xz$ the boundary path of $\Delta$. To obtain the necklace $O'$ corresponding to $\Delta\backslash\Gamma$, one replaces the subpath $\bf z$ of the boundary  by ${\bf y}^{-1}$. 
 Therefore the pairs of white beads counted to get $\lambda(\bf z)$
 are replaced by pairs counted to get $\lambda(\bf y).$ (Note that the white beads of $\bf z$
 are in bijective correspondence with white beads of $\bf y$ by the definition of comb and
 Lemma \ref{NoAnnul}.) Since every white bead of $\bf z$ is separated from any white bead of
 $\partial\Delta\backslash\partial\Gamma$
 by the black beads in the middle of the first and the last edges of $\bf z,$ Inequality (a) is proved for $\lambda$-mixtures. The case of $\kappa$-mixtures is similar.

 The proofs of claims (b), (c), and (d) are also similar.

 The path ${\bf y}^{\Gamma}$ has no $t$-edges, and the first inequality of (e) follows.
 Similarly, every pairs of white beads which makes a contribution to  $\nu^c_J(\bar\Delta)$
 but not to $\nu^c_J(\bar\Delta\backslash\Gamma)$ also contributes to
 $\nu_J(\Delta)$ but not to $\nu_J(\Delta\backslash\Gamma)$, and the second
 inequality of (e) follows. The proof of the version in  the parentheses is similar.
 \endproof

Let $\Gamma$ be a comb and ${\bf z}^1,\dots,{\bf z}^r$ the maximal subpaths of
${\bf z}={\bf z}^{\Gamma}$ containing
 no $q$-edges. We denote by $l^1,\dots,l^r$ their $\theta$-lengths,
 and define \label{lminus} $l_-=l_-^{\Gamma}$ to
 be  $h-\max_{i=1}^r l^i$. (Note that $h=h^{\Gamma}=\sum l^i$ by Lemma \ref{NoAnnul}.)

A $\theta$-band which starts on the handle $\cal C$ of a comb
$\Gamma$ will be called \label{simpletb} {\it simple} if it has no $(\theta, q)$-cells except
for the cell of $\cal C,$ and is maximal with respect to this
property.

We call a maximal $q$-band $\cal B$ a \label{derivqb} {\it derivative} band, if it
is not $\cal C$ but it can be connected with $\cal C$ by a simple
$\theta$-band. 
Throughout the paper, we  will use notation \label{Ci} ${\cal C}_1,\dots {\cal C}_s$
for derivative bands of a comb $\Gamma.$ It is possible that $s=0$, and
every maximal $\theta$-band is simple in this case.

Every derivative band
${\cal C}_i$ is a handle of a subcomb \label{Gi} $\Gamma_i$ (which does not
contain $\cal C$). We will use this notation and call $\Gamma_i$ a \label{dersc}
{\it derivative subcomb} of $\Gamma$. It follows from the definitions
that every cell of a comb belongs either to a derivative subcomb or
to a simple band of $\Gamma.$

Recall that every maximal $\theta$-band of a comb, in
particular, a maximal $\theta$-band crossing a derivative band
${\cal C}_i,$ must cross the handle $\cal C$. Therefore every cell
of ${\cal C}_i$ is connected with $\cal C$ by a $\theta$-band. Since
there is a simple $\theta$-band among these $\theta$-bands, no other
derivative ${\cal C}_j$ can intersect these connecting
$\theta$-bands by Lemma \ref{NoAnnul}, i.e., all of them are simple. 
It follows that different
derivative subcombs are disjoint, and if ${\cal C}_1,\dots,{\cal
C}_s$ is the system of all derivative bands in $\Gamma$ with
histories \label{derhist} $H_1,\dots,H_s$, then $H_1,\dots,H_s$ are pairwise
disjoint subwords in the history $H$ of $\Gamma$. Therefore
$\sum_{i=1}^s h_i\le h$, where \label{hi} $h_i=||H_i||$. We will also use \label{hminus} $h_-$
for $\sum_{i=1}^s h_i -\max_{i=1}^s h_i.$
\begin{lemma}\label{sravnim}
In the above notation, $l_-\ge \min(\sum_{i=1}^s h_i,\;
h-\max_{i=1}^s h_i)$. In particular,   \begin{equation}\label{hl}
  h_-\le l_-
\end{equation}
\end{lemma}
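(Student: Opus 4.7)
The plan is to analyze the structure of the path $\bf z$ in terms of the derivative subcombs and the non-crossing property of bands. By Lemma \ref{NoAnnul}, every maximal $\theta$-band of $\Gamma$ has exactly one cell in the handle $\cal C$ and exactly one end on $\bf z$. Reading the history $H = \theta_1\cdots\theta_h$ along $\cal C$ induces a linear ordering on the ends of these $\theta$-bands along $\bf z$; consequently, for each piece ${\bf z}^j$, the set of $\theta$-bands with ends on ${\bf z}^j$ corresponds to a contiguous subinterval $I^j \subseteq \{1,\dots,h\}$ of length $l^j$.

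The crux is to show that each $I^j$ is either disjoint from all $H_i$ or contained in some single $H_i$. Indeed, if $k$ is a ``gap'' position (not in any $H_i$), then the $\theta$-band at position $k$ is simple and its end lies in the ``outer'' portion of $\bf z$ (outside every derivative subcomb). If $k \in H_i$, then the $\theta$-band at position $k$ passes through the derivative band ${\cal C}_i$ and continues into $\Gamma_i$, so its end on $\bf z$ lies in the portion of $\bf z$ belonging to $\partial\Gamma_i$. These two regions of $\bf z$ are separated by the two $q$-edges of ${\cal C}_i$ lying on $\bf z$. Therefore, if $I^j$ contained both a gap position and a position in some $H_i$, the intermediate $\theta$-band ends on $\bf z$ would be interrupted by a $q$-edge of ${\cal C}_i$, contradicting that ${\bf z}^j$ contains no $q$-edge. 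The same argument applied recursively to the derivative bands of $\Gamma_i$ shows that $I^j$ is in fact contained in a ``block'' of $H$ that is either a connected gap region (giving $l^j \le h - \sum_{i=1}^s h_i$) or a subinterval of some $H_i$ (giving $l^j \le h_i \le \max_i h_i$).

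Taking the maximum over $j$ yields
\[
\max_j l^j \;\le\; \max\Bigl(h-\sum_{i=1}^s h_i,\ \max_{i} h_i\Bigr),
\]
and therefore
\[
l_- \;=\; h - \max_j l^j \;\ge\; \min\Bigl(\sum_{i=1}^s h_i,\ h-\max_{i} h_i\Bigr),
\]
which is the main assertion. For the ``in particular'' part, observe that $h_- = \sum_i h_i - \max_i h_i$ is bounded by $\sum_i h_i$ trivially, and by $h - \max_i h_i$ because the $H_i$ are pairwise disjoint subwords of $H$, so $\sum_i h_i \le h$; hence $h_- \le \min(\sum h_i,\ h-\max h_i) \le l_-$.

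The one genuinely nontrivial step will be the structural dichotomy for $I^j$: making precise the claim that a single $\theta$-free piece ${\bf z}^j$ of $\bf z$ cannot ``straddle'' the boundary between the outer region and the interior of some $\Gamma_i$, nor between two distinct $\Gamma_i$'s. This uses the planar non-crossing of bands together with the fact (from Lemma \ref{NoAnnul}) that each derivative band contributes exactly two $q$-edges to $\bf z$, separating the $\bf z$-portion of $\partial\Gamma_i$ from the rest of $\bf z$. All remaining steps are bookkeeping with the identities $h = \sum_j l^j$ (Remark \ref{yzh}) and $\sum_i h_i \le h$.
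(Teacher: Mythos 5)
Your argument is correct and follows essentially the same route as the paper: the key point in both is the dichotomy that the $\theta$-bands ending on a single $q$-edge-free piece of $\bf z$ either all cross one and the same derivative band or cross none at all, since otherwise a $q$-edge of some ${\cal C}_i$ would have to appear on that piece; the paper simply applies this only to the longest piece ${\bf z}^{i_0}$ rather than to every piece, and concludes with the same bookkeeping $l_-=h-\max_j l^j\ge\min(\sum h_i,\,h-\max h_i)\ge h_-$.
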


\proof Let $|{\bf z}^{i_0}|_\theta =l^{i_0}=\max_{i=1}^r l^i.$  Then,
either every maximal $\theta$-band ending on ${\bf z}^{i_0}$ crosses some
derivative band ${\cal C}_j$, where $j=j(i_0)$, or every maximal
$\theta$-band crossing ${\bf z}^{i_0}$ crosses no derivative bands because
otherwise a $q$-band would cross ${\bf z}^{i_0}.$ (This follows from the
definitions of comb, of ${\bf z}^i$-s and from Lemma \ref{NoAnnul}.) In the
former case, $l_-=h-l^{i_0}\ge h-\max_{i=1}^s h_i\ge h_-$, and in
the latter case, $l_-=h-l^{i_0}\ge \sum_{j=1}^s h_j\ge h_-.$
\endproof

 \begin{lemma}\label{lgamma} In the above notation, we have $hh_-\le hl_-\le 2\kappa^c(\Gamma).$
 \end{lemma}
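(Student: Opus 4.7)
The first inequality $hh_{-}\le hl_{-}$ is immediate from \eqref{hl} in Lemma~\ref{sravnim}. Thus the real content is the bound $hl_{-}\le 2\kappa^c(\Gamma)$, and this is where the necklace machinery should do the work.

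My plan is to unpack $\kappa^c(\Gamma)$ directly on the path $\bf z$. Since ${\bf y}={\bf y}^{\Gamma}$ has no $q$-edges, it carries no black beads, so $\kappa({\bf y})=0$ and hence $\kappa^c(\Gamma)=\kappa({\bf z})$. Now the decomposition ${\bf z}={\bf z}^1\cdots{\bf z}^r$ into maximal subpaths without $q$-edges is precisely the decomposition of $\bf z$ into the maximal ``white runs'' between consecutive black beads (recall that $\bf z$ starts with ${\bf x}_2$ and ends with ${\bf x}_1$, both $q$-edges, so every ${\bf z}^i$ is bracketed by $q$-edges inside ${\bf z}$). The $\theta$-length of ${\bf z}^i$ equals $l^i$, which is therefore exactly the number of white beads of $\bf z$ lying on ${\bf z}^i$.

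For any $i<j$, each white bead of ${\bf z}^i$ and each white bead of ${\bf z}^j$ are separated on $\bf z$ by at least one $q$-edge (the one joining ${\bf z}^i$ to its successor), so every such ordered pair contributes $1$ to $\mu^c_1({\bf z})=\kappa({\bf z})$. This gives
\begin{equation*}
\kappa({\bf z})\;\ge\;\sum_{1\le i<j\le r} l^i l^j\;=\;\tfrac{1}{2}\Bigl(\bigl(\textstyle\sum_i l^i\bigr)^{\!2}-\sum_i (l^i)^2\Bigr)\;=\;\tfrac{1}{2}\Bigl(h^2-\sum_i(l^i)^2\Bigr),
\end{equation*}
since $\sum_i l^i=h$ by Remark~\ref{yzh}.

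To finish, I bound $\sum_i (l^i)^2$ using that each $l^i$ is at most $\max_j l^j=h-l_{-}$:
\begin{equation*}
\sum_i (l^i)^2\;\le\;(\max_j l^j)\sum_i l^i\;=\;(h-l_{-})\,h.
\end{equation*}
Combining the two displays yields $2\kappa^c(\Gamma)=2\kappa({\bf z})\ge h^2-(h-l_{-})h=hl_{-}$, which together with \eqref{hl} gives $hh_{-}\le hl_{-}\le 2\kappa^c(\Gamma)$. No step here looks delicate; the only point that needs care is the observation that consecutive ${\bf z}^i$'s are indeed separated by a $q$-edge of $\bf z$, which is just the maximality in the definition of the ${\bf z}^i$.
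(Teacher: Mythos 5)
Your proof is correct and is essentially the paper's argument: both amount to counting pairs of white beads lying in different maximal $q$-free subpaths ${\bf z}^i$ of $\bf z$ (the paper phrases it as "each of the $h$ white beads is separated from at least $l_-$ others," which is exactly your count $\sum_i l^i(h-l^i)=2\sum_{i<j}l^il^j\ge hl_-$), then halves to pass to $\kappa^c(\Gamma)=\kappa({\bf z})$. No gaps.
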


\proof By (\ref{hl}), it suffices to prove the second inequality.
There are $h$ white beads on $\bf z$. Every such a bead $o$ belongs to
one of the paths ${\bf z}^i$ having $\theta$-length at most
$\max_{i=1}^rl^r$. Therefore for every such $o$, there are at least
$l_-$ white beads $o'$ on $\bf z$ such that $o$ and $o'$ are separated
on $\bf z$ by a black bead. Thus, we obtain at least $hl_-$ pairs
$(o,o')$ of white beads on $\bf z$ separated by black beads. Since one
of the pairs $(o,o')$ and $(o',o)$ contributes $1$ to
$\kappa^c(\Gamma)$, the lemma is proved.
\endproof

Let the handle $\cal C$ of a comb $\Gamma$ is a $t^{\pm 1}$- or $(t')^{\pm 1}$-band
with history having no $(23)$-rules or no $(12)$-rules, respectively;
and every derivative ${\cal C}_i$ is a $k^{\pm 1}$- or a $(k')^{\pm 1}$-band such that 
there are no special $\theta$-edges  (corresponding to the rules (12) and (23)) in the derivative subcomb $\Gamma_i.$
A subband $\cal B$ of some ${\cal C}_i$
which has neither $(12)$- nor $(23)$-edges
and is maximal with respect to this property, is called a \label{shortder}{\it short derivative} of $\cal C$.
By Property \ref{iv}, there are no maximal $a$-bands starting and ending on the same short derivative band.
Let \label{h'1...} $h'_1,\dots$ be the lengths of all short derivatives. Let \label{h'} $h'$ be the number of maximal $\theta$-bands in
$\Gamma$, which do not correspond to the rules $(12)$ and $(23)$.
Define \label{h'minus} $h'_- = h' - \max h'_j.$

\begin{lemma}\label{l'gamma} In the above notation,  we have $hh'_-\le 6\lambda(z^{\Gamma})=6\lambda^c(\Gamma)$.
 \end{lemma}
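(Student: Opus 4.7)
My plan is to follow the strategy of Lemma \ref{lgamma}, replacing the $\kappa$-mixture and the subpaths ${\bf z}^i$ by the $\lambda$-mixture and the intervals of $\bf z^\Gamma$ cut out by black beads of the $\lambda$-necklace. The first thing to notice is that since the handle $\cal C$ is a $t^{\pm 1}$- or $(t')^{\pm 1}$-band whose history contains no $(23)$- (resp.\ no $(12)$-)rules, every $(12)$- or $(23)$-edge on ${\bf y}^\Gamma$ lies on a $t$- or $t'$-band and is therefore special; in particular ${\bf y}^\Gamma$ carries no black beads of the $\lambda$-necklace and no $q$-edges, so $\lambda({\bf y}^\Gamma)=0$ and $\lambda^c(\Gamma)=\lambda({\bf z}^\Gamma)$. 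It remains to produce at least $hh'_-/6$ ordered pairs of white beads on ${\bf z}^\Gamma$ separated by a black bead.

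Next I would catalogue the beads on ${\bf z}^\Gamma$. By Lemma \ref{NoAnnul} each of the $h$ maximal $\theta$-bands of $\Gamma$ ends with exactly one $\theta$-edge on ${\bf z}^\Gamma$, and planarity of the comb forces these endpoints to occur on ${\bf z}^\Gamma$ in the same linear order as the corresponding cells along $\cal C$. Regular bands give the $h'$ white beads. A $(12)$- or $(23)$-band contributes either a special (non-bead) edge or, as soon as it crosses a $k^{\pm 1}$- or $(k')^{\pm 1}$-derivative, a non-special $(12)$- or $(23)$-edge, i.e.\ a black bead; the remaining black beads of ${\bf z}^\Gamma$ are $x_1,x_2$ and the two $q$-edges at the top/bottom of each derivative ${\cal C}_i$. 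From this one reads off that the maximal runs of consecutive white beads on ${\bf z}^\Gamma$ correspond either to a block of regular simple $\theta$-bands attached to $\cal C$ between two successive derivative-touching bands, or to a single short derivative $\cal B_j$ of some ${\cal C}_i$, since consecutive short derivatives inside one ${\cal C}_i$ are separated by $(12)$- or $(23)$-cells whose endpoints on ${\bf z}^\Gamma$ are non-special black beads, and different ${\cal C}_i$'s are separated by the $q$-edge black beads at their top and bottom.

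For the counting step itself, I would imitate Lemma \ref{lgamma}: fix a short derivative $\cal B_{j_0}$ realising $\max_j h'_j$, contained in some ${\cal C}_{i_0}$. The $h'_{j_0}$ white beads associated to $\cal B_{j_0}$ occupy one interval $I$ bounded by black beads, so every one of the remaining $h'_-$ regular endpoints (and also every non-regular endpoint) lies in a different interval and is separated from every white bead of $I$ by a black bead. Running the argument of Lemma \ref{lgamma} with the $h$ endpoints on ${\bf z}^\Gamma$ playing the role of the $h$ white beads counted there and with black beads now allowed to come both from $q$-edges (as in Lemma \ref{lgamma}) and from the non-special $(12)$-/$(23)$-edges inside the ${\cal C}_i$'s, one shows that for each of the $h$ cells of $\cal C$ its endpoint on ${\bf z}^\Gamma$ is separated from at least $h'_-/3$ white beads by a black bead, yielding at least $hh'_-/3$ ordered separated pairs; dividing by $2$ (each pair is counted twice) gives $\lambda({\bf z}^\Gamma)\ge hh'_-/6$ as required. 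The main technical obstacle is exactly this last step: controlling the case where a run of simple regular $\theta$-bands produces an interval on ${\bf z}^\Gamma$ much larger than $\max_j h'_j$, which forces one to use in an essential way both the top/bottom $q$-edges of each derivative and the non-special $(12)$-/$(23)$-black-beads separating short derivatives inside the same ${\cal C}_i$ in order to guarantee the needed per-cell separation quota. This parallels the role played by the bounding $q$-edges of ${\bf z}^i$ in Lemma \ref{lgamma}, but requires a more delicate case analysis because the $\lambda$-necklace ignores the special edges produced by the $t$- or $t'$-band $\cal C$ itself.
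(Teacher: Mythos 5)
Your reduction to showing $\lambda({\bf y}^{\Gamma})=0$ and then bounding $\lambda({\bf z}^{\Gamma})$ from below is the right start, but the counting step you propose does not work, and you yourself flag the unresolved obstacle. Two concrete problems. First, you let all $h$ ends of maximal $\theta$-bands on ${\bf z}^{\Gamma}$ ``play the role of white beads,'' but in the $\lambda$-necklace the ends of the $(12)$- and $(23)$-bands are \emph{not} white beads (they are non-special black beads or, if special, no beads at all), so any pair involving such an end contributes nothing to $\lambda({\bf z}^{\Gamma})$; only the $h'$ ends of bands not corresponding to $(12),(23)$ can be used. Second, the per-endpoint quota ``each of the $h$ endpoints is separated from at least $h'_-/3$ white beads'' is exactly what cannot be extracted from the separation argument: a long run of simple $\theta$-bands produces a black-bead-free interval on ${\bf z}^{\Gamma}$ of length unrelated to $\max_j h'_j$, and no case analysis of the kind you sketch will restore the quota, because the separation argument by itself only yields pairs among the $h'$ white beads, i.e.\ an inequality of the form $h'h'_-\le 2\lambda({\bf z}^{\Gamma})$, not one with $h$ in it.

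The missing idea in the paper's proof is how the factor $h$ (rather than $h'$) enters: it does not come from extra separated pairs at all. One first argues, exactly as in Lemma \ref{lgamma} but with the short derivatives in place of the subpaths ${\bf z}^i$ (the ends of $\theta$-bands crossing two different short derivatives are separated on ${\bf z}^{\Gamma}$ by a $q$-edge or a non-special $\theta$-edge), that $h'h'_-\le 2\lambda({\bf z}^{\Gamma})=2\lambda^c(\Gamma)$. Then, if $h'_-\ne 0$, one uses that the handle lies in a reduced diagram and hence cannot contain two consecutive cells corresponding to the rules $(12),(23)$ or their inverses, so $h'\ge h/3$; combining gives $hh'_-\le 3h'h'_-\le 6\lambda^c(\Gamma)$, and the case $h'_-=0$ is trivial. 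This structural bound $h\le 3h'$ is the step your attempt lacks, and without it the stated inequality cannot be reached along your route.
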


 \proof The sets of ends of the $\theta$-bands crossing  two short derivatives are separated
 in ${\bf z}^{\Gamma}$ either by a $q$-edge or by a non-special $\theta$-edge.
 Therefore arguing as in the proof of Lemma \ref{lgamma}, we come to inequality
 $h'h'_-\le 2\lambda({\bf z}^{\Gamma})= 2\lambda^c(\Gamma).$ (We note that under the assumption
 on the history,  $\lambda({\bf y}^{\Gamma})=0$
 since $\cal C$ is a $t^{\pm 1}$- or $(t')^{\pm 1}$-band.)
 This implies the statement of the lemma  if
 $h'_-\ne 0$ since in this case we have $h'\ge h/3$ because the handle
 $\cal C$, being a reduced diagram, cannot have two consecutive cells corresponding
 to the rules $(12), (23)$ (or inverse). If $h'_-=0$ the claim of the lemma is obvious.
 \endproof

\begin{lemma}\label{a-bands} Let $\Gamma$ be a comb with a handle
$\cal C$ of length $h$. Then the number $a_{ij}$ of all maximal $a$-bands of
$\Gamma$ starting on a derivative band ${\cal C}_i$ and ending on the bands ${\cal
C}_j$ with $j\ne i$  is at most $h_-.$ The total number of
cells in these $a$-bands over all $i<j$ does not exceed $hh_-\le
2\kappa^c(\Gamma).$
\end{lemma}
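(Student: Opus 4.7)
The plan is to establish the two claims of the lemma separately---first the per-pair bound $a_{ij}\le h_-$, and then the total-cell bound $\sum_{i<j}\ell_{\cal A}\le hh_-$---and then invoke Lemma \ref{lgamma} for the final inequality $hh_-\le 2\kappa^c(\Gamma)$.

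For the per-pair bound, I would observe that each maximal $a$-band $\cal A$ joining $\cal C_i$ and $\cal C_j$ terminates at an $a$-edge on a specific cell of each of the two derivative bands. Property \ref{cell} bounds the number of $a$-edges on each cell of a $q$-band by a small constant, and distinct $a$-bands must use distinct terminal $a$-edges; hence $a_{ij}\le\min(h_i,h_j)$ (up to an absorbable constant). For $i\ne j$, $\min(h_i,h_j)$ is at most the second-largest of $h_1,\dots,h_s$, which in turn is dominated by $h_-=\sum_k h_k-\max_k h_k$.

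For the total-cell bound, I first note that every $a$-band has length at most $h$: consecutive cells of an $a$-band are joined by $a$-edges, so they cannot share $\theta$-edges and therefore lie in pairwise distinct maximal $\theta$-bands, of which $\Gamma$ contains only $h$. It remains to show $\sum_{i<j}a_{ij}\le h_-$. For this I would sum over maximal $\theta$-bands $\cal T$ of $\Gamma$: each $\cal T$ hosts inter-derivative $a$-band cells only in runs between successive derivative crossings, yielding at most $(m_{\cal T}-1)_+$ inter-derivative $a$-bands incident to $\cal T$, where $m_{\cal T}$ counts the derivatives crossed by $\cal T$. Since $\sum_{\cal T}m_{\cal T}=\sum_k h_k$ and at least $\max_k h_k$ of the $\theta$-bands have $m_{\cal T}\ge 1$ (those crossing the longest derivative), one gets $\sum_{\cal T}(m_{\cal T}-1)_+\le\sum_k h_k-\max_k h_k=h_-$. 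Combining the length bound with this count yields $\sum_{i<j}\ell_{\cal A}\le h\cdot h_-=hh_-$, and Lemma \ref{lgamma} supplies the remaining inequality $hh_-\le 2\kappa^c(\Gamma)$.

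The main technical obstacle is the delicate combinatorics relating $a$-bands and $\theta$-bands in $\Gamma$. Although an $a$-band $\cal A$ running between $\cal C_i$ and $\cal C_j$ stays in a single ``vertical column'' and spans a contiguous stretch of $\theta$-bands, the intermediate $\theta$-bands need not cross either $\cal C_i$ or $\cal C_j$---they may cross other derivatives whose $q$-bands sit in different columns and which therefore are not endpoints of $\cal A$. The $\theta$-band accounting in the previous paragraph must accordingly be done with care, invoking the disjointness of the derivative subcombs (and the corresponding disjointness of the histories $H_1,\dots,H_s$ as subwords of $H$) together with Lemma \ref{NoAnnul} to rule out pathological crossings; this is the step where the geometric structure of the comb has to be exploited most carefully.
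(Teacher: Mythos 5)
Your length bound (each $a$-band has at most $h$ cells), your per-pair estimate $a_{ij}\le\min(h_i,h_j)\le h_-$, and the final appeal to Lemma \ref{lgamma} are all fine. The gap is in the step you yourself identify as the crucial one, the total bound $\sum_{i<j}a_{ij}\le h_-$: the claim that each maximal $\theta$-band $\cal T$ is ``incident'' to at most $\max(m_{\cal T}-1,0)$ inter-derivative $a$-bands is not justified, and it fails under the natural readings of incidence. If incidence means that $\cal T$ contains a cell of the $a$-band, then arbitrarily many inter-derivative $a$-bands (starting at different cells of ${\cal C}_i$ and ending at different cells of ${\cal C}_j$) can all cross one and the same intermediate $\theta$-band, which may meet no derivative band at all, so $\max(m_{\cal T}-1,0)=0$ there. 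If incidence means that the band starts or ends at a derivative cell lying in $\cal T$, then Property \ref{cell} only yields the bound $m_{\cal T}$ (at most one right-side $a$-edge per derivative cell), and nothing you invoke (disjointness of the derivative subcombs, Lemma \ref{NoAnnul}) forbids both derivative cells of a $\theta$-band with $m_{\cal T}=2$ from each emitting a connecting band toward the other derivative, one passing around the top and one around the bottom of the other handle; and summing $m_{\cal T}$ over all $\theta$-bands gives only $\sum_k h_k$, not $h_-$.

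The paper's argument sidesteps this $\theta$-band bookkeeping altogether. Fix $r$ with $h_r=\max_k h_k$. A connecting $a$-band joins two \emph{different} derivative bands, so at least one of its two ends lies on some ${\cal C}_j$ with $j\ne r$. Since the derivative subcombs are pairwise disjoint and separated from the rest of $\Gamma$ by their handles, both ends of such a band are $a$-edges on the \emph{right} sides of the derivative bands, and by Property \ref{cell} (b) every cell of a $q$-band has at most one $a$-edge on its right side. Counting ends on $\bigcup_{j\ne r}{\cal C}_j$ therefore bounds the total number of connecting bands by $\sum_{j\ne r}h_j=h_-$, after which your multiplication by the length bound $h$ (which is really Lemma \ref{NoAnnul}: an $a$-band meets each of the $h$ maximal $\theta$-bands at most once) and Lemma \ref{lgamma} finish the proof. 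To repair your write-up you should replace the per-$\theta$-band count by an argument of this kind, since the claim you rely on appears to be false as stated.
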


\proof Recall that derivative subcombs with different handles ${\cal
C}_i$ and ${\cal C}_j$ are disjoint and separated by these handles
(which are $q$-bands) from the remaining part of $\Gamma$. Therefore
every $a$-band $\cal A$ connecting some ${\cal C}_i$ and ${\cal
C}_j$ ($i\ne j$), connects $a$-edges of cells on the right sides of
these derivative bands. But every $q$-cell of ${\cal C}_i$ has at
most one $a$-edge on the right side of it by \ref{cell} (b). Besides, a connecting $a$-band $\cal A$ under consideration
either starts or ends on some ${\cal C}_j$, where $j\ne i.$ Thus
the total number of all connecting $a$-bands cannot exceed
$h-h_{r}$ for arbitrary $r\le s$. Now the first statement of the
lemma follows from the definition of $h_-.$ Since the number of
cells in $\cal A$ is at most $h$ by Lemma \ref{NoAnnul}, the second
statement is also proved by Lemma \ref{lgamma}.
\endproof

\begin{lemma}\label{a-bands'} Let $\Gamma$ be a comb and let  its handle $\cal C$ be a $t^{\pm 1}$- or $(t')^{\pm
1}$-band
with history having no $(23)$-rules or $(12)$-rules, respectively,
and every derivative ${\cal C}_i$ is a $k^{\pm 1}$- or a $(k')^{\pm
1}$-band such that there are
no special $\theta$-edges in the derivative subcomb $\Gamma_i.$
Let  ${\cal B}_1,\dots$ be the system of all short derivative bands. Then
the number of all the maximal $a$-bands of $\Gamma$ starting on a short derivative
${\cal B}_i$ and ending on some ${\cal B}_j$ ( $j\ne i$ ), where
${\cal B}_i$ and ${\cal B}_j$ are subbands of the same derivative
band, is at most $h'_-.$ The total number of cells in these
$a$-bands over all $i,j$ does not exceed $hh'_-\le 6\lambda({\bf z}^{\Gamma})\le
6\lambda^c(\Gamma).$
\end{lemma}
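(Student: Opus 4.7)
The plan is to mirror the proof of Lemma \ref{a-bands}, with the short derivative bands ${\cal B}_j$ taking the role that the derivative bands ${\cal C}_i$ played there, and with Property \ref{iv} supplying the analogue of the separation used earlier. The first step is to verify that no maximal $a$-band of $\Gamma$ can start and end on the same short derivative ${\cal B}_j$. By construction ${\cal B}_j$ is contained in some ${\cal C}_\ell$, which is a $k^{\pm 1}$- or $(k')^{\pm 1}$-band, and it contains no $(12)$- or $(23)$-edges; so by Property \ref{kk'} all of its cells belong to a single Step and are active on a common (left or right) side. Property \ref{cell}(a) restricts such a $k$- or $(k')$-cell to at most one $a$-edge, so every cell of ${\cal B}_j$ has exactly one $a$-edge, located on the active side. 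The hypothesis of Property \ref{iv} is thereby met by ${\cal B}_j$, and the desired exclusion follows.

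Next I would fix an index $r$ with $h'_r = \max_j h'_j$. An $a$-band $\cal A$ counted in the lemma has its two endpoints on distinct short derivatives contained in a common derivative band; by the previous paragraph these endpoints cannot both lie on ${\cal B}_r$, so at least one of them lies on some ${\cal B}_j$ with $j \ne r$. Since each cell of a short derivative contributes at most one $a$-edge on its active side, the number of available endpoints for $\cal A$ on $\bigcup_{j\ne r}{\cal B}_j$ is at most $\sum_{j\ne r} h'_j$. The argument preceding Lemma \ref{sravnim} shows that distinct derivatives give rise to pairwise disjoint subwords of $H$, and by the same separation principle each non-$(12),(23)$ $\theta$-band of $\Gamma$ contributes a cell to at most one derivative; this yields $\sum_j h'_j \le h'$. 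Consequently the number of such $a$-bands is at most $h' - h'_r = h'_-$, proving the first assertion.

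For the cell count, Lemma \ref{NoAnnul} guarantees that each maximal $a$-band meets any $\theta$-band in at most one cell, and $\Gamma$ has at most $h$ maximal $\theta$-bands (one per cell of the handle $\cal C$); hence each counted $a$-band has at most $h$ cells, and the total cell count is at most $h h'_-$. Lemma \ref{l'gamma} now converts $h h'_-$ into $6\lambda^c(\Gamma)$. The main obstacle in the argument is the verification that Property \ref{iv} applies to a short derivative: one must simultaneously ensure that every cell of ${\cal B}_j$ is active on a common side (via Property \ref{kk'}, thanks to the absence of $(12)$- and $(23)$-edges within ${\cal B}_j$) and that every such cell actually carries an $a$-edge on that side (via Property \ref{cell}(a), which forces the unique $a$-edge of an active $k$- or $(k')$-cell to be located on the active side). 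Once this is in place, the counting is pure bookkeeping, entirely parallel to the one in Lemma \ref{a-bands}, with $h'_-$ and short derivatives replacing $h_-$ and derivatives, and with Lemma \ref{l'gamma} replacing Lemma \ref{lgamma}.
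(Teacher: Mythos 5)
Your proof is correct and takes essentially the same route as the paper, whose own argument is simply the counting of Lemma \ref{a-bands} repeated with short derivatives in place of derivative bands and with Lemma \ref{l'gamma} replacing Lemma \ref{lgamma}. The only (harmless) overstatement is your claim that every cell of a short derivative is active with exactly one $a$-edge: by Property \ref{kk'} a $k'$-cell of Step 1 (or a $k$-cell of Step 3) is passive, but such a wholly passive short derivative carries no $a$-edges at all, so it contributes nothing to either count and the rest of your argument, which only needs ``at most one $a$-edge per cell'' from Property \ref{cell}(a), goes through unchanged.
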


\proof The proof is similar to the proof of Lemma \ref{a-bands}, but
one should use Lemma \ref{l'gamma} instead of Lemma \ref{lgamma}.
\endproof

\begin{lemma}\label{deriv} (a) Let $\Gamma_1,\dots,\Gamma_s$ be
the derivative subcombs of a comb $\Gamma$. Then \\
$\sum_{i=1}^s\kappa^c(\Gamma_i)\le \kappa^c(\Gamma).$ 

(b) If  the history of a comb $\Gamma$ is $H\equiv H(1)\dots H(t),$
and $\Gamma(1),\dots,\Gamma(t)$ are $H(1)$- $,\dots, H(t)$-parts of $\Gamma$, resp.
($\Gamma(i)$ is absent if $H_i$ is empty), then 
$\sum_{i=1}^t\kappa^c(\Gamma(i))\le \kappa^c(\Gamma)$.
 \end{lemma}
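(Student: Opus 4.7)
The plan is to prove both inequalities by a direct bead-counting injection: exhibit an injective map from the pairs of white beads counted in each term of the left-hand side into the pairs counted by $\kappa^c(\Gamma)$. Since ${\bf y}^\Gamma$ is a side of the $q$-band $\mathcal{C}$, it contains no $q$-edges and hence no black beads; therefore $\kappa^c(\Gamma)=\kappa({\bf z}^\Gamma)$, and analogously $\kappa^c(\Gamma_i)=\kappa({\bf z}^{\Gamma_i})$ and $\kappa^c(\Gamma(i))=\kappa({\bf z}^{\Gamma(i)})$. The task thus reduces to comparing the $\mu^c_1$-mixtures on these boundary paths.

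For (a), I would use the fact that distinct derivative subcombs are disjoint and separated from the rest of $\Gamma$ by their vertical handles $\mathcal{C}_i$, so each ${\bf z}^{\Gamma_i}$ occurs as a consecutive subpath of ${\bf z}^\Gamma$ and these subpaths are pairwise disjoint. Any pair of white beads on ${\bf z}^{\Gamma_i}$ separated there by a black bead is automatically a pair on ${\bf z}^\Gamma$ separated by the same bead, because both the linear order and the separating bead are inherited. Summing the (now disjoint) contributions over $i$ yields $\sum_i \kappa^c(\Gamma_i)\le \kappa({\bf z}^\Gamma)=\kappa^c(\Gamma)$.

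For (b), the same strategy applies but requires care because ${\bf z}^{\Gamma(i)}$ is not literally a subpath of ${\bf z}^\Gamma$: its top and bottom portions run along horizontal interfaces with $\Gamma(i\pm 1)$ that are internal to $\Gamma$. The crucial structural observation is that the top and bottom sides of any $\theta$-band consist only of $q$- and $a$-edges (never $\theta$-edges), as is visible from the shape of the $(\theta,q)$- and $(\theta,a)$-relations and from Property \ref{cell}. Hence the interior interfaces contribute no white beads; every white bead on ${\bf z}^{\Gamma(i)}$ is the leftmost $\theta$-edge of a maximal $\theta$-band of $\Gamma$ whose history position lies in $H(i)$, and this edge lies on the left side of $\Gamma$, hence on ${\bf z}^\Gamma$. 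These white beads are distinct for distinct $i$, since each $\theta$-band of $\Gamma$ belongs to exactly one stratum. Moreover, for two such whites $o_1, o_2$ separated in ${\bf z}^{\Gamma(i)}$ by a black bead $b$, the subpath of ${\bf z}^{\Gamma(i)}$ between $o_1$ and $o_2$ stays on the left side of $\Gamma(i)$, forcing $b$ to lie on that left side; but the left side of $\Gamma(i)$ coincides in a pointwise and order-preserving way with the corresponding portion of the left side of $\Gamma$, so $b$ is also a black bead on ${\bf z}^\Gamma$ separating $o_1$ from $o_2$ there. Assembling the injections yields $\sum_i \kappa^c(\Gamma(i))\le \kappa^c(\Gamma)$.

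The main obstacle, really the only delicate point, is the structural lemma that horizontal interfaces between strata contain no $\theta$-edges, so that the white bead content of each ${\bf z}^{\Gamma(i)}$ is faithfully reflected in ${\bf z}^\Gamma$ and inherits the same linear order. Once that is in hand, the bead-counting injection is essentially formal and no induction on the height of $\Gamma$ or on $t$ is required.
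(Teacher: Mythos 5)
Your proof is correct and follows essentially the same route as the paper's: the paper likewise observes that every white bead of $\Gamma_i$ (resp.\ $\Gamma(i)$) lies on $\partial\Gamma$, that two white beads separated by a black bead on the boundary of the piece are separated by the same black bead on $\partial\Gamma$, and that the white-bead sets of distinct pieces are disjoint, so the inequalities follow from the definition of $\kappa^c$. Your extra care in case (b) — noting that the internal interfaces are tops/bottoms of $\theta$-bands and hence carry no $\theta$-edges, so all white beads and the separating black beads between them already sit on ${\bf z}^{\Gamma}$ — is precisely the point the paper states tersely without elaboration.
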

 \proof (a),(b) Note that every white bead of $\Gamma_i$ (of $\Gamma(i)$)
 is placed on the boundary of $\Gamma$, and two white beads of
 $\partial\Gamma_i$ separated by a black bead are also separated by the same
 black bead on $\partial\Gamma.$ Since the sets of white
 beads of $\Gamma_i$ and $\Gamma_j$ (of $\Gamma(i)$ and $\Gamma(j)$) are disjoint for $i\ne j$,
 the statements (a) and (b) follow from the definition of $\kappa^c(.).$
\endproof

\begin{lemma} \label{simple} (a)Let $\Gamma$ be a comb.
Then the number $\alpha$ of $a$-edges in ${\bf z'}=({\bf z'})^{\Gamma}$ does not
exceed $(\delta')\iv(|{\bf z'}|-h)=(\delta')\iv(|{\bf z}|-h-2).$

(b) Assume in addition that the handle $\cal C$ is passive from the
left and there are no derivative bands ${\cal C}_i$ such that some
non-trivial $a$-band starts and ends on $\partial {\cal C}_i.$ Then the
total area of all simple $\theta$-bands ${\cal
S}_1,\dots,{\cal S}_h$ of $\Gamma$ is at most
$$h(h_-+\alpha+1)\le h( h_-+(\delta')\iv(|{\bf z}|-h-1)).$$
\end{lemma}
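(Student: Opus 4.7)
The plan is to prove (a) by a direct length computation on ${\bf z'}$ and (b) by splitting the $(\theta,a)$-cells in the simple bands according to the endpoint location of the maximal $a$-band containing each one.

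For (a), I would invoke Remark~\ref{yzh}: the $h$ maximal $\theta$-bands crossing the handle $\cal C$ each end on ${\bf z}$, and since ${\bf z}={\bf x}_2{\bf z'}{\bf x}_1$ with ${\bf x}_1,{\bf x}_2$ single $q$-edges, all $h$ of these $\theta$-edges actually lie on ${\bf z'}$. Applying Lemma~\ref{ochev}(a) to ${\bf z'}$ with $d=\alpha$ and $e\ge h$ then yields $|{\bf z'}|\ge h+\alpha\delta'$, which rearranges to the stated bound on $\alpha$. The alternative form $|{\bf z'}|=|{\bf z}|-2$ is Lemma~\ref{ochev}(b) applied at the $q$-edges ${\bf x}_1,{\bf x}_2$.

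For (b), the area in question equals $h+A$, where $A$ counts the $(\theta,a)$-cells sitting inside the simple bands ${\cal S}_1,\ldots,{\cal S}_h$ (each ${\cal S}_j$ contributes exactly one $(\theta,q)$-cell, namely its cell in $\cal C$). Each such $(\theta,a)$-cell belongs to a unique maximal $a$-band $\cal A$; because $a$-bands cannot cross any $q$-band, $\cal A$ is contained entirely in the non-derivative part of $\Gamma$ (the complement of $\cal C$ and of the subcombs $\Gamma_i$). Under the two hypotheses of (b), the endpoints of any such $\cal A$ lie either on ${\bf z'}$ (outside every $\Gamma_i$) or on some $\partial{\cal C}_i$: the left side of $\cal C$ is excluded by passivity, and the prohibition against $a$-bands with both ends on a single $\partial{\cal C}_i$ eliminates the remaining unwanted case.

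I would then bound $A$ by splitting the relevant $a$-bands $\cal A$ into two classes. In Class 1, $\cal A$ has at least one endpoint on ${\bf z'}$; distinct endpoints consume distinct $a$-edges of ${\bf z'}$, so the number of such bands is at most $\alpha$, and by Lemma~\ref{NoAnnul} each has at most $h$ cells, giving a total contribution to $A$ of at most $\alpha h$. In Class 2, both endpoints lie on two distinct derivatives; Lemma~\ref{a-bands} bounds the total number of cells in such bands by $h\,h_-$. Adding the two estimates gives $A\le h(\alpha+h_-)$, whence the area is at most $h+h(\alpha+h_-)=h(h_-+\alpha+1)$. The second claimed inequality then follows by substituting the bound from (a) and using $1\le(\delta')^{-1}$ to absorb the trailing $+1$.

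The point requiring the most care is the geometric claim that every $(\theta,a)$-cell of a simple band sits on an $a$-band confined to the non-derivative part, with endpoints restricted to the two loci listed above; once this dichotomy is pinned down, the class-by-class estimates and the appeals to Lemmas~\ref{NoAnnul} and~\ref{a-bands} combine without further calculation.
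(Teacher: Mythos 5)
Your proposal is correct and follows essentially the same route as the paper: part (a) is the same count of $\theta$-edges on ${\bf z'}$ via Lemma~\ref{NoAnnul} and Lemma~\ref{ochev}(a), and part (b) is the same dichotomy — cells of simple bands lying on $a$-bands connecting distinct derivative bands are bounded by $hh_-$ via Lemma~\ref{a-bands}, while the remaining cells lie on at most $\alpha$ maximal $a$-bands with an end on ${\bf z'}$ (the handle being passive from the left and single-derivative returns being excluded), each of length at most $h$, plus one $q$-cell per simple band. Your closing check that $\alpha+1\le(\delta')^{-1}(|{\bf z}|-h-1)$ matches the paper's implicit use of $1\le(\delta')^{-1}$.
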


\proof (a) Notice that  $|{\bf z}'|\ge h+\delta'\alpha$ by Lemmas
\ref{NoAnnul} and \ref{ochev} (a), and so \\ $\alpha\le
(\delta')\iv(|{\bf z'}|-h)=(\delta')\iv(|{\bf z}|-h-2).$

(b) The total number of cells in all $a$-bands connecting the
derivative bands is at most $hh_-$ by Lemma \ref{a-bands}. If a
$(\theta,a)$-cell $\pi$ of a simple $\theta$-band ${\cal S}_j$ does not
belong to any such connecting $a$-bands, then one of the ends of the
maximal $a$-band containing $\pi$ must belong to $\partial\Gamma$
because $\cal C$ is passive from the left. The number of such
$a$-bands is at most $\alpha$,
 and the total number of their cells is at most
$\alpha h$, because their lengths  do not exceed $h$ by Lemma \ref{NoAnnul}
. Since a simple band has one $q$-cell, the number of cells in
all the simple $\theta$-bands is at most $h(h_- +\alpha+1)$.
\endproof

\begin{rk}\label{pustbudut} If ${\cal C}_i$ is a derivative band
of a comb $\Gamma$, then every $a$-band connecting two cells from
${\cal C}_i$ is of length at most $h_i$, and the total area of
of such bands crossing simple bands of $\Gamma$ at most $h_i^2/2.$
Hence if we omit the assumption that there are no derivative bands
${\cal C}_i$ such that some non-trivial $a$-band starts and ends on
${\cal C}_i,$ then we may add $\sum_{i=1}^s h_i^2/2\le \frac{h}{2} \sum_{i=1}^s h_i$
to the estimate of Lemma \ref{simple} (b), and in this case the
total number cells $n_s$ in all  simple bands satisfies
\begin{equation}
n_s\le h(h_-+\alpha+1+\sum_{i=1}^s h_i/2 )\le
h((\delta')\iv(|{\bf z}|-h-1)+\frac32\sum_{i=1}^s h_i) \label{ns}
\end{equation}

\end{rk}

If we use both Lemmas \ref{a-bands} and \ref{a-bands'} instead of
Lemma \ref{a-bands} in the proof of Lemma \ref{simple} we get

\begin{lemma} \label{comb'} Let $\Gamma$ be a comb and let its handle $\cal C$
be a $t^{\pm 1}$- or $(t')^{\pm 1}$-band
 with history having no $(23)$-rules or no $(12)$-rules, respectively,
and every derivative ${\cal C}_i$ is a $k^{\pm 1}$- or a $(k')^{\pm
1}$-band such that there are
 no special $\theta$-edges in the derivative  subcombs
of ${\Gamma }_i.$
Then the total area of all simple $\theta$-bands of $\Gamma$ is
at most $h(h_-+h'_-+\alpha+1)\le h(h_-+
h'_-+(\delta')\iv(|{\bf z}|-h-1)),$ where $\alpha$ is the number of
$a$-edges in ${\bf z'}=({\bf z'})^{\Gamma}$
\end{lemma}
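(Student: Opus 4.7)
The plan is to parallel the proof of Lemma~\ref{simple}(b) almost verbatim, replacing the single application of Lemma~\ref{a-bands} with a combined application of Lemmas~\ref{a-bands} and~\ref{a-bands'}, which between them handle all maximal $a$-bands that remain ``internal'' (i.e.\ whose ends both lie on derivative handles). First, since $\cal C$ is a $t^{\pm 1}$- or $(t')^{\pm 1}$-band, it is passive by Property~\ref{kk'}, so no maximal $a$-band of $\Gamma$ has an end on $\cal C.$ Hence each $(\theta,a)$-cell of a simple $\theta$-band lies in some maximal $a$-band $\cal A$ whose two endpoints are distributed among ${\bf z'}={\bf z'}^{\Gamma}$ and the sides of the derivative bands ${\cal C}_1,\dots,{\cal C}_s$.

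Accordingly, I would classify each such $\cal A$ into one of three disjoint types:
(i) both ends on derivative bands ${\cal C}_i$ and ${\cal C}_j$ with $i\ne j$;
(ii) both ends on the same derivative band ${\cal C}_i$;
(iii) at least one end on ${\bf z'}.$
For type~(i), Lemma~\ref{a-bands} gives a total cell count of at most $hh_-.$ For type~(ii), I would use the hypothesis: the handle $\cal C$ and the derivatives are exactly the configuration required to define short derivative bands, and by Property~\ref{iv} no maximal $a$-band starts and ends on a single short derivative; consequently every type~(ii) $a$-band connects two distinct short derivatives of the same ${\cal C}_i,$ so Lemma~\ref{a-bands'} bounds the total cell count by $hh'_-.$ For type~(iii), there are at most $\alpha$ such bands (one per $a$-edge of ${\bf z'}$), each of length at most $h$ by Lemma~\ref{NoAnnul}, contributing at most $\alpha h$ cells in total.

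Summing these three contributions for the $(\theta,a)$-cells and adding the $h$ possible $(\theta,q)$-cells (at most one per simple band, and at most $h$ simple bands since each starts on one of the $h$ cells of $\cal C$), I obtain
\[
n_s \;\le\; h(h_- + h'_- + \alpha + 1).
\]
Finally, Lemma~\ref{simple}(a) gives $\alpha \le (\delta')^{-1}(|{\bf z}|-h-2)$, and since $(\delta')^{-1}\ge 1$, we have $\alpha+1 \le (\delta')^{-1}(|{\bf z}|-h-1)$, yielding the second stated inequality.

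I do not anticipate any serious obstacle: the structural verification that the three types of $a$-bands are exhaustive and that the hypotheses of Property~\ref{iv} and Lemma~\ref{a-bands'} are in force under the present assumptions is the only delicate point, and it is immediate from the fact that $\cal C$ is passive and the derivatives have no special $\theta$-edges inside.
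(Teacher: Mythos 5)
Your proposal is correct and follows essentially the same route as the paper, whose proof of Lemma~\ref{comb'} consists precisely of repeating the proof of Lemma~\ref{simple}(b) with Lemmas~\ref{a-bands} and~\ref{a-bands'} used jointly in place of Lemma~\ref{a-bands}. Your three-way classification of the maximal $a$-bands (distinct derivatives, same derivative via distinct short derivatives by Property~\ref{iv}, or an end on ${\bf z'}$) is exactly the intended fleshing-out of that one-line argument.
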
 $\Box$

The proof of the following lemma can be obtained from the proof of
\cite[Lemma 4.10]{OS} by replacing  $|\Gamma|_a$ by $|{\bf z}|_a$ and replacing the constant $C$  by $2$ (since $C$ was the maximum  of the numbers of $a$-letters in
$(\theta,q)$-relations in \cite{OS}).

\begin{lemma} \label{comb} Let $h$ and $b$ be the height
and the base width of a comb $\Gamma$, respectively, and let
$\ttt_1,\dots \ttt_h$ be consecutive $\theta$-bands of $\Gamma$ . We
can assume that $\bott(\ttt_1)$ and $\topp(\ttt_h)$ are contained in
$\partial\Gamma$. Let $\alpha=|z^{\Gamma}|_a,$ and we denote by $\alpha_1$ the
number of $a$-edges on $\bott(\ttt_1)$. Then $\alpha + 8hb\ge
2\alpha_1$, and the area of $\Gamma$ does not exceed $4bh^2+2\alpha
h$.
\end{lemma}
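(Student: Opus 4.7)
The plan is to partition the cells of $\Gamma$ into $(\theta,q)$-cells and $(\theta,a)$-cells and bound each type separately, using Property \ref{cell}(a) and the rigidity of bands afforded by Lemma \ref{NoAnnul}.

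First I establish $2\alpha_1\le\alpha+8hb$. By Lemma \ref{NoAnnul} an $a$-band meets every $\theta$-band in at most one cell; since consecutive $(\theta,a)$-cells of an $a$-band lie in adjacent $\theta$-bands (they share an $a$-edge, which is simultaneously the top edge of one cell and the bottom edge of the other), an $a$-band starting on $\bott(\ttt_1)$ must advance monotonically upward and cannot return to $\bott(\ttt_1)$. Hence each of the $\alpha_1$ $a$-edges on $\bott(\ttt_1)$ is the endpoint of a maximal $a$-band whose other end lies either (i) on $\partial\Gamma\setminus\bott(\ttt_1)$, (ii) on an $a$-edge of some $(\theta,q)$-cell, or, for a trivial $a$-band of length $0$, (iii) on the bottom of a $(\theta,q)$-cell in $\ttt_1$. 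Type (i) contributes at most $\alpha-\alpha_1$ bands; type (ii) at most $2bh$, since by Property \ref{cell}(a) each $(\theta,q)$-cell carries at most two $a$-edges and there are at most $bh$ such cells ($b$ per band, $h$ bands); type (iii) at most $b$. Adding up, $\alpha_1\le(\alpha-\alpha_1)+2bh+b$, so $2\alpha_1\le\alpha+3bh\le\alpha+8hb$.

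For the area bound I count the two types of cells. The $(\theta,q)$-cells number at most $bh$. Every $(\theta,a)$-cell belongs to a unique maximal $a$-band, and every such band has length at most $h$ by Lemma \ref{NoAnnul}. The total number of $a$-band endpoints is at most $\alpha+2bh$: at most $\alpha$ on $\partial\Gamma$ and at most $2bh$ on $(\theta,q)$-cells (again by Property \ref{cell}(a)); since each band has two endpoints, the number of maximal $a$-bands is at most $(\alpha+2bh)/2$. Consequently the total number of $(\theta,a)$-cells is at most $h(\alpha+2bh)/2=\alpha h/2+bh^2$, and
$$
\area(\Gamma)\le bh+bh^2+\alpha h/2\le 2bh^2+\alpha h/2\le 4bh^2+2\alpha h.
$$

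No step is a genuine obstacle once the bookkeeping is set up. The essential inputs are Lemma \ref{NoAnnul} (which both rules out $a$-bands returning to $\bott(\ttt_1)$ and bounds the length of every $a$-band by $h$) and Property \ref{cell}(a) (which supplies the constant $2$ replacing the $C$ of \cite[Lemma 4.10]{OS}). The only minor care required is the separate treatment of trivial $a$-bands when tallying endpoints on $\bott(\ttt_1)$; the constants $4$, $2$, and $8$ in the statement are not tight, but they match the slack accepted in the analogous count in \cite{OS}.
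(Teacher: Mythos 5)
Your proof is correct and is essentially the argument the paper intends: the paper gives no independent proof here but refers to \cite[Lemma 4.10]{OS}, which is exactly this count of the at most $bh$ $(\theta,q)$-cells and of the maximal $a$-bands (each of length $\le h$) via Lemma \ref{NoAnnul} and Property \ref{cell}(a), with the constant $2$ replacing the constant $C$ of \cite{OS}. Two small points to tighten, both absorbed by the slack in the constants: a $(\theta,q)$-cell of $\ttt_1$ may have \emph{both} of its $a$-edges on $\bott(\ttt_1)$, so your type (iii) count should be $2b$ rather than $b$; and since $\alpha$ counts only the $a$-edges of ${\bf z}^{\Gamma}$, one should say explicitly that no $a$-band containing a cell can end on ${\bf y}^{\Gamma}$ (every edge of ${\bf y}^{\Gamma}$ abuts, from inside, only a $(\theta,q)$-cell of the handle), which is what justifies bounding the boundary endpoints by $\alpha$.
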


\begin{rk}

For a comb $\Gamma$, we will use symbol \label{[Gamma]} $[\Gamma]$ to denote
the product $h^{\Gamma}(|{\bf z}^{\Gamma}|-|{\bf y}^{\Gamma}|).$ As we noted in Introduction,
an estimate of the form $\area(\Gamma) \le C[\Gamma]$ (where $C$ does not depend on $\Gamma$) would be perfect for the proof of the main theorem. It follows from the definition of comb
that every maximal $\theta$-band of $\Gamma$ starting on  ${\bf y}^{\Gamma}$ ends on 
${\bf z}^{\Gamma}$ and vice versa, that is $|{\bf z}^{\Gamma}|_{\theta}=|{\bf y}^{\Gamma}|_{\theta}=h^{\Gamma}.$ Clearly $|{\bf z}^{\Gamma}|_q-2\ge |{\bf y}^{\Gamma}|_{q}=0$
since the path ${\bf z}^{\Gamma}$ contains at least $2$ $q$-edges of the handle of $\Gamma,$
and therefore, when we estimate $|{\bf z}^{\Gamma}|-|{\bf y}^{\Gamma}|$ from below in the proofs
of several lemmas, our goal  is to obtain a lower bound for the difference $|{\bf z}^{\Gamma}|_{a}- |{\bf y}^{\Gamma}|_{a}$.

\end{rk}

We observed earlier that if the handle of a comb $\Gamma$ is passive from the right, then $ |{\bf y}^{\Gamma}|=h^{\Gamma},$ and so $[\Gamma]$ is equal to $h^{\Gamma}(|{\bf z}^{\Gamma}|-h^{\Gamma}),$ and therefore it is
positive. Moreover:

\begin{lemma}\label{nizko}
 If the height $h$ of a comb $\Gamma$ does not exceed $(\delta')^{-1}$  ,
then $|{\bf z}^{\Gamma}|-|{\bf y}^{\Gamma}|>0$ and the area of $\Gamma$ does not
exceed $4(\delta')^{-1}  [\Gamma]$.
\end{lemma}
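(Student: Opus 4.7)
The plan is to prove both claims in parallel by carefully estimating $|{\bf z}|-|{\bf y}|$ from below in terms of the base width $b$ and the number $\alpha = |{\bf z}|_a$ of $a$-edges on ${\bf z}$, and then comparing with the area bound $\area(\Gamma)\le 4bh^2 + 2\alpha h$ from Lemma \ref{comb}. The case $h=0$ is trivial, so I assume $h\ge 1$; in that case the handle forces $b\ge 1$.

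First I would bound $|{\bf y}|$ from above via Lemma \ref{ochev}(c): since ${\bf y}$ is a side of a $q$-band with $h$ cells, $|{\bf y}|\le h(1+\delta')$. Next, since by Remark \ref{yzh} the path ${\bf z}$ contains exactly $h$ $\theta$-edges, and it has $|{\bf z}|_q$ $q$-edges and $\alpha$ $a$-edges, Lemma \ref{ochev}(a) yields $|{\bf z}|\ge h+|{\bf z}|_q+\delta'\alpha$. Subtracting and using $h\delta'\le 1$ gives
$$|{\bf z}|-|{\bf y}|\ge |{\bf z}|_q+\delta'(\alpha-h)\ge |{\bf z}|_q-1.$$

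The key combinatorial inequality I need is $|{\bf z}|_q\ge 2b$. Pick a $\theta$-band $\cal T$ of $\Gamma$ whose base has length exactly $b$. By Lemma \ref{NoAnnul} $\cal T$ crosses every $q$-band of $\Gamma$ at most once, and by the comb definition it crosses the handle $\cal C$ exactly once. Hence the remaining $b-1$ $q$-cells of $\cal T$ lie in $b-1$ distinct non-handle maximal $q$-bands of $\Gamma$. Each such $q$-band starts and ends on $\partial\Gamma$, and since ${\bf y}$ contains no $q$-edges, both of its endpoint $q$-edges lie on ${\bf z}$, contributing $2$ to $|{\bf z}|_q$. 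Adding the two $q$-edges ${\bf x}_1,{\bf x}_2$ of the handle that lie on ${\bf z}$ gives $|{\bf z}|_q\ge 2+2(b-1)=2b$. Combined with the estimate above, this yields $|{\bf z}|-|{\bf y}|\ge 2b-1\ge 1>0$, which proves the first claim.

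For the area inequality, Lemma \ref{comb} gives $\area(\Gamma)\le 4bh^2+2\alpha h$. Using $h\le (\delta')^{-1}$ to replace one factor of $h$ in the first term, $\area(\Gamma)\le 2h\bigl(2b(\delta')^{-1}+\alpha\bigr)$. The target bound $\area(\Gamma)\le 4(\delta')^{-1}h(|{\bf z}|-|{\bf y}|)$ therefore reduces (dividing both sides by $2h(\delta')^{-1}$) to $2b+\alpha\delta'\le 2(|{\bf z}|-|{\bf y}|)$. From the stronger estimate $|{\bf z}|-|{\bf y}|\ge 2b+\delta'\alpha-1$ (which uses $h\delta'\le 1$), the right-hand side is at least $4b+2\alpha\delta'-2$, and the inequality $2b+\alpha\delta'\le 4b+2\alpha\delta'-2$ simplifies to $2\le 2b+\alpha\delta'$, which holds because $b\ge 1$. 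The only delicate step is the inequality $|{\bf z}|_q\ge 2b$; everything else is a straightforward chain of length estimates using $h\delta'\le 1$. I do not expect any serious obstacle.
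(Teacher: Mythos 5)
Your argument is correct and is essentially the paper's own proof: bound $|{\bf y}|$ by $h(1+\delta')$ via Lemma \ref{ochev}(c), bound $|{\bf z}|\ge 2b+h+\delta'|{\bf z}|_a$ via Lemma \ref{ochev}(a) together with $|{\bf z}|_q\ge 2b$, deduce $|{\bf z}|-|{\bf y}|\ge (2b-1)+\delta'|{\bf z}|_a>0$, and then combine Lemma \ref{comb} with $h\le(\delta')^{-1}$ and $b\ge 1$. The only difference is cosmetic: you spell out the counting argument for $|{\bf z}|_q\ge 2b$ (which the paper simply attributes to Lemma \ref{NoAnnul}) and arrange the final algebra slightly differently, but the estimates are the same.
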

\proof By Lemma \ref{ochev} (c), $|{\bf y}^{\Gamma}|\le h^{\Gamma}+1$. If the 
base width of $\Gamma$ is $b$, then, by Lemma \ref{NoAnnul}, ${\bf z}^{\Gamma}$ has at lest $2b$ $q$-edges and at least $h^{\Gamma}$ $\theta$-edges. Hence by Lemma \ref{ochev}(a),
$|{\bf z}^{\Gamma}|\ge 2b+h+\delta'|{\bf z}^{\Gamma}|_a.$ Therefore 
$|{\bf z}^{\Gamma}|-|{\bf y}^{\Gamma}|\ge (2b-1) + \delta'|{\bf z}^{\Gamma}|_a >0$ since $b\ge 1.$
Then, by Lemma \ref{comb}, $$Area(\Gamma) \le 4b(h^{\Gamma})^2+2|{\bf z}^{\Gamma}|_ah^{\Gamma} \le h^{\Gamma}(\delta')^{-1} (4(2b-1)+
2\delta'|{\bf z}^{\Gamma}|_a) \le 4(\delta')^{-1} h^{\Gamma}(|{\bf z}^{\Gamma}|-|{\bf y}^{\Gamma}|)$$
\endproof

\begin{rk}

Further we are finding appropriate estimates for the areas of  combs $\Gamma$-s
or for the areas of some proper subcombs of them provided the base width $b$ of $\Gamma$ is not
too small and not too large. It is not small in some lemmas because we need a choice to select a suitable subcomb of $\Gamma$,
and $b$ is not too large since the estimates of Lemma \ref{comb} and of other lemmas
depend on $b$. 
 The sufficiency
of the upper bound $b\le 15N$ will be seen later. 

\end{rk}

\begin{lemma} \label{h0} Let $\Gamma$ be a comb with base width $b\le 15N$ and
with passive handle $\cal C.$  Assume that $\Gamma$ has a derivative band ${\cal C}_{i_0}$
which contains an active from the right subband $\tilde{\cal C}$ of length   $h_0.$
Assume also that at most $(1-\delta) h_0$ maximal $a$-bands starting
on $\tilde{\cal C}$ and ending on one of the bands ${\cal C}$, ${\cal
C}_1,\dots, {\cal C}_s.$ Then (a) $\area(\Gamma)\le
(\delta')^{-2}[\Gamma]$ if $h_0\ge \delta h;$
(b) $\sum_{i=1}^s \area(\Gamma_i)\le
(\delta')^{-2}[\Gamma]$ for the set of derivative subcombs $\Gamma_1,\dots,\Gamma_s$ if
$h_0\ge \delta \sum h_i.$
\end{lemma}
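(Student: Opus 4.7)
The plan is in three stages: first I will extract from the hypothesis a lower bound on $|{\bf z}^{\Gamma}|_a$ and hence on $[\Gamma]$; second I handle (a) by a direct application of Lemma \ref{comb}; third I handle (b) by applying Lemma \ref{comb} to each derivative subcomb and summing.

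\emph{Setup.} Because $\cal C$ is passive, its right side ${\bf y}^{\Gamma}$ carries only $\theta$-edges, so $|{\bf y}^{\Gamma}| = h$. The subband $\tilde{\cal C}\subseteq{\cal C}_{i_0}$ is active from the right, hence contributes $h_0$ distinct $a$-edges on the right side of ${\cal C}_{i_0}$, each one the starting edge of a maximal $a$-band of $\Gamma$. By hypothesis at most $(1-\delta)h_0$ of these $a$-bands end on one of $\cal C,{\cal C}_1,\dots,{\cal C}_s$ (in particular ${\cal C}_{i_0}\supseteq\tilde{\cal C}$ is in this list, so none can return to $\tilde{\cal C}$ without being counted). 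Therefore at least $\delta h_0$ of them terminate on $\partial\Gamma$; since ${\bf y}^{\Gamma}$ has no $a$-edges, they all land on ${\bf z}^{\Gamma}$, giving $|{\bf z}^{\Gamma}|_a\ge\delta h_0$. Applying Lemma \ref{ochev}(a) to ${\bf z}^{\Gamma}$ (which has $h$ $\theta$-edges and $\ge 2$ $q$-edges) yields
\[
|{\bf z}^{\Gamma}|-|{\bf y}^{\Gamma}|\ \ge\ 2+\delta'|{\bf z}^{\Gamma}|_a\ \ge\ \delta'\delta h_0,
\]
and hence $[\Gamma]=h(|{\bf z}^{\Gamma}|-|{\bf y}^{\Gamma}|)\ge\delta'\delta\,h\,h_0$ together with $h|{\bf z}^{\Gamma}|_a\le(\delta')^{-1}[\Gamma]$.

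\emph{Part (a).} The assumption $h_0\ge\delta h$ gives $h^2\le\delta^{-1}hh_0\le(\delta')^{-1}\delta^{-2}[\Gamma]$. With $b\le 15N$, Lemma \ref{comb} yields
\[
\area(\Gamma)\ \le\ 4bh^2+2|{\bf z}^{\Gamma}|_a h\ \le\ 60N(\delta')^{-1}\delta^{-2}[\Gamma]+2(\delta')^{-1}[\Gamma],
\]
which is at most $(\delta')^{-2}[\Gamma]$ by the parameter hierarchy $(\delta')^{-1}\gg N\delta^{-2}$.

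\emph{Part (b).} The assumption $h_0\ge\delta\sum h_i$ gives $h_i\le\sum_j h_j\le\delta^{-1}h_0$ for every $i$. Applying Lemma \ref{comb} to each $\Gamma_i$ (of base width at most $b\le 15N$) and summing:
\[
\sum_i\area(\Gamma_i)\ \le\ 4b\sum_i h_i^2+2\sum_i\alpha_i h_i,\qquad \alpha_i:=|{\bf z}^{\Gamma_i}|_a.
\]
For the first sum, $\sum h_i^2\le(\sum h_j)(\max h_i)\le\delta^{-2}h\,h_0\le(\delta')^{-1}\delta^{-3}[\Gamma]$, so the contribution is $\le 60N(\delta')^{-1}\delta^{-3}[\Gamma]$. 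For the second, I bound $\sum\alpha_i$ by a counting argument on $a$-band endpoints: each such endpoint lies either on $\partial\Gamma$ (total $\alpha$) or on an $a$-edge of a $(\theta,q)$-cell. Since $\cal C$ is passive and the total number of $(\theta,q)$-cells in $\Gamma$ is at most $bh\le 15Nh$, the latter contribute at most $2\cdot 15Nh=30Nh$ endpoints by Property \ref{cell}. An $a$-edge on some ${\bf z}^{\Gamma_i}$ is either on $\partial\Gamma$ (at most one $\alpha_i$) or is shared with a simple $\theta$-band; accounting for both sides being attached to at most two subcombs yields $\sum\alpha_i\le\alpha+O(Nh)$. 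Using $h_i\le\delta^{-1}h_0$, $h_0\alpha\le h\alpha\le(\delta')^{-1}[\Gamma]$ and $hh_0\le(\delta'\delta)^{-1}[\Gamma]$,
\[
\sum\alpha_i h_i\ \le\ \delta^{-1}h_0\bigl(\alpha+O(Nh)\bigr)\ \le\ O(N\delta^{-2})(\delta')^{-1}[\Gamma],
\]
and combining the two estimates and invoking the parameter hierarchy gives $\sum_i\area(\Gamma_i)\le(\delta')^{-2}[\Gamma]$.

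The main obstacle is the bookkeeping behind the counting estimate for $\sum\alpha_i$ in part (b): one must check that the $a$-edges on $\partial\Gamma_i$ which are \emph{internal} edges of an $a$-band of $\Gamma$ (rather than endpoints) are correctly tied back to that $a$-band's two endpoints, so that the overall count degrades only by a factor proportional to $Nh$. Once this is in hand, every remaining step is a routine substitution under the ordering in \eqref{const}.
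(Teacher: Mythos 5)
Your overall strategy (Lemma \ref{comb} plus the $a$-edge count on ${\bf z}^{\Gamma}$ and the parameter hierarchy) is the same engine the paper uses, but two steps are not actually established as written. First, your unconditional inequality $|{\bf z}^{\Gamma}|_a\ge\delta h_0$ is not justified: by Definition \ref{activeband}, a band that is ``active from the right'' may have its first and last cells passive (when they are $(12)^{\pm1}$- or $(23)^{\pm1}$-cells), so $\tilde{\cal C}$ only guarantees at least $h_0-2$ starting $a$-edges, and the hypothesis then yields only $|{\bf z}^{\Gamma}|_a\ge\delta h_0-2$. When $h_0=O(\delta^{-1})$ this gives nothing, and your chain $[\Gamma]\ge\delta'\delta h h_0$, $h^2\le(\delta')^{-1}\delta^{-2}[\Gamma]$ collapses. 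The paper handles exactly this regime by a case split: if $\alpha=|{\bf z}|_a$ is not large compared with $\delta^2h$ (resp.\ $\delta^2\sum h_i$), the corrected count forces $h<4\delta^{-2}<(\delta')^{-1}$ (resp.\ $\sum h_i<4\delta^{-2}$), and then Lemma \ref{nizko} finishes; you never invoke \ref{nizko} nor any substitute. (Your route can in fact be repaired without the case split, because the passive handle gives $|{\bf z}|-|{\bf y}|\ge 2+\delta'|{\bf z}|_a\ge 2(1-\delta')+\delta'(\delta h_0-2)\ge\delta'\delta h_0$, which restores $hh_0\le(\delta'\delta)^{-1}[\Gamma]$ — but this bookkeeping, or the \ref{nizko} fallback, has to appear.)

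Second, in part (b) the step you yourself flag as ``the main obstacle'' — bounding $\sum_i\alpha_ih_i$ — is both unresolved and based on a misreading of the geometry: a derivative subcomb $\Gamma_i$ is cut off by the maximal $q$-band ${\cal C}_i$, whose end $q$-edges lie on ${\bf z}^{\Gamma}$, so ${\bf z}^{\Gamma_i}$ consists of two $q$-edges of ${\cal C}_i$ together with a subpath of ${\bf z}^{\Gamma}$; none of its $a$-edges are ``shared with a simple $\theta$-band'' or lie on $(\theta,q)$-cells. Hence $\alpha_i\le\alpha$ (indeed $\sum_i\alpha_i\le\alpha$, the subpaths being disjoint), and $\sum_i\alpha_ih_i\le\alpha h\le(\delta')^{-1}[\Gamma]$ by Lemma \ref{simple}(a) — this is exactly how the paper argues (compare the proof of Lemma \ref{t}, where the same $\alpha=|{\bf z}|_a$ serves for all derivative subcombs). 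Your proposed endpoint-counting with $O(Nh)$ error terms is unnecessary, and since you leave its verification open, part (b) is not complete as submitted.
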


 \proof To prove statement (a), we consider two cases.

 {\bf Case 1.} Assume that $\alpha = |{\bf z}|_a \ge \delta^2h/2.$ Then
 $|{\bf z}|-|{\bf y}|\ge\delta'\alpha\ge \delta'\delta^2h/2$ by Lemma  \ref{simple} (a) since $|{\bf y}|=h.$
 Hence by Lemma \ref{comb} with $b\le 15N$, we have
  $$\area(\Gamma)\le 60Nh^2 +2\alpha h\le  h(|{\bf z}|-|{\bf y}|)(60N\times 2(\delta')\iv\delta^{-2}+2(\delta')\iv)\le
  (\delta')^{-2}[\Gamma]$$
  since $(\delta')^{-1}> 120N \delta^{-2}+2.$

  {\bf Case 2.} Let $\alpha = |{\bf z}|_a < \delta^2h/2.$
  It follows from the condition of the lemma
  that at least $h_0 -2$ maximal $a$-bands start on $\tilde{\cal C}$  but at most $(1-\delta)h_0$
  end not on $\bf z.$ Therefore $\alpha \ge \delta h_0 -2\ge \delta^2h-2.$   The arising  in this
  case inequality $\delta^2h-2 < \delta^2h/2$ implies $h<4\delta^{-2}<(\delta')^{-1}$ by the choice of $\delta'.$
  Now Claim (a) follows from Lemma \ref{nizko}.

The proof of statement (b) is similar, but now two cases appear due to the comparison of $\alpha$ with
$\delta^2(\sum h_i)/2$, which leads to inequality $\sum h_i<4\delta^{-2}$ in the second case. Also one takes into account inequalities $\sum h_i\le h$ and $\sum (|{\bf z}_i|-|{\bf y}_i|)\le |{\bf z}|-|{\bf y}|$ in both cases.

\endproof

  \begin{lemma} \label{width1} Assume that a comb $\Gamma$ has no maximal $q$-bands except for
  its handle $\cal C$, and there are
  no non-trivial $a$-bands  both starting and terminating
  on ${\bf y'}=({\bf y'})^{\Gamma}$.

  (a) If $\cal C$ is active from the left or passive from the left, then $$\area'(\Gamma)\le(\delta')\iv h(|{\bf z'}|-|{\bf y'}|+1).$$

  (b) If $\cal C$ is active from the left or $\cal C$ is passive (from both sides), then \\
  $$\area(\Gamma)\le(\delta')\iv[\Gamma].$$
  \end{lemma}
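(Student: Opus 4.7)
The plan is to use the structural restrictions on $\Gamma$ to reduce the area computation to a counting argument on maximal $a$-bands. First I observe that since $\cal C$ is the only maximal $q$-band of $\Gamma$, every cell of $\Gamma\setminus\cal C$ is a $(\theta,a)$-cell, and hence lies in a unique maximal $a$-band. By Lemma \ref{NoAnnul} there are no $a$-annuli, so each such band has its two $a$-edge endpoints on the boundary path $({\bf y}')^{-1}{\bf z}'$, and by hypothesis no band has both endpoints on ${\bf y}'$. Setting $\alpha=|{\bf z}'|_a$ and $\beta=|{\bf y}'|_a$, this gives exactly $\beta$ bands of type $a_1$ (one endpoint on each of ${\bf y}',{\bf z}'$) and $(\alpha-\beta)/2$ bands of type $a_2$ (both endpoints on ${\bf z}'$). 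Since by Lemma \ref{NoAnnul} each $a$-band meets each of the $h$ $\theta$-bands of $\Gamma$ in at most one cell, every such band has length at most $h$, so
$$\area'(\Gamma)\le h(\alpha+\beta)/2.$$

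For (a), I would split on the allowed cases for $\cal C$. The easy case is $\cal C$ passive from the left: then $\beta=0$, $|{\bf y}'|=h$, and Lemma \ref{ochev}(a) gives $|{\bf z}'|\ge h+\delta'\alpha$, whence $\area'(\Gamma)\le h\alpha/2\le(\delta')^{-1}h(|{\bf z}'|-|{\bf y}'|)$ and the desired bound follows. The hard case is when $\cal C$ is active from the left: here $\beta$ may be of order $h$, and the naive estimate $|{\bf z}'|-|{\bf y}'|\ge\delta'(\alpha-\beta)$ only controls the type-$a_2$ contribution. The main obstacle is to relate the $\beta$ many $a_1$-bands (which can individually have length up to $h$) to the difference $|{\bf z}'|-|{\bf y}'|$. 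The crucial observation is that ${\bf y}'$, being a side of the $q$-band $\cal C$, has each of its $a$-edges adjacent to a $\theta$-edge so that every $a$-letter in its label pairs into a $\theta a$-syllable, giving $|{\bf y}'|=h+\delta'\beta$ \emph{exactly}; whereas on ${\bf z}'$ the $a$-edges interspersed between the end $\theta$-edges of consecutive simple $\theta$-bands cannot all be absorbed into $\theta a$-syllables and must contribute length $\delta$ rather than $\delta'$. Tracking which $a$-edges on ${\bf z}'$ are adjacent to $\theta$-edges (namely those arising at the ends of simple $\theta$-bands from type-$a_1$ bands) and which are ``interior'' (arising from type-$a_2$ bands) refines Lemma \ref{ochev}(a) to furnish the required inequality $\delta' h(\alpha+\beta)/2\le h(|{\bf z}'|-|{\bf y}'|+1)$.

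Part (b) reduces to (a). Using ${\bf z}={\bf x}_2{\bf z}'{\bf x}_1$ we have $|{\bf z}|=|{\bf z}'|+2$, while $\area(\Gamma)=h+\area'(\Gamma)$. When $\cal C$ is passive from both sides we have $|{\bf y}|=|{\bf y}'|=h$, so $|{\bf z}|-|{\bf y}|=|{\bf z}'|-|{\bf y}'|+2$, and combining with (a) the inequality
$$h+(\delta')^{-1}h(|{\bf z}'|-|{\bf y}'|+1)\le(\delta')^{-1}h(|{\bf z}|-|{\bf y}|)$$
reduces to $\delta'\le 1$, which holds by choice of $\delta'$. When $\cal C$ is active from the left, $|{\bf y}|=h+\delta'\gamma$ where $\gamma$ counts the cells also active from the right; the comparison with the bound from (a) must therefore absorb the extra term $\delta'(\gamma-\beta)$, and one checks that the combinatorial length gap between ${\bf y}$ and ${\bf y}'$ is small enough (using Property \ref{one}(1)) that the $(\delta')^{-1}$ factor suffices. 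This will again come down to verifying a trivial inequality of the form $\delta'\le 1$ together with a non-negativity argument for $|{\bf z}'|-|{\bf y}'|$ under the active hypothesis.
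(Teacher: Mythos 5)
Your passive-from-the-left case of (a) is fine (and even simpler than the paper's), but the active-from-the-left case contains a genuine gap, and (b) inherits it. Your plan bounds $\area'(\Gamma)$ by (number of maximal $a$-bands)$\times h\le h(\alpha+\beta)/2$, and then needs the inequality $\delta'(\alpha+\beta)/2\le |{\bf z'}|-|{\bf y'}|+1$. Your ``crucial observation'' — that an $a$-edge of ${\bf z'}$ which is the end of a type-$a_1$ band must cost $\delta$ rather than $\delta'$, while the matching $a$-edge on ${\bf y'}$ costs only $\delta'$ — is not justified and is false in general: nothing prevents such an $a$-edge of ${\bf z'}$ from sitting next to the end $\theta$-edge of a $\theta$-band and being absorbed into a $\theta a$-syllable. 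Lemma \ref{ochev}(a) forces the price $\delta$ only for the \emph{surplus} of $a$-edges over $\theta$-edges on a path, and when all $a$-bands are of type $a_1$ one can have $\alpha=\beta\approx h$ with the label of ${\bf z'}$ an alternating product of $\theta$- and $a$-letters, so that $|{\bf z'}|-|{\bf y'}|$ is bounded while $\delta'(\alpha+\beta)/2\approx\delta' h$ is not. In that regime the lemma is still true, but only because each $\theta$-band then carries $O(1)$ many $(\theta,a)$-cells — which shows that your accounting (many bands, each estimated by the maximal possible length $h$) is intrinsically too lossy: no refinement of the length function alone can rescue the inequality you need.

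The paper's proof proceeds differently at exactly this point: it bounds $d$, the maximal number of $(\theta,a)$-cells in a single $\theta$-band, and uses $\area'(\Gamma)\le dh$. One takes a longest $\theta$-band $\cal T$, considers the families ${\bf S}_1,{\bf S}_2$ of $a$-bands leaving its top and bottom ($||{\bf S}_1|-|{\bf S}_2||\le 1$ by Lemma \ref{NoAnnul}), and uses the hypothesis on ${\bf y'}$ to conclude that on one side of $\cal T$ \emph{no} band of the corresponding family returns to ${\bf y'}$; on that side the $\approx d$ bands crossing $\cal T$ together with the $a$-bands emanating from $\cal C$ all end on the corresponding portion of ${\bf z'}$, which therefore has an excess of $a$-edges over $\theta$-edges of size $\approx d$. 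Only this excess is forced to cost $\delta$ per edge by Lemma \ref{ochev}(a), and comparing with ${\bf y'}$ (where, in the active case, $a$-edges pair with $\theta$-edges at cost $\delta'$) yields $d\lesssim(\delta-\delta')^{-1}(|{\bf z'}|-|{\bf y'}|)+O(1)$; the other side of $\cal T$ is controlled separately. This one-sided, surplus-based argument is the missing idea; your use of the no-${\bf y'}$-to-${\bf y'}$ hypothesis only for the $a_1/a_2$ classification does not extract enough from it. Your reduction in (b) (via $|{\bf z}|=|{\bf z'}|+2$ and $|{\bf y}|-|{\bf y'}|\le 2\delta'$ in the active case) is essentially the paper's, but it of course stands only once (a) is repaired.
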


\proof (a) Let $\cal T$ be a longest $\theta$-band in $\Gamma$, $d$ the number of $a$-cells in $\cal T$. Denote by
$T_1$ and $T_2$ the top and the bottom of $\cal T$.
Consider the families ${\bf S}_1$ and ${\bf S}_2$ of $a$-bands starting on $T_1$ and $T_2$,
respectively, which are maximal with respect to the requirement that these bands
do not contain cells from $\cal T.$ 
Observe that the cardinalities
$|{\bf S}_1|$ and $|{\bf S}_2|$ of these sets satisfy inequality
\begin{equation}\label{S1S2}
-1\le |{\bf S}_1|-|{\bf S}_2|\le 1
\end{equation}
since every maximal $a$-band of $\Gamma$ crossing $T_1$ has to cross $T_2$ (and vice versa),
with at most one exception for the $a$-band stating on the $a$-edge of the unique (see \ref{cell} (b))
$(\theta,q)$-cell of $\cal T.$

If there were  a non-trivial $a$-band from ${\bf S}_1$ and a non-trivial $a$-band from ${\bf S}_2$
both ending on the path $\bf y',$ then the maximal extension of one of them would connect
different edges of $\bf y'$ contrary to the assumption of the lemma.
Therefore either no non-trivial band from ${\bf S}_1$ ends on $\bf y'$
or no non-trivial band from ${\bf S}_2$ ends on $\bf y'.$

We may consider the former case only.

\bigskip

\unitlength 1mm 
\linethickness{0.4pt}
\ifx\plotpoint\undefined\newsavebox{\plotpoint}\fi 


\bigskip

The path $T_2$ cuts $\Gamma$ into two subdiagrams.
We denote by $\Gamma(1)$ the subdiagram of $\Gamma$ containing the bands from ${\bf S}_1$
and the $\theta$-band $\cal T$.
It has boundary ${\bf y}(1){\bf z}(1)T_2$, where ${\bf y}(1)$ and ${\bf z}(1)$ are subpaths of ${\bf y}={\bf y}(2){\bf y}(1)$ and ${\bf z}={\bf z}(1){\bf z}(2)$,
respectively. Similarly, ${\bf y'}={\bf y'}(1){\bf y'}(2)$, ${\bf z'}={\bf z'}(2){\bf z'}(1),$
and we define $\Gamma(2)$ as a sudiagram bounded by ${\bf z}(2){\bf y}(2)T_2^{-1}.$

Denote by $\bf S$ the family of maximal $a$-bands of $\Gamma(1)$ starting
on $\partial \cal C$.
   It follows from the choice of $\Gamma(1)$ that the families ${\bf S}_1$ and $\bf S$
  have at most one common band starting on the intersection of $\cal C$ and $\cal T$,
  and every $a$-band from these families must end on ${\bf z'}(1)$. Thus,
  \begin{equation}\label{S1}
  |{\bf z'}(1)|_a \ge |{\bf y'}(1)|_a+|{\bf S}_1|-1.
  \end{equation}

  {\bf Case 1.} If  $\cal C$ is active from the left, then $|{\bf y'}(1)|_a \ge |{\bf y'}(1)|_{\theta} -2$
 by the definition. Note that $|{\bf y'}(1)|_{\theta} = |{\bf z'}(1)|_{\theta}$ by Lemma \ref{NoAnnul},
 and so  $|{\bf y'}(1)|_a \ge |{\bf z'}(1)|_{\theta} -2$.
 This inequality together with (\ref{S1}) imply  $|{\bf z'}(1)|_a\ge |{\bf z}(1)|_{\theta}+|{\bf S}_1|-3$,
 and by Lemma \ref{ochev} (a),
$$|{\bf z'}(1)|\ge |{\bf z'}(1)|_{\theta}+\delta'(|{\bf z'}(1)|_{\theta}-3)+\delta(|{\bf S}_1|-3)=|{\bf y'}(1)|_{\theta}(1+\delta')+\delta(|{\bf S}_1|-3)-3\delta'\ge$$ 
\begin{equation} \label{0206}
|{\bf y'}(1)| +\delta(|{\bf S}_1|-3)-3\delta'=|{\bf y'}(1)| +\delta|{\bf S}_1|-3(\delta+\delta')
\end{equation}

Since at most $|{\bf S}_2|$  maximal $a$-bands of $\Gamma(2)$ starting on
$\cal C$ terminate on $\cal T$, Lemma \ref{ochev} and (\ref{S1S2})
give inequality $|{\bf y'}(2)|\le |{\bf z'}(2)|+\delta'|{\bf S}_2|\le
|{\bf z'}(2)|+\delta'(|{\bf S}_1|+1)$. Therefore by (\ref{0206}),
$$|{\bf y'}|\le |{\bf y'}(1)|+|{\bf y'}(2)|\le |{\bf z'}(1)|+|{\bf z'}(2)| - \delta
|{\bf S}_1|+3(\delta+\delta') +\delta'(|{\bf S}_1|+1).$$ Since by Lemma \ref{ochev} (b), $|{\bf z'}(1)|+|{\bf z'}(2)|
\le |{\bf z'}|+\delta-\delta'$ and also $|{\bf S}_1|\ge d-1,$
it follows
that $$|{\bf y'}|<|{\bf z'}|-(\delta-\delta')|{\bf S}_1|+4\delta\le |{\bf z'}|-(d-1)(\delta-\delta')+4\delta,$$
whence $d\le (\delta-\delta')^{-1} (|{\bf z'}|-|{\bf y'}|+5\delta).$

{\bf Case 2.} If $\cal C$ is passive from the left, we have $|{\bf y'}(1)|=|{\bf y'}(1)|_{\theta},$ and therefore
$$|{\bf z'}(1)|\ge |{\bf y'}(1)|+\delta'(|{\bf S}_1|-1)$$
by (\ref{S1}) and by Lemma \ref{ochev} (a).  Also we have $|{\bf z'}(2)|\ge |{\bf y'}(2)|$ by Lemmas
\ref{NoAnnul}
and \ref{ochev} (c), and so
$$|{\bf y'}|\le |{\bf y'}(1)|+|{\bf y'}(2)|< |{\bf z'}(1)|+|{\bf z'}(2)| - \delta'(|{\bf S}_1|-1)< |{\bf z'}| -\delta' (d-1)+\delta.$$
Hence $d \le (\delta')\iv(|{\bf z'}|-|{\bf y'}|)+1+\delta(\delta')\iv.$

Thus, in any case $d< (\delta')\iv(|{\bf z'}|-|{\bf y'}|+1)$ because $5\delta'<\delta<1/5.$

The number of maximal $\theta$-bands of $\Gamma$ is $h$, whence\\
$\area'(\Gamma)\le dh <(\delta')\iv h(|{\bf z'}|-|{\bf y'}|+1)$, as required.

(b)Now it follows from the assumption of the lemma and the definition of length,
that $|{\bf y}|\le |{\bf y'}|+2\delta'$ because if $\cal C$ is active from the left, then it
has at most two passive from the left cells. However, $|{\bf z}|=|{\bf z'}|+2$, and so $|{\bf z}|-|{\bf y}|\ge |{\bf z'}|-|{\bf y'}|+2-2\delta'.$ Now
by (a), and inequality $3\delta'<1,$ $$\area (\Gamma)=\area'(\Gamma)+h\le (\delta')\iv
h(|{\bf z'}|-|{\bf y'}|+1+\delta')\le (\delta')\iv h(|{\bf z}|-|{\bf y}|)  =(\delta')\iv[\Gamma]$$
\endproof

We say that a $q$-band  $\ccc'$ is \label{close} {\em close } to a $q$-band $\ccc$
in a diagram $\Delta$ without hubs (i.e. over the group $M$ ) if every maximal $\theta$-band
crossing $\ccc'$ also crosses $\ccc$.

Observe that a derivative band of a comb is close to the handle of
this comb.

\begin{df}
If $\cal C'$ is close to $\cal C$, then there is a unique minimal
subtrapezium in $\Delta$ containing both $\cal C'$ and a subband $\cal B$
of $\cal C,$ where the numbers of $(\theta,q)$-cells in $\cal C'$ and
in $\cal B$ are equal. We will denote this \label{fillingst} {\it
filling} subtrapezium by \label{Tp.} $Tp(\cal
   C',\cal C)$.
   \end{df}

\begin{lemma}\label{ppp} Assume that a comb $\Delta$ has no active $k$- or $k'$-cells
and contains a $\theta$-band $\cal T$ having a subword
$(pp^{-1}p)^{\pm 1}$ in the base, where $p$ is a control letter.
Then $\Delta$ has a one-Step
subcomb $\Gamma$ such that
$\area(\Gamma)\le(\delta')\iv[\Gamma].$
\end{lemma}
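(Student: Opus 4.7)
The plan is to carve out an appropriate one-Step region around the middle $p$-band produced by the subword $pp^{-1}p$ and then invoke Lemma~\ref{width1}(b).

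First I would restrict $\cal T$ to the three-cell subband $\cal T'$ whose base is exactly the given subword $pp^{-1}p$. Since the base of $\cal T'$ contains no $k^{\pm 1}$- or $(k')^{\pm 1}$-letter, Property~\ref{ii} forces the three $(\theta,p)$-cells of $\cal T'$ to be active from both sides, and classifies the rule $\theta_0$ associated with $\cal T$ as a $\xi_1(a)^{\pm 1}$- or $\xi_3(a)^{\pm 1}$-rule of some single auxiliary machine $\overrightarrow Z^{(\theta,i)}$ or $\overleftarrow Z^{(\theta,i)}$ (compare Lemma~\ref{*IX}); in particular, $\theta_0$ is of Step~$2$.

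Next, I would denote by $\cal P$ the middle of the three consecutive $p$-bands of $\Delta$ crossed by $\cal T'$, and take $\cal B$ to be the maximal subband of $\cal P$ containing the $\cal T$-cell whose entire history consists of rules of the same auxiliary machine to which $\theta_0$ belongs. Every such rule has a component of the form $p\to (a')^{\pm 1}p(a'')^{\mp 1}$, so each cell of $\cal B$ is a $(\theta,p)$-cell active from both sides, and the history of $\cal B$ is one-Step of type $(2)$. Let $\Gamma$ be the comb with handle $\cal B$ lying inside the region of $\Delta$ bounded by $\cal B$ and its $pp^{-1}$-neighbour ${\cal P}_1$; applying Lemma~\ref{qqiv} to every $\theta$-band crossing $\cal B$ (whose base has $p^{-1}p$ adjacency) shows that no maximal $q$-band of $\Delta$ sits in $\Gamma$ other than $\cal B$ itself.

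Finally, I would apply Lemma~\ref{width1}(b). By construction $\cal B$ is active from the left (indeed from both sides); Property~\ref{iv} applied to either side of $\cal B$ rules out any $a$-band of $\Delta$ that starts and ends on the left side ${\bf y}'$ of $\cal B$; and $\Gamma$ has no maximal $q$-band apart from $\cal B$ by the previous paragraph. All hypotheses of Lemma~\ref{width1}(b) are satisfied, yielding $\area(\Gamma)\le (\delta')^{-1}[\Gamma]$, as required. The main obstacle will be the geometric step above, namely verifying rigorously that the chosen $\Gamma$ contains no interior $q$-bands: this requires using the hypothesis that $\Delta$ has no active $k$- or $k'$-cells (to prevent the base of any $\theta$-band incident to $\cal B$ from straying outside the controlled regime of Property~\ref{ii}) and carefully tracking how the $pp^{-1}p$ zigzag in the base of $\cal T$ propagates along the entire history of $\cal B$.
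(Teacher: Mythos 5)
There is a genuine gap, and it lies exactly where you flagged the "main obstacle": your $\Gamma$ is not a legitimate subcomb, and the claim that it contains no interior $q$-bands is not established. First, a subcomb of $\Delta$ must have as its handle a maximal $q$-band of $\Delta$ that starts and ends on $\partial\Delta$; you instead take $\cal B$ to be a proper subband of the middle $p$-band (cut off so that its history lies in one auxiliary machine) and declare the region between $\cal B$ and its $pp^{-1}$-neighbour to be "the comb with handle $\cal B$". That region is bounded by pieces of two $q$-bands rather than by a handle and $\partial\Delta$, there is no reason every maximal $\theta$-band of it crosses $\cal B$, and so Lemma~\ref{width1} (stated for combs) does not apply to it; nor does it give the one-Step \emph{subcomb} of $\Delta$ that the lemma asserts. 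Second, Lemma~\ref{qqiv} cannot deliver the absence of other $q$-bands: it only forbids subwords $Q_iQ_i^{-1}$ or $Q_{i+1}^{-1}Q_{i+1}$ in bases of words admissible for a rule locking the corresponding sector, whereas the $\theta$-bands crossing $\cal B$ at other heights are different bands with different bases, which may perfectly well contain further $q$-letters (for instance $s_js_j^{-1}$ pairs) between the two $p$-bands. Property~\ref{ii} constrains the single band $\cal T$ whose base contains $pp^{-1}p$, not its neighbours.

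The paper's proof sidesteps both problems. It takes the \emph{maximal} $p$-bands $\cal C'$ and $\cal C''$ of $\Delta$ through the cells of $\cal T$ corresponding to the first and third letters of $(pp^{-1}p)^{\pm 1}$; $\cal C'$, being a maximal $q$-band of the comb $\Delta$, is automatically the handle of a subcomb $\Delta'$, and the filling trapezium $Tp({\cal C}',{\cal C}'')$ has base $(pp^{-1}p)^{\pm 1}$. Properties~\ref{i} and~\ref{ii} then show that all $q$-cells of this trapezium, in particular of $\cal C'$, are active from both sides with one-Step history, and applying Property~\ref{ii} to $\Delta'\cup Tp({\cal C}',{\cal C}'')$ one gets that \emph{every} $q$-band of $\Delta'$ is either active from both sides or passive — no attempt is made to show $\Delta'$ has a unique $q$-band. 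Property~\ref{iv} excludes non-trivial $a$-bands starting and ending on the same $q$-band, and then one simply passes to a suitable (innermost) subcomb $\Gamma$ of $\Delta'$ whose only maximal $q$-band is its handle; Lemma~\ref{width1}(b) applies to that $\Gamma$ and gives $\area(\Gamma)\le(\delta')^{-1}[\Gamma]$. So the correct route is to work with genuine maximal $q$-bands and reduce to an innermost subcomb, rather than to carve a region between two $p$-bands and try to prove it is $q$-band-free.
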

\proof Consider the maximal $p$-bands $\cal C'$ and $\cal C''$ of
$\Delta$ crossing $\cal T$ at the $(q,\theta)$-cells corresponding to
the first and the third letters of the subword $(pp^{-1}p)^{\pm 1}$,
respectively. Then $\cal C'$ is a handle of a subcomb $\Delta'$ of
$\Delta$, and the filling trapezium $Tp({\cal C',\cal C''})$ has
base $(pp^{-1}p)^{\pm 1}.$ By \ref{i}, \ref{ii}, all the $q$-cells
of $Tp({\cal C',\cal C''})$, in particular, of $\cal C',$ are active
from both sides and has one-step history.  It follows from Property
\ref{ii}
 applied to the comb $\Delta'\cup Tp({\cal C',\cal
C''}),$ that every $q$-band of $\Delta'$ is either active from both
sides or passive, and no non-trivial $a$-band of $\Delta'$ can start
and end on the same $q$-band by \ref{iv}. Hence there is a subcomb
$\Gamma$ of $\Delta'$ satisfying the condition of Lemma \ref{width1}
(b), and the statement is proved. \endproof

\section{Chains and quasicombs}

\subsection{Intersections of chains and $\theta$-bands}

Let the boundary $\partial\pi$ of a $(\theta,q)$-cell $\pi$
have a $p_i$-edge for a control state letter $p_i$. Then by Property \ref{cell} (a),
$\partial\pi$ either has no $a$-edges or contains two $a$-edges. Below we utilize this 
property in the definition of chain.

Assume that ${\cal A}_1,\dots, {\cal A}_m$ are maximal $a$-bands
such that ${\cal A}_i$ terminates on an $a$-edge of an active $p$-cell $\pi_i$ and ${\cal
A}_{i+1}$ starts with a different $a$-edge of $\pi_i$ ($i=1,\dots,m-1$). Then we say that
${\cal A}_1,\pi_1, {\cal A}_2,\pi_2,\dots,{\cal A}_m$ form a \label{chain} chain
with $m-1$ \label{link} links $\pi_1,\dots,\pi_{m-1}.$
By Property \ref{cell}, all links
are $p$-cells
for the same control base letter $p=p_j.$  A chain ${\bf
A}$ is called a \label{chain-ann}{\it chain-annulus} if the first $a$-edge of ${\cal A}_1$
coincides with the last one of ${\cal A}_m.$

It also follows that if ${\cal A}_1$ starts with
an edge $e_1$ and ${\cal
A}_m$ ends with $f_m,$ then the letters $\Lab(f_m)$ and $\Lab(e_1)$ are
either equal or they are copies of each other in different subalphabets 
$Y_i$ and $Y_{i+1}$ (corresponding, respectively, to some parts $Y'_j$ and $Y''_j$ of the
tape alphabet of the machine $M_3$). 

A chain ${\bf A}$ is {\it non-trivial} if it has at least one cell.

\begin{lemma}\label{nochain} Let $\bf x$ be a subpath of the boundary of a reduced
diagram $\Delta$ without hubs, and $\Lab(\bf x)$ a reduced word in a tape 
subalphabet $Y_i$ of the machine $M.$
Then no chain ${\bf A}$ can start and end on $\bf x.$
\end{lemma}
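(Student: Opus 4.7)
The plan is to argue by contradiction, inducting on the number $m$ of $a$-bands in the chain (with a secondary induction on the area of the cut-off subdiagram). Suppose a chain ${\bf A}={\cal A}_1,\pi_1,\ldots,\pi_{m-1},{\cal A}_m$ starts at $e_1$ and ends at $f_m$ on ${\bf x}$, and let $\Delta'$ be the subdiagram bounded by ${\bf A}$ and the subpath ${\bf x}_0$ of ${\bf x}$ between $e_1$ and $f_m$.

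In the base case $m=1$, the subdiagram $\Delta'$ is bounded by the non-trivial $a$-band ${\cal A}_1$ together with ${\bf x}_0$. Every $\theta$-edge of $\partial\Delta'$ lies on the $\Delta'$-facing side of ${\cal A}_1$, since $\Lab({\bf x}_0)$ contains only $a$-letters from $Y_i^{\pm 1}$. Hence any maximal $\theta$-band inside $\Delta'$ would have both ends on ${\cal A}_1$'s side; extending it to a maximal $\theta$-band of $\Delta$, we would obtain a $\theta$-band sharing two cells with the $a$-band ${\cal A}_1$, contradicting Lemma \ref{NoAnnul}. Thus $\Delta'$ contains no $\theta$-cells, and since it contains no hubs either, it has no cells at all. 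Consequently $\Lab(\partial\Delta')$ reduces freely to $1$. This label is a concatenation of a word in $\Theta^{\pm 1}$ (from the side of ${\cal A}_1$) and a word in $Y_i^{\pm 1}$ (from ${\bf x}_0$ together with $e_1^{\pm 1}$ and $f_1^{\pm 1}$); since these alphabets are disjoint, each factor must reduce independently. This forces the side of ${\cal A}_1$ to be trivial, so ${\cal A}_1$ has no cells and $e_1=f_1$, contradicting the non-triviality of the chain (and Lemma \ref{NoAnnul} which prohibits $a$-annuli).

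For the inductive step $m\ge 2$, consider the maximal $p$-band ${\cal Q}$ of $\Delta$ through $\pi_1$, where $p=p_j$ is the common control letter of the links. Its portion inside $\Delta'$ cannot exit through ${\bf x}_0$ (no $q$-edges there), so it must exit through some other link $\pi_k$ of the chain. The $p$-bands inside $\Delta'$ thus pair up the links $\{\pi_1,\ldots,\pi_{m-1}\}$, and by planarity together with Lemma \ref{NoAnnul} (two distinct $p$-bands cannot share a $p$-cell and cannot form annuli) this pairing is non-crossing. Picking an innermost pair we may assume $k=2$, so $\pi_1$ and $\pi_2$ are joined by a $p$-band segment ${\cal Q}'$ inside $\Delta'$; together with ${\cal A}_2$ this ${\cal Q}'$ cuts off a strictly smaller subdiagram $\Delta''\subseteq\Delta'$. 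If ${\cal Q}'$ has no intermediate $p$-cells, then $\pi_1$ and $\pi_2$ share a $p$-edge and are also joined by the $a$-band ${\cal A}_2$, which together with the single-rule-per-cell structure force $\pi_1$ and $\pi_2$ to be mirror copies, contradicting reducedness of $\Delta$. Otherwise, the intermediate cells $\sigma_1,\ldots$ of ${\cal Q}'$ are active $p$-cells emitting $a$-edges into $\Delta''$; the only other $a$-edges on $\partial\Delta''$ lie on $\pi_1,\pi_2$ (the side of ${\cal A}_2$ carries none), so maximal $a$-bands inside $\Delta''$ together with the $\sigma_i$'s produce a sub-chain whose endpoints lie on a common path carrying a label in a single tape subalphabet (the one associated to the $a$-letter propagated along ${\cal Q}'$). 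The inductive hypothesis, applied to this sub-chain of strictly smaller area (or smaller $m$), yields a contradiction.

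The hardest step by far is the inductive reduction: one must verify that the sub-chain produced inside $\Delta''$ genuinely satisfies the hypothesis of the lemma, in particular that its two endpoints lie on a path whose label is in a single tape subalphabet. This requires careful bookkeeping of how the rules of $\overrightarrow{Z}^{(\theta,j)}$ and $\overleftarrow{Z}^{(\theta,j)}$ transform $a$-letters between the subalphabets $Y'_j$ and $Y''_j$ as one passes through the $p$-cells of ${\cal Q}'$, and a careful use of Property \ref{cell}(b) to locate the $a$-edges on the $\Delta''$-facing sides of those cells.
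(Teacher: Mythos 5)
Your base case ($m=1$: a maximal $\theta$-band of the cut-off region would have both ends on the inner side of ${\cal A}_1$ and hence cross that $a$-band twice, against Lemma \ref{NoAnnul}) is fine, but the crux of your inductive step is unproved and, as written, cannot be repaired by citing the lemma itself. After cutting along the $p$-band segment ${\cal Q}'$ you want to apply the induction hypothesis to a sub-chain whose endpoints lie on the $\Delta''$-side of ${\cal Q}'$. That side is \emph{not} labelled by a reduced word in a tape subalphabet: its label is a word in $\theta$-letters interspersed with the $a$-letters of the active cells, so the hypothesis of the lemma simply does not hold for it, and there is nothing playing the role that reducedness of $\Lab({\bf x})$ plays in producing the final contradiction — what would have to replace it is reducedness of the diagram (no two consecutive mirror cells in ${\cal Q}'$), which is a different statement needing its own proof. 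The statement you actually need is essentially Lemma \ref{2times} (a chain crossing a maximal $p$-band twice forces a cell of the band between the two crossings that is not active from both sides); in the paper that lemma is proved \emph{after}, and by the method of, Lemma \ref{nochain}, so your reduction defers the whole difficulty and comes close to circularity. In addition, you assert without proof that the intermediate cells of ${\cal Q}'$ are active from both sides. This is exactly the point where Property \ref{p} must be used: a $p$-cell not active from both sides forces a neighbouring $(\theta,s)$-cell, and the maximal $s$-band through it would have to end on the boundary of the region, which carries no $s$-edges — impossible by Lemma \ref{NoAnnul}. Without this step neither your picture of "$a$-edges emitted into $\Delta''$" nor the single-subalphabet claim (which also needs that consecutive active cells share the same $p$-letter and hence are all of $\xi_1$-type or all of $\xi_3$-type) is available.

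For comparison, the paper's proof avoids the nested induction altogether. Among all offending chains it chooses one for which the subpath ${\bf x}'$ of ${\bf x}$ between the two end edges $e,f$ is shortest; in the subdiagram $\Delta'$ bounded by the chain and ${\bf x}'$ all boundary $q$-edges are $p$-edges, and the Property \ref{p}/Lemma \ref{NoAnnul} argument above shows every $(\theta,q)$-cell of $\Delta'$ is a $p$-cell active from both sides. Consequently every maximal chain of $\Delta'$ must start and end on ${\bf x}'$, so minimality forces $|{\bf x}'|=0$, and then $\Lab(ef)$ is a nonempty freely trivial subword of the reduced word $\Lab({\bf x})$ — a contradiction. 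If you want to salvage your approach, you would first have to isolate and prove the band analogue (Lemma \ref{2times}) together with the activity claim for the cells of ${\cal Q}'$; once those are in hand, the paper's direct minimality argument is shorter than the double induction.
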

\proof Proving by contradiction, we assume that a non-trivial chain
${\bf A}$ starts and ends on $\bf x$. Notice that it cannot cross itself
since every cell has at most two $a$-edges by \ref{cell} (a). Thus it starts with an
edge $e$ of $\bf x$ and ends at an edge $f$ of $\bf x$, where $e\ne f,$ but
$\Lab(e)\equiv \Lab(f)^{-1}$ since these two letters belong to the
same subalphabet $Y_i.$ 
We may assume that ${\bf A}$ is chosen so that $e{\bf x'}f$ is a subpath
of $\bf x$, where the subpath $\bf x'$ is of minimal possible length.

Now the chain ${\bf A}$ and the path $\bf x'$ bound a subdiagram $\Delta'$ all of
whose boundary $a$-edges belong to $\bf x'.$ By our observation, every
$q$-edge on the boundary $\partial\Delta'$ is a $p^{\pm 1}$-edge for
the same  control base letter $p=p_j.$ Also every $p$-cell $\pi$ of
$\Delta'$ is active from both sides. Indeed  if a cell $\pi$ of $\Delta'$
is not active from both sides it does not belong to the chain ${\bf A}$ by definition.
Therefore $\pi$ must have a neighbor $(\theta, s_i)$-cell in $\Delta'$
by (\ref{p}) (consider the maximal $\theta$-band of $\Delta'$ containing $\pi$).
Then we may apply  Lemma \ref{NoAnnul} to the maximal $s_i$-band of $\Delta'$
containing the $(\theta,s_i)$-cell and obtain a letter of the form $s_i$ 
in the boundary label of
$\bf A$ or in $\Lab(\bf x),$ a contradiction. 

It follows that every $(\theta,q)$-cell in $\Delta'$ is a $p$-cell (corresponds
to a control letter) active from both sides. Since a maximal chain cannot end on a $p$-cell,  every
maximal chain of $\Delta'$ must start and end on $\bf x'$.

This property and the minimality of choice for $\bf x'$ imply that
$|{\bf x'}|=0,$ and so $\Lab(\bf x)$ has a non-empty freely trivial subword
$\Lab(ef);$ a contradiction. The lemma is proved.
\endproof

\begin{lemma}\label{2times} Let a non-trivial chain ${\bf A}$ crosses
a maximal $p$-band $\cal C$ of a reduced diagram $\Delta$ over $M$
twice at  cells $\pi'$ and $\pi''.$ Then there is a  $p$-cell $\pi_0$
in $\cal C$ between $\pi'$ and $\pi''$, which is not active from both sides.
\end{lemma}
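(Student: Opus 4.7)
The plan is to argue by contradiction: assume every $p$-cell of $\cal C$ strictly between $\pi'$ and $\pi''$ is active from both sides. First I would reduce to the case that $\pi'$ and $\pi''$ are two successive crossings of $\cal C$ by $\bf A$ (i.e., no other link of $\bf A$ lies on $\cal C$ between them in chain order), so that the corresponding sub-chain ${\bf A}^*={\cal A}_{j+1}\pi_{j+1}\dots\pi_{k-1}{\cal A}_k$ stays entirely on one side of $\cal C$, say the left. The union of ${\bf A}^*$ with the sub-band $\cal C'$ of $\cal C$ from $\pi'$ to $\pi''$ then encloses a disk subdiagram $\Psi\subseteq\Delta$.

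The next step is to analyze $\partial\Psi$. On the $\cal C'$-side each intermediate $p$-cell contributes one $a$-edge (being active from both sides), whereas on the ${\bf A}^*$-side no $a$-edges appear: the two $a$-edges of each intermediate link $\pi_i$ ($j<i<k$) are shared with the neighbouring $a$-bands ${\cal A}_i,\,{\cal A}_{i+1}$ and hence lie in the interior of the sub-chain. Thus $\partial\Psi$ carries only $\theta$-, $p$-, and $a$-edges. By Lemma \ref{NoAnnul} a maximal $q$-band of $\Delta$ containing a non-$p$ $(\theta,q)$-cell of $\Psi$ would be forced to exit through a matching $q$-edge on $\partial\Psi$, which is absent; combining this with Property \ref{p} shows that every $(\theta,q)$-cell of $\Psi$ is in fact a $p$-cell active from both sides.

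Because $\cal C'$ consists of $p$-cells active from both sides and contains no passive $(12)^{\pm 1}$- or $(23)^{\pm 1}$-cells, Property \ref{iv} forbids any $a$-band of $\Delta$ from both starting and ending on the left side of $\cal C'$. Starting now at the $\Psi$-facing $a$-edge of some intermediate $p$-cell of $\cal C'$, the maximal $a$-band enters $\Psi$, cannot return to $\cal C'$, and cannot escape via the ${\bf A}^*$-side (no $a$-edges there); it must therefore terminate at a $(\theta,q)$-cell interior to $\Psi$ — necessarily an active $p$-cell. Iterating yields a chain ${\bf B}\subset\Psi$ whose successive links are active $p$-cells of $\Psi$, each visited at most once since both its $a$-edges are consumed. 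By finiteness of $\Psi$ the chain $\bf B$ must then terminate, but every $a$-edge on $\partial\Psi$ at which it could end lies on $\cal C'$ and is therefore forbidden, giving the required contradiction.

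The delicate point I expect to be the main obstacle is the boundary bookkeeping — verifying that the corners at $\pi'$, $\pi''$ contribute no $a$-edges to the ${\bf A}^*$-side of $\partial\Psi$, and showing that even when $\partial\Psi$ meets $\partial\Delta$, any $a$-edge of $\partial\Delta$ interior to $\Psi$ must still lie on the $\cal C'$-side, so that Property \ref{iv} continues to block termination of $\bf B$ there.
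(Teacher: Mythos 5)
Your setup (the enclosed subdiagram $\Psi$, and the observation that by Property \ref{p}, Lemma \ref{NoAnnul} and the fact that the only $q$-edges on $\partial\Psi$ are $p$-edges of the same control letter, every interior $(\theta,q)$-cell of $\Psi$ is a $p$-cell active from both sides) is sound and parallels the paper. The gap is in the concluding ``trapping'' step. Property \ref{iv} excludes a \emph{single} maximal $a$-band having \emph{both} of its ends on the $\Psi$-side of ${\cal C}'$. In your chain ${\bf B}$ only the first $a$-band starts on ${\cal C}'$; as soon as ${\bf B}$ has passed through an interior link, its current $a$-band starts at that link, and nothing forbids it from terminating on the $\Psi$-facing $a$-edge of another intermediate cell of ${\cal C}'$. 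So ${\bf B}$ is not trapped: it merely produces another chain crossing ${\cal C}$ twice, and to exploit this you would need an induction or a minimal choice of the configuration (this is what the paper's appeal to the argument of Lemma \ref{nochain}, ``we may assume that ${\bf x}$ has no $a$-edges,'' encodes), which your argument does not set up.

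More seriously, your contradiction requires an intermediate $p$-cell of ${\cal C}'$ to start from, so the case where no $p$-cell lies strictly between $\pi'$ and $\pi''$ --- when your contradiction hypothesis is vacuously satisfied --- is not treated at all, and this is exactly where the content of the lemma sits. The paper reduces to this situation and then uses the reducedness of $\Delta$ in an essential way: if $\pi'$ and $\pi''$ share a $p$-edge, then, since the chain joins an $a$-edge of $\pi'$ to an $a$-edge of $\pi''$ and these $a$-letters are copies of one another inside the same subalphabet, the two cells carry the same $a$-letter and hence are mirror copies of each other with a common edge, contradicting that $\Delta$ is reduced. Your proof never makes this mirror-cell argument, so even after repairing the trapping step it cannot close. (A smaller point: after passing to two successive crossings of ${\cal C}$ by ${\bf A}$, the non-active cell you would obtain lies between those two crossings along ${\cal C}$, and it is not automatic that it lies between the original $\pi'$ and $\pi''$; this reduction also needs justification.)
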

\proof  As in the proof of Lemma \ref{nochain}, there is a
subdiagram $\Delta'$ bounded by the portion of ${\bf A}$ and a
segment $\bf x$ of the side of $\cal C$ between $\pi$ and $\pi'.$ As
there, arguing by contradiction, we may assume that $\bf x$ has no
$a$-edges. Hence, if there is no cell $\pi_0$ lying between $\pi'$
and $\pi''$ in $\cal C$ which is not active from both sides then the
cells $\pi$ and $\pi'$ must share a $p$-edge. Since the chain ${\bf
A}$ connects $\pi$ and $\pi',$ we obtain that these two cells must
have the same $a$-letter in the boundary labels (being letters from
the same subalphabet $Y_i$),
and they are mirror copies of each other
as it follows from the defining relations of the group $M,$ having
two $a$-letters.

We come to a contradiction because $\Delta$ is a reduced
diagram. The lemma is proved.
\endproof

\begin{lemma}\label{chain}  Assume that no $\theta$-band of a comb $\Delta$
has a base with a subword $(pp^{-1}p)^{\pm 1}$, where $p$ is a control letter.
Then every chain
${\bf A}$ of $\Delta$ has at most $9$ common $(\theta,a)$-cells  with any
$\theta$-band $\cal T$ of $\Delta.$
\end{lemma}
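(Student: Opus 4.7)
I would argue by contradiction. Suppose the chain $\mathbf{A}$ shares $N\geq 10$ common $(\theta,a)$-cells $c_1,\dots,c_N$ with $\cal T$, listed in the order they appear along $\mathbf{A}$, with $c_k\in{\cal A}_{i_k}$ and $i_1<\cdots<i_N$. The goal is to exhibit a $\theta$-band of $\Delta$ whose base contains $(pp^{-1}p)^{\pm1}$, contradicting the hypothesis.

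First, I would establish the zig-zag structure. Since $\cal T$ is a $\theta$-band and each crossing cell $c_k$ is an $(\theta,a)$-cell traversed ``vertically'' by ${\cal A}_{i_k}$, the chain passes from one side of $\cal T$ to the other at every $c_k$. Therefore, in each of the $N-1=9$ intervals of $\mathbf{A}$ between consecutive $c_k$'s, the chain stays on one side of $\cal T$, and the sides alternate. By pigeonhole, at least five of these intervals lie on the same side — say, above $\cal T$ — and so at least five distinct links $\pi^{(1)},\dots,\pi^{(5)}$ of the chain lie strictly above $\cal T$.

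Next, I would push each such link down to the $\theta$-band $\cal T_{+1}$ immediately above $\cal T$. For every link $\pi^{(j)}$, the maximal $p$-band $\cal C^{(j)}$ containing it is a $q$-band of $\Delta$; by Lemma \ref{NoAnnul}, the intersection $\cal C^{(j)}\cap \cal T_{+1}$ is either empty or a single $p$-cell $\tilde\pi^{(j)}$ that contributes a $p^{\pm 1}$-letter to the base of $\cal T_{+1}$. Using the comb structure (every $\theta$-band of $\Delta$ meets the handle) together with the fact that ${\cal A}_{i_k}$ crosses $\cal T$ right below the link, I would argue that $\cal C^{(j)}$ must meet $\cal T_{+1}$, producing a $p^{\pm 1}$-letter $\tilde\pi^{(j)}$ in the base of $\cal T_{+1}$. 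Moreover, if two of the $\cal C^{(j)}$'s coincide, Lemma \ref{2times} produces a $p$-cell in between that is not active from both sides, and then Property \ref{p} forces an adjacent $s$-cell, which constrains the local geometry enough to reduce the count; so we may assume after a bounded amount of pruning that three of the bands $\cal C^{(j_1)},\cal C^{(j_2)},\cal C^{(j_3)}$ are distinct and give three cells of $\cal T_{+1}$ occurring in the order their columns sit in~$\cal T_{+1}$.

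Next, I would show these three $p$-letters are \emph{consecutive} in the base of $\cal T_{+1}$ (no intervening $q$-letters) and have \emph{alternating} signs $pp^{-1}p$. Consecutiveness: any other maximal $q$-band meeting $\cal T_{+1}$ between two of our columns would, by the comb structure, have to be either a $p$-band (handled above) or another base letter whose presence would allow an $a$-band to start and end on a side where none can by Lemma \ref{nochain} and Property \ref{iv}. Alternation: two neighbouring links above $\cal T$ are joined, via a descent through $\cal T$ and a symmetric ascent, by a subpath whose entering and leaving $a$-edges at the two links sit on opposite sides of $p$ in the boundary words $v_ipu_{i+1}$ of the link cells (Property \ref{one}(1)); a direct inspection of the two $(\theta,q)$-relations glued to the $p$-band at these consecutive crossings shows that the letters $\tilde\pi^{(j_1)},\tilde\pi^{(j_2)},\tilde\pi^{(j_3)}$ appear with alternating signs in the base of $\cal T_{+1}$. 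This yields a subword $(pp^{-1}p)^{\pm 1}$ in the base of a $\theta$-band of $\Delta$, contradicting the hypothesis, and hence $N\leq 9$.

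The main obstacle is the last step: establishing simultaneously the consecutiveness in the base of $\cal T_{+1}$ and the alternation of orientations. This requires a careful planar bookkeeping of how the chain's $a$-edges are distributed around each link (i.e., which of $v_i,u_{i+1},v'_i,u'_{i+1}$ carry the tape letter), together with eliminating the spurious $p$-band coincidences via Lemma \ref{2times}. The constant $9$ is exactly what remains after these small reductions consume a bounded number of the $N-1$ intervals.
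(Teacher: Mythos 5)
There is a genuine gap, and it sits exactly where you locate it yourself: the final step. Your plan is to manufacture the forbidden word $(pp^{-1}p)^{\pm 1}$ in the base of an auxiliary band ${\cal T}_{+1}$, but none of the three ingredients needed for that is established or forced. First, "the $\theta$-band immediately above $\cal T$" is not a single well-defined band adjacent to all five links, and a maximal $p$-band through a link need not meet it at all: a $q$-band of a comb is only guaranteed not to be an annulus (Lemma \ref{NoAnnul}); it may end on the boundary ${\bf z}$ without crossing any prescribed $\theta$-band (only $\theta$-bands are forced to cross the handle). Second, even if three distinct $p$-bands did cross ${\cal T}_{+1}$, consecutiveness of the corresponding letters in its base is not what your cited lemmas give (Lemma \ref{nochain} and Property \ref{iv} say nothing about which other $q$-letters may intervene), and third, the sign-alternation $pp^{-1}p$ is simply not a consequence of the chain structure: the orientation with which a maximal $p$-band enters the base of a *different* $\theta$-band is not controlled by how the chain threads through its links, and Property \ref{one}(1) does not supply it. Since the whole contradiction rests on producing that subword, the argument does not close.

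The paper's proof runs in the opposite direction and avoids these difficulties by applying the hypothesis to $\cal T$ itself. Let ${\cal T}'$ be the subband of $\cal T$ between the first and last common cells. Every maximal $q$-band crossing ${\cal T}'$ must also cross the chain, so by Property \ref{cell} all $(\theta,q)$-cells of ${\cal T}'$ are $p$-cells for the same control letter; the hypothesis (via Property \ref{order}) then caps their number at two. Next, Lemma \ref{nochain}, applied to the region bounded by a subchain and the corresponding subband of $\cal T$, forces a $p$-cell of $\cal T$ between consecutive $(\theta,a)$-crossings; a "spiral" order of crossings is excluded by Lemma \ref{2times} together with Property \ref{p} and Lemma \ref{NoAnnul}. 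Hence four consecutive $(\theta,a)$-crossings would force three $p$-cells in ${\cal T}'$, a contradiction, and the count $3+1+3+1+3=11$ cells (at most two of them $q$-cells) gives the bound $9$. If you want to salvage your approach you would have to prove the adjacency and alternation claims, which is essentially harder than the lemma itself; the efficient route is to extract the $p$-cells between crossings inside $\cal T$ via Lemma \ref{nochain} and let the hypothesis do its work there.
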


 \proof
 Let $\pi_1,\dots \pi_m$ be the common cells of ${\bf A}$ and ${\cal
 T}$
counted on $\cal T$
 from left to right. Denote by $\cal T'$ the subband of ${\cal T}$
 starting with $\pi_1$ and ending by $\pi_m.$ Every $(\theta,q)$-cell  of
 $\cal T'$ must be a $p$-cell since every maximal $q$-band
 of $\Delta$ crossing $\cal T',$ has to cross the chain ${\bf A}$
 too by Lemma \ref{NoAnnul}. Since by the assumption of the lemma and by Property \ref{order}, the
 base of $\cal T$ has no triples of consecutive $p^{\pm 1}$-letters, the subband $\cal T'$
 can have at most two $(\theta,q)$-cells.

 Assume that ${\bf A}$ crosses $\cal T$ consequently at $3$
 $(\theta,a)$-cells $\pi_{i_1}, \pi_{i_2}, \pi_{i_3}$, and
 $i_1<i_3<i_2$, i.e., ${\bf A}$ has a `convolution of a spiral' ${\bf A'}$,
starting at $\pi_{i_1}$ and ending at $\pi_{i_3}.$

\bigskip

\unitlength 1mm 
\linethickness{0.4pt}
\ifx\plotpoint\undefined\newsavebox{\plotpoint}\fi 
\begin{picture}(146,71.25)(15,0)
\put(18.5,40.75){\line(1,0){127.5}}
\put(18,32.25){\line(1,0){127.5}}
\put(23,48.25){\line(0,-1){19}}
\multiput(23,29.25)(.0336391437,-.0443425076){327}{\line(0,-1){.0443425076}}
\multiput(34,14.75)(.033653846,-.043269231){104}{\line(0,-1){.043269231}}
\put(37.5,10.25){\line(1,0){85}}
\put(122.5,10.25){\line(1,1){15.5}}
\put(138,25.75){\line(0,1){26.5}}
\multiput(138,52.25)(-.0336700337,.0361952862){297}{\line(0,1){.0361952862}}
\put(128,63){\line(-1,0){45.25}}
\multiput(82.75,63)(-.0336658354,-.0548628429){401}{\line(0,-1){.0548628429}}
\put(69.25,41){\line(0,-1){13.25}}
\put(28.75,47.75){\line(0,-1){16.75}}
\multiput(28.75,31)(.0336538462,-.046474359){312}{\line(0,-1){.046474359}}
\multiput(39.25,16.5)(10.125,-.03125){8}{\line(1,0){10.125}}
\multiput(120.25,16.25)(.0343839542,.0336676218){349}{\line(1,0){.0343839542}}
\put(132.25,28){\line(0,1){21}}
\put(132.25,49){\line(-4,5){7}}
\put(125.25,57.75){\line(-1,0){39.5}}
\multiput(85.75,57.75)(-.0336538462,-.0544871795){312}{\line(0,-1){.0544871795}}
\put(75.25,40.75){\line(0,-1){11.25}}
\multiput(100.25,71.25)(-.03125,-7.875){8}{\line(0,-1){7.875}}
\multiput(108,71)(-.03125,-7.8125){8}{\line(0,-1){7.8125}}
\put(93.5,26.75){\line(1,0){19.75}}
\put(93.75,21.5){\line(1,0){19}}
\put(145.5,36.5){$\cal T$}
\put(64.5,13.25){$\bf A'$}
\put(24.5,37.75){$\pi_{i_1}$}
\put(71.75,37.5){$\pi_{i_3}$}
\put(134.5,38.25){$\pi_{i_2}$}
\put(103.25,36.75){$\pi$}
\put(102.25,24.5){$\pi_0$}
\put(103.75,68.25){$\cal C$}
\put(129,57.25){${\bf A}_{23}$}
\put(44.5,36.5){${\cal T}_{13}$}
\put(76.75,23){$\Delta''$}
\put(118.75,49.5){$\Delta'$}
\put(14,36.75){$\cal T$}
\end{picture}

By Lemma \ref{nochain} for the part $\Delta'$ of
$\Delta$ bounded by the subchain ${\bf A}_{23}$ of ${\bf A'}$ connecting $\pi_{i_2}$
and $\pi_{i_3}$ and the subband of $\cal T$ connecting the same
cells, $\cal T$ has a $p$-cell $\pi$ between $\pi_{i_2}$ and
$\pi_{i_3}.$ The maximal $p$-band $\cal C$ crossing $\cal T$ at $\pi$ must
cross ${\bf A'}$ at least twice (above and below $\cal T$).
Hence there is a part of ${\bf A'}$ satisfying, together with $\cal
C$, the condition of Lemma \ref{2times}, and so there is a cell
$\pi_0$ given by that lemma, inside the part $\Delta''$ of $\Delta$
bounded by ${\bf A'}$ and the part ${\cal T}_{13}$ of
$\cal T$ connected $\pi_{i_1}$ and $\pi_{i_3}.$  But then, according to
Property (\ref{p}),
  $\Delta''$
has to contain some $s_i$-cell neighboring  $\pi_0,$
contrary to Lemma \ref{NoAnnul} since
neither ${\bf A'}$ nor ${\cal T}_{13}$ have $s_i$-cells.
Thus
our assumption on the existence of ${\bf A'}$ leads to a
contradiction, and so $i_3>i_2$ if $i_2>i_1.$

Assume that ${\bf A}$ crosses $\cal T$ consequently at $4$
$(\theta,a)$-cells $\pi_{i_1}, \pi_{i_2}, \pi_{i_3}, \pi_{i_4}$. We
may assume that $i_1<i_2$, and so $i_1<i_2<i_3<i_4.$ Then again by
Lemma \ref{nochain}, we have at least 3 $p$-cells on $\cal T'$
(between $\pi_{i_u}$ and $\pi_{i_{u+1}}$ for $u=1,2,3$), a
contradiction. Hence such a series of common $(\theta,a)$-cells is
impossible, and since ${\bf A}$ has at most $2$ common
$(\theta,q)$-cells with $\cal T,$ we conclude that traveling along
${\bf A}$ one meets $m\le 3+1+3+1+3=11$ cells  of $\cal T$ (at most
$3$  $(\theta, a)$-cells, then a $(\theta,q)$-cell, and so on) , and
the number of $a$-cells among them does not exceed $9.$
\endproof

\begin{lemma}\label{chainan}
A comb $\Delta$ has no chain-annuli.
\end{lemma}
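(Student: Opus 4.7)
I would argue by contradiction. If the chain-annulus ${\bf A} = ({\cal A}_1, \pi_1, \dots, {\cal A}_m)$ has $m=1$, then ${\cal A}_1$ is itself an $a$-annulus, which is impossible by Lemma~\ref{NoAnnul}. So assume $m\ge 2$, so at least one link cell $\pi_1$ exists, and all links are active from both sides by definition of chain. As a closed curve in the planar disc $\Delta$, ${\bf A}$ bounds a simply connected subdiagram $\Delta'$ (the component of $\Delta\setminus{\bf A}$ not containing $\partial\Delta$). The boundary $\partial\Delta'={\bf A}$ traces the inner sides of the $a$-bands ${\cal A}_k$ (sequences of $\theta$-edges) together with the interior $\theta$- and $p$-edges of each link cell, so the only $q$-edges on $\partial\Delta'$ are $p$-edges coming from the links, and in particular $\partial\Delta'$ contains no $s$-edges.

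Next I would pick a link $\pi_i$ and track the maximal $p$-band $\cal C$ through it. Since $\cal C$ is not an annulus (Lemma~\ref{NoAnnul}), the portion of $\cal C$ entering $\Delta'$ at $\pi_i$ has to terminate on $\partial\Delta'$, and since its terminating edge is a $p$-edge, it can only end at another link $\pi_j$ of ${\bf A}$. Lemma~\ref{2times} applied to the chain ${\bf A}$ and the band $\cal C$ (which cross at $\pi_i$ and at $\pi_j$) then supplies a $p$-cell $\pi_0$ on $\cal C$ strictly between $\pi_i$ and $\pi_j$, hence in the interior of $\Delta'$, that is not active from both sides.

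Now Property~\ref{p} of Proposition~\ref{summary} forces one of the two neighbors of $\pi_0$ in the $\theta$-band through $\pi_0$ to be an $s$-cell $\pi'_0$. The $\theta$-edge shared between $\pi_0$ and $\pi'_0$ cannot lie on $\partial\Delta'$, because $\theta$-edges of $\partial\Delta'$ border only $(\theta,a)$-cells of the ${\cal A}_k$ or the link $p$-cells themselves, and neither option leaves an $s$-cell on the other side. Hence $\pi'_0\in\Delta'$. Finally, consider the maximal $s$-band $\cal S$ through $\pi'_0$: its cells are all $(\theta,s)$-cells and therefore cannot be shared with any $a$-band ${\cal A}_k$ (whose cells are $(\theta,a)$-cells) or with any link (which is a $(\theta,p)$-cell). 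So $\cal S$ cannot cross ${\bf A}$, and must lie entirely inside $\Delta'$. By Lemma~\ref{NoAnnul}, $\cal S$ is not an annulus, so it has two $s$-edge endpoints on $\partial\Delta'$; but $\partial\Delta'$ has no $s$-edges, the required contradiction.

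The main obstacle is pinning down the boundary combinatorics of $\Delta'$ cleanly enough: one must simultaneously verify that $\cal C$ can only exit $\Delta'$ through another link (so that Lemma~\ref{2times} becomes applicable), that the $s$-cell produced by Property~\ref{p} cannot sit across ${\bf A}$ from $\pi_0$, and that a maximal $s$-band has no mechanism to escape $\Delta'$. Each of these reductions rests on the same structural observation that ${\bf A}$ is built only of $(\theta,a)$- and $(\theta,p)$-cells, making the absence of $s$-edges on $\partial\Delta'$ (and the incompatibility of $s$-band cells with cells of ${\bf A}$) the decisive fact. Incidentally the argument does not use the comb hypothesis; it goes through for any reduced diagram over $M$ containing a chain-annulus.
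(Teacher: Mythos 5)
Your proof is correct, but it takes a genuinely different route from the paper's. The paper leans on the comb hypothesis: it chooses the left-most maximal $p$-band ${\cal C}'$ containing a link, notes that the $a$-band issuing from the left $a$-edge of that link must return to the left side of ${\cal C}'$ (there are no links further to the left), shows via Property \ref{p} together with Lemma \ref{NoAnnul} (the same trapped-$s$-band idea you use) that all cells of ${\cal C}'$ between the two attachment points are active from both sides, and then gets the contradiction from Property \ref{iv}, i.e.\ from Lemma \ref{*XII}. You instead work inside the disc enclosed by the chain-annulus: the $p$-band through a link must re-cross ${\bf A}$ at another link (it can only leave the enclosed region through a $p$-edge of $\partial\Delta'$, and those belong to links), Lemma \ref{2times} then yields a $p$-cell $\pi_0$ in the enclosed region that is not active from both sides, Property \ref{p} gives an adjacent $s$-cell there, and the maximal $s$-band through it is trapped because $\partial\Delta'$ carries no $s$-edges, contradicting Lemma \ref{NoAnnul}. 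So where the paper reuses Property \ref{iv} and the linear arrangement of $q$-bands available in a comb, you replace both by Lemma \ref{2times} plus the enclosed-region analysis; as you observe, this proves the statement for every reduced diagram over $M$, which strengthens what Remark \ref{quasi} records for quasicombs. Two points worth making explicit in a final write-up: $\pi_0$ has $\theta$-band neighbours on both sides (needed to apply Property \ref{p}), which holds because $\pi_0$ is enclosed by ${\bf A}$ and hence its $\theta$-edges are interior edges of $\Delta$; and the end edges of the trapped maximal $s$-band would have to be $s$-edges on $\partial\Delta$ (equivalently, on the boundary of the enclosed subdiagram), neither of which exists --- which is exactly your concluding contradiction.
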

\proof Assume that ${\bf A}$ is a chain-annulus. By Lemma
\ref{NoAnnul}  it must have a link $\pi_i.$ Then there is a left-most
maximal $p_i^{\pm 1}$-band $\cal
C'$ of the comb $\Delta$ containing a link $\pi_i$ from $\bf A,$ where $p_i$ is a control letter, 
i.e., the subcomb $\Gamma$ of
$\Delta$ with handle $\cal C'$ has no links of ${\bf A}$  except for
those on $\cal C'$. But $\partial\pi_i$ has an $a$-edge from the
left by \ref{cell}, and so two different $a$-edges of $\cal C'$ are
connected by an $a$-band ${\cal A}_j$ from the left of $\cal C'.$ We will
assume that ${\cal A}_j$ is the shortest $a$-band with this property,
and so the $(\theta,q)$-cells $\pi^1,\dots, \pi^m$ situated on $\cal
C'$ between $\pi_{j-1}$ and $\pi_j$ (if any) are inside the
subdiagram $\Delta'$ bounded by the chain $\bf A$. By Lemma
\ref{NoAnnul} for $\Delta',$ these cells cannot have common edges
with $(\theta,q)$-cells which do not correspond to $p_i^{\pm 1}.$
Now it follows from \ref{p} that the cells $\pi^1,\dots,\pi^m$ are
active from both sides. Since the cells $\pi_{j-1}$ and $\pi_j$ of
the chain-annulus are also active from both sides, the existence of
the $a$-band ${\cal A}_j$ contradicts Property \ref{iv}.
 The lemma is proved. \endproof
 
 \subsection{An application of quasicombs}
 
 The surgery we use in Lemma \ref{qi} requires a slight modification of 
 the notion of comb. 
 
 Let $\Delta$ be a reduced diagram over $M$ with boundary $\bf yz$ such that every
 maximal $\theta$-band of $\Delta$  has exactly one $\theta$- edge from $\bf y.$ 
 Assume that one can construct a $q$-band $\cal Q$ with
 a top or bottom path ${\bf y}_0,$ and $\Lab(\bf y)$ can be obtained from $\Lab({\bf y}_0)$
 after deleting of some $a$-letters. Then we say that $\Delta$ is a \label{quasic}{\it quasicomb}
 with the {\it support} $\bf y.$
 As for combs, 
 we use the standard factorization \label{yq} \label{zq} ${\bf yz}={\bf y}^{\Delta}{\bf z}^{\Delta}$ of the
 boundary path of a quasicomb. The number \label{hq} $h=h^{\Delta}$ is the $\theta$-length
 $|{\bf y}|_{\theta};$ the notation \label{[q]} $[\Delta]$ is also extended to quasicombs as
well as \label{hminusq} $h_-,$ and the \label{muq} $\kappa$-, $\lambda$-, $\mu$-, and \label{nuq} $\nu$-mixtures.

 In particular, every comb is a quasicomb. (Take $\cal Q$ to be the handle of the comb.) 
 It follow from the definition that the history $H$ of the quasicomb $\Delta$ is the history of $\cal Q,$
 and by Lemma \ref{NoAnnul}, the boundary label of $\Delta$ uniquely
 determines the end edges of all maximal $\theta$-bands ${\cal T}_1,\dots, {\cal T}_h$
 starting on $\bf y.$ It is easy to see that the set of cells of ${\cal T}_1$ is also
 uniquely determined by the boundary of $\Delta,$ and by induction, the same is true
 for ${\cal T}_2,\dots, {\cal T}_h$. Hence every quasicomb (in particular, every comb)
 is a minimal diagram.

 \begin{rk} \label{quasi} The statements and the proofs of Lemmas \ref{chain} and
 \ref{chainan} remain valid if $\Delta$ is a quasicomb.
 \end{rk}
  
  We say that a diagram $\Delta$ \label{admit} {\it admits} a (proper) quasicomb $\Gamma$ if it has a 
  (proper) subcomb
  $\Gamma_0$ such that $\Lab({\bf z}^{\Gamma})\equiv \Lab({\bf z}^{\Gamma_0}),$ the handle of $\Gamma_0$
 serves for $\Gamma$ as $\cal Q$ in the definition of quasicomb, and the words 
 $\Lab({\bf y}^{\Gamma})$ and $\Lab({\bf y}^{\Gamma_0})$ are equal modulo $(\theta,a)$-relations.
 In particular, every subcomb of $\Delta$ is admitted.

\begin{lemma}\label {qi} Let $\Delta$ be a comb 
 with history of type $(2)$.
Assume that $\Delta$ has a  subcomb $\Gamma_0$ of base width $\le 3$ with a $s_j^{\pm 1}$-handle $\cal C'$ such that
the filling trapezium $Tp(\cal C',\cal C)$ is not aligned. 
Also assume that all maximal $q$-bands of $\Gamma_0$ except for $\cal C',$ are $p^{\pm 1}$-bands.
Then $\Delta$ admits a proper quasicomb $\Gamma$ such that $Area(\Gamma)\le 5(\delta')^{-1}[\Gamma].$ 
\end{lemma}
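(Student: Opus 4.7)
\medskip

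\textbf{Plan of proof.} The strategy is to exploit the non-aligned condition on $Tp(\mathcal{C}',\mathcal{C})$ via Property \ref{ix}: since $\Gamma_0$'s filling trapezium has history of type $(2)$ and non-aligned base, the labels of the top and bottom of $\mathcal{C}'$ factor as $u(b_1v_1b_1^{-1})\cdots(b_mv_mb_m^{-1})w$, where each $a$-letter $b_i$ commutes (via $(\theta,a)$-relations) with every $\theta$-letter of $v_i$ and each of $u,w$ has at most one $a$-letter. The $b_i$'s are precisely the $a$-edges on the sides of $\mathcal{C}'$ that interface with the at most two $p^{\pm 1}$-bands of $\Gamma_0$ (at most two because the base width of $\Gamma_0$ is $\le 3$ and $\mathcal{C}'$ itself is an $s_j$-band in the middle).

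First I would take the $s_j$-band $\mathcal{C}'$ itself as the $q$-band $\mathcal{Q}$ of the quasicomb, setting ${\bf y}_0$ to be that side of $\mathcal{C}'$ which faces outward relative to $\mathcal{C}$. I then form $\Gamma$ inside $\Gamma_0$ by cutting along $\mathcal{C}'$ on one side and along a carefully chosen path on the other: the support ${\bf y}^\Gamma$ is obtained from $\Lab({\bf y}_0)$ by deleting exactly those $a$-letters of the form $b_i$ which stand at the interface with a $p$-band of $\Gamma_0$. By the commutation relations of Property \ref{ix}, the $p^{\pm 1}$-bands of $\Gamma_0$ (and the $\theta$-bands connecting them to $\mathcal{C}'$) can be peeled off and the resulting boundary label will correctly express $\Gamma$ as admitted by $\Delta$ (in the sense of Subsection 8.2, with $\Gamma_0$ playing the role of the subcomb $\Gamma_0$ in the definition of ``admits''). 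Each maximal $\theta$-band of $\Gamma$ will have exactly one $\theta$-edge on ${\bf y}^\Gamma$, confirming the quasicomb structure.

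Next, to bound $\area(\Gamma)$, I would verify the hypothesis that no non-trivial $a$-band of $\Gamma$ starts and ends on $({\bf y}')^\Gamma$. Such an $a$-band, combined with the commutative structure, would be part of a chain through the ``absorbed'' $p$-cells; by Remark \ref{quasi}, Lemma \ref{chain} and Lemma \ref{chainan} apply, ruling out chain-annuli and limiting intersections so that no such $a$-band can close up on the support. Granting this, a direct count as in Lemma \ref{width1}(b) applied to the quasicomb (where $\mathcal{C}'$ plays the role of the passive-or-active-from-one-side handle, viewed from the appropriate side) gives $\area(\Gamma)\le (\delta')^{-1}h^\Gamma(|{\bf z}^\Gamma|-|{\bf y}^\Gamma|+1)$; since the base width is $\le 3$ we pick up at most a factor of $3$ on the cell count, and the inequality $|{\bf z}^\Gamma|-|{\bf y}^\Gamma|\ge 2-2\delta'$ absorbs the additive $+1$, yielding at most $5(\delta')^{-1}[\Gamma]$.

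The main obstacle will be giving a rigorous cut-and-reassemble construction of $\Gamma$ from $\Gamma_0$ so that the boundary relation ``$\Delta$ admits $\Gamma$'' really holds: one must track how the commutations of Property \ref{ix} translate into an actual planar surgery that keeps the diagram reduced and turns $p$-cells into deletions of $a$-letters on ${\bf y}^\Gamma$ rather than on ${\bf z}^\Gamma$ (so that the inequality $|{\bf z}^\Gamma|\ge h^\Gamma+\text{const}$ is preserved). Controlling the $a$-bands along the new boundary via the chain lemmas, and verifying that the constant $5$ (rather than some larger constant depending on the base width) is indeed attained, is where the care is required.
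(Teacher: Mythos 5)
There is a genuine gap, and it is precisely at the point you flag as "the main obstacle": the construction of the admitted quasicomb. Your plan is to \emph{remove} material — to peel off the $p^{\pm 1}$-bands of $\Gamma_0$ and the $\theta$-bands joining them to $\cal C'$ — and declare the remaining diagram to be $\Gamma$. This cannot satisfy the definition of an admitted quasicomb: the $p$-bands of $\Gamma_0$ are maximal $q$-bands of the comb and end on ${\bf z}^{\Gamma_0}$, so deleting them (or the $\theta$-bands through them) changes the outer boundary, and the required equality $\Lab({\bf z}^{\Gamma})\equiv\Lab({\bf z}^{\Gamma_0})$ fails; it is exactly this equality that later lets one glue $\Gamma$ to the complement along the support and compare with the minimal diagram $\Delta$. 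The paper's surgery goes in the opposite direction: one keeps all of $\Gamma_0$ and \emph{adds} $(\theta,a)$-cells. Using the commutation of each $b_i$ with the $\theta$-letters of $v_i$ (Property \ref{ix}), one attaches strips of $(\theta,a)$-cells to both sides of $\cal C'$ to form an auxiliary diagram $E$ whose side labels $\Lab({\bf x}),\Lab({\bf x'})$ are obtained from $\Lab({\bf y}^{\Gamma_0}),\Lab({\bf y'}^{\Gamma_0})$ by deleting the $b_i^{\pm1}$ (so each has at most two $a$-edges), attaches the mirror copies of these cells to $\Gamma_0\setminus\cal C'$ to get a quasicomb $E'$ with support ${\bf x'}$ and boundary ${\bf x'}({\bf z'})^{\Gamma_0}$, and then identifies $E$ and $E'$ along ${\bf x'}$. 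The result $\Gamma$ has support ${\bf x}$ (short), the same ${\bf z}$-label as $\Gamma_0$, and ${\bf y}$-label equal to $\Lab({\bf y}^{\Gamma_0})$ modulo $(\theta,a)$-relations, which is what "admits" demands. Nothing in your sketch produces this.

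The area estimate as you propose it would also not go through. Lemma \ref{width1}(b) assumes the comb has no maximal $q$-bands besides its handle and that the handle is active from the left or passive; here the whole difficulty is the presence of the $p$-bands, whose cells carry two $a$-edges, so $a$-bands do not terminate on them but thread through them as chains — ruling out a direct "no $a$-band returns to ${\bf y'}$" argument. The paper instead lets $d$ be the maximal number of $a$-cells in a $\theta$-band of $E'$ and counts chains: since the new support ${\bf x'}$ has at most $2$ $a$-edges, at most two nontrivial chains can start or end there; by Lemma \ref{chain} (via Remark \ref{quasi}) each $\theta$-band meets such chains in at most $18$ $a$-cells, and by Lemma \ref{chainan} there are no chain-annuli, so at least $(d-18)/9$ maximal chains have both ends on $({\bf z'})^{\Gamma_0}$. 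Combined with $h^{\Gamma_0}\le|{\bf x}|\le h^{\Gamma_0}+2\delta'$ this gives $|{\bf z}^{\Gamma}|-|{\bf x}|>1+2\delta'(d+2)/9$, hence $d+2\le 5(\delta')^{-1}(|{\bf z}^{\Gamma}|-|{\bf x}|-1)$, and with $\area(E)\le 3h$ and at most two $p$-cells per $\theta$-band of $E'$ one lands exactly on $5(\delta')^{-1}[\Gamma]$. Your constant-chasing ("factor of $3$", "$|{\bf z}|-|{\bf y}|\ge 2-2\delta'$ absorbs the $+1$") is not a substitute for this chain count, and without the $E$/$E'$ construction the quantity $|{\bf x}|$ that makes $[\Gamma]$ large enough is never produced.
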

\proof By Property \ref{ix}, $\Lab({\bf y}^{\Gamma_0})$ has a factorization of the form\\ $u(b_1v_1b_1^{-1})\dots (b_m v_m b_m^{-1})w,$ 
where $b_i^{\pm 1}$ is an $a$-letter or $1$ ($i=1,\dots,m$) , $v_i$ is a group word in $\theta$-letters,
 $b_i$ commutes with every letter of $v_i$ by virtue of  $(\theta, a)$-relations, and
each of $u,$ $w$ has at most one $a$-letter. Similar property holds for $\Lab({\bf y'}^{\Gamma_0})$.
 Hence one can separate the band $\cal C'$ from the diagram $\Gamma_0$ and attach
 several $(\theta,a)$-cells to the left and to the right sides of it, and obtain an
 auxiliary subdiagram $E$ with the boundary $e{\bf x}f{\bf x'}$ whose label 'almost' equal to
 the boundary label of $\cal C',$ but $\Lab(\bf x)$ (resp., $\Lab(\bf x')$) are obtained
 from $\Lab(\bf y^{\Gamma_0})$ (from $\Lab(\bf y'^{\Gamma_0})$) by deleting of the $a$-letters $b_i^{\pm 1}$-s,
 and therefore each of $\bf x$ and $\bf x'$ has at most $2$ $a$-edges.
 
\unitlength 1mm 
\linethickness{0.4pt}
\ifx\plotpoint\undefined\newsavebox{\plotpoint}\fi 
\begin{picture}(125.5,54.5)(0,0)
\put(30.5,46.25){\line(0,-1){26}}
\multiput(30,46.25)(-.088043478,-.033695652){230}{\line(-1,0){.088043478}}
\put(9.75,38.5){\line(0,-1){14.5}}
\multiput(9.75,24)(.199519231,-.033653846){104}{\line(1,0){.199519231}}
\put(56,46.25){\line(0,-1){25.5}}
\put(56.25,46.75){\line(1,0){9.75}}
\put(66,46.75){\line(0,-1){25.5}}
\put(66,21.25){\line(-1,0){10.25}}
\multiput(94,46.25)(.1285714286,.0336734694){245}{\line(1,0){.1285714286}}
\put(125.5,54.5){\line(0,-1){32}}
\put(94,46.25){\line(0,-1){25.25}}
\multiput(94,21)(.163212435,-.033678756){193}{\line(1,0){.163212435}}
\put(125.5,22.75){\line(0,-1){8}}
\put(30.68,45.93){\line(-1,0){.125}}
\multiput(30.43,44.43)(.0600962,-.03125){13}{\line(1,0){.0600962}}
\multiput(31.992,43.617)(.0600962,-.03125){13}{\line(1,0){.0600962}}
\multiput(33.555,42.805)(.0600962,-.03125){13}{\line(1,0){.0600962}}
\multiput(35.117,41.992)(.0600962,-.03125){13}{\line(1,0){.0600962}}
\put(36.68,41.18){\line(0,-1){.9844}}
\put(36.68,39.211){\line(0,-1){.9844}}
\put(36.68,37.242){\line(0,-1){.9844}}
\put(36.68,35.273){\line(0,-1){.9844}}
\put(36.68,33.305){\line(0,-1){.9844}}
\put(36.68,31.336){\line(0,-1){.9844}}
\put(36.68,29.367){\line(0,-1){.9844}}
\put(36.68,27.398){\line(0,-1){.9844}}
\multiput(36.68,25.43)(-.068182,-.03125){11}{\line(-1,0){.068182}}
\multiput(35.18,24.742)(-.068182,-.03125){11}{\line(-1,0){.068182}}
\multiput(33.68,24.055)(-.068182,-.03125){11}{\line(-1,0){.068182}}
\multiput(32.18,23.367)(-.068182,-.03125){11}{\line(-1,0){.068182}}
\multiput(55.68,44.18)(-.070313,-.03125){12}{\line(-1,0){.070313}}
\multiput(53.992,43.43)(-.070313,-.03125){12}{\line(-1,0){.070313}}
\multiput(52.305,42.68)(-.070313,-.03125){12}{\line(-1,0){.070313}}
\multiput(50.617,41.93)(-.070313,-.03125){12}{\line(-1,0){.070313}}
\put(48.93,41.18){\line(0,-1){.9844}}
\put(48.93,39.211){\line(0,-1){.9844}}
\put(48.93,37.242){\line(0,-1){.9844}}
\put(48.93,35.273){\line(0,-1){.9844}}
\put(48.93,33.305){\line(0,-1){.9844}}
\put(48.93,31.336){\line(0,-1){.9844}}
\put(48.93,29.367){\line(0,-1){.9844}}
\put(48.93,27.398){\line(0,-1){.9844}}
\put(48.93,25.43){\line(0,1){0}}
\multiput(48.93,25.43)(.115079,-.031746){7}{\line(1,0){.115079}}
\multiput(50.541,24.985)(.115079,-.031746){7}{\line(1,0){.115079}}
\multiput(52.152,24.541)(.115079,-.031746){7}{\line(1,0){.115079}}
\multiput(53.763,24.096)(.115079,-.031746){7}{\line(1,0){.115079}}
\multiput(55.374,23.652)(.115079,-.031746){7}{\line(1,0){.115079}}
\multiput(66.18,46.43)(.0335648,-.0347222){18}{\line(0,-1){.0347222}}
\multiput(67.388,45.18)(.0335648,-.0347222){18}{\line(0,-1){.0347222}}
\multiput(68.596,43.93)(.0335648,-.0347222){18}{\line(0,-1){.0347222}}
\multiput(69.805,42.68)(.0335648,-.0347222){18}{\line(0,-1){.0347222}}
\multiput(71.013,41.43)(.0335648,-.0347222){18}{\line(0,-1){.0347222}}
\multiput(72.221,40.18)(.0335648,-.0347222){18}{\line(0,-1){.0347222}}
\put(73.43,38.93){\line(0,-1){.9844}}
\put(73.43,36.961){\line(0,-1){.9844}}
\put(73.43,34.992){\line(0,-1){.9844}}
\put(73.43,33.023){\line(0,-1){.9844}}
\put(73.43,31.055){\line(0,-1){.9844}}
\put(73.43,29.086){\line(0,-1){.9844}}
\put(73.43,27.117){\line(0,-1){.9844}}
\put(73.43,25.148){\line(0,-1){.9844}}
\multiput(73.43,23.18)(-.151042,-.03125){6}{\line(-1,0){.151042}}
\multiput(71.617,22.805)(-.151042,-.03125){6}{\line(-1,0){.151042}}
\multiput(69.805,22.43)(-.151042,-.03125){6}{\line(-1,0){.151042}}
\multiput(67.992,22.055)(-.151042,-.03125){6}{\line(-1,0){.151042}}
\multiput(93.68,45.68)(-.0353535,-.0328283){18}{\line(-1,0){.0353535}}
\multiput(92.407,44.498)(-.0353535,-.0328283){18}{\line(-1,0){.0353535}}
\multiput(91.134,43.316)(-.0353535,-.0328283){18}{\line(-1,0){.0353535}}
\multiput(89.862,42.134)(-.0353535,-.0328283){18}{\line(-1,0){.0353535}}
\multiput(88.589,40.952)(-.0353535,-.0328283){18}{\line(-1,0){.0353535}}
\multiput(87.316,39.771)(-.0353535,-.0328283){18}{\line(-1,0){.0353535}}
\put(86.68,39.18){\line(0,1){0}}
\put(86.68,39.18){\line(0,-1){.9412}}
\put(86.68,37.297){\line(0,-1){.9412}}
\put(86.68,35.415){\line(0,-1){.9412}}
\put(86.68,33.533){\line(0,-1){.9412}}
\put(86.68,31.65){\line(0,-1){.9412}}
\put(86.68,29.768){\line(0,-1){.9412}}
\put(86.68,27.886){\line(0,-1){.9412}}
\put(86.68,26.003){\line(0,-1){.9412}}
\put(86.68,24.121){\line(0,-1){.9412}}
\multiput(86.68,23.18)(.111111,-.031746){7}{\line(1,0){.111111}}
\multiput(88.235,22.735)(.111111,-.031746){7}{\line(1,0){.111111}}
\multiput(89.791,22.291)(.111111,-.031746){7}{\line(1,0){.111111}}
\multiput(91.346,21.846)(.111111,-.031746){7}{\line(1,0){.111111}}
\multiput(92.902,21.402)(.111111,-.031746){7}{\line(1,0){.111111}}
\put(67,36.5){${\bf y}^{\Gamma_0}$}
\put(75,31.5){$\bf x$}
\put(45.5,33.75){$\bf x'$}
\put(52.5,31){${\bf y'}^{\Gamma_0}$}
\put(60,26.25){$\cal C$}
\put(109.75,33.75){$\Delta\backslash\Gamma_0$}
\put(16,32.5){$\Gamma_0\backslash\cal C'$}
\put(25.5,52.5){$E'$}
\put(60.25,53){$E$}
\multiput(9.75,17.75)(.03365385,-.04807692){52}{\line(0,-1){.04807692}}
\put(11.5,15.25){\line(1,0){22}}
\put(47.75,15){\line(1,0){23.75}}
\multiput(71.5,15)(.03731343,.03358209){67}{\line(1,0){.03731343}}
\put(39.25,15){$\Gamma$}
\put(96.25,14.75){$\Delta\backslash\Gamma$}
\multiput(30.68,40.18)(.55,.45){6}{{\rule{.4pt}{.4pt}}}
\multiput(30.93,36.43)(.58333,.55556){10}{{\rule{.4pt}{.4pt}}}
\multiput(30.68,32.68)(.61111,.61111){10}{{\rule{.4pt}{.4pt}}}
\multiput(30.43,29.43)(.63889,.63889){10}{{\rule{.4pt}{.4pt}}}
\multiput(30.68,27.43)(.63889,.55556){10}{{\rule{.4pt}{.4pt}}}
\multiput(30.43,25.18)(.69444,.52778){10}{{\rule{.4pt}{.4pt}}}
\multiput(30.93,22.68)(.125,.125){3}{{\rule{.4pt}{.4pt}}}
\multiput(31.18,22.93)(.625,.5){9}{{\rule{.4pt}{.4pt}}}
\multiput(49.43,38.43)(.61111,.55556){10}{{\rule{.4pt}{.4pt}}}
\multiput(49.18,35.68)(.625,.55){11}{{\rule{.4pt}{.4pt}}}
\multiput(50.43,33.68)(.58333,.55556){10}{{\rule{.4pt}{.4pt}}}
\multiput(48.93,27.93)(.5833,.4167){4}{{\rule{.4pt}{.4pt}}}
\multiput(53.93,34.68)(.5,.5){4}{{\rule{.4pt}{.4pt}}}
\multiput(49.43,26.18)(.6875,.53125){9}{{\rule{.4pt}{.4pt}}}
\multiput(51.18,25.43)(.57143,.42857){8}{{\rule{.4pt}{.4pt}}}
\multiput(53.43,24.43)(.5,.4375){5}{{\rule{.4pt}{.4pt}}}
\multiput(66.18,42.93)(.5833,.5){4}{{\rule{.4pt}{.4pt}}}
\multiput(66.18,40.43)(.6,.5){6}{{\rule{.4pt}{.4pt}}}
\multiput(68.68,39.43)(.4,.4){6}{{\rule{.4pt}{.4pt}}}
\multiput(66.43,34.43)(.5,.4375){5}{{\rule{.4pt}{.4pt}}}
\multiput(66.18,31.93)(.64286,.53571){8}{{\rule{.4pt}{.4pt}}}
\multiput(66.18,29.68)(.675,.65){11}{{\rule{.4pt}{.4pt}}}
\multiput(66.18,27.43)(.675,.625){11}{{\rule{.4pt}{.4pt}}}
\multiput(65.93,25.68)(.7,.575){11}{{\rule{.4pt}{.4pt}}}
\multiput(66.18,23.43)(.675,.575){11}{{\rule{.4pt}{.4pt}}}
\multiput(66.68,22.18)(.65,.5){11}{{\rule{.4pt}{.4pt}}}
\multiput(69.68,22.68)(.6,.45){6}{{\rule{.4pt}{.4pt}}}
\multiput(86.68,37.43)(.675,.55){11}{{\rule{.4pt}{.4pt}}}
\multiput(87.18,35.68)(.69444,.52778){10}{{\rule{.4pt}{.4pt}}}
\multiput(86.68,32.93)(.7,.55){11}{{\rule{.4pt}{.4pt}}}
\multiput(86.68,31.18)(.725,.5){11}{{\rule{.4pt}{.4pt}}}
\multiput(86.93,29.18)(.69444,.5){10}{{\rule{.4pt}{.4pt}}}
\multiput(86.68,27.18)(.7,.475){11}{{\rule{.4pt}{.4pt}}}
\multiput(86.68,25.18)(.7,.525){11}{{\rule{.4pt}{.4pt}}}
\multiput(87.18,23.93)(.69444,.5){10}{{\rule{.4pt}{.4pt}}}
\multiput(88.68,22.93)(.75,.5){8}{{\rule{.4pt}{.4pt}}}
\multiput(90.68,22.43)(.55,.45){6}{{\rule{.4pt}{.4pt}}}
\multiput(92.18,21.93)(.5,.4167){4}{{\rule{.4pt}{.4pt}}}
\end{picture}

 Then we continue the surgery as follows. We can construct 
 the mirror copies of the $(\theta,a)$-cells attached to $\bf y'^{\Gamma_0}$ in $E,$ and attach
 these copies to the diagram $\Gamma_0\backslash \cal C'$ to obtain (after
 possible cancellations of some $(\theta, a)$-cells) a reduced quasicomb $E'$
 whose support can be denoted by $\bf x'$ since its label is $\Lab(\bf x'),$ and the boundary
 of $E'$ is ${\bf x'}({\bf z'})^{\Gamma_0}.$ Finally,
 we identify $E'$ and $E$ along $\bf x'$ and (after possible cancellations of $(\theta,a)$-cells) we have
 a desired quasicomb $\Gamma$ with support $\bf x.$ Now, to estimate the area of the
 minimal diagram $\Gamma$ from above it suffices to estimate $Area (E')$ (and use Lemma \ref{2diagr}) since
 obviously $Area(E)\le 3 (Area({\cal C'}))\le 3h.$ 
 
 Since the path $\bf x$ has at most $2$ $a$-edges
we obtain

\begin{equation} \label{igrek}
h^{\Gamma_0}\le |{\bf x}|\le h^{\Gamma_0}+2\delta'
\end{equation}
 by the definition of the length of a path.
 
 Note that at most two non-trivial chains of
$E'$ can start/end on $\bf x'$ since $|{\bf x'}|_a\le 2.$ Consequently, by Lemma
\ref{chain} (and Remark \ref{quasi}) , every $\theta$-band of $E'$ has at most $18$ cells
belonging to such chains.
 
 Let $d$ be the maximal number of $a$-cells in $\theta$-bands of
$E'$. 
Taking into account the argument of the previous paragraph and the lack of chain-annuli in $E',$ 
we conclude that there are at least $(d-18)/9$ maximal chains
having both ends on
$({\bf z'})^{\Gamma_0}.$  Now it follows from Lemma \ref{ochev} and Inequality (\ref{igrek}) that
$$|{\bf z}^{\Gamma}|\ge |({\bf z'})^{\Gamma_0}|+2\ge 2+h^{\Gamma_0}+2\delta'(d-18)/9 \ge
2+|{\bf x}|-2\delta'+2\delta'(d-18)/9>
|{\bf x}|+1+2\delta'(d+2)/9$$ because $\delta'\le 1/7.$ 
This implies $d+2\le 5(\delta')^{-1}(|{\bf z}^{\Gamma}|-|{\bf x}|-1),$  
and since every $\theta$-band of $E'$ has at most $2$ $p$-cells, we have, as required:
$$\area(\Gamma)\le Area(E')+Area(E) \le (2+d)h^{\Gamma}+3h^{\Gamma}< 5h^{\Gamma}(\delta')^{-1}(|{\bf z}^{\Gamma}|-|\bf x|)$$
\endproof

\begin{lemma} \label{nokpk} Let $\Delta$ be a comb
with base width $b\ge N.$
Assume that $\Delta$ has a one-Step history and has no active $k$- or $k'$-cells.
   Then  $\Delta$ admits a proper quasicomb $\Gamma$ such that
   $Area(\Gamma)\le 5(\delta')^{-1} [\Gamma].$

  \end{lemma}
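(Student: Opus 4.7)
The plan is to reduce the statement to one of Lemmas~\ref{width1}(b), \ref{ppp}, and \ref{qi}, each of which delivers a (quasi)comb with an area bound of the required shape.

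First I would show the history must be of type $(2)$. If it were of type $(1)$, every $\theta$-band of $\Delta$ would apply some rule $\rho_1(y)$; for this rule to be applicable the base must contain the letter $k$, and then by Property~\ref{kk'} each corresponding $k$-cell is active from the left, contradicting the hypothesis. The type~$(3)$ case is symmetric via $k'$. In a type~$(2)$ history, no rule $(12)^{\pm 1}$ or $(23)^{\pm 1}$ appears, so Property~\ref{kk'} again forces every $k$- or $k'$-cell to be active; therefore $\Delta$ contains no $k$- or $k'$-bands, and the base of $\Delta$ is a word over $\{t^{\pm 1},(t')^{\pm 1},s_i^{\pm 1},p_j^{\pm 1}\}$.

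Second, if some $\theta$-band of $\Delta$ has a base containing a subword $(pp^{-1}p)^{\pm 1}$ for a control letter $p$, I apply Lemma~\ref{ppp} directly to obtain a subcomb $\Gamma$ with $\area(\Gamma)\le(\delta')^{-1}[\Gamma]\le 5(\delta')^{-1}[\Gamma]$, noting that subcombs are automatically admitted as quasicombs. If no such subword exists and the base of $\Delta$ is aligned, then since $b\ge N>N/2$ and no $k^{\pm 1}$ or $(k')^{\pm 1}$ appears, Remark~\ref{align} forces the base to consist entirely of $t^{\pm 1}$- or $(t')^{\pm 1}$-letters. By Property~\ref{kk'} every band of $\Delta$ is passive, and Property~\ref{cell} shows $t$- and $t'$-cells carry no $a$-edges, so $\Delta$ has no $a$-edges at all. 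Taking $\Gamma$ to be the outermost proper subcomb of $\Delta$ (which exists since $b\ge N\ge 2$ and has $\mathcal{C}'$ as its unique maximal $q$-band), its handle is passive and no non-trivial $a$-bands can arise on its left side; hence Lemma~\ref{width1}(b) gives $\area(\Gamma)\le(\delta')^{-1}[\Gamma]$.

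The main and principal case is when the base is not aligned. Here I aim to apply Lemma~\ref{qi}, which requires a subcomb $\Gamma_0$ whose handle $\mathcal{C}'$ is an $s_j^{\pm 1}$-band, whose base width is at most $3$, whose only other maximal $q$-bands are $p^{\pm 1}$-bands, and for which the filling trapezium $Tp(\mathcal{C}',\mathcal{C})$ is not aligned. Using non-alignment together with the absence of $(pp^{-1}p)^{\pm 1}$ and of $k,k'$-letters, the base must contain a short conflict pattern of the form $s_j s_j^{-1}$ or $s_j^{-1}s_j$ (possibly with an interposed $p_{j+1}p_{j+1}^{-1}$) inside a maximal $t/t'$-free segment. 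I would then locate an innermost $s_j^{\pm 1}$-band $\mathcal{C}'$ sitting at such a conflict site and on the side of $\mathcal{C}$ furthest from where the conflict begins, so that the subcomb $\Gamma_0$ bounded by $\mathcal{C}'$ has at most three base letters (the $s_j$ itself plus at most two adjacent $p$-letters, by our exclusions) and contains only $p^{\pm 1}$-bands internally; meanwhile the $Tp(\mathcal{C}',\mathcal{C})$ side still straddles the conflict, so its base is not a subword of $B^{\pm 1}$, giving the non-alignment needed by Lemma~\ref{qi}. The lemma then delivers the required quasicomb $\Gamma$ with $\area(\Gamma)\le 5(\delta')^{-1}[\Gamma]$.

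The main obstacle is this last step: the combinatorial selection of $\mathcal{C}'$ so that the three conditions of Lemma~\ref{qi} are \emph{simultaneously} satisfied. This amounts to a careful local case analysis of the admissible non-aligned two- and three-letter patterns over $\{s_i^{\pm 1},p_j^{\pm 1}\}$, given the exclusion of $(pp^{-1}p)^{\pm 1}$, the absence of $k,k'$-letters, and Property~\ref{order} restricting allowable consecutive base pairs; verifying that an innermost choice of $\mathcal{C}'$ always yields base width at most $3$ with only $p$-bands beyond it is the technical heart of the argument.
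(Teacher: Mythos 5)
Your overall plan (reduce to Lemmas \ref{width1}, \ref{ppp} and \ref{qi}) is the right one and is the paper's, but two steps as written do not work. First, the reduction to type $(2)$ is faulty: a rule of Step 1 applied along a $\theta$-band of a comb does not require that band's base to contain a $k$-letter (the base can consist of $s$-, $p$-, $t$-, $t'$-letters, whose Step-1 cells are all passive by Property \ref{kk'}), so histories of type $(1)$ or $(3)$ are perfectly possible; they must be treated, which the paper does by observing that in that case all $q$-cells of $\Delta$ are passive and invoking Lemma \ref{width1}. Relatedly, since $(12)^{\pm1}$ and $(23)^{\pm1}$ are counted in Step 2, a type $(2)$ history may contain them, and their $k$- or $k'$-cells are passive; so you cannot conclude that $\Delta$ has no $k$- or $k'$-cells outright --- this is exactly why the paper first uses Lemma \ref{nizko} to assume the handle ${\cal C}'$ of the small subcomb is longer than $(\delta')^{-1}$, so that the filling trapezium has height $>2$ and a middle $\theta$-band certifies that its base has no $k$- or $k'$-letters.

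Second, and more importantly, the case you yourself call the main obstacle is left undone, and the selection you sketch would not satisfy the hypotheses of Lemma \ref{qi}. Choosing ${\cal C}'$ as an ``innermost $s_j^{\pm1}$-band at the conflict site on the far side from ${\cal C}$'' gives no control over what lies beyond ${\cal C}'$: the subcomb $\Gamma_0$ it bounds can then contain further $s$-, $t$-, $t'$-bands, violating the requirements of base width $\le 3$ and of having only $p^{\pm1}$-bands besides the handle. Those requirements essentially force ${\cal C}'$ to be the left-most maximal non-$p$-band of $\Delta$ (then width $\le 3$ follows from Property \ref{order} together with the exclusion of $(pp^{-1}p)^{\pm1}$ via Lemma \ref{ppp}). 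With this forced choice, the non-alignment demanded by Lemma \ref{qi} is a property of the filling trapezium $Tp({\cal C}',{\cal C})$, not of ``the base of $\Delta$'' (which for a comb is not a single well-defined word): the correct route, and the paper's, is to apply Remark \ref{align} directly to the base of $Tp({\cal C}',{\cal C})$ --- it has length $\ge N-2$ and no $k$-, $k'$-letters, hence is either non-aligned (Lemma \ref{qi} applies) or consists entirely of $t^{\pm1}$-, $(t')^{\pm1}$-letters (Lemma \ref{width1} applies, since a $t$-band has no derivative $p$-bands by \ref{order}). No conflict-locating case analysis is needed, and the one you outline would not close this gap as stated.
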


 \proof
  We first assume that the history of $\Delta$ is of type $(2)$. It follows from the conditions of the lemma and \ref{kk'} that $\Delta$ cannot have $k$- or $k'$-cells at all, with the possible exception
  for the cells of the first and the last
  $(12)^{\pm 1}$ or $(23)^{\pm 1}$-bands. 
 By Lemma \ref{ppp}, we may also
assume that the base of any $\theta$-band
  of $\Delta$ does not have subword $(pp^{-1}p)^{\pm 1},$ where $p^{\pm 1}$ is any control letter.

Therefore a left-most maximal $s_i^{\pm 1}$-, $k^{\pm 1}$-, $(k')^{\pm 1}$-, $t^{\pm 1}$-, or
$(t')^{\pm 1}$-band $\cal C'$ of $\Delta$ is a handle of
 a subcomb $\Gamma$ of base width $b^{\Gamma}\le 3$
 and all other $q$-bands
of $\Gamma$ (if any exist) correspond to the same control letter
$p^{\pm 1}$ by \ref{order}. One may assume that $\cal C'$ has $h' > (\delta')^{-1}$
cells because otherwise Lemma \ref{nizko} is applicable to the subcomb with
handle $\cal C'.$

The filling trapezium $\Delta'=Tp(\cal
C',\cal C)$ has height $h'>(\delta')^{-1} >2$, and therefore it has a maximal $\theta$-band without $k$- or $k'$-cells.
Hence the base
of $\Delta'$ has no  $k$- or $k'$-letters.
Since $b\ge N$, $\Delta'$ is of base width $b'\ge
N-2,$ but having no $k$- or $k'$-letters, the
base of $\Delta'$ is not  aligned or has only $t^{\pm 1}$ or $(t')^{\pm 1}$-letters
by Remark \ref{align}. 
In the former case, we are done by Lemma \ref{qi}.
In the later case we may refer to Lemma \ref{width1}  since there are no derivative $p^{\pm 1}$-bands
for a $t$-band by Property \ref{order}.

If the history of $\Delta$ is of type $(1)$ or $(3),$ then it follows from
the absence of active $k$- and $k'$-cells in $\Delta$ that all $q$-cells of $\Delta$ are passive by Property \ref{kk'}, 
and the statement follows from Lemma \ref{width1}.

\endproof

\section{Combs with one-Step histories}

In this section we obtain the estimates of the areas of one-Step combs.
Lemmas \ref{t}, \ref{oneage}, \ref{13} will be used in the next sections.

\begin{lemma} \label{onek} Let $\Gamma$ be a comb
  whose proper subcombs have no active $k$- or $k'$-cells
  and the handle $\cal C$ of $\Gamma$ is passive from the left.
  Assume that $\Gamma$ has one-Step history and admits no proper quasicombs $\Delta$ such that
  $\area(\Delta)\le 5(\delta')^{-1}[\Delta].$
  Assume that $\Gamma$ has at most $L_0$ odd maximal
   $\theta$-bands for some $L_0$.
  Then  (a) $\area'(\Gamma)\le 5(\delta')^{-1}h(|{\bf z'}|-|{\bf y'}|+L_0);$
  (b) if $\cal C$ is also passive from the right, then
  $\area(\Gamma)< 5(\delta')^{-1}h(|{\bf z}|-|{\bf y}|+L_0);$
  \end{lemma}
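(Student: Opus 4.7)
The plan is to prove (a) first, then deduce (b) as a short corollary. The starting observation is that the non-admission hypothesis, combined with Lemma \ref{nokpk}, forces every derivative subcomb $\Gamma_i$ of $\Gamma$ to have base width strictly less than $N$. Indeed, each $\Gamma_i$ inherits one-Step history as a subword of $H$, and its proper subcombs (being proper subcombs of $\Gamma$) have no active $k$- or $k'$-cells. If $\Gamma_i$ had base width $\ge N$, Lemma \ref{nokpk} would produce a proper quasicomb of $\Gamma_i$ with area at most $5(\delta')^{-1}[\cdot]$; this quasicomb sits inside $\Gamma_i\subset \Gamma\setminus \mathcal C$, so its handle differs from $\mathcal C$ and it becomes a proper quasicomb admitted by $\Gamma$, contradicting the hypothesis.

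Next I would decompose $\area(\Gamma)=n_s+\sum_i\area(\Gamma_i)$ into the contribution $n_s$ of simple $\theta$-bands and the contributions of the derivative subcombs. Since $\mathcal C$ is passive from the left, Remark \ref{pustbudut} (the strengthening of Lemma \ref{simple}(b) which keeps the $a$-bands that start and end on the same $\mathcal C_i$) applies and yields
\[
n_s\le h\bigl(h_-+\alpha+1+\tfrac12\textstyle\sum h_i\bigr),
\]
where $\alpha=|\mathbf{z}'|_a$ satisfies $\alpha\le (\delta')^{-1}(|\mathbf{z}'|-h)$ by Lemma \ref{simple}(a). The derivative subcombs are handled by induction (say on the total height, or on $\sum h_i$), using the fact that each $\Gamma_i$ has base width $<N$: when $h_i\le (\delta')^{-1}$ Lemma \ref{nizko} gives $\area(\Gamma_i)\le 4(\delta')^{-1}[\Gamma_i]$ directly, and otherwise the induction hypothesis (applied with appropriately localized parameters $|\mathbf{z}^{\Gamma_i}|-|\mathbf{y}^{\Gamma_i}|$ and a local count of odd maximal bands) supplies a matching bound. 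Summing the local comb-mixture contributions $(|\mathbf{z}^{\Gamma_i}|-|\mathbf{y}^{\Gamma_i}|)$ and combining them with the $\alpha$-term from the simple bands via the identity $|\mathbf{z}'|-|\mathbf{y}'|\ge (|\mathbf{z}'|-h)+\sum_i(|\mathbf{z}^{\Gamma_i}|-|\mathbf{y}^{\Gamma_i}|)$ (which follows from the definition of length and the concatenation of boundary paths through the derivative handles) consolidates everything into a single bound of the required shape.

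The most delicate ingredient is the $L_0$ term. Every odd maximal $\theta$-band of $\Gamma$ contributes an extra $a$-edge which need not be paid for by $|\mathbf{z}'|-|\mathbf{y}'|$; this is exactly why the hypothesis provides a budget of at most $L_0$ such bands, and each such band can add at most $h$ cells to the simple-band count (one cell per column). So after discarding these $L_0$ offending bands and estimating the rest, one gains the additive $5(\delta')^{-1}hL_0$ term. Combining the simple-band bound with the inductive derivative estimate produces
\[
\area'(\Gamma)=n_s-h+\textstyle\sum\area(\Gamma_i)\le 5(\delta')^{-1}h(|\mathbf{z}'|-|\mathbf{y}'|+L_0).
\]
The main obstacle I anticipate is exactly this balancing: the simple-band estimate depends on $\alpha$ and on $\tfrac12\sum h_i$, while the derivative estimates depend on the local $|\mathbf{z}^{\Gamma_i}|-|\mathbf{y}^{\Gamma_i}|$; the numerical coefficient $5$ (rather than a smaller one) is what makes these fit together, and verifying the book-keeping with the constants from $\delta,\delta'$ requires care.

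For (b), assume $\mathcal C$ is passive from both sides. Then $\mathbf y$ and $\mathbf y'$ have no $a$-edges, so $|\mathbf y|=|\mathbf y'|=h$, while $|\mathbf z|=|\mathbf z'|+2$. Hence $|\mathbf z|-|\mathbf y|=(|\mathbf z'|-|\mathbf y'|)+2$, and
\[
\area(\Gamma)=\area'(\Gamma)+h\le 5(\delta')^{-1}h(|\mathbf z'|-|\mathbf y'|+L_0)+h<5(\delta')^{-1}h(|\mathbf z|-|\mathbf y|+L_0),
\]
the last strict inequality holding because $10(\delta')^{-1}h>h$. This finishes the deduction of (b) from (a).
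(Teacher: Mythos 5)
Your route (decompose $\Gamma$ into simple $\theta$-bands plus derivative subcombs, as in Lemma \ref{t}) is genuinely different from the paper's, but it has a gap that I do not see how to close. The simple-band count you take from Lemma \ref{simple}(b) and Remark \ref{pustbudut} contains the terms $h\,h_-$ and $\tfrac{h}{2}\sum_i h_i$, coming from $a$-bands that join two distinct derivative bands and from $a$-bands that return to the same derivative band. In Lemma \ref{t} those terms are absorbed only through the mixture, via $hh_-\le 2\kappa^c(\Gamma)$ (Lemma \ref{lgamma}); but the statement you are proving has no mixture term, and nothing in your hypotheses bounds $h_-$ or $\sum_i h_i$ by $|\mathbf{z}'|-|\mathbf{y}'|+L_0$ (with $\mathcal C$ passive from the left one has $|\mathbf{y}'|=h$, and $|\mathbf{z}'|-h$ can be very small while $h_-$ is comparable to $h$). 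The paper's proof avoids this by a completely different mechanism: after reducing to base width $\le N$ (Lemma \ref{nokpk}) and excluding base subwords $p p^{-1}p$ (Lemma \ref{ppp}), it works with \emph{chains} — maximal $a$-bands concatenated through $p$-cells with two $a$-edges — and uses Lemmas \ref{chain} and \ref{chainan} (at most $9$ common $a$-cells with any $\theta$-band, no chain-annuli) together with the observation that, in the absence of active $k$- or $k'$-cells and with $\mathcal C$ passive from the left, a maximal chain can terminate only on $\mathbf{z}'$ or on an odd cell. Since odd cells lie on at most $L_0$ $\theta$-bands (at most $2(b-1)L_0$ chain ends, by Property \ref{cell}), a $\theta$-band with $d$ $a$-cells forces $|\mathbf{z}'|_a\ge 2(d-18(b-1)L_0)/9$, and the bound $\area'(\Gamma)\le (b-1+d)h$ then gives the claim. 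This is exactly where the $L_0$ enters; your heuristic that ``each odd band adds at most $h$ cells'' misses the point — the danger of odd cells is that $a$-bands (chains) terminate on them instead of reaching $\mathbf{z}'$, not that the odd bands themselves are large.

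Two further defects: the inequality you invoke, $|\mathbf{z}'|-|\mathbf{y}'|\ge (|\mathbf{z}'|-h)+\sum_i(|\mathbf{z}^{\Gamma_i}|-|\mathbf{y}^{\Gamma_i}|)$, is not usable — since $|\mathbf{y}'|=h$ it reduces to $\sum_i(|\mathbf{z}^{\Gamma_i}|-|\mathbf{y}^{\Gamma_i}|)\le 0$, which is the opposite of what your summation scheme needs; and the proposed induction applies the lemma to derivative subcombs whose handles need not be passive from the left (in a Step-2 comb they are typically $s$- or $p$-bands active from both sides), so the inductive hypotheses are not inherited. Your deduction of (b) from (a) is fine and agrees with the paper.
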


  \proof (a) By Lemma \ref{nokpk}, we may assume that the base width $b$ of $\Gamma$ does not exceed $N.$
 By Lemma \ref{ppp}, we may assume that $\Delta$ has no bands with  $p^{\pm 1}p^{\mp
1}p^{\pm 1}$ in the base for arbitrary control letter $p.$ By Lemma
\ref{chain},  every chain of $\Gamma$ and every $\theta$-band of $\Gamma$
have at most $9$ $a$-cells in common. We also use below that $\Gamma$ has no chain-annuli by Lemma \ref{chainan}.

  At most $2(b-1)L_0$ non-trivial maximal chains can start/terminate on the odd
  $\theta$-bands of $\Gamma$ by \ref{cell} (a) - (c). 
  Let $\cal T$ be a $\theta$-band of $\Gamma$ having maximal number of $a$-cells $d$.
  Then among maximal chains crossing $\cal T$, we have at least
$(d-18(b-1)L_0)/9$ chains with both ends
  on $\bf z'$, and so $|{\bf z'}|_a\ge 2(d-18(b-1)L_0)/9.$ Therefore by Lemma \ref{ochev}, $$|{\bf z'}|-|{\bf y'}|\ge
  2(b-1)+2\delta'(d-18(b-1)L_0)/9>
  2\delta'(d+b-1)/9 - 4\delta'NL_0 $$ since $b\le N$. Hence $d+b-1\le (9/2)(\delta')^{-1}(|{\bf z'}|-|{\bf y'}|)+18NL_0 $,
  and therefore $$\area'(\Gamma)\le (b-1+d)h\le 5h(\delta')^{-1}(|{\bf z'}|-|{\bf y'}| +L_0)$$ since $18N\le (\delta')\iv/2$.

(b) Since $\cal C$ is passive from the right, we have $|{\bf y}|\le
|{\bf y'}|+2\delta'$ and $|{\bf z}|=|{\bf z'}|+2.$ Therefore $|{\bf z}|-|{\bf y}|>|{\bf z'}|-|{\bf y'}|+1,$ and so statement (a) implies (b)
because $\area({\cal C})\le h$.
  \endproof

\bigskip

\begin{lemma}\label{t}
Let $\Gamma$ be a one-Step comb of base width $b\le 15N$.
Assume that  its handle $\cal C $ is a $t^{\pm 1}$- or $(t')^{\pm 1}$-band.
 Then

 (1) $\Gamma$ has a subcomb $\Delta$ such
that its handle ${\cal C}^{\Delta}$ is passive from the right, and
\begin{equation} \label{subcomb} \area(\Delta)\le
 c_1([\Delta]+\frac12  h^{\Delta}h_-^{\Delta})
 \end{equation}

 (2) $\area(\Gamma)\le c_1([\Gamma]+\kappa^c(\Gamma)) $.
\end{lemma}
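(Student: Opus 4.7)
The plan is to prove (1) by exhibiting $\Delta$ as a suitable subcomb and bounding $\area(\Delta)$, and then to deduce (2) from (1) via the mixture inequality of Lemma \ref{lgamma}.

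For (1), the first observation is that because $\mathcal C$ is a $t^{\pm 1}$- or $(t')^{\pm 1}$-band, Property \ref{kk'} forces it to be passive from both sides; in particular it is passive from the right. So $\Delta=\Gamma$ is always an eligible candidate, and I would try it first. Note that the derivative bands $\mathcal C_1,\dots,\mathcal C_s$ of $\Gamma$ are $k^{\pm 1}$- or $(k')^{\pm 1}$-bands by Property \ref{order} (these are the only base letters adjacent to $t, t'$). I would split the area into $h$ (the handle), the area $n_s$ of simple $\theta$-bands, and the areas of the derivative subcombs $\Gamma_1,\dots,\Gamma_s$. By formula (\ref{ns}) of Remark \ref{pustbudut},
$$n_s \le h(\delta')^{-1}(|{\bf z}|-h-1) + \tfrac{3h}{2}\sum_i h_i.$$
Since $\mathcal C$ is passive, $|{\bf y}|=h$ by Lemma \ref{ochev}(c), so this becomes at most $(\delta')^{-1}[\Gamma] + \tfrac{3h}{2}h_- + \tfrac{3h}{2}\max_i h_i$. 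The first two summands are already within the desired bound; the term $\tfrac{3h}{2}\max_i h_i$ is the true obstacle, and I would dispose of it by a case split on $\max_i h_i$. When $\max_i h_i$ is small compared to $h$, one absorbs it into $[\Gamma]$ and $\tfrac12 h h_-$; when it is large, one passes to the derivative subcomb $\Gamma_{i_0}$ realising the maximum and iterates, running an induction on $h$ (with base case Lemma \ref{nizko}). The derivative subcombs themselves are controlled in the favourable cases by Lemmas \ref{onek}, \ref{nokpk}, and \ref{width1}, together with Lemma \ref{ppp} to reduce to one-Step situations without nested control-letter repetitions.

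For (2), apply (1) to obtain a subcomb $\Delta$ satisfying the area bound containing the term $\tfrac12 h^\Delta h_-^\Delta$. Lemma \ref{lgamma} converts this into a mixture: $\tfrac12 h^\Delta h_-^\Delta \le \kappa^c(\Delta)$, and by Lemma \ref{mu}(a) (or iterating Lemma \ref{deriv}(a)) one has $\kappa^c(\Delta)\le \kappa^c(\Gamma)$, so this term is at most $\kappa^c(\Gamma)$. If $\Delta=\Gamma$ the proof of (2) is complete; otherwise one bounds $\area(\Gamma\setminus\Delta)$ by an induction on height, noting that $\Gamma\setminus\Delta$ is again a comb with a $t/t'$-handle satisfying the same hypotheses but of strictly smaller height. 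Summing and using the subadditivity of $[\,\cdot\,]$ and of $\kappa^c$ along the $\Delta$, $\Gamma\setminus\Delta$ decomposition (via Lemma \ref{mu}) gives $\area(\Gamma)\le c_1([\Gamma]+\kappa^c(\Gamma))$.

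The hardest part will be the induction in (1): controlling the nested derivative subcombs with $k/k'$-handles whose own area bounds do not fall directly under the hypothesis ``handle is a $t^{\pm 1}$- or $(t')^{\pm 1}$-band'' of Lemma \ref{t}. One must set up the case analysis on $\max_i h_i$ so that each recursive step makes genuine quantitative progress (in $h$ or in some secondary parameter), ensures that at each stage either one of the auxiliary comb lemmas applies outright or the problem reduces to a strictly smaller one, and, crucially, keeps the constant $c_1$ uniform throughout the recursion (for all base widths $b\le 15N$). Comparing $[\Delta]$ and $h^\Delta h_-^\Delta$ to $[\Gamma]$ and $h^\Gamma h_-^\Gamma$ along the chain of derivative subcombs will also require some care, as $\Delta$ lies strictly inside $\Gamma$ and the relevant boundary paths change accordingly.
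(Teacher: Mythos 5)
Your part (2) is essentially the paper's argument: apply (1), convert $\tfrac12 h^{\Delta}h_-^{\Delta}$ into $\kappa^c(\Delta)$ via Lemma \ref{lgamma}, and induct using the additivity $(|{\bf z}^{\Delta}|-|{\bf y}^{\Delta}|)+(|{\bf z}^{\Delta'}|-|{\bf y}^{\Delta'}|)=|{\bf z}|-|{\bf y}|$ together with Lemma \ref{mu}(c). One slip, though: you propose to induct on the \emph{height}, but the complement $\Gamma\setminus\Delta$ has the same handle $\cal C$ and hence the same height $h$ as $\Gamma$; the induction only makes progress if you measure the number of maximal $q$-bands (which does strictly drop when $\Delta$ is proper), as the paper does.

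In part (1) there is a genuine gap, and it sits exactly where you flag it. The paper first reduces to the case where $\cal C$ is the only $t^{\pm1}$- or $(t')^{\pm1}$-band (otherwise a smaller subcomb with a $t/t'$-handle works), disposes of Step $(1)$/$(3)$ histories via Property \ref{kk'} and Lemma \ref{width1}(b), and in the remaining Step~2 case writes $\area(\Gamma)\le h(h_-+3\alpha+1)+60N\sum h_i^2$ (simple bands via Lemma \ref{simple}/Remark \ref{pustbudut}, derivative subcombs via the crude bound of Lemma \ref{comb}). The case split is then on whether some $h_{i_0}>2\sum_{j\ne i_0}h_j$: in the balanced case $\sum h_i^2\le 3hh_-$ and everything is absorbed into $c_1([\Gamma]+\tfrac12 hh_-)$; in the dominant case the key tool is Lemma \ref{h0}(b), which applies because the derivative bands are active from the right (Property \ref{i}), no $a$-band can return to ${\cal C}_{i_0}$ or end on the passive handle (Properties \ref{iv}, \ref{kk'}), and fewer than $h_{i_0}/2\le(1-\delta)h_{i_0}$ of the $a$-bands starting on ${\cal C}_{i_0}$ reach other derivative bands; this yields $\sum_i\area(\Gamma_i)\le(\delta')^{-2}[\Gamma]$ outright, with no recursion into the $k/k'$-handled subcombs at all. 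Your plan instead iterates into $\Gamma_{i_0}$, whose handle is a $k^{-1}$- or $k'$-band, so the lemma's hypothesis no longer applies and you give no mechanism to close the recursion or keep $c_1$ uniform; moreover the auxiliary lemmas you cite for the derivative subcombs (\ref{onek}, \ref{nokpk}) carry hypotheses (no active $k$/$k'$-cells, no cheap admitted quasicombs, regularity/base-width conditions) that are not verified in this situation. Finally, your claim that the derivative bands are automatically $k^{\pm1}$/$(k')^{\pm1}$-bands needs the preliminary reduction above, since by Property \ref{order} a two-letter base $tt^{-1}$ is also possible.
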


\proof (1) We may assume that $\Gamma$ has no $t^{\pm 1}$ or $(t')^{\pm 1}$-bands
except for $\cal C,$ since otherwise we obtain a smaller subcomb
$\Delta$, and $\Delta$ also satisfies the assumptions of the lemma.
In particular, the derivative bands ${\cal C}_1,\dots, {\cal C}_s$
are all $k^{-1}$- or $k'$-bands by Property \ref{order}.

Assume first that the history of $\Gamma$ is of Step $(1)$ or $(3)$,
and the derivative bands are not active from the right. Then either
$\Gamma$ has a maximal $k$- or $(k')\iv$-band $\cal C'$ active from the left
and passive from the right
(and there exists the left-most band with this property), or all its
$q$-bands are passive by Properties \ref{order} and \ref{kk'}. In any case Lemma \ref{width1} (b) is applicable
to a subcomb of $\Gamma$. Thus we may further assume that the history of
$\Gamma$ is of Step 2. Property \ref{i} implies that the derivative bands of
$\Gamma$ are active from the right.

The sum of areas of derivative subcombs $\Gamma_1,\dots,\Gamma_s$ is
at most $\sum (60 Nh_i^2+2\alpha h_i)$ by Lemma \ref{comb}, where
$\alpha=|{\bf z}|_a$. Hence
by Property \ref{iv} and Lemma \ref{simple},
\begin{equation}\label{otsenka}
\area(\Gamma)\le h(h_- +3\alpha+1)+60N \sum h_i^2
\end{equation}

We also recall that by Lemma \ref{simple},
$\alpha\le(\delta')\iv(|{\bf z}|-|{\bf y}|-1)$ since $|{\bf y}|=h.$

We assume first that $h_{i}\le 2\sum_{j\ne i}h_j$ for every $i$.
Then $h_- \ge \frac13\sum h_i$, and therefore $\sum h_i^{2}\le h\sum
h_i\le 3hh_-$. Now it follows from (\ref{otsenka}) and the
subsequent estimate for $\alpha$ that $$\area(\Gamma)\le (60N)3hh_-
+hh_- + 3(\delta')\iv[\Gamma] \le c_1[\Gamma]+c_1hh_-/2,$$ 
by the choice of $c_1,$ as required.

Now assume that there is $i_0$ such that $h_{i_0}>2\sum_{j\ne i_0}h_j$. Then
less than $h_{i_0}/2$ maximal $a$-bands starting on ${\cal C}_{i_0}$
end on other derivative bands, and we can refer to Lemma \ref{h0} (b). Therefore
$\sum\area(\Gamma_i)\le (\delta')^{-2}[\Gamma]$. Hence
$$\area(\Gamma)\le (\delta')^{-2}[\Gamma]+h(h_- +3\alpha+1)\le (\delta')^{-2}[\Gamma]+ hh_-+3(\delta')^{-1}[\Gamma]
\le c_1([\Gamma]+hh_-/2),$$ as required, since $c_1 > (\delta')^{-2}+3(\delta')\iv$
 Thus the desired
inequality is true in any case.

\medskip

(2) We will induct on the number of maximal $q$-bands in $\Gamma$.
By (1), we have a subcomb $\Delta$ of $\Gamma$ satisfying
(\ref{subcomb}), and therefore

\begin{equation} \label{areadelta}
\area(\Delta)\le
 c_1[\Delta]+c_1\kappa^c(\Delta)
 \end{equation}
by Lemma \ref{lgamma}.

One may assume that the subcomb $\Delta$ of $\Gamma$ is proper since
otherwise it is nothing to prove. Now $\Gamma$ is a union of
$\Delta$ and the remaining comb $\Delta'$ with the handle $\cal C.$
We observe that
\begin{equation} \label{obs}
{\bf y}^{\Delta'}={\bf y} \;\;and  \;\;|{\bf z}^{\Delta'}|= |{\bf z}| -|{\bf z}^{\Delta}|+|{\bf y}^{\Delta}|
\end{equation}
by Lemma \ref{ochev} (b).
By the inductive hypothesis,

\begin{equation} \label{areadelta'}
\area(\Delta')\le
 c_1[\Delta']+c_1\kappa^c(\Delta')
 \end{equation}

Now the sum of the first summands in the right-hand sides of (\ref{areadelta}) and
(\ref{areadelta'}) does not exceed $c_1h(|{\bf z}|-|{\bf y}|)$ because
$(|{\bf z}^{\Delta'}|-|{\bf y}^{\Delta'}|) +(|{\bf z}^{\Delta}|-|{\bf y}^{\Delta}|)=|{\bf z}|-|{\bf y}|$ by ({\ref{obs}}).   The sum of
second summands of (\ref{areadelta}) and (\ref{areadelta'}) does not
exceed $c_1\kappa^c(\Gamma)$ by Lemma \ref{mu}(c), and the lemma
is proved since $\area(\Gamma)=\area(\Delta)+\area(\Delta').$
\endproof

Let $\Gamma$ be a  comb with a handle $\cal C.$ Assume that $\Gamma$
is a  subcomb of (or can be embedded as a subcomb in) a larger comb
$\bar \Gamma$ with a handle $\bar{\cal C}$, and the filling
trapezium is $Tp(\cal C,\bar{\cal C})$. Then the comb $\bar\Gamma$ is called
an {\it extension} of $\Gamma$. The extension is called \label{rexec} {\it regular}
if the base width of $Tp(\cal C,\bar{\cal C})$ is at least $N.$
A comb $\Gamma$ is called \label{regcomb}{\it regular} if there exists a regular extension of $\Gamma.$
Recall that by definition of comb, every cell (of the handle) of a subcomb
of  $\Gamma$ is connected with $\cal C$ by a $\theta$-band. Therefore a subcomb of
a regular  comb  is  regular itself.

\begin{rk} A regular comb is organized better than a random one because its
history coincides with the history of the filling {\it trapezium} $Tp(\cal C,\bar{\cal C})$
having a sufficiently long  base. Therefore this history is a subject of some restrictions 
imposed on trapezia by Proposition \ref{summary}. Recall that by Lemma \ref{simul}, the properties
of trapezia reflect all the features of the computations executed by the machine $M.$
In particular, the next lemma uses Property \ref{x} based on the aperiodicity of the histories formulated in Lemma \ref{*V}, which, in turn goes back to Lemmas \ref{norep} and \ref{m0m1}(a). 
\end{rk}

\begin{lemma}\label{otrez}
Let $\Gamma$ be a one-Step regular 
comb, and the handle $\cal C$ of $\Gamma$ active from the left $k$- or
$(k')\iv$-band.
 Assume that $\Gamma$ has neither $t^{\pm 1}$- nor $(t')^{\pm
 1}$-bands and has no active from the left maximal $k$- or $(k')\iv$-bands
 except for $\cal C,$
 and $\Gamma$ admits no proper quasicomb $\Delta$ such that
$\area(\Delta)\le 5(\delta')^{-1}[\Delta].$
  Then
$\area(\Gamma)\le 16(\delta')^{-2}[\Gamma]$. 
\end{lemma}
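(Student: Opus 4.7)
\smallskip

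\textbf{Plan of proof.} The plan is to use the no-proper-quasicomb hypothesis to force $\Gamma$ to be combinatorially small (bounded base width, no $(pp^{-1}p)$ substrings in $\theta$-band bases), and then apply the general area estimate of Lemma~\ref{comb} together with a lower bound on $|\mathbf{z}|-|\mathbf{y}|$ coming from the fact that $\mathcal{C}$ is active from the left.

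First I would show that every proper subcomb $\Gamma'$ of $\Gamma$ has \emph{no} active $k$- or $k'$-cells. Indeed, by Property \ref{kk'} the active-from-the-left handle $\mathcal{C}$ forces the one-Step history to be of type $(1)$ or $(2)$, and in either type every $k^{\pm 1}$- or $(k')^{\pm 1}$-band is active from the left; since the hypothesis excludes such bands other than $\mathcal{C}$, no non-handle $q$-band in $\Gamma$ is a $k/k'$-band. Now Lemma~\ref{nokpk} applied to a proper subcomb $\Gamma'$ with base width $\ge N$ would furnish a proper quasicomb of $\Gamma'$ (hence of $\Gamma$) with area $\le 5(\delta')^{-1}[\,\cdot\,]$, contradicting the hypothesis. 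Hence the base width $b$ of $\Gamma$ satisfies $b\le N$. Similarly, Lemma~\ref{ppp} applied to proper subcombs rules out any $\theta$-band of $\Gamma$ whose base contains $(pp^{-1}p)^{\pm 1}$ for a control letter $p$.

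Next I exploit the geometry forced by $\mathcal{C}$ being active from the left. At least $h-2$ cells of $\mathcal{C}$ carry an $a$-edge on $\mathbf{y}'$; by Property \ref{iv} each of the resulting $a$-bands leaves $\mathbf{y}'$ and does not return. Using the forbidden $(pp^{-1}p)$ substrings together with Lemmas~\ref{chain} and \ref{chainan}, I track each such $a$-band through any chain it enters (via active $p$-cells) and conclude that its chain terminates on $\partial\Gamma$, on $\mathcal{C}$ (forbidden on $\mathbf{y}'$ again, or landing on $\mathbf{y}$ which has no $a$-edges since $\mathcal{C}$ is passive from the right by \ref{kk'}), or on an active $s$-cell — with the contribution of the latter controlled by the $9$-cells-per-$\theta$-band bound of Lemma~\ref{chain}. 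The upshot is the estimate $|\mathbf{z}|_a \gtrsim h$, and hence by Lemma~\ref{ochev}, $|\mathbf{z}|-|\mathbf{y}| = |\mathbf{z}|-h \ge 2+\delta'(h-2)$, so that $[\Gamma]=h(|\mathbf{z}|-h) \ge \tfrac{1}{2}\delta' h^2$ whenever $h\ge (\delta')^{-1}$.

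Finally I combine these estimates. When $h\le (\delta')^{-1}$, Lemma~\ref{nizko} already gives $\area(\Gamma)\le 4(\delta')^{-1}[\Gamma]\le 16(\delta')^{-2}[\Gamma]$. When $h\ge (\delta')^{-1}$, Lemma~\ref{comb} and Lemma~\ref{simple}(a) give
\[
\area(\Gamma)\le 4bh^{2}+2\alpha h\le 4Nh^{2}+2(\delta')^{-1}[\Gamma],
\]
and the bound $h^{2}\le 2(\delta')^{-1}[\Gamma]$ from the previous paragraph, together with $N\ll (\delta')^{-1}$, yields $4Nh^{2}\le 8N(\delta')^{-1}[\Gamma]\le 14(\delta')^{-2}[\Gamma]$, completing the estimate. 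The main obstacle is the middle step in the Step-$(2)$ subcase: one must argue that $a$-bands issuing from $\mathcal{C}$ cannot be "absorbed" en masse by active $p$- and $s$-cells inside $\Gamma$, and it is precisely here that the regularity of $\Gamma$ (giving a filling trapezium of base width $\ge N$ and hence access to Proposition~\ref{summary}) and the forbidden $(pp^{-1}p)$ subwords work together — without both, the chain structure could in principle soak up nearly all the boundary $a$-edges produced by $\mathcal{C}$ and defeat the lower bound on $|\mathbf{z}|_a$.
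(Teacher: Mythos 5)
There is a genuine gap, and it sits at the very first step. You claim that, since the one-Step history is of type $(1)$ or $(2)$, ``every $k^{\pm 1}$- or $(k')^{\pm 1}$-band is active from the left,'' so the hypothesis leaves no non-handle $k/k'$-bands at all, whence $b\le N$ and no derivative subcombs. That is not what Property \ref{kk'} says: a $k$-cell of Step 1 or 2 is active from the left \emph{and passive from the right}, so the same cell read as a $k^{-1}$-band is active from the \emph{right} and passive from the left (and dually for $k'$ versus $(k')^{-1}$). The hypothesis of Lemma \ref{otrez} only forbids active-from-the-left $k$- and $(k')^{-1}$-bands besides $\cal C$; it does \emph{not} forbid $k^{-1}$- and $k'$-bands, and by \ref{order} these are exactly the derivative bands of $\Gamma$, which do occur and are the whole difficulty of the lemma. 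Because Property \ref{iv} does not prevent an $a$-band emitted on the left of $\cal C$ from ending on such an active-from-the-right derivative band, your central estimate $|{\bf z}|_a\gtrsim h$, hence $[\Gamma]\ge\tfrac12\delta'h^2$, is simply false in general: the derivative subcombs can absorb almost all of the $a$-bands issued by $\cal C$, so $|{\bf z}|-|{\bf y}|$ is only bounded below in terms of $d$, the maximal length of a \emph{simple} $\theta$-band (one gets $|{\bf z}|-|{\bf y}|\ge 2+(d-2)\delta'$), not in terms of $h$. With that lower bound gone, your final application of Lemma \ref{comb} ($4bh^2+2\alpha h$ with $b\le N$) no longer yields $16(\delta')^{-2}[\Gamma]$, and the chain argument via Lemmas \ref{chain} and \ref{chainan} that you hoped would rescue it is exactly the step you flag as the ``main obstacle'' without resolving it.

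What is actually needed, and what the paper does, is an area estimate for the derivative subcombs $\Gamma_i$ themselves. One passes to a regular extension $\bar\Gamma$ so that each filling trapezium $\Pi_i=Tp({\cal C}_i,\bar{\cal C})$ has base width $\ge N$; by Property \ref{xi} (Lemma \ref{gen2}) the history of ${\cal C}_i$ has the periodic form $u_iw_i^{k_i}v_i$ with $|u_i|,|v_i|\le(d-1)/2$, $|w_i|\le d-1$; if the base of $\Pi_i$ is aligned (hence large) Property \ref{x} kills odd cells in the periodic part, so $\Gamma_i$ has at most $3(d-1)$ odd $\theta$-bands and Lemma \ref{onek} gives $\area'(\Gamma_i)\le 5(\delta')^{-1}h_i(|{\bf z'}_i|-|{\bf y'}_i|+3(d-1))$; if it is not aligned, Lemma \ref{qi} produces an admitted quasicomb contradicting the hypothesis, which also forces $\Gamma_i$ to have no $s$-bands. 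Summing these bounds, adding $dh$ for the simple bands, and using $d-2\le(\delta')^{-1}(|{\bf z}|-|{\bf y}|)$ gives the stated $16(\delta')^{-2}[\Gamma]$. Your proposal never engages with the derivative subcombs at all (having argued them away incorrectly), so the key mechanism of the proof --- regularity plus aperiodicity (Property \ref{x}) controlling odd bands --- is missing rather than merely sketched.
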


\proof

Let ${\cal C}_1, {\cal C}_2,\dots {\cal C}_s$ be the set of
derivative subbands (connected with $\cal C$ by simple $\theta$-bands). Since none of them is a $t^{\pm 1}$- or
$(t')^{\pm 1}$-band, they must be $k^{-1}$- or $k'$-bands active from the
right by \ref{order}.  Moreover, since the derivative subcombs $\Gamma_1,\dots, \Gamma_s$
has no $k$- or $k'$-cells active from the left, it is easy to see from Property \ref{order} that $\Gamma$ has no
active $k^{\pm 1}$- or $(k')^{\pm 1}$- cells except for those belonging to
${\cal C}, {\cal C}_1,\dots,{\cal C}_s$. By \ref{iv}), every maximal $a$-band
starting on $\cal C$ ends either on some ${\cal C}_i$ or on ${\bf z}={\bf z}^{\Gamma}.$
Besides, if $\cal T$ is a simple $\theta$-band starting on $\cal C$,
then every maximal $a$-band starting on $\cal
T$ ends either on one of the bands ${\cal C}, {\cal C}_1,\dots {\cal
C}_s$, or on ${\bf z}$.

Assume that $\cal T$ is a simple $\theta$-band of maximal length $d$
among the simple $\theta$-bands starting on $\cal C$. Then the total number of
cells in all simple $\theta$-bands  is at most $dh$.

Let $T_1$ and $T_2$ be top and bottom of $\cal T.$ Since no
non-trivial $a$-band starts and ends on $\cal C$, either no $a$-band
$\cal A$ starting on $T_1$ (an having no cell from $\cal T$)
 ends on $\cal C$ or no $a$-band $\cal A$
starting on 
$T_2$ (and having no cells from $\cal T$)
ends on $\cal C$. We consider only the first variant.

If there are $m$ maximal $\theta$-bands above $\cal T$, then at least $d-1$
maximal $a$-bands starting on $\cal T$ and $m-1$ maximal $a$-bands
starting on $\cal C$ end on at most $m$ $(q,\theta)$-cells of the
derivative  bands and on $z$. Therefore at least $d-2$ of them end on
$\bf z$ , and so $|{\bf z}|-|{\bf y}|\ge 2+(d-2)\delta'$ by Lemma \ref{ochev} (a).
We set $\Delta_i=Tp({\cal C}_i,\cal C)$ ($i=1,\dots,s;$ and $d\ge 2$ if $s>0$ since otherwise
two neighbor $k$- or $k'$-cells of $\cal T$ form a non-reduced subdiagram).  By Property
(\ref{xi}),  $H_i\equiv u_iw_i^{k_i}v_i$,
 where $|u_i|, |v_i|\le (d-1)/2,$ $|w_i|\le d-1$.

Consider a regular extension $\bar\Gamma$ of the comb $\Gamma.$
Let $\bar{\cal C}$ be a handle of $\bar\Gamma$ and $\Pi_i=Tp({\cal C}_i,\bar {\cal C}).$ 

If the base of $\Pi_i$ is not aligned, and $\Gamma_i$ has an $s^{\pm 1}_j$-band for some $j$, then
it follows from Property \ref{order} and Lemma \ref{ppp} that $\Gamma_i$ satisfies
the assumptions of Lemma \ref{qi}, and we have a contradiction with the hypothesis
of Lemma \ref{otrez}. Therefore $\Gamma_i$ has no $s_j^{\pm 1}$-bands, and so, by Property
\ref{order}, it has no maximal $q$-bands at all except for ${\cal C}_i.$ Hence by
Lemma \ref{ppp}, $Area'(\Gamma_i)\le (\delta')^{-1}h_i(|{\bf z'}_i|-|{\bf y'}_i|+1)$ in standard
notation, if the base
of $\Pi_i$ is not aligned.

  Assume that the base of $\Pi_i$ is aligned. It starts with $k$ (or with $(k')^{-1}$)
and the second letter of base is not $k^{-1}$ (not $k'$) by Lemma \ref{qqiv}. Therefore,
by Property \ref{order}, the second letter is the copy of the first letter (or of the inverse of the last letter)
of the standard base $B$ of $M_3.$    Since the base of $\Pi$ is aligned and its base width $\ge N> 2||B||$, this base is large.
Therefore if  $k_i\ge 3$, then the $w^{k_i}$-part
of $\Pi$ has no odd $\theta$-bands
 by  (\ref{x}), and so the number of maximal odd $\theta$-bands is at most $|u_i|+|v_i|\le d-1$. If $k_i\le 2$,
 then $h_i\le |u_i|+|w_i|+|v_i|\le 3(d-1)$. Thus, in any case, the number $L_i$ of odd maximal $\theta$-bands in $\Pi_i$ (and in $\Gamma_i$) is at most $3(d-1).$

 Using ${\bf z}_i, {\bf z'}_i, {\bf y}_i, {\bf y'}_i$ in the
 standard way for the subcombs $\Gamma_i$-s, we have $\area'(\Gamma_i)\le 5(\delta')^{-1}h_i(|{\bf z'}_i|-|{\bf y'}_i|+L_i)$ 
 by Lemma \ref{onek}, if the base of $\Pi_i$ is aligned. Thus in any case
 $\area'(\Gamma_i)\le 5(\delta')^{-1}h_i(|{\bf z'}_i|-|{\bf y'}_i|+3(d-1)).$ 
 Since $|{\bf y'}_i|=h_i$ and $|{\bf y}|=h,$ the sum of these areas is at most $5(\delta')^{-1}h(|{\bf z}|-|{\bf y}|+3(d-1)).$
 Finally,
 \begin{equation}\label{arga}
 \area(\Gamma)\le
 dh+5(\delta')^{-1}h(|{\bf z}|-|{\bf y}|+3(d-1))
 \le h(d+5(\delta')^{-1}(|{\bf z}|-|{\bf y}|+3(d-1))
 \end{equation}
 Recall that $d-2\le (\delta')\iv(|{\bf z}|-|{\bf y}|)$ and $|{\bf z}|-|{\bf y}|\ge 2$ since $\bf y$ has no $a$-edges.
 Therefore
 $$d+5(\delta')^{-1}(|{\bf z}|-|{\bf y}|+3(d-1)) \le 16(\delta')^{-2}(|{\bf z}|-|{\bf y}|),$$
 and so by (\ref{arga}), $\area(\Gamma)\le 16(\delta')^{-2}[\Gamma],$ as required.
 \endproof

We call a (quasi)comb $\Gamma$ \label{longqc}{\it long} if
$|{\bf z}|=|{\bf z}^{\Gamma}|>|{\bf y}|=|{\bf y}^{\Gamma}|.$ 
 If  the handle of a comb $\Gamma$ is
passive from the right, then $\Gamma$ is long since $|{\bf z}|\ge 2+h$ and
$|{\bf y}|=h.$ Obviously a (quasi)comb $\Gamma$ is long if $\area(\Gamma)\le
c[\Gamma]$ for some $c>0.$ 

\begin{rk} \label{quasiotrez}
Observe that if $\Gamma$ is
a long subcomb of a diagram $\Delta$  then for the complimentary
subdiagram $\Delta'= \Delta\backslash\Gamma$ cut from $\Delta$ along the path $\bf y,$ we have
$|\partial\Delta'|<|\partial\Delta|$ by Lemma \ref{ochev}.

If $\Gamma$ is a long quasicomb
admitted by a minimal diagram $\Delta$ with boundary path $\bf zz'$ (where ${\bf z=z}^{\Gamma}$), then
there exists a minimal diagram $\Delta'$ with boundary label $\Lab({\bf y}^{-1})\Lab(\bf z').$ 
It follows from Lemma \ref{NoAnnul} that if $\Delta$ is a comb with handle $\cal C,$ and $\Delta$ admits a proper quasicomb $\Gamma$, then $\Delta'$ is also a comb with the same handle $\cal C$
but with fewer number of maximal $q$-bands than in $\Delta.$ 

It is clear that $Area(\Delta)\le Area(\Gamma)+Area(\Delta')$ (or  see Lemma \ref{2diagr}). We also use
notation $\Delta\backslash\Gamma$ for such a `compliment' $\Delta'$ of $\Gamma.$
Then all the statements of Lemma \ref{mu} hold if one replaces `subcomb $\Gamma$' by
`admitted quasicomb $\Gamma$' in their formulations because
the quasicomb $\Gamma$ and the subcomb $\Gamma_0$ from the definition of admitted quasicomb
have equal $\kappa$-, $\lambda$-, $\mu$-, and $\nu$-necklaces.
\end{rk}

\begin{lemma}\label{oneage} Let $\Gamma$ be a one-Step
regular comb of base width $b\in [2N,15N]$. Then $\Gamma$ admits a
long quasisubcomb $\Delta$ such that
  $$\area(\Delta)\le c_1([\Delta]+\frac{1}{2}h^{\Delta}h_-^{\Delta})
  \le c_1([\Delta]+\kappa^c(\Delta))$$
 \end{lemma}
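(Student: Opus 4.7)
The plan is to reduce the estimate to one of the earlier lemmas (Lemma~\ref{t}, Lemma~\ref{otrez}, or Lemma~\ref{nokpk}) by choosing an appropriate subcomb or admitted quasicomb, and then to use the parameter hierarchy (\ref{const}) to package all the resulting constants inside $c_1$. The second inequality in the statement, $\tfrac12 h^{\Delta}h_-^{\Delta}\le \kappa^c(\Delta)$, will in every case be a direct consequence of Lemma~\ref{lgamma}, so the real task is to prove the first inequality $\area(\Delta)\le c_1([\Delta]+\tfrac12 h^{\Delta}h_-^{\Delta})$.

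First I would dispatch the easy case: if $\Gamma$ already admits a proper quasicomb $\Delta_0$ with $\area(\Delta_0)\le 5(\delta')^{-1}[\Delta_0]$, take $\Delta=\Delta_0$; since $5(\delta')^{-1}\le c_1$ by (\ref{const}), the required inequality holds (adding the nonnegative term $\tfrac12 h^{\Delta}h_-^{\Delta}$ only helps), and $\Delta$ is long because its area is controlled by a positive multiple of $[\Delta]$. Assume from now on that no such small-area proper quasicomb is admitted by $\Gamma$. Next, if the handle $\cal C$ of $\Gamma$ is a $t^{\pm 1}$- or $(t')^{\pm 1}$-band, Lemma~\ref{t}(1) applies directly (the hypothesis $b\le 15N$ is given), producing a subcomb $\Delta$ whose handle $\cal C^{\Delta}$ is passive from the right (so $\Delta$ is long) and whose area satisfies exactly the desired bound.

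The remaining case is that $\cal C$ is not a $t$-type band. I would then aim for Lemma~\ref{otrez}. By Lemma~\ref{nokpk} and the assumption that $\Gamma$ admits no small-area proper quasicomb, $\Gamma$ must contain at least one active $k$- or $k'$-cell. Choose a leftmost maximal $q$-band $\cal D$ in $\Gamma$ that is either (i) a $t^{\pm 1}$- or $(t')^{\pm 1}$-band, or (ii) a $k$- or $(k')^{-1}$-band active from the left; such $\cal D$ exists because at least one active $k/k'$-band is present and its orientation inside the comb (with the handle on the right) makes its active side the one facing the derivatives. Let $\Gamma_{\cal D}$ be the subcomb of $\Gamma$ with handle $\cal D$. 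It inherits from $\Gamma$ being one-Step and being regular (subcombs of regular combs are regular), and it inherits the absence of small-area proper quasicombs (any such quasicomb of $\Gamma_{\cal D}$ is also admitted by $\Gamma$). By the leftmost choice, $\Gamma_{\cal D}$ contains no $t^{\pm 1}$- or $(t')^{\pm 1}$-bands and no active-from-left $k$- or $(k')^{-1}$-bands other than $\cal D$ itself. If $\cal D$ is of type (i), apply Lemma~\ref{t}(1) to $\Gamma_{\cal D}$; if of type (ii), apply Lemma~\ref{otrez} to $\Gamma_{\cal D}$, obtaining $\area(\Gamma_{\cal D})\le 16(\delta')^{-2}[\Gamma_{\cal D}]\le c_1[\Gamma_{\cal D}]$ via (\ref{const}). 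In either subcase, take $\Delta=\Gamma_{\cal D}$ (or the subcomb produced by Lemma~\ref{t}); the bound is positive so $\Delta$ is long.

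The main obstacle is the last step: one must justify the existence of the leftmost band $\cal D$ of the prescribed type and verify that passing from $\Gamma$ to $\Gamma_{\cal D}$ preserves all the hypotheses of Lemma~\ref{otrez}, in particular the ``no small-area proper quasicomb'' condition (which rests on the fact that admitted quasicombs of a subcomb are admitted by the ambient comb, together with properness being preserved whenever $\Gamma_{\cal D}\subsetneq\Gamma$) and the orientation of $k$- and $(k')^{-1}$-bands in the comb relative to the position of the handle (using Property \ref{kk'} in conjunction with the one-Step nature of the history to locate the active side of each $k/k'$-band).
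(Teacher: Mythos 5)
Your overall route is the paper's route: dispatch the case of an admitted small-area quasicomb, invoke Lemma~\ref{t}(1) when a $t^{\pm 1}$- or $(t')^{\pm 1}$-handle is available, otherwise pass to a subcomb whose handle is a $k$- or $(k')^{-1}$-band active from the left and apply Lemma~\ref{otrez}, with the second inequality coming from Lemma~\ref{lgamma}. The reductions you list (subcombs of regular combs are regular, admitted quasicombs of a subcomb are admitted by $\Gamma$) are fine. The genuine gap is the existence of your band ${\cal D}$ of type (ii). The reason you give --- that the orientation of an active $k/k'$-band ``inside the comb (with the handle on the right) makes its active side the one facing the derivatives'' --- is not correct: by Definition~\ref{activecell} and Property~\ref{kk'}, which side of a $k$-type band is active is determined by the base letter and its sign ($k$- and $(k')^{-1}$-bands are active from the left, $k^{-1}$- and $k'$-bands from the right), not by where the handle sits. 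So the active $k/k'$-cell you extract from Lemma~\ref{nokpk} may lie on a $k^{-1}$- or $k'$-band, active only from the right; in that situation (and with no $t$- or $t'$-bands present) your case analysis produces no candidate ${\cal D}$, and the argument stops exactly where the real work begins.

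This missing case is the crux of the paper's own proof. There, after removing $t$- and $t'$-cells via Lemma~\ref{t} and using $b\ge 2N$ to place the leftmost active $k^{\pm 1}$- or $(k')^{\pm 1}$-band ${\cal C}'$ within the first $N$ positions (so that the filling trapezium $Tp({\cal C}',{\cal C})$ has base width at least $N$), one argues: if ${\cal C}'$ is active from the right, then Property~\ref{order}, combined with the absence of $t$- and $t'$-letters in the bases, forces the next $q$-letter to the right of the $k^{-1}$- (resp.\ $k'$-) letter to be $k$ (resp.\ $(k')^{-1}$), so there is a band ${\cal C}''\ne{\cal C}$ active from the left, and Lemma~\ref{otrez} is applied to the subcomb with handle ${\cal C}''$, which still contains no other active-from-the-left $k$- or $(k')^{-1}$-bands and no $t$- or $t'$-bands. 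Note that you never use the hypothesis $b\ge 2N$, which enters precisely in this localization step; its absence from your argument is a symptom of the gap. With this case supplied (and your heuristic justification replaced by the \ref{order}/\ref{kk'} argument), your proposal becomes the paper's proof.
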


 \proof For the beginning, we recall that every subcomb of a regular comb is regular.
 By Lemma \ref{t} (and Lemma \ref{lgamma}), one may assume that $\Gamma$ has neither $t$- nor $t'$-cells.
 If $\Gamma$ has a subcomb $\Delta$ of base width $\ge N$
 without active $k^{\pm 1}$- or $(k')^{\pm 1}$-cells, then one can apply
 Lemma \ref{nokpk} since $c_1\ge 5(\delta')\iv.$ Otherwise by \ref{kk'},  there
 is a maximal, active from the left or from the right $q$-band $\cal C'$ corresponding to a $k^{\pm 1}$- or $(k')^{\pm 1}$-letter,
 such that the subcomb $\Delta$ with handle $\cal C'$  has no other maximal active $k^{\pm 1}$- or $(k')^{\pm 1}$-bands and the base width
 of the filling trapezium $Tp(\cal C', \cal C),$ is at least $N$  since $b\ge 2N.$  If  $\cal C'$ is active from the left, then the statement follows from Lemma \ref{otrez}.
 Otherwise $\cal C'$ is active from the right, and from the right of $\cal C',$ there must
 be a maximal band $\cal C''\ne \cal C$ corresponding to $k$- or $(k')^{-1}$-letter  which therefore is
 active from the left. (Recall that $\Gamma$ has neither $t$- nor $t'$-cells, and so such ${\cal C}''$ exists by Property \ref{order}.) Lemma \ref{otrez} is now applicable
 to the subcomb with handle $\cal C'',$ and so Lemma \ref{oneage} is proved in any case.
\endproof

\begin{lemma}\label{13} Let $\Gamma$ be a  regular comb having history of type $(1)$ or $(3)$ 
and base width at most $15N.$ Assume that the handle $\cal C$ is 
a passive from the left 
$k^{\pm 1}$- or $(k')^{\pm 1}$-, or $s_0$-band. Then $\area'(\Gamma)\le c_1h(|{\bf z'}|-|{\bf y'}|+1)+c_1\kappa^c(\Gamma).$
\end{lemma}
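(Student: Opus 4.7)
My plan is induction on the total number of cells of $\Gamma,$ combined with three kinds of surgery that reduce $\Gamma$ to a smaller comb still satisfying the hypotheses. The preliminary observation is that the rules of Step $(1)$ and Step $(3)$ are, respectively, the $\rho_1(y)$'s and the $\rho_3(\theta)$'s, and each carries an $a$-letter only in its $k$- or $k'$-component; consequently every $(\theta,q)$-cell of $\Gamma$ with an $a$-edge is a $k$- or $k'$-cell, so $\Gamma$ has no odd maximal $\theta$-bands, and the parameter $L_0$ appearing in Lemma \ref{onek}(a) vanishes for $\Gamma$ and for all of its subcombs.

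If $\Gamma$ admits a proper quasicomb $\Delta$ with $\area(\Delta)\le 5(\delta')^{-1}[\Delta],$ Remark \ref{quasiotrez} permits me to cut $\Delta$ off; the residual comb $\Gamma\setminus\Delta$ has strictly fewer cells and inherits all hypotheses of Lemma \ref{13}. The additivity across the cut, Lemma \ref{mu}(c), and the inequality $5(\delta')^{-1}\le c_1$ yield the required bound. Otherwise, if $\Gamma$ contains a proper subcomb whose handle is an active-from-the-left $k$-band (in type $(1)$) or an active-from-the-left $(k')^{-1}$-band (in type $(3)$), I pick such a subcomb $\Gamma_0$ minimal: Property \ref{order} together with the base-width bound and the form of the handle excludes $t^{\pm 1}$- and $(t')^{\pm 1}$-bands from $\Gamma_0;$ minimality excludes other active-from-the-left $k$- or $(k')^{-1}$-bands inside $\Gamma_0;$ regularity and the absence of small-area quasicombs in $\Gamma$ pass to $\Gamma_0.$ Lemma \ref{otrez} then gives $\area(\Gamma_0)\le 16(\delta')^{-2}[\Gamma_0],$ and cutting $\Gamma_0$ off and applying induction to $\Gamma\setminus\Gamma_0,$ the inequality $16(\delta')^{-2}\le c_1$ combined with Lemma \ref{mu}(c) closes this case. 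Derivative subcombs whose handles are $t^{\pm 1}$- or $(t')^{\pm 1}$-bands are peeled off analogously via Lemma \ref{t}(2).

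Once none of these reductions applies, $\Gamma$ admits neither a small-area quasicomb, nor a proper subcomb with an active-from-the-left $k$- or $(k')^{-1}$-handle, nor a $t^{\pm 1}$- or $(t')^{\pm 1}$-handle. In this situation I replicate the adjacency argument from the proof of Lemma \ref{oneage}: an active-from-the-right $k^{-1}$-handle in type $(1)$ (respectively, $k'$-handle in type $(3)$) would sit immediately to the left of an active-from-the-left $k$- or $(k')^{-1}$-handle in the base, contradicting the assumption. Hence no proper subcomb of $\Gamma$ contains any active $k$- or $k'$-cell, all hypotheses of Lemma \ref{onek}(a) hold, and with $L_0=0$ one obtains $\area'(\Gamma)\le 5(\delta')^{-1}h(|{\bf z}'|-|{\bf y}'|)\le c_1 h(|{\bf z}'|-|{\bf y}'|+1),$ and the non-negative term $c_1\kappa^c(\Gamma)$ is superfluous here.

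The main difficulty will be the adjacency argument in the last case: one must analyse the possible base neighbourhoods of an active-from-the-right $k^{-1}$- or $k'$-band after the previous reductions have cleared $t^{\pm 1}$- and $(t')^{\pm 1}$-handles, and conclude, via Property \ref{order}, that an adjacent active-from-the-left $k$- or $(k')^{-1}$-band must exist. A secondary technical point is the bookkeeping in each surgical step: verifying that the hypotheses of Lemma \ref{13} are preserved by $\Gamma\setminus\Gamma_0$ and $\Gamma\setminus\Delta,$ and that the identities $|{\bf z}'^{\Gamma\setminus\Gamma_0}|-|{\bf y}'^{\Gamma\setminus\Gamma_0}| \le |{\bf z}'|-|{\bf y}'|-(|{\bf z}^{\Gamma_0}|-|{\bf y}^{\Gamma_0}|)$ and $\kappa^c(\Gamma\setminus\Gamma_0)\le \kappa^c(\Gamma)-\kappa^c(\Gamma_0)$ allow the inductive hypothesis to absorb the extracted subcomb.
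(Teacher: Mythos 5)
Your proposal follows essentially the paper's own argument: the paper likewise reduces to the case where $\Gamma$ has no $t^{\pm 1}$- or $(t')^{\pm 1}$-bands (via Lemma \ref{t}) and no active-from-the-left $k$- or $(k')^{-1}$-bands (via Lemma \ref{otrez}), cuts off an admitted (quasi)subcomb satisfying a $c_1[\Delta]+c_1\kappa^c(\Delta)$ bound, and inducts (on the number of maximal $q$-bands rather than on the number of cells), using Lemma \ref{mu}(c) and $h^{\bar\Delta}\le h$ for the bookkeeping, exactly as you do. The one real divergence is the terminal case: the paper notes via Property \ref{order} and \ref{kk'} that all non-handle $q$-bands are then passive and invokes Lemma \ref{width1}(a),(b), whereas you invoke Lemma \ref{onek}(a) with $L_0=0$; this is legitimate (odd $\theta$-bands have history of type $(2)$ by definition, and in your terminal case no proper subcomb carries an active $k$- or $k'$-cell, your adjacency argument being the same Property \ref{order} step the paper uses), so it is a harmless variant. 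One local justification is wrong as stated: Property \ref{order} does not exclude $t^{\pm 1}$- or $(t')^{\pm 1}$-bands from a subcomb $\Gamma_0$ with an active-from-the-left $k$-handle, since $tk$ is a subword of the base (\ref{baza}) and such bands can lie immediately to the left of that handle; the hypotheses of Lemma \ref{otrez} only hold after all subcombs with $t^{\pm 1}$- or $(t')^{\pm 1}$-handles have been peeled off by Lemma \ref{t}(2), so that peeling must be performed first — which is the order the paper uses and which your own appeal to Lemma \ref{t}(2) already supplies.
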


\proof At first we will prove that either $\Gamma$ admits a
proper quasicomb $\Delta$ such that

\begin{equation}
\area(\Delta)\le c_1[\Delta]+c_1\kappa^c(\Delta)\label{areadel}
\end{equation}
or $\area'(\Gamma)\le c_1h(|{\bf z'}|-|{\bf y'}|+1)+c_1\kappa^c(\Gamma).$

By Lemma \ref{t}, we may assume that
$\Gamma$ has neither $t^{\pm 1}$-bands nor  $(t')^{\pm 1}$-bands,
and it also has neither $k$-bands nor  $(k')^{-1}$-bands
active from the left by Lemma \ref{otrez}. Therefore by Property \ref{order},
there are no active $k^{\pm 1}$- or $(k')^{\pm 1}$-cells except for
those in $\cal C.$ Since the step history of $\Gamma$ is $(1)$ or $(3),$ every other $q$-bands of $\Gamma$ is
passive by \ref{kk'}
, and if there exist
other $q$-bands, then we can find the desired subcomb $\Delta$ by
Lemma \ref{width1}(b). If $\Gamma$ has no maximal $q$-bands except
for the handle $\cal C,$ then statement follows from Lemma
\ref{width1}(a).

To complete the proof, we will induct on the number of maximal
$q$-bands in $\Gamma,$ as in Lemma \ref{t} (b). By the previous argument,
we may assume that $\Gamma$ admits a proper (quasi)subcomb $\Delta$ 
satisfying (\ref{areadel}), since otherwise there is
nothing to prove. Now $\Gamma$ is a union of $\Delta$ and the
`compliment' $\bar\Delta= \Gamma\backslash\Delta$ (see Remark \ref{quasiotrez})
with the handle $\cal C.$ By the inductive hypothesis,

\begin{equation} \label{areade'}
\area'(\bar\Delta)\le
 c_1 h^{\bar\Delta}(|({\bf z'})^{\bar\Delta}|-|({\bf y'})^{\bar\Delta}|+1)+c_1\kappa^c_{1,1}(\bar\Delta)
 \end{equation}

Note that $h^{\bar\Delta}\le h.$
 It follows that the sum of the first summands of (\ref{areadel})
and (\ref{areade'}) does not exceed $c_1h(|{\bf z'}|-|{\bf y'}|+1).$ The sum of
second summands of (\ref{areadel}) and (\ref{areade'}) does not
exceed $c_1\kappa_{1,1}^c(\Gamma)$ by Lemma \ref{mu}(c), and lemma is
proved since $\area'(\Gamma)=\area(\Delta)+\area'(\bar\Delta).$

\section{Combs with incomplete sets of Steps}

In this section, we analyze combs whose histories have no rules
of one of the Steps $(1)$ or $(3)$, and the main goal is Lemma \ref{bez12}. 
 Lemmas \ref{bez1} and \ref{bez12} utilize the $\lambda$-mixture, but
 unfortunately, this parameter can be negative for some other combs. Therefore
 first of all we have to 
 bound it from below in terms of other `quadratic' parameters of combs.

\begin{lemma}\label{mu21} Let $\Gamma$ be a comb whose handle $\cal C$ is either (a)
a $t^{\pm 1}$-band or (b) a $(t')^{\pm 1}$-band. Respectively, let the history $H$
of $\Gamma,$ either (a)
have rules $(23)^{\pm 1}$ but no rules $(12)^{\pm 1}$ or (b)
have
rules $(12)^{\pm 1}$ but no rules $(23)^{\pm 1}.$ Then \\
$\lambda^c(\Gamma)\ge - 8(\delta')^{-1}[\Gamma] -
36\kappa^c(\Gamma)$.
\end{lemma}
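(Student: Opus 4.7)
The plan is to reduce to case (a) by symmetry (case (b) is obtained by the $t_i$-reflection of Remark \ref{diasym} together with the swap $(12)\leftrightarrow(23)$ of special-edge types), and then to control $L := \lambda({\bf y}) - \lambda({\bf z})$. First I would set up the accounting of beads. Let $m$ be the number of $(23)$-rules in the history $H$, so $h=\|H\|$. By Property \ref{kk'} the $t$-band $\cal C$ is passive, whence $|{\bf y}|=h$ (no $a$- or $q$-edges on ${\bf y}$), and the $m$ $(23)$-edges of ${\bf y}$ arise from $((23),t)$-relations and are therefore non-special, giving $m$ black and $h-m$ white beads in the $\lambda$-necklace of ${\bf y}$. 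By Lemma \ref{NoAnnul}, the maximal $\theta$-bands of $\Gamma$ put the $\theta$-edges of ${\bf y}$ and ${\bf z}$ in order-preserving bijection, mapping white beads to white beads; a $(23)$-edge on ${\bf z}$ is special exactly when its $\theta$-band terminates in a $t'$-cell.

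Next I would derive a clean combinatorial upper bound on $L$. Partition ${\bf z}$ by its $q$-edges into maximal $\theta$-arcs $Z_1,\ldots,Z_u$, and let $c_l=|Z_l|_\theta$, $w_l$ the number of white beads in $Z_l$. Then $\sum c_l=h$ and $\kappa^c(\Gamma)=\kappa({\bf z})=\binom{h}{2}-\sum_l\binom{c_l}{2}$. A pair of corresponding white beads can contribute to $L$ only if it lies in a single $Z_l$ and is separated within $Z_l$ only by special $(23)$-edges (at least one such); any pair separated by a $q$-edge or by a non-special $(23)$-edge on ${\bf z}$ is separated on both ${\bf z}$ and ${\bf y}$. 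A convexity argument bounds the contribution from each $Z_l$ by $\binom{w_l}{2}\le\binom{c_l}{2}$, so
\[
L\;\le\;\sum_{l}\binom{c_l}{2}\;=\;\binom{h}{2}-\kappa^c(\Gamma).
\]

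Now I would split into two cases. If $\binom{h}{2}\le 37\,\kappa^c(\Gamma)$, the displayed inequality immediately yields $L\le 36\,\kappa^c(\Gamma)$, and the lemma follows. Otherwise $\kappa^c(\Gamma)$ is very small compared with $h^2$, forcing the $c_l$ to be extremely skewed so that one segment $Z_{l_0}$ carries almost all of the $h$ $\theta$-edges. I would handle this asymmetric regime by showing that $[\Gamma]=h(|{\bf z}|-|{\bf y}|)$ must be of order $\delta' h^2$. The argument rests on three geometric ingredients: (i) the existence of any special $(23)$-edge on ${\bf z}$ forces at least one $t'$-band $\cal T'$ in $\Gamma$; (ii) by the structure of the standard base (\ref{baza}), the base of the filling trapezium between $\cal C$ and $\cal T'$ has length $\ge N$, producing at least $N+2$ intermediate $q$-bands and at least $2(N+3)$ $q$-edges on ${\bf z}$; and (iii) in the asymmetric regime nearly every $\theta$-band reaches $\cal T'$, so the intermediate $s$- and $p$-bands are tall and the Step-$(2)$ subrules of the auxiliary machines $\overrightarrow Z,\overleftarrow Z$ produce many $(\theta,a)$-cells whose $a$-edges land on ${\bf z}$ (Property \ref{iv} prevents them from returning to the same $q$-band). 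Summing these contributions gives $|{\bf z}|-h\gtrsim\delta'h$, hence $8(\delta')^{-1}[\Gamma]\gtrsim h^2\ge\binom{h}{2}$, which absorbs the residual loss.

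The main obstacle will be the geometric accounting in this asymmetric case. The combinatorial step $L\le\binom{h}{2}-\kappa^c(\Gamma)$ is clean, but translating ``$\kappa^c(\Gamma)$ small'' into ``$[\Gamma]$ large'' requires carefully tracing which $(\theta,a)$-cells of Step-$(2)$ subrules actually place $a$-edges on the outer boundary ${\bf z}$, which depends on the exact termination points of the various $\theta$-bands and on the absence of trapped $a$-bands guaranteed by Properties \ref{iv} and \ref{ix} together with the structure of $\overrightarrow Z$, $\overleftarrow Z$. The constants $8(\delta')^{-1}$ and $36$ are evidently calibrated to balance these two contributions, and making the bookkeeping watertight is where the technical work lies.
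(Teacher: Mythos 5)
Your opening reductions are fine (the bound $\lambda({\bf y})-\lambda({\bf z})\le\sum_l\binom{c_l}{2}=\binom{h}{2}-\kappa^c(\Gamma)$ is correct, and the branch $\binom{h}{2}\le 37\,\kappa^c(\Gamma)$ does give the lemma), but the second branch of your dichotomy contains a fatal gap: after replacing the true deficiency $L=\lambda({\bf y})-\lambda({\bf z})$ by the crude bound $\binom{h}{2}-\kappa^c(\Gamma)$, you are forced to prove the much stronger assertion that \emph{every} comb with $\kappa^c(\Gamma)<\binom{h}{2}/37$ and at least one special $(23)$-edge on ${\bf z}$ satisfies $|{\bf z}|-h\gtrsim\delta'h$, i.e.\ has on the order of $h$ $a$-edges on ${\bf z}$. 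That assertion is false. Take a $t$-handle of length $h$ whose history has a single $(23)$ located two or three rules from one end, and attach to the handle only a short ``protrusion'': a subtrapezium of height $3$ around the $(23)$-row whose base runs across one (mirror) copy of the standard base up to a $((23),t')$-cell, chosen from a computation in which the $tk$- and $k't'$-sectors have length $O(1)$ (such configurations exist, since the domain of $(23)$ only requires the non-$k't'$ sectors to be empty); along the rest of ${\bf y}'$ nothing is attached, so ${\bf z}$ hugs the handle there and carries no $a$-edges. Then there is a special $(23)$-edge on ${\bf z}$, $\kappa^c(\Gamma)=O(h)$ (so the comb is squarely in your ``asymmetric regime'' for large $h$), yet $|{\bf z}|-|{\bf y}|=O(N)$, so $8(\delta')^{-1}[\Gamma]+36\kappa^c(\Gamma)=O(h)$, far below $\binom{h}{2}$. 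The lemma itself holds there comfortably, because the genuine deficiency is $O(1)$: the only pairs unseparated on ${\bf z}$ are the couple of pairs inside the protrusion, while all pairs straddling it are separated by the $q$-edges at the top and bottom of the short crossed $q$-bands. In other words, the quantity your recovery step must produce is not governed by $h$ at all but by the \emph{span} of the actually contributing pairs, which can be bounded while $h\to\infty$; no amount of bookkeeping with the auxiliary machines in step (iii) can change this, because nothing in the hypotheses forces the long $\theta$-bands to meet an active $k$-, $k'$- or $p$-band at all.

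This is why the paper's argument never passes through a global bound of size $\binom{h}{2}$: it cuts $\Gamma$ along the non-special $(23)$-bands into subcombs $\Delta_j$, uses Lemma \ref{mixturec}(e) to charge each $\Delta_j$ only $2m_1(j)m_2(j)$ (the product of the white-bead counts on the two sides of a well-chosen black bead of ${\bf y}(j)$), and then absorbs each such product locally via the Step-$2$ subband ${\cal D}_j$ of the derivative $k^{-1}$-band crossed by the corresponding $(23)$-band: either ${\cal D}_j$ is short and its end $q$-edge splits the white beads of ${\bf z}$ (contribution at most $4\kappa^c$), or many $a$-bands from ${\cal D}_j$ reach other derivative bands (at most $32\kappa^c$ via Lemmas \ref{a-bands} and \ref{lgamma}), or many reach ${\bf z}$ (at most $8(\delta')^{-1}[\Gamma]$ via Lemma \ref{simple}(a)); the constants $36$ and $8$ come from exactly this trichotomy, and any correct proof has to carry out a comparably local analysis proportional to $m_1m_2$. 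A secondary point: case (b) does not reduce to case (a) by the $t_i$-reflection of Remark \ref{diasym}, since that reflection preserves the types of the $(12)$- and $(23)$-cells and the $t$/$t'$ distinction; case (b) needs a parallel argument with $(12)$, $t'$, $k'$ playing the roles of $(23)$, $t$, $k$.
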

\proof We shall prove the variant (a) only. For this goal we
consider the set $\bf T$ of maximal $(23)$-bands of $\Gamma,$ 
which do not cross derivative bands
of $\Gamma$, and so 
both their edges on ${\bf y'}^{\Gamma}$ and on ${\bf z}^{\Gamma}$ are
labeled by the same $\theta$-letter, and therefore they are non-special edges by the definition of $\lambda^c.$
 (The set $\bf T$ may be empty.) Consider all (non-empty) maximal
combs $\Delta_j$  ($j=1,\dots, r$) in which $\Gamma$ is separated
by the bands of $\bf T.$ (The combs $\Delta_j$-s do not contain the
separating $\theta$-bands from $\bf T.$) Then
\begin{equation} \label{deltaj}
\lambda^c(\Gamma) = \sum_j\lambda^c(\Delta_j)
\end{equation}
because arbitrary two white beads which are not separated by a black
one in ${\bf z}={\bf z}^{\Gamma}$ or in ${\bf y}={\bf y}^{\Gamma}$  belong to the boundary
of some $\Delta_j$ since black beads are placed on $q$-edges and
also on non-special $\theta$-edges (and also, every bead of $\partial\Delta_j$ is
on ${\bf z}$). Below we call an edge {\it black} ({\it white}) if its middle point is a
black (white) bead.

Therefore we will estimate $\lambda^c(\Delta_j)$ for every
$j$ from below and then will use (\ref{deltaj}).
Clearly, this number is at least
$-\lambda^c({\bf y}(j))$ ($j=1,\dots, r$), where ${\bf y}(j)={\bf y}^{\Delta_j}.$ To give an upper bound for $\lambda^c({\bf y}(j)),$
we apply Lemma \ref{mixturec} (e) to ${\bf y}_j$ and select an
appropriate 'black' edge $e_j$ (if any exists) on ${\bf y}(j)$
such that ${\bf y}(j)= {\bf y}_2(j)e_j{\bf y}_1(j)$ with $m_1(j)$ and $m_2(j)$
white beads on ${\bf y}_1(j)$ and
${\bf y}_2(j)$, respectively, and
\begin{equation}\label{2mm}
\lambda^c({\bf y}(j)) \le 2m_1(j)m_2(j)
\end{equation}

\unitlength 1mm 
\linethickness{0.4pt}
\ifx\plotpoint\undefined\newsavebox{\plotpoint}\fi 
\begin{picture}(115.75,78)(0,0)
\put(96.25,76){\line(0,-1){71.25}}
\put(104,75.75){\line(0,-1){70.25}}
\put(30,30.25){\line(1,0){74}}
\put(30.25,30.25){\line(0,-1){4.5}}
\put(30.25,25.75){\line(0,1){0}}
\put(30.25,26.5){\line(1,0){73.75}}
\multiput(96.25,30.25)(-.033707865,.03511236){178}{\line(0,1){.03511236}}
\multiput(90.25,36.5)(-.033653846,.110576923){104}{\line(0,1){.110576923}}
\put(86.5,47.75){\line(0,1){7.5}}
\put(86.5,55.25){\line(-1,0){6}}
\put(80.5,55.25){\line(0,-1){7.5}}
\multiput(80.5,47.75)(.033613445,-.094537815){119}{\line(0,-1){.094537815}}
\multiput(84.5,36.5)(.033687943,-.040780142){141}{\line(0,-1){.040780142}}
\put(89,31.25){\line(0,-1){4}}
\multiput(89,27.25)(-.03125,.0625){8}{\line(0,1){.0625}}
\put(88.75,27.75){\line(-2,-5){5.5}}
\put(83.25,14){\line(1,0){6.5}}
\multiput(89.75,14)(.033707865,.074438202){178}{\line(0,1){.074438202}}
\put(104.25,74){\line(-1,0){8}}
\multiput(96.43,73.93)(-.0334448,-.0635452){13}{\line(0,-1){.0635452}}
\multiput(95.56,72.278)(-.0334448,-.0635452){13}{\line(0,-1){.0635452}}
\multiput(94.691,70.625)(-.0334448,-.0635452){13}{\line(0,-1){.0635452}}
\multiput(93.821,68.973)(-.0334448,-.0635452){13}{\line(0,-1){.0635452}}
\multiput(92.951,67.321)(-.0334448,-.0635452){13}{\line(0,-1){.0635452}}
\multiput(92.082,65.669)(-.0334448,-.0635452){13}{\line(0,-1){.0635452}}
\multiput(91.212,64.017)(-.0334448,-.0635452){13}{\line(0,-1){.0635452}}
\multiput(90.343,62.364)(-.0334448,-.0635452){13}{\line(0,-1){.0635452}}
\multiput(89.473,60.712)(-.0334448,-.0635452){13}{\line(0,-1){.0635452}}
\multiput(88.604,59.06)(-.0334448,-.0635452){13}{\line(0,-1){.0635452}}
\multiput(87.734,57.408)(-.0334448,-.0635452){13}{\line(0,-1){.0635452}}
\multiput(86.864,55.756)(-.0334448,-.0635452){13}{\line(0,-1){.0635452}}
\multiput(80.43,55.18)(-.0681511,-.0334008){13}{\line(-1,0){.0681511}}
\multiput(78.658,54.311)(-.0681511,-.0334008){13}{\line(-1,0){.0681511}}
\multiput(76.886,53.443)(-.0681511,-.0334008){13}{\line(-1,0){.0681511}}
\multiput(75.114,52.574)(-.0681511,-.0334008){13}{\line(-1,0){.0681511}}
\multiput(73.342,51.706)(-.0681511,-.0334008){13}{\line(-1,0){.0681511}}
\multiput(71.57,50.838)(-.0681511,-.0334008){13}{\line(-1,0){.0681511}}
\multiput(69.798,49.969)(-.0681511,-.0334008){13}{\line(-1,0){.0681511}}
\multiput(68.026,49.101)(-.0681511,-.0334008){13}{\line(-1,0){.0681511}}
\multiput(66.254,48.232)(-.0681511,-.0334008){13}{\line(-1,0){.0681511}}
\multiput(64.482,47.364)(-.0681511,-.0334008){13}{\line(-1,0){.0681511}}
\multiput(62.71,46.495)(-.0681511,-.0334008){13}{\line(-1,0){.0681511}}
\multiput(60.938,45.627)(-.0681511,-.0334008){13}{\line(-1,0){.0681511}}
\multiput(59.167,44.759)(-.0681511,-.0334008){13}{\line(-1,0){.0681511}}
\multiput(57.395,43.89)(-.0681511,-.0334008){13}{\line(-1,0){.0681511}}
\multiput(55.623,43.022)(-.0681511,-.0334008){13}{\line(-1,0){.0681511}}
\multiput(53.851,42.153)(-.0681511,-.0334008){13}{\line(-1,0){.0681511}}
\multiput(52.079,41.285)(-.0681511,-.0334008){13}{\line(-1,0){.0681511}}
\multiput(50.307,40.417)(-.0681511,-.0334008){13}{\line(-1,0){.0681511}}
\multiput(48.535,39.548)(-.0681511,-.0334008){13}{\line(-1,0){.0681511}}
\multiput(46.763,38.68)(-.0681511,-.0334008){13}{\line(-1,0){.0681511}}
\multiput(44.991,37.811)(-.0681511,-.0334008){13}{\line(-1,0){.0681511}}
\multiput(43.219,36.943)(-.0681511,-.0334008){13}{\line(-1,0){.0681511}}
\multiput(41.447,36.074)(-.0681511,-.0334008){13}{\line(-1,0){.0681511}}
\multiput(39.675,35.206)(-.0681511,-.0334008){13}{\line(-1,0){.0681511}}
\multiput(37.903,34.338)(-.0681511,-.0334008){13}{\line(-1,0){.0681511}}
\multiput(36.131,33.469)(-.0681511,-.0334008){13}{\line(-1,0){.0681511}}
\multiput(34.36,32.601)(-.0681511,-.0334008){13}{\line(-1,0){.0681511}}
\multiput(32.588,31.732)(-.0681511,-.0334008){13}{\line(-1,0){.0681511}}
\multiput(30.816,30.864)(-.0681511,-.0334008){13}{\line(-1,0){.0681511}}
\multiput(30.18,26.18)(.137013,-.031169){7}{\line(1,0){.137013}}
\multiput(32.098,25.743)(.137013,-.031169){7}{\line(1,0){.137013}}
\multiput(34.016,25.307)(.137013,-.031169){7}{\line(1,0){.137013}}
\multiput(35.934,24.871)(.137013,-.031169){7}{\line(1,0){.137013}}
\multiput(37.852,24.434)(.137013,-.031169){7}{\line(1,0){.137013}}
\multiput(39.771,23.998)(.137013,-.031169){7}{\line(1,0){.137013}}
\multiput(41.689,23.562)(.137013,-.031169){7}{\line(1,0){.137013}}
\multiput(43.607,23.125)(.137013,-.031169){7}{\line(1,0){.137013}}
\multiput(45.525,22.689)(.137013,-.031169){7}{\line(1,0){.137013}}
\multiput(47.443,22.252)(.137013,-.031169){7}{\line(1,0){.137013}}
\multiput(49.362,21.816)(.137013,-.031169){7}{\line(1,0){.137013}}
\multiput(51.28,21.38)(.137013,-.031169){7}{\line(1,0){.137013}}
\multiput(53.198,20.943)(.137013,-.031169){7}{\line(1,0){.137013}}
\multiput(55.116,20.507)(.137013,-.031169){7}{\line(1,0){.137013}}
\multiput(57.034,20.071)(.137013,-.031169){7}{\line(1,0){.137013}}
\multiput(58.952,19.634)(.137013,-.031169){7}{\line(1,0){.137013}}
\multiput(60.871,19.198)(.137013,-.031169){7}{\line(1,0){.137013}}
\multiput(62.789,18.762)(.137013,-.031169){7}{\line(1,0){.137013}}
\multiput(64.707,18.325)(.137013,-.031169){7}{\line(1,0){.137013}}
\multiput(66.625,17.889)(.137013,-.031169){7}{\line(1,0){.137013}}
\multiput(68.543,17.452)(.137013,-.031169){7}{\line(1,0){.137013}}
\multiput(70.462,17.016)(.137013,-.031169){7}{\line(1,0){.137013}}
\multiput(72.38,16.58)(.137013,-.031169){7}{\line(1,0){.137013}}
\multiput(74.298,16.143)(.137013,-.031169){7}{\line(1,0){.137013}}
\multiput(76.216,15.707)(.137013,-.031169){7}{\line(1,0){.137013}}
\multiput(78.134,15.271)(.137013,-.031169){7}{\line(1,0){.137013}}
\multiput(80.052,14.834)(.137013,-.031169){7}{\line(1,0){.137013}}
\multiput(81.971,14.398)(.137013,-.031169){7}{\line(1,0){.137013}}
\multiput(89.43,14.18)(.0328947,-.0342105){19}{\line(0,-1){.0342105}}
\multiput(90.68,12.88)(.0328947,-.0342105){19}{\line(0,-1){.0342105}}
\multiput(91.93,11.58)(.0328947,-.0342105){19}{\line(0,-1){.0342105}}
\multiput(93.18,10.28)(.0328947,-.0342105){19}{\line(0,-1){.0342105}}
\multiput(94.43,8.98)(.0328947,-.0342105){19}{\line(0,-1){.0342105}}
\put(95.93,7.68){\line(1,0){.125}}
\put(96.25,7.75){\line(1,0){8}}
\put(79,41.75){$D_j$}
\put(84.5,10.5){${\cal C}_{i(j)}$}
\put(99.25,69.75){$\cal C$}
\put(82.5,58){$f$}
\put(106.5,28.25){$e_j$}
\put(101.25,51.5){$y_1(j)$}
\put(100.25,16.75){$y_2(j)$}
\put(112,29.75){\vector(0,-1){.07}}\put(112,73.75){\vector(0,1){.07}}\put(112,73.75){\line(0,-1){44}}
\put(10.75,77.75){\vector(0,-1){.07}}\put(10.75,78){\vector(0,1){.07}}\put(10.75,78){\line(0,-1){.25}}
\put(115.75,51.5){$m_1(j)$}
\put(112,6.25){\vector(0,-1){.07}}\put(112,26.25){\vector(0,1){.07}}\put(112,26.25){\line(0,-1){20}}
\put(114.5,16.75){$m_2(j)$}
\put(91.75,57.5){$E_j$}
\put(64,8.5){$\Delta_j$}
\put(8,28.25){(23)-band  ${\cal T}_j$}
\multiput(35.43,29.93)(0,-.75){5}{{\rule{.4pt}{.4pt}}}
\multiput(40.93,30.18)(0,-.8125){5}{{\rule{.4pt}{.4pt}}}
\multiput(45.93,29.93)(0,-.75){5}{{\rule{.4pt}{.4pt}}}
\multiput(50.93,30.18)(0,-.8125){5}{{\rule{.4pt}{.4pt}}}
\multiput(56.43,29.93)(-.0625,-.9375){5}{{\rule{.4pt}{.4pt}}}
\multiput(61.18,29.93)(0,-.75){5}{{\rule{.4pt}{.4pt}}}
\multiput(65.68,29.93)(0,-.875){5}{{\rule{.4pt}{.4pt}}}
\multiput(71.18,30.18)(0,-.8){6}{{\rule{.4pt}{.4pt}}}
\multiput(76.43,29.93)(0,-.8125){5}{{\rule{.4pt}{.4pt}}}
\multiput(81.68,30.43)(0,-.9375){5}{{\rule{.4pt}{.4pt}}}
\end{picture}

Since the path $y$ has no $q$-edges, $e_j$ is a $(23)$-edge.
Let ${\cal T}_j$ be the maximal $(23)$-band of $\Delta_j$ containing
the edge $e_j$. By the choice of $\Delta_j,$ the $\theta$-band
${\cal T}_j$ crosses a derivative band ${\cal C}_{i(j)}$ of
$\Gamma,$ and this derivative band is a $k^{-1}$-band by Properties \ref{order} and \ref{i}. 
The Step history of ${\cal C}_{i(j)}$ has no
subwords $(23)^{-1}(2)(23)$ by \ref{kt} applied to the filling trapezium $Tp({\cal C}_{i(j)}, \cal C)$, and so the top or the bottom
path of ${\cal T}_j$ cuts the derivative band in two parts such that
one of them is a $k^{-1}$-subband ${\cal D}_j$ with Step history
$(2)$ (without the cell from ${\cal T}_j$). We denote by $d(j)$ the length of this subband.
Every maximal  $\theta$-band crossing a derivative band,
 also crosses the handle of $\Delta_j$, so we may assume  that
${\cal D}_j$ belongs to the union $E_j$ of the maximal $\theta$-bands of $\Gamma$
ending on the $m_1(j)$ white edges of ${\bf z}^{\Delta_j},$ because
one can interchange $m_1(j)$ and $m_2(j)$) in (\ref{2mm}), in particular, $d(j)\le m_1(j)$. We
consider $3$ cases.

(1) $d(j)\le m_1(j)/2.$  Then we say that $j\in J_1\subset
\{1,\dots,r\}$.

(2) $d(j)>m_1(j)/2$ and at least $d(j)/2$ maximal $a$-bands starting
on ${\cal D}_j$ end on derivative bands of $\Gamma$ non-equal to ${\cal C}_{i(j)}$. Then $j\in J_2.$

(3) $d(j)>m_1(j)/2$ and less than $d(j)/2$ maximal $a$-bands
starting on ${\cal D}_j$ end on derivative bands non-equal to ${\cal C}_{i(j)}$ . Then $j\in
J_3.$

{\bf Case (1).} In this case, the end $f$ of ${\cal D}_j$ is a $q$-edge factorizing
the path ${\bf z}={\bf z}^{\Delta_j}$ in a product ${\bf z'}f{\bf z''},$ where ${\bf z'}$ has at
least $m_1(j)/2$ white $\theta$-edges (the ends of  maximal
$\theta$-bands from $E_j$ non-crossing ${\cal D}_j$) and ${\bf z''}$ has at least
$m_2(j)$ white edges. Therefore $m_2(j)
m_1(j)/2 \le \kappa({\bf z}^{\Delta_j})=\kappa^c(\Delta_j).$  Hence by Inequality (\ref{2mm}) and by Lemma \ref{deriv} (a),
\begin{equation}\label{c1}
\sum_{j\in J_1} \lambda^c({\bf y}(j)) \le 2\sum_{j\in J_1} m_1(j)m_2(j)
\le 4\sum_{j\in J_1} \kappa^c(\Delta_j)\le 4 \kappa^c(\Gamma)
\end{equation}

{\bf Case (2).} In this case, at least $d(j)/2$ $a$-bands connect ${\cal C}_{i(j)}$ with
a different derivative band. Therefore the number of $a$-bands connecting pairwise different
derivative bands of $\Gamma$ is at least $\frac14 \sum_{j\in J_2}d(j)$. On the other hand,
the same number does not exceed $h_-=h_-^{\Gamma}$ by Lemma \ref{a-bands}, whence
$\sum_{j\in J_2}d(j)\le 4h_-.$ Since $m_1(j)\le 2d(j)$, this inequality 
implies
$\sum_{j\in J_2} m_1(j) 
\le 8h_-.$ Therefore by Lemma
\ref{a-bands} and (\ref{2mm}), we have
\begin{equation}\label{c2}
\sum_{j\in J_2}\lambda^c({\bf y}(j))\le 2(8h_-)\sum_{j\in J_2} m_2(j)
\le 16 h_-h\le 32\kappa^c(\Gamma)
\end{equation}

{\bf Case (3).} Since the Step history of the $k^{-1}$-band ${\cal D}_j$ is $(2)$, every cell of this band
is active from the right by \ref{kk'}, and so there are no $a$-bands starting and ending on
the same ${\cal D}_j$ by \ref{iv}. No $a$-band starting on ${\cal D}_j$ can cross ${\cal T}_j$  since there are no
$(\theta,a)$-cells between the intersection cells of ${\cal T}_j$ with ${\cal C}_{j(i)}$ and with $\cal C$ by \ref{i}.  Thus
in Case (3),  more than $d(j)/2$ $a$-bands starting
on ${\cal D}_j$ end on $\partial\Gamma.$ Therefore 
$|{\bf z}|_a \ge \sum_{j\in J_3}
d(j)/2\ge\sum_{j\in J_3} m_1(j)/4.$ Hence, by Lemma \ref{simple}
(a),
\begin{equation}\label{c3}
\sum_{j\in J_3}\lambda^c({\bf y}(j))\le 2\sum_{j\in J_3}m_1(j)m_2(j)\le
8|{\bf z}|_ah \le 8(\delta')^{-1}[\Gamma]
\end{equation}

Altogether, Inequalities (\ref{deltaj}) and (\ref{c1})--(\ref{c3}) imply the inequality
$$\lambda^c(\Gamma)\ge - \sum_{j\in  J_1\cup J_2\cup J_3}\lambda^c({\bf y}(j)) \ge -36
\kappa^c(\Gamma)-8(\delta')^{-1}[\Gamma].$$

\endproof

\begin{lemma} \label{styk}  Let $\Delta$ be a regular comb with step history  $(2)(1)(2)$ or $(2)(3)(2)$, where the
first or the last
$(2)$ can be absent. Let $H\equiv H(1)H(2)H(3)$ be the corresponding Step
decomposition of the history $H.$ Assume that the handle $\cal C$ is
$k^{-1}$- or $k'$-, or $s_0$-band and 
 the base width of
$\Delta$ is at most $15N.$
Let $l= ||H(1)||+||H(3)||,$ 
then $\area(\Delta)\le c_2( h(|{\bf z'}|-h
+l+1)+\kappa^c(\Delta))$, where $h=||H||$ and $z'=z'^{\Delta}.$
\end{lemma}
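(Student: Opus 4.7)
The strategy is to split $\Delta$ into subcombs corresponding to the three step-history parts and bound each one separately. Formally, cut $\Delta$ along the tops of the maximal $\theta$-bands of $\cal C$ separating $H(1)$ from $H(2)$ and $H(2)$ from $H(3)$. This produces subcombs $\Delta(1), \Delta(2), \Delta(3)$ (with $\Delta(1)$ or $\Delta(3)$ absent if $H(1)$ or $H(3)$ is empty), each a one-step regular comb whose handle is the $H(i)$-subband $\bar{\cal C}_i$ of $\cal C$ containing $h_i = \|H(i)\|$ cells, so that $h_1 + h_2 + h_3 = h$ and $h_1 + h_3 = l$.

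First I would verify that the handle of each $\Delta(i)$ is passive from the left. For $\cal C = s_0$ this is immediate from Remark \ref{s0}. For $\cal C = k^{-1}$, Property \ref{kk'} says $k$-cells arising from Step 1 or Step 2 rules (other than $(12)^{\pm 1}$) are active from the left and passive from the right, so the corresponding $k^{-1}$-cells in $\cal C$ are passive from the left, while $k$-cells for Step 3 rules are passive on both sides; the case $\cal C = k'$ is symmetric. For the middle piece $\Delta(2)$ (history of type $(1)$ or $(3)$, passive-from-left handle), Lemma \ref{13} then yields
$$\area(\Delta(2)) \le h_2 + c_1 h_2(|{\bf z'}^{\Delta(2)}| - h_2 + 1) + c_1 \kappa^c(\Delta(2)),$$
since $|{\bf y'}^{\Delta(2)}| = h_2$.

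The crucial step is to establish for $i = 1, 3$ (one-step type $(2)$, passive-from-left $k^{-1}/k'/s_0$ handle, base width $\le 15N$) the bound
$$\area(\Delta(i)) \le c_1\bigl([\Delta(i)] + \kappa^c(\Delta(i))\bigr).$$
I would prove this by induction on the number of maximal $q$-bands, mimicking the proof of Lemma \ref{t}(2): when the base width of $\Delta(i)$ lies in $[2N, 15N]$, Lemma \ref{oneage} supplies a long quasisubcomb $\Xi \subseteq \Delta(i)$ satisfying $\area(\Xi) \le c_1([\Xi] + \kappa^c(\Xi))$; peel it off via Remark \ref{quasiotrez} and apply the inductive hypothesis to the complement, a smaller comb sharing the handle. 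When the base width is less than $2N$, one uses Lemma \ref{nizko} if $h_i \le (\delta')^{-1}$, and otherwise a direct computation from Lemmas \ref{comb} and \ref{simple}(a).

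Finally, one sums the three area bounds. Lemma \ref{deriv}(b) gives $\sum_i \kappa^c(\Delta(i)) \le \kappa^c(\Delta)$. The paths ${\bf z'}^{\Delta(i)}$ decompose into external portions (partitioning $\bf z'$) together with internal separator paths that are tops of two $\theta$-bands of base width $\le 15N$; hence Lemma \ref{ochev}(b) yields $\sum_i |{\bf z'}^{\Delta(i)}| \le |{\bf z'}| + 4 \cdot 15 N(1 + \delta')$. After expanding $[\Delta(i)] = h_i(|{\bf z}^{\Delta(i)}| - |{\bf y}^{\Delta(i)}|)$, using $|{\bf y}^{\Delta(i)}| \ge h_i$ and $|{\bf z}^{\Delta(i)}| = |{\bf z'}^{\Delta(i)}| + 2$, and invoking $h_i \le h$, $\sum h_i = h$, $h_1 + h_3 = l$, the sum telescopes into the target expression, with the $O(Nh)$ separator error absorbed by the slack $c_2 \gg c_1$ afforded by the parameter hierarchy.

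The main obstacle is the inductive bound for $\Delta(1)$ and $\Delta(3)$, since no earlier lemma applies directly to one-step type-$(2)$ combs with $k^{-1}/k'/s_0$ handle: Lemma \ref{oneage} only supplies a quasisubcomb bound, and its base-width hypothesis $[2N, 15N]$ forces an ad hoc treatment of narrow combs. The closing arithmetic combining the three estimates with the separator contributions is routine but needs care, as the target inequality leaves no slack in $|{\bf z'}| - h$ when the comb is tall; the $+l+1$ term in the target is precisely what accommodates the $\Delta(2)$ contribution together with the constant and separator corrections.
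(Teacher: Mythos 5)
Your decomposition of $\Delta$ into the $H(1)$-, $H(2)$-, $H(3)$-parts and the use of Lemma \ref{13} on the middle piece is exactly the paper's starting point, but your treatment of the outer Step-$2$ pieces contains a genuine gap. The bound you declare "crucial", $\area(\Delta(i))\le c_1([\Delta(i)]+\kappa^c(\Delta(i)))$ for a one-Step type-$(2)$ comb with $k^{-1}$-, $k'$- or $s_0$-handle, is not only unavailable from the earlier lemmas (as you admit) — it is false in general. For a Step-$2$ history the $k^{-1}$- or $k'$-handle is \emph{active from the right} (Property \ref{kk'}), so $|{\bf y}|$ can be as large as $h_i(1+\delta')$: already a trapezium of base width $2$ with base $s_Nk'$, empty inner sector and passive $s_N$-band has area $2h_i$, $\kappa^c=0$, and $[\Delta(i)]=h_i(4-\delta' h_i)<0$ once $h_i>4\delta'^{-1}$. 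Your proposed induction via Lemma \ref{oneage} cannot rescue this, precisely because in the narrow case (base width $<2N$) Lemmas \ref{comb} and \ref{simple}(a) give an area of order $h_i^2$ which no multiple of a possibly negative $[\Delta(i)]$ plus a vanishing $\kappa^c(\Delta(i))$ can dominate. (There is also the question of whether the $H(i)$-parts are regular combs at all, which Lemma \ref{oneage} requires.)

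The point you miss is that no $[\cdot]$-type bound for $\Delta(1),\Delta(3)$ is needed: the $+l$ term in the conclusion is there exactly to absorb \emph{crude} bounds on these pieces, not (as you write) the $\Delta(2)$ contribution. The paper's route is: apply Lemma \ref{comb} directly to get $\area(\Delta(1))+\area(\Delta(3))\le 60N(h(1)^2+h(3)^2)+2(h(1)\alpha(1)+h(3)\alpha(3))$, control the interface paths ${\bf x}_{12},{\bf x}_{23}$ via the inequality $\alpha+8hb\ge 2\alpha_1$ of Lemma \ref{comb} — note these paths are tops of $\theta$-bands and can carry up to about $\alpha+120Nh(i)$ $a$-edges, not the $O(N)$ you assert — then convert $\alpha=|{\bf z}|_a$ into $(\delta')^{-1}(|{\bf z'}|-h)$ by Lemma \ref{simple}(a), feed the enlarged $|{\bf z'}(2)|$ into the Lemma \ref{13} estimate for $\Delta(2)$, use Lemma \ref{deriv}(b) for $\kappa^c$, and absorb everything into $c_2h(|{\bf z'}|-h+l+1)+c_2\kappa^c(\Delta)$ using $l\le h$ and $c_2\ge c_1\max(3(\delta')^{-1},123N)$. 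So the correct proof is simpler than your plan, and your central intermediate claim must be discarded rather than proved.
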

\proof 
Let $\Delta(i)$ be the $H(i)$-part of $\Delta,$ $i=1,2,3.$
We will abbreviate   $h(1)=h^{\Delta(1)},$ and so on. By \ref{kk'}, the handle $\cal C$
is passive from the left, and so by Lemma \ref{13},
\begin{equation}\label{d(2)}
\area'(\Delta(2))\le c_1h(2)(|{\bf z'}(2)|-h(2)+1)+c_1\kappa^c(\Delta(2))
\end{equation}
and therefore the statement is true if $l=0$ since $c_2>c_1+1$.
Further we assume that $l\ge 1.$

Since the base widths of $\Delta$ does not exceed $15N,$ we have by Lemma \ref{comb},
\begin{equation} \label{d1d3}
\area(\Delta(1))+\area(\Delta(3))\le
60N(h(1)^2+h(3)^2)+2(h(1)\alpha(1)+h(3)\alpha(3))
\end{equation}
where $\alpha(1)=|{\bf z}(1)|_a, \alpha(3) =|{\bf z}(3)|_a.$

Let ${\bf x}_{12}$ be the common part of the boundaries of $\Delta(1)$ and
$\Delta(2)$. By Lemma \ref{comb},
\begin{equation} \label{xle}
|{\bf x}_{12}|_a\le
(\alpha(1)+120Nh(1))/2\le(\alpha+|{\bf x}_{12}|_a+120Nh(1))/2,
\end{equation}
where
$\alpha=|{\bf z}|_a=|{\bf z}^{\Delta}|_a$ 
From (\ref{xle}) and similar inequality with ${\bf x}_{23},$ we have

\begin{equation}\label{x12}
|{\bf x}_{12}|_a\le \alpha+120Nh(1) \;\; and \;\; |{\bf x}_{23}|_a\le\alpha+120Nh(3)
\end{equation}
 Therefore
 \begin{equation}\label{alpha}
 \alpha(1)\le
|{\bf x}_{12}|_a+\alpha\le 2\alpha+120Nh(1)\;\; and \;\;\alpha(3)\le 2\alpha+120Nh(3)
\end{equation}
  Now we use
Inequalities (\ref{d1d3}), (\ref{alpha}), equality $h(1)+h(3)=l$, and Lemma \ref{simple} (a)to obtain inequality
\begin{equation}\label{1+3}
\begin{array}{l} \area(\Delta(1))+\area(\Delta(3))\le 60Nl^2+2l(2\alpha+120Nl)\\ \le
60Nl^2+2l(2(\delta')\iv(|z'|-h)+120Nl)\end{array}
\end{equation}
 We also have $|{\bf z'}(2)|\le
|{\bf z'}|+|{\bf x}_{12}|+|{\bf x}_{23}|\le |{\bf z'}|+2\alpha+120Nl$ by (\ref{x12}). Therefore
$$|{\bf z'}(2)|-h(2)\le |{\bf z'}|+ 2\alpha+120Nl - (h-l)\le
|{\bf z'}|-h+2\alpha+121Nl.$$ Hence $|{\bf z'}(2)|-h(2)\le
(|{\bf z'}|-h)(2(\delta')\iv+1)+121Nl$ by Lemma \ref{simple} (a). Using this
estimate we deduce from (\ref{d(2)}) and Lemma \ref{deriv}(b) that
$$\area'(\Delta(2))\le c_1h(2)(|{\bf z'}(2)|-h(2)+1)+c_1\kappa^c(\Delta(2))$$ $$\le
c_1h((|{\bf z'}|-h)(2(\delta')\iv+1)+122Nl)+c_1\kappa^c(\Delta)$$ In
turn, this inequality and (\ref{1+3}) show that
$$\begin{array}{l} \area (\Delta)\le \area(\Delta(1))+\area(\Delta(3)) +\area'(\Delta(2))+h\\
\le 60Nhl+2h(2(\delta')\iv(|{\bf z'}|-h)+120Nl)\\ +
c_1h((|{\bf z'}|-h)(2(\delta')\iv+1)+122Nl)+c_1\kappa^c(\Delta)+h\\ \le
c_2h(|{\bf z'}|-h+ l)+c_2\kappa^c(\Delta)\end{array}$$ because $l\ge
1$, $|{\bf z'}|-h\ge 0$ and $c_2\ge c_1\max(3(\delta')\iv, 123N).$ The
lemma is proved.
\endproof

\begin{lemma}\label{2etapa} Let $\Gamma$ be a regular comb of base width
$b\le 15N.$ Assume that it has no one-Step long subcombs $\Delta$ with
$\area(\Delta)\le c_1[\Delta]+c_1\kappa^c(\Delta).$
Let the history $H$ of $\Gamma$ either
(a) have rules $(23)^{\pm 1}$ but no rules $(12)^{\pm 1}$ or (b) have rules
$(12)^{\pm 1}$ but no rules $(23)^{\pm 1}.$ Also assume that in case (a), $\cal C$
is a $t^{\pm 1}$-band, and in case (b), $\cal C$ is a $(t')^{\pm
1}$-band. Then $\area (\Gamma)\le c_2 (2(\delta')^{-1}[\Gamma]+
6\kappa^c(\Gamma))\le c_3 [\Gamma]+ c_2\mu^c(\Gamma)$.
\end{lemma}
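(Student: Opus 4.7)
The plan proceeds in two parts. First, the second inequality $c_2(2(\delta')^{-1}[\Gamma]+6\kappa^c(\Gamma))\le c_3[\Gamma]+c_2\mu^c(\Gamma)$ follows directly from Lemma~\ref{mu21}: since $\mu^c(\Gamma)=c_0\kappa^c(\Gamma)+\lambda^c(\Gamma)$ and Lemma~\ref{mu21} gives $\lambda^c(\Gamma)\ge -8(\delta')^{-1}[\Gamma]-36\kappa^c(\Gamma)$, we obtain $c_3[\Gamma]+c_2\mu^c(\Gamma)\ge (c_3-8c_2(\delta')^{-1})[\Gamma]+(c_0-36)c_2\kappa^c(\Gamma)$, which majorizes $2c_2(\delta')^{-1}[\Gamma]+6c_2\kappa^c(\Gamma)$ by the parameter ordering $c_0\ll c_2\ll c_3$ from (\ref{const}).

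For the main inequality $\area(\Gamma)\le c_2(2(\delta')^{-1}[\Gamma]+6\kappa^c(\Gamma))$, I argue in case (a); case (b) is dual under the evident $t\leftrightarrow t'$, $(12)\leftrightarrow(23)$ symmetry. Since $\cal C$ is a $t$-band it is passive by Property~\ref{kk'}, so $|{\bf y}|=h$ and $[\Gamma]=h(|{\bf z}|-h)$; by Property~\ref{order} every derivative handle ${\cal C}_i$ is a $k^{\pm1}$- or $(k')^{\pm1}$-band. The filling trapezium $Tp({\cal C}_i,{\cal C})$ has base containing $tk^{\pm 1}$ or $(k')^{\pm 1}t^{\pm 1}$; by Property~\ref{kt} its step history cannot contain $(23)^{-1}(2)(23)$, and since $H$ contains no $(12)^{\pm 1}$ rule, the step history of each $\Gamma_i$ is a subword of $(2)(3)(2)$ (with possibly missing first or last $(2)$)---precisely the form handled by Lemma~\ref{styk}. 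Applying that lemma yields $\area(\Gamma_i)\le c_2\bigl(h_i(|{\bf z}'_i|-h_i+l_i+1)+\kappa^c(\Gamma_i)\bigr)$, where $l_i$ is the total Step~3 length in $H_i$.

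Summing over $i$, using Lemma~\ref{deriv}(a) for $\sum\kappa^c(\Gamma_i)\le\kappa^c(\Gamma)$ together with $\sum h_i\le h$ and $\sum_i(|{\bf z}'_i|-h_i)\le|{\bf z}'|-h$, controls the contribution of the derivative subcombs. The simple $\theta$-bands of $\Gamma$ are bounded by Remark~\ref{pustbudut}: their total number of cells is at most $h(h_-+\alpha+1+\sum h_i/2)$, where Lemma~\ref{simple}(a) gives $\alpha\le(\delta')^{-1}(|{\bf z}|-h-1)$ and Lemma~\ref{lgamma} gives $hh_-\le 2\kappa^c(\Gamma)$.

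The principal obstacle is the term $h\sum_i l_i$, which \emph{a priori} is of order $h^2$ and not dominated by $[\Gamma]+\kappa^c(\Gamma)$. To control it, I would observe, in the spirit of the proof of Lemma~\ref{mu21}, that each $(23)^{\pm 1}$-cell on a derivative handle ${\cal C}_i$ lies on a maximal $(23)^{\pm 1}$-band of $\Gamma$ that must either terminate on ${\bf z}^{\Gamma}$---so that its endpoint contributes a $(23)$-edge to ${\bf z}$, bounding the total such count by $|{\bf z}|_a$ and hence by $(\delta')^{-1}(|{\bf z}|-h)$ via Lemma~\ref{simple}(a)---or cross a second derivative handle ${\cal C}_{j}$ with $j\ne i$, in which case it is counted among the $a$-bands of Lemmas~\ref{a-bands} and~\ref{lgamma} and hence charged to $\kappa^c(\Gamma)$. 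Assembling the three contributions (derivative subcombs, simple bands, and handle) and using $c_2\gg c_1$ yields the desired bound $\area(\Gamma)\le c_2(2(\delta')^{-1}[\Gamma]+6\kappa^c(\Gamma))$.
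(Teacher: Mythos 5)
Your overall skeleton (bounding the derivative subcombs via Lemma \ref{styk}, the simple bands via Lemmas \ref{simple} and \ref{lgamma}, and deducing the second inequality from Lemma \ref{mu21}) coincides with the paper's, and your treatment of the second inequality is fine. But the central quantitative step is broken. In Lemma \ref{styk} the quantity $l$ is $||H(1)||+||H(3)||$, i.e.\ the total length of the two \emph{outer Step-2} portions of the decomposition, not ``the total Step~3 length in $H_i$'' as you claim; so the dangerous term is $h\sum_i(h_i(1)+h_i(3))$, the combined length of the Step-2 parts of the derivative handles. Your proposed control of it by counting $(23)^{\pm 1}$-cells cannot work: each derivative handle carries at most two such cells no matter how long its Step-2 portions are; a maximal $(23)$-band of a comb always terminates on ${\bf z}^{\Gamma}$ (so your dichotomy is vacuous); and its end edge is a $\theta$-edge, which contributes nothing to $|{\bf z}|_a$, so it cannot be charged to $(\delta')^{-1}(|{\bf z}|-h)$ through Lemma \ref{simple}(a). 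The paper's argument is instead an $a$-band count on the derivative handles themselves: their Step-2 cells are active on the appropriate side by \ref{kk'}, an $a$-band emitted by such a cell cannot return to the same band by \ref{iv}, and it cannot join the $H_i(1)$-part to the $H_i(3)$-part because the middle part of ${\cal C}_i$ shares $\theta$-edges with the handle $\cal C$ by \ref{i}; hence for the longest derivative band $h_{i_0}(1)+h_{i_0}(3)\le h_-+|{\bf z}|_a$, so $\sum_i(h_i(1)+h_i(3))\le 2h_-+(\delta')^{-1}(|{\bf z}|-h)$, and $hh_-\le 2\kappa^c(\Gamma)$ from Lemma \ref{lgamma} closes the estimate.

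There are further gaps. For the simple $\theta$-bands you invoke Remark \ref{pustbudut}, whose bound carries the extra term $h\sum_i h_i/2$, again of order $h^2$ and not dominated by $[\Gamma]+\kappa^c(\Gamma)$; the same observation as above (no $a$-band starts and ends on one derivative band, by \ref{iv} and \ref{i}) lets you apply Lemma \ref{simple}(b) instead, which is what the paper does. You also silently exclude the possibility that a derivative band is a $t^{\mp 1}$- (resp.\ $(t')^{\mp 1}$-) band; ruling this out requires Property \ref{i} together with Lemma \ref{t}(2) and uses precisely the hypothesis that $\Gamma$ has no long one-Step subcombs of small area. Finally, derivative histories lying entirely in Step 2 should be handled separately (the paper uses Lemma \ref{comb} there), and the resulting term $60N\sum h_i(1)^2$ is absorbed by the same bound on $\sum_i h_i(1)$.
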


\proof  {\bf Case (b)}: We consider the system of derivative bands
${\cal C}_1,\dots,{\cal C}_s.$ 
As usual, $\Gamma_i$ is the derivative subcomb with handle ${\cal
C}_i$.
If a derivative band is a $(t')^{\mp 1}$-band, then it must have a one-step
history since the base of a $(12)$-band cannot contain
subwords $(t')^{\mp 1}(t')^{\pm 1}$ by \ref{i}. By Lemma \ref{t} (2)
, this would contradict our
assumption on long subcombs. Therefore all the derivative bands are
$k'$-bands by \ref{order}.

Let they have histories $H_1,\dots,
H_s$, respectively. Then $H_i$ ($i=1,\dots,s$) has no subwords of type
$(12)(2)(12)^{-1}$ by \ref{kt} applied to the filling
trapezium $Tp({\cal C}_i,\cal C)$ 
(a similar argument
was used in Lemma \ref{mu21}). Therefore
 $H_i\equiv H_i(1)H_i(2)H_i(3)$ ($i\le s$), where
$H_i(1)$ and $H_i(3)$ are of Step $(2)$ and $H_i(2)$ is of Step $(1),$
where some of the factors can be empty.
The lengths of the histories are denoted by $h_i$ and $h_i(j)$,
$j=1,2,3.$

 If $H_i(2)$ is non-empty, then by Lemma \ref{styk} for $\Gamma_i$ and
 equality $|{\bf z}|_i=|{\bf z'}_i|+2$,
$$\area(\Gamma_i)\le \area'(\Gamma)+h_i\le
c_2(h_i(|{\bf z}_i|-h_i+h_i(1)+h_i(3))+\kappa^c(\Gamma_i))$$
 If $H_i=H_i(1)$, then
  $\area(\Gamma_i)\le 60Nh_i(1)^2+2h_i(1)|{\bf z}_i|_a$ by Lemma \ref{comb}.

Since $c_2\ge 2(\delta')\iv$ and $|{\bf z}_i|_a\le (\delta')\iv(|{\bf z}|-h)$ by Lemma
\ref{ochev} (a), we derive from the inequalities of the previous paragraph and Lemma \ref{deriv}(b) that

\begin{equation}
\sum_{i=1}^s \area(\Gamma_i)\le c_2h(|{\bf z}|-h+
\sum(h_i(1)+h_i(3)))+c_2\kappa^c(\Gamma)+60N\sum
h_i(1)^2\label{predvar}
\end{equation}

Let $h_{i_0}=\max h_i.$ Then $\sum_{i\ne i_0}h_i = h_-$ by the
definition of $h_-$. By \ref{iv}, every maximal $a$-band starting on the
$H_{i_0}(1)$- or $H_{i_0}(3)$- part of ${\cal C}_{i_0}$ must end
either on the $H_{i}(1)$- or $H_{i}(3)$-part of some ${\cal C}_i$,
$i\ne i_0$, or on the path $\bf z$. (Indeed, the $H_i(1)$-part of a band ${\cal C}_i$ cannot be connected with
the $H_i(3)$-part by an $a$-band since the $H_i(2)$-part of ${\cal
C}_i$ has common $\theta$-edges  with $\cal C$ by \ref{i}.) Therefore $h_{i_0}(1)+h_{i_0}(3)\le
h_- +|{\bf z}|_a\le h_- +(\delta')^{-1}(|{\bf z}|-h)$ by Lemma \ref{ochev} (a), and so,
$\sum_{i=1}^s(h_i(1)+h_i(3))\le 2h_-+ (\delta')^{-1}(|{\bf z}|-h).$ It
follows from this estimate, Inequalities (\ref{predvar}) and
$hh_-\le 2\kappa^c(\Gamma)$ (see Lemma \ref{lgamma}) that

$$\sum_{i=1}^s \area(\Gamma_i)\le c_2h((|{\bf z}|-h)(1+(\delta')^{-1}) +2h_-) + c_2\kappa^c(\Gamma)
+60Nh(2h_-+(\delta')^{-1}(|{\bf z}|-h))$$
\begin{equation}\label{summaGi}
\le (c_2+c_2(\delta')^{-1}+60N(\delta')^{-1})[\Gamma] +
(4c_2+c_2+240N)\kappa^c(\Gamma)
\end{equation}

By Lemmas \ref{simple} and \ref{lgamma}, the total
area of all simple bands in $\Gamma$ does not exceed
$$h(h_-+|{\bf z}|_a+1)\le h(h_-+(\delta')\iv(|{\bf z}|-h-1))< (\delta')^{-1}[\Gamma] +2\kappa^c(\Gamma)$$
This inequality and (\ref{summaGi}) imply

\begin{equation}\label{areaG}
\area(\Gamma)\le (\frac32 (\delta')^{-1}c_2+60N(\delta')^{-1})[\Gamma] +
(5c_2+240N+2)\kappa^c(\Gamma)
\end{equation}
and to obtain inequality $\area (\Gamma)\le c_2 (2(\delta')^{-1}[\Gamma]+
6\kappa^c(\Gamma))$, it remains to use that $c_2>
 240N+2.$

To complete the proof, we observe that by Lemma \ref{mu21},
 $$\begin{array}{l} c_3 [\Gamma]+ c_2\mu^c(\Gamma)=c_3 [\Gamma]+ c_2(c_0\kappa^c(\Gamma)+\lambda^c(\Gamma))\\
 \ge c_3 [\Gamma]+ c_2c_0\kappa^c(\Gamma)-c_2(36\kappa^c(\Gamma)+8(\delta')^{-1} [\Gamma])\\
 =(c_3-8(\delta')^{-1}c_2)[\Gamma]) +c_2(c_0-36) \kappa^c(\Gamma)
 \ge  c_2 (2[\Gamma]+ 6\kappa^c(\Gamma))\end{array}$$
 since $c_3 \ge 9(\delta')^{-1}c_2,$ and $c_0 \ge 42.$

 {\bf Case (a)} of the lemma is completely analogous.
 \endproof

  \begin{lemma}\label{p1} Let $\Delta$ be a regular comb of base widths at most $15N,$
where the history $H^{\Delta}$ of $\Delta$ has no rules $(23)^{\pm 1}.$
 Assume that $\Delta$ has neither $t$- nor $t'$-cells and has no one-Step long subcombs $\Gamma'$ such that
$\area(\Gamma')\le c_1[\Gamma']+c_1\kappa^c(\Gamma').$
If $\Delta$ has a $(12)$-band
$\cal T$ with base of the form $\dots p_1p_1^{-1}s_0^{-1}\dots,$ then $\Delta$ has
a long subcomb $\Gamma$ with $\area (\Gamma)\le
c_3 [\Gamma]+ c_2\mu^c(\Gamma).$
\end{lemma}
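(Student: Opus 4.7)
The plan is to exhibit $\Gamma$ as the subcomb of $\Delta$ whose handle is a $p_1^{\pm 1}$-band meeting $\cal T$, and then to bound $\area(\Gamma)$ by running the argument of Lemma \ref{2etapa} in this setting, with Property \ref{v} playing the role of Property \ref{kt} and with a variant of Lemma \ref{mu21} supplying the needed mixture bound.

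First, I would unpack the local structure around $\cal T$ via Property \ref{v}. The subtrapezium $\Delta_+$ of $\Delta$ cut off along $\ttopp(\cal T)$ has base containing $p_1 p_1^{-1} s_0^{-1}$ and history beginning with $(12)$; Property \ref{v} then forces this history to be $(12)H_+$ with $H_+$ of type $(2)$, free of $(12)^{\pm 1}$ and $(23)^{\pm 1}$ rules, and with every $p_1$-cell of the $H_+$-part active from both sides. Applying the same property to the mirror of the subtrapezium $\Delta_-$ below $\cal T$, whose history ends in $(12)$, gives the analogous conclusion for the part below $\cal T$. Thus the two $p_1^{\pm 1}$-bands of $\Delta$ that cross $\cal T$ at the $p_1 p_1^{-1}$-cells consist, outside the single $(12)$-cell, entirely of $p_1$-cells active from both sides; in particular, by Property \ref{iv}, no nontrivial $a$-band of $\Delta$ starts and ends on either of them outside the $(12)$-cell.

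Next, I take $\Gamma$ to be the subcomb of $\Delta$ whose handle $\cal C'$ is whichever of these two $p_1^{\pm 1}$-bands separates a nonempty subdiagram from the handle $\cal C$ of $\Delta$. Since $\Delta$ has no $t$- or $t'$-cells, the derivative bands of $\Gamma$ are $k^{\pm 1}$-, $(k')^{\pm 1}$-, $p_j^{\pm 1}$-, or $s_j^{\pm 1}$-bands. The hypothesis that $\Delta$ has no one-Step long subcombs with area bounded by $c_1[\cdot]+c_1\kappa^c(\cdot)$, together with Lemma \ref{t}(2), forces each derivative history to contain at least one $(12)^{\pm 1}$-rule; Property \ref{kt}, applied to each filling trapezium $Tp({\cal C}_i,\cal C')$, then shows that this history decomposes as $H_i(2')\,(12)^{\pm 1}\,H_i(2'')$ with both outer factors of type $(2)$. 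Lemma \ref{styk} bounds $\area(\Gamma_i)$, and summing over $i$ together with the accounting for simple $\theta$-bands (Lemma \ref{simple}) and for $a$-bands connecting different derivatives (Lemma \ref{a-bands}), following the derivation of \eqref{areaG} in the proof of Lemma \ref{2etapa}, I obtain
\[
\area(\Gamma)\le c_2\bigl(2(\delta')^{-1}[\Gamma]+6\kappa^c(\Gamma)\bigr).
\]
Longness of $\Gamma$ follows because $\cal C'$ carries the passive $(12)$-cell plus at least one derivative on the opposite side from $\cal C$, which makes $|{\bf z}^{\Gamma}|>|{\bf y}^{\Gamma}|$.

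To convert this bound to the target form $c_3[\Gamma]+c_2\mu^c(\Gamma)$, I need a lower estimate $\lambda^c(\Gamma)\ge -8(\delta')^{-1}[\Gamma]-36\kappa^c(\Gamma)$ of the kind proved in Lemma \ref{mu21}. Since that lemma is stated for combs with a $t^{\pm 1}$- or $(t')^{\pm 1}$-handle, I would reprove it in the present setting by taking the maximal $(12)$-bands of $\Gamma$ that do not cross any derivative band as the separating bands, and replacing the appeal to Property \ref{kt} by an appeal to Property \ref{v}. The same three-case analysis of Lemma \ref{mu21} then goes through. The main obstacle is precisely this final adaptation: one must verify that the derivative $k^{\pm 1}$- or $(k')^{\pm 1}$-bands $\cal D_j$ invoked in Cases (1)--(3) of Lemma \ref{mu21} can still be extracted when the handle is a $p_1$-band, and that their Step-$(2)$ segments are activated in the way the $a$-band counting requires---for which Property \ref{v} (forbidding $(12)$ to be followed, in the presence of the $p_1 p_1^{-1} s_0^{-1}$ pattern, by anything other than a Step-$(2)$ history with active $p_1$-cells) is the decisive combinatorial input.
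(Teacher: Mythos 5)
Your proposal contains genuine gaps, and the two most serious ones are exactly at the points where the paper's argument has to work hardest. First, your structural claim obtained from Property \ref{v} is too strong. Property \ref{v} is a statement about \emph{trapezia} whose base contains $p_1p_1^{-1}s_0^{-1}$; the region of $\Delta$ above (or below) ${\cal T}$ is not such a trapezium, and the filling trapezium spanned by the $p_1$-, $p_1^{-1}$- and $s_0^{-1}$-bands need not persist along the whole handle. What one can actually extract (and what the paper extracts) is only that the handle history has no subhistory $(12)(2)(12)^{-1}$, hence has type $(2)(1)(2)$: it may well contain a whole Step-$1$ segment and a second $(12)^{\pm1}$-cell. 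Your later argument silently assumes a single $(12)$-cell with everything else of Step $2$, whereas the real difficulty of the lemma is the five-part factorization $H\equiv H(1)\dots H(5)$ that the type $(2)(1)(2)$ forces, with the Step-$1$ part and the two derivative $s_0$-bands crossing the $(12)^{\pm1}$-bands treated separately. Relatedly, your identification of the derivative bands and the tools to control them is off: by \ref{order} the derivatives of a $p_1$-handle can only be $s_0$- or $p_1^{-1}$-bands (Lemma \ref{t}(2) and Property \ref{kt} are inapplicable, since they require $t^{\pm1}$/$(t')^{\pm1}$-handles resp.\ $tk$ or $k't'$ bases); the paper excludes $p_1^{-1}$-derivatives via \ref{i}, \ref{ii} and Lemma \ref{width1}(b), and one-Step $s_0$-derivatives via Lemma \ref{13}, leaving at most two $s_0$-derivatives, each crossing a $(12)$-band.

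Second, both the longness of $\Gamma$ and the mixture bound are left unproved in your sketch. Since the Step-$2$ $p_1$-cells of the handle are active from both sides, $|{\bf y}^{\Gamma}|$ exceeds $h$ by about $\delta'(h(2)+h(4))$, so longness is not automatic; the paper obtains it by counting the $a$-bands issuing from the active handle cells, which must end on a boundary arc next to the passive $s_0$-derivative, yielding $|{\bf z}|-|{\bf y}|\ge\max(1,(\delta-\delta')(h(2)+h(4)))$ --- an inequality that then drives the area estimate (via Lemma \ref{styk} for the middle part and Lemma \ref{width1}(b) for the derivative-free ends) as well as the mixture estimate. Your appeal to the summation scheme of Lemma \ref{2etapa} also breaks here, because that scheme uses a passive $t$/$t'$-handle with $|{\bf y}|=h$. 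Finally, you correctly flag the adaptation of Lemma \ref{mu21} as the main obstacle, but it does not go through: its Cases (1)--(3) rely on derivative $k^{-1}$-bands of a $t$-handle whose Step-$2$ parts are active from the right (\ref{kk'}, \ref{kt}), and with a $p_1$-handle there are no such bands --- the $s_0$-derivatives are passive. The paper avoids \ref{mu21} altogether: since the handle has at most two $(12)$-cells, $\lambda({\bf y})\le (h(1)+h(2))(h(3)+h(4)+h(5))+(h(4)+h(5))h(3)$, and the two cases $h(1)+h(5)\ge h(2)+h(4)$ and $h(1)+h(5)<h(2)+h(4)$ give $\lambda^c(\Gamma)\ge -4\kappa^c(\Gamma)-2(\delta-\delta')^{-1}[\Gamma]$, using the $s_0$-band ends as separating black beads in the first case and the longness inequality in the second. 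Without these ingredients your proposed bound $\area(\Gamma)\le c_3[\Gamma]+c_2\mu^c(\Gamma)$ is not established.
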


\proof Let us consider the maximal $q$-bands $\cal C$ and ${\cal C}_0$ of $\Delta$ corresponding to the
distinguished 
letters $p_1$ and $s_0^{-1},$ resp., in the base of $\cal T$. By Property \ref{v} for the trapezium
$Tp({\cal C}, {\cal C}_0),$  the history $H$ of $\cal C$ has no subhistories of the form $(12)(2)(12)^{-1},$
It follows that $H$ has type $(2)(1)(2)$ (or a subword of $(2)(1)(2)$), in particular, $H$ has at most
two rules $(12)^{\pm 1}$ ),
and by \ref{v}, all $p_1$-cells corresponding to the rules of Step 2 are active from both sides 
in the subcomb $\Gamma$ with handle $\cal C$ as well. Below the usual notation $h, {\bf y,z,y', z'}$
will be used for $\Gamma.$

We consider the system of derivative bands ${\cal C}_1,\dots,{\cal
C}_s$ of $\Gamma.$ Let they have histories $H_1,\dots, H_s$, respectively. Then
$H_i\equiv H_i(1)H_i(2)H_i(3)$ ($i\le s$), where $H_i(1)$ and $H_i(3)$ are
of type $(2)$ and $H_i(2)$ is of type $(1),$ and some of the factors
can be empty. The lengths of the histories are denoted by $h_i$ and
$h_i(j)$, $j=1,2,3.$ As usual, $\Gamma_i$ is the derivative subcomb
with handle ${\cal C}_i$.

Assume that ${\cal C}_i$ is a derivative $p_1^{-1}$-band. Then $H_i$
has neither rules $(12)^{\pm 1}$ nor rules of Step $1$ 
by \ref{i}.
Hence the history $H_i$ is one-Step (of Step
$2$) and moreover all $p_1^{\pm 1}$-bands of $\Gamma_i$ are active from both sides
and all other $q$-bands of $\Gamma_i$ are passive by \ref{ii}. 
Therefore one can apply Lemma \ref{width1} (b) to the subcomb whose
handle is a left-most maximal $q$-band of $\Gamma_i$ which
contradicts the assumption of the lemma about long subcombs. Hence, by \ref{order},
all the derivative bands are $s_0$-bands. By the same argument and
by Lemma \ref{13}, derivative subcombs  cannot have one-Step
histories. (Indeed, since the $s_0$-band is passive by \ref{kk'},
 we have $|{\bf z}^{\Gamma_i}|-|{\bf y}^{\Gamma_i}|=|({\bf z'})^{\Gamma_i}|-|({\bf y'})^{\Gamma_i}|+2,$ and Lemma \ref{13}  would imply $\area(\Gamma_i)=\area'(\Gamma_i)+h^{\Gamma_i}\le
c_1[\Gamma_i]+c_1\kappa^c(\Gamma_i)$
contrary to the condition of the lemma.) Therefore each of
the derivative bands crosses a $(12)$-band, and so $s\le 2.$

We factorize the history $H$ as $H'H(3)H'',$ where $H(3)$ is of Step $1$
including the $(12)^{\pm 1}$-rules, and $H', H''$ are of Step 2. If the $H'$-part ($H''$-part)
of $\Gamma$ is non-empty and it is crossed by a derivative band of $\Gamma$, then this unique band has to cross
the $(12)^{\pm 1}$-band separating the $H(3)$-part from the $H'$-part (from $H''$-part). Using
this observation,
we further factorize $H'\equiv H(1)H(2)$ so that  every maximal $\theta$-band of
the $H(2)$-part $\Gamma(2)$ of
$\Gamma$ crosses a derivative
$s_0$-band ($H(2)$ can be empty) and the $H(1)$-part $\Gamma(1)$ has no derivative bands.
Similarly, we have $H''\equiv H(4)H(5).$

Thus $H\equiv H(1)\dots H(5).$
Let $h(1),\dots,h(5)$ be corresponding
lengths of the histories (some of them may be zero), $\Gamma(i)$ ($i=1,\dots,5$) be the corresponding parts of $\Gamma$ (some of them may be empty) with handles ${\cal C}(i)$, and
${\bf z}={\bf z}(5)\dots {\bf z}(1),$ where ${\bf z}(i)$ is the common part of $\bf z$ and $\partial\Gamma(i)$.
Note that if the maximal $(12)$-band $\cal T$ separating $\Gamma(3)$ and $\Gamma(2)$
is not crossed by a derivative band,
then by \ref{i}, $\cal T$ has no cells except for the intersection cell with the handle, and so
the boundaries of $\Gamma(3)$ and $\Gamma(2)$ have no common edges except for that in $\cal C.$
Similar note is true for the combs $\Gamma(3)$ and $\Gamma(4)$.

Let $\bf x$ be the extension of the path ${\bf z}(1)^{-1}$ along $z^{-1}$, such that $|{\bf x}|_{\theta}= |{\bf z}(1)|_{\theta}$ and
the last edge of $\bf x$ is the $s_0^{-1}$-edge of ${\bf z}(2)$. Then every maximal $a$-band starting
on ${\bf y'}(2){\bf y'}(1)$ must end on $\bf x$ by Lemma \ref{NoAnnul} because (a) the derivative $s_0$-band
is passive, (b) it has a common $(12)$-edges with $\cal C$
 by \ref{i}, and (c) every maximal $a$-band of $\Gamma(2)$ starting on the side ${\bf y'}(2){\bf y'}(1)$ 
 of ${\cal C}(1){\cal C}(2)$ cannot end on ${\bf y'}(2){\bf y'}(1)$ by \ref{iv}. Since all cells of ${\cal C}(1){\cal C}(2)$ are active, the
 path $\bf x$ has at least $h(1)+h(2)$ $a$-edges and only $h(1)$ $\theta$-edges. 
It follows therefore from Lemma \ref{ochev} (a)  that
 \begin{equation}\label{p1(4)}
 |{\bf x}|- h(1) \ge 1+\delta h(2)+ \delta' h(1) 
 \end{equation}
 
\unitlength 1mm 
\linethickness{0.4pt}
\ifx\plotpoint\undefined\newsavebox{\plotpoint}\fi 
\begin{picture}(253.5,102.75)(0,0)
\put(103.25,102.75){\line(0,-1){94}}
\put(114.25,102.25){\line(0,-1){92.75}}
\put(114,64){\line(-1,0){50.75}}
\put(114.25,45.25){\line(-1,0){57.25}}
\multiput(86.25,83.5)(.0337022133,-.0382293763){497}{\line(0,-1){.0382293763}}
\put(79.5,83.25){\line(1,0){7.25}}
\multiput(79.5,83.25)(.0337186898,-.0370905588){519}{\line(0,-1){.0370905588}}
\put(96.75,64.5){\line(0,-1){4}}
\multiput(103,45)(-.0428790199,-.0336906585){653}{\line(-1,0){.0428790199}}
\put(74.75,23.25){\line(-1,0){5.75}}
\multiput(69,23.25)(.0417304747,.0336906585){653}{\line(1,0){.0417304747}}
\put(96.25,45.25){\line(0,1){3.75}}
\put(114,48){\line(-1,0){57}}
\put(57.25,48){\line(0,-1){2.5}}
\put(75,23.25){\line(1,0){39.25}}
\multiput(85.5,23.25)(-.066105769,-.033653846){208}{\line(-1,0){.066105769}}
\put(71.75,16.25){\line(0,-1){3.25}}
\put(71.75,13){\line(1,0){19.25}}
\multiput(91,13)(.104910714,-.033482143){112}{\line(1,0){.104910714}}
\put(102.75,9.25){\line(1,0){11.5}}
\multiput(69,23.5)(-.113402062,.033505155){97}{\line(-1,0){.113402062}}
\multiput(58,26.75)(-.0337301587,.0426587302){252}{\line(0,1){.0426587302}}
\multiput(49.75,37.5)(.033632287,.035874439){223}{\line(0,1){.035874439}}
\multiput(64,48.25)(-.091463415,.033536585){164}{\line(-1,0){.091463415}}
\put(49,53.75){\line(0,1){10}}
\put(49,63.75){\line(1,0){14.75}}
\multiput(63.75,64.25)(-.0614478114,.0336700337){297}{\line(-1,0){.0614478114}}
\put(45.5,74.25){\line(0,1){5.5}}
\put(45.5,79.75){\line(1,0){37.5}}
\put(85.75,83.25){\line(1,0){28.75}}
\put(88.5,83.5){\line(-2,1){18}}
\put(70.5,92.5){\line(0,1){9.5}}
\put(70.5,102){\line(1,0){43.75}}
\put(96,14.5){$\Gamma(1)$}
\put(62.75,34.25){$\Gamma(2)$}
\put(66.5,56){$\Gamma(3)$}
\put(63.25,73.75){$\Gamma(4)$}
\put(78.75,97){$\Gamma(5)$}
\put(253.25,85.5){\rule{.25\unitlength}{.25\unitlength}}
\put(78.75,11){${\bf z}(1)$}
\put(50.25,30.75){${\bf z}(2)$}
\put(45.5,61.75){${\bf z}(3)$}
\put(47.5,82.75){${\bf z}(4)$}
\put(67.75,93.75){${\bf z}(5)$}
\put(104.25,19.75){${\bf y'}(1)$}
\put(116,16.5){${\bf y}(1)$}
\put(107.75,12.25){${\cal C}(1)$}
\put(101.5,32.5){${\bf y'}(2)$}
\put(106.75,40.25){${\cal C}(2)$}
\put(115.75,34.5){${\bf y}(2)$}
\put(101.75,55){${\bf y'}(3)$}
\put(116.25,55.5){${\bf y}(3)$}
\put(107,51){${\cal C}(3)$}
\put(101,77.75){${\bf y'}(4)$}
\put(107.75,71.5){${\cal C}(4)$}
\put(116,75){${\bf y}(4)$}
\put(100.75,97.5){${\bf y'}(5)$}
\put(116.25,90.5){${\bf y}(5)$}
\put(105.5,91){${\cal C}(5)$}
\put(35,24){\vector(0,-1){.07}}\put(35,83){\vector(0,1){.07}}\put(35,83){\line(0,-1){59}}
\put(36.75,52){$E$}
\put(20.25,8.5){\vector(0,-1){.07}}\put(20.5,102.5){\vector(0,1){.07}}\multiput(20.5,102.5)(-.03125,-11.75){8}{\line(0,-1){11.75}}
\put(22.75,55){$\Gamma$}
\put(76.5,21.25){${\bf x}$}
\put(92.25,24.75){${\bf p}_{12}$}
\put(93,85.5){${\bf p}_{45}$}
\put(63.5,47.75){\line(0,-1){2.25}}
\put(69.25,47.75){\line(0,-1){2.5}}
\put(74.75,47.75){\line(0,-1){2.25}}
\put(79.75,47.75){\line(0,-1){2.5}}
\put(85,47.75){\line(0,-1){2.25}}
\put(90.5,47.75){\line(0,-1){2.25}}
\put(52.25,46.5){$\cal T$}
\end{picture}

 (Here $1$ is added for the $q$-edge of the handle $\cal C.$ ) Since
 the path ${\bf z}(2)$ has $h(2)$ $\theta$-edges which do not belong to $\bf x$, we get from \ref{p1(4)}:
 \begin{equation}\label{p1(3)}
 |{\bf z}(2){\bf z}(1)| -h(1)-h(2) \ge 1+\delta h(2)+ \delta' h(1)\; and\;  
  |{\bf z}(5){\bf z}(4)| -h(4)-h(5) \ge 1+\delta h(4)+ \delta' h(5)
 \end{equation}

 The band ${\cal C}(3)$ is passive by \ref{kk'}; therefore ${\bf y}(3)=h(3)$ and so $|{\bf y}|=|{\bf y'}|=h+\delta' (h(1)+h(2)+h(4)+h(5)).$
 This together with equality $|{\bf z}(i)|\ge |{\bf z}(i)|_{\theta} = h(i),$
 Lemma \ref{ochev} (b), and (\ref{p1(3)}) give rise to inequality

 $$|{\bf z}|-|{\bf y}|\ge |{\bf z}(1)|+\dots+|{\bf z}(5)|- |{\bf y}(1)|-\dots-|{\bf y}(5)| - 4(\delta-\delta')\ge$$
 \begin{equation}\label{p1(5)}
 2+(\delta - \delta')(h(2)+h(4)-4)> \max(1, (\delta-\delta')(h(2)+h(4)))
 \end{equation}
In particular, $\Gamma$ is a long subcomb. 

Let ${\bf p}_{1,2}$ be the common segment of $\partial\Gamma(2)$ and $\partial\Gamma(1);$ it is a
top/bottom path of a $\theta$-band, and it consists of
a $p_1$-edge and of $a$-edges. The path ${\bf p}_{4,5}$ is defined similarly. Since every
maximal $a$-band of $\Gamma(2)$ starting on ${\bf p}_{1,2}$ must end on ${\bf y}'(2)=({\bf y'})^{\Gamma(2)},$
we have

\begin{equation}\label{p1(1)} |{\bf p}_{1,2}|_a\le|{\bf y'}(2)|_a=h(2) \;\;\; and\;\; also \;\;\;|{\bf p}_{4,5}|_a\le|{\bf y'}(4)|_a=h(4)
\end{equation}

Now we set $E=\Gamma(2)\cup\Gamma(3)\cup\Gamma(4)$. Then ${\bf z}^{E}={\bf p}_{1,2}{\bf z}(2){\bf z}(3){\bf z}(4){\bf p}_{45}$ and
 \begin{equation}\label{p1(6)}
 |{\bf z}^{E}|-|{\bf y}^{E}|\le
 |{\bf z}|-|{\bf y}|+ \delta(|{\bf p}_{1,2}|_a +|{\bf p}_{4,5}|_a) = |{\bf z}|-|{\bf y}|+ \delta(|h(2)+h(4)|)
 \end{equation}
  by (\ref{p1(1)}), since the maximal $\theta$- and $a$-bands
 starting on ${\bf y'}(1)$ and ${\bf y'}(5)$  end on ${\bf z}(1)$ and ${\bf z}(5)$, respectively.

 The comb $E$ has $s\le 2$ derivative subcombs $E_j=\Gamma_j$ ($j\le s\le 2$) whose handles are  ${\cal C}_j$-s. By Lemma \ref{styk},
 \begin{equation}\label{13fev}
 \sum_j \area(E_j)\le c_2 ((\sum_j |({\bf z'})^{E_j}|-h^{E_j})+h(2)+h(4)+1)(h(2)+h(3)+h(4))
 +\sum_j\kappa^c(E_j))
 \end{equation}
 Since $\sum_j (|({\bf z'})^{E_j}|-|h^{E_j}|) \le
  |{\bf z'}| - |{\bf z}(1)|-|{\bf z}(5)| - h(2)-h(3)-h(4)$ and $h(2)+h(3)+h(4)\le h,$ the  Inequalities (\ref{p1(5)}) and (\ref{13fev}) imply

  $$\sum_j \area(E_j)\le $$ \begin{equation}\label{15234}
  c_2h(|{\bf z'}| - |{\bf z}(1)|-|{\bf z}(5)| - h(2)-h(3)-h(4)+
  (\delta-\delta')^{-1}(|{\bf z}|-|{\bf y}|)+1)
  +c_2\sum\kappa^c(E_j)
  \end{equation}

 Observe that any simple band of $\Gamma(3)$ has no cells except for one  cell $\pi$ of the handle ${\cal C}(3)$
 because there are no $(\theta,a)$-cells from the left of $\pi$ by \ref{i}.
 Hence the comb $E$ consists of the cells of $E_j$-s, the cells of the handle  of $E$ and the cells of
 maximal $a$-bands connecting this handle with ${\bf p}_{1,2}$ and ${\bf p}_{4,5}$ and intersecting at most $h(2)$ and $h(4)$
 $\theta$-bands, respectively, by Lemma \ref{NoAnnul}.  Therefore we obtain  from (\ref{15234}), (\ref{p1(1)}),  and  Lemma \ref{deriv} (a) that
 $$\area(E) \le c_2 h(|{\bf z'}|-|{\bf z}(1)|-|{\bf z}(5)|-h(2)-h(3)-h(4)+$$
 $$(\delta-\delta')^{-1}(|{\bf z}|-|{\bf y}|)+1)+c_2\kappa^c(E)+h+
 h(2)^2+h(4)^2 \le $$
 \begin{equation}\label{234}
  c_2h(|{\bf z}|
 -|{\bf z}(1)|-|{\bf z}(5)|-h(2)-h(3)-h(4)+(\delta-\delta')^{-1}(|{\bf z}|-|{\bf y}|)) +c_2\kappa^c(E)+h(h(2)+h(4))
 \end{equation}
because $|{\bf z}|=|{\bf z'}|+2.$ Since
$|{\bf y}(2)|+|{\bf y}(3)|+|{\bf y}(4)|=h(2)+h(3)+h(4)+\delta'(h(2)+h(4))\le h(2)+h(3)+h(4)+\delta'(\delta-\delta')^{-1}(|{\bf z}|-|{\bf y}|)$ by (\ref{p1(5)}), Inequality (\ref{234})
yields
   $$\area(E)\le c_2h(|{\bf z}| -|{\bf z}(1)|-|{\bf z}(5)|-{\bf y}(2)-{\bf y}(3)-{\bf y}(4)+$$
  \begin{equation} \label{areaD}
  + (\delta'+1)(\delta-\delta')^{-1}(|{\bf z}|-|{\bf y}|)))+c_2\kappa^c(E)+h(h(2)+h(4))
  \end{equation}

Since the combs $\Gamma(1)$ and $\Gamma(5)$ have no derivative
bands, applying Lemma \ref{width1} (b), we obtain the
upper estimates
\begin{equation}\label{ar1}
\area(\Gamma(1))\le
(\delta')^{-1}h(|{\bf z}(1)|+|{\bf p}_{1,2}|- |{\bf y}(1)|)\le
c_2h(|{\bf z}(1)|+1+\delta h(2)- |{\bf y}(1)|)
\end{equation}
as $|{\bf p}_{1,2}|\le 1+\delta h(2)$ by (\ref{p1(1)}) and Lemma \ref{ochev}; and similarly,
\begin{equation}\label{ar5}
\area(\Gamma(5))\le c_2h(|{\bf z}(5)|+1+\delta h(4)-|{\bf y}(5)|)
\end{equation}
Summing Inequalities  (\ref{areaD})-(\ref{ar5}) and using
(\ref{p1(5)}), we get $$\area(\Gamma)\le c_2h(|{\bf z}|-|{\bf y}|)(2+(\delta'+1+\delta)(\delta-\delta')^{-1})
+c_2\kappa^c(E)\le$$
\begin{equation}\label{p1(8)}
2\delta^{-1}c_2[\Gamma]
+c_2\kappa^c(E)
\end{equation}

The handle $\cal C$ has (at most) two $(12)$-cells  (see the first paragraph in
the proof of the lemma), and so it has at most $3$ maximal subbands without
$(12)$-cells. It follows from the definition of the $\lambda$-mixture that
 \begin{equation}\label{vot}
 \lambda({\bf y})\le
 (h(1)+h(2))(h(3)+h(4)+h(5))+(h(4)+h(5))h(3)
 \end{equation}
If $h(1)+ h(5) \ge h(2)+h(4)$, then $2h(1)+2h(5) \ge (h(1)+h(2))+(h(4)+h(5))$, and so
the right-hand side of (\ref{vot}) does not
exceed $2h(1)(h-h(1))+2h(5)(h-h(5))$ which, in turn, does not exceed
$4\kappa^c(\Gamma)$ since the ends of $s_0$-bands separate the
$\theta$-edges of $z$ in parts with $h(1), h(5),$ and $h-h(1)-h(5)$
$\theta$-edges. If $h(1)+h(5)<h(2)+h(4)$, then the right-hand side
of (\ref{vot}) does not exceed $2(h(2)+h(4))h \le 2(\delta-\delta')^{-1}h(|{\bf z}|-|{\bf y}|)$
by (\ref{p1(5)}). Thus, in any case $ \lambda^c(\Gamma)\ge-\lambda({\bf y})\ge
-4\kappa^c(\Gamma)-2(\delta-\delta')^{-1}[\Gamma].$  Therefore
by this inequality and (\ref{p1(8)}),

$$ \area(\Gamma)\le 2\delta^{-1}c_2[\Gamma] +c_2\kappa^c(\Gamma)\le
(c_3-2(\delta-\delta')^{-1}c_2)[\Gamma]
+c_2(c_0-4)\kappa^c(\Gamma)=$$  $$c_3[\Gamma]
+c_2(c_0\kappa^c(\Gamma) -
4\kappa^c(\Gamma)-2(\delta-\delta')^{-1}[\Gamma]) \le c_3[\Gamma]
+c_2(c_0\kappa^c(\Gamma)+ \lambda^c(\Gamma))= c_3[\Gamma]
+c_2\mu^c(\Gamma)$$ since $c_0\ge 5$ and $c_3>5\delta^{-1}c_2$. The
lemma is proved. \endproof

 \begin{lemma} \label{bez1} Let $\Gamma$ be a regular comb of base width
$b\le 15N$. Assume that it has no maximal $t^{\pm 1}$-, $(t')^{\pm
1}$-bands except for the handle $\cal C,$ and $\cal C$  is either (a) a
$(t')^{\pm 1}$-band or (b) a $t^{\pm 1}$-band.
Also assume that there are no special $\theta$-edges in any derivative subcomb.
 Let $\Gamma$ in case (a),
have  $(23)^{\pm 1}$-bands but have no $(12)^{\pm 1}$-bands, and in case (b), it has
$(12)^{\pm 1}$-bands but has no $(23)^{\pm 1}$-bands.  Then $\area (\Gamma)\le
(\delta')^{-2} [\Gamma]+ c_2\mu^c(\Gamma).$
 \end{lemma}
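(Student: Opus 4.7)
My plan is to parallel the argument of Lemma \ref{2etapa}, but to exploit the two new hypotheses to force every derivative subcomb to have a \emph{one-Step} history, which collapses the three-Step ``sandwich'' estimate of Lemma \ref{styk} into a much cleaner one-Step estimate and thus replaces the coefficient $c_3$ of $[\Gamma]$ by the much smaller $(\delta')^{-2}$. I work in case (b); case (a) is symmetric. By Property \ref{order} and the absence of $t^{\pm 1}$- and $(t')^{\pm 1}$-bands other than $\cal C$, every derivative ${\cal C}_i$ is a $k^{\pm 1}$-band. Since $(12)^{\pm 1}$-edges on a $k^{\pm 1}$-band are special, the hypothesis that no $\theta$-edge of a derivative subcomb is special implies $H_i$ contains no $(12)^{\pm 1}$-rules; combined with the case (b) assumption that $\Gamma$ has no $(23)^{\pm 1}$-rules, every $H_i$ is therefore one-Step.

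Next I would establish, by induction on the number of maximal $q$-bands of $\Gamma_i$, the bound $\area(\Gamma_i)\le c_1([\Gamma_i]+\kappa^c(\Gamma_i))$. The base cases (when $\cal C_i$ is the only $q$-band, or when all cells of $\cal C_i$ are passive from the left) are handled by Lemma \ref{width1}(b) and by Lemma \ref{13}. The inductive step peels off a long quasisubcomb via Lemma \ref{oneage} (applicable because $\Gamma_i$, as a subcomb of the regular comb $\Gamma$, is itself regular) and recurses on the complement, absorbing the mixture terms by Lemma \ref{mu}(c), exactly as in the inductive argument at the end of the proof of Lemma \ref{t}(2).

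With those subcomb bounds in hand, I follow the two-case split used inside Lemma \ref{t}(1). If every $h_i$ satisfies $h_i\le 2\sum_{j\ne i}h_j$, then $h_-\ge\tfrac13\sum h_i$ and the summed subcomb bound is at most $c_1[\Gamma]+c_1\kappa^c(\Gamma)$ via Lemma \ref{deriv}(a) and the standard comparison of $\sum[\Gamma_i]$ with $[\Gamma]$ (which uses $|{\bf y}|=h$ because $\cal C$ is a passive $t^{\pm 1}$-band). Otherwise some $h_{i_0}>2\sum_{j\ne i_0}h_j$, and by Lemma \ref{a-bands} fewer than $h_{i_0}/2$ maximal $a$-bands from ${\cal C}_{i_0}$ reach other derivatives, so Lemma \ref{h0}(b) directly gives $\sum\area(\Gamma_i)\le(\delta')^{-2}[\Gamma]$; this is where the coefficient $(\delta')^{-2}$ in the final bound is born. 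The total area of simple $\theta$-bands is controlled by Lemma \ref{simple}(b) (with \ref{iv} excluding $a$-bands that start and end on the same ${\cal C}_i$) and is at most $h(h_-+\alpha+1)\le 2\kappa^c(\Gamma)+(\delta')^{-1}[\Gamma]+h$ after using Lemmas \ref{lgamma} and \ref{simple}(a).

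Adding the three contributions, in either sub-case I obtain $\area(\Gamma)\le(\delta')^{-2}[\Gamma]+O(c_1)\,\kappa^c(\Gamma)$, and the absorption of the $\kappa^c$-term into $c_2\mu^c(\Gamma)=c_2(c_0\kappa^c(\Gamma)+\lambda^c(\Gamma))$ uses Lemma \ref{mu21} (whose hypotheses hold in case (b) for $\Gamma$) together with the parameter hierarchy $c_0\gg c_1$ and $c_2\gg(\delta')^{-1}$: the slightly negative $\lambda^c(\Gamma)$ permitted by Lemma \ref{mu21} is dominated by the slack between $(\delta')^{-2}$ and the larger constants. The main obstacle is the inductive bound on derivative subcombs in the second paragraph, since Lemma \ref{t}(2) does not apply verbatim to $k^{\pm 1}$-handles and Lemma \ref{oneage} only produces a long subcomb rather than a bound on the whole derivative; thus the estimate $\area(\Gamma_i)\le c_1([\Gamma_i]+\kappa^c(\Gamma_i))$ has to be assembled by hand from Lemmas \ref{oneage}, \ref{13}, and \ref{width1}(b), with the regularity of each $\Gamma_i$ (inherited from $\Gamma$) as the crucial structural input that keeps Lemma \ref{oneage} available at every stage of the recursion.
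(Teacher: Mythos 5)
Your key structural claim --- that the hypothesis ``no special $\theta$-edges in any derivative subcomb'' forces every derivative history $H_i$ to be one-Step --- is unjustified, and it removes exactly the difficulty the lemma exists to handle. It is not true that every $(12)$-edge of a $k^{\pm 1}$-band is special: special $(12)$-letters are only those occurring in $((12),t)$-relations, so (as stated explicitly inside the proof of Lemma \ref{bez12}) a $k^{-1}$-band can carry special $(12)$-edges only from the right, i.e.\ on the side ${\bf y}^{\Gamma_i}$ facing the handle $\cal C$, and $k'$-type bands carry none at all. Hence under the hypothesis the derivative bands may perfectly well contain $(12)$-cells, and their histories need not be one-Step. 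The hypothesis is precisely what is needed to define the \emph{short derivatives} of $\cal C$ and to invoke Lemmas \ref{a-bands'}, \ref{l'gamma} and \ref{comb'}; the paper's argument then splits into three cases: the derivative lengths are balanced (handled by $hh_-\le 2\kappa^c(\Gamma)$), one derivative dominates and has a dominant short derivative (then by \ref{kk'} and \ref{iv} many $a$-bands escape to ${\bf z}$, producing the $(\delta')^{-2}[\Gamma]$ term), or one derivative dominates but its short derivatives are balanced --- and in this last case the area is absorbed by the $\lambda$-mixture via $hh'_-\le 6\lambda^c(\Gamma)$. Your scheme has no mechanism for that third case, because under your one-Step assumption it cannot occur; this is not a presentational difference but a missing piece of the proof.

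The concluding absorption also fails. Lemma \ref{mu21}(b) requires a $(t')^{\pm1}$-handle, so it does not apply in case (b) of the present lemma; and even where it applies you cannot afford it, since compensating a negative $\lambda^c$ costs $8(\delta')^{-1}c_2[\Gamma]$, which is far larger than the permitted coefficient $(\delta')^{-2}$ on $[\Gamma]$ (recall $(\delta')^{-1}\ll c_0\ll c_1\ll c_2$). In fact no compensation is needed here: the $(12)$-edges on ${\bf y}^{\Gamma}$ are special, hence neither white nor black beads, so $\lambda({\bf y})=0$ and $\lambda^c(\Gamma)=\lambda({\bf z}^{\Gamma})\ge 0$; the actual proof uses $\kappa^c$ and $\lambda^c$ with a \emph{positive} sign as resources, arriving at a bound of the form $0.5(\delta')^{-2}[\Gamma]+O(N)\kappa^c(\Gamma)+O(N)\lambda^c(\Gamma)$ plus the simple-band estimate of Lemma \ref{comb'}, and concludes from $c_0\ge 1$, $c_2>2800N$. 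Finally, your intermediate bound $\area(\Gamma_i)\le c_1([\Gamma_i]+\kappa^c(\Gamma_i))$ is both unsupported (Lemma \ref{13} needs a passive handle of type $(1)$ or $(3)$, Lemma \ref{oneage} needs base width in $[2N,15N]$, neither of which the $\Gamma_i$ need satisfy) and too coarse: summing it yields a coefficient $c_1\gg(\delta')^{-2}$ on $[\Gamma]$ that cannot be pushed back under the stated bound, whereas the paper bounds the $\Gamma_i$ directly by Lemma \ref{comb} together with Lemma \ref{simple}(a), keeping the $[\Gamma]$-coefficient at $2(\delta')^{-1}$ and charging the $60Nh_i^2$ terms to the mixtures.
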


 \proof We will consider the case (b) only. Observe that the $(12)$-cells of the
 $t^{\pm 1}$-band $\cal C$
 are special, and so $\lambda({\bf y})=0.$ Let ${\cal C}_1,\dots,{\cal
C}_s$ be the system of derivative bands of $\Gamma$. It follows from \ref{order} and
the assumptions of the lemma that all of them must be $k^{-
1}$-bands.

First assume that a derivative band ${\cal C}_i$ is of length $h_i=1.$ 
Then 
$\area(\Gamma_i)
\le 4(\delta')^{-}[\Gamma_i]$ by Lemma \ref{nizko}. Since the
statement of the lemma can by induction be assumed true for the comb
$\Gamma\backslash\Gamma_i$, this implies that the statement is true
for $\Gamma$ since $\mu^c(\Gamma)\ge \mu^c
(\Gamma\backslash\Gamma_i)\ge 0$ and $[\Gamma]=
[\Gamma_i]+[\Gamma\backslash\Gamma_i].$ Thus we may assume further
that $h_i>1$ for every $i$.

{\bf Case 1.} There is no derivative band whose length $h_{i_0}$ is at
least $0.8\sum_{i=1}^s h_i$. Then $\sum h_i^2\le 4hh_-\le
8\kappa^c(\Gamma_i)$ by Lemma \ref{lgamma}. By Lemmas \ref{comb}
and \ref{simple} (a), $\area(\Gamma_i)\le
60Nh_i^2+2(\delta')^{-1}h_i(|{\bf z}_i|-|h_i|)$, and therefore by Lemma
\ref{deriv}, $\sum \area(\Gamma_i)\le
480N\kappa^c(\Gamma)+2(\delta')^{-1}[\Gamma].$

{\bf Case 2.} There is a derivative band ${\cal C}_{i_0}$ with
$h_{i_0}\ge 0.8\sum_{i=1}^s h_i$, and there is a short derivative
${\cal C}'_j$ in it of length $h'_j>0.6h' \ge 0.2\sum h_i.$  (Here
we assume that the total some of lengths of short derivatives $h'$
is at least $\sum h_i/3$ because  $h_i>1$ for every $i$.) Then, by \ref{kk'} and \ref{iv}, at
least $h'_j/3$ $a$-bands starting on  ${\cal C'}_j$ must end on $z$. Therefore
by Lemmas \ref{comb} and \ref{simple} (a),, $$\sum \area(\Gamma_i)\le
\sum 60Nh_i^2+ 2(\delta')^{-1}[\Gamma] $$ $$ \le
60Nh|{\bf z}|_a(0.2)^{-1}(1/3)^{-1}+ 2(\delta')^{-1}[\Gamma] \le
900N(\delta')^{-1}[\Gamma]+ 2(\delta')^{-1}[\Gamma] \le  0.5
(\delta')^{-2}[\Gamma]$$ since $(\delta')^{-1}\ge 2000N.$

{\bf Case 3.} There is a derivative ${\cal C}_{i_0}$ with
$h_{i_0}\ge 0.8\sum_{i=1}^s h_i$, and there are no short derivatives
${\cal C'}_j$ of length $h'_j>0.6h'$. Then $h'_-\ge h'-0.6h'\ge
0.4\sum h_i/3 = \frac{2}{15}\sum h_i$. Hence by Lemmas \ref{comb} and \ref{l'gamma} , $$\sum
\area(\Gamma_i)\le 60Nh_i^2 +2(\delta')^{-1}[\Gamma]\le \frac{15}{2}60 N
hh'_-+2(\delta')^{-1}[\Gamma] \le
2700N\lambda^c(\Gamma)+2(\delta')^{-1}[\Gamma]$$

Thus, in any case 
\begin{equation} \label{apr1}
\sum \area(\Gamma_i)\le
480N\kappa^c(\Gamma)+2700N\lambda^c(\Gamma)
+0.5(\delta')^{-2}[\Gamma]
\end{equation}
By Lemmas \ref{comb'}, \ref{lgamma},
and \ref{l'gamma},
 the number of cells in all simple bands of $\Gamma$ does not
exceed 
\begin{equation}\label{apr2}
h(h_-+h'_-+(\delta')^{-1}(|{\bf z}|-h-1)\le
2\kappa^c(\Gamma)+6\lambda^c(\Gamma)+ (\delta')^{-1}[\Gamma]
\end{equation}
The  two upper bounds (\ref{apr1}) and (\ref{apr2}) together prove the lemma since $c_0\ge 1$
and $c_2>2800N.$ \endproof

\begin{lemma} \label{bez12} Let $\Gamma$ be a regular comb of base width $b$, where $3N\le b\le15N$.
Assume that its history either (a) contains $(12)$-rules but does
not contain $(23)$-rules or (b) vice versa. Then it admits a long
quasicomb $\Delta$ such that $\area(\Delta)\le c_3[\Delta]
+c_2\mu^c(\Delta).$
\end{lemma}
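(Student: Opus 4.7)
The plan is to reduce Lemma \ref{bez12} to earlier lemmas via a case analysis driven by the structure of the handle $\mathcal{C}$ and the placement of $t^{\pm 1}$- and $(t')^{\pm 1}$-bands inside $\Gamma$. The hypothesis $3N\le b\le 15N$ is exploited on both sides: the upper bound $15N$ permits direct application of Lemma \ref{comb}'s quadratic estimate, while the lower bound $b\ge 3N > 2|B|$ forces enough structure in the base to locate distinguished bands (specifically $t$- or $t'$-letters, or the base pattern $\dots p_1p_1^{-1}s_0^{-1}\dots$ required by Lemma \ref{p1}). I focus on case (b), with $(12)^{\pm 1}$-rules present but no $(23)^{\pm 1}$-rules; case (a) is symmetric (and can also be obtained from case (b) via $t_i$-reflection as in Remark \ref{diasym}).

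First I would reduce to the situation where $\Gamma$ admits no long one-Step quasicomb $\Delta'$ satisfying $\area(\Delta')\le c_1[\Delta']+c_1\kappa^c(\Delta')$, since otherwise setting $\Delta=\Delta'$ works directly: a one-Step $\Delta'$ contains no special $\theta$-edges, so $\lambda^c(\Delta')\ge 0$ by Lemma \ref{positive}(c), whence $c_1\kappa^c(\Delta')\le (c_1/c_0)\mu^c(\Delta')\le c_2\mu^c(\Delta')$, and $c_1\le c_3$ closes this sub-case. This reduction absorbs every favorable conclusion of Lemmas \ref{t}(2), \ref{oneage}, and \ref{13}.

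Next I split into three subcases based on the distribution of $t$- and $t'$-bands. If the handle $\mathcal{C}$ itself is a $(t')^{\pm 1}$-band, Lemma \ref{2etapa} gives the conclusion immediately with $\Delta=\Gamma$. If $\mathcal{C}$ is not of that type but $\Gamma$ contains some $t^{\pm 1}$- or $(t')^{\pm 1}$-band, I pick a maximal such band $\mathcal{C}_0$ for which the subcomb $\Gamma_0$ it heads has no further $t$- or $t'$-bands except $\mathcal{C}_0$; since subcombs of regular combs are regular, $\Gamma_0$ remains regular. If no derivative subcomb of $\Gamma_0$ contains a special $\theta$-edge, Lemma \ref{bez1} yields $\area(\Gamma_0)\le(\delta')^{-2}[\Gamma_0]+c_2\mu^c(\Gamma_0)\le c_3[\Gamma_0]+c_2\mu^c(\Gamma_0)$, so $\Delta=\Gamma_0$ works. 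If a derivative subcomb of $\Gamma_0$ does contain a special $\theta$-edge, that edge must be a $(12)$-edge (absence of $(23)$-rules), and Property \ref{i} extracts from the adjacency constraints the base substring $\dots p_1p_1^{-1}s_0^{-1}\dots$ inside a suitable subcomb without $t$- or $t'$-cells; Lemma \ref{p1} then produces $\Delta$. Finally, if $\Gamma$ contains no $t^{\pm 1}$- or $(t')^{\pm 1}$-bands at all yet still has $(12)^{\pm 1}$-rules, direct inspection of a $(12)^{\pm 1}$-band via Property \ref{i} identifies the same base pattern inside $\Gamma$ itself, and Lemma \ref{p1} closes the argument.

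The main obstacle will be the bookkeeping in the middle subcase: verifying that the extracted $\Delta$ is long (so the surgery constitutes a genuine reduction) and that the passage from a $\kappa^c$-bound to a $\mu^c$-bound is controlled even when $\lambda^c$ may be negative on a multi-Step $\Gamma_0$. The latter is exactly where Lemma \ref{mu21}'s lower bound $\lambda^c(\Gamma_0)\ge -8(\delta')^{-1}[\Gamma_0]-36\kappa^c(\Gamma_0)$ must be leveraged so that the dominant $c_3[\Gamma_0]$ summand absorbs the $\lambda^c$ deficit with room to spare; this is precisely the arithmetic that the constant hierarchy $c_0\ll c_1\ll c_2\ll c_3$ was engineered to accommodate.
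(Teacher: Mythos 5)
Your roadmap is recognizably the paper's (dispose of one-Step subcombs, then split on the presence of $t^{\pm 1}$- and $(t')^{\pm 1}$-bands, ending in Lemma \ref{p1} via the base pattern $p_1p_1^{-1}s_0^{-1}$), but there is a genuine gap in your middle subcase. You pick a maximal $t^{\pm 1}$- or $(t')^{\pm 1}$-band $\mathcal{C}_0$ whose subcomb $\Gamma_0$ contains no further such bands and, when no derivative subcomb of $\Gamma_0$ has a special $\theta$-edge, you invoke Lemma \ref{bez1}. But Lemma \ref{bez1} is not symmetric in the handle type: in the situation with $(12)$-rules and no $(23)$-rules its case (b) requires the handle to be a $t^{\pm 1}$-band, while case (a) requires $(23)$-rules; its proof hinges on the $(12)$-cells of a $t^{\pm 1}$-handle being \emph{special}, which gives $\lambda({\bf y})=0$. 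If $\mathcal{C}_0$ is a $(t')^{\pm 1}$-band crossing a $(12)$-band (an internal one, not the handle of $\Gamma$ — your subcase (a) only covers the handle), Lemma \ref{bez1} simply does not apply, and the $\lambda^c$-deficit is real. The correct tool there is Lemma \ref{2etapa} applied to the subcomb headed by that $(t')$-band (its hypotheses — $(t')$-handle, $(12)$-rules, no $(23)$-rules, no good one-Step long subcombs — are exactly met after your first reduction); if the $(t')$-band crosses no $(12)$-band its subcomb is one-Step and Lemma \ref{t}(2) handles it. This is how the paper proceeds: eliminate all $(t')$-bands first, and only then take a \emph{left-most} maximal $t$-band, where the left-most choice is what makes the derivative bands $k^{-1}$-bands and lets Property \ref{i} and Lemma \ref{p1} take over. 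Your planned fallback of patching negative $\lambda^c$ with Lemma \ref{mu21} does not rescue this, since \ref{mu21} also has handle/rule-type hypotheses that $\Gamma_0$ need not satisfy.

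A secondary, smaller gap: in the final subcase (no $t$- or $t'$-bands at all) you appeal to ``direct inspection of a $(12)$-band'' to find the pattern $p_1p_1^{-1}s_0^{-1}$, but a short $(12)$-band need not contain it (its base could be aligned with no $q^{\pm 1}q^{\mp 1}$ subword). One first needs that $\Gamma$ has no one-Step subcombs of base width $>2N$ (this is where Lemma \ref{oneage}, absorbed by your first reduction, is actually cashed in) so that $b\ge 3N$ forces some $(12)$-band to have base of length $\ge N$; only then does Property \ref{i}, together with $N>2\|B\|$ and the absence of $t,t'$-letters, force the subword $p_1p_1^{-1}s_0^{-1}$ needed for Lemma \ref{p1}. (Also note your case labels are swapped relative to the lemma: the case with $(12)$-rules and no $(23)$-rules is case (a).)
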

 \proof Here we consider case (a) only. If $\Gamma$ has a $(t')^{\pm 1}$-band, the statement follows from
 Lemma \ref{2etapa} provided this band crosses a maximal $(12)$-band, and it follows from 
 Lemma \ref{t} (2) otherwise since $\lambda$ takes non-negative values on one-Step combs
 and every subcomb with (passive) $(t')^{\pm 1}$-handle is long. We may therefore assume that $\Gamma$ has no such bands. 
 
 Assume $\Gamma$ has a left-most
 maximal $t^{\pm 1}$-band $\cal B$. If $\cal B$ does not have $(12)$-cells, then again the statement follows from Lemma
 \ref{t} (2). So we assume further that $\cal B$ has $(12)$-cells.

 Let $\Delta$ be the subcomb with the handle $\cal B$. If there are no special $\theta$-edges  from the left of each derivative band of $\Delta,$ then the statement follows from Lemma \ref{bez1} since $c_3>(\delta')^{-2}.$
 Therefore we may assume that $\Delta$ has
 a maximal $(12)$-band $\cal T$ crossing a derivative band ${\cal B}_i$ of $\cal B$ and having a special $\theta$-edge
 from the left of ${\cal B}_i$. 
 
 Note that ${\cal B}_i$ is a $k^{-1}$-band by the choice of $\cal B$ and by \ref{order}. Since only $t$-bands and $k^{\pm 1}$-bands can have special $(12)$-edges (and $k^{-1}$-band can have them from the right only),
  it follows that   $\cal T$ has a $k$-cell from the left of ${\cal B}_i$. 
 Since $\Gamma$ has no $(t')$-cells,
 the base of $\cal T$ is not aligned between the letter $k$ and the next letter $k^{-1}$
 by \ref{order}. Moreover  it
has  a subword $p_1p_1^{-1}s_0^{-1}$ between the $k$- and $k^{-1}$-letters by \ref{i}.
  So the statement of the lemma follows from Lemma \ref{p1}. (Again, we take into account that $\lambda$ is non-negative on one-Step combs.) Thus we may further
 assume that $\Gamma$ has no $t$-cells.

 By Lemma \ref{oneage}, one may assume that $\Gamma$ has
 no one-Step subcombs of base width $>2N.$ Since $b\ge 3N,$ this implies the existence of a
 $(12)$-band $\cal T$ with base $B_0$ of length $\ge N.$ Since $N>2||B||$ and $B_0$ has neither $t$- nor $t'$-letters, it must have at least two subwords of the form $q^{\pm 1}q^{\mp 1}$ for some  base letter $q$ (see \ref{order}). But the existence of $s_1^{-1}s_1$ excludes the possibility of all other
  subwords $q^{\pm 1}q^{\mp 1}$ by \ref{i}, and also by \ref{i}, the existence of $k^{-1}k$ implies
  the existence of at least one subword $s_0p_1p_1^{-1}s_0^{-1}.$  Thus  in any case $B$
 must have a subword $p_1p_1^{-1}s_0^{-1},$ which finishes the proof as in the previous paragraph.
 \endproof

 \section{Combs with multi-Step histories}
 
 In this section, we allow  all three Steps in comb histories.
 Although Lemma \ref{itog} gives no estimate of the area if the size of a comb
 is close, in a sense, to one of the numbers $T_i$-s, this lemma (together with
 the lemmas of the next section) will imply that the Dehn functions of the groups $M$ and $G$
  are almost quadratic  because  the set of $T_i$-s has infinitely many
 very long gaps. Again, to obtain upper estimates of areas for various combs one should
 apply a skillful combination of a number of quadratic parameter. For example,
 Lemma \ref{nuK} (and also Lemma \ref{areaD1} in the next Section) shows the use of the $\nu$-mixture.

 Let $H$ be the history of a comb $\Gamma.$ Consider a factorization
 $H\equiv H(1)\dots H(m),$ where no two non-empty factors are separated by empty ones. We say that this factorization
 is \label{firm} {\it firm} if for every $i=1,\dots, m-1$,

 (a) for non-empty $H_i$ and $H_{i+1}$, the last letter of $H_{i}$ and the first letter of $H_{i+1}$ must belong to different Steps;
so one of these two letters  is $(12)^{\pm 1}$- or $(23)^{\pm 1}$-letter calling   $(i)-(i+1)$ {\it transition letters}; the maximal $\theta$-band of $\Gamma$ corresponding to the $(i)-(i+1)$ transition letter
is an \label{(i)-b} $(i)-(i+1)$-{\it transition} band;

 (b) the transition $\theta$-bands of $\Gamma$ 
 are not simple.

There might be many firm factorizations of $\Gamma.$ Observe that if a factorization $H\equiv H(1)\dots H(m)$ is firm,
then $H^{-1}\equiv H(m)^{-1}\dots H(1)^{-1}$ is a firm factorization for the history of the mirror copy $\Gamma^{-1}$ of the comb $\Gamma$.

\begin{lemma} \label{two01} Let $\Gamma$ be a comb of base width $b\le 15N$
with a firm factorization of the history $H\equiv H(1)H(2)H(3)$, where
$H(2),$ $H(3)$ are one-Step histories, and $h(2)\ge 3h(3)$ (o\emph{}r
$h(3)\ge 3h(2)$). Assume that the handle $\cal C$ of $\Gamma$ is a
$t^{\pm 1}$- or $(t')^{\pm 1}$-band, and the $(1)-(2)$ transition band  has no $(\theta,a)$-cells
between $\cal C$ and the derivative band crossing this transition band, and the  $H(2)$-part (resp., the
$H(3)$-part) of $\Gamma$ has passive $k$- or $k'$-cells only in the $(12)$- or in the $(23)$-
bands. Then provided
$h(2)\ge 0.01 h$ (respectively, $h(3)\ge 0.01h$), there is a long
subcomb $\Delta$ in $\Gamma$ with $\area(\Delta)\le c_3 [\Delta]+
c_2\mu^c(\Delta).$
\end{lemma}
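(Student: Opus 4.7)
By symmetry I may assume that $h(2)\ge 3h(3)$ and $h(2)\ge 0.01h$, so that $H(2)$ dominates. The plan is to extract a subcomb $\Delta$ whose handle is the derivative band ${\cal B}$ crossed by the $(1)-(2)$ transition $\theta$-band ${\cal T}$, and then reduce the area estimate for $\Delta$ to a variant of Lemma \ref{bez12} (or to a suitable combination of Lemma \ref{onek}, Lemma \ref{t}, and Lemma \ref{oneage}) applied to the one-Step $H(2)$-part, which accounts for most of the size of $\Delta$.

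First I would use the firmness of the factorization: ${\cal T}$ is a non-simple $(12)^{\pm 1}$- or $(23)^{\pm 1}$-band, so it must cross at least one derivative band. The hypothesis that ${\cal T}$ contains no $(\theta,a)$-cells between ${\cal C}$ and that derivative band ${\cal B}$, combined with Property \ref{order} and the fact that ${\cal C}$ is a $t^{\pm 1}$- or $(t')^{\pm 1}$-band, forces ${\cal B}$ to be a $k^{\pm 1}$- or $(k')^{\pm 1}$-band directly adjacent to ${\cal C}$ in the base. Let $\Delta$ be the subcomb of $\Gamma$ with handle ${\cal B}$. Since ${\cal B}$ survives across ${\cal T}$ and through most of the $H(2)$-part, the history $H_\Delta$ of $\Delta$ contains a substantial prefix/suffix of $H(2)H(3)$; using $h(2)\ge 0.01h$ and $h(2)\ge 3h(3)$, the $H(2)$-portion $h^{(2)}_\Delta$ dominates all other contributions up to fixed multiplicative constants.

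Next I would exploit the hypothesis that in the $H(2)$-part of $\Gamma$ the only passive $k$-/$k'$-cells lie inside $(12)$- or $(23)$-bands. Combined with Property \ref{kk'}, this means that inside the $H(2)$-subpart of $\Delta$ every non-transition $k^{\pm 1}$-/$(k')^{\pm 1}$-cell of ${\cal B}$ and of the derivative bands of $\Delta$ is active from the appropriate side. Thus I can apply Lemma \ref{bez12}, or, if the Step of $H(2)$ is $(1)$ or $(3)$, the combination of Lemma \ref{bez1} and Lemma \ref{onek}, to the $H(2)$-part of $\Delta$, obtaining a long (quasi)sub\-comb $\Delta_0$ inside it with $\area(\Delta_0)\le c_3[\Delta_0]+c_2\mu^c(\Delta_0)$. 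The small contribution of the $H(3)$-part (bounded via $h(3)\le h(2)/3$ and Lemma \ref{comb}) together with the boundary $(12)$-/$(23)$-transition cells is absorbed into the $c_3[\Delta]$ term using Lemma \ref{ochev} and, to control $\lambda^c$, Lemma \ref{mu21}. The inequality $c_3\gg c_2\gg c_1$ from the parameter list \eqref{const} leaves ample room for these lower-order absorptions.

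The main obstacle, as in Lemmas \ref{t}(2), \ref{bez12} and \ref{p1}, is verifying that $\Delta$ (or the final subcomb $\Delta_0$) is \emph{long}, i.e.\ $|{\bf z}^{\Delta_0}|>|{\bf y}^{\Delta_0}|$, and that $\mu^c(\Delta_0)\ge 0$ survives the surgery. Longness will come from the active-cell hypothesis together with Property \ref{iv}: each active $k$- or $k'$-cell on ${\cal B}$ forces an $a$-edge on ${\bf z}^\Delta$ that is not present on ${\bf y}^\Delta$, and since $h^{(2)}_\Delta\ge 0.01h$, the accumulated $a$-edges yield $|{\bf z}^\Delta|-|{\bf y}^\Delta|\ge \delta'\, h/100>0$. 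The lower bound on $\mu^c$ requires careful control of $\lambda^c$ via Lemma \ref{mu21} (in the $t^{\pm1}$-/$(t')^{\pm1}$ case one of the transition rule classes is absent from the interior of $\Delta_0$), so the $\lambda$-mixture loss is at most $8(\delta')^{-1}[\Delta_0]+36\kappa^c(\Delta_0)$, which is dominated by $c_2\mu^c(\Delta_0)+c_3[\Delta_0]$ by the parameter inequalities.
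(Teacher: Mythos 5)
There are genuine gaps here, and they concern exactly the points where the hypotheses $h(2)\ge 3h(3)$ and the mixture machinery have to do real work. First, the lemmas you want to cite do not have matching hypotheses. Lemma \ref{bez12} requires a \emph{regular} comb of base width between $3N$ and $15N$ whose history contains $(12)^{\pm 1}$- but no $(23)^{\pm 1}$-rules (or vice versa); the $H(2)$-part you want to feed it is one-Step (it contains neither transition rule), its base width is not bounded below, and no regularity is available, so neither \ref{bez12} nor \ref{oneage} applies. Likewise Lemma \ref{mu21}, which you invoke to control $\lambda^c$, is stated only for combs whose handle is a $t^{\pm 1}$- or $(t')^{\pm 1}$-band with a history containing one transition rule type; your $\Delta$ has a $k^{\pm 1}$- or $(k')^{\pm 1}$-handle, so that lemma gives you nothing for $\Delta$. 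In the paper the negative $\lambda$-mixture is handled by a direct argument: either enough $a$-bands from a dominant derivative band reach $\bf z$, so $|{\bf z}|-h\gtrsim \delta' h$ and $\lambda({\bf y})\le h^2$ is absorbed into $(\delta')^{-1}[\Gamma]$, or one applies Lemma \ref{mixturec}(e) to locate the splitting $(12)$- or $(23)$-edge on $\bf y$ and bounds $\lambda({\bf y})$ by a constant times $\kappa^c(\Gamma)$ using the position of the ends of the straddling derivative bands.

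Second, your longness argument is not sound: from the fact that the cells of the handle ${\cal B}$ (or of the derivative bands) are active you conclude that each such cell contributes an $a$-edge to ${\bf z}^\Delta$, but Property \ref{iv} only forbids an $a$-band from returning to the \emph{same} band; $a$-bands may perfectly well connect two \emph{different} derivative bands, and their number is controlled only through $h_-$ (Lemma \ref{a-bands}). This is precisely where the hypothesis $h(2)\ge 3h(3)$ is needed: when a single derivative band carries at least $0.9h(2)$ of the $H(2)$-history, that inequality shows fewer than two thirds of its $a$-bands can end on other derivative bands, so Lemma \ref{h0}(a) applies and yields $\area(\Gamma)\le(\delta')^{-2}[\Gamma]$; your proposal never makes this count. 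You also omit the complementary case in which no derivative band dominates, where the paper uses Lemma \ref{sravnim} to get $l_-\ge 0.001\sum h_i$ and then Lemmas \ref{comb}, \ref{simple}, the bound (\ref{ns}) on simple bands and Lemma \ref{lgamma} to obtain $\area(\Gamma)\le c_1([\Gamma]+\kappa^c(\Gamma))$. Finally, note that the paper argues by contradiction (assuming no good subcomb exists, which via Lemma \ref{t} rules out $t^{\pm1}$-/$(t')^{\pm1}$-derivative bands in the relevant range) and in the end certifies $\Gamma$ itself as the long subcomb $\Delta$; your attempt to localize everything inside the subcomb over ${\cal B}$ and delegate to the one-Step lemmas cannot be completed as written.
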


\proof   We will prove the lemma assuming that $h(2)\ge 3h(3)$ and
$h(2)\ge 0.01h \;(>0)$ since the proof of the second version of the lemma
is similar.

Consider the system of derivative bands ${\cal C}_1,\dots,{\cal
C}_s$ of $\Gamma$. Let ${\cal C}_1,\dots,{\cal C}_u$ have histories
$H_1,\dots,H_u$ which are subwords of $H(1)$, ${\cal C}_{u+1}$ have
history $H_{u+1}\equiv H_{u+1}(1)H_{u+1}(2)$, where $H_{u+1}(1)$ and
$H_{u+1}(2)$ are a suffix and a prefix of $H(1)$ and $H(2)$,
respectively: ${\cal C}_{u+1}$ is a union of bands ${\cal
C}_{u+1}(1)$ and ${\cal C}_{u+1}(2)$ having these two histories, and
$h_{u+1}=h_{u+1}(1)+h_{u+1}(2)> 0$. Similarly we define subwords $H_{u+2},\dots, H_{v}$
of $H(2)$ and
$H_{v+1}\equiv H_{v+1}(2)H_{v+1}(3)$, while  $H_{v+2},\dots, H_{s}$ are subword of $H(3)$.
(It is also possible that ${\cal C}_{u+1}$ has history
$H_{u+1}(1)H_{u+1}(2) H_{u+1}(3),$ where $H_{u+1}(2)\equiv H(2),$ and we
will come back to this case later on.)

Proving by contradiction, we assume that $\Gamma$ has no subcombs
$\Delta$ with area satisfying  the statement of the lemma.

The band ${\cal C}_{u+1}(2),$ if it has non-zero length, is not a $t^{\pm 1}$- or
$(t')^{\pm 1}$-band by the condition on the $(1)-(2)$-transition band and by \ref{i}.
If some ${\cal C}_i$ is a $t^{\pm 1}$- or
$(t')^{\pm 1}$-band, for $i\in [u+2, v+1]$, then the derivative subcomb
$\Gamma_i$ satisfies the condition of Lemma \ref{t}, 
a contradiction since $c_3>c_1$. Therefore all the derivative bands
of the system ${\cal C}_{u+1}(2), {\cal C}_{u+2},\dots,{\cal
C}_{v+1}(2)$ are $k^{-1}$- or $k'$-bands by \ref{i}.

Assume that one of the numbers
$h_{u+1}(2),h_{u+2},\dots,h_{v+1}(2)$, redenote it by $g$, is at least
$0.9h(2)\ge 0.009h>\delta h$

\unitlength 1mm 
\linethickness{0.4pt}
\ifx\plotpoint\undefined\newsavebox{\plotpoint}\fi 
\begin{picture}(100.25,71.75)(0,0)
\put(76.5,71.75){\line(0,-1){64.25}}
\put(83.75,71.25){\line(0,-1){63.5}}
\put(84,17.5){\line(-1,0){37}}
\put(83.5,19.75){\line(-1,0){36.25}}
\put(47.25,19.75){\line(0,-1){2.75}}
\put(83.75,35.25){\line(-1,0){46.5}}
\put(83.5,38.25){\line(-1,0){45.75}}
\put(37.5,38.25){\line(0,-1){3.75}}
\multiput(61.5,45)(.048076923,-.033653846){208}{\line(1,0){.048076923}}
\put(61.75,45){\line(1,0){7.25}}
\multiput(69,45)(.038978495,-.033602151){186}{\line(1,0){.038978495}}
\put(71,38.25){\line(0,-1){9.25}}
\put(71,29){\line(1,0){4.75}}
\put(61.75,64){\line(0,-1){12.5}}
\put(61.75,51.5){\line(1,0){5.75}}
\put(67.5,51.5){\line(0,1){11.75}}
\put(62,63.5){\line(1,0){5.5}}
\put(76.5,71.5){\line(1,0){7.5}}
\put(84,71.5){\line(0,1){0}}
\put(68.75,66){\line(1,0){15.25}}
\multiput(76.68,71.43)(-.03125,-.03125){4}{\line(0,-1){.03125}}
\multiput(76.18,71.68)(-.0484848,-.0333333){15}{\line(-1,0){.0484848}}
\multiput(74.725,70.68)(-.0484848,-.0333333){15}{\line(-1,0){.0484848}}
\multiput(73.271,69.68)(-.0484848,-.0333333){15}{\line(-1,0){.0484848}}
\multiput(71.816,68.68)(-.0484848,-.0333333){15}{\line(-1,0){.0484848}}
\multiput(70.362,67.68)(-.0484848,-.0333333){15}{\line(-1,0){.0484848}}
\multiput(68.907,66.68)(-.0484848,-.0333333){15}{\line(-1,0){.0484848}}
\multiput(68.18,65.93)(-.03125,-.09375){8}{\line(0,-1){.09375}}
\multiput(67.68,64.43)(-.03125,-.09375){8}{\line(0,-1){.09375}}
\put(61.43,63.43){\line(-1,0){.9444}}
\put(59.541,63.207){\line(-1,0){.9444}}
\put(57.652,62.985){\line(-1,0){.9444}}
\put(55.763,62.763){\line(-1,0){.9444}}
\put(53.874,62.541){\line(-1,0){.9444}}
\put(51.985,62.319){\line(-1,0){.9444}}
\put(50.096,62.096){\line(-1,0){.9444}}
\put(48.207,61.874){\line(-1,0){.9444}}
\put(46.319,61.652){\line(-1,0){.9444}}
\put(44.43,61.43){\line(0,-1){.9286}}
\put(44.287,59.573){\line(0,-1){.9286}}
\put(44.144,57.715){\line(0,-1){.9286}}
\put(44.001,55.858){\line(0,-1){.9286}}
\multiput(44.18,54.93)(.153509,-.028509){6}{\line(1,0){.153509}}
\multiput(46.022,54.588)(.153509,-.028509){6}{\line(1,0){.153509}}
\multiput(47.864,54.245)(.153509,-.028509){6}{\line(1,0){.153509}}
\multiput(49.706,53.903)(.153509,-.028509){6}{\line(1,0){.153509}}
\multiput(51.548,53.561)(.153509,-.028509){6}{\line(1,0){.153509}}
\multiput(53.39,53.219)(.153509,-.028509){6}{\line(1,0){.153509}}
\multiput(55.232,52.877)(.153509,-.028509){6}{\line(1,0){.153509}}
\multiput(57.074,52.535)(.153509,-.028509){6}{\line(1,0){.153509}}
\multiput(58.917,52.193)(.153509,-.028509){6}{\line(1,0){.153509}}
\multiput(60.759,51.851)(.153509,-.028509){6}{\line(1,0){.153509}}
\multiput(67.93,51.43)(.15,-.033333){5}{\line(1,0){.15}}
\multiput(69.43,51.096)(.15,-.033333){5}{\line(1,0){.15}}
\multiput(70.18,50.93)(-.15625,-.03125){5}{\line(-1,0){.15625}}
\multiput(68.617,50.617)(-.15625,-.03125){5}{\line(-1,0){.15625}}
\multiput(67.055,50.305)(-.15625,-.03125){5}{\line(-1,0){.15625}}
\multiput(65.492,49.992)(-.15625,-.03125){5}{\line(-1,0){.15625}}
\multiput(63.93,49.68)(.0594406,-.0314685){13}{\line(1,0){.0594406}}
\multiput(65.475,48.862)(.0594406,-.0314685){13}{\line(1,0){.0594406}}
\multiput(67.021,48.043)(.0594406,-.0314685){13}{\line(1,0){.0594406}}
\multiput(68.566,47.225)(.0594406,-.0314685){13}{\line(1,0){.0594406}}
\multiput(70.112,46.407)(.0594406,-.0314685){13}{\line(1,0){.0594406}}
\multiput(71.657,45.589)(.0594406,-.0314685){13}{\line(1,0){.0594406}}
\put(72.93,44.68){\line(-1,0){.875}}
\put(71.18,44.805){\line(-1,0){.875}}
\put(61.68,44.68){\line(-1,0){.95}}
\put(59.78,44.63){\line(-1,0){.95}}
\put(57.88,44.58){\line(-1,0){.95}}
\put(55.98,44.53){\line(-1,0){.95}}
\put(54.08,44.48){\line(-1,0){.95}}
\put(52.18,44.43){\line(-1,0){.95}}
\put(50.28,44.38){\line(-1,0){.95}}
\put(48.38,44.33){\line(-1,0){.95}}
\put(46.48,44.28){\line(-1,0){.95}}
\put(44.58,44.23){\line(-1,0){.95}}
\put(42.68,44.18){\line(0,1){0}}
\multiput(42.68,44.18)(-.0328125,-.0390625){16}{\line(0,-1){.0390625}}
\multiput(41.63,42.93)(-.0328125,-.0390625){16}{\line(0,-1){.0390625}}
\multiput(40.58,41.68)(-.0328125,-.0390625){16}{\line(0,-1){.0390625}}
\multiput(39.53,40.43)(-.0328125,-.0390625){16}{\line(0,-1){.0390625}}
\multiput(38.48,39.18)(-.0328125,-.0390625){16}{\line(0,-1){.0390625}}
\put(37.43,37.93){\line(1,0){.125}}
\put(37.68,37.93){\line(0,1){.125}}
\multiput(37.93,34.68)(.188571,-.031429){5}{\line(1,0){.188571}}
\multiput(39.815,34.365)(.188571,-.031429){5}{\line(1,0){.188571}}
\multiput(41.701,34.051)(.188571,-.031429){5}{\line(1,0){.188571}}
\multiput(43.587,33.737)(.188571,-.031429){5}{\line(1,0){.188571}}
\multiput(45.473,33.423)(.188571,-.031429){5}{\line(1,0){.188571}}
\multiput(47.358,33.108)(.188571,-.031429){5}{\line(1,0){.188571}}
\multiput(49.244,32.794)(.188571,-.031429){5}{\line(1,0){.188571}}
\multiput(51.13,32.48)(.188571,-.031429){5}{\line(1,0){.188571}}
\multiput(53.015,32.165)(.188571,-.031429){5}{\line(1,0){.188571}}
\multiput(54.901,31.851)(.188571,-.031429){5}{\line(1,0){.188571}}
\multiput(56.787,31.537)(.188571,-.031429){5}{\line(1,0){.188571}}
\multiput(58.673,31.223)(.188571,-.031429){5}{\line(1,0){.188571}}
\multiput(60.558,30.908)(.188571,-.031429){5}{\line(1,0){.188571}}
\multiput(62.444,30.594)(.188571,-.031429){5}{\line(1,0){.188571}}
\multiput(64.33,30.28)(.188571,-.031429){5}{\line(1,0){.188571}}
\multiput(66.215,29.965)(.188571,-.031429){5}{\line(1,0){.188571}}
\multiput(68.101,29.651)(.188571,-.031429){5}{\line(1,0){.188571}}
\multiput(69.987,29.337)(.188571,-.031429){5}{\line(1,0){.188571}}
\multiput(75.68,29.18)(-.158333,-.031944){6}{\line(-1,0){.158333}}
\multiput(73.78,28.796)(-.158333,-.031944){6}{\line(-1,0){.158333}}
\multiput(71.88,28.413)(-.158333,-.031944){6}{\line(-1,0){.158333}}
\multiput(69.98,28.03)(-.158333,-.031944){6}{\line(-1,0){.158333}}
\multiput(68.08,27.646)(-.158333,-.031944){6}{\line(-1,0){.158333}}
\multiput(66.18,27.263)(-.158333,-.031944){6}{\line(-1,0){.158333}}
\multiput(64.28,26.88)(-.158333,-.031944){6}{\line(-1,0){.158333}}
\multiput(62.38,26.496)(-.158333,-.031944){6}{\line(-1,0){.158333}}
\multiput(60.48,26.113)(-.158333,-.031944){6}{\line(-1,0){.158333}}
\multiput(58.58,25.73)(-.158333,-.031944){6}{\line(-1,0){.158333}}
\multiput(56.68,25.346)(-.158333,-.031944){6}{\line(-1,0){.158333}}
\multiput(54.78,24.963)(-.158333,-.031944){6}{\line(-1,0){.158333}}
\multiput(52.88,24.58)(-.158333,-.031944){6}{\line(-1,0){.158333}}
\multiput(50.98,24.196)(-.158333,-.031944){6}{\line(-1,0){.158333}}
\multiput(49.08,23.813)(-.158333,-.031944){6}{\line(-1,0){.158333}}
\put(47.18,23.43){\line(0,1){0}}
\put(47.18,23.43){\line(0,-1){.8125}}
\put(47.055,21.805){\line(0,-1){.8125}}
\multiput(46.93,17.43)(.0544444,-.0322222){15}{\line(1,0){.0544444}}
\multiput(48.563,16.463)(.0544444,-.0322222){15}{\line(1,0){.0544444}}
\multiput(50.196,15.496)(.0544444,-.0322222){15}{\line(1,0){.0544444}}
\multiput(51.83,14.53)(.0544444,-.0322222){15}{\line(1,0){.0544444}}
\multiput(53.463,13.563)(.0544444,-.0322222){15}{\line(1,0){.0544444}}
\multiput(55.096,12.596)(.0544444,-.0322222){15}{\line(1,0){.0544444}}
\multiput(56.73,11.63)(.0544444,-.0322222){15}{\line(1,0){.0544444}}
\multiput(58.363,10.663)(.0544444,-.0322222){15}{\line(1,0){.0544444}}
\put(59.18,10.18){\line(0,1){0}}
\multiput(59.18,10.18)(.188889,-.030556){5}{\line(1,0){.188889}}
\multiput(61.069,9.874)(.188889,-.030556){5}{\line(1,0){.188889}}
\multiput(62.957,9.569)(.188889,-.030556){5}{\line(1,0){.188889}}
\multiput(64.846,9.263)(.188889,-.030556){5}{\line(1,0){.188889}}
\multiput(66.735,8.957)(.188889,-.030556){5}{\line(1,0){.188889}}
\multiput(68.624,8.652)(.188889,-.030556){5}{\line(1,0){.188889}}
\multiput(70.513,8.346)(.188889,-.030556){5}{\line(1,0){.188889}}
\multiput(72.402,8.041)(.188889,-.030556){5}{\line(1,0){.188889}}
\multiput(74.291,7.735)(.188889,-.030556){5}{\line(1,0){.188889}}
\put(75.75,7.75){\line(1,0){7.75}}
\put(86.5,23.75){$H(1)$}
\put(85.75,52){$H(2)$}
\put(85.5,69.25){$H(3)$}
\put(59,57.75){$\cal G$}
\put(72.25,32.5){${\cal C}_{u+1}(1)$}
\put(67.25,42.25){${\cal C}_{u+1}(2)$}
\put(79.5,61.25){$\cal C$}
\put(41.75,18.25){$\cal T$}
\put(92,7.25){\vector(0,-1){.07}}\put(92,17){\vector(0,1){.07}}\put(92,17){\line(0,-1){9.75}}
\put(95.5,11.75){$m_1$}
\put(42.75,38){\line(0,-1){2}}
\put(47.5,37.75){\line(0,-1){2.5}}
\put(53.25,37.75){\line(0,-1){2}}
\put(58.5,38){\line(0,-1){2.25}}
\put(64.25,38){\line(0,-1){2.5}}
\put(54.5,20){\line(0,-1){2.25}}
\put(61.25,19.5){\line(0,-1){2.25}}
\put(66.75,19.5){\line(0,-1){1.75}}
\put(72,19.5){\line(0,-1){2}}
\put(35.25,50.5){$\Gamma$}
\put(93,19){\vector(0,-1){.07}}\put(93,71.5){\vector(0,1){.07}}\put(93,71.5){\line(0,-1){52.5}}
\put(94.75,49.25){$m_2$}
\put(3.5,36.25){(1)-(2) transition}
\end{picture}

Denote by ${\cal G}$ the corresponding derivative (sub)band of length $g$. Since $h(2)\ge 3h(3)$, we see that
at most $h(2)/3+0.1h(2)\le  h(2)/2< 2g/3$ maximal $a$-bands starting on $\cal G$ end on
the other derivative bands of $\Gamma.$
So we
may apply Lemma \ref{h0} (a) to $\cal G$ and get inequality

\begin{equation}\label{19000}
\area(\Gamma)\le (\delta')^{-2}[\Gamma]
\end{equation}

Now assume that each of $h_{u+1}(2),h_{u+2}(2),\dots,h_{v+1}(2)$ is less
than $0.9h(2)$. It follows from this assumption that $\max_{i=1}^s
h_i < h-h(2)+0.9h(2)\le (1-0.001)h$ since $h(2)\ge 0.01h.$
Therefore,  by  Lemma \ref{sravnim}, $l_-\ge \min(\sum_{i=1}^s
h_i,\; h-\max_{i=1}^s h_i) \ge 0.001\sum_{i=1}^{s} h_i$. Hence by
Lemmas \ref{comb}, \ref{simple} (1), and by inequality $\delta'^{-1}> \max(40N, 4000),$ we have

$$\sum_{i=1}^{s} \area(\Gamma_i)\le 60N
\sum_{i=1}^{s}h_i^{2}+\sum_{i=1}^{s}2\alpha_ih_i  \le$$
\begin{equation} 
60Nh\times 1000 l_-+2h((\delta')\iv(|{\bf z}|-h)
 \le 2(\delta')\iv[\Gamma]+(\delta')^{-2}hl_-,\label{sumi}
\end{equation}
where $\alpha_i=|{\bf z}^{\Gamma_i}|_a.$

By the inequalities (\ref{ns}) and $l_-\ge 0.001\sum_{i=1}^s h_i$, the
number $n_s$ of cells in all the simple $\theta$-bands of $\Gamma$
satisfies inequality

$$n_s \le h(\frac32\sum_{i=1}^sh_i +(\delta')\iv(|{\bf z}|-|{\bf y}|))\le
h(1500l_- +(\delta')\iv(|{\bf z}|-|{\bf y}|)) $$

 From this inequality,
(\ref{sumi}), and by Lemma \ref{lgamma}, we have
\begin{equation}\label{not56}
\area(\Gamma)\le c_1 [\Gamma]+c_1 h l_-/2 \le
c_1([\Gamma]+\kappa^c(\Gamma))
\end{equation}
since $c_1\ge 3(\delta')^{-2}$.

If $\lambda^c(\Gamma)\ge 0,$ the statement of the lemma follows from Inequalities (\ref{19000})
and (\ref{not56}). Then we will assume that $-\lambda({\bf y})\le\lambda^c(\Gamma)<0.$

To estimate $\lambda^c(\Gamma)$ from below, we again start with the assumption that
$g\ge 0.9h(2),$ and so at least $g/2$ maximal $a$-bands end on
$\bf z$. We have $|{\bf z}|_a\ge g/2\ge 0.4h(2) \ge 0.004h.$ Hence
$|{\bf z}|-h\ge\delta'h/250$ by Lemma \ref{simple} (a). Then by Lemma
\ref{mixturec} (a),
$$\lambda^c(\Gamma) \ge -\lambda({\bf y})\ge -h^2 \ge -250(\delta')^{-1}[\Gamma]$$
Since $c_1\ge 2(\delta')^{-2},$ this estimate together with
(\ref{19000}) yields $\area(\Gamma)\le
c_1[\Gamma]+\lambda^c(\Gamma)\le c_1[\Gamma]+\mu^c(\Gamma).$
Here, the right-hand side does not exceed
$c_3[\Gamma]+c_2\mu^c(\Gamma)$ because $c_3 > c_1c_2,$ and so
the lemma proved in case $g\ge 0.9h(2).$

Now let $g< 0.9h(2).$ Since $\lambda({\bf y})>0,$  by Lemma
\ref{mixturec}(e), there is a maximal $(12)$- or $(23)$-band $\cal T$ 
such that there
are $m_1$ $\theta$-bands crossing $\cal C$ below $\cal T$, $m_2$
$\theta$-bands crossing $\cal C$ above $\cal T,$ and
$\lambda({\bf y})\le 2m_1m_2.$

Assume first that $\cal T$ belongs to the $H(1)$-part of $\Gamma.$
Then one of the two ends of the derivative band ${\cal C}_{u+1}$ lies
above $\cal T$ but there are at least $0.1h(2)$ $\theta$-bands above
this end since $g\le 0.9h(2).$ Therefore by Lemma \ref{mixturec}
(d), $$\kappa^c(\Gamma)\ge 0.1m_1h(2)\ge 0.001m_1h \ge
0.001m_1m_2\ge \lambda({\bf y})/2000.$$ Then assume that $\cal T$
belongs to the $H(2)H(3)$-part of $\Gamma.$ Then $m_2\le h(3)\le
h(2)/3,$ since the one step history $H(2)$ has no $(12)^{\pm 1}$- or
$(23)^{\pm 1}$-rules. On the other hand, there is an (lower) end
of the derivative ${\cal C}_{v+1}$ such that there are at least
$h(3)$ maximal $\theta$-bands above it and at least $0.1h(2)$ below
it since $g\le 0.9h(2).$ Hence
$$\kappa^c(\Gamma)\ge 0.1h(3)h(2)\ge 0.1m_2(0.01h) \ge 0.001m_1m_2\ge \lambda({\bf y})/2000.$$
Thus $\lambda^c(\Gamma)\le 2000\kappa(\Gamma)$ if $g\le 0.9h(2)$. This inequality
and (\ref{not56}) imply $\area(\Gamma) \le
c_1([\Gamma]+\mu^c(\Gamma))$ because $c_0\ge 2001.$
This leads to a contradiction since $c_3>c_2 >c_1.$

The case where ${\cal C}_{u+1}$ had history
$H_{u+1}(1)H_{u+1}(2) H_{u+1}(3)$ with $H_{u+1}(2)\equiv H(2),$
can be treated as the above subcase with $g\ge 0.9h(2),$ since now $g=h(2).$
\endproof

We omit the proof of the following lemma since the argument would be just a
simplified version of the proof given above for Lemma \ref{two01}:
instead of the inequalities $h(3)\ge 0.01h$ and $h(3)\ge 3h(2),$ below we 
have that $h''\ge 0.7 h$ (and so $h''\ge \frac{7}{3}h'$) and $H(1)$ is empty.

\begin{lemma}\label{podslovo1}.
Let $\Gamma$ be a comb of basic width $b\le 15N$ with a $t^{\pm 1}$- or
$(t')^{\pm 1}$-handle $\cal C,$ and let the history of $\Gamma$ have a
firm factorization $H\equiv H'H'',$ where $h''\ge 0.7h$ and $H''$ is of Step
$(2)$. Let the derivative bands ${\cal C}_i$ be all a $k^{\pm 1}$-
or $(k')^{\pm 1}$-bands. Then $\Gamma$ has a long subcomb $\Delta$
with $\area(\Delta)\le c_3 [\Delta]+ c_2\mu^c (\Delta)$.
\end{lemma}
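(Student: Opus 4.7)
The plan is to retrace the proof of Lemma \ref{two01} almost verbatim, under the substitutions $H' \leftrightarrow H(2)$ and $H'' \leftrightarrow H(3)$, with $H(1)$ absent. The two numerical inequalities used in \ref{two01}, namely $h(3) \geq 3h(2)$ and $h(3) \geq 0.01h$, are both implied here by the single hypothesis $h'' \geq 0.7h$: for the first, $h' \leq 0.3h \leq (3/7)h''$, and for the second, $0.7 \geq 0.01$. The opening paragraph of \ref{two01} that rules out $t^{\pm 1}$- or $(t')^{\pm 1}$-derivatives via Lemma \ref{t} can be skipped, since every ${\cal C}_i$ is a $k^{\pm 1}$- or $(k')^{\pm 1}$-band by hypothesis.

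Arguing by contradiction, assume $\Gamma$ admits no long subcomb with area bounded by $c_3[\cdot] + c_2\mu^c(\cdot)$. I would factor each derivative history as $H_i \equiv H_i(1) H_i(2)$ with $H_i(1)$ a subword of $H'$ and $H_i(2)$ a subword of $H''$, and set $g_i = ||H_i(2)||$. The proof then splits on whether some $g_{i_0} \geq 0.9 h''$. If so, let $\cal G$ denote the $H''$-part of ${\cal C}_{i_0}$, of length $g$; then the other derivatives collectively contribute at most $h' + 0.1 h'' \leq (3/7 + 1/10) h'' < 2g/3$ cells, so at most $2g/3$ maximal $a$-bands starting on $\cal G$ end on $\cal C$ or on any other ${\cal C}_j$. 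Since $\cal G$ is active from the appropriate side by Property \ref{kk'} and $g \geq 0.63h \geq \delta h$, Lemma \ref{h0}(a) yields $\area(\Gamma) \leq (\delta')^{-2}[\Gamma]$.

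If $g_i < 0.9h''$ for every $i$, then $\max_i h_i \leq h' + 0.9h'' \leq 0.93h$, and Lemma \ref{sravnim} gives $l_- \geq \min(\sum_i h_i,\, h - \max_i h_i) \geq 0.07 \sum_i h_i$. Plugging this into the chain of estimates that combines Lemmas \ref{comb}, \ref{simple}(1), and \ref{lgamma}, exactly as in \ref{two01}, yields $\area(\Gamma) \leq c_1([\Gamma] + \kappa^c(\Gamma))$. If $\lambda^c(\Gamma) \geq 0$, either case directly implies $\area(\Gamma) \leq c_3[\Gamma] + c_2\mu^c(\Gamma)$ by the choice $c_3 > c_2 > c_1 + (\delta')^{-2}$ and $c_0 \geq 1$. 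Otherwise, I invoke Lemma \ref{mixturec}(e) on the string of beads on ${\bf y}$ to locate a special $(12)^{\pm 1}$- or $(23)^{\pm 1}$-edge producing $\lambda({\bf y}) \leq 2m_1m_2$; since $H''$ is of Step $(2)$, the corresponding $\theta$-band either lies in the $H'$-part or is the $H'$--$H''$ transition band, which by firmness of the factorization crosses some derivative ${\cal C}_j$. Tracking the endpoint of ${\cal C}_j$ on $\cal C$ as in the last paragraphs of \ref{two01}, together with the estimate $|{\bf z}|_a \geq g/2$ in the first case, bounds $m_1 m_2$ by a constant multiple of $\kappa^c(\Gamma) + (\delta')^{-1}[\Gamma]$, which is absorbed by the inequalities $c_0 \geq c_2$ and $c_3 \geq c_2(\delta')^{-1} + c_1$.

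The main obstacle is the verification that the various pigeonhole-type inequalities relating $m_1 m_2$, $\kappa^c(\Gamma)$, and $[\Gamma]$ in the final $\lambda^c(\Gamma) < 0$ case still go through with the new constants $0.7$ and $0.9$ in place of the old $0.01$ and $0.9$ of \ref{two01}; in practice this is a routine but careful bookkeeping of explicit positive reals, and no new ideas beyond those already present in the proof of Lemma \ref{two01} are required.
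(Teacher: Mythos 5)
Your proposal is correct and coincides with the paper's own treatment: the paper omits the proof of this lemma precisely with the remark that it is a simplified version of the proof of Lemma \ref{two01} (with $H(1)$ empty and $h''\ge 0.7h$ replacing $h(3)\ge 0.01h$ and $h(3)\ge 3h(2)$), and your explicit checks ($h'+0.1h''\le \frac{37}{70}h''<\frac{2}{3}g$, $\max_i h_i\le 0.93h$, $l_-\ge 0.07\sum h_i$) are exactly the needed adjustments. Two small corrections to your wording: the bead located via Lemma \ref{mixturec}(e) is a \emph{non-special} $(12)^{\pm1}$- or $(23)^{\pm1}$-edge (special edges carry no black beads), and the final absorption requires only that $c_0$ exceed an absolute constant (as in the paper's uses $c_0\ge 42$, $c_0\ge 2001$), not $c_0\ge c_2$, which would contradict the parameter order $c_0\ll c_2$ in (\ref{const}).
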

 $\Box$

\begin{lemma}\label{nuK} 
Let $\Gamma'$ be a subcomb of a comb
$\Gamma$ of base width $b\le 15N,$ $\cal C'$ and $\cal C$ their
handles with histories $H'$ and $H$, respectively, and each of these
handles a $t^{\pm 1}$- or a $(t')^{\pm 1}$-band. Assume that $h'<
h/2,$ and $H$ has at most $6$ letters $(12)^{\pm 1}$ and
$(23)^{\pm 1}.$ Then either $\Gamma$ has a long subcomb $\Delta$
with $\area(\Delta)\le c_3[\Delta] +c_2\mu^c(\Delta)$ or \\
$\area(\Gamma')\le
c_1([\Gamma']+\lambda^c(\Gamma')+\nu_J^c(\Gamma)-
\nu_J^c(\Delta')) ,$ where $\Delta'= \Gamma\backslash\Gamma'.$
\end{lemma}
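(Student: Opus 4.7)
My plan is to first bound $\area(\Gamma')$ by a combination involving $[\Gamma']$ and $\kappa^c(\Gamma')$ using the one-Step comb machinery, and then to translate the $\kappa^c$-term into the $\lambda^c+(\nu_J^c(\Gamma)-\nu_J^c(\Delta'))$ combination appearing in the statement. Throughout I assume that $\Gamma$ admits no long subcomb $\Delta$ with $\area(\Delta)\le c_3[\Delta]+c_2\mu^c(\Delta)$, since otherwise the first alternative of the lemma holds; under this assumption, the structural obstructions of Lemmas \ref{oneage}, \ref{bez12}, \ref{two01}, and \ref{podslovo1} are ruled out.

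Fix a firm factorization $H\equiv H(1)\cdots H(m)$ of the history of $\Gamma$. Because $H$ contains at most six transition letters of the form $(12)^{\pm 1}$ or $(23)^{\pm 1}$, Lemma \ref{2w} forces $m\le 13$. Restricting to $\Gamma'$ yields a firm factorization $H'\equiv H'(1)\cdots H'(m')$ with $m'\le 13$, each $H'(i)$ one-Step. The corresponding $H'(i)$-part $\Gamma'(i)$ is a one-Step regular subcomb of $\Gamma'$ whose handle is a subband of $\cal C'$, hence a $t^{\pm 1}$- or $(t')^{\pm 1}$-band of base width at most $15N$. To each $\Gamma'(i)$ I apply Lemma \ref{t}(2), together with Lemma \ref{lgamma}, to obtain $\area(\Gamma'(i))\le c_1([\Gamma'(i)]+\kappa^c(\Gamma'(i)))$. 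Summing, using additivity of $[\cdot]$ from Lemma \ref{ochev}(b) and the subadditivity $\sum_i \kappa^c(\Gamma'(i))\le \kappa^c(\Gamma')$ from Lemma \ref{deriv}(b), and absorbing the bounded multiplier $m'\le 13$ into $c_1$, I arrive at $\area(\Gamma')\le c_1([\Gamma']+\kappa^c(\Gamma'))$.

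The crux is then to establish, up to an absolute constant absorbable into $c_1$, the mixture inequality
\[
\kappa^c(\Gamma')\le \lambda^c(\Gamma')+\bigl(\nu_J^c(\Gamma)-\nu_J^c(\Delta')\bigr).
\]
I split each pair of white beads contributing to $\kappa^c(\Gamma')=\kappa({\bf z}^{\Gamma'})$ according to the type of separating $q$-edge. Pairs separated only by $q$-edges that are neither $t^{\pm 1}$- nor $(t')^{\pm 1}$-edges, and whose white endpoints are both non-special $\theta$-edges, contribute to $\lambda^c(\Gamma')$; since $H'$ contains at most $6$ special letters, the discrepancy between the two color assignments amounts to $O(h')$ pairs and is absorbable. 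Pairs separated by at least one $t$- or $t'$-edge on $\partial\Gamma'$ contribute to $\nu_1^c(\Gamma')$. The hypothesis $h'<h/2$ ensures that when these same pairs are read on the larger necklace ${\bf z}^\Gamma$, they are separated by at least $J$ additional $t$/$t'$-edges coming from the derivative $t$/$t'$-bands of $\Delta'$ (of which there are many because $\Delta'$ carries more than $h/2$ maximal $\theta$-bands which, together with the at-most-six transition letters of $H$, force the Step-history of $\Delta'$ to keep many $t$/$t'$-band endpoints spread along the boundary). Combined with Lemma \ref{mu}(d)--(e) applied to the nested combs $\Gamma'\subseteq\Gamma$, this embeds $\nu_1^c(\Gamma')$ into $\nu_J^c(\Gamma)-\nu_J^c(\Delta')$.

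The main obstacle is this final mixture-conversion step: certifying that the surplus of $\theta$-bands in $\Delta'$ guaranteed by $h'<h/2$ really produces at least $J$ new $t$/$t'$-edges separating the relevant white-bead pairs along ${\bf z}^\Gamma$, rather than merely one or two, and that these new black beads lie outside the $\Delta'$-necklace. This requires a careful combinatorial bookkeeping of the derivative $t$/$t'$-bands of $\Delta'$ — using the at-most-six transition letters hypothesis to control how the Step-history of $\Delta'$ can fragment, together with the fact that $J\ll h-h'$ in the regime of interest. The degenerate sub-case where $h-h'<J$ corresponds to $\Gamma'$ being so short that Lemma \ref{nizko} applied to $\Gamma'$ supplies the area bound directly without any mixture input.
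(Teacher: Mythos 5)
Your proof fails at its central step. The heart of the lemma is a quantitative drop of the $\nu_J$-mixture, namely $\nu_J^c(\Gamma)-\nu_J^c(\Delta')>(h')^2$, and you never establish it; moreover the mechanism you sketch for it is backwards. Lemma \ref{mixturec}(d) gives a drop of $m_1m_2$ upon removing a black bead only when the surrounding arc contains \emph{at most} $J$ black beads, so what must be verified is that there are \emph{few} $t^{\pm1}$- and $(t')^{\pm1}$-edges on ${\bf z}^{\Gamma}$ between the ends of $\cal C$. This is exactly what the two hypotheses deliver once the first alternative fails: if some maximal $t^{\pm1}$- or $(t')^{\pm1}$-band of $\Gamma$ had no $(12)$- or $(23)$-cells, Lemma \ref{t} would give a long subcomb with $\area\le c_1([\cdot]+\kappa^c(\cdot))\le c_3[\cdot]+c_2\mu^c(\cdot)$; otherwise every such band crosses one of the at most $6$ maximal $(12)^{\pm1}$- or $(23)^{\pm1}$-bands, each of base width $\le 15N$, so $\Gamma$ has fewer than $90N<J/2$ maximal $t$/$t'$-bands. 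Your claim that $h'<h/2$ forces ``at least $J$ additional $t/t'$-edges'' coming from $\Delta'$ is not provable and would in fact disable \ref{mixturec}(d). The hypothesis $h'<h/2$ serves a different purpose: with $a$ and $b$ white beads outside the arc spanned by $\cal C'$ one has $a+b=h-h'>h'$, so deleting the two black beads at the ends of $\cal C'$ drops $\nu_J$ by $a(h'+b)+b(h'+a)>(h')^2$; this single quantity then pays both for the crude bound $\area(\Gamma')\le 60N(h')^2+2(\delta')\iv[\Gamma']$ (Lemmas \ref{comb} and \ref{simple}) and for the possibly negative $\lambda^c(\Gamma')\ge-\lambda({\bf y}^{\Gamma'})>-(h')^2/2$. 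Without this inequality your target relation $\kappa^c(\Gamma')\lesssim\lambda^c(\Gamma')+\nu_J^c(\Gamma)-\nu_J^c(\Delta')$ has no proof.

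There are also gaps in your first half. Summing Lemma \ref{t}(2) over the $H'(i)$-parts requires $\sum_i[\Gamma'(i)]\lesssim[\Gamma']$, which is not ``additivity from Lemma \ref{ochev}(b)'': the ${\bf z}$-paths of the intermediate parts run along sides of the transition $\theta$-bands, whose $a$-length is not controlled by $|{\bf z}^{\Gamma'}|-|{\bf y}^{\Gamma'}|$ (compare the explicit bookkeeping with terms like $120Nh(1)$ in Lemma \ref{styk}). And the discrepancy between the $\kappa$- and $\lambda$-necklaces is not of order $h'$: a single non-special $(12)$- or $(23)$-edge on ${\bf y}^{\Gamma'}$ can separate on the order of $(h')^2$ pairs of white beads, so $\lambda({\bf y}^{\Gamma'})$ may be comparable to $(h')^2$ and cannot be ``absorbed''; it can only be paid for by the $\nu_J$-drop, which returns you to the missing key inequality.
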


\proof If there is a maximal $t$- or $t'$-band in $\Gamma,$ having no
$(12)$- or $(23)$-cells,
then by
Lemma \ref{t} (b), it is a handle of a long subcomb $\Delta$ with
$$\area(\Delta)\le c_1[\Delta] +c_1\kappa^c(\Delta)\le c_3[\Delta] +c_2\mu^c(\Delta)$$
because $\lambda^c(\Delta)\ge -\lambda({\bf y}^{\Delta})=0$ for a one-Step $\Delta,$
and any subcomb with passive from the right handle is long.

Therefore we  may assume that every $t$- or $t'$-band
of $\Gamma$ intersects a $\theta$-band corresponding to one of the
$\theta$-letters $(12)^{\pm 1}$, $(23)^{\pm 1}$. Since their base widths are at most
$15N$, the number of maximal $t$- and $t'$-bands in $\Delta$ does
not exceed $90N<J/2.$

\unitlength 1mm 
\linethickness{0.4pt}
\ifx\plotpoint\undefined\newsavebox{\plotpoint}\fi 
\begin{picture}(270.25,62.5)(-20,0)
\put(33,34.5){\line(0,-1){22.75}}
\put(33,11.75){\line(1,0){5.75}}
\put(38.75,11.75){\line(0,1){22}}
\put(38.75,33.75){\line(-1,0){5.75}}
\put(71,43.75){\line(0,-1){39}}
\put(71,4.75){\line(1,0){6}}
\put(77,4.75){\line(0,1){38.75}}
\put(77,43.5){\line(-1,0){6}}
\multiput(70.68,43.68)(-.103175,-.031746){9}{\line(-1,0){.103175}}
\multiput(68.823,43.108)(-.103175,-.031746){9}{\line(-1,0){.103175}}
\multiput(66.965,42.537)(-.103175,-.031746){9}{\line(-1,0){.103175}}
\multiput(65.108,41.965)(-.103175,-.031746){9}{\line(-1,0){.103175}}
\multiput(63.251,41.394)(-.103175,-.031746){9}{\line(-1,0){.103175}}
\multiput(61.394,40.823)(-.103175,-.031746){9}{\line(-1,0){.103175}}
\multiput(59.537,40.251)(-.103175,-.031746){9}{\line(-1,0){.103175}}
\multiput(57.68,39.68)(-.103175,-.031746){9}{\line(-1,0){.103175}}
\multiput(55.823,39.108)(-.103175,-.031746){9}{\line(-1,0){.103175}}
\multiput(53.965,38.537)(-.103175,-.031746){9}{\line(-1,0){.103175}}
\multiput(52.108,37.965)(-.103175,-.031746){9}{\line(-1,0){.103175}}
\multiput(50.251,37.394)(-.103175,-.031746){9}{\line(-1,0){.103175}}
\multiput(48.394,36.823)(-.103175,-.031746){9}{\line(-1,0){.103175}}
\multiput(46.537,36.251)(-.103175,-.031746){9}{\line(-1,0){.103175}}
\multiput(44.68,35.68)(-.103175,-.031746){9}{\line(-1,0){.103175}}
\multiput(42.823,35.108)(-.103175,-.031746){9}{\line(-1,0){.103175}}
\multiput(40.965,34.537)(-.103175,-.031746){9}{\line(-1,0){.103175}}
\multiput(39.108,33.965)(-.103175,-.031746){9}{\line(-1,0){.103175}}
\multiput(32.43,33.93)(-.0509511,-.0326087){16}{\line(-1,0){.0509511}}
\multiput(30.799,32.886)(-.0509511,-.0326087){16}{\line(-1,0){.0509511}}
\multiput(29.169,31.843)(-.0509511,-.0326087){16}{\line(-1,0){.0509511}}
\multiput(27.538,30.799)(-.0509511,-.0326087){16}{\line(-1,0){.0509511}}
\multiput(25.908,29.756)(-.0509511,-.0326087){16}{\line(-1,0){.0509511}}
\multiput(24.278,28.712)(-.0509511,-.0326087){16}{\line(-1,0){.0509511}}
\multiput(22.647,27.669)(-.0509511,-.0326087){16}{\line(-1,0){.0509511}}
\multiput(21.017,26.625)(-.0509511,-.0326087){16}{\line(-1,0){.0509511}}
\multiput(19.386,25.582)(-.0509511,-.0326087){16}{\line(-1,0){.0509511}}
\multiput(17.756,24.538)(-.0509511,-.0326087){16}{\line(-1,0){.0509511}}
\multiput(16.125,23.495)(-.0509511,-.0326087){16}{\line(-1,0){.0509511}}
\multiput(14.495,22.451)(-.0509511,-.0326087){16}{\line(-1,0){.0509511}}
\multiput(13.68,21.93)(.0643813,-.0326087){13}{\line(1,0){.0643813}}
\multiput(15.354,21.082)(.0643813,-.0326087){13}{\line(1,0){.0643813}}
\multiput(17.028,20.234)(.0643813,-.0326087){13}{\line(1,0){.0643813}}
\multiput(18.701,19.386)(.0643813,-.0326087){13}{\line(1,0){.0643813}}
\multiput(20.375,18.538)(.0643813,-.0326087){13}{\line(1,0){.0643813}}
\multiput(22.049,17.691)(.0643813,-.0326087){13}{\line(1,0){.0643813}}
\multiput(23.723,16.843)(.0643813,-.0326087){13}{\line(1,0){.0643813}}
\multiput(25.397,15.995)(.0643813,-.0326087){13}{\line(1,0){.0643813}}
\multiput(27.071,15.147)(.0643813,-.0326087){13}{\line(1,0){.0643813}}
\multiput(28.745,14.299)(.0643813,-.0326087){13}{\line(1,0){.0643813}}
\multiput(30.419,13.451)(.0643813,-.0326087){13}{\line(1,0){.0643813}}
\multiput(32.093,12.604)(.0643813,-.0326087){13}{\line(1,0){.0643813}}
\multiput(38.68,11.68)(.159314,-.033088){6}{\line(1,0){.159314}}
\multiput(40.591,11.283)(.159314,-.033088){6}{\line(1,0){.159314}}
\multiput(42.503,10.886)(.159314,-.033088){6}{\line(1,0){.159314}}
\multiput(44.415,10.489)(.159314,-.033088){6}{\line(1,0){.159314}}
\multiput(46.327,10.091)(.159314,-.033088){6}{\line(1,0){.159314}}
\multiput(48.239,9.694)(.159314,-.033088){6}{\line(1,0){.159314}}
\multiput(50.15,9.297)(.159314,-.033088){6}{\line(1,0){.159314}}
\multiput(52.062,8.9)(.159314,-.033088){6}{\line(1,0){.159314}}
\multiput(53.974,8.503)(.159314,-.033088){6}{\line(1,0){.159314}}
\multiput(55.886,8.106)(.159314,-.033088){6}{\line(1,0){.159314}}
\multiput(57.797,7.709)(.159314,-.033088){6}{\line(1,0){.159314}}
\multiput(59.709,7.312)(.159314,-.033088){6}{\line(1,0){.159314}}
\multiput(61.621,6.915)(.159314,-.033088){6}{\line(1,0){.159314}}
\multiput(63.533,6.518)(.159314,-.033088){6}{\line(1,0){.159314}}
\multiput(65.444,6.121)(.159314,-.033088){6}{\line(1,0){.159314}}
\multiput(67.356,5.724)(.159314,-.033088){6}{\line(1,0){.159314}}
\multiput(69.268,5.327)(.159314,-.033088){6}{\line(1,0){.159314}}
\put(62.68,40.68){\line(1,0){.95}}
\put(64.58,40.68){\line(1,0){.95}}
\put(66.48,40.68){\line(1,0){.95}}
\put(68.38,40.68){\line(1,0){.95}}
\put(70.28,40.68){\line(1,0){.95}}
\put(72.18,40.68){\line(1,0){.95}}
\put(74.08,40.68){\line(1,0){.95}}
\put(75.98,40.68){\line(1,0){.95}}
\put(55.68,38.68){\line(1,0){.9545}}
\put(57.589,38.68){\line(1,0){.9545}}
\put(59.498,38.68){\line(1,0){.9545}}
\put(61.407,38.68){\line(1,0){.9545}}
\put(63.316,38.68){\line(1,0){.9545}}
\put(65.225,38.68){\line(1,0){.9545}}
\put(67.134,38.68){\line(1,0){.9545}}
\put(69.043,38.68){\line(1,0){.9545}}
\put(70.952,38.68){\line(1,0){.9545}}
\put(72.862,38.68){\line(1,0){.9545}}
\put(74.771,38.68){\line(1,0){.9545}}
\put(50.68,37.43){\line(1,0){.963}}
\put(52.606,37.393){\line(1,0){.963}}
\put(54.532,37.356){\line(1,0){.963}}
\put(56.457,37.319){\line(1,0){.963}}
\put(58.383,37.282){\line(1,0){.963}}
\put(60.309,37.245){\line(1,0){.963}}
\put(62.235,37.207){\line(1,0){.963}}
\put(64.161,37.17){\line(1,0){.963}}
\put(66.087,37.133){\line(1,0){.963}}
\put(68.013,37.096){\line(1,0){.963}}
\put(69.939,37.059){\line(1,0){.963}}
\put(71.865,37.022){\line(1,0){.963}}
\put(73.791,36.985){\line(1,0){.963}}
\put(75.717,36.948){\line(1,0){.963}}
\put(44.93,35.93){\line(1,0){.9697}}
\put(46.869,35.899){\line(1,0){.9697}}
\put(48.808,35.869){\line(1,0){.9697}}
\put(50.748,35.839){\line(1,0){.9697}}
\put(52.687,35.808){\line(1,0){.9697}}
\put(54.627,35.778){\line(1,0){.9697}}
\put(56.566,35.748){\line(1,0){.9697}}
\put(58.505,35.718){\line(1,0){.9697}}
\put(60.445,35.687){\line(1,0){.9697}}
\put(62.384,35.657){\line(1,0){.9697}}
\put(64.324,35.627){\line(1,0){.9697}}
\put(66.263,35.596){\line(1,0){.9697}}
\put(68.202,35.566){\line(1,0){.9697}}
\put(70.142,35.536){\line(1,0){.9697}}
\put(72.081,35.505){\line(1,0){.9697}}
\put(74.021,35.475){\line(1,0){.9697}}
\put(75.96,35.445){\line(1,0){.9697}}
\put(29.93,31.93){\line(1,0){.9947}}
\put(31.919,31.898){\line(1,0){.9947}}
\put(33.908,31.866){\line(1,0){.9947}}
\put(35.898,31.834){\line(1,0){.9947}}
\put(37.887,31.802){\line(1,0){.9947}}
\put(39.877,31.77){\line(1,0){.9947}}
\put(41.866,31.738){\line(1,0){.9947}}
\put(43.855,31.706){\line(1,0){.9947}}
\put(45.845,31.674){\line(1,0){.9947}}
\put(47.834,31.642){\line(1,0){.9947}}
\put(49.823,31.611){\line(1,0){.9947}}
\put(51.813,31.579){\line(1,0){.9947}}
\put(53.802,31.547){\line(1,0){.9947}}
\put(55.791,31.515){\line(1,0){.9947}}
\put(57.781,31.483){\line(1,0){.9947}}
\put(59.77,31.451){\line(1,0){.9947}}
\put(61.759,31.419){\line(1,0){.9947}}
\put(63.749,31.387){\line(1,0){.9947}}
\put(65.738,31.355){\line(1,0){.9947}}
\put(67.728,31.323){\line(1,0){.9947}}
\put(69.717,31.291){\line(1,0){.9947}}
\put(71.706,31.259){\line(1,0){.9947}}
\put(73.696,31.228){\line(1,0){.9947}}
\put(75.685,31.196){\line(1,0){.9947}}
\put(26.18,29.93){\line(1,0){.9853}}
\put(28.15,29.89){\line(1,0){.9853}}
\put(30.121,29.851){\line(1,0){.9853}}
\put(32.091,29.812){\line(1,0){.9853}}
\put(34.062,29.773){\line(1,0){.9853}}
\put(36.033,29.734){\line(1,0){.9853}}
\put(38.003,29.694){\line(1,0){.9853}}
\put(39.974,29.655){\line(1,0){.9853}}
\put(41.944,29.616){\line(1,0){.9853}}
\put(43.915,29.577){\line(1,0){.9853}}
\put(45.886,29.538){\line(1,0){.9853}}
\put(47.856,29.498){\line(1,0){.9853}}
\put(49.827,29.459){\line(1,0){.9853}}
\put(51.797,29.42){\line(1,0){.9853}}
\put(53.768,29.381){\line(1,0){.9853}}
\put(55.739,29.341){\line(1,0){.9853}}
\put(57.709,29.302){\line(1,0){.9853}}
\put(59.68,29.263){\line(1,0){.9853}}
\put(61.65,29.224){\line(1,0){.9853}}
\put(63.621,29.185){\line(1,0){.9853}}
\put(65.591,29.145){\line(1,0){.9853}}
\put(67.562,29.106){\line(1,0){.9853}}
\put(69.533,29.067){\line(1,0){.9853}}
\put(71.503,29.028){\line(1,0){.9853}}
\put(73.474,28.989){\line(1,0){.9853}}
\put(75.444,28.949){\line(1,0){.9853}}
\put(23.68,28.18){\line(1,0){.9861}}
\put(25.652,28.143){\line(1,0){.9861}}
\put(27.624,28.106){\line(1,0){.9861}}
\put(29.596,28.069){\line(1,0){.9861}}
\put(31.569,28.032){\line(1,0){.9861}}
\put(33.541,27.995){\line(1,0){.9861}}
\put(35.513,27.957){\line(1,0){.9861}}
\put(37.485,27.92){\line(1,0){.9861}}
\put(39.457,27.883){\line(1,0){.9861}}
\put(41.43,27.846){\line(1,0){.9861}}
\put(43.402,27.809){\line(1,0){.9861}}
\put(45.374,27.772){\line(1,0){.9861}}
\put(47.346,27.735){\line(1,0){.9861}}
\put(49.319,27.698){\line(1,0){.9861}}
\put(51.291,27.661){\line(1,0){.9861}}
\put(53.263,27.624){\line(1,0){.9861}}
\put(55.235,27.587){\line(1,0){.9861}}
\put(57.207,27.55){\line(1,0){.9861}}
\put(59.18,27.513){\line(1,0){.9861}}
\put(61.152,27.476){\line(1,0){.9861}}
\put(63.124,27.439){\line(1,0){.9861}}
\put(65.096,27.402){\line(1,0){.9861}}
\put(67.069,27.365){\line(1,0){.9861}}
\put(69.041,27.328){\line(1,0){.9861}}
\put(71.013,27.291){\line(1,0){.9861}}
\put(72.985,27.254){\line(1,0){.9861}}
\put(74.957,27.217){\line(1,0){.9861}}
\put(21.93,26.68){\line(1,0){.9864}}
\put(23.902,26.634){\line(1,0){.9864}}
\put(25.875,26.589){\line(1,0){.9864}}
\put(27.848,26.543){\line(1,0){.9864}}
\put(29.821,26.498){\line(1,0){.9864}}
\put(31.793,26.452){\line(1,0){.9864}}
\put(33.766,26.407){\line(1,0){.9864}}
\put(35.739,26.362){\line(1,0){.9864}}
\put(37.712,26.316){\line(1,0){.9864}}
\put(39.684,26.271){\line(1,0){.9864}}
\put(41.657,26.225){\line(1,0){.9864}}
\put(43.63,26.18){\line(1,0){.9864}}
\put(45.602,26.134){\line(1,0){.9864}}
\put(47.575,26.089){\line(1,0){.9864}}
\put(49.548,26.043){\line(1,0){.9864}}
\put(51.521,25.998){\line(1,0){.9864}}
\put(53.493,25.952){\line(1,0){.9864}}
\put(55.466,25.907){\line(1,0){.9864}}
\put(57.439,25.862){\line(1,0){.9864}}
\put(59.412,25.816){\line(1,0){.9864}}
\put(61.384,25.771){\line(1,0){.9864}}
\put(63.357,25.725){\line(1,0){.9864}}
\put(65.33,25.68){\line(1,0){.9864}}
\put(67.302,25.634){\line(1,0){.9864}}
\put(69.275,25.589){\line(1,0){.9864}}
\put(71.248,25.543){\line(1,0){.9864}}
\put(73.221,25.498){\line(1,0){.9864}}
\put(75.193,25.452){\line(1,0){.9864}}
\put(17.68,24.43){\line(1,0){.9833}}
\put(19.646,24.413){\line(1,0){.9833}}
\put(21.613,24.396){\line(1,0){.9833}}
\put(23.58,24.38){\line(1,0){.9833}}
\put(25.546,24.363){\line(1,0){.9833}}
\put(27.513,24.346){\line(1,0){.9833}}
\put(29.48,24.33){\line(1,0){.9833}}
\put(31.446,24.313){\line(1,0){.9833}}
\put(33.413,24.296){\line(1,0){.9833}}
\put(35.38,24.28){\line(1,0){.9833}}
\put(37.346,24.263){\line(1,0){.9833}}
\put(39.313,24.246){\line(1,0){.9833}}
\put(41.28,24.23){\line(1,0){.9833}}
\put(43.246,24.213){\line(1,0){.9833}}
\put(45.213,24.196){\line(1,0){.9833}}
\put(47.18,24.18){\line(1,0){.9833}}
\put(49.146,24.163){\line(1,0){.9833}}
\put(51.113,24.146){\line(1,0){.9833}}
\put(53.08,24.13){\line(1,0){.9833}}
\put(55.046,24.113){\line(1,0){.9833}}
\put(57.013,24.096){\line(1,0){.9833}}
\put(58.98,24.08){\line(1,0){.9833}}
\put(60.946,24.063){\line(1,0){.9833}}
\put(62.913,24.046){\line(1,0){.9833}}
\put(64.88,24.03){\line(1,0){.9833}}
\put(66.846,24.013){\line(1,0){.9833}}
\put(68.813,23.996){\line(1,0){.9833}}
\put(70.78,23.98){\line(1,0){.9833}}
\put(72.746,23.963){\line(1,0){.9833}}
\put(74.713,23.946){\line(1,0){.9833}}
\put(14.68,22.18){\line(1,0){.9919}}
\put(16.664,22.188){\line(1,0){.9919}}
\put(18.647,22.196){\line(1,0){.9919}}
\put(20.631,22.204){\line(1,0){.9919}}
\put(22.615,22.212){\line(1,0){.9919}}
\put(24.599,22.22){\line(1,0){.9919}}
\put(26.583,22.228){\line(1,0){.9919}}
\put(28.567,22.236){\line(1,0){.9919}}
\put(30.551,22.244){\line(1,0){.9919}}
\put(32.535,22.252){\line(1,0){.9919}}
\put(34.518,22.26){\line(1,0){.9919}}
\put(36.502,22.268){\line(1,0){.9919}}
\put(38.486,22.276){\line(1,0){.9919}}
\put(40.47,22.285){\line(1,0){.9919}}
\put(42.454,22.293){\line(1,0){.9919}}
\put(44.438,22.301){\line(1,0){.9919}}
\put(46.422,22.309){\line(1,0){.9919}}
\put(48.406,22.317){\line(1,0){.9919}}
\put(50.389,22.325){\line(1,0){.9919}}
\put(52.373,22.333){\line(1,0){.9919}}
\put(54.357,22.341){\line(1,0){.9919}}
\put(56.341,22.349){\line(1,0){.9919}}
\put(58.325,22.357){\line(1,0){.9919}}
\put(60.309,22.365){\line(1,0){.9919}}
\put(62.293,22.373){\line(1,0){.9919}}
\put(64.276,22.381){\line(1,0){.9919}}
\put(66.26,22.389){\line(1,0){.9919}}
\put(68.244,22.397){\line(1,0){.9919}}
\put(70.228,22.406){\line(1,0){.9919}}
\put(72.212,22.414){\line(1,0){.9919}}
\put(74.196,22.422){\line(1,0){.9919}}
\put(16.93,20.18){\line(1,0){.9917}}
\put(18.913,20.171){\line(1,0){.9917}}
\put(20.896,20.163){\line(1,0){.9917}}
\put(22.88,20.155){\line(1,0){.9917}}
\put(24.863,20.146){\line(1,0){.9917}}
\put(26.846,20.138){\line(1,0){.9917}}
\put(28.83,20.13){\line(1,0){.9917}}
\put(30.813,20.121){\line(1,0){.9917}}
\put(32.796,20.113){\line(1,0){.9917}}
\put(34.78,20.105){\line(1,0){.9917}}
\put(36.763,20.096){\line(1,0){.9917}}
\put(38.746,20.088){\line(1,0){.9917}}
\put(40.73,20.08){\line(1,0){.9917}}
\put(42.713,20.071){\line(1,0){.9917}}
\put(44.696,20.063){\line(1,0){.9917}}
\put(46.68,20.055){\line(1,0){.9917}}
\put(48.663,20.046){\line(1,0){.9917}}
\put(50.646,20.038){\line(1,0){.9917}}
\put(52.63,20.03){\line(1,0){.9917}}
\put(54.613,20.021){\line(1,0){.9917}}
\put(56.596,20.013){\line(1,0){.9917}}
\put(58.58,20.005){\line(1,0){.9917}}
\put(60.563,19.996){\line(1,0){.9917}}
\put(62.546,19.988){\line(1,0){.9917}}
\put(64.53,19.98){\line(1,0){.9917}}
\put(66.513,19.971){\line(1,0){.9917}}
\put(68.496,19.963){\line(1,0){.9917}}
\put(70.48,19.955){\line(1,0){.9917}}
\put(72.463,19.946){\line(1,0){.9917}}
\put(74.446,19.938){\line(1,0){.9917}}
\put(20.93,18.43){\line(1,0){.9911}}
\put(22.912,18.421){\line(1,0){.9911}}
\put(24.894,18.412){\line(1,0){.9911}}
\put(26.876,18.403){\line(1,0){.9911}}
\put(28.858,18.394){\line(1,0){.9911}}
\put(30.84,18.385){\line(1,0){.9911}}
\put(32.823,18.376){\line(1,0){.9911}}
\put(34.805,18.367){\line(1,0){.9911}}
\put(36.787,18.358){\line(1,0){.9911}}
\put(38.769,18.349){\line(1,0){.9911}}
\put(40.751,18.34){\line(1,0){.9911}}
\put(42.733,18.331){\line(1,0){.9911}}
\put(44.715,18.323){\line(1,0){.9911}}
\put(46.698,18.314){\line(1,0){.9911}}
\put(48.68,18.305){\line(1,0){.9911}}
\put(50.662,18.296){\line(1,0){.9911}}
\put(52.644,18.287){\line(1,0){.9911}}
\put(54.626,18.278){\line(1,0){.9911}}
\put(56.608,18.269){\line(1,0){.9911}}
\put(58.59,18.26){\line(1,0){.9911}}
\put(60.573,18.251){\line(1,0){.9911}}
\put(62.555,18.242){\line(1,0){.9911}}
\put(64.537,18.233){\line(1,0){.9911}}
\put(66.519,18.224){\line(1,0){.9911}}
\put(68.501,18.215){\line(1,0){.9911}}
\put(70.483,18.206){\line(1,0){.9911}}
\put(72.465,18.198){\line(1,0){.9911}}
\put(74.448,18.189){\line(1,0){.9911}}
\put(25.18,16.18){\line(1,0){.9856}}
\put(27.151,16.189){\line(1,0){.9856}}
\put(29.122,16.199){\line(1,0){.9856}}
\put(31.093,16.209){\line(1,0){.9856}}
\put(33.064,16.218){\line(1,0){.9856}}
\put(35.035,16.228){\line(1,0){.9856}}
\put(37.007,16.237){\line(1,0){.9856}}
\put(38.978,16.247){\line(1,0){.9856}}
\put(40.949,16.257){\line(1,0){.9856}}
\put(42.92,16.266){\line(1,0){.9856}}
\put(44.891,16.276){\line(1,0){.9856}}
\put(46.862,16.285){\line(1,0){.9856}}
\put(48.834,16.295){\line(1,0){.9856}}
\put(50.805,16.305){\line(1,0){.9856}}
\put(52.776,16.314){\line(1,0){.9856}}
\put(54.747,16.324){\line(1,0){.9856}}
\put(56.718,16.334){\line(1,0){.9856}}
\put(58.689,16.343){\line(1,0){.9856}}
\put(60.66,16.353){\line(1,0){.9856}}
\put(62.632,16.362){\line(1,0){.9856}}
\put(64.603,16.372){\line(1,0){.9856}}
\put(66.574,16.382){\line(1,0){.9856}}
\put(68.545,16.391){\line(1,0){.9856}}
\put(70.516,16.401){\line(1,0){.9856}}
\put(72.487,16.41){\line(1,0){.9856}}
\put(74.459,16.42){\line(1,0){.9856}}
\put(28.43,14.18){\line(1,0){.9948}}
\put(30.419,14.169){\line(1,0){.9948}}
\put(32.409,14.159){\line(1,0){.9948}}
\put(34.398,14.148){\line(1,0){.9948}}
\put(36.388,14.138){\line(1,0){.9948}}
\put(38.378,14.128){\line(1,0){.9948}}
\put(40.367,14.117){\line(1,0){.9948}}
\put(42.357,14.107){\line(1,0){.9948}}
\put(44.346,14.096){\line(1,0){.9948}}
\put(46.336,14.086){\line(1,0){.9948}}
\put(48.326,14.076){\line(1,0){.9948}}
\put(50.315,14.065){\line(1,0){.9948}}
\put(52.305,14.055){\line(1,0){.9948}}
\put(54.294,14.044){\line(1,0){.9948}}
\put(56.284,14.034){\line(1,0){.9948}}
\put(58.273,14.023){\line(1,0){.9948}}
\put(60.263,14.013){\line(1,0){.9948}}
\put(62.253,14.003){\line(1,0){.9948}}
\put(64.242,13.992){\line(1,0){.9948}}
\put(66.232,13.982){\line(1,0){.9948}}
\put(68.221,13.971){\line(1,0){.9948}}
\put(70.211,13.961){\line(1,0){.9948}}
\put(72.201,13.951){\line(1,0){.9948}}
\put(74.19,13.94){\line(1,0){.9948}}
\put(49.93,9.68){\line(1,0){.9907}}
\put(51.911,9.643){\line(1,0){.9907}}
\put(53.893,9.606){\line(1,0){.9907}}
\put(55.874,9.569){\line(1,0){.9907}}
\put(57.856,9.532){\line(1,0){.9907}}
\put(59.837,9.495){\line(1,0){.9907}}
\put(61.819,9.457){\line(1,0){.9907}}
\put(63.8,9.42){\line(1,0){.9907}}
\put(65.782,9.383){\line(1,0){.9907}}
\put(67.763,9.346){\line(1,0){.9907}}
\put(69.745,9.309){\line(1,0){.9907}}
\put(71.726,9.272){\line(1,0){.9907}}
\put(73.707,9.235){\line(1,0){.9907}}
\put(75.689,9.198){\line(1,0){.9907}}
\put(57.68,7.68){\line(1,0){.9868}}
\put(59.653,7.653){\line(1,0){.9868}}
\put(61.627,7.627){\line(1,0){.9868}}
\put(63.601,7.601){\line(1,0){.9868}}
\put(65.574,7.574){\line(1,0){.9868}}
\put(67.548,7.548){\line(1,0){.9868}}
\put(69.522,7.522){\line(1,0){.9868}}
\put(71.495,7.495){\line(1,0){.9868}}
\put(73.469,7.469){\line(1,0){.9868}}
\put(75.443,7.443){\line(1,0){.9868}}
\put(67.43,5.68){\line(1,0){.9}}
\put(69.23,5.78){\line(1,0){.9}}
\put(71.03,5.88){\line(1,0){.9}}
\put(72.83,5.98){\line(1,0){.9}}
\put(74.63,6.08){\line(1,0){.9}}
\put(73.25,23){$\cal C$}
\put(35.25,23.25){$\cal C'$}
\put(39,33.5){\line(1,0){37.75}}
\put(39.25,11.75){\line(1,0){37.75}}
\put(79.75,8.5){$a$}
\put(79.5,22.5){$h'$}
\put(79,38.75){$b$}
\put(74.25,2){$o_1$}
\put(35.5,9){$o_2$}
\put(35,36.25){$o'_2$}
\put(72.75,46.25){$o_3$}
\put(53.25,24.25){$\Gamma$}
\put(52.5,41){$z^{\Gamma}$}
\put(35.75,34.25){\circle*{1.118}}
\put(35.75,12.25){\circle*{1}}
\put(74,43.25){\circle*{1.5}}
\put(73.75,4.5){\circle*{1.118}}
\put(294,62.25){\circle*{.5}}
\end{picture}

Now we will prove that
\begin{equation}\label{numinus}
\nu^c_J(\Delta')<\nu^c_J(\Gamma)-(h')^2
\end{equation}
Recall that $\nu^c_J(\Gamma)=\nu_J({\bf z}^{\Gamma})$ by Lemma \ref{positive} (b). So we consider the two-colored string of beads
responsible for the $\nu_J$-mixture of ${\bf z}^{\Gamma}.$
 Denote by $o_1$ and $o_3$ the black beads on the
two ends of $\cal C$ and by $o_2, o_2'$ the black beads on the two
ends of $\cal C'.$ We have $h'$ white beads between $o_2$ and
$o'_2$, $a$ white beads between $o_1$ and $o_2$ and $b$ white beads
between $o'_2$ and $o_3$ for some $a,b\ge 0.$ Thus, $a+h'+b=h.$ When
we pass from ${\bf z}^{\Gamma}$ to to ${\bf z}^{\Delta'}$, we delete at least two
black beads $o_2,o'_2.$ But the number of black beads between the
vertices $o_1, o_3$ is less than $J$. Hence we may apply Lemma
\ref{mixturec}, parts (d,c),  and obtain that $\nu^c_J(\Delta')\le
\nu^c_J(\Gamma)-a(h'+b)-b(h'+a).$ But here $a(h'+b)+b(h'+a)>(h')^2$
since $a+b>h'.$ So the inequality \ref{numinus} is obtained.

Now by Lemmas \ref{comb} and \ref{simple},
\begin{equation}\label{arg'}
\area(\Gamma')\le
60N(h')^2+ 2(\delta')\iv[\Gamma']
\end{equation}
 Since  by Lemma \ref{mixturec}(a)
and (\ref{numinus}),
\begin{equation}\label{muc21}
\lambda^c(\Gamma')\ge
-\lambda^c({\bf y}^{\Gamma'})> - (h')^2/2 \ge (-\nu_J^c(\Gamma)+ \nu_J^c(\Delta'))/2
\end{equation}
we deduce  from (\ref{arg'}) and (\ref{muc21}) that
$$\area(\Gamma')\le (60N +c_1/2- c_1/2)(\nu_J^c(\Gamma)- \nu_J^c(\Delta'))+
2(\delta')\iv[\Gamma']$$ $$= (60N+\frac{c_1}{2}) (\nu_J^c(\Gamma)- \nu_J^c(\Delta')) -\frac{c_1}{2}(\nu_J^c(\Gamma)- \nu_J^c(\Delta'))+
2(\delta')\iv[\Gamma']\le $$ $$(60N+\frac{c_1}{2})(\nu_J^c(\Gamma)- \nu_J^c(\Delta'))  +c_1\lambda^c(\Gamma')
+2(\delta')\iv[\Gamma']
 \le c_1(\nu_J^c(\Gamma)- \nu_J^c(\Delta') +\lambda^c(\Gamma')+[\Gamma']),$$
because $c_1\ge
61N+c_1/2$  and  $c_1\ge 2(\delta')^{-1},$ and the lemma is proved.

\endproof

 \begin{lemma} \label{123} Let $\Delta$ be a
 comb with
history $H^{\Delta}$  of type $(1)(12)(2)(23)(3)$, where the (1)-part and the (3)-part of $\Delta$ can be empty. Assume that the base
width $b$ of $\Delta$ satisfies inequalities $4N<b\le 15N.$ Then either
(a)   $\Gamma$ admits a long quasicomb with
\begin{equation}\label{b}
\area(\Gamma)\le
c_3[\Gamma]+c_2\mu^c(\Gamma)+c_3(\nu_J^c(\Delta)-
\nu_J^c(\Delta\backslash \Gamma)) \;\;\;\; or,
\end{equation}

(b) $\Delta$ has a maximal $t^{\pm 1}$ or $(t')^{\pm 1}$-band of
length $l$, where $T_i\le l<10 T_i$ for some $i$.
\end{lemma}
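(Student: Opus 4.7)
The plan is to perform a structural case analysis on the longest $t^{\pm 1}$- or $(t')^{\pm 1}$-band of $\Delta$: either its length falls into an interval $[T_i,10T_i)$ and case (b) holds, or the history of $\Delta$ is so asymmetric that Lemma \ref{two01} applies (to a suitable subcomb with $t$- or $t'$-handle) and produces a long subcomb satisfying (\ref{b}).

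I would begin by observing that since the history of $\Delta$ contains both $(12)^{\pm 1}$- and $(23)^{\pm 1}$-rules and the base width $b$ exceeds $N$, Property \ref{vii} gives that the base of $\Delta$ is normal; since $b>4N$ and $N+1$ is the shortest length of a subword of the hub base containing two $t$-letters, $\Delta$ contains a subtrapezium $\Delta_0$ with the standard $M_4$-base $tk\ldots k't'$. The $(12)H_2(23)$-portion of $\Delta_0$ is then $M_4$-admissible in the sense of Lemma \ref{M40}, so $H_2$ encodes an accepting $M_3$-computation and therefore $|(12)H_2(23)| = h_2+2 = T_i$ for some index $i\ge 1$.

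Next, let $\cal T^*$ be a maximal $t^{\pm 1}$- or $(t')^{\pm 1}$-band of $\Delta$ that is a derivative of the handle $\cal C$, with length $l$. Since every $\theta$-band of $\Delta$ crosses $\cal T^*$, we have $l = h = h_1 + T_i + h_3 \ge T_i$. Pick the largest $j$ with $T_j \le l$. If $l<10 T_j$, case (b) is established; otherwise $l\ge 10T_j\ge 10 T_i$, so $h_1+h_3\ge 9T_i$, and, possibly after replacing $\Delta$ by its mirror copy (Remark \ref{diasym}), we may assume $h_3\ge 4.5 T_i>3(h_2+2)$. Consider the subcomb $\Gamma$ of $\Delta$ whose handle is $\cal T^*$. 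Its history admits the firm factorization $H\equiv H_1\cdot (12)H_2(23)\cdot H_3$, where $(12)H_2(23)$ is of Step $(2)$ and $H_3$ is of Step $(3)$, with $h(3)\ge 3h(2)$ and $h(3)\ge 0.45 h\ge 0.01 h$; the $(12)$- and $(23)$-bands are not simple because they are crossed by the intermediate $q$-bands of the $M_4$-base sitting inside the base of $\Delta$. Applying Lemma \ref{two01} (after verifying the remaining technical hypotheses, discussed below) yields a long subcomb $\Gamma_*\subseteq \Gamma\subseteq\Delta$ with $\area(\Gamma_*)\le c_3[\Gamma_*]+c_2\mu^c(\Gamma_*)$. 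By Lemma \ref{mu}(e), $\nu_J^c(\Delta)-\nu_J^c(\Delta\setminus\Gamma_*)\ge 0$, so setting $\Gamma:=\Gamma_*$ yields (\ref{b}).

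The principal obstacles are twofold. First, the technical hypotheses of Lemma \ref{two01} — the absence of $(\theta,a)$-cells between $\cal T^*$ and the derivative band crossing the $(1)$-$(2)$ transition band, and the restriction that passive $k$- and $k'$-cells appear only inside $(12)$- and $(23)$-bands in the Step-$(2)$ and Step-$(3)$ parts — do not follow automatically and must be secured, possibly by first eliminating obstructive $(p p^{-1}p)^{\pm 1}$-patterns via Lemma \ref{ppp} or by descending to a still smaller subcomb whose firm factorization preserves the ratio $h(3)/h(2)\ge 3$. Second, one must handle the degenerate case where no maximal $t^{\pm 1}$- or $(t')^{\pm 1}$-band of $\Delta$ is a derivative of $\cal C$ (so that $l<h$ and the equation $l=h_1+T_i+h_3$ fails); there, I would instead apply Lemma \ref{nuK} to a $t$- or $(t')$-handled subcomb $\Gamma'$ with $h'<h/2$, which transfers the required area bound up to a $\nu_J^c$-defect, and this defect is exactly what is absorbed by the term $c_3(\nu_J^c(\Delta)-\nu_J^c(\Delta\setminus\Gamma))$ in (\ref{b}).
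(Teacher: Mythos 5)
There is a genuine gap. Your main case rests on the claim that some maximal $t^{\pm 1}$- or $(t')^{\pm 1}$-band ${\cal T}^*$ of $\Delta$ is crossed by \emph{every} maximal $\theta$-band, so that its length satisfies $l=h=h_1+T_i+h_3$. This is unjustified and false in general: the handle of $\Delta$ is not assumed to be a $t$- or $t'$-band, and a maximal $t$- or $t'$-band inside $\Delta$ (even a derivative of the handle) can be much shorter than $h$. Without $l=h$ your deduction of $h_1+h_3\ge 9T_i$, of $h_3\ge 0.45h$ (needed for the hypothesis $h(3)\ge 0.01h$ of Lemma \ref{two01}), and your dichotomy producing case (b) all collapse. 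The situation in which the $t$- and $t'$-bands crossing the wide part of $\Delta$ are short relative to the subcombs containing them is exactly the hard case: the paper resolves it by taking \emph{two} nested subcombs $\Gamma'\subseteq\Gamma$ whose handles are a $t^{\pm1}$- and a $(t')^{\pm1}$-band crossing a filling trapezium of base width $\ge N+1$, and by invoking Lemma \ref{nuK}: if $h'<h/2$ that lemma (combined with Lemmas \ref{positive} and \ref{mu}(d,e)) already yields alternative (a) with the $\nu_J$-defect term, and if $h'\ge h/2$ a short count (either $10h(2)>h$ or $10h'(2)>h'$, giving case (b); otherwise either $h(1)\ge 0.3h$, and Lemma \ref{two01} applies to $\Gamma^{-1}$, or $h'(3)\ge 0.3h'\ge 3h'(2)$, and it applies to $\Gamma'$) finishes the proof. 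Your ``degenerate case'' remark does not substitute for this: the dichotomy is not about ${\cal T}^*$ being a derivative of the handle, and Lemma \ref{nuK} cannot be applied with $\Delta$ as the ambient comb, since it requires the \emph{enclosing} comb, not only the subcomb, to have a $t^{\pm1}$- or $(t')^{\pm1}$-handle; it must be applied to the pair $\Gamma'\subseteq\Gamma$ and then transported to $\Delta$ via Lemma \ref{mu}(e).

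Two further points. Property \ref{vii} is a statement about trapezia, so normality of the base cannot be read off the comb $\Delta$ directly; the paper first passes to a subcomb $\Delta_1$ of base width $>3N$ whose filling trapezium ${\bf T}=Tp({\cal D}_1,{\cal D})$ has width $\ge N+1$ (disposing of the one-Step and transition-free cases by Lemmas \ref{oneage} and \ref{bez12}), and only then applies \ref{vii} to ${\bf T}$ to get the standard subtrapezium of height $T_i$ and the existence of the needed $t$- and $t'$-bands. Also, the ``technical hypotheses'' of Lemma \ref{two01} that you defer are not secured by Lemma \ref{ppp}; they follow from the machine properties \ref{kk'} and \ref{i}, once ${\cal C}$ and ${\cal C}'$ are chosen not to correspond to the first letter of the base of ${\bf T}$ -- the same choice that makes the Step factorizations firm.
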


\proof $\Delta$ has a regular subcomb $\Delta_1$
of base widths $b_1>3N$
such that the base widths of the trapezium ${\bf T}=Tp({\cal D}_1,
\cal D)$
is at least $N+1$, where ${\cal D}_1$
and ${\cal D}$ are the handles of $\Delta_1$
and $\Delta$, respectively.  If the history of $\Delta_1$ has one
Step, then the Property (a) of the lemma is a consequence of Lemma
\ref{oneage} since in this case $\lambda^c(\Delta)\ge -\lambda({\bf y}^{\Delta})=0,$
and $\nu_J^c(\Delta)-
\nu_J^c(\Delta\backslash \Gamma)\ge 0$ by Lemmas \ref{positive} and \ref{mu} (d). It is a consequence of Lemma \ref{bez12} if the
history of $\Delta_1$ has no $(12)^{\pm 1}$- or $(23)^{\pm 1}$-rules.  Hence we may assume that the
history of $\Delta_1$ is of type $(1)(12)(2)(23)(3)$ as well.

 Since the history of ${\bf T}$ has
both  rules $(12)^{\pm 1}$ and $(23)^{\pm 1}$, the base of ${\bf T}$
is normal by \ref{vii}, and since the base of ${\bf T}$ has at least $N+1$ letters,
$\bf T$ contains a standard subtrapezium, and so the subtrapezium of ${\bf T}$ bounded
by the $(12)$- and $(23)$-bands has height $T_i$ for some $i$.

Since $\bf T$ has a normal base of length $\ge N+1,$ its base must contain a letter $t^{\pm 1}$ and a
letter $(t')^{\pm 1}$.  Denote by $\cal C$ ($\cal C'$) a maximal
$t^{\pm 1}$-band ($(t')^{\pm 1}$-band) of $\Delta$ crossing ${\bf T}.$
We may assume that neither of them corresponds to
the first letter of the base of $\bf T$
since otherwise this  normal base of length $N+1$ has one more $t^{\pm 1}$ or $(t')^{\pm 1}$-letter, respectively, and one can
select one of the bands $\cal C, C'$ closer to $\cal D.$

By $\Gamma$ and $\Gamma'$, we denote the subcombs with handles $\cal
C$ and $\cal C'$, respectively. Let the histories of these handles
be $H$ and $H'$. Without loss of the generality of our further
proof, we assume that $\Gamma'$ is contained in $\Gamma$.
Since $0\le \nu^c_J(\Gamma)-\nu^c_J(\Gamma\backslash\Gamma')\le
\nu^c_J(\Delta)-\nu^c_J(\Delta\backslash\Gamma')$ by Lemmas \ref{positive} (b) and \ref{mu}
(d,e), we may assume by  Lemma \ref{nuK}, that $h'=h^{\Gamma'}\ge
h/2$.

Let $H\equiv H(1)H(2)H(3)$ and $H'\equiv H'(1)H'(2)H'(3)$ be the Step
factorizations.
Since the left-most $q$-band of ${\bf T}$ is not a
subband of $\cal C'$ or $\cal C,$
the maximal
$(12)$- and $(23)$-bands of $\Gamma'$ and $\Gamma$ are not simple, and so the factorizations
$H\equiv H(1)H(2)H(3)$ and $H'\equiv H'(1)H'(2)H'(3)$ are firm.
Recall  that by \ref{kk'}, every $k$-cell of the $H(1)$- and $H(2)$-parts of $\Gamma$ (every $k'$-cell of the $H(2)$- and $H(3)$-parts of $\Gamma$) is not passive unless it belongs to a $(12)$- or $(23)$-band.

One may assume
that $10h'(2)\le h'$ because $h'(2)=T_i$ and if $10h'(2) >  h'$  then the length of $\cal C'$ belongs
to the segment $[T_i, 10T_i)$,
and we obtain Property (b).
Similarly we may assume that $10h(2)\le h.$

If $h(1)\ge 0.3 h$, then $h(1)\ge 3h(2)$, and one can apply Lemma
\ref{two01} to $\Gamma^{-1}$ and obtain the desired estimate (\ref{b})
for $\area(\Gamma)$. If $h(1)<0.3 h$, then $h'(1)\le h(1)<0.6 h'$
since $h'\ge h/2$. It follows that $h'(3)=h'-h'(1)-h'(2) \ge
(1-0.6-0.1)h'\ge 0.3h'\ge 3h'(2).$ Now one can apply Lemma \ref{two01}
to $\Gamma'$ and obtain the required estimate  (\ref{b}) for
$\area(\Gamma').$
\endproof

\begin{lemma}\label{podslovo}
Let $\Gamma$ be a regular comb of width $b\le 15N$ with a handle
$\cal C$ containing both $(12)$- and $(23)$-cells, and the history
$H$ of $\Gamma$ contains, in its Step factorization,  a product
$H(1)H(2)H(3)$, where $h(2)\ge h/30$ and $h(1)+h(3)< h(2)/2$. Let
one of the derivative bands ${\cal C}_i$ be a $k^{\pm 1}$- or
$(k')^{\pm 1}$-band which crosses all the maximal $\theta$-bands of
the $H(2)$-part of $\Gamma.$
Assume also that either

(a) $\cal C$ is a $t^{\pm 1}$-band, and $H(1)H(2)H(3)$ is of the form
$(2)(1)(2)$ or

(b) $\cal C$ is a $(t')^{\pm 1}$-band, and $H(1)H(2)H(3)$ is of the form
$(2)(3)(2).$

Then $\area(\Gamma)\le c_3[\Gamma]+c_2\mu^c(\Gamma).$
\end{lemma}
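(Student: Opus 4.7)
The plan is to follow the strategy of Lemma \ref{two01}, adapted to the present structural setup. Write $g$ for the length of the $H(2)$-subband $\mathcal G$ of the distinguished derivative ${\cal C}_i$; the hypothesis gives $g\ge h(2)$, and from $h(1)+h(3)<h(2)/2$ we deduce $h(2)>2h/3$, whence $g\ge h/30$. Since the handle $\cal C$ is a $t^{\pm 1}$- or $(t')^{\pm 1}$-band, it is passive by Properties \ref{kk'} and \ref{order}, so $|{\bf y}^\Gamma|=h$. Depending on case (a) or (b) and on whether ${\cal C}_i$ is a $k^{\pm 1}$- or $(k')^{\pm 1}$-band, the cells of $\mathcal G$ are active from a specified side or else entirely passive, as dictated by Property \ref{kk'}; in the latter subcase one replaces ${\cal C}_i$ by an adjacent active band using Property \ref{order}, so I may assume $\mathcal G$ is active from one side.

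I split into two subcases according to where the maximal $a$-bands starting on the active side of $\mathcal G$ terminate. If at most $(1-\delta)g$ of them end on ${\cal C}$ or on some derivative ${\cal C}_j$ with $j\ne i$, then Lemma \ref{h0}(a) applies with $\tilde{\cal C}=\mathcal G$ and $h_0=g\ge h/30>\delta h$, yielding $\area(\Gamma)\le(\delta')^{-2}[\Gamma]\le c_3[\Gamma]$, which certainly implies the desired estimate.

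In the complementary subcase, more than $(1-\delta)g$ of these $a$-bands land on derivatives ${\cal C}_j$ with $j\ne i$. By Lemma \ref{a-bands} this forces $h_-\ge(1-\delta)g\ge(1-\delta)h/30$, and Lemma \ref{lgamma} gives $\kappa^c(\Gamma)\ge hh_-/2\ge(1-\delta)h^2/60$. Combining Lemma \ref{comb} with Lemma \ref{simple}(a), namely $|{\bf z}^\Gamma|_a\le(\delta')^{-1}(|{\bf z}^\Gamma|-h)$, then gives
\[
\area(\Gamma)\le 60Nh^2+2|{\bf z}^\Gamma|_ah\le C_1\kappa^c(\Gamma)+2(\delta')^{-1}[\Gamma]
\]
for a constant $C_1$ depending only on $N$ and $\delta$.

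The last and most delicate step is to upgrade this $\kappa^c$-bound to the required $\mu^c$-bound. The obstacle is that Lemma \ref{mu21} does not apply verbatim, since the handle $\cal C$ contains both $(12)$- and $(23)$-cells (the latter lying outside the window $H(1)H(2)H(3)$), so $\lambda^c(\Gamma)$ may be negative. I would adapt the proof of Lemma \ref{mu21}: decompose $\Gamma$ by cutting along those maximal $(23)$-bands in case (a) (respectively $(12)$-bands in case (b)) that do not cross any derivative band, obtaining subcombs $\Delta_j$; for each $\Delta_j$, apply Lemma \ref{mixturec}(e) to the string of beads on ${\bf y}^{\Delta_j}$, select a splitting black bead, and run the three-case analysis of Lemma \ref{mu21}: in case (1) the splitting $q$-edge yields $\kappa^c(\Delta_j)\gtrsim m_1m_2$; in case (2) many $a$-bands connect distinct derivatives, giving $m_1\le Ch_-$; in case (3) many $a$-bands exit to ${\bf z}^\Gamma$, feeding $[\Gamma]$ via Lemma \ref{simple}(a). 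Summing yields $\lambda^c(\Gamma)\ge -C_2\kappa^c(\Gamma)-C_3(\delta')^{-1}[\Gamma]$. With the parameter choices $c_0\gg C_2+c_1$ and $c_3\gg(C_3(\delta')^{-1}+C_1)c_2$, the estimate of the previous paragraph becomes $\area(\Gamma)\le c_3[\Gamma]+c_2\mu^c(\Gamma)$, completing the proof.
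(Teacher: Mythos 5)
Your outline misses the one observation that makes this lemma work, and as a result both halves of your case split have gaps. By Properties \ref{i}, \ref{kk'} and \ref{iv}, every maximal $a$-band starting on the $H(2)$-part ${\cal C}_i(2)$ of the distinguished derivative can end \emph{only} on the parts ${\cal C}_i(1)$, ${\cal C}_i(3)$ or on ${\bf z}^{\Gamma}$ (it cannot cross a $(23)$-band in case (a), resp.\ a $(12)$-band in case (b), cannot return to ${\cal C}_i(2)$, and the $t^{\pm1}$-handle is passive). Since $h(1)+h(3)<h(2)/2$, at most $h(2)/2$ of these bands are absorbed by ${\cal C}_i(1)\cup{\cal C}_i(3)$, so Lemma \ref{h0} applies \emph{unconditionally} -- no case distinction is needed -- and, crucially, more than $h(2)/2-2\ge h/60-2$ of the $a$-bands end on ${\bf z}$, whence $|{\bf z}|-|{\bf y}|>\delta' h/60$, i.e.\ $h\le 60(\delta')^{-1}(|{\bf z}|-|{\bf y}|)$. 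Combined with the trivial bound $\lambda^c(\Gamma)\ge-\lambda({\bf y})\ge-h^2/2$ (Lemma \ref{mixturec}(a)), this gives $\lambda^c(\Gamma)\ge-30(\delta')^{-1}[\Gamma]$, and the constants $c_0,c_2,c_3$ absorb this into $c_3[\Gamma]+c_2\mu^c(\Gamma)$. Without this step your first subcase is not closed: the estimate $\area(\Gamma)\le(\delta')^{-2}[\Gamma]$ does \emph{not} ``certainly imply the desired estimate,'' because $\mu^c(\Gamma)=c_0\kappa^c(\Gamma)+\lambda^c(\Gamma)$ can be negative here (you acknowledge this yourself later), so $c_3[\Gamma]+c_2\mu^c(\Gamma)$ may be much smaller than $(\delta')^{-2}[\Gamma]$ unless a lower bound on $\lambda^c(\Gamma)$ is supplied. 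Moreover, in that subcase the hypothesis of Lemma \ref{h0} is not even verified as you state it: the lemma counts $a$-bands ending on \emph{all} the bands ${\cal C},{\cal C}_1,\dots,{\cal C}_s$, including ${\cal C}_i$ itself, and your threshold $(1-\delta)g$ does not account for the up to $h(1)+h(3)$ bands ending on ${\cal C}_i(1)\cup{\cal C}_i(3)$.

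Your proposed repair in the last paragraph -- adapting Lemma \ref{mu21} -- is only a sketch and is not obviously available: the hypotheses of that lemma specifically exclude one of the transition rules from the history, whereas here the handle contains both $(12)$- and $(23)$-cells, which is precisely why $\lambda({\bf y})$ can be positive; whether the three-case analysis survives when both kinds of transition bands and the corresponding derivative structure are present is exactly what would have to be proved, and you do not prove it. The paper needs none of this: the cheap bound $\lambda^c\ge-h^2/2$ suffices once $h$ is known to be $O((\delta')^{-1})(|{\bf z}|-|{\bf y}|)$, which follows from the counting of $a$-bands ending on ${\bf z}$ described above. Your second subcase is in fact vacuous (no $a$-band from ${\cal C}_i(2)$ can reach another derivative), and the auxiliary claim ``$h(2)>2h/3$'' is false in general, since $H(1)H(2)H(3)$ is only a factor of the step factorization of $H$ (the handle also carries the other transition cells), though that particular slip is not load-bearing.
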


\proof It follows from \ref{viii} that $H$ has no subwords of type
$(1)(2)(1)$ or $(3)(2)(3)$ because $\Gamma$ is regular and has both
rules $(12)^{\pm 1}$ and $(23)^{\pm 1}$ in its history. An $a$-band starting on the
$H(2)$-part ${\cal C}_i(2)$ of ${\cal C}_i$ cannot cross a $(23)$-band in case (a) or
$(12)$-band in case (b) by \ref{i}.
Also it cannot end on ${\cal C}_i(2)$ by \ref{kk'} and \ref{iv}.
Hence every maximal $a$-band
starting on ${\cal C}_i(2)$ must end
either on the parts ${\cal C}_i(1),$ ${\cal C}_i(3)$ or on the path
${\bf z}={\bf z}^{\Gamma}$.
Now inequalities $h(1)+h(3) < h(2)/2$ and $h(2)\ge h/30\ge \delta h$
make possible to apply Lemma \ref{h0} to $\Gamma.$ Hence
\begin{equation}\label{chast1}
\area(\Gamma)\le (\delta')^{-2}[\Gamma]
\end{equation}

Since more than $\frac{1}{2} h(2)-2\ge \frac{1}{60}h-2$ maximal $a$-bands end on $\bf z$, we
have   $|{\bf z}|-|{\bf y}|>\delta'(\frac{1}{60}h-2)+1> \frac{\delta'}{60}h$ by Lemma \ref{simple} (a), i.e., $h<60(\delta')^{-1}(|{\bf z}|-|{\bf y}|).$ Hence by Lemma \ref{mixturec} (a),
$$\lambda^c(\Gamma) \ge -\lambda^c({\bf y})\ge -h^2/2 \ge-30(\delta')^{-1}h(|{\bf z}|-|{\bf y}|)
=-30(\delta')^{-1}[\Gamma],$$
This inequality and (\ref{chast1}) complete the proof of the lemma
since $$(\delta')^{-2}[\Gamma]=((\delta')^{-2}+30(\delta')\iv c_2)[\Gamma]-30(\delta')^{-1}c_2[\Gamma]
\le c_3[\Gamma]+c_2\lambda^c(\Gamma)\le c_3[\Gamma]+c_2\mu^c(\Gamma)$$
by the choice of $c_3$, the definition of $\mu^c(\Gamma)$, and by Lemma \ref{positive} (a).
\endproof

\begin{lemma} \label{5parts}
Let $\Gamma$ be a regular comb of width $b\le 15N$ whose
handle $\cal$ is a $t^{\pm 1}$-band (or $(t')^{\pm 1}$-band) with
firm factorization of the history $H\equiv H(1)\dots H(5),$ where
$H(2)$ and $H(4)$ are both of type $(1)$ (or both of type $(3)$,
respectively), and $h(2)+h(4)\ge 0.9h$. Assume that $H(3)$
contains  both   $(12)^{\pm 1}$ and $(23)^{\pm 1},$
and one of the derivative
bands ${\cal C}_i$ crosses all the maximal $(12)$- and $(23)$-bands of
$\Gamma.$
 Then $\Gamma$ has a long sumcomb $\Delta$ with $\area(\Delta)\le c_3 [\Delta]+c_2\mu^c(\Delta).$
\end{lemma}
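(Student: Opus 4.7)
The plan is to produce the required $\Delta$ either as $\Gamma$ itself or as a carefully chosen subcomb with $t^{\pm 1}$-handle to which Lemma \ref{podslovo} can be applied. By Remark \ref{diasym} the $(t')^{\pm 1}$ case follows from the $t^{\pm 1}$ case by mirror reflection, so I work only with $\cal C$ a $t^{\pm 1}$-band and $H(2), H(4)$ of Step $(1)$; by replacing $\Gamma$ with its mirror image (reversing $H$) if needed, I may also assume $h(4)\ge h(2)$, whence $h(4)\ge 0.45h$.

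My first attempt would be to take $\Delta=\Gamma$ with the re-factorization $\tilde H(1)\equiv H(1)H(2)H(3)$, $\tilde H(2)\equiv H(4)$, $\tilde H(3)\equiv H(5)$. Since $\tilde h(2)=h(4)\ge 0.45h\ge h/30$, the first numerical hypothesis of Lemma \ref{podslovo}(a) is satisfied. The ratio requirement $\tilde h(1)+\tilde h(3)<\tilde h(2)/2$ reduces to $h(4)>2h/3$, and when it holds the crossing-derivative hypothesis can be verified via ${\cal C}_i$: because ${\cal C}_i$ crosses every $(23)^{\pm 1}$-band of $\Gamma$, including the one separating $H(3)$ from $H(4)$, and because by Property \ref{kk'} every $k^{\pm 1}$- or $(k')^{\pm 1}$-cell in the Step $(1)$-part $H(4)$ is active from the left, the band ${\cal C}_i$ extends through the full height of $H(4)$ and meets every maximal $\theta$-band of the $\tilde H(2)$-part. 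Lemma \ref{podslovo}(a) then delivers $\area(\Gamma)\le c_3[\Gamma]+c_2\mu^c(\Gamma)$; longness of $\Gamma$ follows from the abundance of $a$-edges on ${\bf z}^{\Gamma}$ forced by active Step $(1)$-cells on the tall $k^{\pm 1}$-bands together with Property \ref{iv}.

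The complementary range $0.45h\le h(4)\le 2h/3$ forces $h(2)\ge 7h/30$, and I would split into subcases according to the size of $h_i$. If $h_i\ge \delta h$, then since the transition rules $(12)^{\pm 1}$ and $(23)^{\pm 1}$ lock most neighboring sectors (Property \ref{i}), at most $(1-\delta)h_i$ of the maximal $a$-bands starting on ${\cal C}_i$ can end on another derivative band of $\Gamma$; Lemma \ref{h0}(a) then gives $\area(\Gamma)\le(\delta')^{-2}[\Gamma]$. To convert this into a bound involving $\mu^c$, I would proceed as in the proof of Lemma \ref{podslovo}, invoking Lemma \ref{mu21} to obtain $\lambda^c(\Gamma)\ge -8(\delta')^{-1}[\Gamma]-36\kappa^c(\Gamma)$ and then using $c_0\ge 37$ and $c_3\gg (\delta')^{-2}$. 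In the remaining subcase $h_i<\delta h$, the entire $(12)^{\pm 1}/(23)^{\pm 1}$-content of $H$ lies in a narrow window between $H(2)$ and $H(4)$; I would cut $\Gamma$ just to the right of $H(3)$ along a maximal $\theta$-band and apply Lemma \ref{podslovo}(a) to the resulting subcomb, whose history has the form $H^*(1)H(4)H(5)$ with $h^*(1)$ small, again using ${\cal C}_i$ (which by construction extends into $H(4)$) as the crossing derivative.

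The main obstacle will be in the last paragraph: making the cut rigorously so that the truncated subdiagram is genuinely a subcomb in the sense of Section \ref{cmb} (its handle being a maximal $q$-band), that it remains long (this relies on the excess of $a$-edges on the Step $(1)$-part, exactly as in Lemma \ref{two01}), and that the crossing-derivative hypothesis of Lemma \ref{podslovo} survives the truncation. A secondary but delicate issue is that in passing from $\Gamma$ to such a subcomb, the comb mixtures $\kappa^c$ and $\lambda^c$ must be compared carefully; Lemma \ref{mu} is the standard tool, but extra care is needed because Lemma \ref{podslovo} bounds the area of the entire subcomb to which it is applied, and the complement of the chosen subcomb in $\Gamma$ must still be absorbed into either the long subcomb itself or into the quadratic invariants.
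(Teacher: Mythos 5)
There is a genuine gap, and it is concentrated exactly where the real work of this lemma lies. The decisive observation you never extract is that the crossing hypothesis pins down ${\cal C}_i$: since the history of a derivative band is a contiguous subword of $H$ and it must contain every maximal $(12)^{\pm 1}$- and $(23)^{\pm 1}$-band, it contains (essentially) $H(2)H(3)H(4)$, so $h_i\ge 0.9h$; moreover ${\cal C}_i$ is a $k^{-1}$-band whose $H(2)$- and $H(4)$-parts are active from the right (\ref{kk'}), no $a$-band can join these two parts because it would have to cross both a $(12)$- and a $(23)$-band (\ref{i}), none can return to the same part (\ref{iv}), none can end on the passive $t^{\pm1}$-handle, and at most $h-h_i<0.1h$ can end on the other derivative bands. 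This is precisely what legitimizes Lemma \ref{h0}(a), applied with $h_0=\max(h(2),h(4))\ge 0.4h\ge\delta h$. Your middle subcase instead applies \ref{h0} with $h_0=h_i$, which fails on both hypotheses: ${\cal C}_i$ is not active from the right along its $H(3)$-portion, and \ref{h0} requires control of the $a$-bands ending on \emph{all} of ${\cal C},{\cal C}_1,\dots,{\cal C}_s$ (including those returning to ${\cal C}_i$ itself), while your bound ``at most $(1-\delta)h_i$,'' justified only by a vague appeal to locking, does not follow from $h_i\ge\delta h$. Your last subcase $h_i<\delta h$ is vacuous, and the truncation ``just to the right of $H(3)$'' would in any case not produce a subcomb in the paper's sense (subcombs are cut off by maximal $q$-bands), nor is it needed: the sought $\Delta$ is $\Gamma$ itself, which is automatically long because the $t^{\pm1}$-handle is passive, so $|{\bf y}|=h<|{\bf z}|$. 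Your first case also misquotes Lemma \ref{podslovo}: there $H(1)H(2)H(3)$ must be consecutive factors of the Step factorization, of the form $(2)(1)(2)$, whereas your $\tilde H(1)\equiv H(1)H(2)H(3)$ is multi-Step; the threshold $h(4)>2h/3$, and hence the whole problematic complementary range, is an artifact of this wrong choice (with the genuine Step factors flanking $H(4)$ the numerical condition $h(3)+h(5)\le 0.1h<h(4)/2$ would hold automatically, but that is not what you wrote, and one would still have to verify that ${\cal C}_i$ crosses all $\theta$-bands of the $H(4)$-part).

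The second gap is the mixture estimate. Lemma \ref{mu21} is inapplicable here: it requires the history to contain no $(12)^{\pm1}$-rules (case (a)) or no $(23)^{\pm1}$-rules (case (b)), while $H(3)$ contains both. What is needed, and what the paper's argument supplies, is the elementary bound $\lambda^c(\Gamma)\ge-\lambda({\bf y})>-h^2/2$ together with $h\le\frac54(\delta')^{-1}(|{\bf z}|-|{\bf y}|)$, the latter coming from the count that at least $0.8h-4$ of the $a$-bands issuing from the $H(2)$- and $H(4)$-parts of ${\cal C}_i$ must end on ${\bf z}$. You never establish any lower bound on $|{\bf z}|-|{\bf y}|$ in terms of $h$, so even the inequality $\area(\Gamma)\le(\delta')^{-2}[\Gamma]$ (had it been proved) could not be converted into $\area(\Gamma)\le c_3[\Gamma]+c_2\mu^c(\Gamma)$ along the route you indicate.
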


\proof We will prove only the first version of the lemma. The
history $H_i$ of ${\cal C}_i$ contains $H(3)$, and so it
contains the rule $(23)^{\pm 1}$, and therefore the band ${\cal C}_i$ cannot
be a $t^{\pm 1}$-band; it is $k^{-1}$-band by \ref{order}. It follows from the assumption of
the lemma that the derivative ${\cal C}_i$ must cross $H(2)$-,
$H(3)$- and $H(4)$-parts of the comb $\Gamma$, and therefore
$h_i>0.9h.$ Moreover the sum of lengths of $H(2)$- and
$H(4)$-parts of ${\cal C}_i$ is at least $0.9h,$ and so $\max (h(2), h(4))\ge 0.4h.$

There are at most $0.1h$ maximal $a$-bands
starting on ${\cal C}_i$ and ending on other derivative bands.
There
are no $a$-bands starting on the $H(2)$-part and ending on the
$H(4)$-part of ${\cal C}_i$ by the condition on $H(3)$, because an
$a$-band cannot cross both $(12)$- and $(23)$-bands by \ref{i}. 
Besides both the $H(2)$- and the $H(4)$-part of ${\cal C}_i$ are active from the right
by \ref{kk'}.
Therefore we can
apply Lemma \ref{h0} to $\Gamma$:
\begin{equation}\label{onlyarea}
\area(\Gamma)\le (\delta')^{-2}[\Gamma]
\end{equation}
Note that at least
$0.9h-4-0.1h =0.8h-4$ maximal $a$-bands end on $\bf z,$ and so $|{\bf z}|-|{\bf y}|-2=|{\bf z'}|-|{\bf y'}|\ge
(0.8h-4)\delta'$ by Lemma \ref{simple} (1), whence 
\begin{equation}\label{07}
h\le \frac{5}{4}(\delta')\iv (|{\bf z}|-|{\bf y}|)
\end{equation}
 Hence by Lemma \ref{mixturec} (a) and Inequality (\ref{07}), we get
  $$\lambda^c(\Gamma)\ge
-\lambda({\bf y})> -h^2/2 > -h(\delta')^{-1}(|{\bf z}|-|{\bf y}|) =
-(\delta')\iv[\Gamma]$$ This inequality and
(\ref{onlyarea}) complete the proof as in Lemma \ref{podslovo} because
$$(\delta')^{-2}[\Gamma]=((\delta')^{-2}+(\delta')\iv c_2)[\Gamma]-(\delta')^{-1}c_2[\Gamma]
\le c_3[\Gamma]+c_2\lambda^c(\Gamma)\le c_3[\Gamma]+c_2\mu^c(\Gamma)$$

 \endproof

\begin{lemma} \label{grebenki} Let $\Delta$ be a regular comb. Assume that
history $H^{\Delta}$ has $m\le 6$ letters $(12)^{\pm 1}$ and $(23)^{\pm 1},$
and  $\max(4, 2m)N<b\le
15N$ for the base width $b$ of $\Delta.$ Then either $\Delta$ admits a
long quasicomb $\Delta'$ with
$$\area(\Delta')\le c_3[\Delta']+c_2\mu^c(\Delta')+
c_3(\nu^c_J(\Delta)-\nu^c_J(\Delta\backslash\Delta'))$$ or $\Delta$
has a maximal $t^{\pm 1}$ or $(t')^{\pm 1}$-band of length $l$, where
$T_i\le l< 200 T_i$ for some $T_i$.
\end{lemma}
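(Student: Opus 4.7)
The plan is to perform a case analysis on the step history $S$ of $\Delta$, applying previously established lemmas in each case. By Property \ref{viii}, whenever $H^\Delta$ contains both a $(12)^{\pm 1}$- and a $(23)^{\pm 1}$-rule, $S$ is a subword of $(2)(1)(2)(3)(2)(1)(2)$ (the hypothesis $b>4N>N$ of that property is satisfied); in particular, the central $(3)$-block is unique but there can be up to two $(1)$-blocks flanking it.

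First I would dispose of the cases where at most one type of transition rule appears. If $m=0$, the history is one-Step and Lemma \ref{oneage} applies (the base-width hypothesis $b\in[2N,15N]$ is satisfied since $b>4N$), producing a long quasisubcomb $\Delta'$ with $\area(\Delta')\le c_1([\Delta']+\kappa^c(\Delta'))\le c_3[\Delta']+c_2\mu^c(\Delta')$ using $c_0\ge 1$. If $H^\Delta$ contains $(12)^{\pm1}$-rules but no $(23)^{\pm1}$-rules (or vice versa), Lemma \ref{bez12} applies since $b>4N\ge 3N$, giving exactly the desired bound $c_3[\Delta']+c_2\mu^c(\Delta')$. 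In both situations, the $\nu_J^c(\Delta)-\nu_J^c(\Delta\setminus\Delta')$ term in the statement is nonnegative by Lemmas \ref{positive}(b) and \ref{mu}(d,e), so the claimed inequality holds.

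When both transition types appear, I would split according to the number of $(1)$-blocks in $S$. If $S$ has a single $(1)$-block (so $S$ is a subword of $(2)(1)(2)(3)(2)$ or of its mirror $(2)(3)(2)(1)(2)$), then after trimming the outer Step-2 portions using the long-subcomb machinery, the step history fits the pattern $(1)(12)(2)(23)(3)$ handled by Lemma \ref{123}, which yields either case (a) with a suitable long subcomb or case (b) with a maximal $t^{\pm1}$- or $(t')^{\pm1}$-band of length $l\in[T_i,10T_i)\subset[T_i,200T_i)$. If instead $S$ contains the substring $(1)(2)(3)(2)(1)$, I would fix a firm factorization $H\equiv H(1)H(2)H(3)H(4)H(5)$ with $H(2),H(4)$ of type $(1)$ and $H(3)$ containing both transition types: when some derivative band of $\Delta$'s handle crosses all transition bands of $H(3)$ and the lengths satisfy $h(2)+h(4)\ge 0.9h$, Lemma \ref{5parts} gives case (a); when the Step-2 portions dominate, Lemma \ref{podslovo} or Lemma \ref{two01} applies in its place.

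The main obstacle lies in the subcase where no single derivative band crosses all transition bands simultaneously, since Lemmas \ref{5parts}, \ref{podslovo}, \ref{two01} all presuppose such a band. Here I would invoke Lemma \ref{nuK}: pick a subcomb $\Gamma'\subset\Delta$ whose handle $\mathcal{C}'$ is a maximal $t^{\pm1}$- or $(t')^{\pm1}$-band with $h'<h/2$ and that contains some but not all transition bands. Lemma \ref{nuK} then produces either a long subcomb $\Delta$ with the required bound or an area estimate for $\Gamma'$ of the form $c_1([\Gamma']+\lambda^c(\Gamma')+\nu_J^c(\Delta)-\nu_J^c(\Delta\setminus\Gamma'))$, which is exactly the shape of the extra $\nu_J^c$-term in the conclusion of the present lemma. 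The condition $b>2mN$ is calibrated so that after passing to such a subcomb (which reduces the count of transitions by at least one), the residual base width still exceeds $2(m-1)N\ge 2N$, keeping the sub-lemmas applicable. The factor 200 in case (b), as opposed to the factor 10 in Lemma \ref{123}, absorbs the accumulated stretching of $t/t'$-band lengths that occurs when one combines the outputs of several intermediate applications (at most $m\le 6$ of them) along the chain of transition rules.
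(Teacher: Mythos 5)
Your general strategy (dispose of the easy cases with Lemmas \ref{oneage}, \ref{bez12}, \ref{123}, then split on the step history and invoke Lemmas \ref{two01}, \ref{podslovo}, \ref{5parts}, \ref{nuK}) is in the right spirit, but the central mechanism of the actual proof is missing, and without it several of your lemma applications are illegitimate. Lemmas \ref{two01}, \ref{podslovo}, \ref{podslovo1} and \ref{5parts} all require a comb whose \emph{handle is a $t^{\pm 1}$- or $(t')^{\pm 1}$-band} and, moreover, the existence of a \emph{derivative band crossing all the $(12)$- and $(23)$-bands}; the handle of $\Delta$ itself is an arbitrary $q$-band and need satisfy neither condition, so you cannot apply these lemmas to $\Delta$ directly as you do. The paper supplies these hypotheses by an induction on $m$: inside $\Delta$ one takes nested regular subcombs $\Delta_1\supset\Delta_2$ whose filling trapezia $Tp({\cal D}_1,{\cal D})$ and $Tp({\cal D}_2,{\cal D}_1)$ have base width $N+1$; if either subcomb has fewer than $m$ transition letters the inductive hypothesis finishes (this is exactly what the hypothesis $b>2mN$ is for), and otherwise both handles carry all $m$ transition cells, so by Property \ref{vii} the trapezia have normal bases containing $t^{\pm1}$- and $(t')^{\pm1}$-letters. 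The subcombs $\Gamma\supseteq\Gamma'$ with those $t/t'$-handles then automatically possess derivative $k^{-1}$- or $k'$-bands crossing every transition band (because $\cal D$ and ${\cal D}_2$ have equal numbers of $(12)$- and $(23)$-cells), and the six-case analysis over step histories is carried out for $\Gamma$ and $\Gamma'$, not for $\Delta$. Your fallback to Lemma \ref{nuK} for the subcase ``no derivative band crosses all transition bands'' is a misuse of that lemma: its hypotheses concern only $h'<h/2$ and the bound on transition letters, it says nothing about crossing derivative bands, and in the paper it is used solely to reduce to the case $h'\ge h/2$.

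Two further concrete problems. First, your treatment of a single $(1)$-block by ``trimming the outer Step-2 portions'' so as to land in Lemma \ref{123} is not a defined or justified operation: \ref{123} applies only to histories of type $(1)(12)(2)(23)(3)$ (so essentially $m\le 2$), and a step history such as $(2)(1)(2)(3)(2)$ with $m=3,4$ cannot be reduced to it by deleting boundary portions of the comb — the areas and mixtures of the trimmed parts would not be accounted for; the paper instead handles this as a separate case (Case 4) via an auxiliary $(t')^{\pm1}$-subcomb and a sequence of block-length comparisons using \ref{two01}, \ref{podslovo}, \ref{podslovo1}. Second, your explanation of the constant $200$ (``accumulated stretching over at most $m$ applications'') is not the actual source: regularity forces the Step-$2$ block flanked by $(12)$ and $(23)$ to have length exactly $T_i$ for some $i$, and in each case one \emph{assumes} $T_i\le h/200$ to run the comparisons of block lengths; when this fails, $h<200T_i$ and the $t/t'$-handle itself is the band of length in $[T_i,200T_i)$ required by the second alternative.
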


\proof If $m\le 2$ or the handle of $\Delta$ does not contain either
$(12)$-cells or $(23)$-cells, then the statement follows from Lemmas
\ref{oneage}, \ref{bez12}, and \ref{123}  because $\lambda^c$ is
non-negative for one Step (quasi)combs, the third summand in the
above inequality is positive by Lemmas \ref{positive} (a) and
\ref{mu} (d), and $c_3>c_2>c_1.$ Then we will induct 
on $m$ assuming
that $3\le m\le 6$ and that the history $H^{\Delta}$ of $\Delta$
contains both  rules $(12)^{\pm 1}$ and $(23)^{\pm 1}.$

The comb $\Delta$ has a regular subcomb $\Delta_1$ of base width $b_1>
N(2m-1)$ such that the base width of the filling trapezium ${\bf
T}=Tp({\cal D}_1, \cal D)$ is $N+1$, where ${\cal D}_1$ and ${\cal
D}$ are the handles of $\Delta_1$ and $\Delta$, respectively. If the
history of $\Delta_1$ has $m_1$ letters $(12)^{\pm 1}$ and $(23)^{\pm 1},$ and $m_1<m,$  then the statement
of the lemma is a consequence of the inductive hypothesis since
$2m_1\le 2m-1$. Hence we may assume that $m_1=m$. Similarly,
$\Delta_1$ has a regular subcomb $\Delta_2$ of width $b_2>(2m-2)N$
with handle ${\cal D}_2$ and the filling trapezium ${\bf
T'}=Tp({\cal D}_2, {\cal D}_1)$ of width $N+1$, and we may assume
that $m_2=m_1=m$ since otherwise $2m_2\le 2m-2$ and one may apply
the inductive conjecture to $\Delta_2.$

\bigskip

\unitlength 1mm 
\linethickness{0.4pt}
\ifx\plotpoint\undefined\newsavebox{\plotpoint}\fi 
\begin{picture}(20.25,55.75)(15,0)
\put(125.25,53.25){\line(0,-1){44}}
\put(125.25,9.25){\line(0,1){0}}
\put(125.25,9.25){\line(0,-1){1.25}}
\put(125.25,53.25){\line(-1,0){3.75}}
\put(121.5,53.25){\line(0,-1){44.25}}
\put(121.5,9){\line(1,0){3.25}}
\put(94.75,47.5){\line(1,0){30.5}}
\put(94.5,47.5){\line(0,-1){33.75}}
\put(94.5,13.75){\line(-1,0){3.5}}
\put(91,13.75){\line(0,1){33.5}}
\put(91,47.25){\line(1,0){3}}
\put(93.75,14){\line(1,0){31}}
\put(59.5,41.25){\line(1,0){35.25}}
\put(59.5,41.5){\line(0,-1){22}}
\put(59.5,19.5){\line(1,0){34.75}}
\put(63,41.25){\line(0,-1){21.5}}
\put(121.68,53.18){\line(-1,0){.9688}}
\put(119.742,53.148){\line(-1,0){.9688}}
\put(117.805,53.117){\line(-1,0){.9688}}
\put(115.867,53.086){\line(-1,0){.9688}}
\put(113.93,53.055){\line(-1,0){.9688}}
\put(111.992,53.023){\line(-1,0){.9688}}
\put(110.055,52.992){\line(-1,0){.9688}}
\put(108.117,52.961){\line(-1,0){.9688}}
\multiput(106.18,52.93)(-.066964,-.032738){12}{\line(-1,0){.066964}}
\multiput(104.573,52.144)(-.066964,-.032738){12}{\line(-1,0){.066964}}
\multiput(102.965,51.358)(-.066964,-.032738){12}{\line(-1,0){.066964}}
\multiput(101.358,50.573)(-.066964,-.032738){12}{\line(-1,0){.066964}}
\multiput(99.751,49.787)(-.066964,-.032738){12}{\line(-1,0){.066964}}
\multiput(98.144,49.001)(-.066964,-.032738){12}{\line(-1,0){.066964}}
\multiput(96.537,48.215)(-.066964,-.032738){12}{\line(-1,0){.066964}}
\multiput(90.68,47.18)(-.098958,-.03125){8}{\line(-1,0){.098958}}
\multiput(89.096,46.68)(-.098958,-.03125){8}{\line(-1,0){.098958}}
\multiput(87.513,46.18)(-.098958,-.03125){8}{\line(-1,0){.098958}}
\put(85.93,45.68){\line(-1,0){.9524}}
\put(84.025,45.656){\line(-1,0){.9524}}
\put(82.12,45.632){\line(-1,0){.9524}}
\put(80.215,45.608){\line(-1,0){.9524}}
\put(78.311,45.584){\line(-1,0){.9524}}
\put(76.406,45.561){\line(-1,0){.9524}}
\put(74.501,45.537){\line(-1,0){.9524}}
\put(72.596,45.513){\line(-1,0){.9524}}
\put(70.692,45.489){\line(-1,0){.9524}}
\put(68.787,45.465){\line(-1,0){.9524}}
\put(66.882,45.442){\line(-1,0){.9524}}
\multiput(65.93,45.43)(-.0331633,-.0433673){14}{\line(0,-1){.0433673}}
\multiput(65.001,44.215)(-.0331633,-.0433673){14}{\line(0,-1){.0433673}}
\multiput(64.073,43.001)(-.0331633,-.0433673){14}{\line(0,-1){.0433673}}
\multiput(63.144,41.787)(-.0331633,-.0433673){14}{\line(0,-1){.0433673}}
\multiput(58.68,41.18)(-.0662393,-.031339){13}{\line(-1,0){.0662393}}
\multiput(56.957,40.365)(-.0662393,-.031339){13}{\line(-1,0){.0662393}}
\multiput(55.235,39.55)(-.0662393,-.031339){13}{\line(-1,0){.0662393}}
\multiput(53.513,38.735)(-.0662393,-.031339){13}{\line(-1,0){.0662393}}
\multiput(51.791,37.92)(-.0662393,-.031339){13}{\line(-1,0){.0662393}}
\multiput(50.069,37.106)(-.0662393,-.031339){13}{\line(-1,0){.0662393}}
\multiput(48.346,36.291)(-.0662393,-.031339){13}{\line(-1,0){.0662393}}
\multiput(46.624,35.476)(-.0662393,-.031339){13}{\line(-1,0){.0662393}}
\multiput(44.902,34.661)(-.0662393,-.031339){13}{\line(-1,0){.0662393}}
\multiput(43.18,33.846)(-.0662393,-.031339){13}{\line(-1,0){.0662393}}
\multiput(41.457,33.032)(-.0662393,-.031339){13}{\line(-1,0){.0662393}}
\multiput(39.735,32.217)(-.0662393,-.031339){13}{\line(-1,0){.0662393}}
\multiput(38.013,31.402)(-.0662393,-.031339){13}{\line(-1,0){.0662393}}
\multiput(36.291,30.587)(-.0662393,-.031339){13}{\line(-1,0){.0662393}}
\multiput(35.43,30.18)(.074846,-.031636){12}{\line(1,0){.074846}}
\multiput(37.226,29.42)(.074846,-.031636){12}{\line(1,0){.074846}}
\multiput(39.022,28.661)(.074846,-.031636){12}{\line(1,0){.074846}}
\multiput(40.819,27.902)(.074846,-.031636){12}{\line(1,0){.074846}}
\multiput(42.615,27.143)(.074846,-.031636){12}{\line(1,0){.074846}}
\multiput(44.411,26.383)(.074846,-.031636){12}{\line(1,0){.074846}}
\multiput(46.207,25.624)(.074846,-.031636){12}{\line(1,0){.074846}}
\multiput(48.004,24.865)(.074846,-.031636){12}{\line(1,0){.074846}}
\multiput(49.8,24.106)(.074846,-.031636){12}{\line(1,0){.074846}}
\multiput(51.596,23.346)(.074846,-.031636){12}{\line(1,0){.074846}}
\multiput(53.393,22.587)(.074846,-.031636){12}{\line(1,0){.074846}}
\multiput(55.189,21.828)(.074846,-.031636){12}{\line(1,0){.074846}}
\multiput(56.985,21.069)(.074846,-.031636){12}{\line(1,0){.074846}}
\multiput(58.782,20.309)(.074846,-.031636){12}{\line(1,0){.074846}}
\multiput(59.68,19.93)(-.03125,-.03125){4}{\line(0,-1){.03125}}
\put(59.43,19.68){\line(0,1){0}}
\multiput(63.18,19.68)(.0535714,-.0337302){14}{\line(1,0){.0535714}}
\multiput(64.68,18.735)(.0535714,-.0337302){14}{\line(1,0){.0535714}}
\multiput(66.18,17.791)(.0535714,-.0337302){14}{\line(1,0){.0535714}}
\multiput(67.68,16.846)(.0535714,-.0337302){14}{\line(1,0){.0535714}}
\multiput(69.18,15.902)(.0535714,-.0337302){14}{\line(1,0){.0535714}}
\put(69.93,15.43){\line(1,0){.9762}}
\put(71.882,15.406){\line(1,0){.9762}}
\put(73.834,15.382){\line(1,0){.9762}}
\put(75.787,15.358){\line(1,0){.9762}}
\put(77.739,15.334){\line(1,0){.9762}}
\put(79.692,15.311){\line(1,0){.9762}}
\put(81.644,15.287){\line(1,0){.9762}}
\put(83.596,15.263){\line(1,0){.9762}}
\put(85.549,15.239){\line(1,0){.9762}}
\put(87.501,15.215){\line(1,0){.9762}}
\put(89.454,15.192){\line(1,0){.9762}}
\multiput(95.43,13.68)(.0327381,-.047619){14}{\line(0,-1){.047619}}
\multiput(96.346,12.346)(.0327381,-.047619){14}{\line(0,-1){.047619}}
\multiput(97.263,11.013)(.0327381,-.047619){14}{\line(0,-1){.047619}}
\put(98.18,9.68){\line(1,0){.9583}}
\put(100.096,9.701){\line(1,0){.9583}}
\put(102.013,9.721){\line(1,0){.9583}}
\put(103.93,9.742){\line(1,0){.9583}}
\put(105.846,9.763){\line(1,0){.9583}}
\put(107.763,9.784){\line(1,0){.9583}}
\put(109.68,9.805){\line(1,0){.9583}}
\put(111.596,9.826){\line(1,0){.9583}}
\put(113.513,9.846){\line(1,0){.9583}}
\put(115.43,9.867){\line(1,0){.9583}}
\put(117.346,9.888){\line(1,0){.9583}}
\put(119.263,9.909){\line(1,0){.9583}}
\put(111.75,9.5){\rule{3.25\unitlength}{43.75\unitlength}}
\put(79,15){\rule{3\unitlength}{30.5\unitlength}}
\put(102.75,31.75){$\bf T$}
\put(69.25,32){$\bf T'$}
\put(59.75,16){${\cal D}_2$}
\put(90.5,11.25){${\cal D}_1$}
\put(122,5.75){$\cal D$}
\put(78.75,48.5){$\cal C'$}
\put(111.25,55.75){$\cal C$}
\put(35.5,40.5){$\Delta$}
\end{picture}

Thus both ${\cal D}_1$ and ${\cal D}_2$ have $(12)$- and
$(23)$-cells. Hence the base of ${\bf T}$ is normal by \ref{vii},
and so it contains a letter $t^{\pm 1}$ and a letter $(t')^{\pm 1}$
which are not the first letter in this base . The same is true for
${\bf T'}.$ Denote by $\cal C$ ($\cal C'$) a maximal $t^{\pm
1}$-band or $(t')^{\pm 1}$-band of $\Delta$ crossing $\bf T$
(crossing $\bf T'$) and corresponding to this letter of the base. By
$\Gamma$ and $\Gamma'$, we denote the subcombs with handles $\cal C$
and $\cal C'$, respectively. The histories of these handles are $H$
and $H'$. We will assume that $h'\ge h/2$ for their length, because
otherwise one can apply Lemma \ref{nuK} to $\Gamma$ since
$c_1<c_2<c_3.$

Observe that there are derivative bands in both $\Gamma$ and
$\Gamma'$ crossing all the $\theta$-bands of $\Gamma$ corresponding
to the rules $(12)^{\pm 1}$ and $(23)^{\pm 1}.$ This follows from
the equality of the numbers of $(12)$- and $(23)$-cells in ${\cal D}$ and ${\cal D}_2.$
Hence such a derivative band is a $k^{- 1}$ or $k'$-band by \ref{order} and \ref{i},
and there exist firm  factorizations $H=H^{(1)}\dots H^{(m+1)}$ and
$H'\equiv (H')^{(1)}\dots (H')^{(m+1)}$ for $\Gamma$ and $\Gamma'$, where
$(H')^{i}\equiv H^i$ for $i=2,\dots, m-1.$

Besides one may assume that all other
derivative bands of $\Gamma$ and $\Gamma'$ (if any) are also either $k^{-1}$- or
$k'$-bands. Indeed, they do not cross $(12)$- and $(23)$-bands and so
if a derivative band is $t^{\pm}$- or $(t')\iv$-band, one can apply Lemma \ref{t} to a
derivative diagram $\Delta'$, and the statement of our lemma follows. (Similarly, the comb $\Lambda$ from Case 4
below, also enjoys this property of $\Gamma$ and $\Gamma'$ by the same reason.)

By Property \ref{viii} the step history of $\Delta$ is a subword of
$(2)(1)(2)(3)(2)(1)(2)$. Since $m\ge 3$ and one always can replace
$\Delta$ by $\Delta^{-1}$ (and $H$ by
$H^{-1}$),
we have to consider the
following $6$ step histories: $(1)(2)(3)(2)$, $(3)(2)(1)(2)$,
$(1)(2)(3)(2)(1)$, $(2)(1)(2)(3)(2),$ $(1)(2)(1)(2)(3)(2),$ and
$(2)(1)(2)(1)(2)(3)(2).$

{\bf Case 1}.  The history $H$ is of type $(1)(2)(3)(2)$, and
$H^{(1)}H^{(2)}H^{(3)}H^{(4)}$ is the corresponding firm
factorization. In this case,
 we select $\cal C$ to be a $t^{\pm
1}$-band and $\cal C'$ a $(t')^{\pm 1}$-band.

If $h^{(4)}\ge 0.01h$, then one can apply Lemma \ref {two01} to
$\Gamma$ with $H(1)\equiv H^{(1)}H^{(2)}H^{(3)}, H(2)\equiv H^{(4)}$ and
$H(3)\equiv \emptyset,$ since the condition on the $(1)-(2)$-transition
holds by \ref{i}, and the passive cells of $H(2)$-part of $\cal C$ 
must be $(12)$- or $(23)$-cells
by \ref{kk'}. Hence we obtain a required subcomb.  Therefore we may
further assume that $h^{(4)}<0.01h$.

Since the base of ${\bf T}$ is normal and has $\ge N$ letters, this trapezium contains
a standard subtrapezium with history $H^{(2)}$, and so $h^{(2)}= T_i$
for some $i$. 
We may assume that $T_i\le  h/200$
because otherwise $h<200T_i$, and the  lemma is true. Thus
$h^{(2)}+h^{(4)}<h/60.$

Assume that $h^{(3)}\ge h/30$. Then $(h')^{(3)}=h^{(3)}\ge h'/30$ and
$(h')^{(2)}+(h')^{(4)}\le h^{(2)}+h^{(4)}<h/60\le h^{(3)}/2=(h')^{(3)}/2$.
Hence one can apply Lemma \ref{podslovo}(b) to $\Gamma'$. (Here
$H(1)\equiv (H')^{(2)}$, $H(2)\equiv (H')^{(3)}$ and $H(3)\equiv (H')^{(4)}$. ) Therefore we
can further assume that $h^{(3)}<h/30.$

Now, $h^{(1)}>h-h^{(2)}-h^{(3)}-h^{(4)} > h(1-1/60-1/30)= 0.95h$.
Therefore Lemma \ref{5parts} is applicable to $\Gamma$ with
$H(1)\equiv \emptyset$, $H(2)\equiv H^{(1)}$, $H(3)\equiv  H^{(2)}H^{(3)}H^{(4)}$,
$H(4)\equiv H(5)\equiv \emptyset.$ This completes Case 1.

{\bf Case 2}. The history  $H$ is of type $(3)(2)(1)(2)$, and
$H^{(1)}H^{(2)}H^{(3)}H^{(4)}$ is the corresponding firm
factorization. In this case we will assume that $\cal C$ is a
$(t')^{\pm 1}$-band and $\cal C'$ a $t^{\pm 1}$-band. Then the proof
coincides with that in Case 1.

{\bf Case 3}. The history $H$ is of type $(1)(2)(3)(2)(1)$, and
$H^{(1)}H^{(2)}H^{(3)}H^{(4)}H^{(5)}$ is the corresponding firm
factorization. In this case we will assume that $\cal C$ is a
$t^{\pm 1}$-band.

As in Case 1, one may assume that $\max(h^{(2)}, h^{(4)})\le h/200$.
Then $h^{(3)}<h/100$ by \ref{xii}. Therefore $h^{(1)}+h^{(5)}> h -
2h/100= 0.98h$. Therefore one can apply Lemma \ref{5parts} to
$\Gamma$ with $H(1)=\emptyset$, $H(2)\equiv H^{(1)}$, $H(3)\equiv H^{(2)}H^{(3)}H^{(4)},$
$H(4)\equiv H^{(5)},$
and $H(5)\equiv \emptyset$.

{\bf Case 4}. The history $H$ is of type $(2)(1)(2)(3)(2)$, and
$H^{(1)}H^{(2)}H^{(3)}H^{(4)}H^{(5)}$ is the corresponding firm
factorization. In this case we will assume that both $\cal C$ and
$\cal C'$  are $t^{\pm 1}$-bands and consider an auxiliary maximal
$(t')^{\pm 1}$-band $\cal B$ between them. It exists since the base of
$Tp(\cal C', \cal C)$ is normal, and determines a subcomb
$\Lambda$ of $\Delta$ whose history $G$ has a firm factorization
$G^{(1)}\dots G^{(5)}$.

If $h^{(5)}\ge 0.01h$, then one can apply Lemma \ref {two01} to
$\Gamma$ with $H(1)\equiv H^{(1)}H^{(2)}H^{(3)}H^{(4)},$ $ H(2)=H^{(5)}$ and
$H(3)=\emptyset.$ Hence we may assume that $h^{(5)}<0.01h$.

Since $\Delta$ is regular, $h^{(3)}= T_i$ for some $i$. We
may assume that $T_i\le  h/200$ because otherwise $h<200T_i$, as
desired. Thus $h^{(3)}+h^{(5)}<h/60.$

Assume that $h^{(4)}\ge h/30$. Then $g^{(4)}=h^{(4)}\ge g/30$ and
$g^{(3)}+g^{(5)}\le h^{(3)}+h^{(5)}<h/60\le h^{(4)}/2=g^{(4)}/2$.
Hence one can apply Lemma \ref{podslovo} to $\Lambda$. (Here
$H(1)\equiv G^{(3)}$, $H(2)\equiv G^{(4)}$ and $H(3)\equiv G^{(5)}$.) Therefore we can
further assume that $h^{(4)}<h/30.$

Suppose $h^{(1)}\ge 0.7h$. Then Lemma \ref{podslovo1}  can be
applied to $\Gamma^{-1}$ with  $H''\equiv (H^{(1)})^{-1}$. Hence we may assume
that $h^{(1)}<0.7h$, and therefore $h^{(2)}>h(1-0.7 -
1/30-1/60)=h/4$.

If $g^{(1)}\ge 0.01g$, then Lemma \ref{two01} is applicable to
$\Lambda^{-1}$ with $H(2)\equiv (G^{(1)})^{-1}$ and $H(3)\equiv \emptyset$. Therefore
we may assume that $g^{(1)}<0.01g.$

Now $(h')^{(2)}=h^{(2)}>h/4\ge h'/4$ and $(h')^{(1)}+(h')^{(3)}\le
g^{(1)}+h^{(3)}<0.01 g+ h/200\le 0.015 h <(h')^{(2)}/2$. Hence Lemma
\ref{podslovo} is applicable to $\Gamma'$, and the lemma is proved
in Case 4.

{\bf Case 5}. The history $H$ is of type $(1)(2)(3)(2)(1)(2)$, and
$H^{(1)}H^{(2)}H^{(3)}H^{(4)}H^{(5)}H^{(6)}$ is the corresponding
firm factorization. In this case we will assume that $\cal C$ is a
$(t')^{\pm 1}$-band and $\cal C'$ is a $t^{\pm 1}$-band.

If $h^{(6)}\ge 0.01h$, then one can apply Lemma \ref {two01} to
$\Gamma$ with $H(1)\equiv H^{(1)}H^{(2)}H^{(3)}H^{(4)}H^{(5)},$
$H(2)\equiv H^{(6)}$ and $H(3)\equiv \emptyset.$ Hence we may assume that
$h^{(6)}<0.01h$. Then, as in Case 3, we may assume that $\max
(h^{(2)},h^{(4)})\le h/200$ and $h^{(3)}<h/100$. Since $h'\ge h/2$, it
follows that
$$(h')^{(1)}+(h')^{(5)}=h'-(h')^{(2)}-(h')^{(3)}-(h')^{(4)}-(h')^{(6)}\ge $$ $$h'-h^{(2)}-h^{(3)}-h^{(4)}-h^{(6)}>
h'-0.03h\ge h'-0.06h'=0.94h'.$$
 Therefore one
can apply Lemma \ref{5parts} to $\Gamma'$ with $H(1)\equiv\emptyset$,
$H(2)\equiv (H')^{(1)}$, $H(3)\equiv (H')^{(2)}(H')^{(3)}(H')^{(4)}$, $H(4)\equiv (H')^{(5)}$
and $H(5)\equiv (H')^{(6)}$.

{\bf Case 6}. The history $H$ is of type $(2)(1)(2)(3)(2)(1)(2)$, and
$H^{(1)}H^{(2)}H^{(3)}H^{(4)}H^{(5)}H^{(6)}H^{(7)}$ is the
corresponding firm factorization. In this case we will assume that
$\cal C$ is a $(t')^{\pm 1}$-band and $\cal C'$ is a $t^{\pm 1}$-band.

If $h^{(7)}\ge 0.01h$, then one can apply Lemma \ref {two01} to
$\Gamma$ with $H(1)\equiv H^{(1)}H^{(2)}H^{(3)}H^{(4)}H^{(5)}H^{(6)},$ $
H(2)\equiv H^{(7)}$ and $H(3)\equiv \emptyset.$ Hence we may assume that
$h^{(7)}<0.01h$. Similarly, $h^{(1)}<0.01h$
 Then, as
in Cases 3 and 5 we may assume that $\max (h^{(3)},h^{(5)})<h/200$
and $h^{(4)}<h/100$. Therefore
$$(h')^{(2)}+(h')^{(6)}= h'-(h')^{(1)}-(h')^{(3)}-(h')^{(4)}-(h')^{(5)}-(h')^{(7)}$$ $$\ge
h'-h^{(1)}-h^{(3)}-h^{(4)}-h^{(5)}-h^{(7)}
> h' - 0.04h \ge h'-0.08h'=0.92 h'.$$ Therefore one can apply
Lemma \ref{5parts} to $\Gamma'$ with $H(1)\equiv (H')^{(1)}$,
$H(2)\equiv (H')^{(2)}$, $H(3)\equiv (H')^{(3)}(H')^{(4)}(H')^{(5)}$, $H(4)\equiv (H')^{(6)}$
and $H(5)\equiv (H')^{(7)}$.

The lemma is proved in any case. \endproof

\begin{lemma}\label{itog} Let $\Delta$ be a comb of base width $b>13N.$ Then either $\Delta$ admits a
long quasicomb $\Gamma'$ with
\begin{equation}\label{smotri}
\area(\Gamma')\le c_3[\Gamma']+c_2\mu^c(\Gamma')+
c_3(\nu^c_J(\Delta)-\nu^c_J(\Delta\backslash\Gamma'))
\end{equation} or $\Delta$
has a maximal $t^{\pm 1}$ or $(t')^{\pm 1}$-band of length $l$, where
$T_i\le l< 200 T_i$ for some $T_i$,
and this band is a handle of a subcomb of base width $\le 14N.$
\end{lemma}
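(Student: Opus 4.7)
The plan is to reduce to a regular subcomb of controlled base width and then dispatch to one of the preceding aggregation lemmas (\ref{oneage}, \ref{bez12}, \ref{grebenki}) according to the Step content of the history. Since $b>13N$, I set $b_\Gamma=\min(14N,\,b-N+1)$ and let $\Gamma$ be the subcomb of $\Delta$ whose handle ${\cal C}^\Gamma$ is the maximal $q$-band in position $b_\Gamma$ from the left. Then $b_\Gamma\in(12N,14N]$, while the filling trapezium $Tp({\cal C}^\Gamma,{\cal C}^\Delta)$ has base width $b-b_\Gamma+1\ge N$, so $\Delta$ is a regular extension of $\Gamma$ and $\Gamma$ is a regular comb.

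The rest is a case analysis on the history $H^\Gamma$. If $H^\Gamma$ is one-Step, Lemma \ref{oneage} gives a long quasicomb $\Delta_0$ admitted by $\Gamma$ with $\area(\Delta_0)\le c_1([\Delta_0]+\kappa^c(\Delta_0))$; since $\lambda^c(\Delta_0)\ge 0$ for a one-Step quasicomb, $\mu^c(\Delta_0)\ge c_0\kappa^c(\Delta_0)\ge\kappa^c(\Delta_0)$, and the target bound follows from the inequalities among the constants ($c_1\le c_3$, $c_1\le c_0c_2$). If exactly one of $(12)^{\pm1}$, $(23)^{\pm1}$ appears in $H^\Gamma$, Lemma \ref{bez12} gives the same shape of estimate. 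If both appear, I apply Property \ref{viii}(a) of Proposition \ref{summary} to the filling trapezium (whose base has length $\ge N$): the step history of $\Gamma$ is a subword of $(2)(1)(2)(3)(2)(1)(2)$, whence the count $m$ of transition letters $(12)^{\pm1}$, $(23)^{\pm1}$ is at most $6$. Hence $\max(4,2m)N<b_\Gamma\le 15N$, and Lemma \ref{grebenki} applies to $\Gamma$.

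It remains to transfer the conclusions from $\Gamma$ to $\Delta$. If Lemma \ref{grebenki} (or one of the earlier two) returns a long quasicomb $\Delta_0$, the parenthesized inequality of Lemma \ref{mu}(e), applicable because $\Delta$ is itself a comb, yields $\nu_J^c(\Gamma)-\nu_J^c(\Gamma\backslash\Delta_0)\le\nu_J^c(\Delta)-\nu_J^c(\Delta\backslash\Delta_0)$, so the bound survives substitution of $\Delta$ for $\Gamma$; moreover $\Delta$ admits $\Delta_0$ because a subcomb of a subcomb is a subcomb, the only point to check being that the relevant handle is maximal and ends on $\partial\Delta$, which is automatic since distinct maximal $q$-bands cannot share a cell and hence cannot cross ${\cal C}^\Gamma$. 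In the other alternative of Lemma \ref{grebenki}, a maximal $t^{\pm1}$- or $(t')^{\pm1}$-band in $\Gamma$ of length $l\in[T_i,200T_i)$ cannot extend beyond ${\cal C}^\Gamma$ into $\Delta\backslash\Gamma$ for the same reason, so it is a maximal $q$-band of $\Delta$ whose ends lie on $\partial\Delta$; it is therefore the handle of a subcomb of $\Delta$ of base width at most $b_\Gamma\le 14N$, as required.

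The hard part is not any isolated computation but the combined bookkeeping: the choice $b_\Gamma=\min(14N,\,b-N+1)$ must simultaneously place $b_\Gamma$ in the window $(\max(4,2m)N,15N]$ required by Lemma \ref{grebenki}, force regularity of $\Gamma$ inside $\Delta$, produce a filling trapezium long enough to invoke Property \ref{viii}(a), and leave the output subcomb within the $14N$ threshold demanded by the conclusion. The hypothesis $b>13N$ is precisely what makes all four constraints consistent.
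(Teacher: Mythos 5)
Your argument is correct and follows essentially the same route as the paper: pass to a regular subcomb $\Gamma$ of base width in $(12N,14N]$ (the paper does this in two nested steps, you do it in one with the explicit choice $b_\Gamma=\min(14N,\,b-N+1)$, but this is cosmetic), then split according to whether $H^\Gamma$ is one-Step, misses one of $(12)^{\pm1},(23)^{\pm1}$, or contains both, invoking Lemmas \ref{oneage}, \ref{bez12}, and (after bounding $m\le 6$ via Property \ref{viii}) Lemma \ref{grebenki}, and finally transferring to $\Delta$ via Lemma \ref{mu}(e) and the non-negativity of the $\nu^c_J$-difference. This matches the paper's proof in structure and in every essential estimate.
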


\proof Recall that the third term in the right-hand side of (\ref{smotri}) is positive
for every sub(quasi)comb $\Gamma'$ by Lemma \ref{mu} (e) and Remark \ref{quasiotrez}.

Then we observe that $\Delta$ has a subcomb $\Delta_0$ of base withs $b^{\Delta_0}\in (13N,15N],$ and in turn, $\Delta_0$
has a regular subcomb $\Gamma$ with  $12N<b^{\Gamma}\le 14N.$
If  $\Gamma$ is a one Step comb, then by Lemma
\ref{oneage}, it admits a  long quasicomb $\Gamma'$ with
$\area(\Gamma')\le c_1([\Gamma']+\kappa^c(\Gamma'))$.
Here the right-hand side does not exceed $c_3[\Gamma']+c_2\mu^c(\Gamma')$
since $c_1<c_2<c_3$ and $\lambda^c(\Gamma')\ge -\lambda(y^{\Gamma'})=0$
for one Step comb $\Gamma'.$ Inequality (\ref{smotri}) follows in this case.

Then we may assume that the history $H$ of $\Gamma$ has one of the rules $(12)^{\pm 1},$ $(23)^{\pm 1}.$
If $H$ has no $(12)^{\pm 1}$ or no $(23)^{\pm 1},$ then the statement of the lemma
follows from Lemma \ref{bez12}. Otherwise $H$ has at most $6$ letters $(12)^{\pm 1}$
and $(23)^{\pm 1}$ by Properties \ref{viii} and \ref{kt},
since $\Gamma$ is a regular comb (and so there exists a trapezium of width $\ge N$ with
history $H$). Now the application of Lemmas \ref{grebenki} and \ref{mu} (e) completes the proof.
\endproof

\bigskip

 \section{Separation of a hub}\label{separ}

In this section we consider  
minimal diagrams over the group $G$ with cyclically reduced boundary
paths. Thus in contrast to previous sections, we study diagrams with hubs.

\subsection{Solid diagrams}

Let $\pi$ be a hub in a diagram $\Delta,$ connected with the
boundary $\partial\Delta$ by  $t$-spokes 
$\cal B$ and $\cal B'.$ We denote by
\label{cl.} $cl(\pi,{\cal B},{\cal B'})$ the subdiagram bounded by these spokes
(and including them) and by subpaths of the boundaries of $\Delta$
and $\pi,$ and call this subdiagram a \label{clove} {\it clove} if it has no hubs.

  \begin{lemma}\label{rimexists} (a) Let $\Psi=cl(\pi,{\cal B},{\cal B'})$ be a clove in a reduced
  diagram $\Delta$. Assume that $\Psi$ contains a rim
  $\theta$-band $\cal T$,
  which crosses neither $\cal B$ nor $\cal B'$, and every rim $\theta$-band of $\Psi$ with this
  property has at least $2LN$  $q$-cells. Then there is a maximal $q$-band $\cal C$ in
  $\Psi$ and a subcomb $\Gamma$ with handle $\cal C$ such that the base width of $\Gamma$
  is $15N$ and no $q$-band of $\Gamma$ is a subband of a spoke of $\Delta.$

  (b) Assume that a reduced diagram $\Delta$
  contains cells
  but has no hubs. Then either it has a rim band of base width
  $<2LN$ or it has a subcomb of base width $15N.$ 
  \end{lemma}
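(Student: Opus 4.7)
For part (b), since $\Delta$ contains cells, a standard peeling argument produces a rim $\theta$-band $\cal T$ in $\Delta$. If its base width is less than $2LN$, the first alternative holds. Otherwise $\cal T$ has at least $2LN$ $q$-cells, giving at least $2LN$ maximal $q$-bands ${\cal C}_1, \ldots, {\cal C}_n$ emanating from $\cal T$; since $\Delta$ has no hubs, Lemma~\ref{NoAnnul} forces each to terminate on $\partial\Delta$. Any $15N$ consecutive such bands, together with the segment of $\cal T$ and the arc of $\partial\Delta$ joining their endpoints, bound a subcomb of base width $15N$ (the relevant portion of $\cal T$ itself realizes the base width).

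For part (a), let $\cal T$ be the rim $\theta$-band from the hypothesis, with $n \geq 2LN$ $q$-cells, and let ${\cal C}_1, \ldots, {\cal C}_n$ be the maximal $q$-bands crossing $\cal T$ in order along it. Because $\cal T$ meets neither $\cal B$ nor $\cal B'$ and the long sides of these $t$-bands contain only $\theta$- and $a$-edges, the second end of each ${\cal C}_i$ lies on $\partial\Delta$ or on $\partial\pi$; in the latter case ${\cal C}_i$ is a subband of a spoke. Since $\partial\pi$ carries only $LN$ $q$-edges, at most $LN$ of the ${\cal C}_i$'s are such subbands. The target is a consecutive run of $15N$ non-spoke indices, which would directly produce the desired subcomb of base width $15N$ with no $q$-band a subband of a spoke.

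To secure such a run I would induct on the number $k$ of $q$-edges of $\partial\pi$ inside $\Psi$. When $k=0$, no ${\cal C}_i$ is a spoke and the argument of part (b) applies to $\cal T$ itself. When $k \geq 1$ and no long run is visible directly, pick a spoke ${\cal C}_{i_j}$ meeting $\cal T$ and split $\Psi$ into two sub-cloves $\Psi_1 = cl(\pi,\cal B,{\cal C}_{i_j})$ and $\Psi_2 = cl(\pi,{\cal C}_{i_j},\cal B')$, each containing strictly fewer $q$-edges of $\partial\pi$. A rim $\theta$-band of a sub-clove not crossing its two boundary spokes is also a rim $\theta$-band of $\Psi$ not crossing $\cal B, \cal B'$, hence still has $\geq 2LN$ $q$-cells, so each sub-clove inherits the hypothesis. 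Applying the induction to whichever sub-clove contains the relevant portion of $\cal T$ produces the subcomb as a subcomb of that sub-clove and hence of $\Psi$; the spoke-avoidance condition transfers because $\pi$ remains the same hub throughout.

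The main obstacle is executing the induction cleanly: one must check that the chosen sub-clove at each stage actually contains cells and a legitimate rim $\theta$-band to which the hypothesis applies, that the cyclic-order constraint on where spokes meet $\partial\pi$ (a planarity observation) really justifies the splittings, and that subbands of spokes in the sub-cloves coincide with subbands of spokes of $\Delta$. The delicate point is confirming that when $\cal T$ crosses ${\cal C}_{i_j}$, the portion of $\cal T$ inherited by the relevant sub-clove is still long enough to set up the next inductive step, rather than having to pass to an entirely new rim $\theta$-band supplied by the hypothesis on the sub-clove.
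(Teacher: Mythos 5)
There is a genuine gap, and it is the same one in both parts: you treat the subdiagram cut off by a maximal $q$-band (containing $15N$ consecutive $q$-cells of the rim band $\cal T$) as automatically a subcomb. But a comb is, by definition, a diagram in which \emph{every} maximal $\theta$-band crosses the handle, and this can fail for the region you cut off: it may contain maximal $\theta$-bands that start and end on $\partial\Delta$ without ever meeting the chosen $q$-band, in which case the "relevant portion of $\cal T$" does not realize any base width of a comb at all. This is precisely the difficulty the paper's proof is built to handle: after choosing a cutting $q$-band $\cal C'$ so that the cut-off subdiagram $\Gamma'$ contains no spoke edges and at least $15N$ $q$-cells of $\cal T$ (possible because the hub has only $LN$ $q$-spokes while $\cal T$ has at least $2LN$ $q$-cells, no $q$-band crossing it twice by Lemma~\ref{NoAnnul}), one asks whether $\Gamma'$ is a comb with handle $\cal C'$; if not, some maximal $\theta$-band of $\Gamma'$ misses $\cal C'$, which forces $\Gamma'$ to contain another rim $\theta$-band of $\Psi$ avoiding $\cal B,\cal B'$, hence with at least $2LN$ $q$-cells by hypothesis, and one cuts off a strictly smaller subdiagram $\Gamma''$, not containing $\cal C'$, in which a portion of that new rim band with at least $LN\ge 15N$ $q$-cells is a rim band. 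Since the areas strictly decrease, this descent terminates in a genuine subcomb of base width $\ge 15N$, which is then trimmed to exactly $15N$ by passing to derivative subcombs. Your proposal contains no substitute for this iteration, so the final step of both (a) and (b) is unsupported.

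A secondary point: the induction on the number of $q$-edges of $\partial\pi$ in $\Psi$ that absorbs most of your effort in (a) is both unnecessary and problematic. The splitting $q$-bands need not be $t$-bands, so the pieces $cl(\pi,\cal B,{\cal C}_{i_j})$ are not cloves in the paper's sense; worse, a rim $\theta$-band of a sub-piece whose side runs along the cutting spoke is not a rim band of $\Psi$, so the hypothesis of the lemma does not guarantee it has $2LN$ $q$-cells, and your inductive hypothesis cannot be invoked. The paper avoids all of this: since at most $LN$ of the $\ge 2LN$ $q$-bands crossing $\cal T$ can be subbands of spokes, a single well-chosen cutting band already yields a spoke-free region with $\ge 15N$ $q$-cells of $\cal T$, and the only real work is the area-descent argument described above.
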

  \proof (a) Since (1) a hub has $LN$ spokes, (2) no $q$-band of $\Psi$ intersects $\cal T$ twice by Lemma
  \ref{NoAnnul},
  (3) $\cal T$ has at least $2LN$  $q$-cells, and (4) $L>30$, there exists a maximal $q$-band $\cal C'$
 such that a subdiagram $\Gamma'$ separated from $\Psi$ by $\cal C'$ contains no edges of the spokes
  of $\pi$ and the part of $\cal T$ belonging to $\Gamma'$ has at least $15N$  $q$-cells.
  
\unitlength 1mm 
\linethickness{0.4pt}
\ifx\plotpoint\undefined\newsavebox{\plotpoint}\fi 
\begin{picture}(131.5,60.75)(0,0)
\multiput(117.43,40.18)(-.03125,.03125){4}{\line(0,1){.03125}}
\multiput(52.18,46.68)(-.0625,.03125){4}{\line(-1,0){.0625}}
\put(22.43,50.93){\line(1,0){.9903}}
\put(24.41,50.935){\line(1,0){.9903}}
\put(26.391,50.939){\line(1,0){.9903}}
\put(28.371,50.944){\line(1,0){.9903}}
\put(30.352,50.949){\line(1,0){.9903}}
\put(32.333,50.954){\line(1,0){.9903}}
\put(34.313,50.959){\line(1,0){.9903}}
\put(36.294,50.964){\line(1,0){.9903}}
\put(38.274,50.969){\line(1,0){.9903}}
\put(40.255,50.973){\line(1,0){.9903}}
\put(42.236,50.978){\line(1,0){.9903}}
\put(44.216,50.983){\line(1,0){.9903}}
\put(46.197,50.988){\line(1,0){.9903}}
\put(48.177,50.993){\line(1,0){.9903}}
\put(50.158,50.998){\line(1,0){.9903}}
\put(52.138,51.003){\line(1,0){.9903}}
\put(54.119,51.007){\line(1,0){.9903}}
\put(56.1,51.012){\line(1,0){.9903}}
\put(58.08,51.017){\line(1,0){.9903}}
\put(60.061,51.022){\line(1,0){.9903}}
\put(62.041,51.027){\line(1,0){.9903}}
\put(64.022,51.032){\line(1,0){.9903}}
\put(66.003,51.037){\line(1,0){.9903}}
\put(67.983,51.041){\line(1,0){.9903}}
\put(69.964,51.046){\line(1,0){.9903}}
\put(71.944,51.051){\line(1,0){.9903}}
\put(73.925,51.056){\line(1,0){.9903}}
\put(75.905,51.061){\line(1,0){.9903}}
\put(77.886,51.066){\line(1,0){.9903}}
\put(79.867,51.07){\line(1,0){.9903}}
\put(81.847,51.075){\line(1,0){.9903}}
\put(83.828,51.08){\line(1,0){.9903}}
\put(85.808,51.085){\line(1,0){.9903}}
\put(87.789,51.09){\line(1,0){.9903}}
\put(89.77,51.095){\line(1,0){.9903}}
\put(91.75,51.1){\line(1,0){.9903}}
\put(93.731,51.104){\line(1,0){.9903}}
\put(95.711,51.109){\line(1,0){.9903}}
\put(97.692,51.114){\line(1,0){.9903}}
\put(99.672,51.119){\line(1,0){.9903}}
\put(101.653,51.124){\line(1,0){.9903}}
\put(103.634,51.129){\line(1,0){.9903}}
\put(105.614,51.134){\line(1,0){.9903}}
\put(107.595,51.138){\line(1,0){.9903}}
\put(109.575,51.143){\line(1,0){.9903}}
\put(111.556,51.148){\line(1,0){.9903}}
\put(113.537,51.153){\line(1,0){.9903}}
\put(115.517,51.158){\line(1,0){.9903}}
\put(117.498,51.163){\line(1,0){.9903}}
\put(119.478,51.168){\line(1,0){.9903}}
\put(121.459,51.172){\line(1,0){.9903}}
\put(123.439,51.177){\line(1,0){.9903}}
\multiput(23.43,51.18)(-.033088,-.072304){12}{\line(0,-1){.072304}}
\multiput(22.636,49.444)(-.033088,-.072304){12}{\line(0,-1){.072304}}
\multiput(21.841,47.709)(-.033088,-.072304){12}{\line(0,-1){.072304}}
\multiput(21.047,45.974)(-.033088,-.072304){12}{\line(0,-1){.072304}}
\multiput(20.253,44.239)(-.033088,-.072304){12}{\line(0,-1){.072304}}
\multiput(19.459,42.503)(-.033088,-.072304){12}{\line(0,-1){.072304}}
\multiput(18.665,40.768)(-.033088,-.072304){12}{\line(0,-1){.072304}}
\multiput(17.871,39.033)(-.033088,-.072304){12}{\line(0,-1){.072304}}
\multiput(17.077,37.297)(-.033088,-.072304){12}{\line(0,-1){.072304}}
\multiput(16.68,36.43)(.0326355,-.0603448){14}{\line(0,-1){.0603448}}
\multiput(17.594,34.74)(.0326355,-.0603448){14}{\line(0,-1){.0603448}}
\multiput(18.507,33.05)(.0326355,-.0603448){14}{\line(0,-1){.0603448}}
\multiput(19.421,31.361)(.0326355,-.0603448){14}{\line(0,-1){.0603448}}
\multiput(20.335,29.671)(.0326355,-.0603448){14}{\line(0,-1){.0603448}}
\multiput(21.249,27.981)(.0326355,-.0603448){14}{\line(0,-1){.0603448}}
\multiput(22.162,26.292)(.0326355,-.0603448){14}{\line(0,-1){.0603448}}
\multiput(23.076,24.602)(.0326355,-.0603448){14}{\line(0,-1){.0603448}}
\multiput(23.99,22.912)(.0326355,-.0603448){14}{\line(0,-1){.0603448}}
\multiput(24.904,21.223)(.0326355,-.0603448){14}{\line(0,-1){.0603448}}
\multiput(25.818,19.533)(.0326355,-.0603448){14}{\line(0,-1){.0603448}}
\multiput(26.731,17.844)(.0326355,-.0603448){14}{\line(0,-1){.0603448}}
\multiput(27.645,16.154)(.0326355,-.0603448){14}{\line(0,-1){.0603448}}
\multiput(28.559,14.464)(.0326355,-.0603448){14}{\line(0,-1){.0603448}}
\multiput(29.473,12.775)(.0326355,-.0603448){14}{\line(0,-1){.0603448}}
\put(42.75,51){\rule{.5\unitlength}{.25\unitlength}}
\put(48.5,14.25){\rule{1.75\unitlength}{37\unitlength}}
\put(103.75,35.5){\circle{6.519}}
\put(106,38.25){\line(1,1){12.75}}
\put(106,37.5){\line(1,1){13.75}}
\put(104,39){\line(0,1){12}}
\put(103.5,39){\line(0,1){12.5}}
\multiput(110,51)(-.033557047,-.080536913){149}{\line(0,-1){.080536913}}
\multiput(110.75,51)(-.033653846,-.078525641){156}{\line(0,-1){.078525641}}
\multiput(22.75,185.75)(-.03125,-.03125){8}{\line(0,-1){.03125}}
\multiput(53,51.75)(.1045353982,-.0337389381){452}{\line(1,0){.1045353982}}
\put(100,35.75){\line(-1,0){28}}
\multiput(58,50.75)(.1041666667,-.0337009804){408}{\line(1,0){.1041666667}}
\multiput(72.5,35.75)(-.0462962963,-.0336700337){297}{\line(-1,0){.0462962963}}
\put(58.75,25.75){\line(-1,-3){3.5}}
\put(100.25,35){\line(-1,0){27.75}}
\multiput(72.5,35)(-.0454545455,-.0336363636){275}{\line(-1,0){.0454545455}}
\multiput(60,25.75)(-.0326087,-.0326087){23}{\line(0,-1){.0326087}}
\multiput(59.25,25)(-.033505155,-.097938144){97}{\line(0,-1){.097938144}}
\put(103.75,32.25){\line(0,-1){17.75}}
\put(103.5,32.25){\line(1,0){.25}}
\put(103.25,32.75){\line(0,-1){18.5}}
\put(106,33){\line(1,-1){18.25}}
\put(106.5,33.5){\line(1,-1){18.5}}
\multiput(104.75,32.5)(.033695652,-.077173913){230}{\line(0,-1){.077173913}}
\multiput(105.5,32.5)(.033695652,-.076086957){230}{\line(0,-1){.076086957}}
\put(101,38.25){\line(-1,1){12.75}}
\put(89.25,51){\line(1,-1){12.75}}
\multiput(96.5,51.25)(.033653846,-.078525641){156}{\line(0,-1){.078525641}}
\multiput(97.75,50.75)(.033687943,-.083333333){141}{\line(0,-1){.083333333}}
\put(101.25,33.25){\line(-1,-1){19}}
\multiput(101.5,33)(.0333333,-.0333333){15}{\line(0,-1){.0333333}}
\put(101.75,32.75){\line(-1,-1){18}}
\put(102.25,32.75){\line(-1,-2){9}}
\multiput(102.75,32.5)(-.0337301587,-.0694444444){252}{\line(0,-1){.0694444444}}
\put(94.25,15){\line(0,1){0}}
\multiput(100.5,34.5)(-.0962301587,-.0337301587){252}{\line(-1,0){.0962301587}}
\multiput(76.25,26)(-.03372093,-.051162791){215}{\line(0,-1){.051162791}}
\multiput(100.75,34)(-.0952380952,-.0337301587){252}{\line(-1,0){.0952380952}}
\multiput(76.75,25.5)(-.033678756,-.051813472){193}{\line(0,-1){.051813472}}
\put(117.25,45.75){$\cal B$}
\put(115.25,27.25){$\cal B'$}
\put(103,35.5){$\pi$}
\put(50.75,40){$\cal C'$}
\put(63,40.25){$\Psi$}
\put(33.25,39.25){$\Gamma'$}
\put(126.25,34.25){$\Delta$}
\multiput(119.43,14.43)(.142857,-.029762){6}{\line(1,0){.142857}}
\multiput(121.144,14.073)(.142857,-.029762){6}{\line(1,0){.142857}}
\multiput(122.858,13.715)(.142857,-.029762){6}{\line(1,0){.142857}}
\multiput(124.573,13.358)(.142857,-.029762){6}{\line(1,0){.142857}}
\multiput(126.287,13.001)(.142857,-.029762){6}{\line(1,0){.142857}}
\multiput(128.001,12.644)(.142857,-.029762){6}{\line(1,0){.142857}}
\multiput(129.715,12.287)(.142857,-.029762){6}{\line(1,0){.142857}}
\put(124,15){\line(1,0){.25}}
\multiput(123.75,15.25)(.03125,-.03125){8}{\line(0,-1){.03125}}
\multiput(123,16.25)(.033505155,-.033505155){97}{\line(0,-1){.033505155}}
\multiput(124.25,15.75)(-.03125,.03125){8}{\line(0,1){.03125}}
\put(123.75,16.25){\line(0,1){.25}}
\multiput(123.75,16.25)(.03963415,-.03353659){82}{\line(1,0){.03963415}}
\multiput(30,12)(.148809524,.033730159){126}{\line(1,0){.148809524}}
\put(50,16.75){\line(1,0){64.75}}
\put(114.75,16.75){\line(0,-1){2}}
\put(114.75,14.75){\line(-1,0){66}}
\multiput(48.75,14.75)(-.140873016,-.033730159){126}{\line(-1,0){.140873016}}
\multiput(31,10.5)(-.0333333,.05){30}{\line(0,1){.05}}
\multiput(114.43,16.68)(.075,-.032143){10}{\line(1,0){.075}}
\multiput(115.93,16.037)(.075,-.032143){10}{\line(1,0){.075}}
\multiput(117.43,15.394)(.075,-.032143){10}{\line(1,0){.075}}
\multiput(118.93,14.751)(.075,-.032143){10}{\line(1,0){.075}}
\put(62,11.5){$\cal T$}
\put(39.75,9.75){$\cal T$}
\multiput(43.75,21.5)(.03333333,-.11666667){60}{\line(0,-1){.11666667}}
\multiput(44.5,21.5)(.03333333,-.1125){60}{\line(0,-1){.1125}}
\multiput(31.75,17.5)(.03358209,-.08955224){67}{\line(0,-1){.08955224}}
\multiput(31,17.25)(.03333333,-.09583333){60}{\line(0,-1){.09583333}}
\end{picture}

  If $\Gamma'$ is not a comb, and so a maximal $\theta$-band of it does not cross $\cal C',$
  then $\Gamma'$ must contain another rim band $\cal T'$ having at least $2LN$
  $q$-cells by the assumption of the lemma. This makes possible to find a subdiagram $\Gamma''$ of
  $\Gamma'$ such that a part of $\cal T'$ is a rim band of $\Gamma''$ containing at least $LN>15N$
  $q$-cells, and $\Gamma''$ does not contain $\cal C'$.
  Since $\area(\Gamma')>\area(\Gamma'')>\dots$ , such a procedure must stop. Hence, for some $i$, we
  obtain a subcomb $\Gamma^{(i)}$ of width $b\ge 15N$ intersected by no spokes.
  If $b>15N,$ then a derived subcomb of it has width $b-1\ge 15N.$ Finally we obtain the desired $\Gamma.$

  (b) The proof is easier than that for (a): one should just
  ignore the hub.
\endproof

We call a minimal diagram \label{solidd}{\it solid} if it has no rim
$\theta$-bands of base width $\le 2LN,$ no subcombs of base width
$15N$ and no one-Step subcombs 
whose handles are $t^{\pm 1}$-
or $(t')^{\pm 1}$-bands. Here we focus on solid diagrams
since the proof of Theorem \ref{mainth} will be reduced to
them in the next section.

For a clove $\Psi=cl(\pi,{\cal B},{\cal B'})$ in a diagram $\Delta$,
we denote by \label{pPsi} ${\bf p}(\Psi)$ the common subpath of $\partial\Psi$ and
$\partial\Delta$ starting with the $q$-edge of ${\cal B}$ and ending
with the $q$-edge of $\cal B'.$

\begin{lemma}\label{psi1} Let $\Psi=cl(\pi,{\cal B},{\cal B'})$
be a clove in a solid diagram $\Delta$. Then every maximal
$\theta$-band of $\Psi$ crosses either ${\cal B}$ or ${\cal B'}$;
the base width of any $\theta$-band of $\Psi$  is less than $2LN,$
and $\area(\Psi)\le (2LN(h+h')+ \delta^{-1}|{\bf p}(\Psi)|)(h+h'),$ where $h$
and $h'$ are the lengths of the bands ${\cal B}$ and ${\cal B'}$,
respectively.
\end{lemma}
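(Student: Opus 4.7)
I will establish the three assertions in order, using the solidity hypothesis on $\Delta$ to rule out the existence of certain unwanted structures in $\Psi$.

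For the first assertion, I argue by contradiction. Suppose a maximal $\theta$-band $\cal T$ of $\Psi$ crosses neither $\cal B$ nor $\cal B'$. Since $\partial\pi$ consists entirely of $q$-edges, the two $\theta$-edge endpoints of $\cal T$ must lie on ${\bf p}(\Psi)\subset \partial\Delta$. Moving successively towards ${\bf p}(\Psi)$ through $\theta$-bands on one side of $\cal T$, I can exhibit a rim $\theta$-band of $\Psi$ that still avoids $\cal B$ and $\cal B'$. Any such rim band has its long side lying on ${\bf p}(\Psi)$, so it is simultaneously a rim $\theta$-band of $\Delta$, whence by solidity its base width exceeds $2LN$. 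The hypothesis of Lemma~\ref{rimexists}(a) is then fulfilled, and that lemma produces a subcomb of $\Psi\subseteq\Delta$ of base width $15N$, contradicting solidity.

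For the bound on base width, note first that by assertion~(1) and Lemma~\ref{NoAnnul}, the number of maximal $\theta$-bands in $\Psi$ is at most $h+h'$ (since each such band shares exactly one cell with $\cal B$ or $\cal B'$, and each cell of $\cal B$ or $\cal B'$ is shared with at most one $\theta$-band). Suppose for contradiction that some $\cal T$ has base width at least $2LN$. By Lemma~\ref{NoAnnul}, $\cal T$ crosses each maximal $q$-band of $\Psi$ at most once; the number of spokes of $\pi$ contained in $\Psi$ is at most $LN$ because $\partial\pi$ carries only $LN$ $q$-edges. Hence $\cal T$ must cross at least $LN$ non-spoke maximal $q$-bands of $\Psi$, each of which has both endpoints on ${\bf p}(\Psi)$. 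Selecting an innermost such $\cal D$, I get a subdiagram $\Xi$ bounded by $\cal D$ and a sub-arc of ${\bf p}(\Psi)$ containing no other non-spoke $q$-bands; any rim $\theta$-band of $\Xi$ that does not cross $\cal D$ has its side on ${\bf p}(\Psi)$ and is therefore a rim band of $\Delta$, of base width exceeding $2LN$ by solidity. Lemma~\ref{rimexists} applied to $\Xi$ then delivers a subcomb of $\Delta$ of base width $15N$, again contradicting solidity. The main obstacle here will be tracking the various geometric configurations of $\Xi$ carefully enough to ensure that Lemma~\ref{rimexists} applies and that the rim bands obtained really are rim bands of $\Delta$.

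For the area bound, I count cells of each type separately. Every cell of $\Psi$ lies in some maximal $\theta$-band, and by assertion~(1) the number of such bands is at most $h+h'$; by assertion~(2) each contains fewer than $2LN$ state-letter cells, contributing at most $2LN(h+h')$ $(\theta,q)$-cells in total. Every maximal $a$-band of $\Psi$ has both end $a$-edges on ${\bf p}(\Psi)$, because the sides of the $t$-bands $\cal B$ and $\cal B'$ consist purely of $\theta$-edges (Property~\ref{kk'}, since $t$-cells are passive) and $\partial\pi$ carries no $a$-edges. The number of such $a$-bands is therefore at most $|{\bf p}(\Psi)|_a/2\le \delta^{-1}|{\bf p}(\Psi)|/2$ by Lemma~\ref{ochev}(d), and by Lemma~\ref{NoAnnul} each $a$-band crosses at most $h+h'$ of the $\theta$-bands. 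Summing these two contributions and bounding loosely by the stated expression $(2LN(h+h')+\delta^{-1}|{\bf p}(\Psi)|)(h+h')$ completes the proof.
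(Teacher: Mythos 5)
Your argument for the first assertion is essentially the paper's (reduce to a rim $\theta$-band of $\Psi$ crossing neither ${\cal B}$ nor ${\cal B'}$ and invoke Lemma~\ref{rimexists}(a) against solidity), and your counting set-up for the second assertion (at least $2LN-LN=LN$ non-spoke $q$-bands crossing $\cal T$, each with both ends on ${\bf p}(\Psi)$) is also correct. The problems come afterwards.

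For the second assertion, applying Lemma~\ref{rimexists}(b) to the subdiagram $\Xi$ cut off by $\cal D$ does not give the contradiction you claim, because what that lemma produces lives in $\Xi$, not in $\Delta$. The boundary of $\Xi$ consists of a side of $\cal D$ (made of $\theta$- and $a$-edges) together with an arc of ${\bf p}(\Psi)$, so a rim $\theta$-band of $\Xi$ may have its end $\theta$-edges and part (or all) of its side along the side of $\cal D$ — for instance a band of $(\theta,a)$-cells hugging $\cal D$ has base width $0$ — and such a band is not a rim band of $\Delta$, so solidity says nothing about it; your dichotomy ``any rim $\theta$-band of $\Xi$ not crossing $\cal D$ has its side on ${\bf p}(\Psi)$'' is therefore unjustified. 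Likewise a subcomb of $\Xi$ of base width $15N$ need not be a subcomb of $\Delta$: if the comb lies on the $\cal D$-side of its handle, the region of $\Delta$ cut off by that handle is strictly larger than the comb and need not be a comb at all. (Also, with $\cal D$ chosen ``innermost'' the region $\Xi$ could even be cell-free, and then Lemma~\ref{rimexists}(b) does not apply.) The paper avoids all of this by choosing, as in the proof of Lemma~\ref{rimexists}(a), a $q$-band $\cal C$ crossing $\cal T$ which cuts off a spoke-free piece containing at least $(2LN-LN)/2\ge 15N$ $q$-cells of $\cal T$, and then running the descent of that proof inside $\Psi$, where every rim band encountered is a rim band of $\Delta$, until a genuine subcomb of $\Delta$ of base width $15N$ appears.

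For the area bound, your claim that every maximal $a$-band of $\Psi$ has both end $a$-edges on ${\bf p}(\Psi)$ is false: by the paper's convention $a$-bands contain no $(\theta,q)$-cells, and a maximal $a$-band typically terminates on an $a$-edge of a $(\theta,q)$-cell. Your count therefore omits all $a$-bands joining two $(\theta,q)$-cells of $\Psi$; by Property~\ref{cell}(a) there are at most $2LN(h+h')$ of them, each of length at most $h+h'$, and these up to $2LN(h+h')^2$ cells are precisely what the term $2LN(h+h')\cdot(h+h')$ in the stated bound is there for. The paper's count handles this by charging the $(\theta,q)$-cells to the maximal $q$-bands (each of which has an edge on ${\bf p}(\Psi)$, hence at most $\delta^{-1}|{\bf p}(\Psi)|$ bands of length at most $h+h'$), charging the internal $a$-bands separately as above, and charging the remaining $a$-bands to their boundary edges on ${\bf p}(\Psi)$. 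Your final inequality is true, but as written your derivation misses a contribution of the same order as the main term.
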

\proof If the first claim were wrong, then one could find a rim $\theta$-band which
crosses neither ${\cal B}$ nor ${\cal B'}$. Then by Lemma \ref{rimexists} (a), either
first  or the second condition in the definition of solid diagram would be violated,
a contradiction. Thus the first statement of the lemma is proved, and $\Psi$ has at most
$h+h'$ maximal $\theta$-bands.

     Now we consider a maximal $\theta$-band
$\cal T$ in $\Psi$ ($\cal T$ is not an annulus by Lemma \ref{NoAnnul}). If its base width is at least $2LN$, then there is a maximal $q$-band $\cal C$ intersecting $\cal T$ which does not
start/end on the hub $\pi$ because the number of spokes starting on the same hub cell is $LN$. Moreover, as in the proof of Lemma \ref{rimexists}, one
can select $\cal C$ so that $\cal C$ separates a comb of base width
$ > (2LN-LN)/2\ge 15N$ from $\Psi,$ contrary to the assumption that
$\Delta$ is solid. Thus, the base width of $\cal T$ is less than
$2LN$. Therefore the number of $(\theta, q)$-cells in $\Psi$ is at
most $2LN\times (h+h')$. Every $(\theta, q)$-cell has at most two $a$-edges
by \ref{cell}. Hence the number of maximal $a$-bands
starting and ending on the $(\theta, q)$-cells of $\Psi$ (but not on $\partial\Delta$) is at most
$2LN(h+h').$ Their lengths do not exceed $h+h'$ by Lemma \ref{NoAnnul} since the number
of maximal $\theta$-bands in $\Psi$ is at most $h+h'.$ Thus the total area of these $a$-bands
does not exceed $2LN(h+h')^2.$
Arbitrary other maximal $a$-band and a maximal
$q$-band  of $\Psi$ is also of length at most $h+h'$  by the same reason, but it
has at least one edge on ${\bf p}(\Psi)$. Therefore their total area
is at most $(h+h')(|{\bf p}(\Psi)|_a+|{\bf p}(\Psi)|_q)\le \delta^{-1}|{\bf p}(\Psi)|$ by Lemma \ref{ochev} (d). Since every cell of $\Psi$ belongs to a $\theta$-band, every $(\theta,a)$-cell belongs to $a$-band and every maximal
$a$-band starts and ends on a $(\theta,q)$-cell or on $\partial\Delta, $ the sum of these two inequalities gives
the inequality from the lemma.
\endproof

Let $\Psi$ be a clove at a hub $\pi$ in a solid diagram $\Delta$.
Assume that it has more than $L$ $t$- and $t'$-spokes. (Recall that
$\partial\pi$ has $2L$ $t$- and $t'$-edges.) Then we denote
by $\bar\Delta$ the subdiagram formed by $\pi$ and $\Psi$, and
denote by $\bf \bar p$ the path $\topp({\cal B}) {\bf u}^{-1}
\bott({\cal B'})^{-1},$ where $\bf u$ is a subpath on $\partial\pi,$ such that
$\bf \bar p$ separate $\bar\Delta$ from the remaining subdiagram $\Psi'$
of $\Delta.$
It follows that the total number of $t$- and
$t'$-edges in $u$ is less than $L,$ $|u|<LN,$ and the number of $t$- end $t'$-edges
in ${\bf p}(\Psi)$ is at least $L+1.$

\unitlength 1mm 
\linethickness{0.4pt}
\ifx\plotpoint\undefined\newsavebox{\plotpoint}\fi 
\begin{picture}(100,48.5)(0,0)
\put(71.75,22.75){\circle{7.649}}
\multiput(74.75,21)(.097355769,-.033653846){208}{\line(1,0){.097355769}}
\multiput(74.25,20)(.094951923,-.033653846){208}{\line(1,0){.094951923}}
\multiput(94.5,14.25)(-.0333333,-.0833333){15}{\line(0,-1){.0833333}}
\multiput(68.25,21)(-.192164179,-.03358209){134}{\line(-1,0){.192164179}}
\multiput(42.5,16.5)(-.03125,.21875){8}{\line(0,1){.21875}}
\multiput(42.25,18.25)(.214285714,.033613445){119}{\line(1,0){.214285714}}
\put(75.5,23.25){\line(1,0){17.25}}
\put(75.5,24.25){\line(1,0){17}}
\put(92.5,24.25){\line(0,-1){1}}
\put(46.5,24.25){\line(1,0){21.75}}
\put(46.5,24.25){\line(0,-1){1}}
\put(46.5,23.25){\line(1,0){20.75}}
\multiput(42.68,16.18)(-.0332817,-.0363777){19}{\line(0,-1){.0363777}}
\multiput(41.415,14.797)(-.0332817,-.0363777){19}{\line(0,-1){.0363777}}
\multiput(40.15,13.415)(-.0332817,-.0363777){19}{\line(0,-1){.0363777}}
\multiput(38.886,12.033)(-.0332817,-.0363777){19}{\line(0,-1){.0363777}}
\multiput(37.621,10.65)(-.0332817,-.0363777){19}{\line(0,-1){.0363777}}
\multiput(36.356,9.268)(-.0332817,-.0363777){19}{\line(0,-1){.0363777}}
\multiput(35.091,7.886)(-.0332817,-.0363777){19}{\line(0,-1){.0363777}}
\multiput(33.827,6.503)(-.0332817,-.0363777){19}{\line(0,-1){.0363777}}
\multiput(32.562,5.121)(-.0332817,-.0363777){19}{\line(0,-1){.0363777}}
\put(31.93,4.43){\line(1,0){.9855}}
\put(33.901,4.473){\line(1,0){.9855}}
\put(35.872,4.517){\line(1,0){.9855}}
\put(37.843,4.56){\line(1,0){.9855}}
\put(39.814,4.604){\line(1,0){.9855}}
\put(41.785,4.647){\line(1,0){.9855}}
\put(43.756,4.691){\line(1,0){.9855}}
\put(45.727,4.734){\line(1,0){.9855}}
\put(47.698,4.778){\line(1,0){.9855}}
\put(49.669,4.821){\line(1,0){.9855}}
\put(51.64,4.864){\line(1,0){.9855}}
\put(53.611,4.908){\line(1,0){.9855}}
\put(55.582,4.951){\line(1,0){.9855}}
\put(57.553,4.995){\line(1,0){.9855}}
\put(59.524,5.038){\line(1,0){.9855}}
\put(61.495,5.082){\line(1,0){.9855}}
\put(63.466,5.125){\line(1,0){.9855}}
\put(65.437,5.169){\line(1,0){.9855}}
\put(67.408,5.212){\line(1,0){.9855}}
\put(69.379,5.256){\line(1,0){.9855}}
\put(71.35,5.299){\line(1,0){.9855}}
\put(73.321,5.343){\line(1,0){.9855}}
\put(75.292,5.386){\line(1,0){.9855}}
\put(77.263,5.43){\line(1,0){.9855}}
\put(79.234,5.473){\line(1,0){.9855}}
\put(81.205,5.517){\line(1,0){.9855}}
\put(83.176,5.56){\line(1,0){.9855}}
\put(85.147,5.604){\line(1,0){.9855}}
\put(87.118,5.647){\line(1,0){.9855}}
\put(89.089,5.691){\line(1,0){.9855}}
\put(91.06,5.734){\line(1,0){.9855}}
\put(93.031,5.778){\line(1,0){.9855}}
\put(95.002,5.821){\line(1,0){.9855}}
\put(96.973,5.864){\line(1,0){.9855}}
\put(98.944,5.908){\line(1,0){.9855}}
\multiput(99.93,5.93)(-.0334225,.0374332){17}{\line(0,1){.0374332}}
\multiput(98.793,7.202)(-.0334225,.0374332){17}{\line(0,1){.0374332}}
\multiput(97.657,8.475)(-.0334225,.0374332){17}{\line(0,1){.0374332}}
\multiput(96.521,9.748)(-.0334225,.0374332){17}{\line(0,1){.0374332}}
\multiput(95.384,11.021)(-.0334225,.0374332){17}{\line(0,1){.0374332}}
\multiput(94.248,12.293)(-.0334225,.0374332){17}{\line(0,1){.0374332}}
\multiput(46.68,24.43)(.03125,.0625){12}{\line(0,1){.0625}}
\multiput(47.43,25.93)(.03125,.0625){12}{\line(0,1){.0625}}
\multiput(48.18,27.43)(.03125,.0625){12}{\line(0,1){.0625}}
\multiput(48.93,28.93)(.03125,.0625){12}{\line(0,1){.0625}}
\put(49.68,30.43){\line(0,1){0}}
\multiput(49.68,30.43)(.166667,-.033333){5}{\line(1,0){.166667}}
\multiput(51.346,30.096)(.166667,-.033333){5}{\line(1,0){.166667}}
\multiput(52.18,29.93)(.0324074,.0354938){18}{\line(0,1){.0354938}}
\multiput(53.346,31.207)(.0324074,.0354938){18}{\line(0,1){.0354938}}
\multiput(54.513,32.485)(.0324074,.0354938){18}{\line(0,1){.0354938}}
\multiput(55.68,33.763)(.0324074,.0354938){18}{\line(0,1){.0354938}}
\multiput(56.846,35.041)(.0324074,.0354938){18}{\line(0,1){.0354938}}
\put(57.43,35.68){\line(0,1){0}}
\multiput(92.18,24.18)(-.030556,.086111){10}{\line(0,1){.086111}}
\multiput(91.569,25.902)(-.030556,.086111){10}{\line(0,1){.086111}}
\multiput(90.957,27.624)(-.030556,.086111){10}{\line(0,1){.086111}}
\multiput(90.346,29.346)(-.030556,.086111){10}{\line(0,1){.086111}}
\multiput(89.735,31.069)(-.030556,.086111){10}{\line(0,1){.086111}}
\multiput(89.43,32.18)(-.104167,-.03125){8}{\line(-1,0){.104167}}
\multiput(87.763,31.68)(-.104167,-.03125){8}{\line(-1,0){.104167}}
\multiput(86.93,31.43)(-.032967,.0631868){13}{\line(0,1){.0631868}}
\multiput(86.073,33.073)(-.032967,.0631868){13}{\line(0,1){.0631868}}
\multiput(85.215,34.715)(-.032967,.0631868){13}{\line(0,1){.0631868}}
\multiput(84.358,36.358)(-.032967,.0631868){13}{\line(0,1){.0631868}}
\put(83.93,37.18){\line(0,1){0}}
\multiput(83.93,37.18)(-.058333,-.033333){10}{\line(-1,0){.058333}}
\multiput(82.763,36.513)(-.058333,-.033333){10}{\line(-1,0){.058333}}
\put(82.18,36.18){\line(-1,0){.97}}
\put(80.24,36.18){\line(-1,0){.97}}
\put(78.3,36.18){\line(-1,0){.97}}
\put(76.36,36.18){\line(-1,0){.97}}
\put(74.42,36.18){\line(-1,0){.97}}
\put(72.48,36.18){\line(-1,0){.97}}
\put(70.54,36.18){\line(-1,0){.97}}
\put(68.6,36.18){\line(-1,0){.97}}
\put(66.66,36.18){\line(-1,0){.97}}
\put(64.72,36.18){\line(-1,0){.97}}
\put(62.78,36.18){\line(-1,0){.97}}
\put(60.84,36.18){\line(-1,0){.97}}
\put(58.9,36.18){\line(-1,0){.97}}
\put(42.18,18.43){\line(0,1){.75}}
\put(42.18,19.93){\line(0,1){.75}}
\put(42.18,20.68){\line(1,0){.875}}
\put(43.93,20.68){\line(0,1){.75}}
\put(43.93,22.18){\line(1,0){.8333}}
\put(45.596,22.18){\line(1,0){.8333}}
\put(46.43,22.18){\line(0,1){.5}}
\put(43.93,20.68){\line(0,-1){.9167}}
\put(43.93,18.846){\line(0,-1){.9167}}
\put(46.18,22.43){\line(0,-1){.9375}}
\put(46.305,20.555){\line(0,-1){.9375}}
\put(48.43,27.43){\line(0,-1){.9063}}
\put(48.43,25.617){\line(0,-1){.9063}}
\put(48.43,23.805){\line(0,-1){.9063}}
\put(48.43,21.992){\line(0,-1){.9063}}
\multiput(94.18,14.43)(.03125,.09375){8}{\line(0,1){.09375}}
\multiput(94.68,15.93)(.03125,.09375){8}{\line(0,1){.09375}}
\multiput(95.18,17.43)(.03125,.09375){8}{\line(0,1){.09375}}
\put(95.43,18.18){\line(-1,0){.6667}}
\put(94.096,18.346){\line(-1,0){.6667}}
\multiput(93.43,18.43)(.033333,.133333){5}{\line(0,1){.133333}}
\multiput(93.763,19.763)(.033333,.133333){5}{\line(0,1){.133333}}
\put(93.93,20.43){\line(-1,0){.75}}
\put(92.43,20.596){\line(-1,0){.75}}
\put(91.68,20.68){\line(0,1){.75}}
\put(91.846,22.18){\line(0,1){.75}}
\multiput(93.43,17.93)(-.033333,-.1){6}{\line(0,-1){.1}}
\multiput(93.03,16.73)(-.033333,-.1){6}{\line(0,-1){.1}}
\multiput(92.63,15.53)(-.033333,-.1){6}{\line(0,-1){.1}}
\multiput(91.43,20.43)(-.029762,-.107143){7}{\line(0,-1){.107143}}
\multiput(91.013,18.93)(-.029762,-.107143){7}{\line(0,-1){.107143}}
\multiput(90.596,17.43)(-.029762,-.107143){7}{\line(0,-1){.107143}}
\multiput(90.18,26.68)(-.029221,-.12013){7}{\line(0,-1){.12013}}
\multiput(89.771,24.998)(-.029221,-.12013){7}{\line(0,-1){.12013}}
\multiput(89.362,23.316)(-.029221,-.12013){7}{\line(0,-1){.12013}}
\multiput(88.952,21.634)(-.029221,-.12013){7}{\line(0,-1){.12013}}
\multiput(88.543,19.952)(-.029221,-.12013){7}{\line(0,-1){.12013}}
\multiput(88.134,18.271)(-.029221,-.12013){7}{\line(0,-1){.12013}}
\multiput(74.5,25.5)(.03731343,.03358209){67}{\line(1,0){.03731343}}
\multiput(73.75,25.75)(.03358209,.04104478){67}{\line(0,1){.04104478}}
\multiput(72.5,27)(.03125,.3125){8}{\line(0,1){.3125}}
\put(71.75,27){\line(0,1){2.5}}
\multiput(70.5,27)(-.0333333,.1333333){15}{\line(0,1){.1333333}}
\multiput(70,26.75)(-.0326087,.076087){23}{\line(0,1){.076087}}
\multiput(69,26)(-.03289474,.05263158){38}{\line(0,1){.05263158}}
\multiput(69,25.25)(-.03365385,.04326923){52}{\line(0,1){.04326923}}
\put(71.25,23){$\pi$}
\multiput(65,36)(.07894737,.03289474){38}{\line(1,0){.07894737}}
\multiput(65.25,36.25)(.05263158,-.03289474){38}{\line(1,0){.05263158}}
\multiput(63.75,6)(.076087,-.0326087){23}{\line(1,0){.076087}}
\multiput(63.5,4)(.0666667,.0333333){30}{\line(1,0){.0666667}}
\multiput(70.5,20.5)(.0416667,-.0333333){30}{\line(1,0){.0416667}}
\multiput(71.75,19.5)(-.03289474,-.03289474){38}{\line(0,-1){.03289474}}
\multiput(81,17.5)(-.0333333,.0333333){15}{\line(0,1){.0333333}}
\put(80.5,18){\line(1,0){2.25}}
\multiput(80.5,18)(.0326087,-.076087){23}{\line(0,-1){.076087}}
\multiput(54.25,19.5)(-.03888889,-.03333333){45}{\line(-1,0){.03888889}}
\multiput(52.5,18)(.0869565,-.0326087){23}{\line(1,0){.0869565}}
\put(59.5,8){$\bf\bar{\bar p}$}
\put(71,16.75){$\bar u$}
\put(81.5,13.75){$\bf\bar p$}
\put(51,15){$\bf\bar p$}
\put(96.5,13.25){$\cal B$}
\put(38.75,18.25){$\cal B'$}
\put(63,28.75){$\Psi$}
\put(69.75,38.5){${\bf p}(\Psi)$}
\put(74.75,11.25){$\Psi'$}
\multiput(35.93,8.68)(.8,.46667){16}{{\rule{.4pt}{.4pt}}}
\multiput(34.68,4.93)(.82692,.51923){27}{{\rule{.4pt}{.4pt}}}
\multiput(38.68,4.93)(.77778,.51852){28}{{\rule{.4pt}{.4pt}}}
\multiput(41.68,4.93)(.80357,.52679){29}{{\rule{.4pt}{.4pt}}}
\multiput(45.68,5.18)(.79808,.55769){27}{{\rule{.4pt}{.4pt}}}
\multiput(48.18,5.18)(0,-.25){3}{{\rule{.4pt}{.4pt}}}
\multiput(48.68,4.93)(.79,.54){26}{{\rule{.4pt}{.4pt}}}
\multiput(52.18,5.18)(.65,.45){6}{{\rule{.4pt}{.4pt}}}
\multiput(60.93,10.68)(.70455,.54545){12}{{\rule{.4pt}{.4pt}}}
\multiput(55.93,5.43)(.75,.54167){25}{{\rule{.4pt}{.4pt}}}
\multiput(59.18,5.43)(.78261,.55435){24}{{\rule{.4pt}{.4pt}}}
\multiput(62.43,5.43)(.75,.5625){13}{{\rule{.4pt}{.4pt}}}
\multiput(76.18,14.68)(.7,.5){6}{{\rule{.4pt}{.4pt}}}
\multiput(66.43,5.68)(.6875,.5625){9}{{\rule{.4pt}{.4pt}}}
\multiput(69.68,5.68)(.66667,.53333){16}{{\rule{.4pt}{.4pt}}}
\multiput(73.18,5.93)(.73438,.57813){17}{{\rule{.4pt}{.4pt}}}
\multiput(75.68,5.93)(.76667,.56667){16}{{\rule{.4pt}{.4pt}}}
\multiput(78.93,5.93)(.66667,.53333){16}{{\rule{.4pt}{.4pt}}}
\multiput(82.68,5.93)(.72917,.625){13}{{\rule{.4pt}{.4pt}}}
\multiput(85.93,6.18)(.65909,.54545){12}{{\rule{.4pt}{.4pt}}}
\multiput(88.93,5.93)(.6875,.59375){9}{{\rule{.4pt}{.4pt}}}
\multiput(91.93,6.18)(.60714,.46429){8}{{\rule{.4pt}{.4pt}}}
\multiput(94.68,6.18)(.5,.4){6}{{\rule{.4pt}{.4pt}}}
\multiput(97.68,6.43)(.25,.25){3}{{\rule{.4pt}{.4pt}}}
\multiput(40.93,13.93)(.60714,.42857){8}{{\rule{.4pt}{.4pt}}}
\end{picture}

\begin{lemma}\label{ppsi} If $|\partial\Delta|=n$ and $|{\bf p}(\Psi)|\ge 2LN\max(h,h')$, then, in the preceding notation,
$|{\bf p}(\Psi)|-|\bf \bar p|>0$ and
 $$\area(\bar\Delta) \le c_4(\mu(\Delta)- \mu(\Psi')) +
 c_5(\nu_J(\Delta)- \nu_J(\Psi')) +c_6n (n-|\partial\Psi'|)$$ 
 \end{lemma}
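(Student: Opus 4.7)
The argument naturally splits into two parts: establishing the positive difference $|{\bf p}(\Psi)|-|\bar{\bf p}|>0$, then deriving the area bound. The mixture terms in the conclusion are non-negative by Lemma \ref{mu}(a,b), so the inequality will actually follow just from a bound of the form $\area(\bar\Delta)\le c_6 n(n-|\partial\Psi'|)$; including the mixture differences only weakens the estimate.

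First I would estimate $|\bar{\bf p}|$. The path factors as $\bar{\bf p} = \topp({\cal B})\cdot{\bf u}^{-1}\cdot\bott({\cal B'})^{-1}$. By Lemma \ref{ochev}(c), $|\topp({\cal B})|\le h(1+\delta')$ and $|\bott({\cal B'})|\le h'(1+\delta')$. Since ${\bf u}$ is a proper subpath of $\partial\pi$, a cycle of combinatorial length $LN$, we have $|{\bf u}|< LN$. The joining edges in this factorization are $q$-edges, so Lemma \ref{ochev}(b) gives exact additivity, yielding $|\bar{\bf p}|\le (1+\delta')(h+h')+LN$. The hypothesis $|{\bf p}(\Psi)|\ge 2LN\max(h,h')\ge LN(h+h')$ together with the parameter hierarchy (\ref{const}) (where $L,N$ are chosen large compared to $1+\delta'$) then gives $|\bar{\bf p}|\le \tfrac{3}{4}|{\bf p}(\Psi)|$, hence $|{\bf p}(\Psi)|-|\bar{\bf p}|\ge \tfrac{1}{4}|{\bf p}(\Psi)|>0$.

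Next I would apply Lemma \ref{psi1} to bound $\area(\Psi)\le (2LN(h+h')+\delta^{-1}|{\bf p}(\Psi)|)(h+h')$. Using $h+h'\le |{\bf p}(\Psi)|/(LN)$ from the hypothesis, this simplifies to
$$\area(\Psi)\le (2+\delta^{-1})|{\bf p}(\Psi)|(h+h')\le \frac{2+\delta^{-1}}{LN}|{\bf p}(\Psi)|^2.$$
Since $\partial\Delta$ decomposes (as a cyclic path) as ${\bf p}(\Psi)$ followed by the arc $\partial\Delta\setminus{\bf p}(\Psi)$ shared with $\Psi'$, and $\partial\Psi'$ decomposes as $\bar{\bf p}$ followed by the same arc, with $q$-edge junctions in both cases, Lemma \ref{ochev}(b) yields $n-|\partial\Psi'|=|{\bf p}(\Psi)|-|\bar{\bf p}|$. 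Combining with $|{\bf p}(\Psi)|\le 4(|{\bf p}(\Psi)|-|\bar{\bf p}|)$ from the first part, we obtain $|{\bf p}(\Psi)|^2\le 4|{\bf p}(\Psi)|(n-|\partial\Psi'|)\le 4n(n-|\partial\Psi'|)$.

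Therefore $\area(\bar\Delta)=\area(\Psi)+1\le \frac{4(2+\delta^{-1})}{LN}n(n-|\partial\Psi'|)+1$, which is bounded by $c_6 n(n-|\partial\Psi'|)$ for $c_6$ chosen sufficiently large within the hierarchy (\ref{const}) (absorbing the additive $1$ by using the pair of hub spokes to ensure $n(n-|\partial\Psi'|)\ge 1$). Adding the non-negative mixture terms $c_4(\mu(\Delta)-\mu(\Psi'))+c_5(\nu_J(\Delta)-\nu_J(\Psi'))$ on the right preserves the inequality. The main obstacle is the careful accounting of weighted lengths at the junction points between $\bar{\bf p}$ and the rest of $\partial\Delta$; this requires invoking the $q$-edge junction clause of Lemma \ref{ochev}(b) rather than the cruder subadditivity, to get exact equality $n-|\partial\Psi'|=|{\bf p}(\Psi)|-|\bar{\bf p}|$ on the nose.
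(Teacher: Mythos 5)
There is a genuine gap, and it lies at the heart of your simplification. You claim the mixture differences $\mu(\Delta)-\mu(\Psi')$ and $\nu_J(\Delta)-\nu_J(\Psi')$ are non-negative by Lemma \ref{mu}(a,b), so that only the bound $\area(\bar\Delta)\le c_6n(n-|\partial\Psi'|)$ is needed. But Lemma \ref{mu} applies when one removes a (proper) subcomb $\Gamma$, where the cut path ${\bf y}^{\Gamma}$ has no $q$-edges (and no $t$-edges) and the white beads of ${\bf z}^{\Gamma}$ correspond bijectively to those of ${\bf y}^{\Gamma}$ via the $\theta$-bands of the comb. Here the removed piece $\bar\Delta=\pi\cup\Psi$ is a hub together with a clove, not a comb, and the cut path $\bar{\bf p}=\topp({\cal B}){\bf u}^{-1}\bott({\cal B'})^{-1}$ carries both white beads (the $\theta$-edges of the spoke sides) and black beads (the $q$-, $t$-, $t'$-edges of the hub arc ${\bf u}$, roughly $LN$ of them). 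Consequently new separated pairs of white beads can appear on $\partial\Psi'$ (any pair with one bead on $\bar{\bf p}$), and the differences $\kappa(\Delta)-\kappa(\Psi')$, $\lambda(\Delta)-\lambda(\Psi')$, $\nu_J(\Delta)-\nu_J(\Psi')$ can be strictly negative. The paper's proof does not treat them as non-negative; it bounds them from below by $-|\bar{\bf p}|\,n$ (and $-J\delta^{-1}(|{\bf p}(\Psi)|-|\bar{\bf p}|)n$ for $\nu_J$), converts $|\bar{\bf p}|\le\delta^{-1}(|{\bf p}(\Psi)|-|\bar{\bf p}|)\le\delta^{-1}(n-|\partial\Psi'|)$ via the inequality $|{\bf p}(\Psi)|-|\bar{\bf p}|\ge\delta|{\bf p}(\Psi)|$, and then deliberately proves only $\area(\bar\Delta)\le c_6n(n-|\partial\Psi'|)/2$, reserving the other half of $c_6n(n-|\partial\Psi'|)$ to absorb the two possibly negative mixture contributions (each at least $-c_6n(n-|\partial\Psi'|)/4$, using $c_6>4(c_0+1)\delta^{-1}c_4$ and $c_6>4J\delta^{-1}c_5$). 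This is precisely why $c_4$ and $c_5$ appear in the statement at all; with your reading the lemma's right-hand side could be smaller than $c_6n(n-|\partial\Psi'|)$ and your argument would not establish it.

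Two smaller points. Your claim of exact equality $n-|\partial\Psi'|=|{\bf p}(\Psi)|-|\bar{\bf p}|$ is not justified (only subadditivity $|\partial\Psi'|\le|\bar{\bf p}|+|\bar{\bar{\bf p}}|$ holds at the junctions of $\bar{\bf p}$, whose ends are $\theta$-edges, not $q$-edges), but the inequality $n-|\partial\Psi'|\ge|{\bf p}(\Psi)|-|\bar{\bf p}|$ is all that is needed, so this is harmless. More seriously for the first assertion of the lemma, your derivation of $|\bar{\bf p}|\le\tfrac34|{\bf p}(\Psi)|$ uses $|{\bf p}(\Psi)|\ge LN(h+h')$, which is vacuous when $h=h'=0$; the paper handles this degenerate case separately, using $|\bar{\bf p}|\le|{\bf u}|\le LN-1$ together with $|{\bf p}(\Psi)|\ge LN+1$ (coming from the $\ge L+1$ $t$- and $t'$-edges on ${\bf p}(\Psi)$), a fact your argument never invokes.
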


 \proof Let us present $\partial\Psi'$ in the form $\bf \bar p \bar{\bar p}. $ By Lemma \ref{ochev}(b),
 $|\partial\Psi'|\le |{\bf \bar p}|+|{\bf\bar{\bar p}}|$ and $n = |{\bf p}(\Psi)|+|\bf \bar{\bar p}|$ since the first and
 the last edges of ${\bf p}(\Psi)$ are $q$-edges.
 Hence
 \begin{equation}\label{pochemu}
 n-|\partial\Psi'|\ge |{\bf p}(\Psi)|-|\bf \bar p|
 \end{equation}

Note that by the definition of $\Psi,$ we have $|{\bf \bar p}|\le h+h'+|u|\le h+h'+LN-1,$ and $|{\bf p}(\Psi)|\ge LN+1$. Therefore
in case $h=h'=0,$ we have $|{\bf p}(\Psi)|-|{\bf \bar p}|\ge\max(2, \delta |{\bf p}(\Psi)|)$ by Lemma \ref{ochev}(d), since $\delta^{-1}\ge LN.$ If $\max(h,h')\ge 1,$ then by the second assumption of the lemma
$\frac{3}{4}|{\bf p}(\Psi)|-|{\bf \bar p}|> 1.5LN\max(h,h')-2\max(h,h')-LN\ge 2$, and
 since $\delta<1/4,$ in any case we obtain
 \begin{equation} \label{razn}
 |{\bf p}(\Psi)|-|{\bf \bar p}|\ge \max (\delta |{\bf p}(\Psi)|, 2)
 \end{equation}

Inequalities (\ref{pochemu}), (\ref{razn}), and the assumption of the lemma on $|{\bf p}(\Psi)|$ imply

\begin{equation}\label{tri}
|{\bf p}(\Psi)|\le \delta\iv (n-|\partial\Psi'|), \;\;\; n-|\partial\Psi'|\ge 2,\;\; and\;\; h+h' \le n/LN
 \end{equation}

Now by  Lemma \ref{psi1} and Inequalities (\ref{tri}), we have

 $$\area(\bar\Delta)=\area(\Psi)+1\le (2LN(h+h')+ \delta^{-1}|{\bf p}(\Psi)|)(h+h')+1
\le $$
$$
(2|{\bf p}(\Psi)|+\delta\iv|{\bf p}(\Psi)|)(h+h')+1\le(2+\delta\iv)\delta\iv (n-|\partial\Psi'|)(h+h')+1$$
\begin{equation}\label{areabar}
\le 3(\delta^2LN)\iv (n-|\partial\Psi'|)n\le c_6n (n-|\partial\Psi'|)/2
\end{equation}

Recall now that in the definition
of $\kappa$- and $\lambda$-mixtures, the middle point of every boundary $q$-edge is a black bead of the
necklace on the boundary of diagram, and every white bead is
a middle point of a boundary $\theta$-edge (see Section \ref{mix} for details).
It follows
that $\kappa(\Delta)- \kappa(\Psi') \ge - |{\bf \bar p}| n$ and
$\lambda(\Delta)- \lambda(\Psi') \ge - |{\bf \bar p}| n$ because new pairs of white beads $(o,o')$ separated by black beads
can appear in the necklace on $\partial\Psi'$ (in comparison with the necklace on $\partial\Delta$)
only if one of the beads $o, o'$ belongs to $\bf \bar p.$  Hence by (\ref{razn}),
$$\min(\kappa(\Delta)- \kappa(\Psi'), \lambda(\Delta)- \lambda(\Psi'))\ge
-\delta\iv(|{\bf p}(\Psi)|-|{\bf \bar p}|)n,$$  
and therefore by (\ref{pochemu}),
$$ c_4(\mu(\Delta)- \mu(\Psi'))=c_4((c_0\kappa(\Delta)+ \lambda(\Delta))-(c_0\kappa(\Psi')+ \lambda(\Psi'))=
c_4(c_0(\kappa(\Delta)- $$ \begin{equation} \label{c4} \kappa(\Psi')))+
 (\lambda(\Delta)- \lambda(\Psi'))\ge -c_4(c_0+1)\delta^{-1} (|{\bf p}(\Psi)|-|{\bf \bar p}|)n
\ge -c_6n(n-|\partial\Psi'|)/4
\end{equation}

Recall also that the number of $t$- and $t'$-edges in the path $\bf\bar p$ (or in the path $\bf u$)
does not exceed the similar number for ${\bf p}(\Psi).$ Therefore any two white beads $o, o'$ of the $\nu$-necklace
on $\partial\Delta$, provided they both  belong to ${\bf p}(\Psi),$ are separated by at least the same
number of black beads in the $\nu$-necklace for $\Delta$ as in the $\nu$-necklace for $\Psi'$  (either the clockwise arc $o-o'$ includes ${\bf p}(\Psi)$ or not). So such a pair  contributes
to $\nu_J(\Delta)$ at least the amount it contributes to $\nu_J(\Psi')$.
Thus, to estimate $\nu_J(\Delta)- \nu_J(\Psi')$ from below, it suffices to consider
the contribution to $\nu_J(\Psi')$ for the pairs $o, o', $ where one of the two beads
lies on $\bf\bar p.$ Then the argument we used above for $\kappa$- and $\lambda$-mixtures, yields
$\nu_J(\Delta)- \nu_J(\Psi') \ge -J\delta\iv(|{\bf p}(\Psi)|-|{\bf \bar p}|)n.$ Hence
$c_5(\nu_J(\Delta)-\nu_J(\Psi'))\ge -c_6n(n-|\partial\Psi'|)/4$ by (\ref{pochemu}) since $c_6>4J\delta^{-1}c_5.$
This inequality together with (\ref{areabar}) and (\ref{c4}) prove the lemma.
\endproof

A clove $cl(\pi,{\cal B},{\cal B'})$ will be called a \label{crescent}{\it crescent}
if

(1) it contains $l\ge L-20>L/2$ consecutive  $t^{\pm 1}$-spokes ${\cal
C}_1=\cal B,$ ${\cal C}_2,\dots$ ${\cal C}_l=\cal B'$ connecting $\partial\Delta$ and
$\partial\pi$;

(2) every maximal $\theta$-band of this clove crosses either ${\cal
C}_1$ or ${\cal C}_l$; 
moreover, either all maximal $\theta$-bands
of $\Psi$ cross ${\cal C}_1$, or all of them cross ${\cal C}_l$, or
there exists $i$, $2\le i \le l-2$   such that the $\theta$-bands
crossing ${\cal C}_l$ but not ${\cal C}_1$, do not cross ${\cal
C}_i$, and the $\theta$-bands crossing ${\cal C}_1$ but not ${\cal
C}_l$, do not cross ${\cal C}_{i+1}$;

(3) every maximal  $(12)$- or $(23)$-band of $\Psi$ crossing ${\cal C}_{1}$ ( crossing
${\cal C}_{l}$) also crosses ${\cal C}_2$ (crosses ${\cal
C}_{l-1}),$ and every spoke of the clove is crossed by at most $3$ $(12)$- or $(23)$-bands.

\begin{lemma}\label{narrow} Assume a solid diagram $\Delta$ has
a hub. Then it contains a crescent $\Psi=cl(\pi,{\cal C}_1,{\cal
C}_l)$ such that the cloves $cl(\pi,{\cal C}_2,{\cal C}_l)$ and
$cl(\pi,{\cal C}_1,{\cal C}_{l-1})$ are also crescents.
\end{lemma}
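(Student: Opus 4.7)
The plan is to start from Lemma \ref{extdisc}, which provides a hub $\pi$ and $L-3$ consecutive maximal $t$-spokes ${\cal S}_1,\ldots,{\cal S}_{L-3}$ connecting $\partial\pi$ to $\partial\Delta$ with no intermediate hubs, and then to obtain the crescent as a sub-clove $cl(\pi,{\cal C}_1,{\cal C}_l)=cl(\pi,{\cal S}_p,{\cal S}_q)$ by trimming a bounded number of spokes from each end. Since $\Delta$ is solid, Lemma \ref{psi1} applied to $\Psi_0=cl(\pi,{\cal S}_1,{\cal S}_{L-3})$ ensures that every maximal $\theta$-band of $\Psi_0$ crosses ${\cal S}_1$ or ${\cal S}_{L-3}$ and that its base width is less than $2LN$. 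This gives the first clause of (2) for free and will be inherited by any sub-clove. If one picks $p\in\{1,\ldots,8\}$ and $q\in\{L-10,\ldots,L-3\}$, then $l=q-p+1\ge L-19$, so the crescent and both sub-cloves $cl(\pi,{\cal C}_2,{\cal C}_l)$, $cl(\pi,{\cal C}_1,{\cal C}_{l-1})$ retain at least $L-20>L/2$ consecutive spokes, verifying (1).

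For (3), the key point is that the trapezium between two consecutive $t$-spokes ${\cal S}_i$ and ${\cal S}_{i+1}$ has base $t B_j^{\pm 1} t'$ of length $N+1$, which is a standard base of $M_4$. By Property \ref{viii} of Proposition \ref{summary} (or directly Lemma \ref{M401}), its step history is a subword of $(2)(1)(2)(3)(2)(1)(2)$, so it has at most six $(12)^{\pm 1}$- or $(23)^{\pm 1}$-letters, and each ${\cal S}_i$ is crossed by at most $6$ maximal $(12)$- or $(23)$-bands. Call such a band \emph{short} if it spans fewer than three consecutive spokes. Because each spoke carries at most $6$ such bands, the total number of short bands is bounded independent of $L$, so I may choose $p$ and $q$ in the allowed ranges so that no short $(12)$- or $(23)$-band touches ${\cal C}_1,{\cal C}_2$ or ${\cal C}_{l-1},{\cal C}_l$. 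This simultaneously enforces the first clause of (3) for $\Psi$ and for the two sub-cloves, while the three-count in (3) follows by locating the trimmed endpoints in a single Step-block of the history, so that only one of the two types of transition $(12)^{\pm 1}$, $(23)^{\pm 1}$ (and at most three of its occurrences) appears in the relevant spokes.

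For the gap condition in (2), classify every non-through maximal $\theta$-band of $\Psi$ as an \emph{L-band} (crosses ${\cal C}_1$ only) or an \emph{R-band} (crosses ${\cal C}_l$ only), and let $a=\max\{b({\cal T}):{\cal T}$ is a non-through L-band, $b({\cal T})$ the index of its rightmost spoke-crossing$\}$ and $b=\min\{a({\cal T}):{\cal T}$ is a non-through R-band$\}$. By Lemma \ref{NoAnnul} maximal $\theta$-bands are pairwise disjoint and the clove is simply connected, so a planarity argument shows that an L-band and an R-band whose spoke-ranges would overlap would have to exit the clove through boundary segments on opposite sides of each other, which is impossible without crossing. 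Hence $a<b$, and any $i$ with $a\le i<b$ (and $2\le i\le l-2$, which holds after the trimming above) witnesses the gap.

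The main obstacle I anticipate is making the planarity step for (2) precise, in particular ruling out overlapping non-through L- and R-bands in a clean way and, at the same time, verifying that the chosen $i$ still lies in $[2,l-2]$ after shrinking the clove by one spoke at either end to form the sub-cloves. This is handled by leaving a two-spoke buffer at each end (so $i$ shifts by at most one when ${\cal C}_1$ or ${\cal C}_l$ is dropped) and by noting that the L-band/R-band classification is preserved under such a shift, so the same gap index continues to work for $cl(\pi,{\cal C}_2,{\cal C}_l)$ and $cl(\pi,{\cal C}_1,{\cal C}_{l-1})$.
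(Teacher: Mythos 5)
Your overall strategy — start from the clove of Lemma \ref{extdisc}, use solidity so that every maximal $\theta$-band crosses one of the two boundary spokes, bound the number of $(12)$- and $(23)$-bands via the trapezia between consecutive spokes, and then trim a bounded number of spokes — is the paper's strategy, and your planarity sketch for the existence of a ``gap'' between the L-bands and the R-bands is no weaker than what the paper itself merely asserts. But two steps fail as written. First, condition (3) of the definition of a crescent requires that \emph{every} spoke of the clove be crossed by at most $3$ maximal $(12)$- or $(23)$-bands, and your count only gives $6$: you invoke Property \ref{viii} (step history a subword of $(2)(1)(2)(3)(2)(1)(2)$) for the trapezium between consecutive spokes, whereas the needed input is that this trapezium is $M_4$-\emph{accepting} — its side along $\partial\pi$ is labelled by a copy of the stop word of $M_4$ — so Property \ref{vi} (together with \ref{kt}) applies and the step history is a subword of $(2)(1)(2)(3)$, giving at most $3$ transition letters per spoke and hence at most $6$ such bands in the whole clove. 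Your attempted repair (``locating the trimmed endpoints in a single Step-block'') says nothing about the interior spokes and does not produce the bound; the $3$-per-spoke bound is genuinely needed later (e.g.\ in Lemma \ref{malot}), so it cannot be relaxed to $6$.

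Second, you assert that the gap index $i$ lies in $[2,l-2]$ ``after the trimming above'', but your trimming was chosen only to keep ``short'' transition bands away from the end spokes and has no bearing on where the gap sits: for instance all L-bands may cross only ${\cal C}_1$ while some R-band crosses ${\cal C}_2,\dots,{\cal C}_l$, and then no admissible $i$ exists for your clove. This is exactly why the paper's proof alternates two shrinking moves: a move of type (a) is applied when some $(12)$- or $(23)$-band crosses ${\cal B}_u$ but not ${\cal B}_{u+2}$ (at most $6$ times, by the $3$-per-spoke count), and a move of type (b) shifts an end inward by two spokes whenever the gap index falls within two spokes of that end; only when neither move applies do conditions (1)--(3) hold simultaneously for $\Psi$ and for the two sub-cloves, with $l\ge L-20$. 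Relatedly, ``short band'' (spanning fewer than three spokes) is not the right bad event for the first clause of (3): a long band whose crossing range terminates exactly at ${\cal C}_1$ also crosses ${\cal C}_1$ but not ${\cal C}_2$; what must be avoided are the positions at which some transition band's range of crossed spokes ends, which is what the type (a) transitions accomplish.
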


\proof We  consider a hub $\pi$ provided by Lemma  \ref{extdisc}. 
There are consecutive maximal $t^{\pm 1}$-bands ${\cal
B}_1,\dots,{\cal B}_{L-3}$ connecting  (counter-clockwise) $\partial\Delta$ and $\partial\pi$,
such that the subdiagram $\bar\Psi$ bounded by ${\cal B}_1$,
${\cal B}_{L-3}$, $\partial\Delta,$ and $\partial\pi$
contains all these $t^{\pm 1}$-bands but does not contain hubs.  Observe
that by Lemma \ref{rimexists},  every maximal $\theta$-band of
$\bar\Psi=cl(\pi,{\cal B}_1,{\cal B}_{L-3})$ crosses either ${\cal
B}_1$ or ${\cal B}_{L-3}$ because $\Delta$ is solid.

Consider a subdiagram $\Psi(0)=cl(\pi,{\cal B}_u,{\cal B}_v)$ of
$\bar\Psi$  with $u-v = L-k$ for some $k$, $6\le k<L,$ $u>1, v<L-3.$
Every maximal $\theta$-band of $\Psi(0)$ crosses either ${\cal B}_u$
or ${\cal B}_v$.

\unitlength 1mm 
\linethickness{0.4pt}
\ifx\plotpoint\undefined\newsavebox{\plotpoint}\fi 
\begin{picture}(58.75, 45.25)(-30, 0)
\put(35.75,13.5){\circle{8.544}}
\multiput(39.5,11.25)(.087365591,-.033602151){186}{\line(1,0){.087365591}}
\multiput(39.75,12.25)(.091292135,-.033707865){178}{\line(1,0){.091292135}}
\multiput(56,6.25)(-.0326087,-.0434783){23}{\line(0,-1){.0434783}}
\multiput(39.75,14.75)(.151442308,.033653846){104}{\line(1,0){.151442308}}
\multiput(54.75,18.25)(-.03125,.125){8}{\line(0,1){.125}}
\put(54.5,19.25){\line(-4,-1){15}}
\put(37,27.75){\line(0,1){0}}
\multiput(8.25,6.25)(.135964912,.033625731){171}{\line(1,0){.135964912}}
\multiput(8,7.25)(.138888889,.033625731){171}{\line(1,0){.138888889}}
\multiput(8.25,7.5)(.03125,-.125){8}{\line(0,-1){.125}}
\multiput(13,16.75)(.22256098,-.03353659){82}{\line(1,0){.22256098}}
\multiput(13.25,18)(.18556701,-.033505155){97}{\line(1,0){.18556701}}
\multiput(13.5,17.75)(-.03125,-.15625){8}{\line(0,-1){.15625}}
\multiput(39,16.75)(.0400641026,.0336538462){312}{\line(1,0){.0400641026}}
\multiput(50.75,28)(.0326087,-.0326087){23}{\line(0,-1){.0326087}}
\multiput(50.75,27.75)(-.0411184211,-.0337171053){304}{\line(-1,0){.0411184211}}
\multiput(40.5,33.25)(-.033505155,-.154639175){97}{\line(0,-1){.154639175}}
\multiput(39.75,33.25)(-.033653846,-.141826923){104}{\line(0,-1){.141826923}}
\multiput(50,25.75)(.033687943,-.04787234){141}{\line(0,-1){.04787234}}
\multiput(50.5,28)(-.091517857,.033482143){112}{\line(-1,0){.091517857}}
\put(40.5,26.75){$\Lambda_{u,u+1}$}
\put(39.75,33.75){\line(1,0){1.25}}
\multiput(40.68,33.43)(.079365,-.031746){9}{\line(1,0){.079365}}
\multiput(42.108,32.858)(.079365,-.031746){9}{\line(1,0){.079365}}
\multiput(43.537,32.287)(.079365,-.031746){9}{\line(1,0){.079365}}
\multiput(44.965,31.715)(.079365,-.031746){9}{\line(1,0){.079365}}
\multiput(45.68,31.43)(-.03125,-.09375){4}{\line(0,-1){.09375}}
\multiput(45.43,30.68)(.0625,-.03125){12}{\line(1,0){.0625}}
\multiput(46.93,29.93)(.0625,-.03125){12}{\line(1,0){.0625}}
\multiput(48.43,29.18)(.0625,-.03125){12}{\line(1,0){.0625}}
\multiput(51.43,27.43)(.030556,-.083333){10}{\line(0,-1){.083333}}
\multiput(52.041,25.763)(.030556,-.083333){10}{\line(0,-1){.083333}}
\multiput(52.652,24.096)(.030556,-.083333){10}{\line(0,-1){.083333}}
\multiput(53.263,22.43)(.030556,-.083333){10}{\line(0,-1){.083333}}
\multiput(53.874,20.763)(.030556,-.083333){10}{\line(0,-1){.083333}}
\put(55.18,17.93){\line(0,-1){.9167}}
\put(55.221,16.096){\line(0,-1){.9167}}
\put(55.263,14.263){\line(0,-1){.9167}}
\put(55.305,12.43){\line(0,-1){.9167}}
\put(55.346,10.596){\line(0,-1){.9167}}
\put(55.388,8.763){\line(0,-1){.9167}}
\put(8.43,7.43){\line(0,1){.9}}
\put(8.33,9.23){\line(0,1){.9}}
\put(8.23,11.03){\line(0,1){.9}}
\put(8.18,11.93){\line(1,0){.75}}
\multiput(9.68,11.93)(.03125,.06875){10}{\line(0,1){.06875}}
\multiput(10.305,13.305)(.03125,.06875){10}{\line(0,1){.06875}}
\multiput(10.93,14.68)(.041667,.033333){12}{\line(1,0){.041667}}
\multiput(11.93,15.48)(.041667,.033333){12}{\line(1,0){.041667}}
\multiput(12.93,16.28)(.041667,.033333){12}{\line(1,0){.041667}}
\multiput(13.43,18.68)(.0318627,.0477941){17}{\line(0,1){.0477941}}
\multiput(14.513,20.305)(.0318627,.0477941){17}{\line(0,1){.0477941}}
\multiput(15.596,21.93)(.0318627,.0477941){17}{\line(0,1){.0477941}}
\multiput(16.68,23.555)(.0318627,.0477941){17}{\line(0,1){.0477941}}
\multiput(17.763,25.18)(.0318627,.0477941){17}{\line(0,1){.0477941}}
\multiput(18.846,26.805)(.0318627,.0477941){17}{\line(0,1){.0477941}}
\multiput(19.93,28.43)(.076446,.030992){11}{\line(1,0){.076446}}
\multiput(21.612,29.112)(.076446,.030992){11}{\line(1,0){.076446}}
\multiput(23.293,29.793)(.076446,.030992){11}{\line(1,0){.076446}}
\multiput(24.975,30.475)(.076446,.030992){11}{\line(1,0){.076446}}
\multiput(26.657,31.157)(.076446,.030992){11}{\line(1,0){.076446}}
\multiput(28.339,31.839)(.076446,.030992){11}{\line(1,0){.076446}}
\multiput(29.18,32.18)(.195455,.031818){5}{\line(1,0){.195455}}
\multiput(31.134,32.498)(.195455,.031818){5}{\line(1,0){.195455}}
\multiput(33.089,32.816)(.195455,.031818){5}{\line(1,0){.195455}}
\multiput(35.043,33.134)(.195455,.031818){5}{\line(1,0){.195455}}
\multiput(36.998,33.452)(.195455,.031818){5}{\line(1,0){.195455}}
\multiput(38.952,33.771)(.195455,.031818){5}{\line(1,0){.195455}}
\multiput(39.93,13.93)(-.0625,.03125){4}{\line(-1,0){.0625}}
\put(40,14){\line(1,0){3.5}}
\put(40,13.75){\line(1,0){.5}}
\put(40,13.5){\line(1,0){3.5}}
\multiput(34.25,20)(.0333333,-.1333333){15}{\line(0,-1){.1333333}}
\multiput(33.5,19.75)(.0333333,-.1166667){15}{\line(0,-1){.1166667}}
\multiput(32.25,19.25)(.0333333,-.05){30}{\line(0,-1){.05}}
\multiput(31.75,18.75)(.0333333,-.0416667){30}{\line(0,-1){.0416667}}
\put(28.75,13.25){\line(1,0){2.5}}
\put(35,13.25){$\pi$}
\put(58.75,6.5){${\cal B}_1$}
\put(40.75,36.25){${\cal B}_{u+1}$}
\put(9.5,19.75){${\cal B}_v$}
\put(51.5,30.5){${\cal B}_u$}
\put(42.5,20){$\Lambda_{u-1,u}$}
\multiput(38.25,10.25)(.03289474,-.04605263){38}{\line(0,-1){.04605263}}
\multiput(37.5,9.75)(.03289474,-.03289474){38}{\line(0,-1){.03289474}}
\put(35.75,9.5){\line(0,-1){2}}
\put(35.25,9.25){\line(0,-1){1.5}}
\multiput(33.25,10.25)(-.03289474,-.03947368){38}{\line(0,-1){.03947368}}
\multiput(32.5,10.25)(-.0333333,-.0333333){30}{\line(0,-1){.0333333}}
\put(49.5,25.5){\line(2,-3){4}}
\multiput(39.5,31.25)(.089285714,-.033730159){126}{\line(1,0){.089285714}}
\put(0,7.75){${\cal B}_{L-3}$}
\put(25.25,23){$\Psi_0$}
\multiput(15.68,15.68)(-.65909,-.56818){12}{{\rule{.4pt}{.4pt}}}
\multiput(19.68,15.68)(-.70833,-.5625){13}{{\rule{.4pt}{.4pt}}}
\multiput(21.93,14.93)(-.65625,-.59375){9}{{\rule{.4pt}{.4pt}}}
\multiput(24.93,14.43)(-.60714,-.5){8}{{\rule{.4pt}{.4pt}}}
\multiput(28.43,14.18)(-.7,-.45){6}{{\rule{.4pt}{.4pt}}}
\multiput(45.68,15.93)(-.71429,-.5){8}{{\rule{.4pt}{.4pt}}}
\multiput(49.43,16.18)(-.75,-.52778){10}{{\rule{.4pt}{.4pt}}}
\multiput(54.43,16.93)(-.75,-.52083){13}{{\rule{.4pt}{.4pt}}}
\multiput(54.68,14.18)(-.725,-.45){11}{{\rule{.4pt}{.4pt}}}
\multiput(55.18,11.43)(-.75,-.39286){8}{{\rule{.4pt}{.4pt}}}
\put(57.25,19.25){${\cal B}_{u-1}$}
\end{picture}

 Let $\Lambda_{u-1,u}$ (let $\Lambda_{u+1,u}$ ) be the trapezium formed by all $\theta$-bands
 starting on ${\cal B}_{u-1}$ (starting on ${\cal B}_{u+1}$, resp.) and ending on ${\cal B}_u.$ It contains $M_4$-accepting subtrapezia with the same histories, and so  there are at
most 3 $(12)$- and $(23)$-bands among them by \ref{vi} and \ref{kt}; similarly for $\Lambda_{v,v+1}$ and $\Lambda_{v,v-1}.$
Since ${\cal B}_u$
(resp. ${\cal B}_v$) must belong to one of trapezia $\Lambda_{u-1,u}$ and $\Lambda_{u+1,u}$
(to one of $\Lambda_{v,v+1}$ and $\Lambda_{v,v-1},$ resp.)
the number of maximal $( (12),t)$- and $((23),t)$-cells in ${\cal B}_u$ (in ${\cal B}_v$), and therefore in any spoke of $\Psi(0),$
is at most $3$, and the number of maximal $(12)$- and $(23)$-bands in $\Psi(0)$
is at most $6$.
 We want to obtain a clove $\Psi(1)=cl(\pi,{\cal B}_{u'},{\cal B}_{v'})$
 ($u'\ge u, v'\le v$) applying one of the following transitions changing the pair $(u,v).$

 (a) If a $(12)$- or $(23)$-band crosses ${\cal B}_u$ but not ${\cal B}_{u+2}$
 (${\cal B}_v$ but not ${\cal B}_{v-2}$), then we set $u'=u+2, v'=v$ ($u'=u, v'=v-2$).

 (b) Notice that either all maximal $\theta$-bands of $\Psi(0)$ cross
 ${\cal B}_u$ or all of them cross  ${\cal B}_v$, or there exists $i$ ($u\le i<v$)
such that the $\theta$-bands crossing ${\cal B}_v$ but  not ${\cal
B}_u$, do not cross ${\cal B}_i$, and the $\theta$-bands crossing
${\cal B}_u$ but not ${\cal B}_v$, do not cross ${\cal B}_{i+1}$. If
in the latter case $i\ge v-2$, then we set $u'=u, v'=v-2$.
Similarly we set $u'=u+2, v'=v$ if $i\le u+1.$

 After a transition (a) or (b), we obtain a
 clove $\Psi(1)=cl(\pi,{\cal B}_{u'},{\cal B}_{v'})$
 with $v'-u'\ge L-k'$, where $k'=k+2.$
 Let us start
 with $u=2$ and $v=L-4$ (i.e., $k=6$) and apply a maximal series of transitions of type (a).
 Since  every transition of type (a) removes a maximal $(12)$- or $(23)$-band and the number of such bands
 in $\Psi(0)$ is at most $6,$ the length
 of the series is also at most $6.$ Then, if possible, we apply a transition of type (b). Note
 that no transition of  types (a) and (b) is applicable to a clove $\Psi(m)$ with $m\le 7$.
  We have
  $k^{(m)}\le 6+2\times 7 =20.$

  It remains to set ${\cal C}_1={\cal B}_{u^{(m)}},$
 and ${\cal C}_l={\cal B}_{v^{(m)}}.$ Then $\Psi=\Psi(m)=cl(\pi,{\cal C}_1,{\cal
C}_l)$ satisfies the conditions (1)-(3) from the definition of crescent. Indeed condition (1)
holds since $k^{(m)}-1\le 19 $ and $L>40,$ condition (2) (condition (3)) holds since  no  transition of type
(b) (of type (a), resp.)  is applicable to $\Psi$.
 The cloves $cl(\pi,{\cal C}_2,{\cal C}_l)$ and
$cl(\pi,{\cal C}_1,{\cal C}_{l-1})$ are also crescents since (1) $20 \le 41 /2$ and (2),(3) no transitions
of type (b) and (a)) are applicable to $\Psi.$
\endproof

\begin{lemma} \label{malot} The number of  maximal $t$- and $t'$-bands
in a crescent $\Psi$ is less then $13LN<J/2.$
\end{lemma}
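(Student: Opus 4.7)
The plan is to count the $t$- and $t'$-bands in $\Psi$ by splitting them into two groups: \emph{spokes}, which have an endpoint on $\partial\pi$, and \emph{non-spoke bands}, whose both endpoints lie on ${\bf p}(\Psi)$. First I would verify that these are the only two possibilities: any such band cannot touch ${\cal C}_1$ or ${\cal C}_l$ (these are $t$-bands, and different $t$- or $t'$-letters label disjoint $q$-bands), cannot form an annulus (Lemma \ref{NoAnnul}), and if it touches $\partial\pi$ it must extend to ${\bf p}(\Psi)$, making it a spoke.

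For the spokes, I would exploit the structure of the hub base $t_1B_1t_2'B_2^{-1}t_3\ldots$: between any two consecutive $t^{\pm 1}$-letters of the base there is exactly one $(t')^{\pm 1}$-letter. Since $\Psi$ contains the $l$ consecutive $t^{\pm 1}$-spokes ${\cal C}_1,\ldots,{\cal C}_l$ with $l\le L$, there are also exactly $l-1$ $(t')^{\pm 1}$-spokes between them on $\partial\pi$, and each of these must reach ${\bf p}(\Psi)$ (by the observation above). This gives $2l-1\le 2L-1$ spokes of the desired types.

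For non-spoke bands, each has length at least $1$ and hence contains a $(q,\theta)$-cell lying in some maximal $\theta$-band of $\Psi$. By Lemma \ref{psi1}, every $\theta$-band of $\Psi$ has base width strictly less than $2LN$, and since consecutive $t^{\pm 1}$- and $(t')^{\pm 1}$-letters of the hub base are separated by at least $N+1$ positions, such a base contains at most $2L$ letters of $t$- or $t'$-type. Hence each $\theta$-band of $\Psi$ crosses at most $2L$ distinct $t$- or $t'$-bands. Since every $\theta$-band of $\Psi$ must cross ${\cal C}_1$ or ${\cal C}_l$ (again Lemma \ref{psi1}), the total number of $\theta$-bands is at most $h_1+h_l$, where $h_i=|{\cal C}_i|$. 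Summing incidences bounds the number of non-spoke $t$- and $t'$-bands by $2L(h_1+h_l)$.

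The main obstacle is thus to bound $h_1+h_l$ by $O(N)$. I expect this to follow from solidity together with the crescent's third condition (at most $3$ transition cells per spoke). If $h_1$ (say) were too large, the $\theta$-bands crossing ${\cal C}_1$ would fill out a trapezium with ${\cal C}_1$ as its rightmost $q$-band and base width at most $2LN$; since the Step history of ${\cal C}_1$ contains at most $3$ transition letters, Proposition \ref{summary}\ref{viii} and the prohibition of subcombs of base width $15N$ and of one-Step $t^{\pm 1}$- or $(t')^{\pm 1}$-handle subcombs would force a forbidden configuration inside $\Delta$. Once $h_1+h_l<6N$ is established, adding the contributions yields $(2L-1)+2L(h_1+h_l)<13LN$, as required.
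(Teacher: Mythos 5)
There is a genuine gap, and it is exactly at the point you flag as "the main obstacle": you cannot bound $h_1+h_l$ by $O(N)$. The lengths of the spokes ${\cal C}_1$ and ${\cal C}_l$ are the lengths of the histories of the computations simulated inside the crescent, and nothing in solidity or in the crescent conditions restricts them; in this construction they are typically comparable to the times $T_i$, which are not bounded by any recursive function of the perimeter, let alone by $6N$. So your incidence count over \emph{all} $\theta$-bands of $\Psi$, which gives at most $2L(h_1+h_l)$ non-spoke bands, leads to a bound that genuinely depends on the heights of the spokes and cannot be converted into the absolute constant $13LN$. The crescent condition (3) (at most $3$ transition cells per spoke) limits the number of $(12)^{\pm1}$- and $(23)^{\pm1}$-bands, not the heights, and Property \ref{viii} only constrains the \emph{Step history}, again not the length of the history.

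The missing idea is to use solidity in a different way. A non-spoke maximal $t$- or $t'$-band $\cal C$ starts and ends on ${\bf p}(\Psi)$ and cuts off a subdiagram $\Phi$ containing no cells of ${\cal C}_1$ or ${\cal C}_l$; by crescent condition (2) every cell of $\Phi$ lies on a $\theta$-band crossing ${\cal C}_1$ or ${\cal C}_l$, so every maximal $\theta$-band of $\Phi$ must cross $\cal C$, i.e.\ $\Phi$ is a subcomb with handle $\cal C$. Since $\Delta$ is solid, this subcomb cannot be one-Step, so $\cal C$ must contain a $(12)$- or $(23)$-cell, i.e.\ $\cal C$ crosses one of the maximal $(12)$- or $(23)$-bands of $\Psi$. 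By conditions (2) and (3) there are at most $6$ such bands, and by Lemma \ref{psi1} each has base width $<2LN$, hence crosses fewer than $2LN$ maximal $q$-bands. This pigeonhole gives fewer than $12LN$ non-spoke $t$- and $t'$-bands, independently of $h_1$ and $h_l$; together with the fewer than $2L$ spokes one gets $12LN+2L<13LN$. Your decomposition into spokes and non-spoke bands and your bound on the spokes are fine, but the count of non-spoke bands has to go through the subcomb/solidity argument rather than through a height bound.
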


\proof The number of maximal $t$- and $t'$-bands ending on $\pi$
is less than $2L$. Any other maximal $t$- or $t'$-band $\cal C$ starts and
ends on the subpath ${\bf p}(\Psi)$ of the boundary of $\Delta$ and separates
a subdiagram $\Phi$ from the crescent $\Psi=cl(\pi,\cal B, B')$ such that
$\Phi$ contains $\cal C$ but has no cells from $\cal B$ or $\cal B'$.
However by the definition of crescent, every cell from $\Phi$ is connected
with either $\cal B$ or $\cal B'$ by a $\theta$-band. It follows that
every maximal $\theta$-band of $\Phi$ has to cross $\cal C,$ i.e., $\Phi$
is a subcomb of $\Delta$ with handle $\cal C.$
This subcomb is not
one-Step since $\Delta$ is a solid diagram, and therefore $\cal C$ has
ether $(12)$- or $(23)$-cell. In other words, it crosses
one of the maximal  $\theta$-bands crossing ${\cal C}_1$ or ${\cal
C}_l$ and corresponding to one of the rules $(12),$ $(13).$ The
number of such $\theta$-bands is at most $6$ by Properties (3) and
(2) from the definition of crescent.
Their base widths $<2LN$ by Lemma \ref{psi1}, and so the  number of
maximal $t$- and $t'$-bands $\cal C$ which have no ends on $\pi$, is less than $12LN$. The lemma is
proved because $12LN+2L<13LN.$
\endproof

\subsection{Surgery removing a hub}\label{srh}

When we induct on the number of hubs, we want to cut up a subdiagram
$\Delta_1$ with one hub so that $\area(\Delta_1)$ to be
bounded in `quadratic terms' (as we did earlier for subcombs). The
estimates of Lemmas \ref{case2} and \ref{areaD1} will be applied in the
final Section \ref{aqub}.

In this subsection 
we consider a solid diagram $\Delta$ with the hub
$\pi$ and the crescent $\Psi=cl(\pi,{\cal C}_1,{\cal C}_l)$ provided by
Lemma \ref{narrow}. We will assume that the $t$-spokes ${\cal C}_1,{\cal C}_l$ are
enumerated counter-clockwise with respect to the hub $\pi,$ and the histories $H_1,\dots, H_l$ 
of ${\cal C}_1,{\cal C}_l)$ are read towards $\pi.$ 
We have $\partial\Psi=\topp({\cal C}_1)^{-1}{\bf p}
\bott({\cal C}_l){\bf s}^{-1},$ where ${\bf p}={\bf p}(\Psi)$ and $\bf s$ is a subpath in
$\partial\pi.$ Then $\partial\Delta={\bf pp}_2$. The diagram $\Delta$ is the union of
$\Psi, \pi,$ and the remaining subdiagram $\Psi'$. Let now $h_i=||H_i||,$ $i=1,\dots,l.$

\unitlength 1mm 
\linethickness{0.4pt}
\ifx\plotpoint\undefined\newsavebox{\plotpoint}\fi 
\begin{picture}(77,52.25)(-30,0)
\put(34.393,21.5){\line(0,1){.4554}}
\put(34.381,21.955){\line(0,1){.454}}
\put(34.342,22.409){\line(0,1){.4512}}
\put(34.279,22.861){\line(0,1){.4469}}
\put(34.19,23.307){\line(0,1){.4412}}
\multiput(34.077,23.749)(-.027603,.086841){5}{\line(0,1){.086841}}
\multiput(33.939,24.183)(-.032417,.085161){5}{\line(0,1){.085161}}
\multiput(33.776,24.609)(-.030941,.069345){6}{\line(0,1){.069345}}
\multiput(33.591,25.025)(-.029803,.057863){7}{\line(0,1){.057863}}
\multiput(33.382,25.43)(-.032993,.056105){7}{\line(0,1){.056105}}
\multiput(33.151,25.823)(-.031569,.047401){8}{\line(0,1){.047401}}
\multiput(32.899,26.202)(-.030374,.040499){9}{\line(0,1){.040499}}
\multiput(32.625,26.566)(-.032592,.038737){9}{\line(0,1){.038737}}
\multiput(32.332,26.915)(-.031236,.033168){10}{\line(0,1){.033168}}
\multiput(32.02,27.247)(-.033042,.031369){10}{\line(-1,0){.033042}}
\multiput(31.689,27.56)(-.038606,.032746){9}{\line(-1,0){.038606}}
\multiput(31.342,27.855)(-.040377,.030536){9}{\line(-1,0){.040377}}
\multiput(30.978,28.13)(-.047274,.031759){8}{\line(-1,0){.047274}}
\multiput(30.6,28.384)(-.055973,.033218){7}{\line(-1,0){.055973}}
\multiput(30.208,28.616)(-.057743,.030035){7}{\line(-1,0){.057743}}
\multiput(29.804,28.827)(-.069221,.031218){6}{\line(-1,0){.069221}}
\multiput(29.389,29.014)(-.08503,.032758){5}{\line(-1,0){.08503}}
\multiput(28.964,29.178)(-.086729,.027951){5}{\line(-1,0){.086729}}
\put(28.53,29.318){\line(-1,0){.4408}}
\put(28.089,29.433){\line(-1,0){.4465}}
\put(27.643,29.523){\line(-1,0){.4509}}
\put(27.192,29.589){\line(-1,0){.4538}}
\put(26.738,29.629){\line(-1,0){.9109}}
\put(25.827,29.632){\line(-1,0){.4542}}
\put(25.373,29.596){\line(-1,0){.4514}}
\put(24.922,29.534){\line(-1,0){.4473}}
\put(24.474,29.447){\line(-1,0){.4417}}
\multiput(24.033,29.336)(-.08695,-.027255){5}{\line(-1,0){.08695}}
\multiput(23.598,29.199)(-.08529,-.032075){5}{\line(-1,0){.08529}}
\multiput(23.171,29.039)(-.069469,-.030663){6}{\line(-1,0){.069469}}
\multiput(22.755,28.855)(-.057982,-.029571){7}{\line(-1,0){.057982}}
\multiput(22.349,28.648)(-.056237,-.032768){7}{\line(-1,0){.056237}}
\multiput(21.955,28.419)(-.047527,-.031379){8}{\line(-1,0){.047527}}
\multiput(21.575,28.168)(-.04062,-.030212){9}{\line(-1,0){.04062}}
\multiput(21.209,27.896)(-.038867,-.032436){9}{\line(-1,0){.038867}}
\multiput(20.86,27.604)(-.033293,-.031103){10}{\line(-1,0){.033293}}
\multiput(20.527,27.293)(-.031501,-.032916){10}{\line(0,-1){.032916}}
\multiput(20.212,26.963)(-.032901,-.038474){9}{\line(0,-1){.038474}}
\multiput(19.915,26.617)(-.030698,-.040254){9}{\line(0,-1){.040254}}
\multiput(19.639,26.255)(-.031948,-.047146){8}{\line(0,-1){.047146}}
\multiput(19.384,25.878)(-.033442,-.055839){7}{\line(0,-1){.055839}}
\multiput(19.149,25.487)(-.030266,-.057622){7}{\line(0,-1){.057622}}
\multiput(18.938,25.084)(-.031496,-.069095){6}{\line(0,-1){.069095}}
\multiput(18.749,24.669)(-.033098,-.084898){5}{\line(0,-1){.084898}}
\multiput(18.583,24.244)(-.028298,-.086617){5}{\line(0,-1){.086617}}
\put(18.442,23.811){\line(0,-1){.4403}}
\put(18.325,23.371){\line(0,-1){.4462}}
\put(18.232,22.925){\line(0,-1){.4506}}
\put(18.165,22.474){\line(0,-1){.4537}}
\put(18.123,22.021){\line(0,-1){1.3651}}
\put(18.151,20.655){\line(0,-1){.4517}}
\put(18.211,20.204){\line(0,-1){.4476}}
\put(18.296,19.756){\line(0,-1){.4421}}
\multiput(18.406,19.314)(.03363,-.10882){4}{\line(0,-1){.10882}}
\multiput(18.54,18.879)(.031733,-.085418){5}{\line(0,-1){.085418}}
\multiput(18.699,18.452)(.030384,-.069591){6}{\line(0,-1){.069591}}
\multiput(18.881,18.034)(.029339,-.0581){7}{\line(0,-1){.0581}}
\multiput(19.086,17.627)(.032542,-.056368){7}{\line(0,-1){.056368}}
\multiput(19.314,17.233)(.031188,-.047652){8}{\line(0,-1){.047652}}
\multiput(19.564,16.852)(.030048,-.040741){9}{\line(0,-1){.040741}}
\multiput(19.834,16.485)(.03228,-.038997){9}{\line(0,-1){.038997}}
\multiput(20.125,16.134)(.030969,-.033417){10}{\line(0,-1){.033417}}
\multiput(20.434,15.8)(.03279,-.031633){10}{\line(1,0){.03279}}
\multiput(20.762,15.484)(.038342,-.033055){9}{\line(1,0){.038342}}
\multiput(21.107,15.186)(.040131,-.030859){9}{\line(1,0){.040131}}
\multiput(21.469,14.908)(.047018,-.032137){8}{\line(1,0){.047018}}
\multiput(21.845,14.651)(.055705,-.033665){7}{\line(1,0){.055705}}
\multiput(22.235,14.416)(.0575,-.030497){7}{\line(1,0){.0575}}
\multiput(22.637,14.202)(.068968,-.031772){6}{\line(1,0){.068968}}
\multiput(23.051,14.011)(.084765,-.033438){5}{\line(1,0){.084765}}
\multiput(23.475,13.844)(.086502,-.028645){5}{\line(1,0){.086502}}
\put(23.907,13.701){\line(1,0){.4398}}
\put(24.347,13.582){\line(1,0){.4458}}
\put(24.793,13.488){\line(1,0){.4504}}
\put(25.243,13.419){\line(1,0){.4535}}
\put(25.697,13.376){\line(1,0){.4552}}
\put(26.152,13.357){\line(1,0){.4556}}
\put(26.608,13.365){\line(1,0){.4544}}
\put(27.062,13.397){\line(1,0){.4519}}
\put(27.514,13.455){\line(1,0){.4479}}
\put(27.962,13.539){\line(1,0){.4426}}
\multiput(28.404,13.647)(.10896,.0332){4}{\line(1,0){.10896}}
\multiput(28.84,13.78)(.085544,.031391){5}{\line(1,0){.085544}}
\multiput(29.268,13.937)(.069712,.030105){6}{\line(1,0){.069712}}
\multiput(29.686,14.117)(.058217,.029106){7}{\line(1,0){.058217}}
\multiput(30.094,14.321)(.056498,.032316){7}{\line(1,0){.056498}}
\multiput(30.489,14.547)(.047777,.030997){8}{\line(1,0){.047777}}
\multiput(30.872,14.795)(.045969,.033621){8}{\line(1,0){.045969}}
\multiput(31.239,15.064)(.039126,.032123){9}{\line(1,0){.039126}}
\multiput(31.591,15.353)(.033541,.030835){10}{\line(1,0){.033541}}
\multiput(31.927,15.662)(.031764,.032663){10}{\line(0,1){.032663}}
\multiput(32.244,15.988)(.033208,.038209){9}{\line(0,1){.038209}}
\multiput(32.543,16.332)(.031019,.040007){9}{\line(0,1){.040007}}
\multiput(32.822,16.692)(.032325,.046889){8}{\line(0,1){.046889}}
\multiput(33.081,17.067)(.029652,.048623){8}{\line(0,1){.048623}}
\multiput(33.318,17.456)(.030727,.057378){7}{\line(0,1){.057378}}
\multiput(33.533,17.858)(.032048,.068841){6}{\line(0,1){.068841}}
\multiput(33.726,18.271)(.028148,.070525){6}{\line(0,1){.070525}}
\multiput(33.895,18.694)(.028992,.086387){5}{\line(0,1){.086387}}
\multiput(34.04,19.126)(.03014,.10984){4}{\line(0,1){.10984}}
\put(34.16,19.565){\line(0,1){.4454}}
\put(34.256,20.011){\line(0,1){.4501}}
\put(34.327,20.461){\line(0,1){.4533}}
\put(34.372,20.914){\line(0,1){.5857}}
\put(31,14.75){\circle{.707}}
\put(30.75,15.25){\line(1,-1){12.5}}
\put(31.5,16){\line(1,-1){12.25}}
\multiput(43.75,3.75)(-.05,-.0333333){15}{\line(-1,0){.05}}
\put(32.5,26.5){\line(1,1){19.25}}
\put(32,27.75){\line(1,1){18.5}}
\multiput(50.5,46.25)(.0434783,-.0326087){23}{\line(1,0){.0434783}}
\put(21.5,15.25){\line(-1,-1){10}}
\multiput(11.5,5.25)(-.0333333,.0833333){15}{\line(0,1){.0833333}}
\put(11,6.5){\line(1,1){9.25}}
\put(11.68,5.43){\line(1,0){.9688}}
\put(13.617,5.289){\line(1,0){.9688}}
\put(15.555,5.148){\line(1,0){.9688}}
\put(17.492,5.008){\line(1,0){.9688}}
\put(19.43,4.867){\line(1,0){.9688}}
\put(21.367,4.727){\line(1,0){.9688}}
\put(23.305,4.586){\line(1,0){.9688}}
\put(25.242,4.445){\line(1,0){.9688}}
\put(27.18,4.305){\line(1,0){.9688}}
\put(29.117,4.164){\line(1,0){.9688}}
\put(31.055,4.023){\line(1,0){.9688}}
\put(32.992,3.883){\line(1,0){.9688}}
\put(34.93,3.742){\line(1,0){.9688}}
\put(36.867,3.602){\line(1,0){.9688}}
\put(38.805,3.461){\line(1,0){.9688}}
\put(40.742,3.32){\line(1,0){.9688}}
\multiput(25.75,5.5)(.0583333,-.0333333){30}{\line(1,0){.0583333}}
\multiput(27.5,4.5)(-.0583333,-.0333333){30}{\line(-1,0){.0583333}}
\put(50.18,46.18){\line(-1,0){.9667}}
\put(48.246,46.08){\line(-1,0){.9667}}
\put(46.313,45.98){\line(-1,0){.9667}}
\put(44.38,45.88){\line(-1,0){.9667}}
\put(42.446,45.78){\line(-1,0){.9667}}
\put(40.513,45.68){\line(-1,0){.9667}}
\put(38.58,45.58){\line(-1,0){.9667}}
\put(36.646,45.48){\line(-1,0){.9667}}
\put(34.713,45.38){\line(-1,0){.9667}}
\put(32.78,45.28){\line(-1,0){.9667}}
\put(30.846,45.18){\line(-1,0){.9667}}
\put(28.913,45.08){\line(-1,0){.9667}}
\put(26.98,44.98){\line(-1,0){.9667}}
\put(25.046,44.88){\line(-1,0){.9667}}
\put(23.113,44.78){\line(-1,0){.9667}}
\multiput(21.18,44.68)(-.031851,-.0480769){16}{\line(0,-1){.0480769}}
\multiput(20.16,43.141)(-.031851,-.0480769){16}{\line(0,-1){.0480769}}
\multiput(19.141,41.603)(-.031851,-.0480769){16}{\line(0,-1){.0480769}}
\multiput(18.122,40.064)(-.031851,-.0480769){16}{\line(0,-1){.0480769}}
\multiput(17.103,38.526)(-.031851,-.0480769){16}{\line(0,-1){.0480769}}
\multiput(16.084,36.987)(-.031851,-.0480769){16}{\line(0,-1){.0480769}}
\multiput(15.064,35.449)(-.031851,-.0480769){16}{\line(0,-1){.0480769}}
\multiput(14.045,33.91)(-.031851,-.0480769){16}{\line(0,-1){.0480769}}
\multiput(13.026,32.372)(-.031851,-.0480769){16}{\line(0,-1){.0480769}}
\multiput(12.007,30.834)(-.031851,-.0480769){16}{\line(0,-1){.0480769}}
\multiput(10.987,29.295)(-.031851,-.0480769){16}{\line(0,-1){.0480769}}
\multiput(9.968,27.757)(-.031851,-.0480769){16}{\line(0,-1){.0480769}}
\multiput(8.949,26.218)(-.031851,-.0480769){16}{\line(0,-1){.0480769}}
\multiput(7.93,24.68)(.0275,-.19){5}{\line(0,-1){.19}}
\multiput(8.205,22.78)(.0275,-.19){5}{\line(0,-1){.19}}
\multiput(8.48,20.88)(.0275,-.19){5}{\line(0,-1){.19}}
\multiput(8.755,18.98)(.0275,-.19){5}{\line(0,-1){.19}}
\multiput(9.03,17.08)(.0275,-.19){5}{\line(0,-1){.19}}
\multiput(9.305,15.18)(.0275,-.19){5}{\line(0,-1){.19}}
\multiput(9.58,13.28)(.0275,-.19){5}{\line(0,-1){.19}}
\multiput(9.855,11.38)(.0275,-.19){5}{\line(0,-1){.19}}
\multiput(10.13,9.48)(.0275,-.19){5}{\line(0,-1){.19}}
\multiput(10.405,7.58)(.0275,-.19){5}{\line(0,-1){.19}}
\multiput(23,30.5)(-.03333333,-.03888889){45}{\line(0,-1){.03888889}}
\multiput(21.5,28.75)(.15,-.0333333){15}{\line(1,0){.15}}
\multiput(25.5,14.5)(.1166667,-.0333333){15}{\line(1,0){.1166667}}
\multiput(27.25,14)(-.04605263,-.03289474){38}{\line(-1,0){.04605263}}
\multiput(33.75,23.5)(-.0326087,-.0434783){23}{\line(0,-1){.0434783}}
\put(33,22.5){\line(0,1){0}}
\multiput(33.75,23.5)(.03125,.0625){8}{\line(0,1){.0625}}
\multiput(34,23.75)(.0416667,-.0333333){30}{\line(1,0){.0416667}}
\put(25.75,21.75){$\pi$}
\put(36.5,22){$\bf{\bar u}$}
\put(4,20.5){$\bf p$}
\put(14,39.5){${\bf p}_1$}
\multiput(14.25,35.5)(-.0333333,-.1166667){15}{\line(0,-1){.1166667}}
\multiput(13.75,33.75)(.1,.0333333){15}{\line(1,0){.1}}
\multiput(51.68,44.68)(.0332237,-.0375){19}{\line(0,-1){.0375}}
\multiput(52.942,43.255)(.0332237,-.0375){19}{\line(0,-1){.0375}}
\multiput(54.205,41.83)(.0332237,-.0375){19}{\line(0,-1){.0375}}
\multiput(55.467,40.405)(.0332237,-.0375){19}{\line(0,-1){.0375}}
\multiput(56.73,38.98)(.0332237,-.0375){19}{\line(0,-1){.0375}}
\multiput(57.992,37.555)(.0332237,-.0375){19}{\line(0,-1){.0375}}
\multiput(59.255,36.13)(.0332237,-.0375){19}{\line(0,-1){.0375}}
\multiput(60.517,34.705)(.0332237,-.0375){19}{\line(0,-1){.0375}}
\multiput(61.78,33.28)(.0332237,-.0375){19}{\line(0,-1){.0375}}
\multiput(63.042,31.855)(.0332237,-.0375){19}{\line(0,-1){.0375}}
\multiput(64.305,30.43)(.0332237,-.0375){19}{\line(0,-1){.0375}}
\multiput(65.567,29.005)(.0332237,-.0375){19}{\line(0,-1){.0375}}
\multiput(66.83,27.58)(.0332237,-.0375){19}{\line(0,-1){.0375}}
\multiput(68.092,26.155)(.0332237,-.0375){19}{\line(0,-1){.0375}}
\multiput(69.355,24.73)(.0332237,-.0375){19}{\line(0,-1){.0375}}
\multiput(70.617,23.305)(.0332237,-.0375){19}{\line(0,-1){.0375}}
\multiput(71.88,21.88)(.0332237,-.0375){19}{\line(0,-1){.0375}}
\multiput(73.142,20.455)(.0332237,-.0375){19}{\line(0,-1){.0375}}
\multiput(74.405,19.03)(.0332237,-.0375){19}{\line(0,-1){.0375}}
\multiput(75.667,17.605)(.0332237,-.0375){19}{\line(0,-1){.0375}}
\multiput(43.68,3.68)(.075231,.03125){12}{\line(1,0){.075231}}
\multiput(45.485,4.43)(.075231,.03125){12}{\line(1,0){.075231}}
\multiput(47.291,5.18)(.075231,.03125){12}{\line(1,0){.075231}}
\multiput(49.096,5.93)(.075231,.03125){12}{\line(1,0){.075231}}
\multiput(50.902,6.68)(.075231,.03125){12}{\line(1,0){.075231}}
\multiput(52.707,7.43)(.075231,.03125){12}{\line(1,0){.075231}}
\multiput(54.513,8.18)(.075231,.03125){12}{\line(1,0){.075231}}
\multiput(56.319,8.93)(.075231,.03125){12}{\line(1,0){.075231}}
\multiput(58.124,9.68)(.075231,.03125){12}{\line(1,0){.075231}}
\multiput(59.93,10.43)(.075231,.03125){12}{\line(1,0){.075231}}
\multiput(61.735,11.18)(.075231,.03125){12}{\line(1,0){.075231}}
\multiput(63.541,11.93)(.075231,.03125){12}{\line(1,0){.075231}}
\multiput(65.346,12.68)(.075231,.03125){12}{\line(1,0){.075231}}
\multiput(67.152,13.43)(.075231,.03125){12}{\line(1,0){.075231}}
\multiput(68.957,14.18)(.075231,.03125){12}{\line(1,0){.075231}}
\multiput(70.763,14.93)(.075231,.03125){12}{\line(1,0){.075231}}
\multiput(72.569,15.68)(.075231,.03125){12}{\line(1,0){.075231}}
\multiput(74.374,16.43)(.075231,.03125){12}{\line(1,0){.075231}}
\multiput(65.5,14)(.3125,-.03125){8}{\line(1,0){.3125}}
\multiput(68,13.75)(-.03947368,-.03289474){38}{\line(-1,0){.03947368}}
\put(66,10.75){${\bf p}_2$}
\put(23,10.25){$\bf u'$}
\put(19.75,31.25){$\bf s$}
\put(32.25,41.5){$\Psi$}
\put(47.75,38.5){${\cal C}_1$}
\put(41.25,9.5){${\cal C}_l$}
\put(29.25,8.75){$\Gamma$}
\multiput(15.43,5.68)(.53125,.53125){9}{{\rule{.4pt}{.4pt}}}
\multiput(18.93,5.43)(.5,.55){6}{{\rule{.4pt}{.4pt}}}
\multiput(25.93,11.93)(.5625,.4375){5}{{\rule{.4pt}{.4pt}}}
\multiput(22.43,5.43)(.54167,.5){7}{{\rule{.4pt}{.4pt}}}
\multiput(28.68,11.43)(.5,.4){6}{{\rule{.4pt}{.4pt}}}
\multiput(27.18,5.68)(.55,.5){6}{{\rule{.4pt}{.4pt}}}
\multiput(33.18,10.93)(.125,.375){3}{{\rule{.4pt}{.4pt}}}
\multiput(30.93,4.93)(.5625,.53125){9}{{\rule{.4pt}{.4pt}}}
\multiput(35.18,4.43)(.55,.55){6}{{\rule{.4pt}{.4pt}}}
\multiput(39.18,4.18)(.4167,.4167){4}{{\rule{.4pt}{.4pt}}}
\put(52,23.5){$\Psi '$}
\put(68.5,33.25){$\Delta$}
\put(1.75,2){${\cal C}_{2l-L-1}$}
\put(32,1.5){$\bf v$}
\end{picture}

\medskip

{\bf Without loss of generality, we will assume further that $h_1\ge
h_l$ for $\Psi.$} Under this assumption, we will use the following
special surgery for $\Delta$. Denote by $e_j$ the common $t$-edge of $\partial {\cal C}_j$  and $\partial\pi$. Consider the reduced subpath $e_{2l-L-1}{\bf u'}e_{l}$ of ${\bf s}.$ Denote by $\Gamma$ the
subdiagram without hubs bounded by  $\bott({\cal C}_{2l-L-1})^{-1}({\bf v})
\topp({\cal C}_{l})({\bf u'})^{-1}$, where $\bf v$ is a subpath of
$\partial\Delta$. We have ${\bf p}(\Psi)={\bf p}={\bf p}_1 {\bf v}f$ for some ${\bf p}_1,$
where $f$ is the common edge of $\partial{\cal C}_l$ end $\partial\Delta.$

There is a reduced path $e_1^{-1}{\bf\bar u} e_{l}^{-1}$
, where $({\bf\bar u})^{-1}$ is a subpath of $\partial\pi$.
Then  the paths ${\bf w}_1=\topp({\cal C}_{l})({\bf u'})^{-1}$ is obtained from   ${\bf w}=\bott({\cal
C}_{l})\bar {\bf u}^{-1}$ by a $t_{i}$-reflection since ${\cal C}_l$ is a $t_{i}$-band for some $i$ (see definitions in Remark \ref{diasym}). 
Therefore the following surgery is possible.

(1) Cut $\Delta$ along $\bf w$.

(2) Construct a diagram $\Gamma_1$ obtained from $\Gamma$ by the $t_i$-reflection
(see Remark \ref{diasym}) and take a standard mirror copy
$\Gamma_2$ of $\Gamma_1$ (where the mirror edges have equal labels).  Glue $\Gamma_1$ and $\Gamma_2$ together along the path $\bf r$
obtained by the $t_{i}$-reflection from $\bott({\cal C}_{2l-L-1})^{-1}\bf v$, and
obtain a diagram $\Pi$ with boundary $\bf w'w''$, where $\Lab ({\bf w'})\equiv
\Lab (({\bf w''})^{-1})\equiv \Lab ({\bf w}^{-1})$.

\unitlength 1mm 
\linethickness{0.4pt}
\ifx\plotpoint\undefined\newsavebox{\plotpoint}\fi 
\begin{picture}(105,51)(-10,0)
\put(32,23.5){\circle{11.597}}
\put(35.25,18.75){\circle{.5}}
\multiput(35.25,19)(.033536585,-.057926829){164}{\line(0,-1){.057926829}}
\multiput(40.75,9.5)(-.076087,-.0326087){23}{\line(-1,0){.076087}}
\multiput(39,8.75)(-.033557047,.062080537){149}{\line(0,1){.062080537}}
\multiput(36,28)(.0336879433,.0425531915){282}{\line(0,1){.0425531915}}
\multiput(45.5,40)(-.076087,.0326087){23}{\line(-1,0){.076087}}
\multiput(43.75,40.75)(-.0336538462,-.0451923077){260}{\line(0,-1){.0451923077}}
\multiput(27,20.25)(-.0460199,-.03358209){201}{\line(-1,0){.0460199}}
\multiput(17.75,13.5)(-.0333333,.0583333){30}{\line(0,1){.0583333}}
\multiput(16.75,15.25)(.049731183,.033602151){186}{\line(1,0){.049731183}}
\multiput(44.25,28.25)(.0326087,-.1521739){23}{\line(0,-1){.1521739}}
\multiput(45,24.75)(-.03358209,-.0858209){67}{\line(0,-1){.0858209}}
\multiput(42.75,19)(.033536585,-.056402439){164}{\line(0,-1){.056402439}}
\put(48.25,9.75){\line(0,1){0}}
\multiput(44.25,28.75)(.033653846,-.052884615){156}{\line(0,-1){.052884615}}
\put(48.75,21.25){\line(0,-1){11.75}}
\multiput(60.75,29)(-.033653846,-.049679487){156}{\line(0,-1){.049679487}}
\multiput(61,29)(.03333333,-.06666667){45}{\line(0,-1){.06666667}}
\put(62.5,26.25){\line(0,-1){6.25}}
\multiput(62.25,21)(.03333333,-.13666667){75}{\line(0,-1){.13666667}}
\multiput(55.25,21.25)(.0337078652,-.0393258427){267}{\line(0,-1){.0393258427}}
\multiput(70,28.75)(.03333333,-.06666667){45}{\line(0,-1){.06666667}}
\put(71.5,25.75){\line(0,-1){4.5}}
\multiput(71.5,21.25)(.033625731,-.052631579){171}{\line(0,-1){.052631579}}
\multiput(77.25,12.25)(.0333333,-.05){15}{\line(0,-1){.05}}
\multiput(70,29)(.0344827586,.0336990596){319}{\line(1,0){.0344827586}}
\multiput(80.93,39.18)(.0435924,-.0336134){17}{\line(1,0){.0435924}}
\multiput(82.412,38.037)(.0435924,-.0336134){17}{\line(1,0){.0435924}}
\multiput(83.894,36.894)(.0435924,-.0336134){17}{\line(1,0){.0435924}}
\multiput(85.376,35.751)(.0435924,-.0336134){17}{\line(1,0){.0435924}}
\multiput(86.858,34.608)(.0435924,-.0336134){17}{\line(1,0){.0435924}}
\multiput(88.34,33.465)(.0435924,-.0336134){17}{\line(1,0){.0435924}}
\multiput(89.823,32.323)(.0435924,-.0336134){17}{\line(1,0){.0435924}}
\multiput(91.305,31.18)(.0435924,-.0336134){17}{\line(1,0){.0435924}}
\multiput(92.787,30.037)(.0435924,-.0336134){17}{\line(1,0){.0435924}}
\multiput(94.269,28.894)(.0435924,-.0336134){17}{\line(1,0){.0435924}}
\multiput(95.751,27.751)(.0435924,-.0336134){17}{\line(1,0){.0435924}}
\multiput(97.233,26.608)(.0435924,-.0336134){17}{\line(1,0){.0435924}}
\multiput(98.715,25.465)(.0435924,-.0336134){17}{\line(1,0){.0435924}}
\multiput(100.198,24.323)(.0435924,-.0336134){17}{\line(1,0){.0435924}}
\multiput(101.68,23.18)(-.0676638,-.0327635){13}{\line(-1,0){.0676638}}
\multiput(99.92,22.328)(-.0676638,-.0327635){13}{\line(-1,0){.0676638}}
\multiput(98.161,21.476)(-.0676638,-.0327635){13}{\line(-1,0){.0676638}}
\multiput(96.402,20.624)(-.0676638,-.0327635){13}{\line(-1,0){.0676638}}
\multiput(94.643,19.772)(-.0676638,-.0327635){13}{\line(-1,0){.0676638}}
\multiput(92.883,18.92)(-.0676638,-.0327635){13}{\line(-1,0){.0676638}}
\multiput(91.124,18.069)(-.0676638,-.0327635){13}{\line(-1,0){.0676638}}
\multiput(89.365,17.217)(-.0676638,-.0327635){13}{\line(-1,0){.0676638}}
\multiput(87.606,16.365)(-.0676638,-.0327635){13}{\line(-1,0){.0676638}}
\multiput(85.846,15.513)(-.0676638,-.0327635){13}{\line(-1,0){.0676638}}
\multiput(84.087,14.661)(-.0676638,-.0327635){13}{\line(-1,0){.0676638}}
\multiput(82.328,13.809)(-.0676638,-.0327635){13}{\line(-1,0){.0676638}}
\multiput(80.569,12.957)(-.0676638,-.0327635){13}{\line(-1,0){.0676638}}
\multiput(78.809,12.106)(-.0676638,-.0327635){13}{\line(-1,0){.0676638}}
\put(12.18,50.68){\line(0,1){.125}}
\multiput(44.18,40.68)(-.192105,-.028947){5}{\line(-1,0){.192105}}
\multiput(42.259,40.39)(-.192105,-.028947){5}{\line(-1,0){.192105}}
\multiput(40.338,40.101)(-.192105,-.028947){5}{\line(-1,0){.192105}}
\multiput(38.417,39.811)(-.192105,-.028947){5}{\line(-1,0){.192105}}
\multiput(36.495,39.522)(-.192105,-.028947){5}{\line(-1,0){.192105}}
\multiput(34.574,39.232)(-.192105,-.028947){5}{\line(-1,0){.192105}}
\multiput(32.653,38.943)(-.192105,-.028947){5}{\line(-1,0){.192105}}
\multiput(30.732,38.653)(-.192105,-.028947){5}{\line(-1,0){.192105}}
\multiput(28.811,38.364)(-.192105,-.028947){5}{\line(-1,0){.192105}}
\multiput(26.89,38.074)(-.192105,-.028947){5}{\line(-1,0){.192105}}
\multiput(25.93,37.93)(-.0333333,-.0441176){17}{\line(0,-1){.0441176}}
\multiput(24.796,36.43)(-.0333333,-.0441176){17}{\line(0,-1){.0441176}}
\multiput(23.663,34.93)(-.0333333,-.0441176){17}{\line(0,-1){.0441176}}
\multiput(22.53,33.43)(-.0333333,-.0441176){17}{\line(0,-1){.0441176}}
\multiput(21.396,31.93)(-.0333333,-.0441176){17}{\line(0,-1){.0441176}}
\multiput(20.263,30.43)(-.0333333,-.0441176){17}{\line(0,-1){.0441176}}
\multiput(19.13,28.93)(-.0333333,-.0441176){17}{\line(0,-1){.0441176}}
\multiput(17.996,27.43)(-.0333333,-.0441176){17}{\line(0,-1){.0441176}}
\put(17.43,26.68){\line(0,-1){.9792}}
\put(17.263,24.721){\line(0,-1){.9792}}
\put(17.096,22.763){\line(0,-1){.9792}}
\put(16.93,20.805){\line(0,-1){.9792}}
\put(16.763,18.846){\line(0,-1){.9792}}
\put(16.596,16.888){\line(0,-1){.9792}}
\multiput(18.18,13.18)(.155303,-.032197){6}{\line(1,0){.155303}}
\multiput(20.043,12.793)(.155303,-.032197){6}{\line(1,0){.155303}}
\multiput(21.907,12.407)(.155303,-.032197){6}{\line(1,0){.155303}}
\multiput(23.771,12.021)(.155303,-.032197){6}{\line(1,0){.155303}}
\multiput(25.634,11.634)(.155303,-.032197){6}{\line(1,0){.155303}}
\multiput(27.498,11.248)(.155303,-.032197){6}{\line(1,0){.155303}}
\multiput(29.362,10.862)(.155303,-.032197){6}{\line(1,0){.155303}}
\multiput(31.225,10.475)(.155303,-.032197){6}{\line(1,0){.155303}}
\multiput(33.089,10.089)(.155303,-.032197){6}{\line(1,0){.155303}}
\multiput(34.952,9.702)(.155303,-.032197){6}{\line(1,0){.155303}}
\multiput(36.816,9.316)(.155303,-.032197){6}{\line(1,0){.155303}}
\put(45.25,18){$\Gamma_1$}
\put(59,21){$\Gamma_2$}
\put(83,25.5){$\Psi'$}
\put(29.75,34){$\Psi$}
\put(15.25, 30){${\bf p}_1$}
\multiput(90.25,18.75)(.1333333,-.0333333){15}{\line(1,0){.1333333}}
\multiput(92.25,18.25)(-.03289474,-.03289474){38}{\line(0,-1){.03289474}}
\put(95.5,17.5){${\bf p}_2$}
\put(31,23.5){$\pi$}
\put(10.5,11.75){${\cal C}_{2l-L-1}$}
\put(30.5,07.25){$\bf v$}
\put(47.25,42){${\cal C}_1$}
\put(41.5,7.75){${\cal C}_l$}
\put(38,18){$\bf w$}
\put(43.25,14.5){$\bf w'$}
\put(64,20.5){$\bf w''$}
\put(28.5,14.75){$\Gamma$}
\put(52.75,20.5){$r$}
\put(58.25,14.25){${\bf v}_1$}
\put(65,33){\vector(1,0){.07}}\put(44.75,33.25){\vector(-1,0){.07}}\multiput(44.68,33.18)(.96429,-.0119){22}{{\rule{.4pt}{.4pt}}}
\put(54.5,34.5){$\Pi$}
\put(51,3.75){\vector(1,0){.07}}\put(16.5,3.75){\vector(-1,0){.07}}\multiput(16.43,3.68)(.985714,0){36}{{\rule{.4pt}{.4pt}}}
\put(105,4){\vector(1,0){.07}}\put(54.75,3.75){\vector(-1,0){.07}}\multiput(54.68,3.68)(.985294,.004902){52}{{\rule{.4pt}{.4pt}}}
\put(33,1.25){$\Delta_1$}
\put(78,1.25){$\Delta_2$}
\put(71.5,16.75){$\bf w$}
\multiput(17.93,26.68)(.69737,.60526){20}{{\rule{.4pt}{.4pt}}}
\multiput(17.18,21.68)(.70536,.64286){29}{{\rule{.4pt}{.4pt}}}
\multiput(16.93,17.18)(.65385,.61538){14}{{\rule{.4pt}{.4pt}}}
\multiput(25.43,25.18)(.68478,.6413){24}{{\rule{.4pt}{.4pt}}}
\multiput(19.18,13.68)(.66667,.65){16}{{\rule{.4pt}{.4pt}}}
\multiput(32.43,26.43)(.64583,.60417){13}{{\rule{.4pt}{.4pt}}}
\multiput(22.93,12.68)(.7,.6){21}{{\rule{.4pt}{.4pt}}}
\multiput(28.68,13.18)(.6875,.53125){9}{{\rule{.4pt}{.4pt}}}
\multiput(31.18,10.68)(.6875,.5625){9}{{\rule{.4pt}{.4pt}}}
\multiput(34.93,10.18)(.7,.55){6}{{\rule{.4pt}{.4pt}}}
\multiput(44.18,22.43)(.45,.55){6}{{\rule{.4pt}{.4pt}}}
\multiput(47.68,22.43)(-.4,-.4){6}{{\rule{.4pt}{.4pt}}}
\multiput(48.18,17.43)(-.45,-.55){6}{{\rule{.4pt}{.4pt}}}
\multiput(47.93,13.93)(-.5,-.4167){4}{{\rule{.4pt}{.4pt}}}
\multiput(46.43,12.68)(0,0){3}{{\rule{.4pt}{.4pt}}}
\multiput(59.43,23.43)(.5,.55){6}{{\rule{.4pt}{.4pt}}}
\multiput(57.93,18.68)(.75,.55){6}{{\rule{.4pt}{.4pt}}}
\multiput(60.43,16.93)(.5,.5){4}{{\rule{.4pt}{.4pt}}}
\multiput(61.18,13.68)(.4,.45){6}{{\rule{.4pt}{.4pt}}}
\multiput(63.43,12.18)(.25,.375){3}{{\rule{.4pt}{.4pt}}}
\multiput(71.68,26.68)(.64706,.61765){18}{{\rule{.4pt}{.4pt}}}
\multiput(71.68,22.68)(.65476,.61905){22}{{\rule{.4pt}{.4pt}}}
\multiput(73.18,20.18)(.65,.65){11}{{\rule{.4pt}{.4pt}}}
\multiput(82.93,29.43)(.5,.53125){9}{{\rule{.4pt}{.4pt}}}
\multiput(74.18,17.68)(.66304,.63043){24}{{\rule{.4pt}{.4pt}}}
\multiput(75.43,15.43)(.70652,.6413){24}{{\rule{.4pt}{.4pt}}}
\multiput(76.93,13.68)(.70833,.60417){25}{{\rule{.4pt}{.4pt}}}
\multiput(79.43,12.93)(.73913,.59783){24}{{\rule{.4pt}{.4pt}}}
\multiput(93.68,20.93)(.59375,.53125){9}{{\rule{.4pt}{.4pt}}}
\multiput(98.93,22.43)(.5,.5){4}{{\rule{.4pt}{.4pt}}}
\end{picture}

\medskip

(3) Insert $\Pi$ in the hole of $\Delta$ obtained after step (1).

(4) Cut up the obtained disc diagram along $\topp ({\cal C}_1)\bf r$, and
obtain two diagrams $\Delta_1$ and $\Delta_2$, where $\Delta_1$ is a
minimal diagram with the same boundary label as the union of $\Psi,
\pi$ and $\Gamma_1$, and $\Delta_2$ is a union of $\Psi'$ and
$\Gamma_2$.

(5) Let $H_0$ be the history of the maximal trapezium
bounded by ${\cal C}_1$ and  ${\cal C}_{2l-L-1}$ in $\Psi$ (it is the filling trapezium
$Tp({\cal C}_{2l-L-1}, {\cal C}_1)$ if every maximal $\theta$-band
crossing ${\cal C}_{2l-L-1}$ also crosses ${\cal C}_1$),
and so $H_0$ is a suffix of
both $H_1$ and $H_{2l-L-1}.$ Therefore $2h_0=2||H_0||$ letters can be
canceled in the product $\Lab(\topp ({\cal C}_1))\Lab({\bf r}).$
And so we shorten the corresponding part of the boundary
 of $\Delta_2$ by $2h_0$ edges
 and replace the obtained diagram by
 a minimal diagram $\Delta'$.
 \medskip

 Thus the boundary of $\Delta'$
 is ${\bf p}_3{\bf p}_2$, where ${\bf p}_3=\bf xv'$, $\bf v'$ is obtained by the $t_{i}$-reflection of $\bf v,$ and
 $|{\bf x}|=|{\bf x}|_{\theta}=h_1+h_{2l-L-1}-2h_0.$
 Since the path ${\bf p}_1$ has at least $(h_1-h_0)+(h_{2l-L+1}-h_0)$
 $\theta$-edges (the ends of maximal $\theta$-bands which cross ${\cal C}_1$ but do not cross ${\cal C}_{2l-L-1},$
 and vice versa) and also has $q$-edges,  we have

 \begin{equation}\label{yp}
 |{\bf x}|=|{\bf x}|_{\theta}\le |{\bf p}_1|_{\theta}\le |{\bf p}_1|-1
 \end{equation}
 Moreover using the maximal $\theta$-bands crossing ${\cal C}_1$ and ${\cal C}_l$ in  the crescent $\Psi,$
 one can for every $\theta$-edges of $\bf x,$ find a $\theta$-edge of ${\bf p}_1$ corresponding to the
 same rule $\theta^{\pm 1},$ and the obtaining mapping from the set of (non-oriented) edges
 of $\bf x$ to the set of edges of ${\bf p}_1$ is injective.

\medskip

\begin{lemma}\label{Delta'} With the preceding notation, we have (a) $|\partial\Delta'|\le
 |\partial\Delta|-1$; (b) $\kappa(\Delta')\le \kappa(\Delta);$
 (c) $\lambda(\Delta')\le \lambda(\Delta)+2h_1^2;$ (d) $\nu_J(\Delta')\le \nu_J(\Delta).$

\end{lemma}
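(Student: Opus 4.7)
The plan is to compare the boundary necklaces of $\Delta$ and $\Delta'$ directly, using the explicit boundary decompositions $\partial\Delta = {\bf p}_1{\bf v}f{\bf p}_2$ and $\partial\Delta' = {\bf x}{\bf v}'{\bf p}_2$. Two preparatory observations will do most of the work. First, ${\bf v}'$ is the $t_i$-reflection of ${\bf v}$ by Remark \ref{diasym}, hence has the same multiset of edge labels as ${\bf v}$; in particular ${\bf v}$ and ${\bf v}'$ contribute identical beads to the $\kappa$-, $\lambda$-, and $\nu_J$-necklaces, since the colour of a bead depends only on the label of its edge. Second, by the observation immediately following (\ref{yp}) there is an injection $\iota$ from the (non-oriented) edges of ${\bf x}$ into the $\theta$-edges of ${\bf p}_1$ preserving the underlying rule, hence preserving every relevant classification of an edge (special vs.\ non-special, $(12)$- vs.\ $(23)$- vs.\ other). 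Together, these give a bead-preserving injection $\Phi$ from the white beads on $\partial\Delta'$ into the white beads on $\partial\Delta$ that fixes the part lying on ${\bf p}_2$, identifies the parts on ${\bf v}'$ and ${\bf v}$, and sends white beads on ${\bf x}$ to those on ${\bf p}_1$ via $\iota$.

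Part (a) is an immediate length computation. The factorization ${\bf p}_1{\bf v}f$ has $q$-edges at the junctions that matter, so Lemma \ref{ochev}(b) gives $|{\bf p}| = |{\bf p}_1| + |{\bf v}| + 1$ up to $\delta-\delta'$ corrections that work in our favour. Combined with $|{\bf v}'| = |{\bf v}|$ and $|{\bf x}| \le |{\bf p}_1| - 1$ from (\ref{yp}), this yields $|\partial\Delta'| \le |\partial\Delta| - 1$.

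For (b) and (d) I check that the map $\Phi$ preserves bead-separation. Given a pair $(o_1, o_2)$ on $\partial\Delta'$ whose clockwise arc contains a black bead $b$, I show that the clockwise arc from $\Phi(o_1)$ to $\Phi(o_2)$ on $\partial\Delta$ also contains a black bead of the appropriate colour. If $b$ lies in ${\bf p}_2$ or ${\bf v}'$ the conclusion is immediate via the identity or the identification of ${\bf v}'$ with ${\bf v}$. The only remaining case is when $b$ lies in the subpath ${\bf x}$, but ${\bf x}$ has no $q$-edges at all, so this case is vacuous for $\kappa$, proving (b). For (d) it is vacuous for a different reason: ${\bf x}$ has no $t^{\pm 1}$- or $(t')^{\pm 1}$-edges (being $\theta$-only), hence no $\nu_J$-black beads. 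Moreover, whenever an arc on $\partial\Delta'$ crosses ${\bf x}$, the corresponding arc on $\partial\Delta$ crosses ${\bf p}_1$ together with the $q$-edge $f$, so any separation available on $\partial\Delta'$ is still available on $\partial\Delta$.

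For (c) the $\lambda$-necklace is different: ${\bf x}$ itself can carry both white beads and black beads (the latter from non-special $(12)$- and $(23)$-edges). The argument above still handles every pair $(o_1, o_2)$ for which the separating $\lambda$-black bead lies outside ${\bf x}$. The genuinely new contributions to $\lambda(\Delta') - \lambda(\Delta)$ are therefore pairs for which the separating black bead lies on ${\bf x}$; in such a pair both endpoints must also lie on ${\bf x}$, because otherwise the same analysis as in (b) locates a $q$-edge on $\partial\Delta$ that separates the images. The number of such pairs is at most $|{\bf x}|_\theta^2$, and the identity $|{\bf x}|_\theta = h_1 + h_{2l-L-1} - 2h_0 \le 2h_1$ — coming from the choice of $h_0$ as the length of the common suffix of $H_1$ and $H_{2l-L-1}$, which forces a $|h_1 - h_{2l-L-1}|$-type cancellation on top of the inequality $h_{2l-L-1} \le h_1$ that will be justified using the crescent structure — yields the bound $2h_1^2$. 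The main obstacle in the write-up will be verifying cleanly the inequality $|{\bf x}|_\theta \le 2h_1$; aside from that, parts (a), (b), (d) and the bead-bookkeeping for (c) are all direct consequences of the injection $\Phi$.
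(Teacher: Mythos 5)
Your argument is correct and follows essentially the same route as the paper's proof: (a) via (\ref{yp}) and Lemma \ref{ochev}(b); (b) and (d) from the absence of $q$-edges, respectively of $t^{\pm 1}$- and $(t')^{\pm 1}$-edges, on ${\bf x}$ together with the white-bead injection into ${\bf p}_1$ (the paper phrases this as removing beads and quoting Lemma \ref{mixture}(b,c)); and (c) by reducing the genuinely new separated pairs to those with both beads on ${\bf x}$ and using $|{\bf x}|\le 2h_1$, which indeed follows from the crescent structure since $h_{2l-L-1}-h_0\le h_l\le h_1$. The only bookkeeping point is that to land on $2h_1^2$ rather than $4h_1^2$ you must count, as the paper does, the string mixture $\lambda({\bf x})<|{\bf x}|^2/2$ (i.e.\ only one orientation of each pair of white beads on ${\bf x}$), not $|{\bf x}|_\theta^2$ ordered pairs.
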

 \proof (a)  Since $|{\bf v'}|=|{\bf v}|$, the statement (a) follows from Inequality (\ref{yp}) and Lemma \ref{ochev} (b).

 (b) Recall that $\bf v'$ is constructed as the $t_i$-reflection of $\bf v.$ Thus, when passing
 from the boundary label of $\Delta$ to the boundary label of $\Delta',$
we, in essence, just replace  $\Lab({\bf p}_1)$ by $\Lab(\bf x).$ But $\bf x$ has no
$q$-edges, and so it has no black beads (see the definition of the $\kappa$-mixture of a diagram), and the number of white beads of $\bf x$ is at most the number of white beads on ${\bf p}_1$ by (\ref{yp}). Therefore $\kappa(\Delta')\le \kappa(\Delta)$  by Lemma \ref{mixture}
 (Parts b,c).

(d) Similarly, using the fact that the path $\bf x$ has no $t$- or $t'$-edges
  one concludes  that $\nu_J(\Delta')\le \nu_J(\Delta).$

  (c) The remark made before the formulation of the lemma, allows us to obtain
  an injective mapping from the set of white beads of the $\lambda$-necklace $O'$ for $\partial\Delta'$
  to the set of white beads of the $\lambda$-necklace $O$ on $\partial\Delta,$ so that the beads from $\bf x$ map to
  the beads on ${\bf p}_1.$  It follows that if $o,o'$ are two white beads from $O',$ but not both on $\bf x,$ and
  they are separated  by a black bead in $O',$ then the corresponding white beads of $O$ are also separated
  by a black bead. (We take into account that ${\bf p}_1$ starts and ends with $q$-edges having black beads by
  the definition of the necklace $O$.)
  Therefore to estimate the difference $\lambda(\Delta)-\lambda(\Delta')$ from below, we may consider
  only the pairs of white beads of $O',$ where both $o$ and $o'$ belong to $\bf x,$ whence
  $\lambda(\Delta)-\lambda(\Delta')\ge -\lambda(\bf x).$
    By Lemma \ref{mixturec} (a), $\lambda({\bf x})< |{\bf x}|^2/2 \le (2h_1)^2/2$
  and claim (c) is proved.
\endproof

\begin{rk}\label{anyclove} (1) The surgery described before the
formulation of Lemma \ref{Delta'}, can be also done for the original
clove $cl(\pi,{\cal B}_1, {\cal B}_{L-3})$ even if one does not assume
that $\Delta$ is a solid diagram. In this case again, exactly as in the
proof of Lemma \ref{Delta'}, we  obtain the inequality
$|\partial\Delta'|\le
 |\partial\Delta|-1.$

 (2) Assume that $\Gamma$ is a subcomb of a diagram $\Delta,$ and the  handle of $\Gamma$
 is a $t^{\pm 1}$
 or $(t')^{\pm 1}$-band, ${\bf y}={\bf y}^{\Gamma},$ and $\Delta'=\Delta\backslash\Gamma.$ Then by Lemma \ref{NoAnnul},
 we have a preserving the order bijective mapping from the set of $\theta$-edges of ${\bf y}^{-1}$ to
 the set of the $\theta$-edges of ${\bf z}={\bf z}^{\Gamma}.$ Then arguing exactly
 as in the proof of Part (c) of Lemma \ref{Delta'}, we get
 $\lambda(\Delta)-\lambda(\Delta')\ge -\lambda({\bf y})> -|{\bf y}|^2/2.$ Hence
 $$\mu(\Delta)-\mu(\Delta')> -|{\bf y}|^2/2,$$ by the definition of $\mu(.)$ and Lemmas
 \ref{positive} (a) and \ref{mu} (a).
 \end{rk}

 \begin{lemma}\label{case2} Assume that $n=|\partial\Delta|,$  $h_1$ does not
belong to any interval $(T_i,9T_i)$ ($i=1,2,\dots$,) and $h_2>
(1-\frac{1}{30N})h_1$.
 Then, with the notation of Lemma \ref{Delta'}, we have
 $$\area(\Delta_1) \le c_4(\mu(\Delta)- \mu(\Delta')) +
 c_5(\nu_J(\Delta)- \nu_J(\Delta')) +c_6n (n-|\partial\Delta'|)$$

 \end{lemma}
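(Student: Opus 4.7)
The plan is to bound $\area(\Delta_1) \le 1 + \area(\Psi) + \area(\Gamma)$ (the last summand appears since $\Gamma_1$ has the same area as $\Gamma$, being its $t_i$-reflection) and to convert this spatial estimate into the mixture and perimeter-reduction terms on the right-hand side. The two obvious inputs are: Lemma \ref{psi1} gives $\area(\Psi) \le (4LN h_1 + \delta^{-1}|{\bf p}(\Psi)|)\cdot 2h_1$ (since $h_l \le h_1$), and Lemma \ref{comb} applied to $\Gamma$ (whose base width is $O(N)$, since $\Gamma$ lies in the narrow strip between ${\cal C}_{2l-L-1}$ and ${\cal C}_l$ containing only $L-l+2 \le 22$ spokes) gives $\area(\Gamma) \le C' N h_1^2 + 2h_1 |{\bf v}|_a$. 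Both contributions are of the shape $O(h_1^2) + O(h_1 \cdot |{\bf p}(\Psi)|)$.

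The crucial step, which is the main obstacle, is to extract from the hypotheses a linear bound $h_1 \le C|W_{12}|_a$, where $W_{12}$ denotes the portion of $\partial\Delta$ between the bottom $t$-edges of ${\cal C}_1$ and ${\cal C}_2$. The filling trapezium $Tp({\cal C}_1,{\cal C}_2)$ is $M_4$-accepting (its top sits on the accept word of $\pi$, and its base is the standard base of one copy of $M_4$); moreover, by Property (3) of a crescent all $(12)^{\pm}$- and $(23)^{\pm}$-cells of ${\cal C}_1$ are shared with ${\cal C}_2$, which combined with $h_1 - h_2 < h_1/(30N)$ forces its height to be very close to $h_1$. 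Applying Lemma \ref{*XVII}(a): either $h_2 \le 6|W_{12}|_a$, giving $h_1 \le 7|W_{12}|_a$ directly, or $h_2 \in (T_i,9T_i)$ for some $i$. In the latter subcase the hypothesis $h_1 \notin (T_i,9T_i)$ together with $h_2 \le h_1 \le h_2/(1-1/(30N))$ confines $h_1$ to the narrow window $[9T_i,\, 9T_i(1 + 1/(30N-1)))$; here one promotes ${\cal C}_1$ to a full $M_4$-accepting trapezium of height $h_1$ by attaching to $Tp({\cal C}_1,{\cal C}_2)$ the trapezium formed by the extra $h_1-h_2$ non-transition cells of ${\cal C}_1$ together with their $\theta$-bands ending on $\partial\Delta$, and then applies Lemma \ref{*XVII}(a) directly with height $h_1 \notin \bigcup_i(T_i,9T_i)$ to conclude $h_1 \le 6|W|_a \le 7|W_{12}|_a$.

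Once $h_1$ is linearly bounded by $|W_{12}|_a \le (\delta')^{-1}|{\bf p}_1|$, both area estimates are bounded by a constant multiple of $|{\bf p}(\Psi)|^2$, hence by a constant multiple of $n \cdot |{\bf p}(\Psi)|$. The perimeter reduction satisfies $n - |\partial\Delta'| \ge |{\bf p}_1| - |{\bf x}| + O(1)$ with $|{\bf x}| \le 2h_1$. When $|{\bf p}_1| \ge 4h_1$ the term $c_6 n(n-|\partial\Delta'|)$ already absorbs the area. Otherwise ${\bf p}_1$ is short compared to $h_1$, but it still contains at least $2l - L - 3 \ge L - 23$ of the $t^{\pm}$- and $(t')^{\pm}$-edges of the spokes ${\cal C}_2,\dots,{\cal C}_{2l-L-2}$; since ${\bf p}_1$ is replaced in $\partial\Delta'$ by the $q$-edge-free path ${\bf x}$, Lemma \ref{mixturec}(d) shows that the pairs of white beads on $\partial\Delta$ that were separated by fewer than $J$ black beads via these spokes contribute quadratically to $\nu_J(\Delta)-\nu_J(\Delta')$. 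Via $c_5 \gg c_4$, this dominates both the possibly negative contribution $\mu(\Delta)-\mu(\Delta') \ge -2h_1^2$ from Lemma \ref{Delta'}(c) and the remaining area, yielding the claimed inequality.
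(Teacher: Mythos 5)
Your overall skeleton (bound $\area(\Delta_1)$ by $2\area(\Psi)+1$, extract a linear bound on $h_1$ from Property \ref{xv} applied between ${\cal C}_1$ and ${\cal C}_2$, then absorb everything into the right-hand side) is the same as the paper's, but the crucial step has a genuine gap. Lemma \ref{*XVII}(a) (equivalently Property \ref{xv}) bounds $h_2$ by $6|W|_a$ where $W$ is the \emph{bottom label of the $M_4$-accepting trapezium} $\Lambda_{12}=Tp({\cal C}_2,{\cal C}_1)$, i.e.\ an interior path ${\bf z}_{12}$ of $\Delta$ --- not by the $a$-length of the boundary arc $W_{12}\subset\partial\Delta$ that you use. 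Since there are $h_1-h_2>0$ maximal $\theta$-bands between ${\bf z}_{12}$ and $\partial\Delta$, the $a$-edges of ${\bf z}_{12}$ need not appear on the boundary at all; one must trace the $a$-bands starting on ${\bf z}_{12}$ and show that most of them escape to ${\bf p}_1$, using precisely the hypothesis $h_2>(1-\frac{1}{30N})h_1$ together with solidity of $\Delta$: the $<h_1/(30N)$ intervening $\theta$-bands can absorb only $O(N)\cdot h_1/(30N)$ of these $a$-bands, so $|{\bf p}_1|_a>h_1/10$ and hence, via (\ref{yp}) and Lemma \ref{ochev}(a), $h_1<10(\delta')^{-1}(|{\bf p}|-|{\bf p}_3|)$. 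This $a$-band transfer is the heart of the paper's proof and is entirely missing from yours; you use the hypothesis on $h_2$ only for the numerical ratio $h_1\le h_2/(1-\frac{1}{30N})$, which is not where its strength lies.

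Two further consequences of this gap make the rest of your argument break down. First, because you only obtain $h_1=O((\delta')^{-1}|{\bf p}_1|)$ (combinatorial length rather than $a$-length), you do not get the perimeter reduction $n-|\partial\Delta'|\ge\delta' h_1/10$, which is why you are forced into the case split on $|{\bf p}_1|$ versus $4h_1$; in the ``otherwise'' branch your claim that removing the $t^{\pm1}$-edges of ${\bf p}_1$ yields a drop of $\nu_J$ of order $h_1^2$ is unsubstantiated and can fail, e.g.\ when $h_0$ is close to $h_1$ and ${\bf p}_1$ carries few $\theta$-edges (few white beads), while the clove can still have area of order $h_1^2$. With the correct bound $|{\bf p}_1|_a>h_1/10$ no case split is needed: both $16LNh_1^2$ and $5\delta^{-1}|{\bf p}|h_1$, as well as the penalty $-2c_4h_1^2$ from Lemma \ref{Delta'}(c), are absorbed into $c_6 n(n-|\partial\Delta'|)$, and Lemma \ref{Delta'}(b,d) handles $\kappa$ and $\nu_J$. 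Second, your treatment of the subcase $h_2\in(T_i,9T_i)$ by ``promoting'' ${\cal C}_1$ to an $M_4$-accepting trapezium of height $h_1$ is not legitimate: the $\theta$-bands crossing ${\cal C}_1$ but not ${\cal C}_2$ end on $\partial\Delta$, so the region they sweep is not a trapezium (both sides of a trapezium must be $q$-bands), and Property \ref{xv} cannot be invoked at height $h_1$ this way.
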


 \proof Since $h_1=\max_{i=1}^l h_i,$ the condition (2) from the definition of crescent implies
that every maximal $\theta$-band crossing the $t^{\pm 1}$-band ${\cal C}_2$ in the crescent $\Psi,$
has to cross ${\cal C}_1$ as well. Therefore we can consider the trapezium $\Lambda_{12}=Tp({\cal C}_2, {\cal C}_1)$ of height $h_2$ between ${\cal C}_1$ and ${\cal C}_2.$
The bottom path ${\bf z}_{12}$ of $\Lambda_{12}$, must be of $a$-length at
least $h_2/6$ by \ref{xv}, since $h_2$ does not belong to any
interval $(T_i,9T_i).$

\unitlength 1mm 
\linethickness{0.4pt}
\ifx\plotpoint\undefined\newsavebox{\plotpoint}\fi 
\begin{picture}(50.75,53)(-25,0)
\put(28.75,16.25){\circle{8.246}}
\put(31.5,19.75){\circle{.5}}
\multiput(31.25,20)(.0336914063,.0419921875){512}{\line(0,1){.0419921875}}
\multiput(48.5,41.5)(.0333333,-.0333333){30}{\line(0,-1){.0333333}}
\multiput(49.5,40.5)(-.0337186898,-.0414258189){519}{\line(0,-1){.0414258189}}
\put(29.25,40.75){\line(0,-1){19.75}}
\put(27.75,40.75){\line(1,0){1.25}}
\put(27.5,41){\line(0,-1){20.25}}
\multiput(29.25,40.75)(.146634615,-.033653846){104}{\line(1,0){.146634615}}
\multiput(47.93,41.43)(-.077273,.031818){11}{\line(-1,0){.077273}}
\multiput(46.23,42.13)(-.077273,.031818){11}{\line(-1,0){.077273}}
\multiput(44.53,42.83)(-.077273,.031818){11}{\line(-1,0){.077273}}
\multiput(42.83,43.53)(-.077273,.031818){11}{\line(-1,0){.077273}}
\multiput(41.13,44.23)(-.077273,.031818){11}{\line(-1,0){.077273}}
\multiput(39.43,44.93)(-.0875,-.033333){10}{\line(-1,0){.0875}}
\multiput(37.68,44.263)(-.0875,-.033333){10}{\line(-1,0){.0875}}
\multiput(35.93,43.596)(-.0875,-.033333){10}{\line(-1,0){.0875}}
\multiput(34.18,42.93)(-.0875,-.033333){10}{\line(-1,0){.0875}}
\multiput(32.43,42.263)(-.0875,-.033333){10}{\line(-1,0){.0875}}
\multiput(30.68,41.596)(-.0875,-.033333){10}{\line(-1,0){.0875}}
\multiput(27.93,40.68)(-.0482955,-.0326705){16}{\line(-1,0){.0482955}}
\multiput(26.384,39.634)(-.0482955,-.0326705){16}{\line(-1,0){.0482955}}
\multiput(24.839,38.589)(-.0482955,-.0326705){16}{\line(-1,0){.0482955}}
\multiput(23.293,37.543)(-.0482955,-.0326705){16}{\line(-1,0){.0482955}}
\multiput(21.748,36.498)(-.0482955,-.0326705){16}{\line(-1,0){.0482955}}
\multiput(20.202,35.452)(-.0482955,-.0326705){16}{\line(-1,0){.0482955}}
\multiput(18.657,34.407)(-.0482955,-.0326705){16}{\line(-1,0){.0482955}}
\multiput(17.112,33.362)(-.0482955,-.0326705){16}{\line(-1,0){.0482955}}
\multiput(15.566,32.316)(-.0482955,-.0326705){16}{\line(-1,0){.0482955}}
\multiput(14.021,31.271)(-.0482955,-.0326705){16}{\line(-1,0){.0482955}}
\multiput(12.475,30.225)(-.0482955,-.0326705){16}{\line(-1,0){.0482955}}
\put(10.93,29.18){\line(0,-1){.97}}
\put(10.95,27.24){\line(0,-1){.97}}
\put(10.97,25.3){\line(0,-1){.97}}
\put(10.99,23.36){\line(0,-1){.97}}
\put(11.01,21.42){\line(0,-1){.97}}
\put(11.03,19.48){\line(0,-1){.97}}
\put(11.05,17.54){\line(0,-1){.97}}
\put(11.07,15.6){\line(0,-1){.97}}
\put(11.09,13.66){\line(0,-1){.97}}
\put(11.11,11.72){\line(0,-1){.97}}
\put(11.13,9.78){\line(0,-1){.97}}
\put(11.15,7.84){\line(0,-1){.97}}
\put(11.17,5.9){\line(0,-1){.97}}
\multiput(32,14)(.0514981273,-.0337078652){267}{\line(1,0){.0514981273}}
\multiput(45.75,5)(-.0326087,-.0652174){23}{\line(0,-1){.0652174}}
\multiput(45,3.5)(-.047979798,.0336700337){297}{\line(-1,0){.047979798}}
\put(32.18,18.93){\line(1,0){.925}}
\put(34.03,18.83){\line(1,0){.925}}
\put(35.88,18.73){\line(1,0){.925}}
\put(37.73,18.63){\line(1,0){.925}}
\put(39.58,18.53){\line(1,0){.925}}
\multiput(41.43,18.43)(.031481,-.1){9}{\line(0,-1){.1}}
\multiput(41.996,16.63)(.031481,-.1){9}{\line(0,-1){.1}}
\multiput(42.563,14.83)(.031481,-.1){9}{\line(0,-1){.1}}
\multiput(43.13,13.03)(.031481,-.1){9}{\line(0,-1){.1}}
\multiput(43.696,11.23)(.031481,-.1){9}{\line(0,-1){.1}}
\multiput(44.263,9.43)(.031481,-.1){9}{\line(0,-1){.1}}
\multiput(44.83,7.63)(.031481,-.1){9}{\line(0,-1){.1}}
\multiput(45.396,5.83)(.031481,-.1){9}{\line(0,-1){.1}}
\put(11.43,4.18){\line(1,0){.9706}}
\put(13.371,4.136){\line(1,0){.9706}}
\put(15.312,4.091){\line(1,0){.9706}}
\put(17.253,4.047){\line(1,0){.9706}}
\put(19.194,4.003){\line(1,0){.9706}}
\put(21.136,3.959){\line(1,0){.9706}}
\put(23.077,3.915){\line(1,0){.9706}}
\put(25.018,3.871){\line(1,0){.9706}}
\put(26.959,3.827){\line(1,0){.9706}}
\put(28.9,3.783){\line(1,0){.9706}}
\put(30.841,3.739){\line(1,0){.9706}}
\put(32.783,3.694){\line(1,0){.9706}}
\put(34.724,3.65){\line(1,0){.9706}}
\put(36.665,3.606){\line(1,0){.9706}}
\put(38.606,3.562){\line(1,0){.9706}}
\put(40.547,3.518){\line(1,0){.9706}}
\put(42.489,3.474){\line(1,0){.9706}}
\put(38.5,15.25){$\Gamma_1$}
\put(32.25,32.5){$\Lambda_{12}$}
\put(33,38.5){${\bf z}_{12}$}
\multiput(29.43,22.68)(.078125,-.03125){8}{\line(1,0){.078125}}
\multiput(30.68,22.18)(.078125,-.03125){8}{\line(1,0){.078125}}
\multiput(29.43,24.93)(.09375,-.03125){8}{\line(1,0){.09375}}
\multiput(30.93,24.43)(.09375,-.03125){8}{\line(1,0){.09375}}
\multiput(32.43,23.93)(.09375,-.03125){8}{\line(1,0){.09375}}
\multiput(29.68,27.18)(.089286,-.03125){8}{\line(1,0){.089286}}
\multiput(31.108,26.68)(.089286,-.03125){8}{\line(1,0){.089286}}
\multiput(32.537,26.18)(.089286,-.03125){8}{\line(1,0){.089286}}
\multiput(33.965,25.68)(.089286,-.03125){8}{\line(1,0){.089286}}
\multiput(29.68,29.68)(.090278,-.03125){9}{\line(1,0){.090278}}
\multiput(31.305,29.117)(.090278,-.03125){9}{\line(1,0){.090278}}
\multiput(32.93,28.555)(.090278,-.03125){9}{\line(1,0){.090278}}
\multiput(34.555,27.992)(.090278,-.03125){9}{\line(1,0){.090278}}
\multiput(29.68,31.68)(.083333,-.030556){10}{\line(1,0){.083333}}
\multiput(31.346,31.069)(.083333,-.030556){10}{\line(1,0){.083333}}
\multiput(33.013,30.457)(.083333,-.030556){10}{\line(1,0){.083333}}
\multiput(34.68,29.846)(.083333,-.030556){10}{\line(1,0){.083333}}
\multiput(36.346,29.235)(.083333,-.030556){10}{\line(1,0){.083333}}
\put(50.75,37.5){${\cal C}_1$}
\put(25.5,45){${\cal C}_2$}
\put(49.25,4.5){${\cal C}_l$}
\put(28.25,16.5){$\pi$}
\put(22.75,9){$\Delta_1$}
\end{picture}
\bigskip

Recall that the diagram $\Delta$ is solid, and therefore the clove $cl(\pi,{\cal C}_1, {\cal C}_2)$
has $h_1-h_2 < \frac {h_1}{30N}$ maximal $\theta$-bands outside $\Lambda_{12}.$ Hence the maximal $a$-bands
starting on ${\bf z}_{12}$ can end outside of $\Lambda_{1,2}$ on at most $N$
$(\theta, q)$-cells of each of the $<\frac {h_1}{30N}\;$ $\theta$-bands.
Hence by \ref{cell} (a), at least $|{\bf z}_{12}|_a-h_1/15$ $a$-bands starting
on ${\bf z}_{12}$ end on ${\bf p}_1$, and so $|{\bf p}_1|_a> h_1(1/6 - 1/15)=h_1/10$.

 Therefore by Lemma \ref{ochev} (a) and Inequality (\ref{yp}), we obtain
 $$|{\bf p}|=|{\bf p}_1|+|{\bf v}|> |{\bf p}_1|_{\theta}+\delta'h_1/10+|{\bf v}|  \ge
 |{\bf x}|+|{\bf v}|+\delta'h_1/10\ge |{\bf p}_3|+\delta'h_1/10,$$ and so $|{\bf p}|-|{\bf p}_3|>\delta'h_1/10.$
 Thus
 \begin{equation}\label{h1}
 h_1<10(\delta')\iv(|{\bf p}|-|{\bf p}_3|)
 \end{equation}

 We have by  (\ref{h1}) and Lemma \ref{psi1}:
 $$\area (\Delta_1)\le 2\area(\Psi)+1\le
4(2LN(h_1+h_l)+  \delta\iv|{\bf p}|)h_1+1\le
16LNh_1^2+5\delta\iv|{\bf p}|h_1$$
 \begin{equation} \label{hvost}<16LN\times
100(\delta')^{-2}(|{\bf p}|-|{\bf p}_3|)^2+
 5\delta\iv|{\bf p}| \times 10(\delta')\iv(|{\bf p}|-|{\bf p}_3|) <
 c_6|{\bf p}|(|{\bf p}|-|{\bf p}_3|)/2
 \end{equation} 
 since $|{\bf p}_3|\le |{\bf p}|.$
 By Lemma \ref{Delta'}, $\lambda(\Delta)-\lambda(\Delta') \ge - 2h_1^2,$
 and therefore by (\ref{h1}),
  \begin{equation}\label{c4c6}
  c_4(\lambda(\Delta)-\lambda(\Delta'))\ge -2c_4h_1^2
 \ge -2c_4(10)^2(\delta')^{-2}(|{\bf p}|-|{\bf p}_3|)^2\ge -c_6|{\bf p}|(|{\bf p}|-|{\bf p}_3|)/2
 \end{equation}
 since $c_6>400(\delta')^{-2}c_4.$ Hence by (\ref{hvost}) and (\ref{c4c6}),
 $$\area(\Delta_1)\le c_6|{\bf p}|(|{\bf p}|-|{\bf p}_3|)
 +c_4(\lambda(\Delta)-\lambda(\Delta'))\le c_6n(n-|\partial\Delta'|)+c_4(\lambda(\Delta)-\lambda(\Delta'))$$
 as
 $|{\bf p}|-|{\bf p}_3|\le n-|\partial\Delta'|.$ Now the statement follows from Lemma \ref{Delta'} (b,d).

\endproof

\bigskip

  Let now
  $\Psi_{2,l}$ be the part of the crescent $\Psi$ between ${\cal C}_2$ and ${\cal C}_l.$
  By Lemma \ref{narrow}, $\Psi_{2,l}$  is a crescent too.
  For the crescent $\Psi_{2,l}$, one can define the analogs of ${\bf p}, {\bf p}_1,
{\bf p}_3,$ $\bf v,$ $\bf v',$ $\Delta_1$ and $\Delta'$ introduced earlier for the crescent
$\Psi.$ We denote them by ${\bf p}(0), {\bf p}_1(0),$ ${\bf p}_3(0),$ ${\bf v}(0),$ ${\bf v'}(0),$ $\Delta_1(0)$ and
$\Delta'(0),$ respectively.

The substitution of $\Psi$ by $\Psi_{2,l}$ in Lemma \ref{psi1},
gives us

\begin{equation}\label{Phi}
    \area(\Psi_{2,l})\le (h_2+h_l)(2LN(h_2+h_l)+ \delta^{-1}|{\bf p}(0)|)
   \end{equation}

\begin{lemma}\label{areaD1}
Assume that $|{\bf p}(0)|\le 2LN\max(h_2,h_l),$ $h_2< (1-\frac{1}{30N})h_1,$
and $\max(h_2,h_l)$ does not belong to any interval $(T_i,9T_i).$
Then the following inequality holds:
\begin{equation}\label{arD1}
\area(\Delta_1(0))
\le c_4(\mu(\Delta)- \mu (\Delta'(0))) +
 c_5(\nu_J(\Delta)- \nu_J(\Delta'(0))) +c_6|\partial\Delta|(|\partial\Delta|-|\partial\Delta'(0)|).
 \end{equation}
\end{lemma}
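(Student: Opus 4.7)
The plan is to apply the surgery of Subsection \ref{srh} to $\Psi_{2,l}$, which is itself a crescent by Lemma \ref{narrow}, producing $\Delta_1(0)$ and $\Delta'(0)$. The proof of Lemma \ref{Delta'} carries over verbatim to this setting, yielding $\kappa(\Delta'(0)) \le \kappa(\Delta)$, $\lambda(\Delta'(0)) \le \lambda(\Delta) + 2(h^*)^2$ with $h^* := \max(h_2, h_l)$, $\nu_J(\Delta'(0)) \le \nu_J(\Delta)$, and $|\partial\Delta'(0)| \le |\partial\Delta|-1$.

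Next, Lemma \ref{psi1} applied to $\Psi_{2,l}$ combined with the hypothesis $|{\bf p}(0)| \le 2LN h^*$ gives
\begin{equation*}
\area(\Delta_1(0)) \le 2\area(\Psi_{2,l}) + 1 \le 2\bigl(2LN(h_2+h_l) + 2LN\delta^{-1}h^*\bigr)(h_2+h_l) + 1 \le C(h_2+h_l)^2
\end{equation*}
for a constant $C = C(L,N,\delta)$. The task thus reduces to bounding $(h_2+h_l)^2$ in terms of $|\partial\Delta|\bigl(|\partial\Delta|-|\partial\Delta'(0)|\bigr)$, from which the $c_6$-term of the conclusion simultaneously absorbs this quadratic quantity and the $\lambda$-increase of $2(h^*)^2$, since $c_6 \gg c_4$.

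To obtain that bound I mimic Lemma \ref{case2}: by symmetry I may assume $h^* = h_2$ (if $h^* = h_l$ one swaps the two ends of $\Psi_{2,l}$ via a $t_i$-reflection and applies the analogous argument). Since $h^* \notin (T_i,9T_i)$, the plan is to locate inside $\Psi_{2,l}$ a trapezium of height comparable to $h^*$ whose bottom sits on the boundary arc ${\bf p}(0)$. Property \ref{xv} then forces this bottom to carry at least $h^*/6$ maximal $a$-bands. Solidity together with the bound on base widths in $\Psi_{2,l}$ (Lemma \ref{psi1}) ensures that only a negligible fraction of these $a$-bands terminate internally on other spokes of the crescent, so a linear-in-$h^*$ number survive onto ${\bf p}(0)$, yielding the perimeter drop $|\partial\Delta|-|\partial\Delta'(0)| \ge c\delta' h^*$ (compare the derivation of \eqref{h1} in Lemma \ref{case2}). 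Hence $(h^*)^2 \le O((\delta')^{-2})|\partial\Delta|\bigl(|\partial\Delta|-|\partial\Delta'(0)|\bigr)$.

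The hard part will be locating the right trapezium. In Lemma \ref{case2} it was $\Lambda_{12}$, whose height $h_2$ was comparable to $h_1$ by hypothesis; here the opposite hypothesis $h_2 < (1-\tfrac{1}{30N})h_1$ blocks this direct strategy. Instead, one must find a pair of consecutive spokes inside $\Psi_{2,l}$ whose heights are comparable and close to $h^*$. Such a pair exists because the hypothesis $|{\bf p}(0)| \le 2LN h^*$ together with solidity caps the total height variation within $\Psi_{2,l}$: otherwise, successive drops of height at least $h^*/(30N)$ accumulated across $l \le 13LN$ spokes (Lemma \ref{malot}) would force too many $\theta$-edges onto ${\bf p}(0)$, contradicting the hypothesis. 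Once the appropriate adjacent pair is fixed, the trapezium between them inherits height comparable to $h^*$ (the smaller of the two spoke heights), Property \ref{xv} applies as above, and the argument closes.
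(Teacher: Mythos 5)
There is a genuine gap: you try to extract the whole bound from the perimeter term $c_6n(n-|\partial\Delta'(0)|)$, but in the regime this lemma is designed for that term can be uselessly small, and the step where you "locate the right trapezium" does not go through. The hypothesis $|{\bf p}(0)|\le 2LN\max(h_2,h_l)$ does \emph{not} cap the height variation inside $\Psi_{2,l}$: the number of $\theta$-edges on ${\bf p}(0)$ is only $h_2+h_l-2h'$, where $h'$ is the number of $\theta$-bands crossing both ${\cal C}_2$ and ${\cal C}_l$, so even a total collapse of the spoke heights immediately after ${\cal C}_2$ (say $h_2=h^*$ and $h_3,\dots,h_l$ tiny) contributes at most about $2h^*\ll 2LNh^*$ $\theta$-edges — no contradiction, and no pair of adjacent spokes of comparable height near $h^*$ need exist. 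In exactly that configuration the $a$-edges forced by Property \ref{xv} sit on the bottom of the trapezium between ${\cal C}_1$ and ${\cal C}_2$ and land on the boundary arc \emph{between ${\cal C}_1$ and ${\cal C}_2$}, which is not part of ${\bf p}(0)$ and survives into $\Delta'(0)$; the surgery on $\Psi_{2,l}$ then shortens the boundary only by the $\theta$-excess plus $O(LN)$, so $|\partial\Delta|-|\partial\Delta'(0)|$ can be $O(1)$ while $\area(\Delta_1(0))$ is of order $(h^*)^2$. A telltale sign is that your argument never uses the hypothesis $h_2<(1-\frac1{30N})h_1$, which is precisely what must carry the case complementary to Lemma \ref{case2}.

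The paper's proof splits on $h_0$, the height of the overlap trapezium between ${\cal C}_1$ and ${\cal C}_{2l-L-1}$. If $h_0\ge(1-\frac1{30N})\max(h_2,h_l)$, every spoke of $\Psi_{2,l}$ is that tall and Lemma \ref{case2} applies directly to $\Delta_1(0)$ (this is where the hypothesis $\max(h_2,h_l)\notin(T_i,9T_i)$ is used). If $h_0$ is smaller, the area $\area(\Delta_1(0))\le(\delta')^{-1}h_{2,l}^2$ (your estimate) is absorbed not by the perimeter term but by the $\nu_J$-term: the $t$-edge $f_2$ of ${\cal C}_2$ on $\partial\Delta$ is a black bead of the $\nu$-necklace separating $m_1=h_1-h_2\ge\frac1{30N}h_1$ white beads (here the hypothesis $h_2<(1-\frac1{30N})h_1$ enters) from $m_2\ge\max(h_2-h_0,h_l-h_0)\ge\frac1{30N}h_{2,l}$ white beads on ${\bf p}(0)$; since the replacement path ${\bf x}(0)$ contains no $t$- or $t'$-edges, Lemmas \ref{malot} and \ref{mixture}(d) give $\nu_J(\Delta)-\nu_J(\Delta'(0))\ge m_1m_2\ge\frac{1}{(30N)^2}h_{2,l}^2$, and $c_5\gg N^2(\delta')^{-1},\,N^2c_4$ then yields (\ref{arD1}) with the $\lambda$-increase of $2h_{2,l}^2$ (Lemma \ref{Delta'}(c)) also covered. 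This mixture mechanism is the missing idea in your proposal; without it, or some substitute for it, the bound cannot be closed in the small-$h_0$ case.
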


\proof Assume that $h_0\ge (1-\frac{1}{30})h_{2,l},$ where
$h_{2,l}=\max(h_2,h_l).$ Since $h_i\ge h_0$ for any $i=1,..,l$, to
complete the proof, it suffices to apply Lemma \ref{case2} to
$\Delta_1(0).$ Then we assume that $h_0<  (1-\frac{1}{30N})h_{2,l}.$

By Lemma \ref{psi1} and the restriction on $|p(0)|$, $$\area (\Delta_1(0))\le 2\area(\Psi_{2,l})+1\le
4(2LN(h_2+h_l)+  \delta\iv|{\bf p}(0)|)h_{2,l}+1$$
\begin{equation}\label{1(0)}\le
16LNh_{2,l}^2+4\delta\iv(2LN)^2h^2_{2,l}\le(\delta')^{-1}h^2_{2,l}
\end{equation}
since $(\delta')\iv>20\delta^{-1}L^2N^2.$

Now we want to estimate $\nu_J(\Delta)-\nu_J(\Delta'(0)).$ For this
aid, we observe that the common $t$-edge $f_2$ of the spoke
${\cal C}_2$ and $\partial\Delta$
 separates at least
$h_1-h_2=m_1$ $\theta$-edges placed on $\bf p$ between ${\cal C}_1$ and
${\cal C}_2$ and $m_2$ ones placed between ${\cal C}_2$ and ${\cal
C}_l,$ where $m_2\ge \max(h_2-h_0, h_l-h_0)\ge
\frac{1}{30N}h_{2,l}.$  Lemmas \ref{malot} and \ref{mixture} (d) imply that one
decreases $\nu_J(\Delta)$ at least by $m_1m_2$ when erasing the black
bead on $f_2$ in the $\nu$-necklace on $\partial\Delta.$ 

\bigskip

\unitlength 1mm 
\linethickness{0.4pt}
\ifx\plotpoint\undefined\newsavebox{\plotpoint}\fi 
\begin{picture}(115.25,63.25)(-5,0)
\put(32,22.25){\circle{10.062}}
\put(35.25,26.5){\line(1,1){17.75}}
\multiput(53,44.25)(.03289474,-.03289474){38}{\line(0,-1){.03289474}}
\put(54.25,43){\line(-1,-1){17.75}}
\multiput(26,47.25)(.033730159,-.154761905){126}{\line(0,-1){.154761905}}
\multiput(24.25,46.25)(.033687943,-.132978723){141}{\line(0,-1){.132978723}}
\multiput(24.25,46.25)(.1,.0333333){15}{\line(1,0){.1}}
\put(25,46.75){\circle*{.5}}
\put(52,44.25){\circle*{.707}}
\multiput(52.43,43.68)(-.09375,.03125){10}{\line(-1,0){.09375}}
\multiput(50.555,44.305)(-.09375,.03125){10}{\line(-1,0){.09375}}
\multiput(48.68,44.93)(-.09375,.03125){10}{\line(-1,0){.09375}}
\multiput(46.805,45.555)(-.09375,.03125){10}{\line(-1,0){.09375}}
\multiput(44.93,46.18)(-.09375,.03125){10}{\line(-1,0){.09375}}
\multiput(43.055,46.805)(-.09375,.03125){10}{\line(-1,0){.09375}}
\multiput(41.18,47.43)(-.09375,.03125){10}{\line(-1,0){.09375}}
\multiput(39.305,48.055)(-.09375,.03125){10}{\line(-1,0){.09375}}
\multiput(37.43,48.68)(-.09375,.03125){10}{\line(-1,0){.09375}}
\multiput(35.555,49.305)(-.09375,.03125){10}{\line(-1,0){.09375}}
\multiput(33.68,49.93)(-.083333,-.033333){10}{\line(-1,0){.083333}}
\multiput(32.013,49.263)(-.083333,-.033333){10}{\line(-1,0){.083333}}
\multiput(30.346,48.596)(-.083333,-.033333){10}{\line(-1,0){.083333}}
\multiput(28.68,47.93)(-.083333,-.033333){10}{\line(-1,0){.083333}}
\multiput(27.013,47.263)(-.083333,-.033333){10}{\line(-1,0){.083333}}
\multiput(27.18,19.43)(-.03125,.0625){4}{\line(0,1){.0625}}
\put(27.25,20){\line(-1,-1){12.5}}
\multiput(14.75,7.5)(.03289474,-.03289474){38}{\line(0,-1){.03289474}}
\multiput(16,6.25)(.0336538462,.0343406593){364}{\line(0,1){.0343406593}}
\multiput(34.5,18)(.0337370242,-.0389273356){289}{\line(0,-1){.0389273356}}
\multiput(44.25,6.75)(.0333333,.0416667){30}{\line(0,1){.0416667}}
\multiput(45.25,8)(-.0336700337,.0387205387){297}{\line(0,1){.0387205387}}
\multiput(23.43,45.93)(-.0355263,-.0322368){19}{\line(-1,0){.0355263}}
\multiput(22.08,44.705)(-.0355263,-.0322368){19}{\line(-1,0){.0355263}}
\multiput(20.73,43.48)(-.0355263,-.0322368){19}{\line(-1,0){.0355263}}
\multiput(19.38,42.255)(-.0355263,-.0322368){19}{\line(-1,0){.0355263}}
\multiput(18.03,41.03)(-.0355263,-.0322368){19}{\line(-1,0){.0355263}}
\multiput(16.68,39.805)(-.0355263,-.0322368){19}{\line(-1,0){.0355263}}
\multiput(15.33,38.58)(-.0355263,-.0322368){19}{\line(-1,0){.0355263}}
\multiput(13.98,37.355)(-.0355263,-.0322368){19}{\line(-1,0){.0355263}}
\multiput(12.63,36.13)(-.0355263,-.0322368){19}{\line(-1,0){.0355263}}
\multiput(11.28,34.905)(-.0355263,-.0322368){19}{\line(-1,0){.0355263}}
\put(9.93,33.68){\line(0,-1){.9531}}
\put(9.867,31.773){\line(0,-1){.9531}}
\put(9.805,29.867){\line(0,-1){.9531}}
\put(9.742,27.961){\line(0,-1){.9531}}
\put(9.68,26.055){\line(0,-1){.9531}}
\put(9.617,24.148){\line(0,-1){.9531}}
\put(9.555,22.242){\line(0,-1){.9531}}
\put(9.492,20.336){\line(0,-1){.9531}}
\multiput(9.43,18.43)(.032197,-.075758){11}{\line(0,-1){.075758}}
\multiput(10.138,16.763)(.032197,-.075758){11}{\line(0,-1){.075758}}
\multiput(10.846,15.096)(.032197,-.075758){11}{\line(0,-1){.075758}}
\multiput(11.555,13.43)(.032197,-.075758){11}{\line(0,-1){.075758}}
\multiput(12.263,11.763)(.032197,-.075758){11}{\line(0,-1){.075758}}
\multiput(12.971,10.096)(.032197,-.075758){11}{\line(0,-1){.075758}}
\multiput(13.68,8.43)(.0428571,-.0333333){15}{\line(1,0){.0428571}}
\multiput(14.965,7.43)(.0428571,-.0333333){15}{\line(1,0){.0428571}}
\multiput(16.251,6.43)(.0428571,-.0333333){15}{\line(1,0){.0428571}}
\multiput(17.537,5.43)(.0428571,-.0333333){15}{\line(1,0){.0428571}}
\put(18.18,4.93){\line(1,0){.9815}}
\put(20.143,5.078){\line(1,0){.9815}}
\put(22.106,5.226){\line(1,0){.9815}}
\put(24.069,5.374){\line(1,0){.9815}}
\put(26.032,5.522){\line(1,0){.9815}}
\put(27.995,5.67){\line(1,0){.9815}}
\put(29.957,5.819){\line(1,0){.9815}}
\put(31.92,5.967){\line(1,0){.9815}}
\put(33.883,6.115){\line(1,0){.9815}}
\put(35.846,6.263){\line(1,0){.9815}}
\put(37.809,6.411){\line(1,0){.9815}}
\put(39.772,6.559){\line(1,0){.9815}}
\put(41.735,6.707){\line(1,0){.9815}}
\put(43.698,6.856){\line(1,0){.9815}}
\put(12.25,27){${\bf p}_1(0)$}
\put(29,8.25){${\bf v}(0)$}
\put(-.75,.5){$m_2$ white beads on ${\bf p}_1(0){\bf v}(0)$}
\put(28.75,52.75){$m_1$ white beads}
\put(24.75,47){\circle*{1.118}}
\put(-2.75,48.25){black bead}
\put(22,47.75){\vector(1,0){.07}}\put(10.93,48.93){\line(1,0){.9167}}
\put(12.763,48.721){\line(1,0){.9167}}
\put(14.596,48.513){\line(1,0){.9167}}
\put(16.43,48.305){\line(1,0){.9167}}
\put(18.263,48.096){\line(1,0){.9167}}
\put(20.096,47.888){\line(1,0){.9167}}
\put(31,22.75){\vector(0,1){.07}}\put(30.93,22.43){\line(0,1){.125}}
\put(30.75,22.25){$\pi$}
\put(47.25,33.25){${\cal C}_1$}
\put(29.5,41.75){${\cal C}_2$}
\put(41.75,15){${\cal C}_l$}
\put(17.25,36){$\Psi_{2,l}$}
\multiput(54.68,41.93)(.0334821,-.0580357){14}{\line(0,-1){.0580357}}
\multiput(55.617,40.305)(.0334821,-.0580357){14}{\line(0,-1){.0580357}}
\multiput(56.555,38.68)(.0334821,-.0580357){14}{\line(0,-1){.0580357}}
\multiput(57.492,37.055)(.0334821,-.0580357){14}{\line(0,-1){.0580357}}
\multiput(45.18,8.18)(.073864,.032197){12}{\line(1,0){.073864}}
\multiput(46.952,8.952)(.073864,.032197){12}{\line(1,0){.073864}}
\multiput(48.725,9.725)(.073864,.032197){12}{\line(1,0){.073864}}
\multiput(50.498,10.498)(.073864,.032197){12}{\line(1,0){.073864}}
\multiput(52.271,11.271)(.073864,.032197){12}{\line(1,0){.073864}}
\multiput(54.043,12.043)(.073864,.032197){12}{\line(1,0){.073864}}
\put(50.75,24.75){$\Delta$}
\multiput(81.25,46.75)(.033482143,-.185267857){112}{\line(0,-1){.185267857}}
\multiput(85,26)(.0336391437,-.0435779817){327}{\line(0,-1){.0435779817}}
\multiput(106,47.75)(-.037234043,-.033687943){141}{\line(-1,0){.037234043}}
\multiput(106,47.75)(.03289474,-.04605263){38}{\line(0,-1){.04605263}}
\multiput(107.25,46)(-.043814433,-.033505155){97}{\line(-1,0){.043814433}}
\multiput(81.93,46.93)(.0662393,.0320513){13}{\line(1,0){.0662393}}
\multiput(83.652,47.763)(.0662393,.0320513){13}{\line(1,0){.0662393}}
\multiput(85.374,48.596)(.0662393,.0320513){13}{\line(1,0){.0662393}}
\multiput(87.096,49.43)(.0662393,.0320513){13}{\line(1,0){.0662393}}
\multiput(88.819,50.263)(.0662393,.0320513){13}{\line(1,0){.0662393}}
\multiput(89.68,50.68)(.177778,-.033333){5}{\line(1,0){.177778}}
\multiput(91.457,50.346)(.177778,-.033333){5}{\line(1,0){.177778}}
\multiput(93.235,50.013)(.177778,-.033333){5}{\line(1,0){.177778}}
\multiput(95.013,49.68)(.177778,-.033333){5}{\line(1,0){.177778}}
\multiput(96.791,49.346)(.177778,-.033333){5}{\line(1,0){.177778}}
\multiput(98.569,49.013)(.177778,-.033333){5}{\line(1,0){.177778}}
\multiput(100.346,48.68)(.177778,-.033333){5}{\line(1,0){.177778}}
\multiput(102.124,48.346)(.177778,-.033333){5}{\line(1,0){.177778}}
\multiput(103.902,48.013)(.177778,-.033333){5}{\line(1,0){.177778}}
\multiput(107.43,45.93)(.0339912,-.0328947){19}{\line(1,0){.0339912}}
\multiput(108.721,44.68)(.0339912,-.0328947){19}{\line(1,0){.0339912}}
\multiput(110.013,43.43)(.0339912,-.0328947){19}{\line(1,0){.0339912}}
\multiput(111.305,42.18)(.0339912,-.0328947){19}{\line(1,0){.0339912}}
\multiput(112.596,40.93)(.0339912,-.0328947){19}{\line(1,0){.0339912}}
\multiput(113.888,39.68)(.0339912,-.0328947){19}{\line(1,0){.0339912}}
\multiput(95.93,11.68)(.069444,.032986){12}{\line(1,0){.069444}}
\multiput(97.596,12.471)(.069444,.032986){12}{\line(1,0){.069444}}
\multiput(99.263,13.263)(.069444,.032986){12}{\line(1,0){.069444}}
\multiput(100.93,14.055)(.069444,.032986){12}{\line(1,0){.069444}}
\multiput(102.596,14.846)(.069444,.032986){12}{\line(1,0){.069444}}
\multiput(104.263,15.638)(.069444,.032986){12}{\line(1,0){.069444}}
\put(85.75,37.5){${\bf x}(0)$}
\put(92.75,20.25){${\bf v'}(0)$}
\put(106,27.75){$\Delta'(0)$}
\put(83.75,53.25){$m_1$ white beads}
\put(60,49.5){no black bead}
\put(80.25,46.75){\vector(3,-1){.07}}\multiput(74.43,48.43)(.102679,-.03125){8}{\line(1,0){.102679}}
\multiput(76.073,47.93)(.102679,-.03125){8}{\line(1,0){.102679}}
\multiput(77.715,47.43)(.102679,-.03125){8}{\line(1,0){.102679}}
\multiput(79.358,46.93)(.102679,-.03125){8}{\line(1,0){.102679}}
\put(70.25,1.75){$\le m_2$ white beads on ${\bf x}(0){\bf v'}(0)$}
\put(102.5,40.75){${\cal C}_1$}
\put(11.25,16.25){${\cal C}_{2l-L-2}$}
\end{picture}

\bigskip

\bigskip

\bigskip

Nevertheless we do such erasing
while passing from $\Delta$
to $\Delta'(0)$ since the path ${\bf x}(0)$ (replacing the path ${\bf p}_1(0)$ with edge $f_2$)
has no $t$- or $t'$-edges and ${\bf v'}(0)$ is a copy of ${\bf v}(0).$ (We might erase some other
black and white beads). Hence
\begin{equation}\label{m1m2}
\nu_J(\Delta)-\nu_J(\Delta'(0))\ge m_1m_2\ge
\frac{1}{30N}h_1(\frac{1}{30N})h_{2,l}\ge\frac{1}{(30N)^2}h^2_{2,l}
\end{equation}

The Inequalities (\ref{m1m2}) and (\ref{1(0)}) imply
\begin{equation}\label{ardel10}
\area(\Delta_1(0))\le c_5(\nu_J(\Delta)-\nu_J(\Delta'(0)))/2
\end{equation}
  since
$c_5> 2000N^2(\delta')^{-1}.$ 

By Lemma \ref{Delta'} (b,c) for the diagrams $\Delta$ and $\Delta'(0),$
we have $\mu(\Delta)-\mu(\Delta'(0))\ge\lambda(\Delta)-\lambda(\Delta'(0)) \ge - 2h_{2,l}^2,$ whence by (\ref{m1m2}), $0\le c_5(\nu_J(\Delta)-\nu_J(\Delta'(0)))/2+
c_4(\mu(\Delta)-\mu(\Delta'(0))$ 
since $c_5>2000N^2 c_4$. Adding this inequality with(\ref{ardel10}), we have
$$\area(\Delta_1(0))
\le c_5(\nu_J(\Delta)- \nu_J(\Delta'(0))) +c_4(\mu(\Delta)- \mu (\Delta'(0)))$$
This implies Inequality (\ref{arD1}) since the third summand at the right-hand
side of (\ref{arD1}) is positive
by Lemma \ref{Delta'} (a) applied to the diagrams $\Delta$ and $\Delta'(0).$

The notation of this subsection will also be used in the next section.

\section{Almost quadratic upper bound} \label{aqub}

We denote by $g(n)$  the minimal function such that
the height of any $M_4$-accepting trapezium is at most $g(n)$
if the $a$-length of its bottom label does not exceed $n.$
(Such upper bounds exist for every $n$  by Properties \ref{xiii} and \ref{xiv}.)
Since $g(n)$ is non-decreasing, the auxiliary function
$f(n)=n(8g(\delta^{-1} n^2)^2+\delta^{-1}n^3)$ used in this
section is also non-decreasing. For the beginning of this section, we need
 a crude upper bound for areas of diagrams. 

\begin{lemma} \label{grubo}  Let $\Delta$ be a minimal diagram of perimeter $\le n$.
Then $\area(\Delta)\le f(n)$.
\end{lemma}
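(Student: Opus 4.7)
The plan is to bound $\area(\Delta)$ by directly counting each type of cell---hubs, $(\theta,q)$-cells, and $(\theta,a)$-cells---rather than by an inductive reduction. The crucial starting observation is that the hub relation $W_M=1$ involves a word $W_M$ (the accept word of the $S$-machine $M$) consisting entirely of $q$-letters, so the boundary of every hub carries only $q$-edges. Consequently no maximal $\theta$-band of $\Delta$ can touch a hub, since the $\theta$-edges at the two ends of such a band cannot be shared with a hub; every maximal $\theta$-band therefore starts and ends on $\partial\Delta$. Since by Lemma \ref{ochev} each $\theta$-edge on $\partial\Delta$ contributes at least $1$ to $|\partial\Delta|=n$, the number of maximal $\theta$-bands is at most $n/2$.

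Next I count maximal $q$-bands. Each ends either on $\partial\Delta$ or on a hub, contributing at most $n/2$ and at most $mLN/2$ bands respectively, where the number of hubs $m$ satisfies $m<2n/(LN)$ by Lemma \ref{mnogospits}. This totals at most $3n/2$ maximal $q$-bands. By Lemma \ref{NoAnnul} each $\theta$-band meets each $q$-band in at most one cell, so the number of $(\theta,q)$-cells is at most $(n/2)(3n/2)=3n^2/4$. For $(\theta,a)$-cells I count via $a$-bands: their endpoints lie on $\partial\Delta$ (at most $(\delta')^{-1}n$ $a$-edges, again by Lemma \ref{ochev}) or on $(\theta,q)$-cells (each carrying at most two $a$-edges by Property \ref{cell}), so there are at most $O(n^2)$ $a$-bands. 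Each $a$-band has length at most $n/2$ by Lemma \ref{NoAnnul}, giving at most $O(n^3)$ $(\theta,a)$-cells in all.

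Combining, $\area(\Delta)\le m+3n^2/4+O(n^3)\le \delta^{-1}n^3$, which is comfortably below $f(n)\ge\delta^{-1}n^4$ for any $n$ at least a small absolute constant; for very small $n$ the diagram has no cells and the inequality is trivial. No step presents a real obstacle: the key observation that hubs are invisible to $\theta$- and $a$-bands (because the hub word contains only $q$-letters) localizes all the interesting band structure to $\partial\Delta$ and to the $(\theta,q)$-cells, after which everything is routine application of Lemma \ref{NoAnnul} and Lemma \ref{mnogospits}. The only minor care needed is with the modified metric, where $a$-edges are short and $\theta a$-syllables are combined; Lemma \ref{ochev} handles both of these.
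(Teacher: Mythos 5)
Your counting in the hub-free case is essentially the paper's own Step 1 and is fine. The fatal gap is in the sentence ``no maximal $\theta$-band of $\Delta$ can touch a hub \dots\ every maximal $\theta$-band therefore starts and ends on $\partial\Delta$.'' The correct conclusion from the fact that hubs carry only $q$-edges is only that $\theta$-bands cannot \emph{end} on a hub; it does not follow that they end on $\partial\Delta$, because a maximal $\theta$-band may be an \emph{annulus} encircling one or more hubs. Lemma \ref{NoAnnul} forbids $\theta$-annuli only in reduced diagrams over $M$, i.e.\ hub-free diagrams; over $G$ they are ubiquitous --- indeed, in the final section of the paper a minimal diagram with boundary label $W(M)$, $W\in X_5$, consists of a single hub surrounded entirely by $\theta$-annuli, and the number of these annuli equals the length of an accepting computation. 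Consequently the number of maximal $\theta$-bands is not bounded by $n/2$, your bound on the number of $(\theta,q)$-cells collapses, and so does the length bound $n/2$ for $a$-bands (over $G$ an $a$-band can also cross a $\theta$-annulus twice). Your conclusion $\area(\Delta)\le\delta^{-1}n^3$ for \emph{all} minimal diagrams cannot be repaired: it would give a cubic Dehn function for $G$, contradicting the undecidability of the word problem (the Dehn function is not bounded by any recursive function), and it is exactly the reason the bound $f(n)$ must involve the function $g$.

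For comparison, the paper confines the cubic counting to the hub-free case and handles hubs by induction on their number: by Lemma \ref{extdisc} it selects an outermost hub with a clove $\Psi$, splits $\Psi$ into the subdiagram $\Lambda$ formed by the $\theta$-bands crossing both extreme $t$-spokes and the remainder $\Lambda'$; the bands of $\Lambda'$ reach $\partial\Delta$ and are counted as you do, while $\Lambda$ consists of $M_4$-accepting trapezia whose height is bounded by $g(\delta^{-1}n^2)$ and whose band lengths are controlled by Property \ref{vi}, giving the $g(\delta^{-1}n^2)^2$ term; the surgery of Remark \ref{anyclove}(1) then removes the hub while decreasing the perimeter, so the induction closes. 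Any correct proof must contain some such mechanism for bounding the number of $\theta$-annuli around a hub through the machine's accepting time; a purely boundary-based count cannot see them.
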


\proof {\bf Step 1.} 
Assume that $\Delta$ has no hubs. Then we can use Lemma \ref{NoAnnul}, and 
the total number of maximal $q$-band and maximal $\theta$-bands of
$\Delta$ is at most $n/2$. Hence the number of $(q,\theta)$-cells is
at most $n^2/16.$ Since every maximal $a$-band ends either on the
boundary $\partial\Delta$ or on a $(q,\theta)$-cell, the number such
bands is at most $(2\times n^2/16+\delta^{-1}n)/2$ by Lemma \ref{ochev} (d) and \ref{cell} (a). Each of these
$a$-bands crosses at most $n/2$ $\theta$-bands, and so their
total area is at most $n^3/16 +\delta^{-1}n^2/4$. Therefore $\area
(\Delta)\le n^3/16+\delta^{-1}n^2/4 +n^2/16\le \delta^{-1}n^3/3.$

{\bf Step 2.} In any case, the number of hubs $n_{hub}$ in $\Delta$
is at most $2n/LN$ by Lemma \ref{mnogospits}. 
To complete the proof of the lemma,
it suffices to assume that $n_{hub}\ge 1$ and to prove by induction
on $n_{hub}$ that $$\area(\Delta)\le
n_{hub}(4LNg(\delta^{-1}n^2)^2+6\delta^{-1}n^3)+\delta^{-1}n^3/3$$

  There are a hub $\pi$   and a
clove $\Psi=cl(\pi,\cal B, \cal B')$ given by Lemma \ref{extdisc}. 
  Let $\Lambda$ be
the subdiagram of $\Psi$ formed by the $\theta$-bands of $\Psi$
crossing both $t$-bands ${\cal B}={\cal B}_1$ and ${\cal B'}={\cal
B}_{L-3}.$ Let the remaining part $\Lambda'=\Psi\backslash \Lambda $
be separated from $\Lambda$ by a path ${\bf p}(\Lambda).$

\unitlength 1mm 
\linethickness{0.4pt}
\ifx\plotpoint\undefined\newsavebox{\plotpoint}\fi 
\begin{picture}(51.25,46.5)(-35,0)
\put(22.5,19.25){\circle*{4.123}}
\put(23.75,21){\line(1,1){13}}
\put(23.5,21.5){\line(1,1){12.75}}
\multiput(36.25,34.25)(.03125,-.0625){8}{\line(0,-1){.0625}}
\put(24.25,18){\line(1,-1){10}}
\multiput(34.25,8)(-.125,-.03125){8}{\line(-1,0){.125}}
\put(24,17.5){\line(1,-1){10}}
\multiput(31,29)(-.051546392,.033505155){97}{\line(-1,0){.051546392}}
\put(26,32.25){\line(-1,0){5.25}}
\multiput(20.75,32.25)(-.06402439,-.03353659){82}{\line(-1,0){.06402439}}
\put(15.5,29.5){\line(-3,-4){3.75}}
\multiput(11.75,24.5)(-.03289474,-.15131579){38}{\line(0,-1){.15131579}}
\multiput(10.5,18.75)(.03333333,-.09583333){60}{\line(0,-1){.09583333}}
\multiput(12.5,13)(.038461538,-.033653846){104}{\line(1,0){.038461538}}
\put(16.5,9.5){\line(4,-1){5}}
\multiput(21.5,8.25)(.1956522,.0326087){23}{\line(1,0){.1956522}}
\multiput(26,9)(.04268293,.03353659){82}{\line(1,0){.04268293}}
\multiput(35.93,34.18)(-.0484375,.0328125){16}{\line(-1,0){.0484375}}
\multiput(34.38,35.23)(-.0484375,.0328125){16}{\line(-1,0){.0484375}}
\multiput(32.83,36.28)(-.0484375,.0328125){16}{\line(-1,0){.0484375}}
\multiput(31.28,37.33)(-.0484375,.0328125){16}{\line(-1,0){.0484375}}
\multiput(29.73,38.38)(-.0484375,.0328125){16}{\line(-1,0){.0484375}}
\multiput(28.18,39.43)(-.03125,-.072917){12}{\line(0,-1){.072917}}
\multiput(35.43,32.93)(-.0512821,.0320513){15}{\line(-1,0){.0512821}}
\multiput(33.891,33.891)(-.0512821,.0320513){15}{\line(-1,0){.0512821}}
\multiput(32.353,34.853)(-.0512821,.0320513){15}{\line(-1,0){.0512821}}
\multiput(30.814,35.814)(-.0512821,.0320513){15}{\line(-1,0){.0512821}}
\multiput(29.276,36.776)(-.0512821,.0320513){15}{\line(-1,0){.0512821}}
\multiput(27.737,37.737)(-.0512821,.0320513){15}{\line(-1,0){.0512821}}
\multiput(26.199,38.699)(-.0512821,.0320513){15}{\line(-1,0){.0512821}}
\put(25.43,39.18){\line(-1,0){.9375}}
\put(23.555,39.18){\line(-1,0){.9375}}
\put(21.68,39.18){\line(1,0){.125}}
\multiput(21.93,39.18)(.03125,-.125){4}{\line(0,-1){.125}}
\put(22.18,38.18){\line(1,0){.75}}
\put(23.68,38.18){\line(1,0){.75}}
\multiput(25.18,38.18)(.05,-.0333333){15}{\line(1,0){.05}}
\multiput(26.68,37.18)(.05,-.0333333){15}{\line(1,0){.05}}
\multiput(28.18,36.18)(.05,-.0333333){15}{\line(1,0){.05}}
\multiput(29.68,35.18)(.05,-.0333333){15}{\line(1,0){.05}}
\multiput(31.18,34.18)(.05,-.0333333){15}{\line(1,0){.05}}
\multiput(32.68,33.18)(.05,-.0333333){15}{\line(1,0){.05}}
\multiput(33.68,7.93)(-.046875,-.0334821){16}{\line(-1,0){.046875}}
\multiput(32.18,6.858)(-.046875,-.0334821){16}{\line(-1,0){.046875}}
\multiput(30.68,5.787)(-.046875,-.0334821){16}{\line(-1,0){.046875}}
\multiput(29.18,4.715)(-.046875,-.0334821){16}{\line(-1,0){.046875}}
\multiput(28.43,4.18)(-.03125,.125){4}{\line(0,1){.125}}
\put(28.18,5.18){\line(0,1){0}}
\multiput(32.68,8.43)(-.05,-.0333333){15}{\line(-1,0){.05}}
\multiput(31.18,7.43)(-.05,-.0333333){15}{\line(-1,0){.05}}
\multiput(29.68,6.43)(-.05,-.0333333){15}{\line(-1,0){.05}}
\multiput(28.18,5.43)(-.05,-.0333333){15}{\line(-1,0){.05}}
\multiput(26.68,4.43)(-.05,-.0333333){15}{\line(-1,0){.05}}
\multiput(25.18,3.43)(-.05,-.0333333){15}{\line(-1,0){.05}}
\put(24.43,2.93){\line(-1,0){.85}}
\put(22.73,3.13){\line(-1,0){.85}}
\put(21.03,3.33){\line(-1,0){.85}}
\multiput(32.18,9.43)(-.0484848,-.0318182){15}{\line(-1,0){.0484848}}
\multiput(30.725,8.475)(-.0484848,-.0318182){15}{\line(-1,0){.0484848}}
\multiput(29.271,7.521)(-.0484848,-.0318182){15}{\line(-1,0){.0484848}}
\multiput(27.816,6.566)(-.0484848,-.0318182){15}{\line(-1,0){.0484848}}
\multiput(26.362,5.612)(-.0484848,-.0318182){15}{\line(-1,0){.0484848}}
\multiput(24.907,4.657)(-.0484848,-.0318182){15}{\line(-1,0){.0484848}}
\put(24.18,4.18){\line(-1,0){.75}}
\put(22.68,4.055){\line(-1,0){.75}}
\multiput(21.18,4.68)(-.03125,-.078125){8}{\line(0,-1){.078125}}
\multiput(27.68,5.18)(.03125,-.03125){12}{\line(0,-1){.03125}}
\multiput(20.68,37.93)(-.0460784,-.0323529){17}{\line(-1,0){.0460784}}
\multiput(19.113,36.83)(-.0460784,-.0323529){17}{\line(-1,0){.0460784}}
\multiput(17.546,35.73)(-.0460784,-.0323529){17}{\line(-1,0){.0460784}}
\multiput(15.98,34.63)(-.0460784,-.0323529){17}{\line(-1,0){.0460784}}
\multiput(14.413,33.53)(-.0460784,-.0323529){17}{\line(-1,0){.0460784}}
\multiput(12.846,32.43)(-.0460784,-.0323529){17}{\line(-1,0){.0460784}}
\multiput(11.28,31.33)(-.0460784,-.0323529){17}{\line(-1,0){.0460784}}
\multiput(9.713,30.23)(-.0460784,-.0323529){17}{\line(-1,0){.0460784}}
\multiput(21.18,4.68)(-.0479911,.0334821){16}{\line(-1,0){.0479911}}
\multiput(19.644,5.751)(-.0479911,.0334821){16}{\line(-1,0){.0479911}}
\multiput(18.108,6.823)(-.0479911,.0334821){16}{\line(-1,0){.0479911}}
\multiput(16.573,7.894)(-.0479911,.0334821){16}{\line(-1,0){.0479911}}
\multiput(15.037,8.965)(-.0479911,.0334821){16}{\line(-1,0){.0479911}}
\multiput(13.501,10.037)(-.0479911,.0334821){16}{\line(-1,0){.0479911}}
\multiput(11.965,11.108)(-.0479911,.0334821){16}{\line(-1,0){.0479911}}
\multiput(9.43,29.93)(.078125,-.03125){8}{\line(1,0){.078125}}
\multiput(10.93,29.68)(-.032738,-.059524){12}{\line(0,-1){.059524}}
\multiput(10.144,28.251)(-.032738,-.059524){12}{\line(0,-1){.059524}}
\multiput(9.358,26.823)(-.032738,-.059524){12}{\line(0,-1){.059524}}
\multiput(8.573,25.394)(-.032738,-.059524){12}{\line(0,-1){.059524}}
\multiput(8.18,24.68)(.09375,-.03125){8}{\line(1,0){.09375}}
\multiput(9.68,24.18)(.09375,-.03125){8}{\line(1,0){.09375}}
\multiput(10.43,23.93)(-.03125,-.109375){8}{\line(0,-1){.109375}}
\multiput(9.93,22.18)(.041667,-.033333){10}{\line(1,0){.041667}}
\multiput(10.763,21.513)(.041667,-.033333){10}{\line(1,0){.041667}}
\put(11.43,11.68){\line(0,1){.375}}
\multiput(11.43,12.43)(-.03125,.09375){8}{\line(0,1){.09375}}
\multiput(10.93,13.93)(-.03125,.09375){8}{\line(0,1){.09375}}
\multiput(10.43,15.43)(-.03125,.09375){8}{\line(0,1){.09375}}
\multiput(10.18,16.18)(.0625,.03125){8}{\line(1,0){.0625}}
\multiput(36.68,33.93)(.0656109,-.0328054){13}{\line(1,0){.0656109}}
\multiput(38.386,33.077)(.0656109,-.0328054){13}{\line(1,0){.0656109}}
\multiput(40.091,32.224)(.0656109,-.0328054){13}{\line(1,0){.0656109}}
\multiput(41.797,31.371)(.0656109,-.0328054){13}{\line(1,0){.0656109}}
\multiput(43.503,30.518)(.0656109,-.0328054){13}{\line(1,0){.0656109}}
\multiput(45.209,29.665)(.0656109,-.0328054){13}{\line(1,0){.0656109}}
\multiput(46.915,28.812)(.0656109,-.0328054){13}{\line(1,0){.0656109}}
\multiput(48.621,27.959)(.0656109,-.0328054){13}{\line(1,0){.0656109}}
\multiput(50.327,27.106)(.0656109,-.0328054){13}{\line(1,0){.0656109}}
\multiput(34.43,8.43)(.088889,.030556){10}{\line(1,0){.088889}}
\multiput(36.207,9.041)(.088889,.030556){10}{\line(1,0){.088889}}
\multiput(37.985,9.652)(.088889,.030556){10}{\line(1,0){.088889}}
\multiput(39.763,10.263)(.088889,.030556){10}{\line(1,0){.088889}}
\multiput(41.541,10.874)(.088889,.030556){10}{\line(1,0){.088889}}
\multiput(43.319,11.485)(.088889,.030556){10}{\line(1,0){.088889}}
\multiput(45.096,12.096)(.088889,.030556){10}{\line(1,0){.088889}}
\multiput(46.874,12.707)(.088889,.030556){10}{\line(1,0){.088889}}
\multiput(48.652,13.319)(.088889,.030556){10}{\line(1,0){.088889}}
\put(39,36.25){$\cal B$}
\put(35.75,6.75){$\cal B'$}
\put(14.5,5){$\Psi$}
\put(14.5,17.5){$\Lambda$}
\put(22.5,35.75){$\Lambda'$}
\put(22.25,6.25){$\Lambda'$}
\put(25.75,19.25){$\pi$}
\put(38.5,20.75){$\Delta$}
\put(16.5,28.75){${\bf p}(\Lambda)$}
\end{picture}

It follows from the choice of $\Lambda$ that every maximal
$\theta$-band $\cal T$ of $\Lambda'$ starts  or ends on
$\partial\Delta$. Hence the number of such $\theta$-bands is at most
$n$. In the diagram $\Lambda'$, a $\theta$-band of $\Delta$ and a
$q$-band have at most one common $(q,\theta)$ -cells by Lemma
\ref{NoAnnul}. Since the number of maximal $q$-bands of $\Lambda'$ is
at most $|{\bf p}(\Psi)|\le n$, the number of $(q,\theta)$-cells in
$\Lambda'$ is not greater than $n^2.$ Since every maximal $a$-band
of $\Lambda'$ starting on the path ${\bf p}(\Lambda)$ must end on one of these
$(q,\theta)$-cells or on $\partial\Delta$, the number of $a$-edges
in ${\bf p}(\Lambda)$ is at most $2n^2+\delta^{-1}n\le 2\delta\iv n^2$ by Lemma \ref{ochev} (d). Since by Lemma \ref{NoAnnul}, a maximal $a$-band
intersects a $\theta$-band of the diagram $\Lambda'$ at most once,
there are at most $2\delta\iv n^3$ $(\theta,a)$-cells in $\Lambda'.$
Thus, $\area(\Lambda')\le 2\delta\iv n^3+n^2\le 2.5\delta^{-1}n^3.$

Since $|{\bf p}(\Lambda)|_a \le 2\delta^{-1}n^2$ and $\Lambda$ has $2(L-4)$
$M_4$-accepting
 trapezia whose top labels are just copies of  one of them (see Remark \ref{diasym} and Lemma \ref{simul})),
 the number of maximal $\theta$-bands in
$\Lambda$ is at most $g((2\delta^{-1}n^2)(2L-8)\iv)\le g(\delta\iv n^2).$  By \ref{vi} applied to $2L-8$ $M_4$-accepting trapezia, the number of cells in any maximal
$\theta$-band of $\Lambda$ does not exceed $LN+(2L-8)4 g(\delta\iv n^2).$ Multiplying
this number by the height of $\Lambda$, we obtain
$$\area(\Lambda)\le
((8L-32)g(\delta\iv n^2)+LN)g(\delta\iv n^2)\le 2LNg(\delta^{-1}n^2)^2.$$
Therefore $\area(\Psi)=\area(\Lambda)+\area(\Lambda')\le
2LNg(\delta^{-1}n^2)^2 +3\delta^{-1}n^3-1.$ 

Now we use the surgery from Remark \ref{anyclove} (1) and have $\area(\Delta_1)\le
2\area(\Psi)+1\le 4LNg(\delta^{-1}n^2)^2 +6\delta^{-1}n^3,$ and
$|\partial\Delta'|\le|\partial\Delta'|-1.$ Since the number of hubs
of $\Delta'$ is strictly less then this number for $\Delta$, we have
by the inductive hypothesis, $\area(\Delta') \le
(n_{hub}-1)(4LNg(\delta^{-1}n^2)^2+6\delta^{-1}n^3)+(\delta')\iv n^3/3$,
and therefore, by Lemma \ref{2diagr},  as required, $$\area(\Delta)\le \area(\Delta_1)+ \area(\Delta')\le
n_{hub}(4LNg(\delta^{-1}n^2)^2+6\delta^{-1}n^3)+(\delta')\iv n^3/3$$
\endproof

The following lemma summarizes our efforts and ensures the main result.

\begin{lemma} \label{main} Let the perimeter $n=|\partial\Delta|$ of a minimal diagram $\Delta$
satisfy inequality $f(n)\le T_i $ for some $i$, where $f$ is the function from Lemma \ref{grubo}.  Then \label{FD}
$\area(\Delta)\le F(\Delta)$ for $F(\Delta)=F(\partial\Delta)= 
c_4\mu(\Delta)+c_5\nu_J(\Delta)+c_6n^2 +c_7n_Qf(T_{i-1})),$ where $n_Q$ is
the number of $q$-edges in $\partial\Delta.$
\end{lemma}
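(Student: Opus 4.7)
\medskip

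\textbf{Proof proposal.} The plan is to induct on the area of $\Delta$ (or equivalently on the number of cells plus hubs, so that any proper surgery produces a strictly smaller diagram to which the hypothesis applies). The base case (no cells) is trivial since then $F(\Delta)\ge 0$. For the inductive step, I will analyze $\Delta$ according to which of the three ``solidity'' conditions fails; if all three hold I will invoke the hub surgery of Section~\ref{separ}.

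First, if $\Delta$ has a rim $\theta$-band of base width $\le 2LN$ or a one-Step subcomb with a $t^{\pm 1}$- or $(t')^{\pm 1}$-handle, I cut it off, getting $\Delta'$ of strictly smaller area with $|\partial\Delta'|\le n$ and controlled changes in $\mu$, $\nu_J$, and $n_Q$. The area of the removed piece is $O(n)$ in the first case and, via Lemma~\ref{t}, bounded by a linear combination of $[\Gamma]$ and $\kappa^c(\Gamma)\le\mu(\Delta)-\mu(\Delta')$ plus a correction from Remark~\ref{anyclove}(2) in the second. Combining with the inductive bound on $\area(\Delta')\le F(\Delta')$ yields $\area(\Delta)\le F(\Delta)$ since $c_4,c_5,c_6$ are chosen large enough relative to lower constants.

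Next, if $\Delta$ admits a subcomb of base width $>13N$, I apply Lemma~\ref{itog}. Either $\Delta$ admits a long quasicomb $\Gamma'$ whose area is controlled by $c_3[\Gamma']+c_2\mu^c(\Gamma')+c_3(\nu_J^c(\Delta)-\nu_J^c(\Delta\backslash\Gamma'))$; then cutting along $\Gamma'$ via Remark~\ref{quasiotrez} and applying induction (noting that $|\partial(\Delta\backslash\Gamma')|<|\partial\Delta|$ by longness and using Lemmas~\ref{mu} and \ref{positive}) gives the desired bound. Or $\Delta$ contains a maximal $t^{\pm 1}$- or $(t')^{\pm 1}$-band of length $l$ with $T_j\le l<200T_j$ for some $j$, which is the handle of a subcomb of base width $\le 14N$. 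Since $f(n)\le T_i$ and $f$ is non-decreasing with $f(m)\ge m$, the inequality $l\le n\le T_i$ forces $j\le i-1$. The subcomb in question has perimeter $O(T_{i-1})$, hence by the crude bound of Lemma~\ref{grubo} its area is at most $f(O(T_{i-1}))=O(f(T_{i-1}))$; cutting it off loses two $q$-edges from $\partial\Delta$, so one such subcomb is absorbed into the $c_7n_Qf(T_{i-1})$ term (with $c_7$ large enough to absorb the polynomial blow-up of $f$ under constant scaling).

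Finally, if $\Delta$ is solid and contains a hub, I invoke Lemma~\ref{narrow} to choose a crescent $\Psi=cl(\pi,{\cal C}_1,{\cal C}_l)$ with $h_1\ge h_l$. Depending on whether $h_2>(1-\tfrac1{30N})h_1$ or not, I apply Lemma~\ref{case2} (to $\Delta$ itself) or Lemma~\ref{areaD1} (to the subcrescent $\Psi_{2,l}$) to bound the area of the removed piece $\Delta_1$ (or $\Delta_1(0)$) by $c_4(\mu(\Delta)-\mu(\Delta'))+c_5(\nu_J(\Delta)-\nu_J(\Delta'))+c_6n(n-|\partial\Delta'|)$. The remaining diagram $\Delta'$ has one fewer hub and strictly smaller area; its perimeter is at most $n$, and the application of Lemma~\ref{malot} (using $J>26LN$) ensures the $\nu_J$ bookkeeping works. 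Induction gives $\area(\Delta')\le F(\Delta')$, and summing completes the bound. The case where the hypothesis $h_1\in(T_j,9T_j)$ of Lemma~\ref{case2} fails must be handled: if it does lie in such an interval with $j<i$, then the spoke ${\cal C}_1$ itself is a $t$-band of length in this range and we reduce to the previous paragraph by cutting along ${\cal C}_1$; if $j\ge i$ then $h_1\ge T_i>n$, contradicting $h_1\le n$. A symmetric remark handles $\max(h_2,h_l)$ in Lemma~\ref{areaD1}.

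The main obstacle will be the careful accounting in the solid-with-hub case: one must verify that after the surgery the inductive parameters $\mu(\Delta')$, $\nu_J(\Delta')$, $n'=|\partial\Delta'|$, and $n_Q'$ all compare correctly with those of $\Delta$, and that the term $c_6n(n-|\partial\Delta'|)$ (which is not additively inherited by induction on $\Delta'$) can be absorbed into $c_6n^2$ together with the inductive $c_6(n')^2$ via the convexity estimate $n^2\ge (n')^2+n(n-n')$. The choice of the parameter hierarchy $c_0\ll c_1\ll\cdots\ll c_7$ from Section~\ref{param} is specifically designed so that each of these absorptions succeeds.
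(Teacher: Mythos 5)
Your overall skeleton matches the paper's proof (induction with the same case analysis: rim bands, subcombs via Lemma \ref{itog}, one-Step subcombs, then solidity and a crescent at a hub), but the decisive case is mishandled. In the solid-with-hub situation, when the relevant spoke height lies in a bad interval $[T_j,9T_j)$ (so that neither Lemma \ref{case2} nor Lemma \ref{areaD1} applies), you propose to ``reduce to the previous paragraph by cutting along ${\cal C}_1$.'' But ${\cal C}_1$ is a spoke: it starts on $\partial\Delta$ and ends on the hub, so it is not a maximal $q$-band with both ends on $\partial\Delta$ and it bounds no subcomb; the subcomb argument of Lemma \ref{itog}(b) simply does not apply, and cutting along a spoke neither removes the hub nor decreases the inductive parameter in a controlled way. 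This is exactly the step for which the term $c_7n_Qf(T_{i-1})$ was built into $F$: the paper performs the full hub-removal surgery of Subsection \ref{srh}, bounds $\area(\Delta_1)$ crudely by Lemma \ref{psi1} using $h_1<9T_{i-1}$ \emph{and} $|{\bf p}(\Psi)|\le 2LNh_1$ (giving $\area(\Delta_1)\le 800LNf(T_{i-1})$), controls the loss in $\mu$ by Lemma \ref{Delta'}(c), and absorbs everything into the gain $2c_7f(T_{i-1})$ coming from $n_Q'\le n_Q-2$. Related to this, you never treat the case $|{\bf p}(\Psi)|>2LN\max_ih_i$ (the paper's appeal to Lemma \ref{ppsi}); this case cannot be skipped, since Lemma \ref{areaD1} assumes $|{\bf p}(0)|\le 2LN\max(h_2,h_l)$ and the crude estimate above needs $|{\bf p}|$ bounded by the spoke lengths.

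Two further points. First, in the Lemma \ref{itog}(b) case you claim the subcomb with handle of length $l<200T_j$ has perimeter $O(T_{i-1})$ and hence area $\le f(O(T_{i-1}))$; this is false, since ${\bf z}^{\Gamma}$ contains a stretch of $\partial\Delta$ that may carry arbitrarily many $a$-edges. The paper instead uses Lemma \ref{comb} to split $\area(\Gamma)\le c_7f(T_{i-1})+c_6[\Gamma]$ and pays for the $c_6[\Gamma]$ part with the drop $c_6n(|{\bf z}|-|{\bf y}|)$ in the quadratic term; you need this pairing, not a bound purely in terms of $T_{i-1}$. Second, your declared induction parameter (area, ``number of cells plus hubs'') does not work for the admitted-quasicomb cuts of Remark \ref{quasiotrez}: there $\Delta\backslash\Gamma$ is a new minimal diagram with boundary label $\Lab({\bf y}^{-1})\Lab({\bf z}')$, not a subdiagram of $\Delta$, so its area is not controlled by $\area(\Delta)$; the correct parameter is the perimeter, which strictly decreases precisely because the quasicomb is long (this is how the paper sets up the minimal counterexample).
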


\proof

(1) If $\partial\Delta$ has no $q$-edges, then $\Delta$ has
no $q$-edges by Lemmas \ref{extdisc} 
and \ref{NoAnnul}. Then $\area(\Delta)<\delta^{-1}n^2\le
c_6n^2$ because (1) a maximal $\theta$-band and a maximal $a$-band have at least one common
$(\theta,a)$-cell, (2) $\Delta$ has no $\theta$- and $a$-annuli and so every maximal $\theta$- and $a$-band starts and ends
on $\partial \Delta$ by Lemma \ref{NoAnnul}, and (3) $|\partial\Delta|_{a}\le||\partial\Delta|| \le\delta^{-1}n$ by Lemma \ref{ochev} (a,d). 

Thus we suppose $n_Q\ge 1.$
By Lemma \ref{grubo}, $\area(\Delta)\le f(T_{i-1})$ if $n\le
T_{i-1}$. Then we {\bf may suppose $n>T_{i-1}$, $n\ge 1$ and prove the lemma by
contradiction assuming further that $\Delta$ is a counter-example
with minimal perimeter $n.$}
\medskip

(2) If $\Delta$ is a union of a subdiagram $\Delta'$ and a rim $\theta$-band
$\cal T$ of base width $\le 2LN$, then there are at most $4LN$
$a$-edges on the boundaries of $(\theta,q)$-cells of $\cal T$ by \ref{cell} (a), and so $|\;|\topp({\cal
T})|-|\bott({\cal T})|\;|\le 4LN\delta$ by Lemma \ref{ochev}(a). Therefore $ |\partial
\Delta'|_a\le 4LN\delta +|\partial \Delta|_a$ but $|\partial
\Delta'|_{\theta}= |\partial \Delta|_{\theta}-2$. Hence
$|\partial \Delta'|\le |\partial \Delta|-1$ by Lemma \ref{ochev} (b) since $5LN<
\delta^{-1}.$ 

\unitlength 1mm 
\linethickness{0.4pt}
\ifx\plotpoint\undefined\newsavebox{\plotpoint}\fi 
\begin{picture}(89.75,30.5)(-12,-5)
\put(19.25,11.5){\rule{54\unitlength}{3.5\unitlength}}
\put(19,11.75){\line(-1,0){6}}
\put(13,11.75){\line(0,-1){8.75}}
\put(13,3){\line(1,0){65.75}}
\put(72.75,11.75){\line(1,0){5.5}}
\put(78.25,11.75){\line(0,-1){8.25}}
\put(89.75,8.25){$\Delta$}
\put(46,17.5){$\cal T$}
\put(46,6.25){$\Delta'$}
\end{picture}

If $\cal T$ has $m$ $q$-cells, then $n\ge m$, and so, by Lemma \ref{ochev} (d),
the number of cells in $\cal T$ is at most
$\delta^{-1}n+2m \le 2\delta\iv n$ by Lemma \ref{ochev} (d). Also we have $\mu(\Delta')\le \mu(\Delta)$ and
$\nu_J(\Delta')\le \nu_J(\Delta)$ by Lemma \ref{mixture} (b), and
$n'_Q\le n_Q$ by Lemma \ref{NoAnnul}. Therefore by the inductive
hypothesis for $\Delta'$, $$\area(\Delta)\le F(\Delta')+2\delta^{-1}n
\le F(\Delta)-c_6(2n-1)+2\delta^{-1}n \le F(\Delta)$$ since
$c_6>2\delta ^{-1};$ a contradiction. {\bf Therefore $\Delta$ has no rim
$\theta$-bands of base width at most $2LN.$}

\medskip
(3) Assume that $\Delta$ has a subcomb    $\bar\Delta$ of base width $15N.$
Hence we can apply Lemma \ref{itog}
to the comb $\bar\Delta$ and consider two arising cases.

(a) $\Bar\Delta$ admits a long quasicomb  $\Gamma$  such that
$$\area(\Gamma)\le c_3[\Gamma]+c_2\mu^c(\Gamma)+c_3(\nu_J(\bar\Delta)-\nu^c_J(\bar\Delta\backslash\Gamma))$$
We multiply the right hand side by the number $c_4c_2^{-1}>1$ and then replace the two
coefficients $c_3c_4c_2^{-1}$  by bigger coefficients $c_6$ and $c_5,$ resp.; this
is legal since $\nu_J(\bar\Delta)-\nu^c_J(\bar\Delta\backslash\Gamma)\ge 0$ by Lemma
\ref{mu}(e) and Remark \ref{quasiotrez}, and $[\Gamma]\ge 0$ since $\Gamma$ is a long 
subcomb. Hence 
\begin{equation}\label{0406}
\area(\Gamma)\le c_6[\Gamma]+c_4\mu^c(\Gamma)+c_5(\nu_J(\bar\Delta)-\nu^c_J(\bar\Delta\backslash\Gamma))
\end{equation}
Let ${\bf y}={\bf y}^{\Gamma}$ and ${\bf z}={\bf z}^{\Gamma}.$ Since $\Gamma$ is long, the
compliment diagram $\Delta'=\Delta\backslash\Gamma$ 
satisfies
$|\partial \Delta'|\le|\partial \Delta|-|{\bf z}|+|{\bf y}| < |\partial \Delta|$. By Lemma \ref{mu} (a,e) and Remark \ref{quasiotrez}, we also
have $\mu(\Delta)\ge\mu(\Delta')+\mu^c(\Gamma)$ and
$ \nu_J(\bar\Delta)-\nu_J(\bar\Delta\backslash\Gamma)\le\nu_J(\Delta)-\nu_J(\Delta').$ Since $\area(\Delta')\le F(\Delta')$
by the inductive hypothesis, it follows from (\ref{0406}) that $$\area(\Delta)\le
\area(\Delta')+
c_6[\Gamma]+c_4\mu^c(\Gamma)+c_5(\nu_J(\Delta)-\nu_J(\Delta'))\le
c_6(n-(|{\bf z}|-|{\bf y}|))^2+$$ $$c_4\mu(\Delta') +c_5\nu(\Delta') +
c_7n_Qf(T_{i-1})) +
c_6[\Gamma]+c_4\mu^c(\Gamma)+c_5(\nu_J(\Delta)-\nu_J(\Delta')) \le$$ $$
c_6n^2 -c_6n(|{\bf z}|-|{\bf y}|)+ c_6n(|{\bf z}|-|{\bf y}|)+c_4\mu(\Delta)+c_5\nu(\Delta)+c_7n_Qf(T_{i-1})\le
F(\Delta)$$ since $|{\bf y}|<|{\bf z}|\le n, c_3<c_6,$ $c_3<c_5,$ and $c_2<c_4.$
Therefore $\Delta$ is not a counter-example, a contradiction.

(b) $\Delta$ has subcomb whose handle $\cal C$ is a $t$- or
$t'$-band  with length $l$ satisfying $T_j\le l<200T_j$ for some $j$,
and $\cal C$ separates a  subcomb $\Gamma$ of base width at
most $14N$ from $\Delta.$ By Remark \ref{yzh} applied to $\Gamma$, we have $n> l.$ 
Now since $T_j\le l< n\le f(n)\le T_i$, we have $j\le
i-1$. Again let $\Delta'$ be the diagram $\Delta\backslash\Gamma.$ Let
${\bf y}={\bf y}^{\Gamma}$ and ${\bf z}={\bf z}^{\Gamma}.$ Then we have
\begin{equation}\label{perim}
|\partial\Delta'|\le|\partial\Delta|-(|{\bf z}|-|{\bf y}|)\le |\partial\Delta|-2
\end{equation}
since the handle $\cal C$ of
$\Gamma$ is passive, and so $\Gamma$ is a long subcomb. Since $\Gamma$ has a $q$-band
$\cal C,$ we immediately obtain
\begin{equation}\label{nQ}
n_Q-n'_Q\ge 2
\end{equation}
for the numbers of $q$-edges in $\partial\Delta$ and in $\partial\Delta',$ and
\begin{equation}\label{nud}
\nu_J(\Delta)-\nu_J(\Delta')\ge 0
\end{equation}
by Lemmas \ref{mu}(b) and \ref{positive}(b).

By Remark \ref{anyclove} (2),
\begin{equation}\label{3b}
\mu(\Delta')\le
\mu(\Delta)+l^2/2<\mu(\Delta)+(200T_{i-1})^2/2 \le
\mu(\Delta)+f(T_{i-1})
\end{equation}
 since $(\delta')\iv>2\times10^4.$ Now, from the definition of the function $F$ and
Inequalities (\ref{perim}), (\ref{nud}), and (\ref{nQ}), we get
$$F(\Delta)\ge F(\Delta')+ c_6(n^2-(n-(|{\bf z}|-|{\bf y}|))^2) +c_5(\nu(\Delta)-\nu(\Delta'))+c_4(\mu(\Delta)- \mu(\Delta'))$$ $$
+c_7(n_Q-n'_Q)f(T_{i-1})\ge F(\Delta')+ c_6n(|{\bf z}|-|{\bf y}|)+c_4(\mu(\Delta)- \mu(\Delta'))+2c_7f(T_{i-1})$$
which together with Inequality (\ref{3b}) implies
\begin{equation}\label{F}
F(\Delta)\ge F(\Delta')+c_6n(|{\bf z}|-|{\bf y}|)- c_4f(T_{i-1})+2c_7f(T_{i-1})
 \ge \area(\Delta')+ c_6[\Gamma]+ c_7f(T_{i-1})
\end{equation}
because $\area(\Delta')\le F(\Delta')$ by (\ref{perim}) and the minimality of the counter-example $\Delta.$ 

On the
other hand, by Lemmas \ref{comb}, \ref{ochev} (d), and inequality $|{\bf y}|=l<200T_{i-1},$ we get
$$\area(\Gamma)\le 60N|{\bf y}|^2+2\delta^{-1}|{\bf z}||{\bf y}|=
(60N+2\delta^{-1})|{\bf y}|^2+ 2\delta^{-1}|{\bf y}|(|{\bf z}|-|{\bf y}|)\le $$ \begin{equation}\label{contr}(60N+2\delta^{-1})|{\bf y}|^2+2\delta^{-1} [\Gamma]\le
200^2\times (60N+2\delta^{-1})T_{i-1}^2+ 2\delta^{-1}[\Gamma]\le c_6[\Gamma]+c_7f(T_{i-1})
\end{equation}
 because $c_6>2\delta^{-1},$ $c_7\ge 10^6,$ and $f(T_{i-1})\ge \delta^{-1}T_{i-1}^2.$ Now Inequalities
(\ref{F}, \ref{contr}) yield $$\area(\Delta) = \area(\Delta')+\area(\Gamma)\le
F(\Delta)- c_6[\Gamma]- c_7f(T_{i-1})+\area(\Gamma) \le F(\Delta),$$  a contradiction.
{\bf Hence $\Delta$ has no subcombs of base width $15N$.}
\medskip

(4) Assume that $\Delta$ has a one-Step subcomb $\Gamma$ whose handle is $t$ or $t'$-band.
By (3), we may assume that it base
width is less $15N$. Then we can use
Lemma \ref{t}(2) and come to a contradiction as in (3)(a) above.

By (2)-(4), {\bf the diagram $\Delta$ is solid. By Lemma
\ref{rimexists}(b), it has a hub.}

\medskip

(5) Suppose we have a hub $\pi$ and a crescent
$\Psi=cl(\pi,{\cal C}_1,\dots,{\cal C}_l)$ given by Lemma \ref{narrow}. Assume that
$|{\bf p}(\Psi)|>2LN\max_{i=1}^l h_i,$ where $h_i$ is the length of ${\cal
C}_i.$ Then by Lemma \ref{ppsi}, $|\partial\Psi'|<|\partial\Delta|$ for the
subdiagram $\partial\Psi'= \Delta\backslash (\pi\cup\Psi).$
 Besides it follows from the definition
of crescent that $n'_Q< n_Q,$ where $n'_Q$ is the number of
$q$-edges in $\partial\Psi'$. Since by the inductive hypothesis
$\area(\Psi')\le F(\Psi'),$ we obtain by Lemma \ref{ppsi} that
$$\area(\Delta)\le F(\Psi')+c_4(\mu(\Delta)- \mu(\Psi')) +
 c_5(\nu_J(\Delta)- \nu_J(\Psi')) +c_6n (n-|\partial\Psi'|)\le F(\Delta),$$ and so $\Delta$ is not a counter-example.
\medskip

(6) Now we assume that $\Delta$ has  a crescent $\Psi$ and a hub as
in (5), but now $|{\bf p}(\Psi)|\le 2LN\max_{i=1}^l h_i.$ If the
conditions of Lemma \ref{case2} are satisfied, then that lemma leads
to a contradiction as in case (5) above since
$|\partial\Delta'|<|\partial\Delta|$  by Lemma \ref{Delta'}(a), 
$n'_Q<n_Q,$ and the diagram $\Delta_1\cup\Delta'$ (with notation of Section \ref{separ})
has the same boundary label as $\Delta$ (see Lemma \ref{2diagr}). Similarly we obtain a contradiction under assumptions of
Lemma \ref{areaD1}, if we cut off the subdiagram $\Delta_1(0)$ with
the spokes ${\cal C}_2,\dots,{\cal C}_l$ since these spokes also
bound a crescent $\Psi_{2,l}$ by Lemma \ref{narrow}.
\medskip

(7) Thus it remains to assume that the maximal $h_i$ for the
crescent, say $h_1 $ (since the case with $\Psi_{2,l}$ from Lemma \ref{areaD1} is absolutely
similar) {\bf satisfies inequalities $T_j\le h_1<9T_j$ for some $j$ and
$|{\bf p}|=|{\bf p}(\Psi)|\le 2LNh_1.$} Notice that $j\le i-1$ because, by Lemma
\ref{grubo} and the assumption $f(n)\le T_i,$ we have
$T_j\le h_1<\area(\Delta)\le f(n)\le T_i.$

Now we will use the notation of Lemma \ref{Delta'}. By   Lemma \ref{psi1}, we have
 $$\area (\Delta_1)\le 2\area(\Psi)+1\le
4(2LN(2h_1)+  \delta\iv(2LNh_1))h_1+1\le$$
\begin{equation}\label{f1}
(16LN+8\delta\iv LN)h_1^2+1< 9\delta\iv LN
h_1^2<800\delta\iv LNT_{i-1}^2 \le 800LNf(T_{i-1})
\end{equation}

By Lemma \ref{Delta'} (c),
 $$\mu(\Delta')\le
\mu(\Delta)+2h_1^2<\mu(\Delta)+200T_{i-1}^2 \le
\mu(\Delta)+f(T_{i-1}),$$
and so by Lemma \ref{Delta'} (b) and the definition of $\mu(*),$
\begin{equation}\label{mumu}
\mu(\Delta)-
\mu(\Delta')=c_0(\kappa(\Delta)-
\kappa(\Delta')) + (\lambda(\Delta)-
\lambda(\Delta'))\ge
-f(T_{i-1})
\end{equation}

  Now using Lemma \ref{Delta'} (a, d), inequalities $n_Q\ge n'_Q+2$ and (\ref{mumu})
we have
$$F(\Delta)-F(\Delta')> c_6\times 0-c_4f(T_{i-1})+c_5\times 0 +2c_7f(T_{i-1})\ge c_7f(T_{i-1})$$
This inequality, (\ref{f1}), the inductive hypothesis (valid by Lemma \ref{Delta'} (a)),
and Lemma \ref{2diagr} imply
$$\area(\Delta)\le \area(\Delta')+\area(\Delta_1)< F(\Delta') +800LNf(T_{i-1})
\le F(\Delta')+c_7f(T_{i-1})<F(\Delta),$$ and so $\Delta$ is not a
counter-example in this case too.

The proof is complete.

\endproof

Now we go back to the combinatorial length $||.||$ and 
make use of the obvious quadratic upper bounds for the mixtures.

\begin{lemma}\label{final} There is a constant $C$ such that
for every $i=2,3,\dots$ and arbitrary minimal diagram $\Delta$ with
 $||\partial\Delta||=r$, we have $\area(\Delta)\le C
(r^2+rT_{i-1}g(CT_{i-1}^2)^2 + T_{i-1}^4)$ provided $Cr
(g(Cr^2)^2+Cr^3)<T_i.$
\end{lemma}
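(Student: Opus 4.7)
The strategy is to translate the hypothesis and conclusion from the combinatorial length $r = \|\partial\Delta\|$ to the modified length $n = |\partial\Delta|$, invoke Lemma~\ref{main}, and then plug in worst-case quadratic upper bounds for the various mixtures. By Lemma~\ref{ochev}(d) we have $n \le r$, and since $f(x) = x(8g(\delta^{-1}x^2)^2 + \delta^{-1}x^3)$ is non-decreasing, the hypothesis $Cr(g(Cr^2)^2 + Cr^3) < T_i$ gives $f(n) \le f(r) < T_i$ as soon as $C$ is chosen to dominate the constants $8$ and $\delta^{-1}$ appearing in the definition of $f$. Hence Lemma~\ref{main} applies and yields
$$
\area(\Delta) \le F(\Delta) = c_4\mu(\Delta) + c_5\nu_J(\Delta) + c_6 n^2 + c_7 n_Q f(T_{i-1}).
$$

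Next I estimate each summand. The $\kappa$-, $\lambda$-, and $\nu_J$-necklaces on $\partial\Delta$ each carry at most $\|\partial\Delta\| = r$ beads, so Lemma~\ref{mixture}(a) gives $\kappa(\Delta), \lambda(\Delta), \nu_J(\Delta) = O(r^2)$, whence $c_4\mu(\Delta) + c_5\nu_J(\Delta) = O(r^2)$. Obviously $c_6 n^2 \le c_6 r^2$ and $n_Q \le r$. Finally, substituting directly, $f(T_{i-1}) = 8T_{i-1}g(\delta^{-1}T_{i-1}^2)^2 + \delta^{-1}T_{i-1}^4 \le C'(T_{i-1}g(C'T_{i-1}^2)^2 + T_{i-1}^4)$ for a suitable $C'$. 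Combining everything yields
$$
\area(\Delta) \le C''\bigl(r^2 + rT_{i-1}g(CT_{i-1}^2)^2 + rT_{i-1}^4\bigr).
$$

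The remaining issue, and the only real obstacle, is the stray factor of $r$ in front of $T_{i-1}^4$: the lemma asks for $T_{i-1}^4$ alone. I plan to handle this with a two-case split on the size of $r$ relative to $T_{i-1}$. If $r \le T_{i-1}$ I bypass Lemma~\ref{main} entirely and apply Lemma~\ref{grubo} directly, obtaining $\area(\Delta) \le f(r) \le f(T_{i-1}) \le C'(T_{i-1}g(C'T_{i-1}^2)^2 + T_{i-1}^4)$, which is visibly inside the stated bound. If instead $r > T_{i-1}$, the term $rT_{i-1}^4$ can be absorbed into $rT_{i-1}g(CT_{i-1}^2)^2$ by exploiting the crude lower bound $g(m) \ge \Omega(m^{1/2})$ on the height of an $M_4$-accepting trapezium in terms of the $a$-length of its bottom (a consequence of Property~\ref{vi}, which forces $|W|_a \le 4h$); this gives $g(\delta^{-1}T_{i-1}^2)^2 \ge T_{i-1}^3$ for $T_{i-1}$ past a fixed threshold, so that $rT_{i-1}^4 \le rT_{i-1}g(\delta^{-1}T_{i-1}^2)^2$. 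Inflating $C$ to absorb the threshold case and the various multiplicative constants produces the desired inequality. The principal difficulty is thus not any new geometry but the careful bookkeeping needed to make this case analysis uniform in $i$ and to verify that the constants can indeed be chosen once and for all.
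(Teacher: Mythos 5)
Your main line is exactly the paper's own proof: pass from $r=\|\partial\Delta\|$ to $n=|\partial\Delta|$ via Lemma \ref{ochev}, check $f(n)\le f(r)<T_i$ by monotonicity of $f$ so that Lemma \ref{main} applies, and then bound $\mu(\Delta),\nu_J(\Delta)$ by $O(r^2)$ (Lemma \ref{mixture}(a)) and $n_Q$ by $n\le r$. Up to the point where you arrive at $\area(\Delta)\le C''\bigl(r^2+rT_{i-1}g(CT_{i-1}^2)^2+rT_{i-1}^4\bigr)$ you have reproduced the paper's argument faithfully; in fact the paper's one-line proof, read literally, yields precisely this bound with the extra factor $r$ on the last term and does not address the discrepancy with the stated $T_{i-1}^4$ (the weaker form with $rT_{i-1}^4$ is all that is used in Lemma \ref{almostquad}, where $T_{i-1}\le\log m$ and $r>m^{1/6}$, so $rT_{i-1}^4=o(r^2)$). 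Your case $r\le T_{i-1}$, handled by Lemma \ref{grubo} and monotonicity of $f$, is correct.

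The genuine gap is in your case $r>T_{i-1}$. To absorb $rT_{i-1}^4$ into $rT_{i-1}g(CT_{i-1}^2)^2$ you need $g(CT_{i-1}^2)^2\ge T_{i-1}^3$, i.e. roughly $g(m)\ge cm^{3/4}$. From your claimed bound $g(m)\ge\Omega(m^{1/2})$ you only get $g(\delta^{-1}T_{i-1}^2)^2\ge\Omega(T_{i-1}^2)$, which is short by a factor of $T_{i-1}$, so the inequality $g(\delta^{-1}T_{i-1}^2)^2\ge T_{i-1}^3$ does not follow and the absorption fails as stated. Moreover, Property \ref{vi} by itself gives no lower bound on $g$ at all: it says $\|W\|_a\le 4h$ for an $M_4$-accepting trapezium that actually exists, so any lower bound on $g(m)$ additionally requires producing accepting trapezia whose bottoms have $a$-length comparable to $m$ (for instance, accepted configurations carrying a long word in the $tk$-sector); neither you nor the paper establishes this. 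So your case-2 argument does not prove the stated inequality. Either supply such a lower bound for $g$ (a linear one is morally true and would suffice, but it needs the existence argument just described), or settle for the estimate with $rT_{i-1}^4$ in place of $T_{i-1}^4$ — which is what the paper's proof actually delivers and is enough for every later application.
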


 \proof By lemma \ref{ochev}, $|\partial\Delta|=n\le
r\le\delta^{-1}n.$ Recall also that by Lemma \ref{mixture} (a) and
the definition of $\mu$- and $\nu_J$-mixtures, $\mu(\Delta)\le (c_0+1)r^2,$
$\nu_J(\Delta)\le Jr^2$, and also $n_Q\le n$.
Now the statement of the lemma follows with a constant $C\ge 2\delta\iv c_7$
from Lemma \ref{main} and from the definitions of the function $f.$
\endproof

Finally we apply Theorem \ref{re} converted into Property \ref{xiv} of trapezia.

\begin{lemma}\label{almostquad} The Dehn functions of the groups $G$ and $M$
 are almost quadratic.
\end{lemma}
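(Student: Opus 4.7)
\proof (Proposal) The plan is to extract an infinite set $B \subset \mathbb{N}$ of perimeter values for which the bound of Lemma \ref{final} collapses to a quadratic one. The crucial input is the super-exponential sparsity of the sequence $\{T_i\}$ guaranteed by Lemma \ref{M32} and Property \ref{xiv} (which ultimately relies on Sapir's Theorem \ref{re}): for every constant $\alpha>0$, there exist infinitely many indices $i$ such that the gap between $T_{i-1}$ and $T_i$ is at least $\exp(\alpha T_{i-1})$, and moreover, for those indices, $g(CT_{i-1}^2)$ is bounded by a linear function of $T_{i-1}$ (this is the content of Property \ref{xiii}, which says that for good $m$ the height of any $M_4$-accepting trapezium with $a$-length $\le m$ of bottom label is $O(m + \log m)$, so that $g(m) = O(m)$ for such $m$).

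First, I would fix such an infinite set of indices $\mathcal{I}$ and, for each $i \in \mathcal{I}$, choose a perimeter value $r_i$ in the window
\[
 T_{i-1}^2 \, g(C T_{i-1}^2)^2 \;\le\; r_i \;\le\; \tfrac{1}{2C}\, T_i^{1/5}.
\]
The lower bound is $O(T_{i-1}^4)$ by the linear estimate on $g$, while the upper bound is chosen so that $Cr_i(g(Cr_i^2)^2 + Cr_i^3) < T_i$, which is the hypothesis of Lemma \ref{final}; the super-exponential sparsity makes this window enormous and in particular non-empty. With this choice, each of the three summands in the bound of Lemma \ref{final} is dominated by $r_i^2$: the first trivially, the second because $rT_{i-1}g(CT_{i-1}^2)^2 \le r\cdot r = r^2$ by the lower bound on $r_i$, and the third because $T_{i-1}^4 \le r^2$ by the same inequality. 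Hence $\area(\Delta) \le 3Cr_i^2$ for any minimal diagram of perimeter $r_i$, and the set $B = \{r_i : i \in \mathcal{I}\}$ is infinite, which shows the Dehn function of $G$ is almost quadratic.

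For the group $M$, observe that $M$ is the multiple HNN extension whose presentation is a subset of the presentation of $G$ (namely, $G$ is obtained from $M$ by adjoining the hub relation). Consequently, every word that is trivial in $M$ is trivial in $G$, and a minimal diagram for such a word over $M$ has no hubs, so Lemma \ref{main} applies in exactly the same way with the hub-surgery steps (5)--(7) of its proof simply unused. Therefore the same set $B$ (or a subset of it) witnesses the almost quadratic Dehn function for $M$.

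The main obstacle is verifying that the window for $r_i$ is actually non-empty, which reduces entirely to checking that the good $m$ provided by Property \ref{xiv} are sparse enough that $T_i$ dwarfs a fixed polynomial in $T_{i-1}$. This is where Theorem \ref{re} from the Appendix does all the heavy lifting: the double-exponential sparsity it provides is more than sufficient, since we only need $T_i$ to exceed a polynomial (indeed $O(T_{i-1}^{20})$ suffices) of $T_{i-1}$. Once this sparsity is in place, the routine calculation sketched above goes through, and no further delicate estimates of mixtures or comb areas are needed beyond those already encapsulated in Lemma \ref{final}. \endproof
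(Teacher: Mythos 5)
There is a genuine gap at the point where you verify the hypothesis of Lemma \ref{final}. You tie the upper end of your window to $T_i$ (namely $r_i\le \frac{1}{2C}T_i^{1/5}$) and claim this makes $Cr_i(g(Cr_i^2)^2+Cr_i^3)<T_i$. The term $C^2r_i^4$ is indeed small, but the term $Cr_i\,g(Cr_i^2)^2$ is not controlled by sparsity of $\{T_j\}$ at all: the function $g$ is exactly the non-recursive quantity of the construction, and the only control over it comes from the good number $m$ of Property \ref{xiii}, which bounds $g(n)$ (via \ref{xiii} together with \ref{xv}) only for arguments $n$ up to roughly $m/7$. In your window $r_i$ may be as large as $\sim T_i^{1/5}$, whereas $m$ need only satisfy $\exp T_{i-1}<m<T_i$ and can be vastly smaller than $T_i^{1/5}$; for such $r_i$ the argument $Cr_i^2$ lies beyond the range the good number controls, and nothing prevents an accepted input of length between $m$ and $Cr_i^2$ from having acceptance time $\ge T_i$, which forces $g(Cr_i^2)\ge T_i$ and kills the hypothesis of Lemma \ref{final}. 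This is why the paper's proof chooses $r$ subject to $Dr^5\le m$ (together with $r>m^{1/6}$), so that every argument of $g$ that appears — both $Cr^2$ in the hypothesis and $CT_{i-1}^2$ in the conclusion — stays below $m/7$, where $g(n)\le\max(9\log m,6n)$; the sparsity coming from Theorem \ref{re} enters only through the inequality $\exp T_{i-1}<m<T_i$, which makes $T_{i-1}\le\log m$ and hence the second and third terms of Lemma \ref{final} at most $r^2$. So the heavy lifting is done by the good-number property of $m$ at all scales up to $m$, not by the gap between consecutive $T_i$'s; your reduction ``it suffices that $T_i$ dwarfs a polynomial in $T_{i-1}$'' is not sufficient.

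Two smaller points: the claim that $g(CT_{i-1}^2)$ is linear in $T_{i-1}$ is not what \ref{xiii} gives (one gets $g(n)\le 4n+3\log m$ for $n\le m$, hence at best a bound quadratic in $T_{i-1}$ plus $\log m$, refined by \ref{xv} to $\max(9T_{i-1},6n)$); this does not break your term-by-term check, since your lower bound on $r_i$ is stated in terms of $g(CT_{i-1}^2)$ itself, but the parenthetical ``$O(T_{i-1}^4)$'' should be adjusted. The treatment of $M$ is fine in spirit and matches the paper (diagrams over $M$ have no hubs, so the hub steps of Lemma \ref{main} are vacuous), once the main estimate is repaired as above.
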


\proof 
We consider only the Dehn function $d(r)$ of the group $G$ since a simpler proof
works for $M.$ (One considers only diagrams having no hubs in the later case.)

Assume that an integer $m$ satisfies the hypothesis of \ref{xiii} and
$m$ is large enough, say $m>\max(T_1, C^{20}(\log m)^{40}),$ where $C$
 is provided by Lemma \ref{final}.
There is a  maximal $i$ such that $T_{i-1}\le m.$ Since $T_{i-1}$
is the height of a standard trapezium with some bottom $W$ and
any rule corresponding to this trapezium can decrease the length of the input sector
at most by $1,$
we have $|W|_a\le T_{i-1}\le m,$ 
and so by \ref{xiii} and the choice of $i,$ we get

\begin{equation}\label{good}
\exp T_{i-1}<m <T_i,
\end{equation}
 It follows from (\ref{good}) and the choice of $m$ that
 \begin{equation}\label{120}
 CT_{i-1}^2< m^{1/20}.
 \end{equation}

 Now, on the one hand, inequality $n<m/7$ implies
 \begin{equation}\label{67}
 g(n)\le \frac47 m+3\log m<m<T_i
 \end{equation}
by \ref{xiii}.
On the other hand, by Property \ref{xv}, any value $g(n)$ of the function $g$ either
belongs to some interval $(T_j,9T_j)$ or $g(n)\le6n.$
By (\ref{67}), we have $j<i$ in the former case. Therefore if $n<m/7,$ then in any case
\begin{equation}\label{gn}
g(n)\le \max(9T_{i-1}, 6n)\le \max(9\log m, 6n)
\end{equation}
Hence there is a constant $D$ such that for every integer $r$ such that  $r>9\log m$ and $Dr^5\le m,$ we have  
\begin{equation}\label{kfinal}
Cr(g(Cr^2)^2+Cr^3) <Dr^5\le m<T_i
\end{equation}
Now by Inequalities (\ref{kfinal}), (\ref{good}), (\ref{gn}), (\ref{120}), and by Lemma \ref{final},
we have $$d(r)\le C
(r^2+rT_{i-1} g(CT_{i-1}^2)^2 + T_{i-1}^4)\le C
(r^2+r(\log m) g(CT_{i-1}^2)^2 + T_{i-1}^4)\le $$ $$ C(r^2 +r(\log
m) (\max (9\log m, 6m^{1/20}))^2 +(\log m)^4) \le 2Cr^2 $$ if 
$Dr^5\le m,$ and $r>m^{1/6}$.

Since the set of integers $m$ satisfying the hypothesis of \ref{xiii}, is infinite by \ref{xiv}, we can
find for almost every such $m,$ an integer $r$ satisfying inequalities  $Dr^5 \le
m<r^6,$ and so the inequality $d(r)\le 2Cr^2$ holds on an infinite
set of integers $r.$
\endproof

{\bf End of proofs of Theorems \ref{mainth} and \ref{thmain1}.} Using the notation 
of Lemma \ref{abc}, we consider a word $V\equiv W(M)$ 
for an arbitrary admissible input word $W$ of the machine $M_4$. Assume that $V=1$ in 
the group $G.$ Then there is a minimal diagram $\Delta$ whose boundary path
is labeled by $V.$ Since every state letter from the  vector of start states of $M$ occurs in $V$  exactly once, every maximal $q$-band of $\Delta$ must end on a hub, and $\Delta$
has $m\ge 1$ hubs. On the other hand, $m\le 1$ by Lemma \ref{mnogospits}, since
$|V|_q = LN$ by the definition of the standard base for the machine $M.$ Thus 
$\Delta$ has exactly one hub $\Pi$, and so every maximal $q$-band of $\Delta$ connects
the boundaries of $\Pi$ and $\Delta.$ 

\unitlength 1mm 
\linethickness{0.4pt}
\ifx\plotpoint\undefined\newsavebox{\plotpoint}\fi 


Since $V$ has no $\theta$-edges, by Lemma \ref{NoAnnul}, every non-hub cell of $\Delta$ belongs
to a $\theta$-annulus surrounding the hub $\Pi.$ (The set of these annuli is not empty since $V$ has no state letters of the hub relation.) Hence one can remove $\Pi,$ make a radial
cut, and construct a trapezium with base (\ref{baza}). By Lemma \ref{simul} (1), the computation
of $M$ corresponding to this trapezium accepts the word $V,$ and therefore $V\in X_5$ by
Lemma \ref{abc}.

Conversely, assume that $V\equiv W(M)\in X_5.$ Then by Lemma \ref{simul} (2), there is
a trapezium with base (\ref{baza}) corresponding to an accepting computation $W(M)\to\dots$
of $M.$ Now one may identify the left-most and the right-most maximal $t$-bands of this
trapezium and paste up the hole of the obtained annular diagram by a hub. Hence $V$
is a boundary label of a disc van Kampen diagram, and therefore $V=1$ in $G.$

The obtained criterion shows that the word problem is undecidable for $G$ since the set $X_5$ is not recursive by Lemma \ref{abc}. By Lemma \ref{almostquad}, the proof of Theorem \ref{mainth}
is complete.

Relations (\ref{rel1}) of the group $M$ define the structure of a (multiple)
HNN-extension on the group $M$ whose base it the free subgroup generated
by all $a$- and $q$-letters, and for every rule, one has a stable $\theta$-letter.
(See the presentation of every S-machine as an HNN-extension in \cite{OS}.)
The statements 2 and 3 of Theorem \ref{thmain1} hold for $M$ by 
Lemma \ref{almostquad} and by Step 1 of the proof of Lemma \ref{grubo}. Finally,
a word $V\equiv W(M)$ is conjugate to the hub in $M$ iff $V\in X_5$. (The proof is
similar to the criterion obtained above for the equality $W(M)=1$ in $G$, but now
one considers annular diagrams over $M$ instead of disc diagrams over $G$.)
Now the statement 1 of Theorem \ref{thmain1} follows from Lemma \ref{abc},
and the proof is complete.

      \begin{theorem}\label{exp} There exists a finitely presented group $G$ with
almost quadratic Dehn function $d(n)$ such that $d(n)\ge \exp n$ for infinitely many $n$-s,
and  $d(n)$ is bounded from above on the entire $\mathbb N$  by an exponential function.
      \end{theorem}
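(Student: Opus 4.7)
The plan is to rerun the construction of Theorem \ref{mainth} verbatim, with the deterministic Turing machine $M_0$ from Theorem \ref{re} replaced by an explicit recursive machine of controlled speed. Concretely, pick a computable sparse sequence $n_1 < n_2 < \cdots$ satisfying $n_{i+1} \ge 2^{n_i}$, and let $M_0$ be the deterministic machine recognizing $\{1^{n_i}: i \ge 1\}$ that, on input $1^{n_i}$, runs an $n_i$-bit binary counter from $0$ to $2^{n_i}-1$ before accepting; so $T(n_i) = 2^{n_i} + O(n_i)$. The resulting sequence of $M_4$-accepting-trapezium heights $T_i = 2^{n_i}$ obeys $T_{i+1} \ge \exp T_i$, and this gap condition is the only property of the sequence that the later arguments actually use.

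With this new $M_0$, every lemma in the preceding sections carries over without change, since none invoked the non-recursiveness of the language $X$ supplied by Theorem \ref{re}. Theorem \ref{re} entered only via Lemmas \ref{lmM22}, \ref{M32} and Property \ref{xiv}, which jointly state that the set of $f$-good numbers (in the sense of Property \ref{xiii}) is infinite. In the present setting this follows immediately from the gap condition: every sufficiently large $m$ in the interval $[\exp T_{i-1}, T_i)$ is $f$-good, and infinitely many such intervals are non-empty. Lemma \ref{almostquad} then yields the almost quadratic bound $d(r) \le 2Cr^2$ on an infinite set of $r$. The exponential lower bound on an infinite subset $N_2 \subseteq \mathbb N$ is immediate from Lemma \ref{simul}(2): for each $i$ the word $W(M) \in X_5$ built from $1^{n_i}$ has combinatorial length $O(n_i)$, represents $1$ in $G$, and every minimal van Kampen diagram for it contains the $M_4$-accepting trapezium of height $T_i = 2^{n_i}$; letting $N_2 = \{O(n_i) : i \ge 1\}$ one obtains $d(n) \ge \exp(\Omega(n))$ for $n \in N_2$.

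The exponential upper bound on the entire $\mathbb N$ is the subtle point, and requires a mild refinement of Lemmas \ref{grubo}--\ref{final}. A naive application of Lemma \ref{final} to a diagram of perimeter $r$ yields only a $2^{O(r^2)}$ bound, because the auxiliary quantity $g(CT_{j-1}^2)$ appearing there can be doubly exponential in $r$. The fix is to exploit the fact that any $M_4$-accepting subtrapezium of a diagram $\Delta$ has base $a$-length bounded by $O(|\partial\Delta|)$ (its base sits on a maximal $s$-band whose endpoints lie on $\partial\Delta$), so the trapezium's height is bounded by $g(O(r)) \le 2^{O(r)}$ rather than $g(CT_{j-1}^2)$; substituting this sharper bound throughout the inductive estimate of Lemma \ref{main} collapses the resulting $F(\Delta)$ to $2^{O(r)}$. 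The main obstacle lies precisely in this refinement: one must re-examine the inductive bookkeeping in Lemmas \ref{lgamma}--\ref{itog} and in the hub-removal surgery of Section \ref{separ} to verify that the trapezia appearing at each stage have their heights truly controlled by the original perimeter $r$, not by auxiliary combinatorial parameters that have inflated over the course of the induction. Once this bookkeeping is carried out, $d(n) \le 2^{O(n)}$ holds on all of $\mathbb N$, completing the proof.
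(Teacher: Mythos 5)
There is a genuine gap, and it lies in your choice of $M_0$. You build the exponential into $M_0$ itself (a $2^{n_i}$-step binary counter) and then assert that the trapezium heights satisfy $T_i=2^{n_i}$. But $T_i$ is the acceptance time of $M_3$, not of $M_0$, and the passage from $M_1$ to $M_2=M_1\circ Z$ multiplies time exponentially: $M_1$ carries the auxiliary tape $\tau^l$ whose length equals the number of $M_0$-steps (Lemma \ref{m0m1}), and after every rule of $M_1$ the adding machines must perform a computation of length between $2^{||u||}$ and $6\cdot 2^{||u||}$ on each sector (Lemma \ref{lm90}(2), Lemma \ref{lmb}). With your counter machine the $\tau$-tape reaches length about $2^{n_i}$, so $T_i$ is in fact doubly exponential in $n_i$ (roughly $\exp(2^{n_i})$), not $2^{n_i}$. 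Since the word $W(M)$ built from $1^{n_i}$ has length $O(n_i)$ and any diagram for it has area at least $T_i$, your own construction forces $d(n)\ge \exp\exp(\Omega(n))$ at $n\approx n_i$, which contradicts the global bound $d(n)\le 2^{O(n)}$ you claim at the end; so the proposal cannot prove the theorem as stated. (Your good-number bookkeeping also presupposes $T_i=2^{n_i}$ with gaps $n_{i+1}\ge 2^{n_i}$; with the true $T_i$ the sequence must be far sparser.) The paper does the opposite: it keeps $M_0$ of \emph{linear} time complexity, recognizing the easily checkable sparse set $n_i=2^{2^{2^{n_{i-1}}}}$, and lets the $Z$-composition alone produce $\exp(c'n_i)<T_i<\exp(c''n_i)$; this is exactly what makes the lower bound $\exp n$ on the sequence and the upper bound $\exp(Cn)$ on all of $\mathbb{N}$ simultaneously achievable.

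Your proposed ``refinement'' of Lemmas \ref{grubo}--\ref{final} is also unsupported and is not how the paper gets the upper bound. The claim that an $M_4$-accepting subtrapezium has base $a$-length $O(|\partial\Delta|)$ because its base ``sits on a maximal $s$-band whose endpoints lie on $\partial\Delta$'' has no justification: trapezia are interior subdiagrams bounded by $q$- and $\theta$-bands, and interior path lengths in these diagrams are not controlled by the perimeter (in the group of Theorem \ref{mainth} they can be non-recursively large). The paper's actual route to $d(n)<\exp(Cn)$ avoids reworking the mixture and comb machinery altogether: a minimal diagram of area $\ge\exp(Cn)$ can have no rim $\theta$-band of small base width and no long subcomb (subcomb areas are cubically bounded as in Step 1 of Lemma \ref{grubo}), hence it is solid; by Lemma \ref{mnogospits} and the surgery of Lemma \ref{Delta'} one reduces to a single hub; then Lemma \ref{psi1} forces every maximal $\theta$-band to be an annulus, so the boundary label is $W(M)$ for an accepted input word, and the machine-level estimate of Lemma \ref{*XVII}, combined with the fact that a computation containing the standard subcomputation of length $T_i$ forces $||W||\ge n_i$ and hence length $\le 9T_i\le 9\exp(c''||W||)$, gives the exponential bound directly. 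If you replace your counter machine by a linear-time recognizer of a sufficiently sparse, easily checkable sequence, the remainder of your first two paragraphs essentially coincides with the paper's argument.
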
 
      
      \proof We will make a few alternations in the proof of Theorem \ref{mainth}.
      
       Given a word $a^{n},$ it is easy to check in linear time whether $n=2^m$
      for some natural $m$ or not and to compute $m=\log_2 n$ if $m\in \mathbb N.$
      Therefore there is a deterministic Turing machine $M_0$ with linear time complexity
      which accept a word $a^n$ iff  $n$ belongs to the sequence $n_1=1,$ 
      $n_i=2^{2^{2^{n_{i-1}}}}$ for $i>1.$ Clearly almost every $n_i$ is an $h_{\alpha}$-good number
      for any function $h_{\alpha}(x)=2^{2^{\alpha x}},$ and we can use this property
      instead of Theorem \ref{re}.  
      
      Starting with $M_0,$ we construct the machines $M_1,\dots, M_4, M$ and define
      the group $G$ as in the paper. Then we obtain, as in Theorem \ref{mainth}, that $d(n)$ is almost quadratic
      (since the non-recursiveness from Theorem \ref{re} has never been uses for this goal).
      
      For some positive constants $c'$ and $c'',$ Lemmas \ref{X2}(b) and \ref{M31}(b)
      give the estimates $\exp(c'n_i)<T_i<\exp(c''n_i)$ for the time of acceptance $T_i$
      of the word $a^{n_i}$ by the machine $M_3.$ As in the above ``End of proofs'',
      it follows that the length of the corresponding to $a^{n_i}$ accepted input word $V_i$ of the
      machine $M$ is $O(n_i)$ while the area is at least $T_i>\exp(c'n_i).$ Thus
      $d(n)$ is bounded from below on the infinite sequence of $n_i$-s by an exponent. It remains
      to obtain an exponential upper bound $d(n)< \exp(Cn)$ on the entire $\mathbb N.$
      (We do not need any mixtures for this goal.)
      
      Assume that $C$ is large enough, and a minimal diagram $\Delta$ over $G$ has area 
      $\ge \exp(Cn)$. Then $\Delta$ has no rim $\theta$-bands $\cal T$ of base width $\le 2LN$
      because $|\partial\Delta'|< n $ for the subdiagram $\Delta'=\Delta\backslash\cal T$ in
      Case (2) of the proof of Lemma \ref{main}. Similarly $\Delta$ has no long
      subcombs (or subquasicombs) since Lemmas \ref{grubo} (Step 1 of the proof)  provides us with a cubic upper bound of the area of any subcomb (as function of $n$.)
      Therefore the diagram $\Delta$ is solid, and therefore Lemma \ref{Delta'} (a) reduces
      our task to diagrams having at most one hub. Indeed, by Lemma \ref{mnogospits},
      The number of hubs in $\Delta$ does not exceed $n,$ and the functions $\exp(Cn)$
      and $n\exp(Cn)$ are equivalent.
      
      We may assume that $\Delta$ has exactly one hub since otherwise  its area is
      bounded by a cubic function of the perimeter. Now by Lemma \ref{psi1} applied to the whole $\Delta,$ we conclude that every
      maximal
      $\theta$-band of $\Delta$ is an annulus, and so, as at the ``End of proofs'' above,
      the boundary label of $\Delta$ is of the form $V\equiv W(M)$ for some admissible
      input word of the machine $M_4.$ Therefore it suffices (by Lemma \ref{simul}) to find an exponential
      upper bound for the accepting computations of $M_4$ with respect to the length $||W||$
      of an input admissible word $W$. Such an upper bound (even a linear bound) is given
      by Lemma \ref{*XVII} if the length of the reduced computation of $W$ does not belong
      to any interval $(T_i,9T_i).$ The argument of that lemma works in other cases
      if the computation does not contain the standard computation of length $T_i,$ (i.e.,
      the computation of $n_i$ in our situation). However the proof of Lemma \ref{*XVII}
      also shows that $||W||\ge n_i$ in the remaining cases. Therefore the length of the
      computation has the exponential upper bound $9T_i\le 9\exp(c''||W||),$ and the
      proof is complete since $C>>c''.$
      
      \begin{rk}\label{other} One can replace the exponential function by a multiexponential one
      or by many other functions with at least exponential growth in the formulation and in the proof of Theorem
      \ref{exp}.
      \end{rk}
     
      \endproof

\section {Appendix:  A very sparse immune set\\  {\rm By M.V.Sapir}}

\label{vsis}

Let $X$ be a recursively enumerable (r.e.) language in the binary
alphabet recognized by a 
Turing machine $M$. If
$x\in X$ then the {\em time} of $x$ \label{cost} (denoted $\cost(x)$ or $\cost_M(x)$) is, by
definition, the minimal time of an accepting computation of $M$ with
input $x$. For any increasing function $h\colon \N\to \N$, a real number $m$ is called {\em $h$-good} for $M$ if for
every $w\in X$, $||w||<m$ implies $h(\cost(w))<m$.

For every number $n$, the number of digits in $n$ is denoted by \label{||2}
$||n||_2$. This number is roughly $\log_2 n$. Since we are not using
any other logarithms in Appendix, we shall omit $2$ in $\log_2$.
Similarly, we shall write $\exp x$ for $2^x$.

The proof of the following theorem uses an idea communicated to the author by S.Yu. Podzorov. For every $\alpha>0$, let $h_\alpha(n)=\exp\exp(\alpha n).$

\begin{theorem} \label{re}
There exists a Turing machine $M_0$ recognizing a r.e. non-recursive
set $X$ such that the set of all $h_\alpha$-good numbers for $M_0$ is infinite for all $\alpha>0$.
\end{theorem}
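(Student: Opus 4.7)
The plan is to construct $M_0$ and its accepted language $X\subseteq\{1\}^*$ simultaneously, placing the accepted words at tower-fast length thresholds derived adaptively from the enumeration of the halting set $K=\{e:\phi_e(e){\downarrow}\}$. I fix an enumeration $K_0\subseteq K_1\subseteq\cdots$ of $K$ with at most one new element per stage; let $e_i$ denote the $i$-th element added and $s_i$ the stage of its addition. Define $L_0:=2$ and recursively $L_{i+1}:=\exp\exp\!\bigl(i\cdot(L_i+s_{i+1})\bigr)$. The set $X$ will consist of those $1^{L_i}$ for which the $\Sigma_1$-condition $P(e_i):=\exists n\,[\phi_{e_i}(n){\downarrow}0]$ holds.

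The machine $M_0$ on input $1^m$ dovetails two simulations: the stage-by-stage enumeration of $K$, by which $M_0$ learns successive $(e_i,s_i)$ and updates the current threshold $L_i$; and a search over $n$ for a witness of $P(e_i)$ whenever a new $e_i$ appears. It accepts as soon as it observes $L_i=m$ with $P(e_i)$ verified, and rejects as soon as $L_i$ first exceeds $m$ (absent an ongoing witness search). Because $K$ is infinite the $L_i$ are strictly increasing and always eventually exceed $m$, so $M_0$'s deciding behaviour is forced to depend on whether $P(e_i)$ can be witnessed---and it is precisely the $\Pi_1$ failure case $\neg P$ that will prevent $X$ from being recursive.

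For the cost and good-number estimate, the dovetailed simulation up to stage $s_i$ and the threshold comparisons contribute $O(L_i+s_i)$ to $\cost(1^{L_i})$, while the $P(e_i)$-verification contributes the halting time of the relevant $\phi_{e_i}(n)$. By the padding lemma, there are infinitely many $i$ for which $\phi_{e_i}$ is (equivalent to) a fixed machine that halts on input $1$ in constant time with output $0$; on this cofinal subsequence, $\cost(1^{L_i})=O(L_i+s_i)$. Combined with the choice of $L_{i+1}$, this yields $L_{i+1}>\exp\exp(\alpha\cdot\cost(1^{L_i}))$ for all $i>\alpha c$ (with $c$ an absolute constant), and the factor $i$ in the exponent absorbs any super-recursive $\cost(1^{L_{j^*}})$ contributed by rare "hard" earlier stages $j^*$ as soon as $i\ge j^*+O(\log^*\cost(1^{L_{j^*}}))$, since $L_i$ itself eventually dominates any fixed value. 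Consequently every integer in $\bigl(\exp\exp(\alpha\cdot C(L_i)),\,L_{i+1}\bigr)$ (where $C(L_i)=\max_{j\le i,\,1^{L_j}\in X}\cost(1^{L_j})$) is $h_\alpha$-good, giving infinitely many such intervals for every $\alpha>0$.

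The main obstacle is the non-recursiveness of $X$. The complement of $X$ is the union of the recursive set $\{1^m:m\notin\{L_i\}\}$ with the $\Pi_1$ slice $\{1^{L_i}:\forall n,\phi_{e_i}(n)\ne0\}$; I plan to show this slice is not r.e.\ by a reduction from the $\Pi_1$-complete problem ``$\forall n\,\phi_e(n)\ne0$'' restricted to indices that enter $K$, which requires the enumeration $(e_i)$ to hit arbitrary machine behaviours---a consequence of padding combined with the fact that every machine $\phi_e$ that is total has some total index in $K$. The delicate technical step will be verifying that, despite the adaptive definition of $L_{i+1}$, this reduction still goes through (i.e.\ the bookkeeping required to pass from an $X$-oracle back to a halting-problem oracle is preserved by the dependence of $L_{i+1}$ on the $s_{i+1}$'s) and that the "easy" padding subsequence is genuinely cofinal rather than killed by interaction with the threshold formula.
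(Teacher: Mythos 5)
Your construction has a genuine gap at the $h_\alpha$-goodness step. Goodness of a number $m$ near $L_{i+1}$ is a \emph{universal} condition over all $w\in X$ with $\|w\|<m$, but your thresholds $L_{i+1}=\exp\exp\bigl(i(L_i+s_{i+1})\bigr)$ are computed from the bookkeeping data $(i,L_i,s_{i+1})$ only and make no reference to the witness-search times for $P(e_j)$, $j\le i$, which are the dominant (and recursively uncontrollable) part of $\cost(1^{L_j})$. A single earlier ``hard'' word $1^{L_{j}}\in X$ excludes from goodness the whole interval $\bigl(L_{j},\,h_\alpha(\cost(1^{L_{j}}))\bigr]$, and nothing in your construction prevents new hard words from appearing often enough that these excluded intervals chain and cover all sufficiently large integers: your absorption claim (``$L_i$ eventually dominates any fixed value'') disposes of each fixed hard index $j^*$ separately, but the quantifiers are in the wrong order — you need, for infinitely many $i$, that $\max_{j\le i,\,1^{L_j}\in X}h_\alpha(\cost(1^{L_j}))<L_{i+1}$, and since the verification times are never fed into the recursion for $L_{i+1}$, this is not forced by anything you prove. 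The appendix avoids exactly this: there the time to count any word into $X$ at step $n$ (hence $\cost_{M_0}$ of that word, up to a constant) is bounded by a polynomial in $n$ and $\|b(n-1)\|_2$, i.e., by precisely the data that is pushed through four exponentials (together with $x(m)$, the maximum of all elements counted so far) to define the next threshold $b(n)$; consequently \emph{every} previously accepted word automatically satisfies $h_\alpha(\cost)<\log b(n)$ at appropriate steps. Your membership test $P(e_i)=\exists n[\phi_{e_i}(n){\downarrow}0]$ is external to the bookkeeping, so no such domination is available, and the easy padding subsequence only controls the most recent word, not the earlier ones.

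Two secondary points. First, the non-recursiveness of $X$ is announced rather than proved, and the fact you invoke (``every total machine has a total index in $K$'') is not the relevant one; what you need is that from any index $e$ one can compute (by s-m-n/recursion theorem) an index $e'\in K$ with $P(e')\Leftrightarrow P(e)$, and that its position $i$ in the enumeration of $K$ is computable, so that decidability of $\{i:P(e_i)\}$ would decide the $\Sigma_1$-complete predicate $P$ — this is repairable but must be written out, and any repair of the goodness problem (e.g.\ making acceptance of $1^{L_i}$ depend only on events observable within a time budget that is then folded into $L_{i+1}$, as the paper folds $x(m)$ and $\|b(m-1)\|_2$ into $b(m)$) forces you to re-examine this reduction, since truncating the witness search changes which $1^{L_i}$ lie in $X$. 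Second, minor: the ``reject'' clause is unnecessary for a recognizer and harmless, but the cost accounting should state explicitly that verifying $m=L_i$ costs only $O(L_i)$ plus the enumeration time up to stage $s_i$, which your inequality $L_i\ge\exp\exp(s_i)$ does absorb.
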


\proof We use a recursive enumeration of all Turing machines from
\cite{Mal}. By Matiyasevich's solution of the 10th Hilbert problem
\cite{Mat}, there exists a polynomial $F(a,b,x_1,...,x_s)$ with
integer coefficients such that $a$ is recognized by the Turing machine number $b$ if and only if $F(a,b,x_1,...,x_s)=0$ for some
natural numbers $x_1,...,x_s$.
 We are going to use G\"odel
numeration of $s$-tuples of natural numbers. For every natural $m$
let $\mathfrak{g}(m)$ be the $s$-tuple having G\"odel number $m$. Note
that all coordinates of this tuple do not exceed $m$ and the time to
compute $\mathfrak{g}(m)$ is linear in $||m||_2$.

Note also that if $||a||_2,||b||_2,||x_i||_2\le n$ ($i=1,...,s$) then the
time needed to compute $F(a,b,x_1,...,x_s)$ in bounded by a
polynomial in $n$ depending only on $F$. Also the time to compute
binary value of the exponent $\exp n$ (given $n$ written in binary)
is linear in $n$ (and exponential in $||n||_2$).

The algorithm of enumerating elements in $X$ involves auxiliary
formulas for functions $f_m\colon \{1,2,...\}\to \{1,2,...\}$, and two sequences of numbers $b(m)$,
$x(m)$, $m=0,1,2,...$.

\bigskip

{\bf Step 0.} Set $f_0(i)=i$ (that is by definition $f_0$ is the identity function),
$b(0)=0$, $x(0)=0$,
$X=\emptyset$.

\medskip

{\bf Step $m\ge 1$.} Search for the minimal $i=i(m)\le m$ such that
for some $m'\le m$ $F(f_{m-1}(i),i,\mathfrak{g}(m'))=0$ and $f_{m-1}(i)$ is not already in $X$. If such an $i$ exists,
 add $f_{m-1}(i)$ to $X$,
compute the new numbers $b(m)=\exp\exp\exp\exp(m+||b(m-1)||_2+x(m-1))$ ({four
exponents}), $x(m)=\max(x(m-1), f_{m-1}(i))$, and define the function
$f_{m}$ by adding in the definition of $f_m$ that
$f_{m}(j)=j+b(m)$ for every $j>i$. (Note that $f_m(j)=f_{m-1}(j)$ for every $j\le i$.) In that case we say that the step $m$ was \label{succs} {\em successful}, and $i$ is \label{respon}
{\em responsible} for counting $f_{m-1}(i)$ into $X$. Otherwise (if either $i(m)$ does not exist or $f_{m-1}(i)$ is
already in $X$) let $f_{m}=f_{m-1}, b(m)=b(m-1), x(m)=x(m-1)$.
Then go to the next step. Note that for every $i,m$ we have $f_m(i)\le i+b(m)$. Therefore if step $m$ is successful, we have $x(m)\le x(m-1)+m+b(m-1)$. Hence \begin{equation}\label{xm}x(m)< b(m).\end{equation} Inequality (\ref{xm}) holds also for unsuccessful $m$ if $m$ is larger than the number of the first successful step  by induction, because in that case $x(m)=x(m-1), b(m)=b(m-1)$.

\medskip

We claim that every number $i\ge 1$ is responsible for at most $2^i$
members of $X$. Indeed, every $i$ can be responsible only for
numbers of the form $f_m(i)$. The value $f_m(i)$ can differ from $f_{m-1}(i)$
only if some number $i'< i$ is responsible for counting some number $f_{m'-1}(i')$ into $X$ at some step
$m'> m\ge i$.  Therefore we have $f_1(1)=f_2(1)=...$, so $1$ can be responsible only for at most one number in $X$.
This implies that $2$ can be responsible for at most two numbers only: the
value $f_0(2)$ is $2$, and the value $f_m(2)$ can differ from $f_{m-1}(2)$ only if $1$ is responsible for some number in $X$. Similarly, the value $f_m(i+1)$ can
differ from $f_{m-1}(i+1)$ only when a number $j\le i$ becomes responsible for counting a
number into $X$. By induction it can happen at most
$1+2+2^2+...+2^i=2^{i+1}-1$ times. Therefore $i+1$ can be
responsible for at most $2^{i+1}$ numbers in $X$ as claimed.

\medskip

Let us prove now that the set $X$ is what we need. It is clear that $X$
is recursively enumerable: the machine $M$ enumerating this set is
described in the definition of $X$. 
(Recall, that a Turing machine \label{enumeratingtm} enumerating a set of words $X$ in a finite alphabet differs from a Turing machine recognizing it: it does not have input sector and the accept configuration. It starts working with all tapes empty, and writes words from $X$ in the first tape one by one, separated by a special symbol. After a new word is written in tape $1$ (i.e. when the machine {\em counts a new word into $X$}), the machine puts the separating symbol next to that word and continues working. If $X$ is infinite, the machine works infinitely long. For every $u\in X$, we can talk about the {\em time to count it into} $X$, i.e. the shortest length of the computation after which the word first appears in the first tape.)

\medskip
Let us prove that $X$ is not recursive. Suppose the contrary - that $X$ is recursive. Then its complement is
recursively
enumerable. Therefore there exists a natural number $b$ such that
\bigskip

(*) $F(a,b,x_1,...,x_s)=0$ for some $x_1,...,x_s$ if and only if $a$ is
{\bf\em not} in $X$.

\bigskip

Let  $b$ be the number from (*).  There exists  $m\ge 1$ which is bigger than the number of the first successful step, and such that for every $m'\ge m$ either
$i(m') > b$ or the step number $m'$ is not successful (this follows from the fact that each $i$ is responsible for finitely many members of $X$ only).
Let $m$ be one of the numbers with this property.

By definition,

\begin{equation} \label{y}
{f_{m'}(j)=f_m(j)}
\end{equation}
for every $m'>m, j\le b$.

{\bf Claim.} No $j\ne b$ can be responsible for counting {$r=f_{m-1}(b)$} into $X$.

Suppose that $j<b$ is responsible for counting $r$ into $X$. That cannot happen at step $m'>m$ because $i(m')>b$ since $r=f_{m-1}(j)\le f_{m'-1}(j)< f_{m'-1}(i(m'))$ by
definition of $m$. If that happens at step $m$, then $f_{m-1}(j)=r=f_{m-1}(b)$ which is impossible since $f_{m-1}$ is
strictly increasing. If that happens at step number $m'<m$, then, since $j<b$, $f_{m'}(b)\ge b+b(m')\ge b(m')$, and
we would have
$$f_{m-1}(b)\ge f_{m'}(b)\ge b(m')>x(m')\ge r$$
by (\ref{xm}) and because $x(m')$ is the maximum of all numbers counted into $X$ at steps $\le m'$, including $r$, a contradiction.

Suppose that $j>b$ is responsible for counting $r$ into $X$ at some step $m'$. Suppose that $m'<m$. We have
$f_{m'-1}(j)=r$. Since $j\ne b$, $f_{m'-1}(b)\ne r$. Therefore $f_{k}(b)$ has changed at some step $k$ such that
$m'\le k\le m-1$. Hence there exists a successful step number $k$, $m'\le k\le m-1$ such that $i(k)<b$. But then
$$f_{m-1}(b)\ge f_{k}(b)\ge b(k)>x(k)\ge x(m')\ge r,$$
a contradiction.

It remains to consider the case when $j>b, m'\ge m$. But in that case (since $f_{m'-1}$ is strictly increasing)
$$f_{m'-1}(j)>f_{m'-1}(b)\ge f_{m-1}(b)=r, $$ a contradiction. This completes the proof of our claim.

Now if $r$ is in $X$ then for some $m'$, $F(r,b,\mathfrak{g}(m'))=0$ (since by the Claim only $b$ can be responsible for counting $r$ into $X$).
But this would mean, by the choice of $b$ (see (*)), that $r$ is {\bf not} in
$X$, a contradiction. On the other hand if $r$ is not in $X$ then
$F(r,b,\mathfrak{g}(j))=0$ for some $j$, therefore at some step $m'$,
$b$ would be responsible for counting $r$ into $X$, so $r\in X$, a
contradiction. This shows that $X$ is not recursive. In particular, $X$ is infinite.

Note that  there exists a
deterministic Turing machine $M_0$ which recognizes $X$ and such that
for every $m\in X$, the time to recognize it by $M_0$ is linearly bounded in terms of the time to count it into $X$ by $M$.
Indeed let us add the input tape to the tapes of $M.$ The machine $M_0$ will
execute $M$ on its tapes. Every time there is a new word
counted into $X$, the machine $M_0$ checks whether this word coincides with the input word. After the match is found, $M_0$ erases all
tapes and stops.

Now let us determine the $h_\alpha$-good numbers of the machine $M_0$.
We say that a number $n$ is \label{appropr}{\em appropriate} if $i(n)$ exists, $f_{n-1}(i(n))$ is counted into $X$ at step $n$
and none
of $i(n')$ with $n'>n$ is smaller than $i(n)$. Clearly the set of
appropriate numbers is infinite (since every number is responsible
only for finitely many members of $X$, see above). Let $B$ be the
set of numbers $[\log b(n)]$ for appropriate $n$. Let us show that
almost all numbers in $B$ are $h_\alpha$-good.

Indeed, let us estimate the time of a number $r=f_{n-1}(i_n)$
counted into $X$ at an appropriate step number $n$
by $M_0.$
The total number of evaluations of $F$ needed for this is
at most $n^3$ {($\le n$ steps, at most $n\times n$
evaluations of $F(f_{n'-1}(i),i,\mathfrak{g}(t))$ at each step
where $1\le i\le n, 1\le t\le n$)}.
We can estimate the time of each evaluation of $F$ as a polynomial
in $||n||_2+||b(n-1)||_2$. In addition of computing values of $F$, we
also have to compute the numbers $b(n')$ and the formulas for
$f_{n'}$ (at most $n$ times). The time of computing $b(n')$ and $f_{n'}$
does not exceed the time of computing $b(n-1)$ and $f_{n-1}$. And
those times can be bounded by a polynomial in $||b(n-1)||_2$.
Recall that the time of recognizing $r$ by $M_0$ is bounded by a constant times the time of counting $r$ into
$X$ by $M$. Thus the
total time of accepting $r$ by the machine $M_0$ is bounded by
$cn^c||b(n-1)||_2^c+c$ for some constant $c$. Note that

\begin{equation}\label{2s} \exp\exp(cn^c||b(n-1)||_2^c+c)<\exp\exp\exp(||b(n-1)||_2)<\log b(n)
\end{equation}
for almost all $n$. Also notice that since $n$ is appropriate,  by the definition of $f_n$,
there are no numbers $r'\in X$ between $r+1$ and $b(n)$. Hence if
$r\in X$ and $||r||_2<\log b(n)$ then {$\exp\exp(\alpha\cost(r))<\log b(n)$} by (\ref{2s})
(for all but finitely many $n$ and some $\alpha$). Hence $\log b(n)$ is an $h_\alpha$-good number
of $M_0$ for almost all $n$. Therefore the set of $h_\alpha$-good numbers for $M_0$ is
infinite.
\endproof

\twocolumn

\noindent {\bf Subject index} \label{sind}
\bigskip

\noindent $a$-band \pr{aband}\\ 
Accept (stop) configuration of a  machine \pr{acceptc}\\
Accepted input word \pr{acceptediw}\\
Accepted word by an S-machine \pr{acceptedws}\\
Accept vector $\overrightarrow s_0$ \pr{acceptv}\\
Active from the left/right band \pr{activelrb}\\
Active from the left/right cell \pr{alrcell}\\
Active from the left/right letter \pr{activel}\\
Admissible word \pr{admissiblew}\\
$a$-edge \pr{aedge}\\
$a$-length $|.|_a$ \pr {wa}, \pr{alength}\\
$a$-letter \pr{aletter}\\
Aligned base \pr{alignedb}\\
Almost quadratic function \pr{almostqf}\\
Annulus \pr{annulus}\\
Application of a rewriting rule \pr{applicationr}\\
Appropriate number \pr{appropr}\\
Area of a diagram \pr{aread}\\
Area of a word \pr{areaw}\\
Band \pr{band}\\
Base of a band \pr{baseb}\\
Base of a trapezium \pr{baset}\\
Base width of a comb \pr{basewc}\\
Base of a word \pr{basew}\\
Bead black or white \pr{bead}\\
Bottom of a band \pr{bottomb}\\
Bottom of a trapezium \pr{bottomt}\\
Cell \pr{cell}\\
Chain \pr{chain}\\
Chain-annulus \pr{chain-ann}\\
Close $q$-band (to) \pr{close} \\
Clove \pr{clove}\\
Comb \pr{comb}\\
Combinatorial length of a path \pr{clength}\\
Comb mixture \pr{combmix}\\
Command of a Turing machine \pr{command}\\
Computation \pr{computation}\\
Configuration of a Turing machine \pr{config}\\
Copy of a word \pr{copyw}\\
Control state letter \pr{controlsl}\\
Crescent \pr{crescent}\\
Crossing bands \pr{cross}\\
Dehn function \pr{Dehnf}\\
Derivative subcomb \pr{dersc}\\
Derivative $q$-band \pr{derivqb}\\
Diagram \pr{diagram}\\
Diagramm admitting a quasicomb \pr{admit}\\ 
Domain of a rule \pr{domain}\\
Enumerating Turing machine \pr{enumeratingtm}\\
Equivalence of functions \pr{equivf}\\
$f$-good number \pr{good}, \pr{fgoodn}\\
Filling subtrapezium \pr{fillingst}\\
Firm factorization \pr{firm}\\
Group $G$ \pr{groupG}\\
Group $M$ \pr{groupM}\\
Handle of a comb \pr{handlec}\\
Height of a trapezium \pr{heightt}\\
History of a band \pr{historyb}\\
History/ step history of a comb \pr{historyc}\\
History of a computation \pr{history}\\
History of a trapezium \pr{historyt}\\
History of type ... \pr{typeh}\\
Hub \pr{hub}, \pr{hubs}\\
$H'$-part of a comb \pr{H'partc}\\
$H'$-part of a trapezium \pr{H'partt}\\
Input configuration of a Turing machine \pr{inputc}\\
Input sector of $M_4$ \pr{inputs}\\
Inverse command \pr{inversec}\\
$(i)-(i+1)$ transition band \pr{(i)-b}\\
$K$-mixture on a necklace\\
Language accepted by a machine \pr{languageam}\\
Large base \pr{largeb}\\
Left and right rules \pr{leftrr}\\
Left/right sides of a $q$-band in a comb \pr{rightlsqbc}\\
Length (time) of a computation \pr{lengthtime}\\
Length $|.|$ of a word \pr{lengthw|}\\
Length $||.||$ of a word (combinatorial) \pr{lengthw||}\\
Length $|.|$ of a path \pr{lengthp|}\\
Link of a chain \pr{link}
Long (quasi)comb \pr{longqc}\\
$k$-band \pr{kband}\\
$k'$-band \pr{k'band}\\
$k$-letter \pr{kl}\\
$k'$-letter \pr{k'l}\\
Maximal band \pr{maxb}\\
Minimal diagram \pr{minimald}\\
$M$-index \pr{Mind}\\
$M_2$-index \pr{M2ind}\\
$M_4$-accepting trapezium \pr{M4acct}\\
Necklace \pr{neckl}\\
Normal base \pr{normalb}\\
Odd cell \pr{oddc}\\
Odd $\theta$-band \pr{oddb}\\
One Step comb \pr{onestepc}\\
Parameters \pr{param}\\
Part of a rule \pr{partr}\\
Passive band (from the left/right) \pr{passiveb}\\
Passive cell \pr{passivec}\\
Passive $q$-letter \pr{passive}\\
$p$-band \pr{pband}\\
Perimeter $|.|$ of a diagram \pr{perimd|}\\
$p$-letter \pr{pletter}\\
$p_i$-letter, edge, band \pr{piletter}\\
Positive and negative commands \pr{positivec}\\
Positive and negative rules \pr{positiver}\\
Projection of a word \pr{projectw}\\
Proper subcomb \pr{propersc}\\
$q$-band \pr{qband}\\
$q$-edge \pr{qedge}\\
$q$-length $|.|_q$ \pr{qlength}\\
$q$-letter \pr{qletter}\\
Quasicomb \pr{quasic}\\
Reduced computation \pr{reducedc}\\
Reduced diagram \pr{reducedd}\\
Regular comb \pr{regcomb}\\
Regular extension of a comb \pr{rexec}\\
Relations of the group $M$ \pr{relations}\\
Responsible for counting number \pr{respon}\\
Rim band \pr{rimb}\\
Rule \pr{rule}\\
Rule locks a sector \pr{locks}\\
Sector of a word \pr{sectorw}\\
Short derivative \pr{shortder}\\
Side of a band \pr{sideb}\\
Sides of a trapezium \pr{lrsidest}\\
$s$-band \pr{sband}\\
$s_i$-band \pr{siband}\\
Simple $\theta$-band \pr{simpletb}\\
$s$-letter \pr{sl}\\
S-machine \pr{Smachine}\\
S-machine $M$ \pr{MS}\\
S-machine $M_2$ \pr{M2}\\
S-machine $\tilde M_2$ \pr{tildeM2}\\
S-machine $M_3$ \pr{M3}\\
S-machine $M_4$ \pr{M4S}\\
S-machine $M_4(i)$ \pr{M4iS}\\
S-machine $S_1$ \pr{M1S}\\
S-machine $Z(A)$ \pr{ZA}\\
S-machines $\overrightarrow Z(A)$ and $\overleftarrow Z(A)$ \pr{overZ}\\
Solid diagram \pr{solidd}\\
Special $\theta$-edge \pr{specialte}\\
Spoke \pr{spoke}\\
Standard base \pr{standardb}\\
Standard computation of $M_4$ \pr{standardcM4}\\
Standard trapezium \pr{standardM}\\
Start and end edges of a band \pr{seedgesb}\\
Start vector $\overrightarrow s_1$ \pr{startv}\\
State letter \pr{statel}\\
Step history of a band \pr{stephb}\\
Step history of a computation \pr{steph}\\
Step history of a trapezium \pr{stepht}\\
Steps of $M_4$ \pr{Steps}\\
String of beads \pr{stringb}\\
Strongly active from the left/right band \pr{sactivelrb}\\
Subcomb of a diagram/comb \pr{subcombd}\\
Successful step \pr{succs}\\
Symmetric Turing machine \pr{symmetricm}\\
Tape letter \pr{tapel}\\
$t$-band \pr{tband}\\
$t'$-band \pr{t'band}\\
$t$-letter \pr{tl}\\
$t'$-letter \pr{t'l}\\
$t_i$-reflection \pr{tlrefl}\\
Top of a band \pr{topb}\\
Top of a trapezium \pr{topt}\\
Transition rule $(12)$ \pr{Tr12}\\
Transition rule $(23)$ \pr{Tr23}\\
Trapezium \pr{trapez}\\
Trimmed sides of a trapezium \pr{trim}
Turing machine \pr{Turingm}\\
Turing machine $M_0$ \pr{M0}\\
Turing machine $M_1$ \pr{M1}\\
$B$ \pr{Bst}\\
$B(i)$ \pr{Bi}\\
$\bott(.)$ \pr{topp}\\
${\cal C}_1,\dots {\cal C}_s$ \pr{Ci}\\
$cl(\pi,{\cal B},{\cal B'})$ \pr{cl.}\\
$\partial$ \pr{partial}\\
$F(\Delta)$\pr{FD}\\
$h$ \pr{hc}, \pr{hq}\\
$h'$\pr{h'}\\
$h_i$ \pr{hi}\\
$h'_1,\dots$ \pr{h'1...}\\
$h_-$ \pr{hminus}, \pr{hminusq}\\
$H_1,\dots,H_s$ \pr{derhist}\\
$l_-$ \pr{lminus}\\
$\Lab(.)$ \pr{Lab}\\
${\bf P}_k$ \pr{Pk}\\
${\bf p}(\Psi)$ \pr{pPsi}\\
$Sym(M_1)$ \pr{symM1}\\
$T_i$ \pr{Tis}\\
$\cost (x)$ \pr{cost}\\
$\tbott(.)$ \pr{tbott}\\
$\ttopp(.)$ \pr{ttopp}\\
$\topp(.)$\pr{topp}\\
$Tp(.\; ,\; .)$ \pr{Tp.}\\
$W_0$ \pr{W0}\\
$W(i)$ \pr{Wi}\\
$W(M)$ \pr{WM}\\
$X_1$ \pr{X1}\\
$X_2$ \pr{X2}\\
$X_3$ \pr{X3}\\
$X_4$ \pr{X4}\\
$X_5$ \pr{X5}\\
$\bf y$ \pr{y}, \pr{yq}\\
$\bf y'$\pr{y'}\\
$Y(3)$ and $Y'(3)$ \pr{Y3pr}\\
$Y_i(\theta)$ \pr{Yit}\\
$\bf z $ \pr{z}, \pr{zq}\\
$\bf z'$ \pr{z'}\\
$Z^{(\theta,i)}$ \pr{Zti}\\
$\overleftarrow Z^{(\theta,i)}$ and $\overrightarrow Z^{(\theta,i)}$\pr{Ztilr}\\ 
$\Gamma_i$\pr{Gi}\\
$[\Gamma]=h(\bf z - y)$ \pr{[Gamma]}, \pr{[q]}\\
$\zeta(\theta)$ \pr{zetat}\\
$\zeta_-(\theta)$ and $\zeta_+(\theta)$ \pr{zmt}\\
$\kappa(.)$\pr{mu11.}\\
$\kappa$-mixture \pr{muK1mix}, \pr{munustring}\\
$\kappa$-necklace \pr{mu1neckl}\\
$\kappa^c(.)$ \pr{muKc}, \pr{mucjK}\\
$\lambda(.)$\pr{mu21.}\\
$\lambda$-mixture \pr{mu2Kmix}, \pr{munustring}\\
$\lambda$-necklace \pr{mu2neckl}\\
$\lambda^c(.)$ \pr{muKc}, \pr{mucjK}\\
$\mu(.)$ \pr{mu.}, \pr{munustring}\\
$\mu_K(.)$ \pr{muKO}\\
$\mu^c(.)$ \pr{muc.}, \pr{muq}\\
$\nu_K$-mixture \pr{nuKmix}\\
$\nu$-necklace \pr{nuneckl}\\
$\nu_K(.)$ \pr{nuK.}\\
$\nu_K^c(.)$ \pr{nucK}, \pr{nuq}\\
$\pi_{12}(.)$ \pr{pi12}\\
$\pi_{21}(.)$ \pr{pi21}\\
$\pi_{23}(.)$ \pr{pi23w}\\
$\pi_{32}(.)$ \pr{pi32}\\
$\pi_{34}(.)$ \pr{pi34}\\
$\Pi_{12}(.)$ \pr{Pi12}\\
$\Pi_{21}(.)$ \pr{Pi21}\\
$\Pi_{23}(.)$ \pr{Pi23}\\
$\Pi_{32}(.)$ \pr{Pi32}\\
$\Pi_{34}(.)$ \pr{Pi34}\\
$\theta_{accept}$\pr{taccept}\\
$\theta_{start}$ \pr{tstart}\\
$\theta(M_4)$ \pr{tM4}\\
$\theta$-band \pr{thband}\\
$\theta$-edge \pr{thedge}\\
$\theta$-length $|.|_{\theta}$ \pr{thlength}\\
$\theta$-letter \pr{thetal}\\
$(\theta,a)$-cell \pr{ta}\\
$(\theta,a)$-relation \pr{thetaar}\\
$(\theta,q)$-cell \pr{tq}\\
$(\theta,q)$-relation \pr{thetaqr}\\
$\Theta(M_4)$ \pr{TM4}\\
$\Theta(3)$ \pr{T3}\\
$\Theta^{\pm }$ \pr{T+-}\\
$\bar\theta$ \pr{bart}\\
$(12)$- and $(23)$-bands \pr{12band}\\
$(12)$- and $(23)$-cells \pr{12cell}\\
$(12)$- and $(23)$-letters and edges \pr{12letter}\\
$\equiv$ \pr{equivgraph}\\
$|.|$ \pr{|.|}\\
$||.||_2$ \pr{||2}\\
$\tool$ \pr{tool}


\addtocontents{toc}{\contentsline {section}{\numberline { }Subject
index \hbox {}}{\pageref{sind}}}

 \end{document}